\documentclass{memo-l}

\usepackage{amsmath, amssymb, amsthm}
\usepackage{mathtools}
\usepackage{color}
\usepackage{comment}
\usepackage{amsfonts, latexsym}
\usepackage{extpfeil}
\usepackage{tikz-cd}
\usepackage{mathrsfs} %\mathscrのため
\usepackage{array, booktabs, float}
\usepackage{hyperref}
\usepackage{adjustbox}

% xypic
\usepackage[all]{xy}
\xyoption{line}
\xyoption{arrow}
\SelectTips{cm}{}

\usepackage{xcolor}

\theoremstyle{plain} %見出し太字, 本文斜体, 上下スペース
\newtheorem{theorem}{Theorem}[section]
\newtheorem{proposition}[theorem]{Proposition}
\newtheorem{lemma}[theorem]{Lemma}
\newtheorem{corollary}[theorem]{Corollary}

\theoremstyle{definition} %見出し太字, 本文普通, 上下スペース
\newtheorem{definition}[theorem]{Definintion}

\theoremstyle{remark} %見出し斜体, 本文普通
\newtheorem{remark}[theorem]{Remark}
\newtheorem{example}[theorem]{Example}

\numberwithin{section}{chapter}
\numberwithin{equation}{section} %数式番号

\newcommand{\gMod}[1]{{#1}\textup{-gMod}}
\newcommand{\gmod}[1]{{#1}\textup{-gmod}}
\newcommand{\gproj}[1]{{#1}\textup{-gproj}}
\newcommand{\quantum}[1]{U_q^{#1}(\mathfrak{g})}
\newcommand{\qcoordinate}{A_q(\mathfrak{n})}
\newcommand{\catquantum}[1]{\mathcal{U}_q({#1})} 
\newcommand{\dotcatquantum}[1]{\dot{\mathcal{U}}_q({#1})}
\newcommand{\renormalizedR}[2]{{\mathsf{R}}_{{#1}, {#2}}^{\mathrm{norm}}}
\newcommand{\fpdgMod}[1]{{#1}\textup{-gMod}_{\mathrm{f.p.d.}}}

\DeclareMathOperator{\id}{id}
\DeclareMathOperator{\Id}{Id}
\DeclareMathOperator{\height}{ht}
\DeclareMathOperator{\Hom}{Hom}
\DeclareMathOperator{\HOM}{HOM}
\DeclareMathOperator{\End}{End}
\DeclareMathOperator{\END}{END}
\DeclareMathOperator{\EXT}{EXT}
\DeclareMathOperator{\Aut}{Aut}

\DeclareMathOperator{\Ind}{Ind}
\DeclareMathOperator{\Res}{Res}
\DeclareMathOperator{\Cok}{Cok}
\DeclareMathOperator{\Ext}{Ext}
\DeclareMathOperator{\ad}{ad}
\DeclareMathOperator{\Ker}{Ker}
\DeclareMathOperator{\qdim}{qdim}
\DeclareMathOperator{\wt}{wt}

%%%%%%%%%%%%%%%%%%%%%%
%diagram macro (no color version)

\newcommand{\ucross}[2]{
 \xybox{
    (-4,6)*{}; (4,6)*{};
    (-4,-4)*{};(4,4)*{} **\crv{(-4,-1) & (4,1)}?(1)*\dir{>};
    (4,-4)*{};(-4,4)*{} **\crv{(4,-1) & (-4,1)}?(1)*\dir{>};
    (-4,-6)*{\scriptstyle #1};
     (4,-6)*{\scriptstyle #2};
     (-8,0)*{};(8,0)*{};
     }}
\newcommand{\dcross}[2]{
 \xybox{ 
    (-4,6)*{}; (4,6)*{};
        (-4,-4)*{};(4,4)*{} **\crv{(-4,-1) & (4,1)}?(0)*\dir{<} ;
    (4,-4)*{};(-4,4)*{} **\crv{(4,-1) & (-4,1)}?(0)*\dir{<};
    (-4,-6)*{\scriptstyle #1};
     (4,-6)*{\scriptstyle #2};
     (-8,0)*{};(8,0)*{};
     }}     
\newcommand{\lcross}[2]{
 \xybox{
    (-4,6)*{}; (4,6)*{};
    (-4,-4)*{};(4,4)*{} **\crv{(-4,-1) & (4,1)}?(0)*\dir{<} ;
    (4,-4)*{};(-4,4)*{} **\crv{(4,-1) & (-4,1)}?(1)*\dir{>};
    (-4,-6)*{\scriptstyle #1};
     (4,-6)*{\scriptstyle #2};
     (-8,0)*{};(8,0)*{};
     }}
\newcommand{\rcross}[2]{
 \xybox{
    (-4,6)*{}; (4,6)*{};
    (-4,-4)*{};(4,4)*{} **\crv{(-4,-1) & (4,1)}?(1)*\dir{>};
    (4,-4)*{};(-4,4)*{} **\crv{(4,-1) & (-4,1)}?(0)*\dir{<};
    (-4,-6)*{\scriptstyle #1};
     (4,-6)*{\scriptstyle #2};
     (-8,0)*{};(8,0)*{};
     }}
\newcommand{\rcap}[1]{\xybox{
  (-6,0)*{};
  (6,0)*{};
  (-4,-.7)*{}="t1";
  (4,-.7)*{}="t2";
  "t1";"t2" **\crv{(-4,5.3) & (4,5.3)}; ?(1)*\dir{>}
  ?(.5)*\dir{}+(0,2)*{\scriptstyle{#1}}; 
}}
\newcommand{\lcap}[1]{\xybox{
  (-6,0)*{};
  (6,0)*{};
  (-4,-.7)*{}="t1";
  (4,-.7)*{}="t2";
  "t2";"t1" **\crv{(4,5.3) & (-4,5.3)}; ?(1)*\dir{>} ?(.5)*\dir{}+(0,2)*{\scriptstyle{#1}};
}}
\newcommand{\rcup}[1]{\xybox{
  (-6,0)*{};
  (6,0)*{};
  (-4,-.7)*{}="t1";
  (4,-.7)*{}="t2";
  "t1";"t2" **\crv{(-4,-6.7) & (4,-6.7)}; ?(1)*\dir{>}
   ?(.5)*\dir{}+(0,-2)*{\scriptstyle{#1}}; 
}}
\newcommand{\lcup}[1]{\xybox{
  (-6,0)*{};
  (6,0)*{};
  (-4,-.7)*{}="t1";
  (4,-.7)*{}="t2";
  "t2";"t1" **\crv{(4,-6.7) & (-4,-6.7)}; ?(1)*\dir{>}
  ?(.5)*\dir{}+(0,-2)*{\scriptstyle{#1}};
}}
\newcommand{\slineu}[1]{\xybox{
  (-2,0)*{};
  (2,0)*{};
  (0,2)*{};
  (0,0)*{}; (0,-8)*{} **\dir{-}; ?(0)*\dir{<};
  (0,-10)*{\scriptstyle  #1}; 
}}
\newcommand{\slined}[1]{\xybox{
  (-2,0)*{};
  (2,0)*{};
  (0,2)*{}; 
  (0,0)*{}; (0,-8)*{} **\dir{-}; ?(1)*\dir{>};
  (0,-10)*{\scriptstyle  #1};
}}
\newcommand{\sline}[1]{\xybox{
  (-2,0)*{};
  (2,0)*{};
  (0,2)*{}; 
  (0,0)*{}; (0,-8)*{} **\dir{-}; 
  (0,-10)*{\scriptstyle  #1};
}}
\newcommand{\sdotu}[1]{\xybox{
  (-2,0)*{};
  (2,0)*{};
  (0,2)*{};
  (0,0)*{}; (0,-8)*{} **\dir{-}?(.5)*{\scriptstyle\bullet} ?(0)*\dir{<};
  (0,-10)*{\scriptstyle  #1};
}}
\newcommand{\sdotd}[1]{\xybox{
  (-2,0)*{};
  (2,0)*{};
  (0,2)*{};
  (0,0)*{}; (0,-8)*{} **\dir{-}?(.5)*{\scriptstyle\bullet} ?(1)*\dir{>};
  (0,-10)*{\scriptstyle  #1};
}}

%%%%%%%%%%%%%%%%%%%%%
%tikz setting

\tikzset{cross/.style={preaction={-,draw=white,line width=6pt}}}

\makeindex

\begin{document}

\frontmatter
\title{A diagrammatic approach to reflection functors}
\author{Haruto Murata}
\address{Graduate School of Mathematical Sciences, the University of Tokyo, 3-8-1 Komaba, Meguro-ku, Tokyo 153-8914, Japan.}
\email{muraharu@ms.u-tokyo.ac.jp}
\subjclass[2020]{20C08,18N25,20G42,16T20}

\begin{abstract}
We construct reflection functors for quiver Hecke algebras associated with arbitrary symmetrizable Kac-Moody algebras, 
from a higher representation-theoretic viewpoint. 
These functors provide a categorification of Lusztig's braid group action on the quantum group. 
Similar functors were recently constructed independently by Kashiwara-Kim-Oh-Park via a different approach. 
Moreover, we prove that our reflection functors satisfy the braid relations as natural isomorphisms. 
\end{abstract}

\maketitle

\tableofcontents

\chapter{Introduction}

\section{Overview}
Let $\mathsf{A} = (a_{i,j})_{i \in I}$ be a symmetrizable generalized Cartan matrix, and let $\mathfrak{g} = \mathfrak{g}(\mathsf{A})$ be the Kac-Moody algebra. 
The negative half of the quantum group $U_q^-(\mathfrak{g})$ is categorified by the category $\gMod{R}$ of modules over the quiver Hecke algebra associated with $\mathfrak{g}$ \cite{rouquier20082kacmoodyalgebras, MR2525917, MR2763732}.
Let $T_i \ (i \in I)$ be the Lusztig's braid group symmetry on $U_q(\mathfrak{g})$ \cite{MR2759715}. 
For each $i \in I$, $T_i$ restricts to an isomorphism of two subalgebras of $U_q^-(\mathfrak{g})$ 
\begin{equation} \label{eq:Ti}
T_i \colon {}_iU \to U_i. 
\end{equation}
See Section \ref{sec:braidgroupaction} for the precise definition. 
There are monoidal subcategories $\gMod{{}_iR}$ and $\gMod{R_i}$ of $\gMod{R}$ that correspond to ${}_iU$ and $U_i$ respectively. 
It is natural to ask whether we can find an equivalence $\gMod{{}_iR} \simeq \gMod{R_i}$ that categorifies the isomorphism (\ref{eq:Ti}). 

When $\mathsf{A}$ is symmetric, such an equivalence was constructed by Kato via constructible sheaves on the representation spaces of the corresponding quiver \cite{MR3165425, MR4216698}. 
In the finite type case, Kashiwara-Kim-Oh-Park established a purely algebraic construction of a related equivalence between localized categories using R-matrices \cite{MR4717658}.
However, it is not known whether this functor restricts to an equivalence between the original categories before localization.
For arbitrary symmetrizable $\mathsf{A}$, Vera diagrammatically constructed a related functor for categorified quantum groups that categorifies $T_i \colon U_q^-(\mathfrak{g})_{I \setminus \{i\}} \to U_i$, 
where $U_q^-(\mathfrak{g})_{I \setminus \{i\}}$ denotes a subalgebra of ${}_iU$ generated by $f_j \ (j \in I \setminus \{i\})$ \cite{MR4285453}. 
Abram-Egan-Lauda-Rose also proposed a diagrammatic categorification of $T_i$ on the entire $U_q(\mathfrak{g})$ in the symmetric case, as a functor from the categorified quantum group $\mathcal{U}_q(\mathfrak{g})$ to its homotopy category $K^b(\mathcal{U}_q(\mathfrak{g}))$ \cite{MR4732757}. 
However, it remains open whether it yields an autoequivalence of $K^b(\mathcal{U}_q(\mathfrak{g}))$.  

Partly motivated by the works of Vera and Abram-Egan-Lauda-Rose, we construct in this paper an equivalence $\gMod{{}_iR} \to \gMod{R_i}$ for arbitrary symmetrizable generalized Cartan matrix $\mathsf{A}$ from the perspective of higher representation theory of $U_q(\mathfrak{p}_i)$, 
where $\mathfrak{p}_i$ is the standard parabolic subalgebra of $\mathfrak{g}$ generated by $e_i, f_j \ (j \in I)$ and the Cartan subalgebra. 
While this paper was being written, a preprint by Kashiwara-Kim-Oh-Park \cite{kashiwara2025reflectionfunctorsquiverhecke} appeared, which contains similar results obtained independently. 
Their proof relies on techniques involving $R$-matrices and the localization of monoidal categories. 
Although their construction goes through localization, they eventually establish results for the unlocalized categories.
Our approach appears to be different from theirs: in particular, it does not involve any localization procedure to construct reflection functors. 

\section{Strategy}
In order to explain the idea, let us first reexamine the isomorphism $T_i \colon {}_iU \to U_i$. 
Let $V_i(0)$ be a left $U_q(\mathfrak{p}_i)$-module defined by
\[
V_i(0) = U_q(\mathfrak{p}_i) \bigg/ \left(U_q(\mathfrak{p}_i)e_i + U_q(\mathfrak{p}_i)f_i + \sum_{h \in \mathsf{P}^{\lor}} U_q(\mathfrak{p}_i)(q^h - 1) \right),
\]
where $\mathsf{P}^{\lor}$ is the coweight lattice. 
Then, it is easy to verify that the canonical morphism $U_i \to V_i(0)$ is an isomorphism. 
Through this isomorphism, $U_i$ inherits a left $U_q(\mathfrak{p}_i)$-module structure from $V_i(0)$. 

Similarly, let ${}_iV(0)$ be a right $U_q(\mathfrak{p}_i)$-module 
\[
{}_iV(0) = U_q(\mathfrak{p}_i) \bigg/ \left(e_iU_q(\mathfrak{p}_i) + f_iU_q(\mathfrak{p}_i) + \sum_{h \in \mathsf{P}^{\lor}} (q^h - 1)U_q(\mathfrak{p}_i)\right). 
\]
Then, we have an isomorphism ${}_iU \to {}_iV(0)$, and ${}_iU$ inherits a right $U_q(\mathfrak{p}_i)$-module structure. 

To summarize, we have an isomorphism $T_i \colon {}_iU \to U_i$ of vector spaces from a right $U_q(\mathfrak{p}_i)$-module ${}_iU$ to a left $U_q(\mathfrak{p}_i)$-module $U_i$. 
Hence, $U_i$ inherits a right $U_q(\mathfrak{p}_i)$-module structure from ${}_iU$, which differs from its own left $U_q(\mathfrak{p}_i)$-module structure, and $T_i$ is a morphism of right $U_q(\mathfrak{p}_i)$-modules. 
This new module structure on $U_i$ is explicitly described in Proposition \ref{prop:bimodule}.

Now, we reverse the storyline and give an alternative definition of $T_i$. 
We have a right $U_q(\mathfrak{p}_i)$-module structure on ${}_iU \simeq {}_iV(0)$. 
We can define a right $U_q(\mathfrak{p}_i)$-module structure on $U_i$ by the formulas of Proposition \ref{prop:bimodule}. 
Then, $T_i \colon {}_iU \to U_i$ is induced by the fundamental theorem on homomorphisms as the morphism of right $U_q(\mathfrak{p}_i)$-modules that sends $1$ to $1$. 

Similarly, we can define a left $U_q(\mathfrak{p}_i)$-module structure on ${}_iU$ by the formulas of Proposition \ref{prop:bimodule}, 
and a morphism of left $U_q(\mathfrak{p}_i)$-modules $T_i' \colon U_i \to {}_iU$ that sends $1$ to $1$. 
Furthermore, we can prove that $T_i$ is left $U_q(\mathfrak{p}_i)$-linear and that $T'_i$ is right $U_q(\mathfrak{p}_i)$-linear.
Since ${}_iU \simeq {}_iV(0)$ is generated by $1$ as a right $U_q(\mathfrak{p}_i)$-module, it follows that $T'_iT_i = \id$. 
Similarly, $T_iT'_i = \id$. 
Hence, $T_i$ and $T'_i$ are isomorphisms that are inverse to each other. 

The main idea in constructing the equivalence $\gMod{{}_iR} \simeq \gMod{R_i}$ is to categorify this alternative definition of $T_i$. 
First, we introduce categorified parabolic quantum group $\mathcal{U}_q(\mathfrak{p}_i)$, which is described diagrammatically (Definition \ref{def:catquantum}). 
We establish a right $\mathcal{U}_q(\mathfrak{p}_i)$-module structure on $\gMod{{}_iR}$ that categorifies ${}_iV(0)$, 
and a left $\mathcal{U}_q(\mathfrak{p}_i)$-module structure on $\gMod{R_i}$ that categorifies $V_i(0)$. 
They are parabolic generalizations of the categorification of highest weight integrable modules \cite{MR2995184}. 
We remark that the action of the Levi part of $\mathcal{U}_q(\mathfrak{p}_i)$ was already established by Vera \cite{MR4285453}. 

The key constructions are a right $\mathcal{U}_q(\mathfrak{p}_i)$-action on $\gMod{R_i}$ and a left $\mathcal{U}_q(\mathfrak{p}_i)$-action on $\gMod{{}_iR}$ (Theorem \ref{thm:anotheraction}), 
which categorify the right $U_q(\mathfrak{p}_i)$-module structure on $U_i$ and the left $U_q(\mathfrak{p}_i)$-module structure on ${}_iU$ respectively. 
By the formulas on $U_i$ and ${}_iU$, it is easy to determine how generating objects should act. 
The challenging part is to find the correct action of generating morphisms. 

Once these $\mathcal{U}_q(\mathfrak{p}_i)$-module structures on $\gMod{R_i}$ and $\gMod{{}_iR}$ are established, we can obtain functors 
\[
S_i \colon \gMod{{}_iR} \to \gMod{R_i}, S'_i \colon \gMod{R_i} \to \gMod{{}_iR}
\]
by the higher version of fundamental theorem of homomorphisms: 
$S_i$ is the morphism of right $\mathcal{U}_q(\mathfrak{p}_i)$-modules that sends the unit object to the unit object, 
and $S'_i$ is the morphism of  left $\mathcal{U}_q(\mathfrak{p}_i)$-modules that sends the unit object to the unit object. 
Our reflection functors have the following properties. 

\begin{theorem}[{Theorem \ref{thm:reflectionfunctor}, Theorem \ref{thm:braidrel}}]
For each $i \in I$, $S_i$ and $S'_i$ are mutually quasi-inverse monoidal equivalences. 
Furthermore, functors $S_i \ (i \in I)$ satisfy the braid relations as natural isomorphisms. 
\end{theorem}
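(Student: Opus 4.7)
The plan is to categorify, step by step, the short uncategorified argument given in the introduction, in which $T_i$ and $T_i'$ are shown to be mutually inverse bimodule morphisms sending $1$ to $1$. The constructions of $S_i$ and $S_i'$ via universality already supply one half of the structure: by definition $S_i$ is right $\catquantum{\mathfrak{p}_i}$-linear and unit-preserving, and dually for $S_i'$. The technical heart of the proof will be to upgrade each of $S_i, S_i'$ to a $\catquantum{\mathfrak{p}_i}$-bimodule functor, after which the theorem will follow from two applications of universality plus the braid argument below.

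To promote $S_i$ to a bimodule functor, I would argue as follows. For any generating $1$-morphism $E$ of the ``left'' copy of $\catquantum{\mathfrak{p}_i}$, both $S_i \circ (E \otimes -)$ and $(E \otimes -) \circ S_i$ are right $\catquantum{\mathfrak{p}_i}$-linear functors $\gMod{{}_iR} \to \gMod{R_i}$, and evaluation at the unit gives in each case the same object $E \otimes \mathbf{1}$, using Theorem \ref{thm:anotheraction} to make sense of the two left actions and the identity $S_i(\mathbf{1}) = \mathbf{1}$. Right-module universality then yields a canonical natural isomorphism $S_i(E \otimes X) \cong E \otimes S_i(X)$; what remains is to check compatibility with the generating $2$-morphisms of $\catquantum{\mathfrak{p}_i}$, in particular the $i$-colored caps, cups and crossings appearing in the definition of the ``other'' action in Theorem \ref{thm:anotheraction}. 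This is a finite diagrammatic check, and the uncategorified shadow of it is exactly the formula for the bimodule structure in Proposition \ref{prop:bimodule}. The same argument upgrades $S_i'$.

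Once bimodule linearity is established, the quasi-inverse property is formal. The composite $S_i' \circ S_i$ is a right $\catquantum{\mathfrak{p}_i}$-linear endofunctor of $\gMod{{}_iR}$ sending $\mathbf{1}$ to $\mathbf{1}$, and so is $\id$; since $\gMod{{}_iR}$ categorifies the cyclic right module ${}_iV(0)$, right-module universality furnishes a canonical natural isomorphism $S_i' S_i \simeq \id$, and the dual argument on $\gMod{R_i}$ gives $S_i S_i' \simeq \id$. Monoidality of $S_i$ is essentially automatic: a unit-preserving right-linear functor between two module-categorifications of cyclic parabolic modules necessarily intertwines tensor products, since the tensor product of two objects can be recovered from the right action of the second on the first.

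For the braid relations I would work inside the rank-two parabolic $\mathfrak{p}_{ij}$ generated by the Cartan, $e_i, e_j$ and all $f_k$, together with its diagrammatic categorification. Both composites $\cdots S_i S_j S_i$ and $\cdots S_j S_i S_j$ have the same source and target, and I would equip each with the structure of a $\catquantum{\mathfrak{p}_{ij}}$-linear functor sending unit to unit; the braid relation $T_i T_j T_i \cdots = T_j T_i T_j \cdots$ in $U_q(\mathfrak{g})$ guarantees that the induced bimodule structures on the target agree, so a final application of universality produces the desired natural isomorphism of functors. The main obstacle, as above, will be the bimodule upgrade: while the object-level compatibility is forced by universality, matching the two actions on generating $2$-morphisms requires explicit diagrammatic computation, and I expect the crossings involving the $i$-strand to be the most delicate case, since this is precisely where the specific choices made in Theorem \ref{thm:anotheraction} enter in an essential way.
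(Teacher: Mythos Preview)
Your high-level picture matches the paper's strategy, but two of your shortcuts fail.

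First, the universality argument for constructing $\kappa$ breaks down. You claim that for a left-acting generator $E$, both $S_i\circ(E\otimes-)$ and $(E\otimes-)\circ S_i$ are right $\catquantum{\mathfrak{p}_i}$-linear. This is false when $E=F_j$ with $j\neq i$: the left and right $\catquantum{\mathfrak{p}_i}$-actions on $\gMod{R_i}$ do not commute. Concretely, the right action of $E_i$ from Theorem~\ref{thm:anotheraction}(1) is $YE_i=F_iY$, and Lemma~\ref{lem:adjointSES} gives only a short exact sequence
\[
0 \to R(\alpha_j)\circ F_iY \to F_i(R(\alpha_j)\circ Y) \to F_iR(\alpha_j)\circ Y \to 0
\]
with $F_iR(\alpha_j)\neq 0$, so $(F_jY)E_i\not\cong F_j(YE_i)$. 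Thus $(F_j\otimes-)$ is not a morphism of right modules, and you cannot invoke right-universality to produce $\kappa_j^-$. The paper instead constructs $\kappa_j^-$ explicitly as the composite $S_i(M'_j\circ X)\xrightarrow{\theta^{-1}}S_i(M'_j)\circ S_i(X)\xrightarrow{\zeta_j}R(\alpha_j)\circ S_i(X)$, which \emph{requires} the monoidality isomorphism $\theta$ as input.

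Second, and relatedly, monoidality is not ``essentially automatic''. Your claim that the tensor product can be recovered from the right action fails for the same reason: $X\circ(YF_i)$ and $(X\circ Y)F_i$ are related by a short exact sequence, not an isomorphism. The paper proves monoidality \emph{first} (Proposition~\ref{prop:monoidality}) by an explicit inductive construction of $\theta(X,Y)$, verifying compatibility diagrams~(1)--(8) at each step; only then does it construct $\kappa$ and check 2-morphism compatibility (the three Cases for $\dcross{j}{k}$, which are the bulk of the work and which you correctly flag as ``the most delicate''). After Proposition~\ref{prop:linear} is established, the quasi-inverse argument is exactly what you describe. Your braid-relation outline is along the right lines and close to the paper's Section~6, though there too the $\catquantum{\mathfrak{p}_{ij}}$-linearity of the composites must be built explicitly (via fixed isomorphisms $S_{[h,2]}R(\alpha_1)\simeq R(\alpha_{1^*})$ etc.) rather than by abstract universality.
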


Unlike the case at the level of vector spaces, proving that $S'_i$ is a morphism of right $\mathcal{U}_q(\mathfrak{p}_i)$-modules is hard work, 
since we need to verify the compatibility of morphisms, not just of objects. 
Although the argument is straightforward, it involves lengthy computations, which occupy a large part of this paper. 
The same applies to the proofs of monoidality and the braid relations. 

In the subsequent paper, we will prove that our functor $S_i$ essentially coincides with the geometrically constructed equivalence when $\mathsf{A}$ is symmetric (\cite{MR3165425}). 

\section{Other results}
In the course of the proof, we also establish the following results, which are of independent interest: 
\begin{enumerate}
\item The Grothendieck ring $K(\gMod{R}) \otimes_{\mathbb{Z}[q,q^{-1}]} \mathbb{Q}(q)$ of the category of finitely-generated (not finite-dimensional) graded modules over the quiver Hecke algebra, is isomorphic to $U_q^-(\mathfrak{g})$ (Theorem \ref{thm:categorification2}). 
The key is that the quiver Hecke algebra is finitely-generated over its center, which is isomorphic to a polynomial ring. 
\item We establish a parabolic generalization of the categorification of highest weight integrable modules \cite{MR2995184}, 
for any standard parabolic subalgebra $\mathfrak{p}_J$ of $\mathfrak{g}$, where $J$ is a subset of $I$. 
More precisely, we introduce categorified parabolic quantum group $\mathcal{U}_q(\mathfrak{p}_J)$ (Definition \ref{def:catquantum}), 
and prove that it acts on the category $\gMod{R^{J,\Lambda}}$ of modules over a parabolic generalization of cyclotomic quiver Hecke algebras for any $J$-dominant integral weight $\Lambda$ (Section \ref{sec:cyclotomic}).
It categorifies the $U_q(\mathfrak{p}_J)$-module $V_J(\Lambda)$ (Definition \ref{def:integrablemodule}). 
\item For $w,v \in W$ and $i \in I$ satisfying $s_iw > w, s_iv > v$, we prove equivalences between several subcategories of $\gMod{R}$ that categorify the following isomorphisms: 
\begin{itemize}
\item $T_{(s_iw)^{-1}}^{-1}U_q^-(\mathfrak{g}) \cap U_q^-(\mathfrak{g}) \cap T_vU_q^-(\mathfrak{g}) \xrightarrow{T_i} T_{w^{-1}}^{-1} U_q^-(\mathfrak{g}) \cap U_q^-(\mathfrak{g}) \cap T_{s_iv} U_q^-(\mathfrak{g})$. 
\item $T_w(U_q^0(\mathfrak{g})U_q^+(\mathfrak{g})) \cap U_q^-(\mathfrak{g}) \cap T_vU_q^-(\mathfrak{g}) \xrightarrow{T_i} T_{s_iw}(U_q^0(\mathfrak{g})U_q^+(\mathfrak{g})) \cap U_q^-(\mathfrak{g}) \cap T_{s_iv}U_q^-(\mathfrak{g})$. 
\end{itemize}
\item In Section \ref{sec:stratification}, we reformulate the stratifications on $\gMod{R}$ established in \cite{murata2025affinehighestweightstructures} using reflection functors. 
In that paper, standard modules are constructed from a special family of modules called determinantial modules. 
By using the reflection functors introduced in this paper, we can alternatively construct standard modules without using determinantial modules. 
\end{enumerate}

\section{Notations and Conventions} \label{sec:notation}
Throughout this paper, $\mathbf{k}$ is a field of arbitrary characteristic.
Dimension of a $\mathbf{k}$-vector space is denoted by $\dim$, and tensor product over $\mathbf{k}$ is denoted by $\otimes$. 
For a graded $\mathbf{k}$-vector space $V = \bigoplus_{d \in \mathbb{Z}}V_d$, we define a formal series
\[
\qdim V = \sum_{d \in \mathbb{Z}}(\dim V_d)q^d. 
\]
If every homogeneous component of $V$ is finite dimensional and $V_d = 0$ for sufficiently small $d$, 
it gives a Laurentian series: $\qdim V \in \mathbb{Z}((q))$. 

A graded $\mathbf{k}$-linear category $\mathcal{A}$ is a $\mathbf{k}$-linear category endowed with a $\mathbf{k}$-linear autoequivalence $q$ called the grading shift functor. 
For $X, Y \in \mathcal{A}$, a $\mathbb{Z}$-graded $\mathbf{k}$-vector space $\HOM_{\mathcal{A}}(X,Y)$ is defined by 
\[
\HOM_{\mathcal{A}}(X,Y)_d = \Hom_{\mathcal{A}}(q^dX,Y) \ (d \in \mathbb{Z}). 
\]
Homogeneous elements of $\HOM_{\mathcal{A}}(X,Y)$ are called homogeneous morphisms from $X$ to $Y$. 
%For $X \in \mathcal{A}$ and $f(q) = \sum_k c_k q^k \in \mathbb{Z}[q,q^{-1}]$, we abbreviate 
%\[
%f(q) X = \bigoplus_k (q^k X)^{\oplus c_k} \in \mathcal{A}. 
%\]
Let $K_{\oplus}(\mathcal{A})$ denote the split Grothendieck group of $\mathcal{A}$. 
It is a $\mathbb{Z}[q,q^{-1}]$-module by the grading shift functor $q$, 
and we define $K_{\oplus}(\mathcal{A})_{\mathbb{Q}(q)} = K_{\oplus}(\mathcal{A}) \otimes_{\mathbb{Z}[q,q^{-1}]} \mathbb{Q}(q)$. 
When $\mathcal{A}$ is an abelian category, let $K(\mathcal{A})$ denote its Grothendieck group and let $K(\mathcal{A})_{\mathbb{Q}(q)} = K(\mathcal{A}) \otimes_{\mathbb{Z}[q,q^{-1}]} \mathbb{Q}(q)$. 

Let $A$ be a $\mathbb{Z}$-graded $\mathbf{k}$-algebra. 
Let $\gMod{A}$ (resp.~$\gproj{A}, \gmod{A}$) denote the category of finitely-generated graded left $A$-modules (resp. finitely-generated projective graded left $A$-modules, finite-dimensional graded left $A$-modules) whose morphisms are degree-preserving A-module homomorphisms. 
For a graded $A$-module $X$, we define its grading shift $qX$ by $(qX)_d = X_{d-1}$. 
Then, $\gMod{A}, \gproj{A}$ and $\gmod{A}$ are graded categories. 
For $X, Y \in \gMod{A}$, we define a graded $\mathbf{k}$-vector space $\EXT_A(X,Y)$ by 
\[
\EXT_A(X,Y)_d = \Ext_{\gMod{A}}(q^dX,Y) \ (d \in \mathbb{Z}). 
\]

Let $\mathfrak{C}$ be a 2-category.
We define a 2-category $\mathfrak{C}^{\mathrm{op}}$ with the same objects as $\mathfrak{C}$, and the hom-category $\mathfrak{C}^{\mathrm{op}}(a,b) = \mathfrak{C}(b,a)$. \index{$\mathfrak{C}^{\mathrm{op}}$}
We define another 2-category $\mathfrak{C}^{\mathrm{co}}$ with the same objects as $\mathfrak{C}$, and the hom-category $\mathfrak{C}^{\mathrm{co}}(a,b)$ is the opposite category of $\mathfrak{C}(a,b)$.  \index{$\mathfrak{C}^{\mathrm{co}}$}

A graded $\mathbf{k}$-linear 2-category is a 2-category enriched in graded $\mathbf{k}$-linear categories. 

For $n \in \mathbb{Z}_{\geq 1}$, let $\mathfrak{S}_n$ be the symmetric group of degree $n$. 
Let $e_n$ (resp. $w_n$) denote the unit element (resp. the longest element) of $\mathfrak{S}_n$. 
When $n = l+m$ with $l,m \geq 1$, let $\mathfrak{S}_{n}^{l,m} \subset \mathfrak{S}_n$ be the minimal length coset representative for $\mathfrak{S}_n/(\mathfrak{S}_l \times \mathfrak{S}_m)$. 
For $w \in \mathfrak{S}_l$ and $v \in \mathfrak{S}_m$, we define $w \star v$ as the image of $(w,v) \in \mathfrak{S}_l \times \mathfrak{S}_m$ in $\mathfrak{S}_n$ under the canonical embedding $\mathfrak{S}_l \times \mathfrak{S}_m \subset \mathfrak{S}_n$. 

\section{Acknowledgement}
I am deeply grateful to my supervisor, Noriyuki Abe, for his continuous support and invaluable feedback throughout this research. 
I thank Myungho Kim for giving comments on the draft. 
This work was supported by JSPS KAKENHI Grant Number 25KJ1132. 

\mainmatter

\chapter{Quiver Hecke algebras and categorified quantum groups}
\section{Quantum groups} \label{sub:quantumgroups}

We mainly follow the conventions in \cite{MR3758148}. 
Throughout this paper, let $(\mathsf{A}, \mathsf{P}, \Pi, \Pi^{\lor}, (\cdot, \cdot))$ be a fixed root datum, 
where $\mathsf{A} = (a_{i, j})_{i, j \in I}$ is a symmetrizable generalized Cartan matrix, 
$\mathsf{P}$ is a free abelian group called the weight lattice,  
$\Pi = \{ \alpha_i \}_{i\in I}$ is a subset of $\mathsf{P}$,  
$\Pi^{\lor} = \{ h_i \}_{i \in I}$ is a subset of $\mathsf{P}^{\lor} = \Hom_{\mathbb{Z}}(\mathsf{P}, \mathbb{Z})$,  
and $(\cdot, \cdot)$ is a $\mathbb{Q}$-valued symmetric bilinear form on $\mathsf{P}$, satisfying the following conditions:  
\begin{enumerate}
  \item $a_{i, j} = \langle h_i, \alpha_j \rangle$ for $i, j \in I$, 
  \item $(\alpha_i, \alpha_i) \in 2 \mathbb{Z}_{> 0}$ for $i \in I$, 
  \item $\langle h_i, \lambda \rangle = 2 (\alpha_i, \lambda)/(\alpha_i, \alpha_i)$ for $i\in I$ and $\lambda \in \mathsf{P}$, 
  \item $\Pi$ is linearly independent and 
  \item for any $i \in I$, there exists $\Lambda_i \in \mathsf{P}$ such that $\langle h_j, \Lambda_i \rangle = \delta_{i, j}$ for all $j \in I$. 
\end{enumerate}

For each $i \in I$, we call $\alpha_i$ the simple root, $h_i$ the simple coroot, and $\Lambda_i$ the fundamental weight. 
We put $q_i = q^{(\alpha_i, \alpha_i)/2}, [n] = (q^n-q^{-n})/(q-q^{-1}), [n]! = [n] [n-1] \cdots [1], [n]_i = (q_i^n - q_i^{-n})/(q_i-q_i^{-1})$, and $[n]_i! = [n]_i [n-1]_i \cdots [1]_i$. 
Let $W$ be the Weyl group, which is generated by the simple reflections $s_i \ (i \in I)$. 
The root lattice is defined as $\mathsf{Q}=\sum_{i \in I} \mathbb{Z} \alpha_i \subset \mathsf{P}$, the positive root lattice is $\mathsf{Q}_+ = \sum_{i \in I }\mathbb{Z}_{\geq 0}\alpha_i$, and the negative root lattice is $\mathsf{Q}_- = - \mathsf{Q}_+$. 
We define $\height \colon \mathsf{Q}_+ \to \mathbb{Z}_{\geq 0}$ to be a morphism of monoids given by $\height (\alpha_i) = 1 \ (i \in I)$. 

\begin{definition}
Let $J$ be a subset of $I$.  
The parabolic quantum group $U_q(\mathfrak{p}_J)$ is a $\mathbb{Q}(q)$-algebra \index{$U_q(\mathfrak{p}_J)$}
on generators $e_i \ (i \in J), f_i \ (i \in I), q^h \ (h \in \mathsf{P}^{\lor})$, subject to the following relations: 
\begin{align*}
&q^0 = 1, \ q^h q^k = q^{h+k} \ (h,k \in \mathsf{P}^{\lor}),  \\
&q^h e_i q^{-h} = q^{\langle h, \alpha_i \rangle} e_i \ (i \in J, h \in \mathsf{P}^{\lor}), \\ 
&q^h f_i q^{-h} = q^{-\langle h, \alpha_i \rangle} f_i \ (i \in I, h \in \mathsf{P}^{\lor}), \\
&[e_i, f_j] = \delta_{i,j} \frac{t_i - t_i^{-1}}{q_i-q_i^{-1}} \ (i \in J, j \in I), \\
&\sum_{s=0}^{1-a_{i,j}} (-1)^s e_i^{(s)} e_j e_i^{(1-a_{i,j}-s)} = 0 \quad \text{if $i \neq j$} \ (i,j \in J), \\
&\sum_{s=0}^{1-a_{i,j}} (-1)^s f_i^{(s)} f_j f_i^{(1-a_{i,j}-s)} = 0 \quad \text{if $i \neq j$} \ (i,j \in I), 
\end{align*}
where $t_i = q^{\frac{(\alpha_i,\alpha_i)}{2} h_i}, e_i^{(s)} = e_i^s/([s]_i!), f_i^{(s)} = f_i^s/([s]_i!)$. 
\end{definition}

When $J = I$, $U_q(\mathfrak{p}_J)$ is the ordinary quantum group $U_q(\mathfrak{g})$. 
%When $J = \emptyset$, we write $U_q(\mathfrak{p}_{\emptyset}) = U_q(\mathfrak{b}^-)$. 

Let $U_q^+(\mathfrak{p}_J)$ (resp. $U_q^-(\mathfrak{p}_J), U_q^0(\mathfrak{p}_J)$) be the algebra generated by $e_i \ (i \in J)$ (resp. $f_i \ (i \in I)$ or $q^h \ (h \in \mathsf{P}^{\lor})$) with the same defining relations as $U_q(\mathfrak{p}_J)$.
Note that $U_q^-(\mathfrak{p}_J)$ and $U_q^0(\mathfrak{p}_J)$ are independent of $J$. 

\begin{lemma} \label{lem:pJtriangular}
The canonical homomorphism $U_q(\mathfrak{p}_J) \to U_q(\mathfrak{g})$ is injective, and 
the multiplication induces an isomorphism
\[
U_q(\mathfrak{p}_J) \simeq U_q^-(\mathfrak{p}_J) \otimes U_q^0(\mathfrak{p}_J) \otimes U_q^+(\mathfrak{p}_J). 
\]
\end{lemma}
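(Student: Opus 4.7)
The plan is to reduce both assertions to a single core computation: show that the multiplication map
\[
m \colon U_q^-(\mathfrak{p}_J) \otimes U_q^0(\mathfrak{p}_J) \otimes U_q^+(\mathfrak{p}_J) \longrightarrow U_q(\mathfrak{p}_J)
\]
is surjective, and that its composition with the canonical map $\pi \colon U_q(\mathfrak{p}_J) \to U_q(\mathfrak{g})$ is injective. These two together imply everything: $m$ must then be injective (so it is an isomorphism), and $\pi$ must then be injective on $U_q(\mathfrak{p}_J) = \mathrm{im}(m)$.

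First, I would prove surjectivity of $m$ by the standard straightening argument. The defining relations $q^h e_i = q^{\langle h,\alpha_i\rangle} e_i q^h$, $q^h f_i = q^{-\langle h,\alpha_i\rangle} f_i q^h$, and $[e_i,f_j] = \delta_{i,j}(t_i - t_i^{-1})/(q_i - q_i^{-1})$ allow any monomial in the generators of $U_q(\mathfrak{p}_J)$ to be rewritten as a linear combination of monomials in the normal form $f \cdot q^h \cdot e$. Since the $(e,f)$-commutator reduces the joint degree in $e$ and $f$, a routine induction closes the argument.

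For injectivity of $\pi\circ m$, I would factor it through the triangular decomposition of the full quantum group:
\[
U_q^-(\mathfrak{p}_J) \otimes U_q^0(\mathfrak{p}_J) \otimes U_q^+(\mathfrak{p}_J) \longrightarrow U_q^-(\mathfrak{g}) \otimes U_q^0(\mathfrak{g}) \otimes U_q^+(\mathfrak{g}) \xrightarrow{\sim} U_q(\mathfrak{g}),
\]
where the second arrow is Lusztig's classical triangular decomposition of $U_q(\mathfrak{g})$, which I would take as known. It then suffices to check that each tensor factor of the first arrow is injective. The Cartan factor is tautological (both equal $\mathbb{Q}(q)[\mathsf{P}^{\lor}]$), and $U_q^-(\mathfrak{p}_J) \to U_q^-(\mathfrak{g})$ is an isomorphism because the generating symbols $\{f_i : i \in I\}$ and the relations among them (the negative Serre relations) that are inherited from the respective presentations coincide; the same straightening argument shows both are surjective images of the free algebra on $\{f_i\}$ modulo Serre relations.

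The only genuinely nontrivial point is the injectivity of $U_q^+(\mathfrak{p}_J) \to U_q^+(\mathfrak{g})$. I expect this to be the main obstacle, and I would dispatch it by invoking the standard fact that the subalgebra of $U_q^+(\mathfrak{g})$ generated by $\{e_i : i \in J\}$ has a presentation by just the positive Serre relations indexed by $J$, so it is canonically isomorphic to the positive part of the Levi quantum group. This is a well-known consequence of Lusztig's PBW basis theory (or, equivalently, of the existence of the Kashiwara crystal basis), applied to the sub-root-datum indexed by $J$. Once this black box is granted, all three tensor factors of the vertical arrow are injective, so is their tensor product, and the proposition follows.
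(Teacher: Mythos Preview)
Your proposal is correct and follows essentially the same route as the paper: establish the surjectivity $U_q(\mathfrak{p}_J) = U_q^-(\mathfrak{p}_J)\,U_q^0(\mathfrak{p}_J)\,U_q^+(\mathfrak{p}_J)$ by straightening, then deduce everything from the triangular decomposition of $U_q(\mathfrak{g})$. The paper compresses the second step into a single sentence, whereas you unpack it and explicitly flag the one genuinely nontrivial ingredient (that the subalgebra of $U_q^+(\mathfrak{g})$ generated by $\{e_i : i \in J\}$ is presented by the $J$-indexed Serre relations alone); the paper leaves this implicit.
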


\begin{proof}
By the defining relations, it is easy to see that the homomorphism $U_q^-(\mathfrak{p}_J) \otimes U_q^0(\mathfrak{p}_J) \otimes U_q^+(\mathfrak{p}_J) \to U_q(\mathfrak{p}_J)$ induced by the multiplication is surjective. 
By the definition, we have $U_q^0(\mathfrak{p}_J) \simeq U_q^0(\mathfrak{g})$ and $U_q^-(\mathfrak{p}_J) \simeq U_q^-(\mathfrak{g})$. 
It is well-known that $U_q^+(\mathfrak{p}_J) \to U_q^+(\mathfrak{g})$ is injective. 
In fact, the nondegenerate bilinear form on $U_q^+(\mathfrak{g})$ of \cite[Chapter 1]{MR2759715} is pulled back to the nondegenerate bilinear form on $U_q(\mathfrak{p}_J)$.  
Hence, the assertion follows from the triangular decomposition of $U_q(\mathfrak{g})$. 
\end{proof}

%Let $\varphi$ be $\mathbb{Q}(q)$-algebra antiautomorphisms of $\quantum{}$ defined by
%\[
% \varphi(e_i) = f_i, \ \varphi(f_i) = e_i, \ \varphi(q^h) = q^h, \\
%\]

Let $\sigma$ be a $\mathbb{Q}(q)$-algebra antiautomorphism of $U_q(\mathfrak{p}_J)$ defined by \index{$\sigma$ on $U_q(\mathfrak{p}_J)$}
\[
\sigma(e_i) = e_i, \ \sigma(f_i) = f_i, \ \sigma(q^h) = q^{-h}.
\] 

Let $\overline{(\cdot)}$ be a $\mathbb{Q}$-algebra automorphism of $U_q(\mathfrak{p}_J)$ defined by \index{$\overline{(\cdot)}$ on $U_q(\mathfrak{p}_J)$}
\[
\overline{e_i} = e_i,\ \overline{f_i} = f_i,\ \overline{q^h} = q^{-h}, \overline{q} = q^{-1}. 
\]

\begin{definition} \label{def:integrablemodule}
Let $J \subset I$. 
Let $\Lambda \in P$ be a $J$-dominant weight, that is, it satisfies $\langle h_j, \Lambda \rangle \geq 0$ for all $j \in J$. 
We define a left $U_q(\mathfrak{p}_J)$-module $V_J(\Lambda)$ as 
\[
U_q(\mathfrak{p}_J)\bigg/ \left(\sum_{j\in J} \left(U_q(\mathfrak{p}_J)e_j + U_q(\mathfrak{p}_J)f_j^{\langle h_j, \Lambda \rangle + 1}\right) + \sum_{h \in \mathsf{P}^{\lor}} U_q(\mathfrak{p}_J) (q^h - q^{\langle h, \Lambda \rangle}) \right). \index{$V_J(\Lambda)$}
\]
Let $v_{\Lambda}^J \in V_J(\Lambda)$ denote the image of $1 \in U_q(\mathfrak{p}_J)$. 
We write $v_{\Lambda}$ instead of $v_{\Lambda}^J$ when there is no risk of ambiguity. 

We define a right $U_q(\mathfrak{p}_J)$-module ${}_JV(-\Lambda)$ as 
\[
U_q(\mathfrak{p}_J)\bigg/ \left(\sum_{j\in J} \left(e_j U_q(\mathfrak{p}_J) + f_j^{\langle h_j, \Lambda \rangle +1}U_q(\mathfrak{p}_J)\right) + \sum_{h \in \mathsf{P}^{\lor}}  (q^h - q^{-\langle h, \Lambda \rangle})U_q(\mathfrak{p}_J) \right). \index{${}_JV(-\Lambda)$}
\] 
Let $v_{-\Lambda} \in {}_JV(-\Lambda)$ denote the image of $1 \in U_q(\mathfrak{p}_J)$. 
\end{definition} 

When $J=I$, $V_J(\Lambda) = V_I(\Lambda)$ is isomorphic to the integrable highest weight module of highest weight $\Lambda$. 
If $V_J(\Lambda)$ is regarded as a right $U_q(\mathfrak{p}_J)$-module using the anti-automorphism $\sigma$, it coincides with ${}_JV(-\Lambda)$. 

The automorphism $\overline{(\cdot)}$ of $U_q(\mathfrak{p}_J)$ induces automorphisms of $V_J(\Lambda)$ and ${}_JV(-\Lambda)$ by the definition, 
which are also denoted by $\overline{(\cdot)}$. 

\section{$q$-Boson algebras}

Let $J \subset I$ and let $\Lambda \in \mathsf{P}$ be a $J$-dominant weight. 
In this section, we introduce parabolic q-boson algebra, a hybrid of $U_q(\mathfrak{g})$ and the $q$-Boson algebra. 
and explain that $V_J(\Lambda)$ is a simple module over it. 

\begin{definition}[{\cite[3.1]{kwon2025infinitelevelfockspacescrystal}}] \label{def:qboson}
Let $J \subset I$. 
We define $B_q^J(\mathfrak{g})$ to be a $\mathbb{Q}(q)$-algebra with generators $e_i \ (i \in I), f_i \ (i \in I), q^h \ (h \in \mathsf{P}^{\lor})$ subject to the following relations: \index{$B_q^J(\mathfrak{g})$}
\begin{align*}
&q^0 = 1, \ q^h q^k = q^{h+k} \ (h,k \in \mathsf{P}^{\lor}),  \\
&q^h e_i q^{-h} = q^{\langle h, \alpha_i \rangle} e_i \ (i \in I, h \in \mathsf{P}^{\lor}), \\
&q^h f_i q^{-h} = q^{-\langle h, \alpha_i \rangle} f_i \ (i \in I, h \in \mathsf{P}^{\lor}), \\
&[e_i, f_j] = \delta_{i,j} \frac{t_i - t_i^{-1}}{q_i-q_i^{-1}} \ (i \in J, j \in I), \\
&e_if_j = q^{-(\alpha_i,\alpha_j)}f_je_i + \delta_{i,j} \ (i \in I \setminus J, j \in I), \\
&f_{i,j} = e_{i,j} = 0 \ (i,j \in I, i \neq j), \\
\end{align*}
where 
\begin{align*}
f_{i,j} &= \sum_{s=0}^{1-a_{i,j}} (-1)^s f_i^{(s)} f_j f_i^{(1-a_{i,j}-s)}, \\
e_{i,j} &= \begin{cases} 
\sum_{s=0}^{1-a_{i,j}} (-1)^s e_i^{(1-a_{i,j}-s)} e_j e_i^{(s)} & \text{if $i,j \in J$ or $i,j \in I \setminus J$}, \\
\sum_{s=0}^{1-a_{i,j}} (-1)^s q_i^{sa_{i,j}} e_i^{(1-a_{i,j}-s)} e_j e_i^{(s)} & \text{if $i \in I \setminus J, \ j \in J$}, \\
\sum_{s=0}^{1-a_{i,j}} (-1)^s q_i^{-sa_{i,j}}e_i^{(1-a_{i,j}-s)} e_j e_i^{(s)} & \text{if $i \in J, \ j \in I \setminus J$.} \\
\end{cases}
\end{align*}
Let $B_q^J(\mathfrak{g})^-$ (resp. $B_q^J(\mathfrak{g})^0, B_q^J(\mathfrak{g})^+$) be the $\mathbb{Q}(q)$-algebra generated by $f_i \ (i \in I)$ (resp. $q^h \ (h \in \mathsf{P}^{\lor})$ or $e_i \ (i \in I)$) with the same relations as $B_q^J(\mathfrak{g})$.  
\end{definition}

We have a canonical algebra homomorphism $U_q(\mathfrak{p}_J) \to B_q^J(\mathfrak{g})$. 

\begin{remark}
Our $B_q^J(\mathfrak{g})$ coincides with $U_q(\mathfrak{g}, \mathfrak{p}_J)$ of \cite{kwon2025infinitelevelfockspacescrystal}, except that we extended the Cartan part.

Note that $B_q^I(\mathfrak{g}) = U_q(\mathfrak{g})$. 
When $J = \emptyset$, let $B_q(\mathfrak{g})$ be the subalgebra of $B_q^{\emptyset}(\mathfrak{g})$ generated by $e_i, f_i \ (i \in I)$. 
It is the $q$-Boson algebra defined in \cite[Section 3.3]{MR1159265}, 
and $B_q^{\emptyset}(\mathfrak{g})$ is a smash product of $\mathbb{Q}(q)[\mathsf{P}^{\lor}]$ and $B_q(\mathfrak{g})$.  

The algebra $B_q^J(\mathfrak{g})$ is isomorphic to the generalization of $q$-oscillator algebra $U_q^{I \setminus J, \emptyset}(\mathfrak{g})$ introduced in \cite[Section 2.1]{MR4609778}, via the correspondence 
\[
B_q^J(\mathfrak{g}) \to U_q^{I \setminus J, \emptyset} (\mathfrak{g}), \ e_i \mapsto \begin{cases}
e_i & \text{if $i \in J$}, \\
-(q_i - q_i^{-1}) t_ie_i & \text{if $i \in I \setminus J$}, 
\end{cases} \ 
f_i \mapsto f_i, \ q^h \mapsto q^h. 
\] 
(Although only the case of finite-dimensional $\mathfrak{g}$ is considered in \cite{MR4609778}, the definition applies verbatim to arbitrary symmetrizable Kac-Moody algebras.)
Furthermore, $B_q^J(\mathfrak{g})$ is closely related to the degenerate quantized universal enveloping algebra of \cite[Definition 2.7]{MR3376147}, as discussed in \cite[Remark 5.20]{hoshino2025semisimplemodulecategoriesfusion}. 
\end{remark}

\begin{lemma}[{\cite[Lemma 3.1]{kwon2025infinitelevelfockspacescrystal}}] \label{lem:triangular}
\begin{comment}
(1) There exists a $\mathbb{Q}(q)$-algebra anti-involution $\varphi$ of $B_q^J(\mathfrak{g})$ given by \index{$\varphi$ on $B_q^J(\mathfrak{g})$}
\begin{align*}
\varphi(f_i) = \begin{cases}
\frac{1}{1-q_i^2} e_i & \text{if $i \in I \setminus J$}, \\
q_ie_it_i & \text{if $i \in J$}, 
\end{cases} \ \varphi(e_i) = \begin{cases}
(1-q_i^2) f_i & \text{if $i \in I \setminus J$}, \\
q_if_it_i^{-1} & \text{if $i \in J$}, 
\end{cases},\ \varphi(q^h) = q^h. 
\end{align*}
\end{comment}

We have canonical isomorphisms 
\[
B_q^J(\mathfrak{g})^- \simeq U_q^-(\mathfrak{g}),\ B_q^J(\mathfrak{g})^0 \simeq U_q^0(\mathfrak{g}). 
\]
Furthermore, the multiplication induces an isomorphism
\[
B_q^J(\mathfrak{g}) \simeq B_q^J(\mathfrak{g})^- \otimes B_q^J(\mathfrak{g})^0 \otimes B_q^J(\mathfrak{g})^+.  
\]
\end{lemma}

\begin{lemma} \label{lem:presentation}
The canonical homomorphisms 
\begin{align*}
&U_q^-(\mathfrak{g}) \bigg/ \sum_{j \in J} U_q^-(\mathfrak{g})f_j^{\langle h_j,\Lambda \rangle + 1} \to V_J(\Lambda) \\
&\to B_q^J(\mathfrak{g})\bigg/ \left(\sum_{i \in I}B_q^J(\mathfrak{g})e_i + \sum_{j \in J} B_q^J(\mathfrak{g})f_j^{\langle h_j,\Lambda \rangle +1} + \sum_{h \in \mathsf{P}^{\lor}}B_q^J(\mathfrak{g})(q^h-q^{\langle h,\Lambda \rangle}) \right). 
\end{align*}
are both isomorphisms. 
Hence, the left $U_q(\mathfrak{p}_J)$-module structure on $V_J(\Lambda)$ uniquely extends to a left $B_q^J(\mathfrak{g})$-module structure. 
\end{lemma}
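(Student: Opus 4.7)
The plan is to identify both target quotients with $M_1 := U_q^-(\mathfrak{g})/\sum_{j \in J} U_q^-(\mathfrak{g}) f_j^{\langle h_j, \Lambda\rangle+1}$ by exploiting the triangular decompositions of $U_q(\mathfrak{p}_J)$ and $B_q^J(\mathfrak{g})$. Once these identifications are in place, the composite $M_1 \to V_J(\Lambda) \to M_3$ will match the identity on $M_1$, so both arrows must be isomorphisms.

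First I would introduce the ``Verma-like'' $U_q(\mathfrak{p}_J)$-module
\[
N := U_q(\mathfrak{p}_J)\bigg/\Bigl(\sum_{j \in J} U_q(\mathfrak{p}_J) e_j + \sum_{h \in \mathsf{P}^{\lor}} U_q(\mathfrak{p}_J)(q^h - q^{\langle h, \Lambda\rangle})\Bigr),
\]
and use the triangular decomposition $U_q(\mathfrak{p}_J) \simeq U_q^-(\mathfrak{p}_J) \otimes U_q^0(\mathfrak{p}_J) \otimes U_q^+(\mathfrak{p}_J)$, together with the observation $U_q^-(\mathfrak{p}_J) = U_q^-(\mathfrak{g})$, to see that $U_q^-(\mathfrak{g}) \to N$, $x \mapsto xv$, is bijective, where $v$ denotes the image of $1$. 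Since $V_J(\Lambda) = N/\sum_{j \in J} U_q(\mathfrak{p}_J) f_j^{N_j+1} v$ with $N_j := \langle h_j, \Lambda\rangle$, it suffices to prove the quantum highest-weight identity $U_q(\mathfrak{p}_J) f_j^{N_j+1} v = U_q^-(\mathfrak{g}) f_j^{N_j+1} v$ in $N$. Commuting the $U_q^0(\mathfrak{p}_J)$ and $U_q^+(\mathfrak{p}_J)$ factors past $f_j^{N_j+1}$ and using $e_k v = 0$ ($k \in J$), $t_j v = q_j^{N_j} v$, this boils down to the standard rank-one calculation $e_j f_j^{N_j+1} v = [N_j+1]_j [0]_j f_j^{N_j} v = 0$ together with $[e_k, f_j] = 0$ for $k \in J \setminus \{j\}$.

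For the second arrow I would repeat the argument with $B_q^J(\mathfrak{g})$ in place of $U_q(\mathfrak{p}_J)$, using the triangular decomposition $B_q^J(\mathfrak{g}) \simeq B_q^J(\mathfrak{g})^- \otimes B_q^J(\mathfrak{g})^0 \otimes B_q^J(\mathfrak{g})^+$ from the preceding lemma. The analogous Verma-like module $\tilde N$ is well-defined because $B_q^J(\mathfrak{g})^0 B_q^J(\mathfrak{g})^+$ admits the character $q^h \mapsto q^{\langle h, \Lambda\rangle}$, $e_i \mapsto 0$ (the Serre-type expressions $e_{i,j}$ vanish under it trivially, as they are polynomials in the $e_i$). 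Writing $\tilde v$ for its generator, the task is $B_q^J(\mathfrak{g}) f_j^{N_j+1} \tilde v = U_q^-(\mathfrak{g}) f_j^{N_j+1} \tilde v$ in $\tilde N$, which reduces via the triangular decomposition to $e_i f_j^{N_j+1} \tilde v = 0$ for \emph{every} $i \in I$. For $i \in J$ the computation above carries over verbatim; for $i \in I \setminus J$ the $q$-Boson relation $e_i f_j = q^{-(\alpha_i, \alpha_j)} f_j e_i + \delta_{i,j}$ applies with $\delta_{i,j} = 0$ (since $j \in J$), giving
\[
e_i f_j^{N_j+1} \tilde v = q^{-(N_j+1)(\alpha_i, \alpha_j)} f_j^{N_j+1} e_i \tilde v = 0.
\]

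The only subtle point is this last vanishing for $i \in I \setminus J$: the modified Serre-type relations in Definition \ref{def:qboson}, specifically the $q$-power twists distinguishing the cases $i \in J$ versus $i \in I \setminus J$, are what ensure that the character on $B_q^J(\mathfrak{g})^0 B_q^J(\mathfrak{g})^+$ exists in the first place; without them the construction of $\tilde N$ and hence the $B_q^J(\mathfrak{g})$-action on $V_J(\Lambda)$ would collapse. Everything else is a routine Verma-module-type bookkeeping once the triangular decomposition and the commutation identity $e_i f_j^{N_j+1} \tilde v = 0$ are in hand.
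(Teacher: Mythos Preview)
Your proposal is correct and follows essentially the same route as the paper: introduce the Verma-type module (the paper calls it $M_J(\Lambda)$), identify it with $U_q^-(\mathfrak{g})$ via the triangular decomposition, and then check that $f_j^{N_j+1}v$ is singular so that the $U_q(\mathfrak{p}_J)$-submodule it generates coincides with the $U_q^-(\mathfrak{g})$-submodule; the $B_q^J(\mathfrak{g})$ case is handled the same way. The paper is terser, citing Lusztig for the singularity and saying ``similar'' for the second map, whereas you spell out the rank-one computation and the $q$-boson commutation for $i\in I\setminus J$ explicitly. One small correction to your closing remark: the one-dimensional character on $B_q^J(\mathfrak{g})^0 B_q^J(\mathfrak{g})^+$ sending all $e_i\mapsto 0$ exists regardless of the twisted Serre relations (any polynomial in the $e_i$ without constant term vanishes under it); those twists are rather what make the anti-involution $\varphi$ well-defined and hence underpin the triangular decomposition you are invoking.
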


\begin{proof}
Regarding the first homomorphism, observe that the triangular decomposition implies $U_q^-(\mathfrak{g}) \simeq M_J(\Lambda)$, where 
\[
M_J(\Lambda) = U_q(\mathfrak{p}_J) \bigg/ \left( \sum_{j \in J} U_q(\mathfrak{p}_J)e_j + \sum_{h \in \mathsf{P}^{\lor}} U_q(\mathfrak{p}_J) (q^h - q^{\langle h,\Lambda \rangle}) \right). 
\]
Hence, it suffices to prove that for each $j \in J$ the image of $U_q^-(\mathfrak{p}_J)f_j^{\langle h_j, \Lambda \rangle + 1}$ in $M_J(\Lambda)$ is a $U_q(\mathfrak{p}_J)$-submodule. 
By the triangular decomposition of $U_q(\mathfrak{p}_J)$ (Lemma \ref{lem:pJtriangular}), it suffices to prove that $e_i f_j^{\langle h_j,\Lambda \rangle + 1}v_{\Lambda} = 0 \ (i \in I)$ in $M_J(\Lambda)$, which is well-known: see \cite[3.5.6]{MR2759715} for instance. 
The second isomorphism is proved in a similar way using the triangular decomposition of $B_q^J(\mathfrak{g})$ (Lemma \ref{lem:triangular} (2)). 

Hence, the left $U_q(\mathfrak{p}_J)$-module structure on $V_J(\Lambda)$ extends to a left $B_q^J(\mathfrak{g})$-module structure. 
Since $V_J(\Lambda) = U_q^-(\mathfrak{g})v_{\Lambda}$, the commutation relation between $e_i \ (i \in I \setminus J)$ and $f_i \ (i \in I)$ of Definition \ref{def:qboson} imply that such an extension is unique. 
\end{proof}

\begin{theorem}[{\cite[Theorem 3.13, Proposition 3.18]{kwon2025infinitelevelfockspacescrystal}}] \label{thm:bosonsimple}
$V_J(\Lambda)$ is simple as a left $B_q^J(\mathfrak{g})$-module. 
\begin{comment}
and it admits a nondegenerate symmetric bilinear form $(,)$ such that 
\[
(v_{\Lambda},v_{\Lambda}) = 1, \ (ux, y) = (x,\varphi(u)y) \ (x,y \in V_J(\Lambda), u \in B_q^J(\mathfrak{g})). 
\]
(2) The right $U_q(\mathfrak{p}_J)$-module structure on $V_J^{\mathrm{r}}(\Lambda)$ extends to a right $B_q^j(\mathfrak{g})$-module structure by letting $e_i \in B_q^J(\mathfrak{g})$ acts by $\overline{r_i}$ for each $i \in I \setminus J$.
Furthermore, $V_J^{\mathrm{r}}(-\Lambda)$ is simple as a right $B_q^J(\mathfrak{g})$-module, and it admits a nondegenerate symmetric bilinear form $(,)$ such that 
\[
(v_{-\Lambda},v_{-\Lambda}) = 1, (xf_i, y) = (x,ye_i) \ (x,y \in V_J^{\mathrm{r}}(\Lambda), i \in I). 
\]
\end{comment}
\end{theorem}

\begin{definition}\label{def:qboson'}
Let $J \subset I$. 
We define ${B'}_q^J(\mathfrak{g})$ to be a $\mathbb{Q}(q)$-algebra with generators $e_i \ (i \in I), f_i \ (i \in I), q^h \ (h \in \mathsf{P}^{\lor})$ subject to the following relations: \index{${B'}_q^J(\mathfrak{g})$}
\begin{align*}
&q^0 = 1,\ q^h q^k = q^{h+k} \ (h,k \in \mathsf{P}^{\lor}), \\
&q^h e_i q^{-h} = q^{\langle h, \alpha_i \rangle} e_i, (h \in \mathsf{P}^{\lor}, i \in I) \\
&q^h f_i q^{-h} = q^{-\langle h, \alpha_i \rangle} f_i, \ (h \in \mathsf{P}^{\lor}, i\in I) \\
&[e_i, f_j] = \delta_{i,j} \frac{t_i - t_i^{-1}}{q_i-q_i^{-1}} \ (i\in J, j \in I), \\
&f_je_i = q^{-(\alpha_i,\alpha_j)}e_if_j + \delta_{i,j} \ (i \in I \setminus J, j \in I), \\
&e_{i,j} = f_{i,j} = 0 \ (i,j \in I, i \neq j). 
\end{align*}
\end{definition}

Note that we have an $\mathbb{Q}(q)$-algebra anti-automorphism $\sigma \colon B_q^J(\mathfrak{g}) \to {B'}_q^J(\mathfrak{g})$ given by
\[
\sigma(e_i) = e_i,\ \sigma(f_i )= f_i,\ \sigma(q^h) = q^{-h}. 
\] 
Also note that we have a canonical homomorphism $U_q(\mathfrak{p}_J) \to {B'}_q^J(\mathfrak{g})$. 

\begin{theorem} \label{thm:bosonsimple'}
The right $U_q(\mathfrak{p}_J)$-module structure on ${}_JV(-\Lambda)$ uniquely extends to a right ${B'}_q^J(\mathfrak{g})$-module structure. 
Furthermore, ${}_JV(\Lambda)$ is simple as a right ${B'}_q^J(\mathfrak{g})$-module. 
\begin{comment}
and it admits a nondegenerate symmetric bilinear form $(,)$ such that 
\begin{align*}
&(v_{-\Lambda},v_{-\Lambda}) = 1, \\ 
&(xf_i, y) = \frac{1}{1-q_i^2}(x,ye_i) \ (x,y \in {}_JV(-\Lambda), i \in I \setminus J), \\
&(xf_i,y) = q_i(x,yt_i^{-1}e_i) \ (x, y \in {}_JV(-\Lambda), i \in J). 
\end{align*}
\end{comment}
\end{theorem}

\begin{proof}
It follows from Theorem \ref{thm:bosonsimple} by applying the anti-isomorphism 
\[
\sigma \colon B_q^J(\mathfrak{g}) \to {B'}_q^J(\mathfrak{g}).
\] 
\end{proof}

\section{Braid group action} \label{sec:braidgroupaction}

\begin{definition}
Let $i \in I$. 
We define $T_i$ to be the $\mathbb{Q}(q)$-algebra automorphism of $U_q(\mathfrak{g})$ given by  \index{$T_i$}
\begin{align*}
  T_i(q^h) = q^{s_i h},\ T_i (e_i) = - t_i^{-1}f_i,\ T_i(f_i) = - e_i t_i, \\
  T_i(e_j) = \sum_{r+s = -a_{i,j}}(-1)^r q_i^{-r} e_i^{(r)}e_je_i^{(s)} \ (j \neq i), \\
  T_i(f_j) = \sum_{r+s = -a_{i,j}}(-1)^r q_i^{r}f_i^{(s)}f_jf_i^{(r)} \ (j \neq i).  
\end{align*}
\end{definition}

\begin{remark} \label{rem:substitution}
Our $T_i$ above coincides with $T''_{i,1}$ of \cite[37.1.3]{MR2759715} by substituting 
\[
q \mapsto v^{-1}, e_i \mapsto F_i, f_i \mapsto E_i, q^h \mapsto K_h. 
\]
\end{remark}

The inverse $T_i^{-1}$ coincides with $\sigma T_i \sigma$ \cite[37.2.4]{MR2759715}: explicitly
\begin{align*}
  T_i^{-1}(q^h) = q^{s_i h}, T_i^{-1} (e_i) = - f_it_i, T_i^{-1}(f_i) = - t_i^{-1}e_i, \\
  T_i^{-1}(e_j) = \sum_{r+s = -a_{i,j}}(-1)^r q_i^{-r} e_i^{(s)}e_je_i^{(r)} \ (j \neq i), \\
  T_i^{-1}(f_j) = \sum_{r+s = -a_{i,j}}(-1)^r q_i^{r}f_i^{(r)}f_jf_i^{(s)} \ (j \neq i).  
\end{align*}
The automorphisms $\{T_i\}_{i \in I}$ satisfy the braid relations \cite[Theorem 39.4.3]{MR2759715}. 
For each $w \in W$, we define $T_w$ to be the automorphism given by 
\[
T_w = T_{i_1} \cdots T_{i_l}, 
\]
where $(i_1, \ldots, i_l)$ is a reduced word of $w$. 
$T_w$ is independent of the choice of the reduced word. 

In the rest of this section, we fix $i \in I$. 

\begin{definition}
We define two subalgebras of $U_q^-(\mathfrak{g})$
\[
U_i = U_q^-(\mathfrak{g}) \cap T_iU_q^-(\mathfrak{g}),\ {}_iU = U_q^-(\mathfrak{g}) \cap T_i^{-1}U_q^-(\mathfrak{g}).  
\] \index{$U_i, {}_iU$}
\end{definition}

Note that our $U_i$ (resp. ${}_iU$) coincides with ${}^{\sigma}\mathbf{f}[i]$ (resp. $\mathbf{f}[i]$) of \cite[38.1]{MR2759715}, by the substitution of Remark \ref{rem:substitution}. 
By the definition, the automorphism $T_i$ induces an isomorphism ${}_iU \to U_i$. 

By \cite[Proposition 3.1.6]{MR2759715}, for any $u \in U_q^-(\mathfrak{g})$, there uniquely exist elements $r_i(u), {}_ir(u) \in U_q^-(\mathfrak{g})$ such that \index{$r_i, {}_ir$}
\[
e_iu - ue_i = \frac{r_i(u)t_i - t_i^{-1}{}_ir(u)}{q_i - q_i^{-1}}. 
\]
They yield $\mathbb{Q}(q)$-linear endomorphisms of $U_q^-(\mathfrak{g})$, $r_i$ and ${}_ir$. 

\begin{lemma}[{\cite[18.1.6]{MR2759715}}] \label{lem:boson}
We have $U_i = \Ker r_i,\ {}_iU = \Ker {}_ir$. 
\end{lemma}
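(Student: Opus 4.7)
The plan is to prove ${}_iU = \Ker {}_ir$ directly; the equality $U_i = \Ker r_i$ then follows by applying the anti-involution $\sigma$, which satisfies $\sigma T_i \sigma = T_i^{-1}$, preserves $U_q^-(\mathfrak{g})$, and intertwines $r_i$ with ${}_ir$.

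For the inclusion ${}_iU \subseteq \Ker {}_ir$, I would take $u \in {}_iU$ and write $u = T_i^{-1}(u')$ with $u' \in U_q^-(\mathfrak{g})$. Apply the automorphism $T_i$ to both sides of the defining identity
\[
(q_i - q_i^{-1})(e_i u - u e_i) = r_i(u)\, t_i - t_i^{-1}\, {}_ir(u).
\]
Using $T_i(e_i) = -t_i^{-1} f_i$ and $T_i(t_i^{\pm 1}) = t_i^{\mp 1}$, the left-hand side becomes $(q_i - q_i^{-1})(-t_i^{-1} f_i u' + u' t_i^{-1} f_i)$; after commuting the factor $t_i^{-1}$ past the weight-homogeneous element $u'$, the expression lies entirely in $U_q^-(\mathfrak{g}) \cdot t_i^{-1}$. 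The right-hand side becomes $T_i(r_i(u))\, t_i^{-1} - t_i\, T_i({}_ir(u))$. The triangular decomposition of $U_q(\mathfrak{g})$ separates the $t_i$- and $t_i^{-1}$-components uniquely, so the vanishing of the $t_i$-component on the left forces $T_i({}_ir(u)) = 0$, and hence ${}_ir(u) = 0$ since $T_i$ is an automorphism.

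For the reverse inclusion $\Ker {}_ir \subseteq {}_iU$, the plan is a dimension comparison in each $\mathsf{Q}_-$-weight space. Iterating the skew-Leibniz identity ${}_ir(f_i y) = y + q_i^{-2} f_i {}_ir(y)$ for $y \in U_q^-(\mathfrak{g})$ yields the PBW-type direct-sum decomposition
\[
U_q^-(\mathfrak{g}) = \bigoplus_{n \geq 0} f_i^n \cdot \Ker {}_ir,
\]
which determines $\dim (\Ker {}_ir)_{-\gamma}$ inductively for each $\gamma \in \mathsf{Q}_+$. Lusztig's PBW theory for $T_i$ supplies a parallel decomposition exhibiting ${}_iU$ as spanned by monomials in quantum root vectors indexed by the positive roots $\beta$ with $s_i \beta$ positive, and yields the same weight-space dimensions. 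Combined with the containment established above, equality ${}_iU = \Ker {}_ir$ then follows one weight space at a time.

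The main obstacle is the PBW description of ${}_iU$ and the matching dimension count in the general symmetrizable Kac-Moody setting: in finite type it is classical, but in general one must argue with care, for instance by reducing to the rank-two subsystem generated by $\{i,j\}$ for each $j \ne i$ where the braid group element $s_i$ acts in a controllable way, and then assembling the local information using the already-established triangular decomposition. A complete argument is given in \cite[18.1.6]{MR2759715}.
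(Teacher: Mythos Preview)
The paper does not prove this lemma; it is stated with a bare citation to Lusztig, so there is nothing in the paper's text to compare your argument against.

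Your sketch for the inclusion ${}_iU \subseteq \Ker {}_ir$ has a genuine gap. After applying $T_i$ you correctly arrive at
\[
(\text{element of } U_q^-(\mathfrak{g}))\cdot t_i^{-1} \;=\; T_i(r_i(u))\,t_i^{-1} \;-\; t_i\,T_i({}_ir(u)),
\]
and then assert that the triangular decomposition ``separates the $t_i$- and $t_i^{-1}$-components'', forcing $T_i({}_ir(u))=0$. This step implicitly assumes that $T_i(r_i(u))$ and $T_i({}_ir(u))$ lie in $U_q^-(\mathfrak{g})$, so that multiplying by $t_i^{\pm 1}$ places them in the distinct subspaces $U_q^-\cdot t_i^{-1}$ and $U_q^-\cdot t_i$. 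But nothing in the hypothesis $u\in{}_iU$ guarantees this: $r_i(u)$ and ${}_ir(u)$ are elements of $U_q^-(\mathfrak{g})$, yet their images under $T_i$ are general elements of $U_q(\mathfrak{g})$, each carrying its own Cartan factors in the triangular form. Those internal Cartan factors combine with the explicit $t_i^{\pm 1}$, so the two terms on the right can overlap and partially cancel in every graded piece of $U_q^-\otimes U_q^0\otimes U_q^+$. No separation of the kind you invoke is available without further input.

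Your dimension-count strategy for the reverse direction is sound: the decomposition $U_q^-(\mathfrak{g})=\bigoplus_{n\ge 0} f_i^n\cdot \Ker{}_ir$ follows from the twisted Leibniz rule and local nilpotence of ${}_ir$, and matching it against $U_q^-(\mathfrak{g})\simeq \langle f_i\rangle \otimes {}_iU$ (Lemma~\ref{lem:PBW}) gives equal weight-space dimensions. So once either inclusion is established the lemma follows. A cleaner route to the missing inclusion is to observe that $\Ker{}_ir$ is a subalgebra (immediate from the Leibniz rule) and to verify ${}_ir$ vanishes on explicit elements known to generate ${}_iU$; this is closer to how Lusztig organizes the argument you cite at the end.
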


\begin{lemma}[{\cite[Section 38]{MR2759715}}] \label{lem:PBW}
The multiplication induces the following isomorphisms:
\[
U_i \otimes_{\mathbb{Q}(q)} \langle f_i \rangle \to U_q^-(\mathfrak{g}), \langle f_i \rangle \otimes_{\mathbb{Q}(q)} {}_iU \to U_q^-(\mathfrak{g}), 
\]
where $\langle f_i \rangle$ is the $\mathbb{Q}(q)$-subalgebra of $U_q(\mathfrak{g})$ generated by $f_i$. 
\end{lemma}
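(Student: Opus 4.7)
The plan is to prove the second isomorphism $\langle f_i \rangle \otimes {}_iU \to U_q^-(\mathfrak{g})$; the first follows by the symmetric argument with $r_i$ in place of ${}_ir$. Since different powers of $f_i$ lie in distinct weight spaces $U_q^-(\mathfrak{g})_{-n\alpha_i}$, the subalgebra $\langle f_i \rangle$ is a polynomial algebra with $\mathbb{Q}(q)$-basis $\{f_i^n\}_{n\ge 0}$. Combined with the identification ${}_iU = \Ker {}_ir$ from Lemma~\ref{lem:boson}, the claim reduces to showing that every $u \in U_q^-(\mathfrak{g})$ admits a unique decomposition $u = \sum_{n \geq 0} f_i^n u_n$ with $u_n \in \Ker {}_ir$ and almost all $u_n = 0$.

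The key computational input is the formula
\[
{}_ir(f_i^n v) = [n]_i\, q_i^{-(n-1)}\, f_i^{n-1} v \qquad (v \in \Ker {}_ir,\ n \geq 1),
\]
which I would prove by a short induction on $n$ using the twisted Leibniz rule ${}_ir(f_i x) = q_i^{-2} f_i\, {}_ir(x) + x$ recorded in Lemma~\ref{lem:boson}. The coefficient $[n]_i\, q_i^{-(n-1)}$ is nonzero. Existence of the decomposition then follows by induction on the height of $-\wt(u)$: if ${}_ir(u) = 0$, take $u_0 = u$; otherwise ${}_ir(u)$ has strictly smaller height, so inductively ${}_ir(u) = \sum_{n \geq 0} f_i^n v_n$ with $v_n \in \Ker {}_ir$. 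Setting $u_{n+1} = v_n/([n+1]_i\, q_i^{-n})$ for $n \geq 0$ and $u_0 = u - \sum_{n \geq 1} f_i^n u_n$, the displayed formula ensures ${}_ir(u_0) = 0$, so this is the desired decomposition.

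For uniqueness, suppose $\sum_{n \geq 0} f_i^n u_n = 0$ with $u_n \in \Ker {}_ir$, and let $N$ be the largest index with $u_N \neq 0$. Applying ${}_ir^N$ and repeatedly invoking the displayed formula yields a nonzero scalar multiple of $u_N$, a contradiction. The main obstacle is really just the bookkeeping in verifying the explicit formula and running the two-step existence argument; no conceptual difficulty arises beyond the kernel description ${}_iU = \Ker {}_ir$, which has already been established. The result is classical (cf.\ \cite[Section 38]{MR2759715}), so a brief sketch of this argument should suffice.
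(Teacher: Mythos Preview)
Your proof is correct. The paper does not supply its own proof of this lemma but simply cites \cite[Section 38]{MR2759715}, and your argument is essentially the standard one found there: use the identification ${}_iU = \Ker {}_ir$ (resp.\ $U_i = \Ker r_i$) together with the twisted Leibniz rule to obtain the recursive formula ${}_ir(f_i^n v) = [n]_i q_i^{-(n-1)} f_i^{n-1} v$, and then run the height induction for existence and iterate ${}_ir$ for uniqueness.
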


We write $U_q(\mathfrak{p}_i)$ instead of $U_q(\mathfrak{p}_{\{i\}})$.
Recall the left (resp. right) $U_q(\mathfrak{p}_i)$-module $V_{\{i\}}(0)$ (resp. ${}_{\{i\}}V(0)$). 
We simply write $V_i (0)$ (resp. ${}_iV(0)$) for it. 

\begin{lemma} \label{lem:isom}
The following morphisms are isomorphisms:
\begin{align*}
U_i \to V_i(0),\ & u \mapsto uv_0, \\
{}_iU \to {}_iV(0),\ & u \mapsto v_0u. 
\end{align*}
\end{lemma}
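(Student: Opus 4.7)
The plan is to reduce both isomorphisms to two prior inputs: Lemma \ref{lem:presentation}, specialized to $J = \{i\}$ and $\Lambda = 0$, together with the PBW factorization of Lemma \ref{lem:PBW}.

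For the first isomorphism, I would first observe that $V_i(0)$ as defined in the introduction coincides with $V_{\{i\}}(0)$ from Definition \ref{def:integrablemodule}, since $\langle h_i, 0\rangle + 1 = 1$. Then Lemma \ref{lem:presentation} produces a canonical isomorphism $U_q^-(\mathfrak{g})/U_q^-(\mathfrak{g}) f_i \xrightarrow{\sim} V_i(0)$ sending $\bar{u}$ to $uv_0$. Meanwhile, Lemma \ref{lem:PBW} gives an isomorphism $U_i \otimes \langle f_i\rangle \xrightarrow{\sim} U_q^-(\mathfrak{g})$ via multiplication. Splitting $\langle f_i\rangle = \mathbb{Q}(q) \oplus f_i \langle f_i\rangle$ and noting that $U_q^-(\mathfrak{g}) f_i$ corresponds under this isomorphism to $U_i \otimes f_i \langle f_i\rangle$, I obtain a direct-sum decomposition $U_q^-(\mathfrak{g}) = U_i \oplus U_q^-(\mathfrak{g}) f_i$. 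Composing the inclusion $U_i \hookrightarrow U_q^-(\mathfrak{g})$, the projection $U_q^-(\mathfrak{g}) \twoheadrightarrow U_q^-(\mathfrak{g})/U_q^-(\mathfrak{g}) f_i$, and the Lemma \ref{lem:presentation} isomorphism then yields the claimed bijection $U_i \xrightarrow{\sim} V_i(0)$, $u \mapsto uv_0$.

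The second isomorphism proceeds identically on the other side. I would invoke the right-module analogue of Lemma \ref{lem:presentation}, whose proof is the verbatim mirror of the one in the excerpt (alternatively, one transfers via the anti-automorphism $\sigma$, noting that the set $\{q^h - 1 : h \in \mathsf{P}^{\lor}\}$ is $\sigma$-stable), to obtain ${}_iV(0) \simeq U_q^-(\mathfrak{g})/f_i U_q^-(\mathfrak{g})$, sending $\bar{u}$ to $v_0 u$. The other half of Lemma \ref{lem:PBW}, namely $\langle f_i\rangle \otimes {}_iU \xrightarrow{\sim} U_q^-(\mathfrak{g})$, gives the corresponding decomposition $U_q^-(\mathfrak{g}) = {}_iU \oplus f_i U_q^-(\mathfrak{g})$, and the composite ${}_iU \hookrightarrow U_q^-(\mathfrak{g}) \twoheadrightarrow U_q^-(\mathfrak{g})/f_i U_q^-(\mathfrak{g}) \xrightarrow{\sim} {}_iV(0)$ provides the bijection $u \mapsto v_0 u$.

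No step should pose a real obstacle: once we record that $V_i(0)$ is a free-module quotient via Lemma \ref{lem:presentation}, the argument is essentially bookkeeping from the PBW decomposition. The only item not literally stated earlier is the right-module counterpart of Lemma \ref{lem:presentation}, which is an immediate mirror image of the given proof.
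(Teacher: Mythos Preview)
Your proof is correct and follows essentially the same approach as the paper. The paper's proof is the one-line citation ``It immediately follows from Definition~\ref{def:integrablemodule} and Lemma~\ref{lem:PBW},'' and your argument simply unpacks this: you invoke Lemma~\ref{lem:presentation} (which is the explicit consequence of Definition~\ref{def:integrablemodule} needed here) to identify $V_i(0)$ with $U_q^-(\mathfrak{g})/U_q^-(\mathfrak{g})f_i$, and then use the PBW factorization of Lemma~\ref{lem:PBW} exactly as the paper intends.
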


\begin{proof}
It immediately follows from Lemma \ref{lem:presentation} and Lemma \ref{lem:PBW}.  
\end{proof}

By Lemma \ref{lem:isom}, $U_i$ (resp. ${}_iU$) inherits a left (resp. right) $U_q(\mathfrak{p}_i)$-module structure. 
We shall explicitly described it below. 

\begin{definition}
We define $\mathbb{Q}(q)$-linear endomorphisms $\ad_{f_i}, \ad_{e_i}, \ad_{f_i}^*, \ad_{e_i}^*$ of $U_q(\mathfrak{g})$ as \index{$\ad_{f_i}, \ad_{e_i}, \ad_{f_i}^*, \ad_{e_i}^*$}
\begin{align*}
\ad_{f_i}(u) = f_i u - t_i u t_i^{-1} f_i, \ \ad_{e_i}(u) = e_iut_i - ue_it_i,  \\ 
\ad^*_{f_i}(u) = uf_i - f_i t_i u t_i^{-1},\ \ad^*_{e_i}(u) = t_i^{-1}ue_i - t_i^{-1}e_iu. 
\end{align*}
\end{definition}

Note that $\ad^*_{f_i} = \sigma \ad_{f_i} \sigma, \ad^*_{e_i} = \sigma\ad_{e_i}\sigma$. 

\begin{lemma} \label{lem:Tvsad}
We have 
\[
\ad_{f_i} = T_i \ad_{e_i}^* T_i^{-1}, \ad_{e_i} = T_i \ad_{f_i}^* T_i^{-1}.
\]
\end{lemma}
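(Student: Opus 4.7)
Plan: First I reduce the two identities to one. Directly from the definitions one checks $\sigma \circ \ad_{f_i} \circ \sigma = \ad_{f_i}^*$ and $\sigma \circ \ad_{e_i} \circ \sigma = \ad_{e_i}^*$, and the identity $\sigma T_i \sigma = T_i^{-1}$ is recalled in Remark~\ref{rem:substitution}. Conjugating $\ad_{f_i} = T_i \ad_{e_i}^* T_i^{-1}$ by $\sigma$ therefore produces $\ad_{f_i}^* = T_i^{-1} \ad_{e_i} T_i$, which rearranges to the second identity. So I only need to establish the first.

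For the first identity, my plan is a direct expansion followed by simplification. Let $u \in U_q(\mathfrak{g})$ and set $v := T_i^{-1}(u)$. Since $T_i$ is an algebra automorphism with $T_i(e_i) = -t_i^{-1} f_i$ and $T_i(t_i^{-1}) = t_i$, applying $T_i$ termwise to $\ad_{e_i}^*(v) = v e_i t_i^{-1} - e_i v t_i^{-1}$ gives
\[
T_i\bigl(\ad_{e_i}^*(v)\bigr) = T_i(v)\,T_i(e_i)\,T_i(t_i^{-1}) - T_i(e_i)\,T_i(v)\,T_i(t_i^{-1}) = -u\,t_i^{-1} f_i\,t_i + t_i^{-1} f_i\,u\,t_i.
\]
The target $\ad_{f_i}(u) = f_i u - t_i u t_i^{-1} f_i$ and the expression above are both $\mathbb{Z}[q,q^{-1}]$-linear combinations of $f_i u$ and $u f_i$ once one invokes the commutation $t_i^{-1} f_i t_i = q_i^2 f_i$ together with the weight action $t_i u t_i^{-1} = q^{(\alpha_i, \wt u)} u$ for homogeneous $u$. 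By $\mathbb{Q}(q)$-linearity I may reduce to the case of a weight vector, at which point the claimed equality becomes a scalar identity in $q$-powers that can be read off directly.

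The main obstacle is purely bookkeeping: several $t_i^{\pm 1}$ factors must be commuted past each other and past $u$, each one generating a weight-dependent $q$-twist, and one must verify that all the resulting coefficients align to produce the clean form of $\ad_{f_i}(u)$. No deeper structural input is needed beyond the explicit formulas for $T_i$ on $e_i, t_i$ and the standard commutation relations in $U_q(\mathfrak{g})$; once the simplification on weight vectors is carried out, the identity follows.
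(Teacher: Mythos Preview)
Your overall strategy---reduce to one identity via conjugation by $\sigma$, then verify that identity by direct expansion---is exactly the paper's approach, and your reduction step is correct.

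There is, however, a genuine gap in the direct computation. You expand using the displayed formula $\ad_{e_i}^*(v) = v e_i t_i^{-1} - e_i v t_i^{-1}$ and then assert that the resulting expression matches $\ad_{f_i}(u)$ after restricting to weight vectors, calling this ``bookkeeping''. But if you actually carry out the check for $u$ of weight $\mu$, you get
\[
-u\, t_i^{-1} f_i t_i + t_i^{-1} f_i u\, t_i \;=\; q_i^{\,2-\langle h_i,\mu\rangle} f_i u - q_i^{2}\, u f_i,
\]
whereas $\ad_{f_i}(u) = f_i u - q_i^{\langle h_i,\mu\rangle} u f_i$. These coincide only when $\langle h_i,\mu\rangle = 2$, so the claimed scalar identity fails in general.

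The source of the trouble is that the displayed definition of $\ad_{e_i}^*$ in the paper is a typo. Computing $\sigma \ad_{e_i} \sigma$ directly (remembering that $\sigma$ is an \emph{anti}-automorphism with $\sigma(t_i)=t_i^{-1}$) gives
\[
\ad_{e_i}^*(u) = t_i^{-1} u e_i - t_i^{-1} e_i u,
\]
and this is the form the paper's own proof uses in its first line. With this corrected expression the verification is immediate, with no need to pass to weight vectors:
\[
T_i\bigl(t_i^{-1} u e_i - t_i^{-1} e_i u\bigr) = t_i\, T_i(u)\,(-t_i^{-1} f_i) - t_i\,(-t_i^{-1} f_i)\, T_i(u) = -\,t_i T_i(u) t_i^{-1} f_i + f_i T_i(u) = \ad_{f_i}\bigl(T_i(u)\bigr).
\]
So your plan is right, but you should recompute $\ad_{e_i}^*$ from $\sigma\ad_{e_i}\sigma$ rather than trusting the displayed formula; the weight-vector reduction then becomes unnecessary.
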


\begin{proof}
For $u \in U_q(\mathfrak{g})$, 
\begin{align*}
T_i \ad_{e_i}^* (u)& = T_i (t_i^{-1} ue_i- t_i^{-1}e_iu)  \\
&= -t_iT_i(u)t_i^{-1}f_i + t_it_i^{-1}f_iT_i(u) \\
&= \ad_{f_i}T_i(u). 
\end{align*}
By applying $\sigma$, we obtain
\[
T_i^{-1} \ad_{e_i} = \ad^*_{f_i} T_i^{-1}. 
\]
\end{proof}

\begin{lemma} \label{lem:stable}
$U_i$ is stable under $\ad_{f_i}$ and $\ad_{e_i}$, while ${}_iU$ is stable under $\ad^*_{f_i}$ and $\ad^*_{e_i}$. 
\end{lemma}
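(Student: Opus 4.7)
The plan is to leverage the characterization $U_i = \Ker r_i$ and ${}_iU = \Ker {}_ir$ from Lemma \ref{lem:boson}, and then check directly that $\ad_{f_i}, \ad_{e_i}$ preserve $\Ker r_i$ inside $U_q^-(\mathfrak{g})$. Once this is done for $U_i$, the statement for ${}_iU$ should fall out by applying $\sigma$, since $T_i^{-1} = \sigma T_i \sigma$ yields $\sigma(U_i) = {}_iU$ and we already know $\ad^*_{f_i} = \sigma \ad_{f_i} \sigma$, $\ad^*_{e_i} = \sigma \ad_{e_i} \sigma$.

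For $\ad_{f_i}$, take a homogeneous $u \in U_i$ of weight $-\beta$. Since $t_i u t_i^{-1} = q^{-(\alpha_i,\beta)} u$, the element $\ad_{f_i}(u) = f_i u - q^{-(\alpha_i,\beta)} u f_i$ lies in $U_q^-(\mathfrak{g})$, so one can meaningfully apply $r_i$. Using the Leibniz rule for $r_i$ in Lemma \ref{lem:boson}(2) together with $r_i(u) = 0$ and $r_i(f_i) = 1$, both $r_i(f_i u)$ and $q^{-(\alpha_i,\beta)} r_i(u f_i)$ reduce to $q^{-(\alpha_i,\beta)} u$ and cancel, so $\ad_{f_i}(u) \in \Ker r_i = U_i$. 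For $\ad_{e_i}$, I would use the defining identity of $r_i, {}_ir$ from Lemma \ref{lem:boson}(1): combined with $r_i(u) = 0$, it collapses to $[e_i, u] = -t_i^{-1} {}_ir(u)/(q_i - q_i^{-1})$, so $\ad_{e_i}(u) = (e_i u - u e_i) t_i$ is a nonzero scalar multiple (depending only on the weight) of ${}_ir(u)$ and, in particular, belongs to $U_q^-(\mathfrak{g})$. Membership in $\Ker r_i$ is then immediate from the commutativity $r_i \circ {}_ir = {}_ir \circ r_i$ in Lemma \ref{lem:boson}(5): $r_i({}_ir(u)) = {}_ir(r_i(u)) = 0$.

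For the second half of the statement I would not repeat any computation. Since $\sigma$ restricts to an involution of $U_q^-(\mathfrak{g})$ (it fixes every $f_i$) and $T_i^{-1} = \sigma T_i \sigma$, one directly obtains $\sigma(U_i) = {}_iU$; combined with $\ad^*_{f_i} = \sigma \ad_{f_i} \sigma$ and $\ad^*_{e_i} = \sigma \ad_{e_i} \sigma$, the stability of ${}_iU$ under $\ad^*_{f_i}$ and $\ad^*_{e_i}$ transfers formally from the first half. There is no real obstacle: the only place requiring some care is the weight bookkeeping when rewriting $\ad_{e_i}(u)$ as a scalar multiple of ${}_ir(u)$ so that it is manifestly a weight vector in $U_q^-(\mathfrak{g})$ to which $r_i$ applies. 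Otherwise, the argument is essentially a one-line consequence of the kernel description of $U_i, {}_iU$ together with the commutativity of the skew derivations $r_i$ and ${}_ir$.
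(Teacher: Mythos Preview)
Your argument is correct, and it takes a different route from the paper. You work entirely inside $U_q^-(\mathfrak{g})$ via the kernel characterization $U_i = \Ker r_i$: for $\ad_{f_i}$ you apply the Leibniz rule for $r_i$ directly, and for $\ad_{e_i}$ you reduce to a scalar multiple of ${}_ir(u)$ and invoke the commutativity $r_i\,{}_ir = {}_ir\, r_i$ from Lemma~\ref{lem:boson}(5). The paper instead handles $\ad_{e_i}$ by checking both halves of the definition $U_i = U_q^- \cap T_i U_q^-$: that $\ad_{e_i}(u)\in U_q^-$ is obtained as you do, but that $T_i^{-1}\ad_{e_i}(u)\in U_q^-$ comes from the intertwining relation $\ad_{e_i} = T_i\, \ad_{f_i}^*\, T_i^{-1}$ (Lemma~\ref{lem:Tvsad}); stability under $\ad_{f_i}$ is then deduced from $\ad_{f_i} = T_i\,\ad_{e_i}^*\,T_i^{-1}$ rather than by a direct Leibniz computation. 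Your approach is more self-contained in that it never touches $T_i$ or Lemma~\ref{lem:Tvsad}; the paper's approach has the advantage of making the link between the adjoint actions and the braid operator explicit, which is precisely what is needed immediately afterwards in Proposition~\ref{prop:bimodule}.
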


\begin{proof}
Note that $U_q^-(\mathfrak{g})$ is stable under $\ad_{f_i}$ and $\ad^*_{f_i}$. 
Let $u \in U_i$. 
Lemma \ref{lem:boson} shows $r_i(u) = 0$, 
hence $\ad_{e_i}(u) = (t_i^{-1}{}_ir(u)t_i)/(q_i-q_i^{-1})$, which belongs to $U_q^-(\mathfrak{g})$. 
On the other hand, Lemma \ref{lem:Tvsad} shows $T_i^{-1}\ad_{e_i}(u) = \ad_{f_i}^*T_i^{-1}(u)$, hence $T_i^{-1}\ad_{e_i}(u)$ also belongs to $U_q^-(\mathfrak{g})$. 
It means that $\ad_{e_i}(u) \in U_i$. 
By applying $\sigma$, we see that ${}_iU$ is stable under $\ad_{e_i}^*$. 
By using Lemma \ref{lem:Tvsad} again, we deduce that $U_i$ is stable under $\ad_{f_i}$ and that ${}_iU$ is stable under $\ad_{f_i}^*$. 
\end{proof}

\begin{proposition} \label{prop:actionlift}
(1) $U_i$ is a left $U_q(\mathfrak{p}_i)$-module by 
\[
f_i \cdot u = \ad_{f_i} (u), e_i \cdot u = \ad_{e_i}(u), f_j \cdot u = f_ju \ (j \neq i), q^h \cdot u = q^h u q^{-h} \ (h \in \mathsf{P}^{\lor}).  
\] 
Furthermore, the first isomorphism of Lemma \ref{lem:isom} is left $U_q(\mathfrak{p}_i)$-linear. 

(2) ${}_iU$ is a right $U_q(\mathfrak{p}_i)$-module by 
\[
u\cdot f_i = \ad_{f_i}^* (u), u \cdot e_i = \ad_{e_i}^*(u), u \cdot f_j = uf_j \ (j \neq i), u \cdot q^h = q^{-h}uq^h \ (h \in \mathsf{P}^{\lor}).  
\] 
Furthermore, the first isomorphism of Lemma \ref{lem:isom} is right $U_q(\mathfrak{p}_i)$-linear. 
\end{proposition}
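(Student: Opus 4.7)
The plan is to transport the left $U_q(\mathfrak{p}_i)$-module structure of $V_i(0)$ onto $U_i$ through the isomorphism of Lemma \ref{lem:isom}, and then to identify the resulting action on each generator by a direct calculation in $U_q^-(\mathfrak{g})$. Once this explicit identification is done, the isomorphism $U_i \to V_i(0)$ is $U_q(\mathfrak{p}_i)$-linear by construction. Part (2) then follows by the same argument applied to the right module ${}_iV(0)$; alternatively, one can deduce it from part (1) via the anti-automorphism $\sigma$, since $\ad^*_{f_i} = \sigma \ad_{f_i} \sigma$ and $\ad^*_{e_i} = \sigma \ad_{e_i} \sigma$ swap the two pictures.

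For part (1), fix $u \in U_i$ of weight $-\beta$, and use the defining relations $e_i v_0 = f_i v_0 = 0$ and $q^h v_0 = v_0$ of $V_i(0)$. For $j \neq i$, the Leibniz rule of Lemma \ref{lem:boson}(2) gives $r_i(f_j u) = q^{-(\alpha_i,\beta)} r_i(f_j) u + f_j r_i(u) = 0$, so $f_j u$ remains in $U_i$ and the action of $f_j$ is just left multiplication. For $f_i$, the definition of $\ad_{f_i}$ rearranges to $f_i u = \ad_{f_i}(u) + t_i u t_i^{-1} f_i$; applying this to $v_0$ kills the second summand, leaving $\ad_{f_i}(u) v_0$. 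For $e_i$, the definition likewise yields $e_i u = \ad_{e_i}(u) t_i^{-1} + u e_i$, and applying to $v_0$ gives $\ad_{e_i}(u) v_0$ after using $e_i v_0 = 0$ and $t_i^{-1} v_0 = v_0$. The action of $q^h$ is the weight scalar $q^{-\langle h, \beta \rangle}$, coming from $q^h u q^{-h} v_0$. In both middle cases Lemma \ref{lem:stable} guarantees that $\ad_{f_i}(u)$ and $\ad_{e_i}(u)$ actually lie in $U_i$, which is what allows the transported action to be written cleanly in the stated form.

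No single step is a serious obstacle; the computation is entirely structural. The two non-trivial ingredients are Lemma \ref{lem:stable}, which closes $U_i$ under $\ad_{f_i}$ and $\ad_{e_i}$, and Lemma \ref{lem:boson}(2), which closes $U_i$ under left multiplication by $f_j$ for $j \neq i$. After these closure statements are in hand, everything reduces to the two rearrangement identities $f_i u = \ad_{f_i}(u) + t_i u t_i^{-1} f_i$ and $e_i u = \ad_{e_i}(u) t_i^{-1} + u e_i$ combined with the defining relations of $V_i(0)$. Part (2) is handled identically using the dual identities $u f_i = \ad^*_{f_i}(u) + f_i t_i u t_i^{-1}$ and $u e_i = \ad^*_{e_i}(u) t_i + e_i u$, killing the tail term against $v_0$ on the right.
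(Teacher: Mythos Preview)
Your proposal is correct and follows essentially the same approach as the paper: the paper's proof simply notes that $f_i v_0 = e_i v_0 = 0$ forces $\ad_{f_i}(u)v_0 = f_i(uv_0)$ and $\ad_{e_i}(u)v_0 = e_i(uv_0)$, then invokes Lemma~\ref{lem:isom} and Lemma~\ref{lem:stable}. You have unpacked the same computation in more detail, and additionally made explicit the closure of $U_i$ under left multiplication by $f_j$ for $j \neq i$ via Lemma~\ref{lem:boson}(2), a point the paper leaves tacit; your remark that part~(2) can be obtained from part~(1) via $\sigma$ is also a valid shortcut not spelled out in the paper.
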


\begin{proof}
(1) Since $f_i v_0 = e_iv_0 = 0$, we have 
\[
\ad_{f_i}(u)v_0 = f_i (uv_0), \ad_{e_i}(u)v_0 = e_i(uv_0), (q^huq^{-h})v_0 = q^h(uv_0). 
\]
Hence, the assertion follows from Lemma \ref{lem:stable}. 
(2) is similar. 
\end{proof}

Recall that we have an isomorphism $T_i \colon {}_iU \to U_i$. 
Hence, the left $U_q(\mathfrak{p}_i)$-module structure on $U_i$ yields a left $U_q(\mathfrak{p}_i)$-module structure ${}_iU$.
Similarly, the right $U_q(\mathfrak{p}_i)$-module structure on ${}_iU$ yields a right $U_q(\mathfrak{p}_i)$-module structure on $U_i$.  
They are explicitly given by the following formulas. 

\begin{proposition} \label{prop:bimodule}
(1) $U_i$ is a right $U_q(\mathfrak{p}_i)$-module by 
\[
u \cdot f_i = \ad_{e_i}(u), u \cdot e_i = \ad_{f_i}(u), u \cdot f_j = u u_j \ (j \neq i), 
\]
where $u_j = T_i(f_j)$. \index{$u_j = T_i(f_j)$}

(2) ${}_iU$ is a left $U_q(\mathfrak{p}_i)$-module by 
\[
f_i \cdot u = \ad_{e_i}^*(u), e_i \cdot u = \ad_{f_i}^*(u), f_j \cdot u = u'_j u \ (j \neq i), 
\]
where $u'_j = T_i^{-1}(f_j)$. \index{$u'_j = T_i^{-1}(f_j)$}

(3) $T_i \colon {}_iU \to U_i$ is both left $U_q(\mathfrak{p}_i)$-linear and right $U_q(\mathfrak{p}_i)$-linear. 
\end{proposition}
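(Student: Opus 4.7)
The plan is to transport the $U_q(\mathfrak{p}_i)$-module structures along the bijection $T_i \colon {}_iU \to U_i$. For (1), I would push forward the right $U_q(\mathfrak{p}_i)$-action on ${}_iU$ (Proposition~\ref{prop:actionlift}(2)) along $T_i$: for $u \in U_i$ and $x \in U_q(\mathfrak{p}_i)$, define $u \cdot x := T_i(T_i^{-1}(u) \cdot x)$. Since $T_i$ is a bijection, this immediately yields a well-defined right $U_q(\mathfrak{p}_i)$-module structure on $U_i$, and by construction $T_i$ is right $U_q(\mathfrak{p}_i)$-linear. Thus the only work is to identify this transported action with the explicit formulas in the statement.

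For $x = f_i$, Lemma~\ref{lem:Tvsad} gives
\[
T_i\bigl(T_i^{-1}(u) \cdot f_i\bigr) = T_i\bigl(\ad_{f_i}^*(T_i^{-1}(u))\bigr) = \ad_{e_i}(u),
\]
and symmetrically $u \cdot e_i = \ad_{f_i}(u)$. For $x = f_j$ with $j \neq i$, since $T_i$ is an algebra homomorphism,
\[
T_i\bigl(T_i^{-1}(u) \cdot f_j\bigr) = T_i\bigl(T_i^{-1}(u)\, f_j\bigr) = u\, T_i(f_j),
\]
and the fact that this element lies in $U_i$ can be checked by computing $r_i(T_i(f_j)) = 0$ using Lemma~\ref{lem:boson} together with the explicit formula for $T_i(f_j)$; this recovers the claim (with $u_j$ identified accordingly).

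Part (2) is proved by the symmetric procedure: transport the left $U_q(\mathfrak{p}_i)$-action on $U_i$ from Proposition~\ref{prop:actionlift}(1) along $T_i^{-1} \colon U_i \to {}_iU$. The $f_i$ and $e_i$ formulas again reduce to Lemma~\ref{lem:Tvsad}, and the $f_j$ formula follows from multiplicativity of $T_i^{-1}$ together with the observation that $T_i^{-1}(f_j) \in {}_iU$. Part (3) is then a tautology: by the way the actions were defined in (1) and (2), $T_i$ is simultaneously right and left $U_q(\mathfrak{p}_i)$-linear, hence an isomorphism of $U_q(\mathfrak{p}_i)$-bimodules.

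The main (and only) real obstacle is the computation with Lemma~\ref{lem:Tvsad} that converts the $\ad^*$-formulas on the ${}_iU$-side into the $\ad$-formulas on the $U_i$-side; everything else in the proof is formal, being driven by the transport-of-structure along the bijection $T_i$.
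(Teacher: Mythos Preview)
Your approach is correct and essentially identical to the paper's one-line proof: transport the structure along $T_i$ and identify the resulting formulas via Lemma~\ref{lem:Tvsad} and multiplicativity of $T_i$. One small remark: your check that $T_i(f_j) \in U_i$ via $r_i$ is unnecessary, since $T_i(f_j) \in U_q^-(\mathfrak{g}) \cap T_i U_q^-(\mathfrak{g}) = U_i$ follows directly from the explicit formula for $T_i(f_j)$ and the definition of $U_i$; and your computed $u \cdot f_j = u\,T_i(f_j)$ is indeed what Lemma~\ref{lem:uj} confirms (the stated $u_j = T_i^{-1}(f_j)$ appears to be a misprint).
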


\begin{proof}
Using Lemma \ref{lem:Tvsad}, it immediately follows from Proposition \ref{prop:actionlift}.
\end{proof}

\begin{remark}
The left and the right $U_q(\mathfrak{p}_i)$-action do not commute.  
\end{remark}

\begin{lemma} \label{lem:uj}
For $j \neq i$, we have 
\[
u_j = \ad_{f_i}^{(-a_{i,j})}(f_j), u'_j = (\ad_{f_i}^*)^{(-a_{i,j})}(f_j). 
\]
\end{lemma}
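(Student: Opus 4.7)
The plan is to prove the identity by a uniqueness-plus-normalization argument. Since the anti-automorphism $\sigma$ from Section \ref{sub:quantumgroups} intertwines $\ad_{f_i}$ with $\ad_{f_i}^*$ and relates $T_i$ with $T_i^{-1}$, the two identities for $u_j$ and $u'_j$ are equivalent, so I focus on one of them.

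The key observation is that both candidate expressions lie in a one-dimensional weight space. For $n = -a_{i,j}$, the divided adjoint power $\ad_{f_i}^{(n)}(f_j)$ belongs to $U_i$ by iterating the left $U_q(\mathfrak{p}_i)$-action of Proposition \ref{prop:actionlift}(1) on $f_j$, which itself lies in $U_i$ because $r_i(f_j) = 0$. The element $T_i(f_j)$ belongs to $U_i$ directly from the definition $U_i = U_q^-(\mathfrak{g}) \cap T_i U_q^-(\mathfrak{g})$. Within the ambient weight space $U_q^-(\mathfrak{g})_{-n\alpha_i - \alpha_j}$ the monomials $\{f_i^{(s)} f_j f_i^{(r)}\}_{r+s=n}$ form a basis (since $n < 1 - a_{i,j}$, strictly below the Serre threshold), and applying the twisted Leibniz rule for $r_i$ to the equation $r_i(u) = 0$ produces a triangular recursion determining every coefficient from a single free parameter. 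Hence $U_i$ meets this weight space in a line.

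It then suffices to match one coefficient, and the cleanest is the coefficient of $f_i^{(n)} f_j$, i.e.\ the $(r,s) = (0,n)$ term. For $T_i(f_j)$ this coefficient is $1$ by inspection of the explicit expansion recalled in Section \ref{sec:braidgroupaction}. For $\ad_{f_i}^{(n)}(f_j)$ it is also $1$ by a short induction on $n$: only the summand $f_i^{(n-1)} f_j$ inside $\ad_{f_i}^{(n-1)}(f_j)$ contributes to $f_i^{(n)} f_j$ under one further application of $\ad_{f_i}$, with multiplicative factor $[n]_i$ that cancels the divided-power denominator.

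The main obstacle I anticipate is the careful bookkeeping needed to verify the one-dimensionality of $U_i$ in this weight space. As an alternative route, one may dispense with the uniqueness step and compute $\ad_{f_i}^{(n)}(f_j)$ coefficient-by-coefficient: the recursion $\ad_{f_i}(f_i^{(s)} f_j f_i^{(r)}) = [s+1]_i f_i^{(s+1)} f_j f_i^{(r)} - q_i^{-(2(s+r)+a_{i,j})}[r+1]_i f_i^{(s)} f_j f_i^{(r+1)}$, obtained by evaluating $t_i \cdot f_i^{(s)} f_j f_i^{(r)} \cdot t_i^{-1}$ via the Cartan relations, can be iterated and shown by induction to collapse, at the specific value $n = -a_{i,j}$, to the explicit formula for $T_i(f_j)$. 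This route is tedious but entirely routine.
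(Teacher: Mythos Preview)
Your argument is correct. The paper gives no real proof, only a pointer to the literature, treating the identity as a direct consequence of the definitions—essentially the coefficient-by-coefficient expansion you sketch as your alternative route. Your primary approach via the one-dimensionality of $(U_i)_{-s_i\alpha_j}$ is a genuinely different route: rather than tracking the full recursion for $\ad_{f_i}^{(n)}(f_j)$, you observe that $\Ker r_i$ meets this weight space in a line (the basis $\{f_i^{(s)}f_jf_i^{(r)}\}_{r+s=n}$ has $n+1$ elements and the condition $r_i=0$ imposes $n$ independent triangular constraints) and then match a single coefficient. This buys conceptual clarity at the cost of the extra structural input from Lemma~\ref{lem:stable}; the direct computation, by contrast, needs no knowledge of $U_i$ at all. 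One minor bookkeeping remark: you work with $T_i(f_j)$ throughout, while Proposition~\ref{prop:bimodule} writes $u_j = T_i^{-1}(f_j)$; since you correctly invoke $\sigma$ at the outset to interchange the two identities, this causes no harm, and the formula you actually establish, $T_i(f_j) = \ad_{f_i}^{(n)}(f_j)$, is the standard form in the cited reference.
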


\begin{proof}
It is straightforward from the definition, see \cite[Lemma 1.1.1]{MR1265471}. 
\end{proof}

\begin{remark}
Since ${}_iU \simeq {}_iV(0)$ is generated by $1$ as a right $U_q(\mathfrak{p}_i)$-module, 
the isomorphism $T_i \colon {}_iU \to U_i$ can be characterized as the right $U_q(\mathfrak{p}_i)$-module homomorphism that sends $1$ to $1$. 
Similarly, the isomorphism $T_i^{-1} \colon U_i \to {}_iU$ can be characterized as the left $U_q(\mathfrak{p}_i)$-module homomorphism that sends $1$ to $1$. 
We will construct functors that categorify $T_i$ and $T_i^{-1}$ based on these characterizations. 
\end{remark} 

\section{Quiver Hecke algebras}

\begin{definition}
  A choice of scalars $Q$ consists of elements $t_{i,j} \in \mathbf{k}^{\times} \ (i, j \in I)$ 
  and $s_{i,j}^{p,q} \in \mathbf{k} \ (i,j \in I, p,q \in \mathbb{Z}_{>0}, p(\alpha_i,\alpha_i) + q(\alpha_j,\alpha_j) = -2(\alpha_i,\alpha_j))$ 
  subject to the following conditions: \index{$Q = (t_{i,j}, s_{i,j}^{p,q})$: choice of scalars}
  \begin{enumerate}
  \item $t_{i,i} = 1$, 
  \item $t_{i,j} = t_{j,i}$ if $a_{i,j} = 0$, 
  \item $s_{i,j}^{p,q} = s_{j,i}^{q,p}$. 
  \end{enumerate} 
  For such scalars, we define polynomials $Q_{i,j}(u,v) \in \mathbf{k}[u,v] \ (i,j \in I)$ as 
  \[
  Q_{i,j}(u,v) = \begin{cases}
  t_{i,j}u^{-a_{i,j}} + t_{j,i} v^{-a_{j,i}} + \sum_{p,q \in \mathbb{Z}_{> 0}} s_{i,j}^{p,q} u^p v^q & \text{if $a_{i,j} < 0$}, \\
  t_{i,j} = t_{j,i} & \text{if $a_{i,j} = 0$}, \\
  0 & \text{if $a_{i,j} = 2$}.
  \end{cases}
  \] \index{$Q_{i,j}$}
  \end{definition}
  
Note that we have $Q_{i,j}(u,v) = Q_{j,i} (v,u)$. 

\begin{definition}
Fix a choice of scalars $Q$. 
Let $\beta \in \mathsf{Q}_+$. 
Put $n = \height \beta$ and $I^{\beta} = \{ \nu \in I^n \mid \alpha_{\nu_1} + \cdots + \alpha_{\nu_n} = \beta \}$. 
The quiver Hecke algebra $R(\beta)$ is a graded $\mathbf{k}$-algebra defined by the following generators and relations:  \index{$R(\beta)$}
\begin{itemize}
\item The generators are 
\[
e(\nu) \ (\nu \in I^{\beta}), x_k \ (1 \leq k \leq n), \tau_k \ (1 \leq k \leq n-1). 
\]
\item The relations are 
\begin{align*}
  & e(\nu)e(\nu') = \delta_{\nu, \nu'} e(\nu), \  \sum_{\nu \in I^{\beta}} e(\nu) = 1, \\
  & x_k e(\nu) = e(\nu) x_k, \ x_k x_{l} = x_{l} x_k, \\
  & \tau_k e(\nu) = e(s_k(\nu)) \tau_k \ (1 \leq k \leq n-1), \ \tau_k \tau_{l} = \tau_{l} \tau_k \ (1 \leq k, l \leq n-1, \lvert k-l \rvert \geq 2), \\
  & (\tau_k x_{k+1} - x_k \tau_k)e(\nu) = (x_{k+1} \tau_k - \tau_k x_k)e(\nu) = \delta_{\nu_k, \nu_{k+1}} e(\nu) \; (1 \leq k \leq n-1), \\
  & \tau_k^2 e(\nu) = Q_{\nu_k, \nu_{k+1}} (x_k, x_{k+1}) e(\nu) \; (1 \leq k \leq n-1), \\
  & (\tau_{k+1} \tau_k \tau_{k+1} - \tau_k \tau_{k+1} \tau_k) e(\nu) = \overline{Q}_{\nu_k, \nu_{k+1}, \nu_{k+2}}(x_k, x_{k+1}, x_{k+2})e(\nu) \; (1 \leq k \leq n-2), 
\end{align*}  
where 
\[
\overline{Q}_{i,i',i''}(u,u',u'') = \begin{cases} 
\dfrac{Q_{i,i'}(u,u')-Q_{i,i'}(u'',u')}{u-u''} & \text{if $i= i'' \neq i'$}, \\
0 & \text{otherwise}. 
\end{cases}
\] \index{$\overline{Q}_{i,i',i''}$}
\item The degree is given by 
\[
\deg e(\nu) = 0, \ \deg x_k e(\nu) = (\alpha_{\nu_k},\alpha_{\nu_k}), \ \deg \tau_k e(\nu) = -(\alpha_{\nu_k},\alpha_{\nu_{k+1}}). 
\]
\end{itemize}
\end{definition}

In this paper, every $R(\beta)$-module is assumed to be a graded left module unless otherwise specified. 
For each $w \in \mathfrak{S}_n$, fix a reduced expression $w = s_{i_1} \cdots s_{i_l}$ and define 
\[
\tau_w = \tau_{i_1} \cdots \tau_{i_l}. 
\]
In general, it depends on the choice of the reduced expression. 

\begin{lemma} \label{lem:tau}
Let $\nu \in I^{\beta}$. 
If $w$ satisfies 
\[
\text{there exists no $1 \leq a < b < c \leq n$ such that $w(a) > w(b) > w(c)$ and $\nu_a = \nu_c \neq \nu_b$,}
\] 
then $\tau_w e(\nu)$ is independent of the reduced expression of $w$. 
\end{lemma}

\begin{proof}
Consider passing from one reduced expression of $w$ to another by braid moves. 
The lemma follows from the defining relations of $R(\beta)$.
\end{proof}

For instance, if $w \in \mathfrak{S}_n^{l,m}$ for some $l + m = n$, then $\tau_w$ is independent of the choice.  

There is a $\mathbf{k}$-algebra anti-involution $\varphi$ of $R(\beta)$ that fixes all the generators $e(\nu), x_k$ and $\tau_k$. \index{$\varphi$ on $R(\beta)$}
Using it, we get a duality functor $D$ on $\gmod{R(\beta)}$ given by $D(M) = \Hom_{\mathbf{k}}(M, k)$, on which $R(\beta)$ acts by 
\[
\text{$(a f)(m) = f(\varphi(a)m)$ for $a \in R(\beta), f \in D(M), m \in M$. } 
\]
The $d$-th homogeneous component of $D(M)$ is $D(M)_d = \Hom_{\mathbf{k}}(M_{-d}, \mathbf{k})$. 
%Similarly, there is a duality functor $D'$ on $\gproj{R(\beta)}$ defined by $D'(P) = \Hom_{R(\beta)} (P, R(\beta))$. \index{$D'$}
A finite-dimensional module $M \in \gmod{R(\beta)}$ is said to be self-dual if $DM \simeq M$. 

Let $Z(\beta) = \left( \bigoplus_{\nu \in I^{\beta}} \mathbf{k}[x_1, \ldots, x_{\height \beta}]e(\nu) \right)^{\mathfrak{S}_{\height \beta}}$ be the center of $R(\beta)$. \index{$Z(\beta)$}
It is isomorphic to 
\[
\bigotimes_{i \in I}\mathbf{k}[z_{i,1}, \ldots, z_{i,k_i}]^{\mathfrak{S}_{k_i}},
\]
where $k_i$ is given by $\beta = \sum_{i \in I} k_i \alpha_i$ and $\deg z_{i,p} = (\alpha_i,\alpha_i)$.  

Let $\beta,\gamma \in Q_+$ and put $m = \height (\beta), n = \height (\gamma)$. 
We define an idempotent $e(\beta, \gamma)$ of $R(\beta + \gamma)$ by
\[
e(\beta, \gamma) = \sum_{\nu \in I^{\beta}, \nu' \in I^{\gamma}} e(\nu, \nu'). 
\]
We sometimes write $e(\beta,*) = e(\beta,\gamma) = e(*,\gamma)$ and $e(i,*) = e(\alpha_i,*), e(*,i) = e(*,\alpha_i)$.   

Then, $R(\beta+ \gamma)e(\beta,\gamma)$ is a right $(R(\beta) \otimes R(\gamma))$-module as follows:
\begin{align*}
 &ue(\beta,\gamma) (e(\nu)\otimes 1) = ue(\nu,\gamma) \ (\nu \in I^{\beta}), \\
 &ue(\beta, \gamma) (1 \otimes e(\nu)) = ue(\beta, \nu) \ (\nu \in I^{\gamma}), \\
 &ue(\beta,\gamma) (x_k \otimes 1) = ue(\beta,\gamma)x_k \ (1 \leq k \leq m), \\
 &ue(\beta,\gamma) (1 \otimes x_k) = ue(\beta,\gamma) x_{k+m} \ (1 \leq k \leq n), \\
 &ue(\beta,\gamma) (\tau_k \otimes 1) = ue(\beta,\gamma)\tau_k \ (1 \leq k \leq m-1), \\ 
 &ue(\beta, \gamma) (1 \otimes \tau_k) = ue(\beta,\gamma) \tau_{k+m} \ (1\leq k \leq n-1). 
\end{align*}
It is both left $R(\beta + \gamma)$-projective and right $(R(\beta) \otimes R(\gamma))$-projective. 
Similar property holds for $e(\beta, \gamma)R(\beta + \gamma)$. 
They produce two exact functors
\begin{align*}
  \Ind_{\beta, \gamma} &= R(\beta + \gamma)e(\beta, \gamma) \otimes_{R(\beta) \otimes R(\gamma)} (\cdot)\colon \gMod{(R(\beta) \otimes R(\gamma))} \to \gMod{R(\beta + \gamma)}, \\
  \Res_{\beta, \gamma} &= \Hom_{R(\beta + \gamma)}(R(\beta + \gamma)e(\beta, \gamma), \cdot)\colon \gMod{R(\beta + \gamma)} \to \gMod{(R(\beta) \otimes R(\gamma))}. 
\end{align*}
We have an adjoint pair $(\Ind_{\beta, \gamma}, \Res_{\beta, \gamma})$. 
For multiple $(\beta_1, \ldots, \beta_m) \in \mathsf{Q}_+^m$, we define $\Ind_{\beta_1, \ldots, \beta_m}$ and $\Res_{\beta_1, \ldots, \beta_m}$ in the same manner. 
We usually write $M \circ N$ instead of $\Ind_{\beta, \gamma} (M \otimes N)$ and call it the convolution product of $M$ and $N$.
It gives a monoidal structure on $\gMod{R} = \bigoplus_{\beta \in Q_+} \gMod{R(\beta)}$ with the unit object $\mathbf{k} \in \gMod{R(0)}$, which is denoted by $\mathbf{1}$. 
Additionally, $\gmod{R} = \bigoplus_{\beta \in Q_+} \gmod{R(\beta)}$ and $\gproj{R} = \bigoplus_{\beta \in Q_+} \gproj{R(\beta)}$ are closed under the convolution products.
When $M \otimes N$ is regarded as a subspace of $M \circ N$, it is denoted by $M \boxtimes N$. 
For $u \in M, v\in N$, the element $u \otimes v \in M \boxtimes N$ is denoted by $u \boxtimes v$. 
Similarly, when $R(\alpha) \otimes R(\beta)$ is regarded as a subspace of $R(\alpha+\beta)$, it is denoted by $R(\alpha) \boxtimes R(\beta)$. 

Let $\beta_1, \ldots, \beta_m, \gamma_1, \ldots, \gamma_n \in \mathsf{Q}_+$ with $\sum_k \beta_k = \sum_l \gamma_l$.
Put $b_k = \height \beta_k, c_l = \height \gamma_l$ and $a = \sum_k b_k = \sum_l c_l$. 
Let $A = A(\beta_1, \ldots, \beta_m; \gamma_1, \ldots, \gamma_n)$ be the set of $\boldsymbol{\alpha} = (\alpha_{k,l})_{1 \leq k \leq m, 1 \leq l \leq n} \in \mathsf{Q}_+^{mn}$ satisfying  
\[
\sum_k \alpha_{k,l} = \gamma_l \ (1 \leq l \leq n), \  \sum_l \alpha_{k,l} = \beta_k \ (1 \leq k \leq m). 
\] 
Let ${}^{(c_l)}\mathfrak{S}^{(b_k)}$ be the set of minimal length double-coset representatives for 
\[
(\mathfrak{S}_{c_1} \times \cdots \times \mathfrak{S}_{c_n}) \backslash \mathfrak{S}_a / (\mathfrak{S}_{b_1} \times \cdots \times \mathfrak{S}_{b_m}). 
\]
For $\boldsymbol{\alpha} \in A$, we define $w(\boldsymbol{\alpha}) \in {}^{(c_l)}\mathfrak{S}^{(b_k)}$ as the element satisfying
\[
\lvert w(\boldsymbol{\alpha}) [b_1 + \cdots + b_{k-1} + 1, b_1 + \cdots + b_k] \cap [c_1 + \cdots + c_{l-1}+1, c_1 + \cdots + c_l] \rvert = \height \alpha_{k,l}
\]
for any $1 \leq k \leq m, 1 \leq l \leq n$. 

Let $M_k \in \gMod{R(\beta_k)} \ (1 \leq k \leq m)$. 
Put $V = \Res_{\gamma_1, \ldots, \gamma_n} \Ind_{\beta_1, \ldots, \beta_m} (M_1 \otimes \cdots \otimes M_m)$. 
It is decomposed into a direct sum of graded $\mathbf{k}$-vector spaces
\[
V = \bigoplus_{w \in {}^{(c_l)}\mathfrak{S}^{(b_k)}} (R(\gamma_1) \otimes \cdots \otimes R(\gamma_n))\tau_w (M_1 \otimes \cdots \otimes M_m). 
\]
For $w \in {}^{(c_l)}\mathfrak{S}^{(b_k)}$, we define two subspaces of $V$ 
\begin{align*}
F_{\leq w}V &= \bigoplus_{v \in {}^{(c_l)}\mathfrak{S}^{(b_k)}, v \leq w} (R(\gamma_1) \otimes \cdots \otimes R(\gamma_n))\tau_v (M_1 \otimes \cdots \otimes M_m), \\
F_{< w}V &=  \bigoplus_{v \in {}^{(c_l)}\mathfrak{S}^{(b_k)}, v < w} (R(\gamma_1) \otimes \cdots \otimes R(\gamma_n))\tau_v (M_1 \otimes \cdots \otimes M_m). 
\end{align*}

\begin{proposition}[Mackey filtration, {\cite[Proposition 2.18]{MR2525917}}] \label{prop:Mackey}
We use the notation above. 
For $w \in {}^{(c_l)}\mathfrak{S}^{(b_k)}$, $F_{\leq w}V$ and $F_{< w}V$ are $R(\gamma_1) \otimes \cdots \otimes R(\gamma_n)$-submodules of $V$. 
Furthermore, $F_{\leq w}V / F_{< w}V$ is isomorphic to a direct sum of
\begin{align*}
&q^{m(\boldsymbol{\alpha})}(\Ind_{\alpha_{1,1}, \ldots, \alpha_{m,1}} \otimes \cdots \otimes \Ind_{\alpha_{1,n}, \ldots, \alpha_{m,n}}) \cdot \\  
&(\Res_{\alpha_{1,1},\ldots,\alpha_{1,n}}M_1 \otimes \cdots \otimes \Res_{\alpha_{m,1},\ldots, \alpha_{m,n}}M_m) \\
&\text{where $m(\boldsymbol{\alpha}) = -\sum_{1 \leq k < k' \leq m, n \geq l > l' \geq 1} (\alpha_{k,l}, \alpha_{k',l'})$}, 
\end{align*}
parametrized by $\boldsymbol{\alpha} \in A$ satisfying $w(\boldsymbol{\alpha}) = w$. 
The isomorphism is given by 
\begin{align*}
q^{m(\boldsymbol{\alpha})}\Res_{\alpha_{1,1}, \ldots,\alpha_{1,n}}M_1 \otimes \cdots \otimes \Res_{\alpha_{m,1}, \ldots,\alpha_{m,n}} M_m &\to F_{\leq w}V/F_{<w} V,  \\
v_1 \otimes \cdots \otimes v_m &\mapsto \tau_w(v_1 \otimes \cdots \otimes v_n) + F_{< w}V, 
\end{align*}
and it is natural in $M_1, \ldots, M_m$. 
\end{proposition}

\begin{comment}
\begin{proposition}[Mackey filtration, {\cite[Proposition 2.18]{MR2525917}}] \label{prop:Mackey}
Let $\beta_1, \ldots, \beta_m, \gamma_1, \ldots, \gamma_n \in \mathsf{Q}_+$ with $\sum_k \beta_k = \sum_l \gamma_l$, and $M_k \in \gMod{R(\beta_k)}$. 
Then, 
\[ 
\Res_{\gamma_1,\ldots,\gamma_n} \Ind_{\beta_1, \ldots, \beta_m} (M_1 \otimes \cdots \otimes M_m)  
\] 
has a filtration whose successive quotients are
\begin{align*}
&q^{m(\alpha_{k,l})}(\Ind_{\alpha_{1,1}, \ldots, \alpha_{m,1}} \otimes \cdots \otimes \Ind_{\alpha_{1,n}, \ldots, \alpha_{m,n}}) (\Res_{\alpha_{1,1},\ldots,\alpha_{1,n}}M_1 \otimes \cdots \otimes \Res_{\alpha_{m,1},\ldots, \alpha_{m,n}}M_m), \\
&m(\alpha_{k,l}) = -\sum_{1 \leq k < k' \leq m, n \geq l > l' \geq 1} (\alpha_{k,l}, \alpha_{k',l'}), 
\end{align*}
where $(\alpha_{k,l}) \in \mathsf{Q}_+^{mn}$ runs under the condition 
\[
\sum_k \alpha_{k,l} = \gamma_l \ (1 \leq l \leq n), \  \sum_l \alpha_{k,l} = \beta_k \ (1 \leq k \leq m). 
\] 
\end{proposition}
\end{comment}

Put $n = \height \beta$. 
We define an algebra involution $\sigma$ of $R(\beta)$ by 
\begin{align*}
&\sigma(e(\nu)) = e(\nu_n, \ldots, \nu_1),\ \sigma(x_k) = x_{n+1-k},\\
&\sigma(\tau_k e(\nu)) = (-1)^{\delta_{\nu_k,\nu_{k+1}}}\tau_{n-k}e(\nu_n,\ldots,\nu_1). 
\end{align*}
It yields an autofunctor $\sigma_*$ of $\gMod{R(\beta)}$. 

Let $i \in I$ and $n \in \mathbb{Z}_{\geq 0}$. 
In $R(n\alpha_i)$, we have the following identity: 
\begin{equation} \label{eq:demazure}
\tau_{w_n} f \tau_k = \tau_{w_n} \partial_k(f), \ \tau_k f \tau_{w_n} = \partial_k(f) \tau_{w_n}, 
\end{equation}
where $f \in \mathbf{k}[x_1, \ldots,x_n], 1 \leq k \leq n-1$, and $\partial_k$ is the Demazure operator defined by \index{$\partial_k$: Demazure operator}
\[
\partial_k (f) = \frac{s_k(f) - f}{x_k-x_{k+1}}. 
\]
The operators $\partial_k \ (1 \leq k \leq n-1)$ satisfy the braid relations. 
Hence, we have an operator $\partial_w$ for each $w \in \mathfrak{S}_n$. 

We define 
\begin{align*}
\mathbf{x}_n = x_2 x_3^2 \cdots x_n^{n-1}, \ & \mathbf{x}'_n = x_1^{n-1}x_2^{n-2} \cdots x_{n-1}, \\
b_+(i^n) = \mathbf{x}_n\tau_{w_n}, \ & b_-(i^n) = \tau_{w_n}\mathbf{x}_n , \\
b'_+(i^n) = (-1)^{n(n-1)/2} \mathbf{x}'_n \tau_{w_n}, \ & b'_-(i^n) = (-1)^{n(n-1)/2} \tau_{w_n} \mathbf{x}'_n. 
\end{align*} \index{$\mathbf{x}_n, \mathbf{x}'_n$} \index{$b_+(i^n),b'_+(i^n), b_-(i^n),b'_-(i^n)$}
Note that 
\[
\varphi(b_+(i^n)) = b_-(i^n), \varphi(b'_+(i^n)) = b'_-(i^n), \sigma(b_+(i^n)) = b'_+(i^n), \sigma(b_-(i^n)) = b'_-(i^n). 
\]
Using (\ref{eq:demazure}), we obtain
\[
\tau_{w_n} b_+(i^n) = b_-(i^n) \tau_{w_n} = \tau_{w_n} b'_+(i^n) = b'_-(i^n) \tau_{w_n} = \tau_{w_n}. 
\]
It follows that $b_+(i^n), b_-(i^n), b'_+(i^n)$ and $b'_-(i^n)$ are all idempotents. 
Let $L(i^n)$ be the unique self-dual simple $R(n\alpha_i)$-module, and let $P(i^n)$ be its projective cover. \index{$L(i^n)$} \index{$P(i^n)$}
All the four modules 
\begin{align*}
q_i^{n(n-1)/2} R(n\alpha_i)b_+(i^n), &\ q_i^{-n(n-1)/2} R(n\alpha_i)b_-(i^n), \\ 
q_i^{n(n-1)/2}R(n\alpha_i)b'_+(i^n), &\ q_i^{-n(n-1)/2}R(n\alpha_i)b'_-(i^n), 
\end{align*}
are projective covers of $L(i^n)$. 
Hence, they are mutually isomorphic. 

\begin{lemma} \label{lem:projisom}
The left multiplication by $\mathbf{x}_n$ gives an isomorphism 
\[
q_i^{n(n-1)/2}b'_-(i^n)R(n\alpha_i) \to q_i^{-n(n-1)/2} b_+(i^n)R(n\alpha_i), 
\]
and the left multiplication by $\tau_{w_n}$ gives an isomorphism 
\[
q_i^{-n(n-1)/2}b_+(i^n)R(n\alpha_i) \to q_i^{n(n-1)/2} b'_-(i^n)R(n\alpha_i).
\] 
Furthermore, they are inverse to each other.  
\end{lemma}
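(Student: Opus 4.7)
The plan is to check everything by direct manipulation using only the identities already stated just before the lemma, namely $\mathbf{x}_n\tau_{w_n}=b_{+,n}$, $\tau_{w_n}\mathbf{x}'_n=(-1)^{n(n-1)/2}b'_{-,n}$, $b_{+,n}^2=b_{+,n}$, and the identity chain $\tau_{w_n}b_{+,n}=b'_{-,n}\tau_{w_n}=\tau_{w_n}$. The assertion splits into three tasks: (i) well-definedness of the two left-multiplication maps, (ii) matching of degree shifts, and (iii) checking that each composition is the identity on its source.

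For well-definedness, I first observe
\[
\mathbf{x}_n \cdot b'_{-,n} = (-1)^{n(n-1)/2}\,\mathbf{x}_n\tau_{w_n}\mathbf{x}'_n = (-1)^{n(n-1)/2}\,b_{+,n}\mathbf{x}'_n \in b_{+,n}R(n\alpha_i),
\]
so left multiplication by $\mathbf{x}_n$ sends $b'_{-,n}R(n\alpha_i)$ into $b_{+,n}R(n\alpha_i)$. Dually, $\tau_{w_n}\cdot b_{+,n}=\tau_{w_n}=b'_{-,n}\tau_{w_n}$, so left multiplication by $\tau_{w_n}$ sends $b_{+,n}R(n\alpha_i)$ into $b'_{-,n}R(n\alpha_i)$. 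For the grading, $\deg\mathbf{x}_n=(1+\cdots+(n-1))(\alpha_i,\alpha_i)=n(n-1)(\alpha_i,\alpha_i)/2$ and $\deg\tau_{w_n}=-n(n-1)(\alpha_i,\alpha_i)/2$, and since each $q_i$ shift corresponds to $(\alpha_i,\alpha_i)/2$, the shifts $q_i^{\pm n(n-1)/2}$ exactly cancel these degrees, so both maps are homomorphisms of graded right $R(n\alpha_i)$-modules.

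For mutual inversion, I evaluate each composition on a general element. Using $\mathbf{x}_n\tau_{w_n}=b_{+,n}$ and idempotency,
\[
(L_{\mathbf{x}_n}\circ L_{\tau_{w_n}})(b_{+,n}r)=\mathbf{x}_n\tau_{w_n}b_{+,n}r=b_{+,n}^2\,r=b_{+,n}r.
\]
In the other direction, using the well-definedness computation above followed by $\tau_{w_n}b_{+,n}=\tau_{w_n}$,
\[
(L_{\tau_{w_n}}\circ L_{\mathbf{x}_n})(b'_{-,n}r)=\tau_{w_n}\mathbf{x}_nb'_{-,n}r
=(-1)^{n(n-1)/2}\tau_{w_n}b_{+,n}\mathbf{x}'_n r
=(-1)^{n(n-1)/2}\tau_{w_n}\mathbf{x}'_n r=b'_{-,n}r.
\]

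There is no genuine obstacle here: the whole lemma is a bookkeeping consequence of the identities $\mathbf{x}_n\tau_{w_n}=b_{+,n}$, $\tau_{w_n}\mathbf{x}'_n=(-1)^{n(n-1)/2}b'_{-,n}$, and the chain $\tau_{w_n}b_{+,n}=b'_{-,n}\tau_{w_n}=\tau_{w_n}$ already recorded above. The only place where one must be slightly careful is the sign $(-1)^{n(n-1)/2}$, which appears twice in $L_{\tau_{w_n}}\circ L_{\mathbf{x}_n}$ and therefore cancels; this is the step I would double-check most carefully.
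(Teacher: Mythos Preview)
Your proof is correct and follows essentially the same approach as the paper: both arguments reduce everything to the identities $b_{+,n}=\mathbf{x}_n\tau_{w_n}$, $b'_{-,n}=(-1)^{n(n-1)/2}\tau_{w_n}\mathbf{x}'_n$, and $\tau_{w_n}b_{+,n}=\tau_{w_n}$ (equivalently $\tau_{w_n}\mathbf{x}_n\tau_{w_n}=\tau_{w_n}$). The paper additionally identifies $b_{+,n}R(n\alpha_i)=\mathbf{x}_n\tau_{w_n}\mathbf{k}[x_1,\ldots,x_n]$ and $b'_{-,n}R(n\alpha_i)=\tau_{w_n}\mathbf{k}[x_1,\ldots,x_n]$ via a Demazure-operator computation, but your more formal idempotent bookkeeping avoids that step and is if anything slightly cleaner.
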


\begin{proof}
We suppress degree shifts. 
Note that 
\[
b_+(i^n)R(n\alpha_i) = \mathbf{x}_n \tau_{w_n} \mathbf{k}[x_1, \ldots, x_n], b'_-(i^n) R(n\alpha_i) = \tau_{w_n} \mathbf{k}[x_1,\ldots,x_n], 
\]
where the second equality follows from 
\[
b'_-(i^n)\tau_w f(x_1, \ldots, x_n) = (-1)^{n(n-1)/2} \tau_{w_n} \partial_w(\mathbf{x}_n) f.
\] 
Using $\tau_{w_n} \mathbf{x}_n \tau_{w_n} = \tau_{w_n}$, we see that the two morphisms are well-defined and mutually inverse. 
\end{proof}

\section{Categorification theorem} \label{subsec:categorification}

\begin{theorem}[\cite{MR2525917, MR2763732}] \label{thm:categorification1}
We have an isomorphism of $\mathbb{Q}(q)$-algebras
\[
K(\gmod{R})_{\mathbb{Q}(q)} \to U_q^-(\mathfrak{g}), \ [L(i)] \mapsto (1-q_i^2) f_i \ (i \in I). 
\]
\end{theorem}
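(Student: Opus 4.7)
The plan is to construct the map $\psi \colon K(\gmod{R})_{\mathbb{Q}(q)} \to U_q^-(\mathfrak{g})$ in stages: first equip $K(\gmod{R})$ with an algebra structure via induction, then check that the assignment $[L(i)] \mapsto (1-q_i^2) f_i$ respects the Serre relations and so extends to an algebra homomorphism, and finally verify bijectivity by comparing graded dimensions of weight spaces.

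For the first step, I would endow $K(\gmod{R}) = \bigoplus_{\beta \in \mathsf{Q}_+} K(\gmod{R(\beta)})$ with a $\mathbb{Z}[q,q^{-1}]$-algebra structure via the convolution product $[M] \cdot [N] = [\Ind_{R(\alpha) \boxtimes R(\beta)}^{R(\alpha+\beta)}(M \boxtimes N)]$. This is well-defined because $R(\alpha+\beta)$ is free as a right $R(\alpha) \boxtimes R(\beta)$-module on the elements $\tau_w$ for $w$ in the minimal coset $\mathfrak{S}_{\height(\alpha+\beta)}^{\height \alpha, \height \beta}$, so the induction functor is exact; associativity follows from transitivity of induction.

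Second, I would define $\psi$ on classes of simples by the prescribed formula and extend multiplicatively. One must verify that the image satisfies the quantum Serre relations, equivalently that an alternating sum of the form $\sum_s (-1)^s [L(i)^{\circ s} \circ L(j) \circ L(i)^{\circ (1-a_{i,j}-s)}]$ (with appropriate normalization) vanishes in $K(\gmod{R})_{\mathbb{Q}(q)}$. This follows from an explicit analysis of the structure of induced modules using the defining relations of $R(\beta)$, where the $\overline{Q}_{i,j,i}$ terms in the cubic braid relation produce exactly the needed cancellations. The scaling factor $(1-q_i^2)$ emerges naturally from the identity $[L(i)]^n = [n]_i!\, q_i^{?} [L(i^n)]$ in the Grothendieck ring, which compares the $n$-fold convolution to divided powers $f_i^{(n)}$. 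Surjectivity of $\psi$ then follows immediately because $\{f_i\}_{i \in I}$ generates $U_q^-(\mathfrak{g})$.

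The hard step is injectivity. I would compare graded ranks weight by weight: the domain $K(\gmod{R(\beta)})_{\mathbb{Q}(q)}$ has $\mathbb{Q}(q)$-rank equal to the number of isomorphism classes of simple $R(\beta)$-modules modulo grading shift, while $\dim_{\mathbb{Q}(q)} U_q^-(\mathfrak{g})_{-\beta}$ is the Kostant partition function of $\beta$. Matching these numbers requires a classification of simple $R(\beta)$-modules, and this is the genuinely deep input: historically it is carried out either geometrically, via Varagnolo-Vasserot's identification of $R(\beta)$ with an equivariant $\Ext$-algebra of Lusztig's perverse sheaves in the symmetric case, or combinatorially, via cuspidal PBW-type decompositions as in Kleshchev-Ram and McNamara. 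Once this input is available, the dimension comparison forces $\psi$ to be an isomorphism.
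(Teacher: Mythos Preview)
The paper does not give its own proof of this statement: it is cited directly to Khovanov--Lauda \cite{MR2525917, MR2763732} as a foundational result and then used as input to Theorem~\ref{thm:categorification2}. So there is no proof in the paper to compare against.

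Your sketch captures the overall architecture of the original argument, but one point is tangled. You describe defining $\psi \colon K(\gmod{R})_{\mathbb{Q}(q)} \to U_q^-(\mathfrak{g})$ on the $[L(i)]$ and ``extending multiplicatively,'' yet the verification you propose---that the Serre-type alternating sum vanishes \emph{in} $K(\gmod{R})_{\mathbb{Q}(q)}$---is precisely what one needs to define a map in the \emph{opposite} direction, $f_i \mapsto (1-q_i^2)^{-1}[L(i)]$. To define $\psi$ directly you would need a presentation of $K(\gmod{R})_{\mathbb{Q}(q)}$ in advance, which is essentially the whole theorem. The standard route is to construct the inverse, obtain surjectivity from the fact that induced products of the $L(i)$ exhaust $\gmod{R(\beta)}$ up to Jordan--H\"older, and then prove injectivity.

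For injectivity, the tools you name (Varagnolo--Vasserot geometry, Kleshchev--Ram/McNamara PBW theory) are later developments; the original Khovanov--Lauda proof instead matches the nondegenerate bilinear form on $U_q^-(\mathfrak{g})$ against a $\qdim\Hom$-pairing between $K_\oplus(\gproj{R})$ and $K(\gmod{R})$, using the faithful polynomial representation of $R(\beta)$ to compute graded dimensions. Your acknowledgement that this step is the deep input is accurate, but the specific mechanism differs from what you cite.
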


We may extend this result to $K(\gMod{R})$ as follows:  

\begin{theorem} \label{thm:categorification2}
We have an isomorphism of $\mathbb{Q}(q)$-algebras
\[
\chi \colon K(\gMod{R})_{\mathbb{Q}(q)} \to U_q^-(\mathfrak{g}), 
\] \index{$\chi \colon K(\gMod{R})_{\mathbb{Q}(q)} \to U_q^-(\mathfrak{g})$}
such that $\chi(R(\alpha_i)) = f_i$. 
Furthermore, the inclusions induce isomorphisms
\[
K(\gmod{R})_{\mathbb{Q}(q)} \xrightarrow{\sim} K(\gMod{R})_{\mathbb{Q}(q)} \xleftarrow{\sim} K_{\oplus}(\gproj{R})_{\mathbb{Q}(q)}. 
\]
\end{theorem}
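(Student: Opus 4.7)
The plan is to exploit the fact, noted in the introduction, that $R(\beta)$ is finitely generated as a module over its center $Z(\beta) \simeq \bigotimes_{i \in I} \mathbf{k}[z_{i,1}, \ldots, z_{i,k_i}]^{\mathfrak{S}_{k_i}}$, which is a graded polynomial ring with all generators of strictly positive degree. The first step is to extend the Khovanov-Lauda character map from $\gmod{R}$ to $\gMod{R}$: for any $M \in \gMod{R(\beta)}$ and $\nu \in I^\beta$, the weight component $e(\nu)M$ is a finitely generated graded $Z(\beta)$-module, so its $q$-dimension $\qdim(e(\nu)M)$ is a rational function in $\mathbb{Q}(q)$. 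This yields an additive map $\mathrm{ch} \colon K(\gMod{R(\beta)})_{\mathbb{Q}(q)} \to \bigoplus_{\nu \in I^\beta}\mathbb{Q}(q)$ that restricts on $K(\gmod{R(\beta)})_{\mathbb{Q}(q)}$ to the classical character map, which is injective by (the proof of) Theorem~\ref{thm:categorification1}.

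The central step is a Koszul complex argument for the surjectivity of $K(\gmod{R})_{\mathbb{Q}(q)} \to K(\gMod{R})_{\mathbb{Q}(q)}$. Let $f_1, \ldots, f_N$ be a homogeneous generating set of the graded maximal ideal $\mathfrak{m} \subset Z(\beta)$; since $Z(\beta)$ is a polynomial ring, the $f_j$'s form a regular sequence, and the Koszul complex $K_\bullet(f_\bullet; Z(\beta))$ is a free $Z(\beta)$-resolution of $\mathbf{k} = Z(\beta)/\mathfrak{m}$. For $M \in \gMod{R(\beta)}$, the complex $K_\bullet(f_\bullet; M) := K_\bullet(f_\bullet; Z(\beta)) \otimes_{Z(\beta)} M$ consists of finitely generated graded $R(\beta)$-modules (each term is a direct sum of graded shifts of $M$, and the differentials are $R(\beta)$-linear because $f_j \in Z(\beta)$ is central), with cohomology $\mathrm{Tor}^{Z(\beta)}_\bullet(\mathbf{k}, M)$. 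Each Tor module is annihilated by $\mathfrak{m}$ and finitely generated over $R(\beta)$, hence is a finitely generated module over $R(\beta)/\mathfrak{m}R(\beta)$. Finite generation of $R(\beta)$ over $Z(\beta)$ makes $R(\beta)/\mathfrak{m}R(\beta)$ a finite-dimensional $\mathbf{k}$-algebra, so the Tor modules are finite-dimensional. Taking Euler characteristics in $K(\gMod{R(\beta)})$ yields
\[
\prod_{j=1}^N (1 - q^{\deg f_j})\, [M] = \sum_{i \geq 0} (-1)^i [\mathrm{Tor}^{Z(\beta)}_i(\mathbf{k}, M)].
\]
The right side is supported in $\gmod{R(\beta)}$, while the left coefficient is a nonzero element of $\mathbb{Q}(q)$; inverting it shows $[M]$ lies in the image of $K(\gmod{R(\beta)})_{\mathbb{Q}(q)}$. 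Combined with the injectivity from $\mathrm{ch}$, the inclusion is an isomorphism. Composing with Theorem~\ref{thm:categorification1} produces the desired $\chi$, and specializing the Koszul identity to $\beta = \alpha_i$ with $f_1 = x_1$ of degree $(\alpha_i, \alpha_i)$ gives $(1 - q_i^2)[R(\alpha_i)] = [L(i)]$, whence $\chi([R(\alpha_i)]) = f_i$.

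For the projective side, $K_{\oplus}(\gproj{R})_{\mathbb{Q}(q)} \to K(\gMod{R})_{\mathbb{Q}(q)}$ is a ring homomorphism (induction preserves projectivity), and its image under $\chi$ contains each $f_i = \chi([R(\alpha_i)])$, hence equals all of $U_q^-(\mathfrak{g})$; this gives surjectivity. Injectivity follows from the classical Khovanov-Lauda-Rouquier categorification $K_{\oplus}(\gproj{R}) \simeq U_q^-(\mathfrak{g})_{\mathbb{Z}[q,q^{-1}]}$ of \cite{MR2525917, MR2763732}: in each weight $\beta$ both sides become finite-dimensional $\mathbb{Q}(q)$-vector spaces of equal dimension, so a surjective map between them is automatically an isomorphism. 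The principal technical obstacle is the bookkeeping in the Koszul argument — in particular, verifying cleanly that $R(\beta)$ is finitely generated over its center (so that $R(\beta)/\mathfrak{m}R(\beta)$ is finite-dimensional and the Tor modules lie in $\gmod{R(\beta)}$) and that the resulting Euler characteristic identity descends unambiguously to the Grothendieck group $K(\gMod{R(\beta)})$.
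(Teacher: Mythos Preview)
Your proof is correct and shares the paper's overall strategy: exploit finiteness of $R(\beta)$ over its polynomial center $Z(\beta)$ to reduce $K(\gMod{R})_{\mathbb{Q}(q)}$ to the finite-dimensional case, then invoke Theorem~\ref{thm:categorification1}. The implementations differ in two places. For surjectivity of $K(\gmod{R})_{\mathbb{Q}(q)} \to K(\gMod{R})_{\mathbb{Q}(q)}$, the paper proceeds one central generator at a time using only the four-term sequence $0 \to \Ker \to q^{\deg f}M \xrightarrow{f\cdot} M \to \Cok \to 0$, which yields $[M] = ([\Cok]-[\Ker])/(1-q^{\deg f})$ and then iterates until all central generators act by zero; this avoids any regularity hypothesis. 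Your Koszul complex packages the same idea in one shot, at the cost of needing $f_1,\ldots,f_N$ to be a regular sequence---so you should say ``minimal homogeneous generating set'' (equivalently, polynomial generators of $Z(\beta)$), since an arbitrary generating set of $\mathfrak{m}$ need not be regular. For injectivity, the paper builds an explicit retraction $[M]\mapsto \sum_s (\qdim \HOM_{R(\beta)}(P_s,M))\,[L_s]$ via projective covers rather than the extended character map; both work, but the paper's retraction is reused immediately afterward to give an explicit formula for $\chi$ on arbitrary finitely generated modules. The projective-side arguments are identical.
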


\begin{proof}
Step 1. Surjectivity of $K(\gmod{R(\beta)})_{\mathbb{Q}(q)} \to K(\gMod{R(\beta)})_{\mathbb{Q}(q)} \ (\beta \in \mathsf{Q}_+)$.
We need to show that for any $M \in \gMod{R(\beta)}$, the element $[M]$ lies in the image of $K(\gmod{R(\beta)})_{\mathbb{Q}(q)}$. 
Recall that 
\[
Z(\beta) \simeq \bigotimes_{i \in I} \mathbf{k}[e_{i,1}, \ldots, e_{i,k_i}], 
\]
where $k_i$ is the coefficient of $\alpha_i$ in $\beta$, and $\deg e_{i,r} = r(\alpha_i,\alpha_i)$.
We fix a labeling $I = \{ i_1, \ldots, i_n\}$. 
Consider the following exact sequence: 
\[
0 \to \Ker \to q_{i_1}^2 M \xrightarrow{e_{i_1,1} \times} M \to \Cok \to 0. 
\]
In $K(\gMod{R(\beta)})_{\mathbb{Q}(q)}$, we have 
\[
[M] = \frac{[\Cok] - [\Ker]}{1-q_{i_1}^2}. 
\]
Hence, the surjectivity is reduced to proving that $[\Cok]$ and $[\Ker]$ are in the image of $K(\gmod{R(\beta)})_{\mathbb{Q}(q)}$. 
Repeating this procedure for all $e_{i,r}$, we may assume that the center $Z(\beta)$ acts on $M$ trivially. 
Since $R(\beta)$ is finitely generated over $Z(\beta)$, $M$ is finite-dimensional. 
Now, it is obvious that $[M]$ is in the image of $K(\gmod{R(\beta)})$. 

Step 2. Injectivity of $K(\gmod{R(\beta)})_{\mathbb{Q}(q)} \to K(\gMod{R(\beta)})_{\mathbb{Q}(q)} \ (\beta \in \mathsf{Q}_+)$.
Let $\{ L_1, \ldots, L_r \}$ be a complete set of isomorphism classes of simple graded $R(\beta)$-modules up to grading shifts. 
For $1 \leq s \leq r$, let $P_s$ be the projective cover of $L_s$. 
Then, $\{ [L_1], \ldots, [L_r] \}$ (resp. $\{ [P_1], \ldots, [P_r]\}$) is a free $\mathbb{Z}[q,q^{-1}]$-basis of $K(\gmod{R(\beta)})$ (resp. $K_{\oplus} (\gproj{R(\beta)})$). 
Since the functors $\HOM_{R(\beta)} (P_s,*)$ are exact, we have a $\mathbb{Z}[q,q^{-1}]$-linear map
\begin{align*}
K(\gMod{R(\beta)}) &\to K(\gmod{R(\beta)})_{\mathbb{Z}((q))} \coloneq K(\gmod{R(\beta)}) \otimes_{\mathbb{Z}[q,q^{-1}]} \mathbb{Z}((q)), \\
[M] &\mapsto \sum_{1 \leq s \leq r} (\qdim \HOM_{R(\beta)}(P_s, M)) [L_s]. 
\end{align*}
Under this homomorphism, $[L_s]$ is sent to $[L_s]$ for every $1 \leq s \leq r$. 
Hence, the composition $K(\gmod{R}) \to K(\gMod{R}) \to K(\gmod{R(\beta)})_{\mathbb{Z}((q))}$ is injective, 
which proves the injectivity of $K(\gmod{R}) \to K(\gMod{R})$. 

Step 3. Isomorphism $K(\gproj{R})_{\mathbb{Q}(q)} \simeq K(\gMod{R})_{\mathbb{Q}(q)}$. 
For $i \in I$, we have a short exact sequence 
\[
0 \to q_i^2 R(\alpha_i) \to R(\alpha_i) \to L(i) \to 0. 
\]
Hence, $[R(\alpha_i)] = [L(i)]/(1-q_i^2)$ in $K(\gMod{R})_{\mathbb{Q}(q)}$. 
On the other hand, Theorem \ref{thm:categorification1} and Step 1 imply that the $\mathbb{Q}(q)$-algebra $K(\gMod{R})_{\mathbb{Q}(q)}$ is generated by $[L(i)] \ (i \in I)$.
Therefore, the morphism $K_{\oplus}(\gproj{R})_{\mathbb{Q}(q)} \to K(\gMod{R})_{\mathbb{Q}(q)}$ is surjective. 
By step 1 and 2, we have 
\begin{align*}
\dim_{\mathbb{Q}(q)} K_{\oplus}(\gproj{R(\beta)})_{\mathbb{Q}(q)} &= \dim_{\mathbb{Q}(q)} K(\gmod{R(\beta)})_{\mathbb{Q}(q)} \\
&= \dim_{\mathbb{Q}(q)} K(\gMod{R(\beta)}).
\end{align*}
Hence, the assertion follows. 

Step 4. Combined with Theorem \ref{thm:categorification1}, we deduce the theorem. 
\end{proof}

Note that the morphism $K(\gMod{R}(\beta)) \to K(\gmod{R(\beta)})_{\mathbb{Z}((q))}$ in Step 2 of the proof above yields the inverse of the isomorphism $K(\gmod{R(\beta)})_{\mathbb{Q}(q)} \to K(\gMod{R(\beta)})_{\mathbb{Q}(q)}$. 
It implies the following corollary.  

\begin{corollary}
Let $\beta \in \mathsf{Q}_+$, and $M \in \gMod{R(\beta)}$.  
We use the notation of Step 2 in the proof above. 
We have $\qdim \HOM_{R(\beta)}(P_s, M) \in \mathbb{Q}(q)$ for any $1 \leq s \leq r$, and 
\[
\chi(M) = \sum_{1 \leq s \leq r} (\qdim \HOM_{R(\beta)}(P_s,M)) \chi(L_s). 
\]
\end{corollary}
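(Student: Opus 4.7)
My plan is to invoke the auxiliary $\mathbb{Z}[q,q^{-1}]$-linear map
\[
\Phi \colon K(\gMod{R(\beta)}) \to K(\gmod{R(\beta)})_{\mathbb{Z}((q))}, \quad [M] \mapsto \sum_{s=1}^{r} \bigl(\qdim \HOM_{R(\beta)}(P_s, M)\bigr)[L_s],
\]
constructed in Step 2 of the proof of Theorem \ref{thm:categorification2}, together with the fact noted there that $\Phi([L_s]) = [L_s]$ for every $s$. The content of the corollary is essentially that $\Phi$, after an appropriate extension of scalars, is the inverse of the Grothendieck-group isomorphism of Theorem \ref{thm:categorification2}.

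First I would use Theorem \ref{thm:categorification2} to write
\[
[M] = \sum_{s=1}^{r} f_s(q)[L_s] \quad\text{in } K(\gMod{R(\beta)})_{\mathbb{Q}(q)}
\]
for uniquely determined $f_s(q) \in \mathbb{Q}(q)$. After choosing a common denominator $p(q) \in \mathbb{Z}[q,q^{-1}]\setminus\{0\}$ and setting $g_s(q) = p(q) f_s(q) \in \mathbb{Z}[q,q^{-1}]$, one obtains the identity $p(q)[M] = \sum_s g_s(q)[L_s]$ in $K(\gMod{R(\beta)})_{\mathbb{Q}(q)}$. Since $\mathbb{Q}(q) = \mathrm{Frac}(\mathbb{Z}[q,q^{-1}])$, the kernel of $K(\gMod{R(\beta)}) \to K(\gMod{R(\beta)})_{\mathbb{Q}(q)}$ is the $\mathbb{Z}[q,q^{-1}]$-torsion submodule, so there exists $p'(q) \in \mathbb{Z}[q,q^{-1}]\setminus\{0\}$ with
\[
p'(q)p(q)[M] = p'(q)\sum_s g_s(q)[L_s] \quad\text{already in } K(\gMod{R(\beta)}).
\]
Applying the $\mathbb{Z}[q,q^{-1}]$-linear map $\Phi$ and using $\Phi([L_s]) = [L_s]$, I obtain an identity in the free $\mathbb{Z}((q))$-module $K(\gmod{R(\beta)})_{\mathbb{Z}((q))}$ on the basis $\{[L_s]\}$. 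Cancelling the nonzero scalar $p'(q)p(q) \in \mathbb{Z}((q))$ and comparing coefficients yields
\[
\qdim \HOM_{R(\beta)}(P_s,M) = g_s(q)/p(q) = f_s(q) \in \mathbb{Q}(q),
\]
which is the rationality assertion; the displayed formula then follows by applying the $\mathbb{Q}(q)$-linear isomorphism $\chi$ to $[M] = \sum_s f_s(q)[L_s]$.

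The only subtlety I foresee is moving between the two coefficient rings $\mathbb{Z}((q))$ (into which $\Phi$ naturally lands) and $\mathbb{Q}(q)$ (the coefficient ring used in Theorem \ref{thm:categorification2}); this is handled above by combining the freeness of $K(\gmod{R(\beta)})$ over $\mathbb{Z}[q,q^{-1}]$ with the integrality of $\mathbb{Z}((q))$, both of which legitimise the cancellation step.
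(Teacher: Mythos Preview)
Your proof is correct and follows essentially the same approach as the paper: the paper simply asserts (in the sentence preceding the corollary) that the map $\Phi$ of Step~2 yields the inverse of the isomorphism $K(\gmod{R(\beta)})_{\mathbb{Q}(q)} \to K(\gMod{R(\beta)})_{\mathbb{Q}(q)}$, and you have carefully unpacked what this means, including the passage between the coefficient rings $\mathbb{Z}((q))$ and $\mathbb{Q}(q)$ via clearing denominators and killing torsion.
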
 

\begin{definition}
For $M \in \gmod{R(\beta)}$, we define $R(\beta-\alpha_i)$-modules
\begin{align*}
E_i' M = e(i,\beta-\alpha_i) M, \ {E_i'}^* M = e(\beta -\alpha_i,i)M
\end{align*}
\end{definition}

\begin{lemma} \label{lem:res}
For $M \in \gmod{R(\beta)}$, we have 
\[
\chi(E_i'M) = \frac{1}{1-q_i^2} {}_ir(\chi (M)), \ \chi({E_i'}^*M) = \frac{1}{1-q_i^2} r_i(\chi(M)). 
\]
\end{lemma}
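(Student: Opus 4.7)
The plan is to view both identities as equalities of $\mathbb{Q}(q)$-linear endomorphisms of $U_q^-(\mathfrak{g})$ via the isomorphism $\chi$ of Theorem \ref{thm:categorification2}, and to verify them by matching initial values on the generators with a Leibniz-type recursion. Since $E_i'$ is exact from $\gmod{R(\beta)}$ to $\gmod{R(\beta-\alpha_i)}$, it induces a $\mathbb{Q}(q)$-linear endomorphism on the Grothendieck group; transporting via $\chi$ gives $\tilde{\phi} = (1-q_i^2)\chi \circ E_i' \circ \chi^{-1}$ acting on $U_q^-(\mathfrak{g})$. It is enough to prove $\tilde{\phi} = {}_ir$; the second identity for ${E_i'}^*$ and $r_i$ follows from a symmetric argument decomposing by the last $i$-strand rather than the first.

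The main step is a Mackey-type short exact sequence. For $M \in \gmod{R(\alpha)}$ and $N \in \gmod{R(\gamma)}$, decomposing $M \circ N = R(\alpha+\gamma) \otimes_{R(\alpha) \boxtimes R(\gamma)} (M \boxtimes N)$ according to whether the first $i$-strand originates from the $M$-block or the $N$-block yields
\[
0 \to q_i^{-(\alpha_i,\alpha)} M \circ (E_i' N) \to E_i'(M \circ N) \to (E_i' M) \circ N \to 0
\]
as graded $R(\alpha + \gamma - \alpha_i)$-modules. The shift $q_i^{-(\alpha_i,\alpha)}$ tracks the total degree of the $\tau$-crossings needed to move an $i$-strand from position $\height \alpha + 1$ to position $1$, which telescopes via $\deg \tau_k e(\nu) = -(\alpha_{\nu_k}, \alpha_{\nu_{k+1}})$ to exactly $-(\alpha_i, \alpha)$. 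Applying $\chi$ and multiplying by $1 - q_i^2$ yields the Leibniz rule
\[
\tilde{\phi}(xy) = \tilde{\phi}(x) y + q^{-(\alpha_i,\alpha)} x\, \tilde{\phi}(y), \qquad x \in U_q^-(\mathfrak{g})_{-\alpha},\ y \in U_q^-(\mathfrak{g}),
\]
precisely matching the Leibniz rule satisfied by ${}_ir$.

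To finish, I check initial values on the generators $f_j$. Since $\chi([L(j)]) = (1-q_j^2)f_j$, we have $\tilde{\phi}(f_j) = \chi(E_i' L(j))$. Now $E_i' L(j) = e(i) L(j)$ equals $L(j)$ itself as a one-dimensional $R(0) = \mathbf{k}$-module (whose class under $\chi$ is $1 \in U_q^-(\mathfrak{g})_0$) when $j = i$, and vanishes when $j \neq i$. This gives $\tilde{\phi}(f_j) = \delta_{i,j} = {}_ir(f_j)$. As ${}_ir$ is uniquely determined by its values on the $f_j$ together with the Leibniz rule, we conclude $\tilde{\phi} = {}_ir$, which proves the first identity; the second is handled identically.

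The main obstacle is establishing the Mackey short exact sequence with the correct grading shift $q_i^{-(\alpha_i,\alpha)}$. This amounts to routine combinatorics with the minimal-length coset representatives of $\mathfrak{S}_{n+m}/(\mathfrak{S}_n \times \mathfrak{S}_m)$ and the degree formula for $\tau_w$, but no new ideas beyond standard quiver Hecke algebra manipulations are needed.
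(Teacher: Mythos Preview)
Your approach is essentially the same as the paper's: both establish the identity by matching the Mackey recursion for $E_i'$ against the Leibniz formula characterizing ${}_ir$, together with the initial values on $f_j$. Two minor slips do not affect correctness: the grading shift should be $q^{-(\alpha_i,\alpha)}$ rather than $q_i^{-(\alpha_i,\alpha)}$ (you already write it correctly in the Leibniz rule), and in your Mackey sequence the roles of submodule and quotient are swapped relative to the standard convention---the piece $(E_i'M)\circ N$ is the submodule and $q^{-(\alpha_i,\alpha)} M \circ (E_i'N)$ the quotient---but the resulting identity in $K(\gmod{R})$ is of course the same.
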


\begin{proof}
By the Mackey-filtration (Proposition \ref{prop:Mackey}), we have a short exact sequence
\[
0 \to M \simeq L(i) \otimes M \to E_i'(L(i) \circ M) \to q^{-(\alpha_i,\alpha_i)} L(i) \circ E_i'M \to 0,  
\]
and isomorphisms
\[
E_i'(L(j) \circ M) \simeq q^{-(\alpha_i,\alpha_j)} L(j) \circ E_i'(M) \ (j \neq i). 
\]
Hence, 
\[
\chi(E_i'(L(j) \circ M)) = q^{-(\alpha_i,\alpha_j)} \chi(L(j)) \chi(E_i'M) + \delta_{i,j} \chi(M) \ (j \in I). 
\]

On the other hand, we have  
\[
{}_ir(f_jx) = q^{-(\alpha_i,\alpha_j)}f_j ({}_ir(x)) + \delta_{i,j}x 
\]
for $j \in I$ and $x \in U_q^-(\mathfrak{g})$ \cite[1.2.13]{MR2759715}. 
It implies 
\[
\frac{1}{1-q_i^2} {}_ir((1-q_j^2)f_jx) = q^{-(\alpha_i,\alpha_j)}(1-q_j^2)f_j \frac{1}{1-q_i^2} {}_ir(x) + \delta_{i,j} x. 
\]
Since $\chi(L(j)) = (1-q_j^2)f_j$ and $U_q^-(\mathfrak{g})$ is generated by $f_j \ (j \in I)$ as a $\mathbb{Q}(q)$-algebra, 
the first assertion follows. 
The second assertion is similar. 
\end{proof}

By \cite[Chapter 1]{MR2759715}, there exists a nondegenerate symmetric $\mathbb{Q}(q)$-bilinear form $(,)$ on $U_q^-(\mathfrak{g})$ determined by 
\[
(1,1) = 1, \ (f_ix, y) = \frac{1}{1-q_i^2}(x, {}_ir(y)), 
\]
for $x,y \in U_q^-(\mathfrak{g})$. 
Let $c$ be the $\mathbb{Q}$-linear automorphism of $U_q^-(\mathfrak{g})$ defined by 
\[
(c(x),y) = \overline{(x,\overline{y})} \ (x, y \in U_q^-(\mathfrak{g})).  
\]
For $x \in U_q^-(\mathfrak{g})_{-\beta}$ with $\beta = \sum_{i \in I} k_i \alpha_i \ (k_i \geq 0)$, 
our $c(x)$ is $\prod_{i \in I}(-q_i^2)^{k_i}$-multiple of $\sigma(x)$ in \cite[3.1]{MR2914878}. 
By \cite[Proposition 3.6]{MR2914878}, we have 
\[
c(xy) = q^{(\beta,\gamma)}c(y)c(x) \ (x \in U_q^-(\mathfrak{g})_{-\beta}, y \in U_q^-(\mathfrak{g})_{-\gamma}). 
\]
By definition, we have 
\[
c(1) = 1, c((1-q_i^2)f_i) = (1-q_i^2)f_i \ (i \in I).  
\]
Note that $c$ is uniquely determined by these properties. 

\begin{comment}
\begin{lemma} \label{lem:twist}
Let $\beta \in \mathbf{Q}_+$. 

(1) For $P \in \gproj{R(\beta)}$ and $M \in \gMod{R(\beta)}$, we have 
\[
(\overline{\chi{P}}, \chi(M)) = \qdim \HOM_{R(\beta)}(P,M). 
\]

(2) For $M \in \gmod{R(\beta)}$, we have 
\[
\chi(DM) = c(\chi(M)). 
\]
\end{lemma}

\begin{proof}
(1) For $P \in \gproj{R(\beta)}, M \in \gmod{R(\beta+\alpha_i)}$, we have 
\begin{align*}
\qdim \HOM_{R(\beta+\alpha_i)}(R(\alpha_i)\circ P,M) &= \qdim \HOM_{R(\alpha_i) \otimes R(\beta)} (R(\alpha_i) \otimes P, \Res_{\alpha_i,\beta}M) \\
&= \qdim \HOM_{R(\beta)} (P, E'_iM). 
\end{align*}
On the other hand, we have 
\begin{align*}
(\overline{\chi(R(\alpha_i) \circ P)}, \chi(M)) &= (f_i \overline{\chi(P)}, \chi(M))\\
&= \frac{1}{1-q_i^2}(\overline{\chi(P)}, {}_ir (\chi(M))) \\
&= (\overline{\chi(P)}, \overline{E'_iM}) \quad \text{by Lemma \ref{lem:res}}. 
\end{align*}
Hence, the assertion follows by induction from 
\[
(1,1) = 1 = \qdim \HOM_{R(0)}{\mathbf{1},\mathbf{1}}. 
\]

(2) It suffices to prove the equality for a self-dual simple module $M$. 
By definition, we have $DM \simeq M$. 
For any $P \in \gproj{R(\beta)}$, we have 
\begin{align*}
(\overline{\chi(P)}, \chi(DM)) &= \qdim \HOM_{R(\beta)}(P,M) \quad \text{by (1) and $DM \simeq M$}  \\
&= (\overlin{\chi(P)}, \chi(M))
\end{align*}
\end{proof}

\end{comment}

\begin{lemma} \label{lem:twist}
For $M \in \gmod{R}$, we have 
\[
\chi(DM) = c (\chi(M)). 
\]
\end{lemma}

\begin{proof}
It suffices to prove that $D$ satisfies properties that characterizes $c$ discussed above. 
For $M \in \gmod{R(\beta)}, N \in \gmod{R(\gamma)}$, we have 
\[
D(M \circ N) \simeq q^{(\beta,\gamma)} DN \circ DM, 
\] 
by \cite[Theorem 2.2]{MR2822211}. 
Furthermore, we have $DL(i) \simeq L(i)$ and $\chi(L(i)) = (1-q_i^2)f_i$ for any $i \in I$. 
Hence, the lemma follows. 
\end{proof}

\section{Categorified quantum groups}

In this section, we introduce the categorified parabolic quantum group $\mathcal{U}_q(\mathfrak{p}_J)$ for any subset $J$ of $I$. 
This is a parabolic analogue of $\mathcal{U}_q(\mathfrak{g})$ defined in \cite{MR3461059}.
It is known that $\catquantum{\mathfrak{g}}$ is isomorphic to both Khovanov-Lauda's 2-category \cite{MR2628852,MR3300416} by \cite[Theorem 2.1]{MR3461059}, 
and Rouquier's 2-category \cite{rouquier20082kacmoodyalgebras} by \cite{MR3451390}. 

\begin{definition} \label{def:bubbleparameter}
Let $Q$ be a choice of scalars. 
A choice of bubble parameters $C$ compatible with $Q$ consists of elements $c_{i,\lambda} \in \mathbf{k}^{\times} \ (i \in I, \lambda \in \mathsf{P})$ satisfying \index{$C = (c_{i,\lambda})_{i \in I, \lambda \in \mathsf{P}}$: bubble parameters}
\[
c_{i,\lambda+\alpha_j} / c_{i,\lambda} = t_{i,j}. 
\]
\end{definition}

Given $c_{i,\lambda}$ for every $i \in I$ and a representative $\lambda$ of every coset of $\mathsf{Q}$ in $\mathsf{P}$, 
we can extend it to a unique choice of bubble parameters compatible with $Q$.

\begin{definition} \label{def:catquantum}
 Let $J \subset I$. 
 Fix a choice of scalars $Q$ and a choice of bubble parameters $C$ compatible with $Q$. 
 Then the graded $\mathbf{k}$-linear 2-category $\mathcal{U}_q(\mathfrak{p}_J) = \mathcal{U}_q(\mathfrak{p}_J;Q,C)$ is defined as follows:  \index{$\catquantum{\mathfrak{p}_J}$}
 \begin{itemize}
 \item Objects are $\lambda \in \mathsf{P}$.
 \item 1-morphisms are formal direct sums of shifts of compositions of the generating 1-morphisms: \[
  1_\lambda, F_i 1_{\lambda} = 1_{\lambda -\alpha_i}F_i = 1_{\lambda-\alpha_i} F_i 1_{\lambda}, E_j1_{\lambda} = 1_{\lambda+\alpha_j} E_j = 1_{\lambda+\alpha_j}E_j 1_{\lambda}
  \]
 for $\lambda \in P, i \in I, j \in J$. 
 \item 2-morphisms are generated over $\mathbf{k}$ by compositions of shifts of the decorated tangle-like diagrams:
\begin{align*}
\xy 0;/r.17pc/: 
(0,0)*{\sdotu{j}}; (5,0)*{\lambda}; 
\endxy \colon q_j^2 E_j 1_{\lambda} \to E_j 1_{\lambda}, \quad 
&
\xy 0;/r.17pc/: (0,0)*{\sdotd{i}}; (5,0)*{\lambda};
\endxy \colon q_i^2 F_i 1_{\lambda} \to F_i 1_{\lambda}, \\
\xy 0;/r.17pc/: 
(0,0)*{\ucross{j}{j'}}; (9,0)*{\lambda}; 
\endxy \colon q^{-(\alpha_j,\alpha_{j'})} E_j E_{j'}1_{\lambda} \to E_{j'}E_j1_{\lambda}, \quad 
& 
\xy 0;/r.17pc/: 
(0,0)*{\dcross{i}{i'}}; (9,0)*{\lambda}; 
\endxy \colon q^{-(\alpha_i,\alpha_{i'})} F_i F_{i'}1_{\lambda} \to F_{i'}F_i1_{\lambda}, \\
\xy 0;/r.17pc/:
(0,0)*{\rcup{j}}; (9,0)*{\lambda}; 
\endxy \colon q_j^{1+\langle h_j, \lambda \rangle} 1_{\lambda} \to F_jE_j 1_{\lambda}, \quad
& 
\xy 0;/r.17pc/:
(0,0)*{\rcap{j}}; (9,0)*{\lambda}; 
\endxy \colon q_j^{1-\langle h_j, \lambda \rangle} E_jF_j1_{\lambda} \to 1_{\lambda}, \\
\xy 0;/r.17pc/:
(0,0)*{\lcup{j}}; (9,0)*{\lambda}; 
\endxy \colon q_j^{1-\langle h_j, \lambda \rangle} 1_{\lambda} \to E_jF_j 1_{\lambda}, \quad
& 
\xy 0;/r.17pc/:
(0,0)*{\lcap{j}}; (9,0)*{\lambda}; 
\endxy \colon q_j^{1+\langle h_j, \lambda \rangle} F_jE_j1_{\lambda} \to 1_{\lambda}, \\
\end{align*}
for $i,i' \in I, j,j' \in J, \lambda \in \mathsf{P}$. 
\end{itemize}

Note that our shift functor $q$ is $\langle -1 \rangle$ in \cite{MR3461059}. 
We read 1-morphisms from right to left, and 2-morphisms from bottom to top. 
We write 
\begin{align*}
\xy 0;/r.17pc/: (0,0)*{\slineu{j}}; (5,0)*{\lambda} \endxy = \id_{E_j1_{\lambda}}, \quad 
&
\xy 0;/r.17pc/: (0,0)*{\slined{i}}; (5,0)*{\lambda} \endxy = \id_{F_i1_{\lambda}}, \\ 
\xy 0;/r.17pc/: (0,0)*{\sdotu{j}}; (-3,0)*{\scriptstyle n}; (5,0)*{\lambda}; \endxy = \left( \xy 0;/r.17pc/: (0,0)*{\sdotu{j}}; (5,0)*{\lambda} \endxy \right)^n,  \quad 
& 
\xy 0;/r.17pc/: (0,0)*{\sdotd{i}}; (-3,0)*{\scriptstyle n}; (5,0)*{\lambda}; \endxy = \left( \xy 0;/r.17pc/: (0,0)*{\sdotd{i}}; (5,0)*{\lambda} \endxy \right)^n, \\
\xy 0;/r.17pc/: (0,0)*{\lcross{i}{j}}; (9,0)*{\lambda} \endxy = \xy 0;/r.17pc/:
(0,0)*{\xybox{
(0,0)*{\dcross{}{}};
(8,7)*{\lcap{}};
(-8,-7)*{\lcup{}};
(-4,8)*{\slined{}};
(4,-8)*{\slined{i}};
(12,-12); (12,4) **\dir{-} ?(1)*\dir{>};
(12,-14)*{\scriptstyle j};
(-12,-4); (-12,12) **\dir{-} ?(1)*\dir{>}; 
(17,0)*{\lambda};
}}\endxy, \quad
&
\xy 0;/r.17pc/: (0,0)*{\rcross{j}{i}}; (9,0)*{\lambda}; \endxy = \xy 0;/r.17pc/:
(0,0)*{\xybox{
(0,0)*{\dcross{}{}};
(-8,7)*{\rcap{}};
(8,-7)*{\rcup{}};
(4,8)*{\slined{}};
(-4,-8)*{\slined{i}};
(-12,-12); (-12,4) **\dir{-} ?(1)*\dir{>};
(-12,-14)*{\scriptstyle j};
(12,-4); (12,12) **\dir{-} ?(1)*\dir{>}; 
(17,0)*{\lambda};
}}\endxy 
\end{align*}
for $i \in I, j \in J, \lambda \in \mathsf{P}, n \in \mathbb{Z}_{\geq 0}$. 
We omit 1-morphisms ($\lambda$) from the diagram when it is obvious from the context, or when it is irrelevant to the computation. 

The following local relations are imposed on the 2-morphisms: 
\begin{enumerate}
\item Right and left adjunction ($j \in J, \lambda \in \mathsf{P}$): 
\begin{align*}
\xy 0;/r.17pc/:
(-4,3)*{\rcap{}};
(-8,-4)*{\slineu{j}};
(4,-3)*{\rcup{}};
(8,4)*{\slineu{}};
(12,0)*{\lambda};
\endxy \quad 
= 
\xy 0;/r.17pc/:
(0,-4)*{\sline{j}}; 
(0,4)*{\slineu{}}; 
(5,0)*{\lambda}; 
\endxy \quad 
= 
\xy 0;/r.17pc/:
(-8,4)*{\slineu{}};
(-4,-3)*{\lcup{}};
(4,3)*{\lcap{}};
(8,-4)*{\slineu{j}};
(12,0)*{\lambda};
\endxy, \quad 
\xy 0;/r.17pc/:
(-4,3)*{\lcap{}};
(-8,-4)*{\slined{j}};
(4,-3)*{\lcup{}};
(8,4)*{\slined{}};
(12,0)*{\lambda};
\endxy \quad 
= 
\xy 0;/r.17pc/:
(0,-4)*{\slined{j}}; 
(0,4)*{\sline{}}; 
(5,0)*{\lambda}; 
\endxy \quad 
= 
\xy 0;/r.17pc/:
(-8,4)*{\slined{}};
(-4,-3)*{\rcup{}};
(4,3)*{\rcap{}};
(8,-4)*{\slined{j}};
(12,0)*{\lambda};
\endxy. 
\end{align*}
\item Dot cyclicity($j \in J, \lambda \in \mathsf{P}$): 
\[
\xy 0;/r.17pc/:
(-4,3)*{\lcap{}};
(-8,-4)*{\slined{j}};
(4,-3)*{\lcup{}};
(8,4)*{\slined{}};
(12,0)*{\lambda};
(0,0)*{\bullet};
\endxy \quad 
= 
\xy 0;/r.17pc/:
(0,-4)*{\slined{j}}; 
(0,4)*{\sline{}}; 
(5,0)*{\lambda}; 
(0,0)*{\bullet};
\endxy \quad 
= 
\xy 0;/r.17pc/:
(-8,4)*{\slined{}};
(-4,-3)*{\rcup{}};
(4,3)*{\rcap{}};
(8,-4)*{\slined{j}};
(12,0)*{\lambda};
(0,0)*{\bullet};
\endxy. 
\] 
Hence, we can freely move the dots along the strands until they meet a crossing. 

\item Crossing cyclicity ($j,j' \in J, \lambda \in \mathsf{P}$): 
\[
\xy 0;/r.17pc/: 
(0,0)*{\dcross{j}{j'}}; 
(8,0)*{\lambda}
\endxy 
= 
\xy 0;/r.17pc/: 
(0,0)*{\xybox{(0,0)*{\ucross{}{}}; 
(-8,-7)*{\rcup{}}; 
(8,7)*{\rcap{}}; 
(-4,4); (20,4) **\crv{(-4,20) & (20,20)}; ?(1)*\dir{>};
(-20,-4); (4,-4) ** \crv{(-20,-20) & (4,-20)}; ?(1)*\dir{>}; 
(-20,22); (-20,-4) **\dir{-} ?(1)*\dir{>};
(-12,22); (-12,-4) **\dir{-} ?(1)*\dir{>};
(12,4); (12,-22) **\dir{-} ?(1)*\dir{>}+(0,-2)*{\scriptstyle j};
(20,4); (20,-22) **\dir{-} ?(1)*\dir{>}+(0,-2)*{\scriptstyle j'};
(25,0)*{\lambda}; 
(-25,0)*{}; 
}};
\endxy 
= 
\xy 0;/r.17pc/: 
(0,0)*{\xybox{(0,0)*{\ucross{}{}}; 
(8,-7)*{\lcup{}}; 
(-8,7)*{\lcap{}}; 
(4,4); (-20,4) **\crv{(4,20) & (-20,20)}; ?(1)*\dir{>};
(20,-4); (-4,-4) ** \crv{(20,-20) & (-4,-20)}; ?(1)*\dir{>}; 
(20,22); (20,-4) **\dir{-} ?(1)*\dir{>};
(12,22); (12,-4) **\dir{-} ?(1)*\dir{>};
(-12,4); (-12,-22) **\dir{-} ?(1)*\dir{>}+(0,-2)*{\scriptstyle j'};
(-20,4); (-20,-22) **\dir{-} ?(1)*\dir{>}+(0,-2)*{\scriptstyle j};
(25,0)*{\lambda}; 
(-25,0)*{}; 
}};
\endxy. 
\]
Note that (1) and (3) imply
\[
  \xy 0;/r.17pc/: (0,0)*{\lcross{j}{j'}}; (9,0)*{\lambda} \endxy = \xy 0;/r.17pc/:
  (0,0)*{\xybox{
  (0,0)*{\ucross{}{}};
  (8,-7)*{\lcup{}};
  (-8,7)*{\lcap{}};
  (-4,-8)*{\slineu{j'}};
  (4,8)*{\slineu{}};
  (12,12); (12,-4) **\dir{-} ?(1)*\dir{>};
  (-12,-14)*{\scriptstyle j};
  (-12,4); (-12,-12) **\dir{-} ?(1)*\dir{>}; 
  (17,0)*{\lambda};
  }}\endxy, \quad
  \xy 0;/r.17pc/: (0,0)*{\rcross{j}{j'}}; (9,0)*{\lambda}; \endxy = \xy 0;/r.17pc/:
  (0,0)*{\xybox{
  (0,0)*{\ucross{}{}};
  (-8,-7)*{\rcup{}};
  (8,7)*{\rcap{}};
  (4,-8)*{\slineu{j}};
  (-4,8)*{\slineu{}};
  (-12,12); (-12,-4) **\dir{-} ?(1)*\dir{>};
  (12,-14)*{\scriptstyle j'};
  (12,4); (12,-12) **\dir{-} ?(1)*\dir{>}; 
  (17,0)*{\lambda};
  }}\endxy.
\]
\item Quadratic KLR ($i, i' \in I, \lambda \in \mathsf{P}$): 
\[
\xy 0;/r.17pc/:
(0,4)*{\dcross{}{}};
(0,-4)*{\dcross{i}{i'}};
(9,0)*{\lambda};
\endxy
= Q_{i,i'} \left( 
\xy 0;/r.17pc/: 
(-4,8); (-4,-8) **\dir{-} ?(1)*\dir{>}+(0,-2)*{\scriptstyle i}; 
(4,8); (4,-8) **\dir{-} ?(1)*\dir{>}+(0,-2)*{\scriptstyle i'};
(9,0)*{\lambda};
(-4,0)*{\bullet};
\endxy, \quad 
\xy 0;/r.17pc/:
(-4,8); (-4,-8) **\dir{-} ?(1)*\dir{>}+(0,-2)*{\scriptstyle i}; 
(4,8); (4,-8) **\dir{-} ?(1)*\dir{>}+(0,-2)*{\scriptstyle i'};
(9,0)*{\lambda}; 
(4,0)*{\bullet}; 
\endxy
\right).
\]
\item Dot slide ($i, i' \in I, \lambda \in \mathsf{P}$): 
\begin{align*}
\xy 0;/r.17pc/: 
(0,0)*{\dcross{i}{i'}}; 
(2,1.8)*{\bullet}; 
(8,0)*{\lambda};
\endxy 
- 
\xy 0;/r.17pc/: 
(0,0)*{\dcross{i}{i'}}; 
(-2,-1.3)*{\bullet};
(8,0)*{\lambda};
\endxy
&= 
\xy 0;/r.17pc/: 
(0,0)*{\dcross{i}{i'}}; 
(2,-1.3)*{\bullet}; 
(8,0)*{\lambda};
\endxy
-
\xy 0;/r.17pc/: 
(0,0)*{\dcross{i}{i'}}; 
(-2,1.8)*{\bullet}; 
(8,0)*{\lambda}; 
\endxy \\
&= \begin{cases}
\xy 0;/r.17pc/: 
(-4,0)*{\slined{i}};
(4,0)*{\slined{i}}; 
(8,0)*{\lambda}; 
\endxy & \text {if $i = i'$}, \\
0 & \text {if $i \neq i'$}. 
\end{cases}
\end{align*}
\item Cubic KLR ($i, i', i'' \in I, \lambda \in \mathsf{P}$):
\[
\xy 0;/r.17pc/: 
(4,-8)*{\dcross{i'}{i''}}; 
(-4,0)*{\dcross{}{}};
(4,8)*{\dcross{}{}};
(-8,-8)*{\slined{i}};
(8,0)*{\slined{}};
(-8,8)*{\slined{}};
(12,0)*{\lambda};
\endxy
- 
\xy 0;/r.17pc/: 
(-4,-8)*{\dcross{i}{i'}}; 
(4,0)*{\dcross{}{}};
(-4,8)*{\dcross{}{}};
(8,-8)*{\slined{i''}};
(-8,0)*{\slined{}};
(8,8)*{\slined{}};
(12,0)*{\lambda};
\endxy
= 
\overline{Q}_{i,i',i''}\left( 
\xy 0;/r.17pc/: 
(-4,0)*{\slined{i}}; 
(0,0)*{\slined{i'}};
(4,0)*{\slined{i''}}; 
(-4,0)*{\bullet}; 
\endxy, 
\xy 0;/r.17pc/: 
(-4,0)*{\slined{i}}; 
(0,0)*{\slined{i'}};
(4,0)*{\slined{i''}}; 
(0,0)*{\bullet}; 
\endxy, 
\xy 0;/r.17pc/: 
(-4,0)*{\slined{i}}; 
(0,0)*{\slined{i'}};
(4,0)*{\slined{i''}}; 
(4,0)*{\bullet}; 
\endxy
\right) \lambda. 
\]
\item Mixed $EF$ ($i \in I, j \in J, i \neq j, \lambda \in \mathsf{P}$): 
\[
\xy 0;/r.17pc/:
(0,-4)*{\rcross{j}{i}};
(0,4)*{\lcross{}{}};
(8,0)*{\lambda}; 
\endxy 
= 
\xy 0;/r.17pc/:
(-4,-8); (-4,8) **\dir{-} ?(1)*\dir{>}; 
(4,8); (4,-8) **\dir{-} ?(1)*\dir{>}; 
(8,0)*{\lambda}; 
(-4,-10)*{\scriptstyle j};
(4,-10)*{\scriptstyle i};
\endxy, \quad 
\xy 0;/r.17pc/:
(0,-4)*{\lcross{i}{j}};
(0,4)*{\rcross{}{}};
(8,0)*{\lambda}; 
\endxy 
= 
\xy 0;/r.17pc/:
(4,-8); (4,8) **\dir{-} ?(1)*\dir{>}; 
(-4,8); (-4,-8) **\dir{-} ?(1)*\dir{>}; 
(8,0)*{\lambda}; 
(-4,-10)*{\scriptstyle i};
(4,-10)*{\scriptstyle j};
\endxy. 
\]
\item Bubble relations:
We introduce the following 2-morphisms, called fake bubbles. 
When $\langle h_j, \lambda \rangle -1 < 0$, we inductively define 
\begin{align*}
  &\xy 0;/r.17pc/:
  (0,3)*{\rcap{j}};
  (0,-3)*{\lcup{}};
  (0,-4)*{\bullet};
  (0,-6)*{\scriptstyle \langle h_j, \lambda \rangle -1 + m};
  (9,0)*{\lambda};
  \endxy  \\
  &= \begin{cases}
  0 & \text {if $ m < 0$}, \\
  c_{j,\lambda} \id_{1_{\lambda}} & \text{if $m = 0$}, \\
  -c_{j,\lambda} \sum\limits_{a \geq 0, b \geq 1, a + b = m} \xy 0;/r.17pc/:
  (0,3)*{\rcap{j}};
  (0,-3)*{\lcup{}};
  (0,-4)*{\bullet};
  (-3,-6)*{\scriptstyle \langle h_j, \lambda \rangle -1 + a};
  (9,0)*{\lambda};
  (18,3)*{\lcap{j}};
  (18,-3)*{\rcup{}};
  (18,-4)*{\bullet};
  (21,-6)*{\scriptstyle -\langle h_j, \lambda \rangle -1 + b};
  \endxy & \text{if $0 < m < -\langle h_j, \lambda \rangle +1$}.   
  \end{cases}
\end{align*}
When $-\langle h_j, \lambda \rangle -1 <0$, we inductively define 
\begin{align*}
  &\xy 0;/r.17pc/:
  (0,3)*{\lcap{j}};
  (0,-3)*{\rcup{}};
  (0,-4)*{\bullet};
  (0,-6)*{\scriptstyle -\langle h_j, \lambda \rangle -1 + m};
  (9,0)*{\lambda};
  \endxy \\
  &= \begin{cases}
  0 & \text {if $m < 0$}, \\
  c_{j,\lambda}^{-1} \id_{1_{\lambda}} & \text{if $m= 0$}, \\
  -c_{j,\lambda}^{-1} \sum\limits_{a \geq 1, b \geq 0, a + b = m} \xy 0;/r.17pc/:
  (0,3)*{\rcap{j}};
  (0,-3)*{\lcup{}};
  (0,-4)*{\bullet};
  (-3,-6)*{\scriptstyle \langle h_j, \lambda \rangle -1 + a};
  (9,0)*{\lambda};
  (18,3)*{\lcap{j}};
  (18,-3)*{\rcup{}};
  (18,-4)*{\bullet};
  (21,-6)*{\scriptstyle -\langle h_j, \lambda \rangle -1 + b};
  \endxy & \text{if $0 < m < \langle h_j, \lambda \rangle +1$}. 
  \end{cases}
\end{align*}
We impose the following relations for $j \in J, \lambda \in \mathsf{P}$: 
\begin{align*}
\xy 0;/r.17pc/:
(0,3)*{\rcap{j}};
(0,-3)*{\lcup{}};
(0,-4)*{\bullet};
(0,-6)*{\scriptstyle \langle h_j, \lambda \rangle -1 + m};
(9,0)*{\lambda};
\endxy 
&= \begin{cases}
c_{j,\lambda} \id_{\lambda} & \text{if $m= 0$}, \\
0 & \text{if $m < 0$},  
\end{cases} \\
\xy 0;/r.17pc/:
(0,3)*{\lcap{j}};
(0,-3)*{\rcup{}};
(0,-4)*{\bullet};
(0,-6)*{\scriptstyle -\langle h_j, \lambda \rangle -1 + m};
(9,0)*{\lambda};
\endxy 
&= \begin{cases}
c_{j,\lambda}^{-1} \id_{\lambda} & \text{if $m= 0$}, \\
0 & \text{if $m < 0$}. 
\end{cases}
\end{align*}
Note that we have 
\[
  \sum_{a + b = m} \xy 0;/r.17pc/:
  (0,3)*{\rcap{j}};
  (0,-3)*{\lcup{}};
  (0,-4)*{\bullet};
  (-3,-6)*{\scriptstyle \langle h_j, \lambda \rangle -1 + a};
  (9,0)*{\lambda};
  (18,3)*{\lcap{j}};
  (18,-3)*{\rcup{}};
  (18,-4)*{\bullet};
  (21,-6)*{\scriptstyle -\langle h_j, \lambda \rangle -1 + b};
  \endxy = \delta_{m,0}. 
\]
\item Extended $\mathfrak{sl}_2$ relations ($j \in J, \lambda \in \mathsf{P}$):
\begin{align*}
\xy 0;/r.17pc/:
(-4,-8); (-4,8) **\dir{-} ?(1)*\dir{>};
(-4,-10)*{\scriptstyle j};
(4,8); (4,-8) **\dir{-} ?(1)*\dir{>};
(4,-10)*{\scriptstyle j};
(8,0)*{\lambda};
\endxy
&= 
- \xy 0;/r.17pc/:
(0,-4)*{\rcross{j}{j}};
(0,4)*{\lcross{}{}};
(8,0)*{\lambda}; 
\endxy
+ 
\sum_{\substack{a,b,c \geq 0 \\ a+b+c = \langle h_j, \lambda \rangle -1}} \xy 0;/r.17pc/:
(0,3)*{\lcap{j}};
(0,-3)*{\rcup{}};
(2,-4)*{\bullet};
(10,-6)*{\scriptstyle -\langle h_j, \lambda \rangle -1 + b}; 
(0,12)*{\lcup{}};
(0,-12)*{\rcap{}};
(4,17)*{\scriptstyle j};
(-4,-17)*{\scriptstyle j};
(2,11)*{\bullet};
(-2,-11)*{\bullet};
(3,9)*{\scriptstyle a};
(-3,-9)*{\scriptstyle c};
(7,4)*{\lambda};
\endxy, \\
\xy 0;/r.17pc/:
(-4,8); (-4,-8) **\dir{-} ?(1)*\dir{>};
(-4,-10)*{\scriptstyle j};
(4,-8); (4,8) **\dir{-} ?(1)*\dir{>};
(4,-10)*{\scriptstyle j};
(8,0)*{\lambda};
\endxy
&= 
- \xy 0;/r.17pc/:
(0,-4)*{\lcross{j}{j}};
(0,4)*{\rcross{}{}};
(8,0)*{\lambda}; 
\endxy
+ 
\sum_{\substack{a,b,c \geq 0 \\ a+b+c = -\langle h_j, \lambda \rangle -1}} \xy 0;/r.17pc/:
(0,3)*{\rcap{j}};
(0,-3)*{\lcup{}};
(2,-4)*{\bullet};
(10,-6)*{\scriptstyle \langle h_j, \lambda \rangle -1 + b}; 
(0,12)*{\rcup{}};
(0,-12)*{\lcap{}};
(-4,17)*{\scriptstyle j};
(4,-17)*{\scriptstyle j};
(2,11)*{\bullet};
(-2,-11)*{\bullet};
(3,9)*{\scriptstyle a};
(-3,-9)*{\scriptstyle c};
(7,4)*{\lambda};
\endxy. 
\end{align*}
\end{enumerate}
\end{definition}

The 2-category $\catquantum{\mathfrak{p}_J}^{\mathrm{op}}$ is depicted as follows: 
it consists of the same diagrams as $\catquantum{\mathfrak{p}_J}$, we read 1-morphisms from left to right, and 2-morphisms from bottom to top. 
The 2-category $\catquantum{\mathfrak{p}_J}^{\mathrm{co}}$ is depicted as follows:
it consists of the same diagrams as $\catquantum{\mathfrak{p}_J}$, we read 1-morphisms from right to left, and 2-morphisms from top to bottom. 

\begin{proposition} \label{prop:scalarshift}
Let $J \subset I$, and fix a choice of scalars $Q$. 
Let $C, C'$ be choices of bubble parameters compatible with $Q$. 
Let $b_{i,i'} \in \mathbf{k}^\times \ (i,i' \in I), d_{j,\lambda} \in \mathbf{k}^{\times} \ (j \in J, \lambda \in \mathsf{P})$, 
and assume $b_{i,i'} b_{i',i} = b_{i,i} = 1 \ (i, i' \in I)$. 
Then, there exists an isomorphisms of 2-categories $\catquantum{\mathfrak{p}_J; Q,C} \to \catquantum{\mathfrak{p}_J, Q, C'}$ given as follows: 
\begin{itemize}
\item On objects and 1-morphisms, it is the identity. 
\item On the generating 2-morphisms, it is defined by
\begin{align*}
&\xy 0;/r.17pc/: 
(0,0)*{\sdotd{i}}; (5,0)*{\lambda};
\endxy \mapsto \xy 0;/r.17pc/: 
(0,0)*{\sdotd{i}}; (5,0)*{\lambda};
\endxy, \quad \xy 0;/r.17pc/: 
(0,0)*{\sdotu{j}}; (5,0)*{\lambda}; 
\endxy 
\mapsto 
\xy 0;/r.17pc/: 
(0,0)*{\sdotu{j}}; (5,0)*{\lambda}; 
\endxy,  \\
&\xy 0;/r.17pc/: 
(0,0)*{\dcross{i}{i'}}; (9,0)*{\lambda}; 
\endxy \mapsto b_{i,i'} \xy 0;/r.17pc/: 
(0,0)*{\dcross{i}{i'}}; (9,0)*{\lambda}; 
\endxy, \\
&\xy 0;/r.17pc/: 
(0,0)*{\ucross{j}{j'}}; (9,0)*{\lambda}; 
\endxy \mapsto b_{j,j'} d_{j,\lambda+\alpha_j}^{-1}d_{j,\lambda+\alpha_j+\alpha_{j'}}d_{j',\lambda + \alpha_{j'}}d_{j',\lambda +\alpha_j + \alpha_{j'}}^{-1} \xy 0;/r.17pc/: 
(0,0)*{\ucross{j}{j'}}; (9,0)*{\lambda}; 
\endxy, \\
&\xy 0;/r.17pc/:
(0,0)*{\rcup{j}}; (9,0)*{\lambda}; 
\endxy \mapsto 
d_{j,\lambda+\alpha_j}^{-1} \xy 0;/r.17pc/:
(0,0)*{\rcup{j}}; (9,0)*{\lambda}; 
\endxy, \quad \xy 0;/r.17pc/:
(0,0)*{\rcap{j}}; (9,0)*{\lambda}; 
\endxy 
\mapsto 
d_{j,\lambda} \xy 0;/r.17pc/:
(0,0)*{\rcap{j}}; (9,0)*{\lambda}; 
\endxy, \\
&\xy 0;/r.17pc/:
(0,0)*{\lcup{j}}; (9,0)*{\lambda};  
\endxy 
\mapsto 
d_{j,\lambda}^{-1} c_{j,\lambda}{c'}_{j,\lambda}^{-1} \xy 0;/r.17pc/:
(0,0)*{\lcup{j}}; (9,0)*{\lambda};  
\endxy, \quad \xy 0;/r.17pc/:
(0,0)*{\lcap{j}}; (9,0)*{\lambda}; 
\endxy 
\mapsto 
d_{j,\lambda+\alpha_j}c_{j,\lambda}^{-1}c'_{j,\lambda} \xy 0;/r.17pc/:
(0,0)*{\lcap{j}}; (9,0)*{\lambda}; 
\endxy. \\
\end{align*}
\end{itemize}
\end{proposition}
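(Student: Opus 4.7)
The plan is to verify that the prescribed assignment on generating 2-morphisms extends to a well-defined 2-functor $\Phi \colon \catquantum{\mathfrak{p}_J;Q,C} \to \catquantum{\mathfrak{p}_J;Q,C'}$ by checking that each defining relation of the source is sent to an equality in the target. Once well-definedness is established, the same recipe with $b_{i,i'}^{-1}$, $d_{j,\lambda}^{-1}$ and $C, C'$ interchanged produces a two-sided inverse, so $\Phi$ is automatically an isomorphism.

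First I would dispose of the adjunction relations, which amount to showing that the product of scalars attached to each cup-cap pair in a zig-zag equals $1$. For the right adjunction at weight $\lambda$ the rightward cup carries $d_{j,\lambda+\alpha_j}^{-1}$ while the matching rightward cap at weight $\lambda+\alpha_j$ carries $d_{j,\lambda+\alpha_j}$, and these cancel; the left adjunction pair similarly contributes reciprocal scalars $d_{j,\mu}^{\mp 1}(c_{j,\mu}{c'}_{j,\mu}^{-1})^{\pm 1}$ whose product is $1$. Dot cyclicity then follows since dots are unrescaled and the remaining cup-cap decorations cancel as above; crossing cyclicity additionally uses $b_{j,j'}b_{j',j}=1$ together with the fact that the $d$-factor decoration appearing in the scaling of the upward crossing is designed precisely to absorb the decorations arising from the constituent downward crossing and cups/caps on the other side of the cyclicity equation.

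Next I would treat the KLR-type relations. Dot slide has a single crossing on each side, so both sides acquire the same factor $b_{i,i'}$; cubic KLR involves three crossings on each side, yielding the same triple product of $b$-factors. In quadratic KLR the left-hand side has two consecutive crossings producing $b_{i,i'}b_{i',i}=1$, and the right-hand side $Q_{i,i'}$ is a polynomial in unrescaled dots; the condition $b_{i,i}=1$ covers the case $i=i'$ where $Q_{i,i}=0$. The mixed $EF$ relation is verified analogously, using the derived sideways crossings and the already-established adjunction and crossing cyclicity.

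The main obstacle is the bubble relations, since these are precisely the relations where $C$ and $C'$ differ. A clockwise real bubble at weight $\lambda$ is composed of a rightward cap at weight $\lambda$ and a leftward cup at weight $\lambda$, so $\Phi$ multiplies the diagram by $d_{j,\lambda}\cdot d_{j,\lambda}^{-1}c_{j,\lambda}{c'}_{j,\lambda}^{-1}=c_{j,\lambda}{c'}_{j,\lambda}^{-1}$. The source relation ``bubble $= c_{j,\lambda}\,\id_\lambda$'' is therefore sent by $\Phi$ to an equality whose right-hand side is $c_{j,\lambda}\,\id_\lambda$ (since identities and scalars are unrescaled), and whose left-hand side, after applying the target bubble relation, becomes $c_{j,\lambda}{c'}_{j,\lambda}^{-1}\cdot c'_{j,\lambda}\,\id_\lambda = c_{j,\lambda}\,\id_\lambda$, so the two agree; the counterclockwise case is analogous. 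This is exactly why the factor $c_{j,\lambda}{c'}_{j,\lambda}^{-1}$ was incorporated into the leftward cup and cap scalars in the statement. The fake bubbles are inductively defined by the same infinite Grassmannian identity in both categories, so they are automatically compatible with $\Phi$; the extended $\mathfrak{sl}_2$ relations then follow by a systematic tally of the cups, caps and crossings on both sides. Invertibility follows by symmetry.
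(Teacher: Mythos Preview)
Your proof is correct and follows the same approach as the paper, which simply states ``The well-definedness is straightforward. The inverse is induced from $b_{i,i'}^{-1}, d_{j,\lambda}^{-1}$.'' You have supplied considerably more detail than the paper in checking each family of relations and in explaining why the bubble relations force the factor $c_{j,\lambda}{c'}_{j,\lambda}^{-1}$ in the leftward cup and cap, which is a helpful elaboration of what the paper leaves implicit.
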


Note that when $C = C'$, it gives a nontrivial automorphism of $\catquantum{\mathfrak{p}_J;Q,C}$. 

\begin{proof}
The well-definedness is straightforward. 
The inverse is induced from $b_{i,i'}^{-1}, d_{j,\lambda}^{-1}$. 
\end{proof}

In the rest of this section, we fix $J \subset I$, a choice of scalars $Q$, and a choice of bubble parameters $C$ compatible with $Q$. 
The following two propositions are straightforward. 

\begin{proposition}[Chevalley involution] \label{prop:chevalleyinvolution} 
If $J = I$, there exits an isomorphism of graded 2-categories $\omega \colon \catquantum{\mathfrak{g}} \to \catquantum{\mathfrak{g}}^{\mathrm{op}}$ given as follows (see the end of Definition \ref{def:catquantum} for the description of $\mathcal{U}_q(\mathfrak{g})^{\mathrm{op}}$). \index{$\omega$ on $\catquantum{\mathfrak{g}}$}
\begin{itemize}
\item On the objects, it is the identity. 
\item On the generating 1-morphisms, it is defined by \[
1_{\lambda -\alpha_i} F_i 1_{\lambda} \mapsto 1_{\lambda} E_i 1_{\lambda-\alpha_i}, 1_{\lambda+\alpha_i}E_i 1_{\lambda} \mapsto 1_{\lambda} F_i 1_{\lambda+\alpha_i}. 
\]
\item On the 2-morphisms, it is defined by 
\begin{align*}
  \xy 0;/r.17pc/: 
  (0,0)*{\sdotd{i}}; (5,0)*{\lambda};
  \endxy \mapsto \xy 0;/r.17pc/: 
  (0,0)*{\sdotu{i}}; (-5,0)*{\lambda};
  \endxy, \quad 
  & 
  \xy 0;/r.17pc/: 
  (0,0)*{\sdotu{i}}; (5,0)*{\lambda}; 
  \endxy 
  \mapsto 
  \xy 0;/r.17pc/: 
  (0,0)*{\sdotd{i}}; (-5,0)*{\lambda}; 
  \endxy,  \\
  \xy 0;/r.17pc/: 
  (0,0)*{\dcross{i}{i'}}; (9,0)*{\lambda}; 
  \endxy \mapsto \xy 0;/r.17pc/: 
  (0,0)*{\ucross{i'}{i}}; (-9,0)*{\lambda}; 
  \endxy, \quad 
  &
  \xy 0;/r.17pc/: 
  (0,0)*{\ucross{i}{i'}}; (9,0)*{\lambda}; 
  \endxy \mapsto \xy 0;/r.17pc/: 
  (0,0)*{\dcross{i'}{i}}; (-9,0)*{\lambda}; 
  \endxy, \\
  \xy 0;/r.17pc/:
  (0,0)*{\rcup{i}}; (9,0)*{\lambda}; 
  \endxy \mapsto \xy 0;/r.17pc/:
  (0,0)*{\rcup{i}}; (-9,0)*{\lambda}; 
  \endxy, \quad
  & 
  \xy 0;/r.17pc/:
  (0,0)*{\rcap{i}}; (9,0)*{\lambda}; 
  \endxy \mapsto \xy 0;/r.17pc/:
  (0,0)*{\rcap{i}}; (-9,0)*{\lambda}; 
  \endxy, \\
  \xy 0;/r.17pc/:
  (0,0)*{\lcup{i}}; (9,0)*{\lambda};  
  \endxy \mapsto \xy 0;/r.17pc/:
  (0,0)*{\lcup{i}}; (-9,0)*{\lambda};  
  \endxy, \quad
  & 
  \xy 0;/r.17pc/:
  (0,0)*{\lcap{i}}; (9,0)*{\lambda}; 
  \endxy 
  \mapsto 
  \xy 0;/r.17pc/:
  (0,0)*{\lcap{i}}; (-9,0)*{\lambda}; 
  \endxy. \\
  \end{align*}
\end{itemize}
\end{proposition}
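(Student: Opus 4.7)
The plan is to verify the proposed assignment on generators extends to a genuine 2-functor by checking all defining relations of $\catquantum{\mathfrak{g}}$, then exhibit an explicit inverse. Call the proposed map $\Phi$. On objects it is the identity, on 1-morphisms it swaps $E_i 1_\lambda$ with $F_i 1_{-\lambda}$ (reading in the reversed composition order of $\mathfrak{C}^{\mathrm{op}}$), and on 2-morphism generators it reflects each diagram across a vertical axis while simultaneously swapping the roles of upward/downward strands. Because the domain and codomain are $\mathfrak{C}$ and $\mathfrak{C}^{\mathrm{op}}$, this horizontal reflection is exactly what is needed: horizontal composition in $\mathfrak{C}^{\mathrm{op}}$ is reversed, so a diagram read right-to-left in $\mathfrak{C}$ becomes one read left-to-right in $\mathfrak{C}^{\mathrm{op}}$, i.e.\ its mirror image.

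First I would record degrees. Each generator and its image have the same degree once one tracks the shifts carefully: the shifts on $F_i 1_\lambda$ transform into those on $E_i 1_{-\lambda}$ consistently because $\langle h_i,\lambda\rangle$ and $\langle h_i,-\lambda\rangle$ enter the right/left adjunction shifts with opposite signs, matching the horizontal reflection. Next, I would verify the local relations one by one. The adjunction and cyclicity relations (1)--(3) are preserved because mirror reflection of a cup-cap zig-zag is another zig-zag, and dot/crossing cyclicity is itself a mirror-invariant statement. The quadratic KLR relation (4) is preserved because $Q_{i,i'}(u,v) = Q_{i',i}(v,u)$, which is precisely condition (2) of the definition of a choice of scalars; the cubic KLR relation (6) is preserved because the polynomials $\overline{Q}_{i,i',i''}$ satisfy $\overline{Q}_{i,i',i''}(u,u',u'') = \overline{Q}_{i'',i',i}(u'',u',u)$ by their definition via symmetric difference quotients. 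The dot-slide relation (5) and mixed $EF$ relation (7) are manifestly mirror-invariant.

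The main point to check is that the bubble and extended $\mathfrak{sl}_2$ relations (8), (9) are respected. Under $\Phi$, a clockwise bubble of label $j$ at weight $\lambda$ maps to a counterclockwise bubble of label $j$ at weight $-\lambda$, and the integer $\langle h_j,\lambda\rangle-1$ becomes $-\langle h_j,-\lambda\rangle-1$, so the two defining normalizations in (8) are interchanged. This forces us to choose the bubble parameters on the target side to satisfy $c_{j,-\lambda}^{-1} = c_{j,\lambda}$ (i.e.\ $\Phi$ should be regarded as a map $\catquantum{\mathfrak{g};Q,C}\to \catquantum{\mathfrak{g};Q,C'}^{\mathrm{op}}$ where $c'_{j,\lambda} = c_{j,-\lambda}^{-1}$); one then composes with the scalar-rescaling isomorphism of Proposition \ref{prop:scalarshift} to land in $\catquantum{\mathfrak{g};Q,C}^{\mathrm{op}}$. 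The fake-bubble inductive relations transform into their mirror images because each summand in the recursion pairs a clockwise with a counterclockwise bubble, and the reflection swaps the two factors while preserving the total degree constraint. Finally, the two extended $\mathfrak{sl}_2$ relations are interchanged by $\Phi$, with the right-hand sums reindexed by reversing $(a,b,c)\mapsto(c,b,a)$; the resulting identity is exactly the second relation at the weight $-\lambda$.

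The expected obstacle is precisely the bubble bookkeeping: verifying that all signs and scalar factors in the fake-bubble expansions and the extended $\mathfrak{sl}_2$ relations cohere after reflection, and keeping the compatibility with Proposition \ref{prop:scalarshift} explicit. Once this is done, the inverse 2-functor is constructed by the same recipe (reflecting diagrams again and swapping $E,F$ back), and $\Phi\circ\Phi^{-1} = \id$ is immediate on generators, so $\Phi$ is an isomorphism of graded 2-categories as claimed.
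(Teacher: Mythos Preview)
Your overall strategy of checking each defining relation under the proposed assignment is the natural one and matches the paper's (implicit) argument. However, there is a genuine error in your weight bookkeeping. You write that $\Phi$ ``swaps $E_i 1_\lambda$ with $F_i 1_{-\lambda}$,'' and later that ``a clockwise bubble of label $j$ at weight $\lambda$ maps to a counterclockwise bubble of label $j$ at weight $-\lambda$.'' But $\Phi$ is the \emph{identity on objects}, so any endomorphism of $1_\lambda$ must land in $\End(1_\lambda)$, not $\End(1_{-\lambda})$; indeed the proposition itself specifies $E_i 1_\lambda \mapsto F_i 1_{\lambda+\alpha_i}$, not $F_i 1_{-\lambda}$. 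You appear to have conflated the Chevalley involution with the involution $\sigma$ of Proposition~\ref{prop:involutionsigma}, which \emph{does} negate weights.

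Once this is corrected, the bubble check becomes much simpler than you suggest. Since each generating cup and cap is sent to itself (with the region label moving from the right side to the left, which for a closed bubble changes nothing because the exterior region is $\lambda$ on both sides), a clockwise bubble with $n$ dots at $\lambda$ maps to the \emph{same} clockwise bubble with $n$ dots at $\lambda$. The bubble relations (8) are therefore preserved on the nose, and there is no need to adjust bubble parameters via Proposition~\ref{prop:scalarshift}. Likewise, the extended $\mathfrak{sl}_2$ relations at weight $\lambda$ go to the extended $\mathfrak{sl}_2$ relations at the same weight $\lambda$, not at $-\lambda$. With the weight confusion removed, the remaining relation checks are as routine as you outline.
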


\begin{proposition} \label{prop:involutionsigma}
There exists an isomorphim of 2-categories $\sigma \colon \catquantum{\mathfrak{p}_J} \to \catquantum{\mathfrak{p}_J}^{\mathrm{op}}$ given as follows (see the end of Definition \ref{def:catquantum} for the description of $\mathcal{U}_q(\mathfrak{g})^{\mathrm{op}}$). \index{$\sigma$ on $\catquantum{\mathfrak{p}_J}$}
\begin{itemize}
\item On the objects, it is $\lambda \mapsto -\lambda$. 
\item On the generating 1-morphisms, it is defined by 
\[  
1_{\lambda-\alpha_i} F_i 1_{\lambda} \mapsto 1_{-\lambda}F_i 1_{-\lambda+\alpha_i}, 1_{\lambda+\alpha_j}E_j 1_{\lambda} \mapsto 1_{-\lambda} E_j 1_{-\lambda-\alpha_j}. 
\]
\item On the 2-morphisms, it is defined by
\begin{align*}
\xy 0;/r.17pc/: 
(0,0)*{\sdotd{i}}; (5,0)*{\lambda};
\endxy \mapsto \xy 0;/r.17pc/: 
(0,0)*{\sdotd{i}}; (-5,0)*{-\lambda};
\endxy, \quad 
& 
\xy 0;/r.17pc/: 
(0,0)*{\sdotu{j}}; (5,0)*{\lambda}; 
\endxy 
\mapsto 
\xy 0;/r.17pc/: 
(0,0)*{\sdotu{j}}; (-5,0)*{-\lambda}; 
\endxy,  \\
\xy 0;/r.17pc/: 
(0,0)*{\dcross{i}{i'}}; (9,0)*{\lambda}; 
\endxy \mapsto (-1)^{\delta_{i,i'}} \left( \xy 0;/r.17pc/: 
(0,0)*{\dcross{i'}{i}}; 
%(-4,6)*{\scriptstyle i};
%(4,6)*{\scriptstyle i'};
(-9,0)*{-\lambda}; 
\endxy \right), \quad 
&
\xy 0;/r.17pc/: 
(0,0)*{\ucross{j}{j'}}; (9,0)*{\lambda}; 
\endxy \mapsto (-1)^{\delta_{j,j'}} \left( \xy 0;/r.17pc/: 
(0,0)*{\ucross{j'}{j}}; 
%(-4,6)*{\scriptstyle j};
%(4,6)*{\scriptstyle j'};
(-9,0)*{-\lambda}; 
\endxy \right), \\
\xy 0;/r.17pc/:
(0,0)*{\rcup{j}}; (9,0)*{\lambda}; 
\endxy \mapsto c_{j,\lambda}^{-1}c_{j,-\lambda}^{-1} \xy 0;/r.17pc/:
(0,0)*{\lcup{j}}; (9,0)*{-\lambda}; 
\endxy, \quad
& 
\xy 0;/r.17pc/:
(0,0)*{\rcap{j}}; (9,0)*{\lambda}; 
\endxy \mapsto c_{j,\lambda}c_{j,-\lambda} \xy 0;/r.17pc/:
(0,0)*{\lcap{j}}; (9,0)*{-\lambda}; 
\endxy, \\
\xy 0;/r.17pc/:
(0,0)*{\lcup{j}}; (9,0)*{\lambda};  
\endxy \mapsto \xy 0;/r.17pc/:
(0,0)*{\rcup{j}}; (9,0)*{-\lambda};  
\endxy, \quad
& 
\xy 0;/r.17pc/:
(0,0)*{\lcap{j}}; (9,0)*{\lambda}; 
\endxy 
\mapsto 
\xy 0;/r.17pc/:
(0,0)*{\rcap{j}}; (9,0)*{-\lambda}; 
\endxy. \\
\end{align*}
\end{itemize} 
\end{proposition}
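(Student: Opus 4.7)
The plan is to check that the prescribed $\sigma$ on generators respects every defining relation of $\catquantum{\mathfrak{p}_J}$, and then to write down an inverse. Morally, $\sigma$ performs a horizontal reflection of each diagram together with the substitution $\lambda\mapsto -\lambda$; the scalar factors in the formulas are precisely chosen so that this reflection defines a strict 2-functor into the op-2-category, rather than just a correspondence.

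First I would verify the structural relations (adjunction, dot cyclicity, crossing cyclicity). Under horizontal reflection, right-facing caps/cups become left-facing caps/cups and vice versa, so the adjunction zigzags for $E_jF_j$ transform into the corresponding zigzags for $F_jE_j$. The factors $c_{j,\lambda}^{-1}c_{j,-\lambda}^{-1}$ and $c_{j,\lambda}c_{j,-\lambda}$ multiplying the images of $\rcup{j}$ and $\rcap{j}$ are set up so that the resulting pair in $\catquantum{\mathfrak{p}_J}^{\mathrm{op}}$ satisfies the normalization in the definition of $\lcup{j}$, $\lcap{j}$ at weight $-\lambda$; the symmetric factors on the other two generators are then forced by the same calculation. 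Dot and crossing cyclicity follow immediately because each cyclic rotation is implemented by horizontal reflection of a zigzag.

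Next I would treat the KLR-type relations. The quadratic KLR and cubic KLR relations go through because $Q_{i,i'}(u,v)=Q_{i',i}(v,u)$ and the polynomial $\overline{Q}_{i,i',i''}$ has the required symmetry. The sign $(-1)^{\delta_{i,i'}}$ attached to each crossing is what makes the dot slide relation reflect correctly in the case $i=i'$: each side of the quadratic relation picks up $(-1)^{2}=1$, each side of the dot slide picks up $(-1)^{1}$, and these combine with the horizontal flip of dots to reproduce the identity strand term with the correct sign. The mixed $EF$ relation for $i\neq j$ involves only $\ucross$, $\dcross$ and caps/cups, all of which transform coherently. The bubble relations are checked directly from the scalar factors attached to cups and caps, and the extended $\mathfrak{sl}_2$ relations then follow once one verifies inductively that the image under $\sigma$ of a fake bubble at weight $\lambda$ is again a fake bubble at weight $-\lambda$; this is the one step requiring a genuine induction on the dot count, using that $c_{j,\lambda}^{-1}c_{j,-\lambda}^{-1}\cdot c_{j,\lambda}c_{j,-\lambda}=1$ so that genuine and fake bubbles transform by matching scalars.

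For the inverse, one checks that applying $\sigma$ twice is the identity on objects (as $\lambda\mapsto -\lambda$ is an involution) and, up to a quick pairwise cancellation of the $c_{j,\lambda}$-factors, the identity on 2-morphisms; equivalently, one writes down a map $\catquantum{\mathfrak{p}_J}^{\mathrm{op}}\to\catquantum{\mathfrak{p}_J}$ by the same formulas with $\lambda$ replaced by $-\lambda$ and checks the two compositions are the identity. The main obstacle is simply the bookkeeping for bubbles and fake bubbles, together with keeping track of the signs $(-1)^{\delta}$ and the $c_{j,\lambda}$-normalizations across the $\lambda\mapsto-\lambda$ substitution; the calculations are routine once they are organized by the horizontal-reflection symmetry, which is why the proposition can fairly be labeled straightforward.
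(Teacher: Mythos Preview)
Your proposal is correct and takes essentially the same approach as the paper: direct verification that the prescribed $\sigma$ respects all defining relations. The paper's own proof is simply the one-line assertion that the check is straightforward and essentially identical to the involution in \cite[3E2]{MR4732757} after applying scalar shifts of Proposition~\ref{prop:scalarshift}; your write-up just unpacks what that straightforward verification would entail.
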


\begin{proof}
It is straightforward. 
It is essentially the same as the involution of \cite[3E2]{MR4732757} after applying some scalar shifts of Proposition \ref{prop:scalarshift}. 
\end{proof}

\begin{proposition} \label{prop:involution}
There exists an isomorphim of 2-categories $\psi \colon \catquantum{\mathfrak{p}_J} \to \catquantum{\mathfrak{p}_J}^{\mathrm{co}}$ given as follows (see the end of Definition \ref{def:catquantum} for the description of $\mathcal{U}_q(\mathfrak{g})^{\mathrm{co}}$). \index{$\psi$ on $\catquantum{\mathfrak{p}_J}$}
\begin{itemize}
\item On the objects, it is the identity. 
\item On the generating 1-morphisms, it is defined by 
\[  
q^n F_i 1_{\lambda} \mapsto q^{-n} F_i 1_{\lambda}, q^n E_j 1_{\lambda} \mapsto q^{-n} E_j 1_{\lambda}. 
\]
\item On the 2-morphisms, it is defined by
\begin{align*}
\xy 0;/r.17pc/: 
(0,0)*{\sdotd{i}}; (5,0)*{\lambda};
\endxy \mapsto \xy 0;/r.17pc/: 
(0,0)*{\sdotd{i}}; (5,0)*{\lambda};
\endxy, \quad 
& 
\xy 0;/r.17pc/: 
(0,0)*{\sdotu{j}}; (5,0)*{\lambda}; 
\endxy 
\mapsto 
\xy 0;/r.17pc/: 
(0,0)*{\sdotu{j}}; (5,0)*{\lambda}; 
\endxy,  \\
\xy 0;/r.17pc/: 
(0,0)*{\dcross{i}{i'}}; (9,0)*{\lambda}; 
\endxy \mapsto \xy 0;/r.17pc/: 
(0,0)*{\dcross{i'}{i}}; (9,0)*{\lambda}; 
\endxy, \quad 
&
\xy 0;/r.17pc/: 
(0,0)*{\ucross{j}{j'}}; (9,0)*{\lambda}; 
\endxy \mapsto \xy 0;/r.17pc/: 
(0,0)*{\ucross{j'}{j}}; (9,0)*{\lambda}; 
\endxy, \\
\xy 0;/r.17pc/:
(0,0)*{\rcup{j}}; (9,0)*{\lambda}; 
\endxy \mapsto \xy 0;/r.17pc/:
(0,0)*{\lcap{j}}; (9,0)*{\lambda}; 
\endxy, \quad
& 
\xy 0;/r.17pc/:
(0,0)*{\rcap{j}}; (9,0)*{\lambda}; 
\endxy \mapsto \xy 0;/r.17pc/:
(0,0)*{\lcup{j}}; (9,0)*{\lambda}; 
\endxy, \\
\xy 0;/r.17pc/:
(0,0)*{\lcup{j}}; (9,0)*{\lambda};  
\endxy \mapsto \xy 0;/r.17pc/:
(0,0)*{\rcap{j}}; (9,0)*{\lambda};  
\endxy, \quad
& 
\xy 0;/r.17pc/:
(0,0)*{\lcap{j}}; (9,0)*{\lambda}; 
\endxy 
\mapsto 
\xy 0;/r.17pc/:
(0,0)*{\rcup{j}}; (9,0)*{\lambda}; 
\endxy. \\
\end{align*}
\end{itemize} 
\end{proposition}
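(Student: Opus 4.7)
The plan is to interpret $\psi$ as the \emph{vertical reflection} 2-endofunctor and verify it preserves all defining relations. Passage to the $\mathrm{co}$-category reverses vertical composition of 2-morphisms, which diagrammatically amounts to flipping every 2-morphism upside down while keeping 1-morphisms horizontally in place. The prescribed swaps between rightward and leftward cups and caps are exactly what such a flip does to the cups and caps, whereas dots and crossings are fixed (only the reading direction is reversed).

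First I would check source-target compatibility for each generating 2-morphism. A 2-morphism $\alpha \colon q^n X \to Y$ of degree $n$ becomes under $\psi$ a 2-morphism $Y \to q^n X$ in $\catquantum{\mathfrak{p}_J}^{\mathrm{co}}$; applying the stipulated rule $q^n \mapsto q^{-n}$ on 1-morphisms, the new source is $Y$ and the new target is $q^{-n}X$, which preserves the degree. A case-by-case inspection of dots, crossings, cups, and caps confirms compatibility.

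Next I would verify each defining relation. Relations (1)--(3) (adjunction, dot cyclicity, crossing cyclicity) come in left/right symmetric pairs interchanged by $\psi$, hence preserved. Relations (4)--(6) (quadratic KLR, dot slide, cubic KLR) involve only downward $F$-strands and are manifestly invariant under vertical flip, since $Q_{i,i'}$ and $\overline{Q}_{i,i',i''}$ are unchanged. The mixed $EF$ relation (7) consists of two equations that are images of each other under the flip. For the bubble relations (8), a clockwise bubble flips to a composition of $\lcup$-type below $\rcap$-type which, using crossing cyclicity and adjunction, is equivalent to a clockwise bubble of the same shape with the same dot count, so the scalar $c_{j,\lambda}$ persists; the counter-clockwise relation is analogous. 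The fake bubbles, being defined inductively from the bubble normalization identity, are automatically compatible once the genuine bubbles are.

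The main obstacle is the extended $\mathfrak{sl}_2$ relations (9), where each side contains a sum over triples $(a,b,c)$ with $a+b+c = \pm\langle h_j, \lambda \rangle - 1$ of diagrams mixing cups, caps, dots, and both types of crossings. Here one must verify term-by-term that vertical reflection interchanges the first extended $\mathfrak{sl}_2$ relation with the second, with the dot powers on the interior bubbles and on the residual end-pieces lining up correctly. Once this is done, a direct inspection of the formulas shows that $\psi^2$ fixes each generating 2-morphism (the cup/cap swaps being mutually inverse), so $\psi$ is its own inverse and thus an isomorphism of 2-categories.
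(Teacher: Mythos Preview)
Your approach is correct and is precisely how one verifies such an involution: interpret $\psi$ as the vertical reflection of string diagrams and check each defining relation. The paper itself gives no proof for this proposition; it is stated alongside the two preceding symmetry propositions, which are declared straightforward, so your write-up is already more detailed than what the paper offers.

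One small correction to your analysis of relation (9): the two extended $\mathfrak{sl}_2$ relations are not \emph{interchanged} by $\psi$ but are each individually invariant. In the first relation the identity term is $\id_{E_jF_j 1_\lambda}$, which $\psi$ fixes; the double-crossing term is fixed because $\psi$ swaps the leftward and rightward sideways crossings while reversing composition, so their composite is unchanged; the counter-clockwise bubble in the summand is fixed (as follows directly from the cup/cap swaps and composition reversal); and the outer cap/cup pieces carrying dot labels $a$ and $c$ get exchanged, which leaves the sum unchanged by the symmetry of the condition $a+b+c = \langle h_j,\lambda\rangle -1$ in $a$ and $c$. The second relation is invariant by the same reasoning. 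Your conclusion that (9) is preserved is correct, just via invariance rather than interchange. Similarly, for (8) a clockwise bubble maps directly to a clockwise bubble with the same dot count, so no appeal to cyclicity is needed there either.
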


\begin{proof}
It is \cite[3E4]{MR4732757}. 
\end{proof}

\begin{theorem} \label{thm:Rouquierver}
(1) The 2-category $\catquantum{\mathfrak{p}_J}$ is canonically isomorphic to the graded $\mathbf{k}$-linear 2-category defined as follows.
\begin{itemize}
\item The objects and 1-morphisms are the same as those of $\catquantum{\mathfrak{p}_J}$. 
\item The 2-morphisms are generated over $\mathbf{k}$ by composition of shifts of the decorated tangle-like diagrams: 
\begin{align*}
\xy 0;/r.17pc/: (0,0)*{\sdotd{i}}; (5,0)*{\lambda};
\endxy \colon q_i^2 F_i 1_{\lambda} \to F_i 1_{\lambda}, 
&
\xy 0;/r.17pc/: 
(0,0)*{\dcross{i}{i'}}; (9,0)*{\lambda}; 
\endxy \colon q^{-(\alpha_i,\alpha_{i'})} F_i F_{i'}1_{\lambda} \to F_{i'}F_i1_{\lambda}, \\
\xy 0;/r.17pc/:
(0,0)*{\rcup{j}}; (9,0)*{\lambda}; 
\endxy \colon q_j^{1+\langle h_j, \lambda \rangle} 1_{\lambda} \to F_jE_j 1_{\lambda}, \quad
& 
\xy 0;/r.17pc/:
(0,0)*{\rcap{j}}; (9,0)*{\lambda}; 
\endxy \colon q_j^{1-\langle h_j, \lambda \rangle} E_jF_j1_{\lambda} \to 1_{\lambda}, \\
\end{align*}
for $i, i' \in I, j \in J, \lambda \in \mathsf{P}$.  
\end{itemize}
We write
\[
\xy 0;/r.17pc/: 
(0,0)*{\slineu{j}}; 
(5,0)*{\lambda};
\endxy = \id_{E_j1_{\lambda}}, \quad \xy 0;/r.17pc/: 
(0,0)*{\slined{i}}; 
(5,0)*{\lambda}; 
\endxy = \id_{F_i1_{\lambda}}, \quad \xy 0;/r.17pc/: 
(0,0)*{\rcross{j}{i}}; (9,0)*{\lambda}; 
\endxy = \xy 0;/r.17pc/:
(0,0)*{\xybox{
(0,0)*{\dcross{}{}};
(-8,7)*{\rcap{}};
(8,-7)*{\rcup{}};
(4,8)*{\slined{}};
(-4,-8)*{\slined{i}};
(-12,-12); (-12,4) **\dir{-} ?(1)*\dir{>};
(-12,-14)*{\scriptstyle j};
(12,-4); (12,12) **\dir{-} ?(1)*\dir{>}; 
(17,0)*{\lambda};
}}
\endxy
\]
for $i \in I, j \in J, \lambda \in \mathsf{P}$. 
The following local relations are imposed on the 2-morphisms: 
\begin{itemize}
\item Right adjunction ($j \in J, \lambda \in \mathsf{P}$): 
\[
  \xy 0;/r.17pc/:
  (-4,3)*{\rcap{}};
  (-8,-4)*{\slineu{j}};
  (4,-3)*{\rcup{}};
  (8,4)*{\slineu{}};
  (12,0)*{\lambda};
  \endxy \quad 
  = 
  \xy 0;/r.17pc/:
  (0,-4)*{\sline{j}}; 
  (0,4)*{\slineu{}}; 
  (5,0)*{\lambda}; 
  \endxy, \quad 
  \xy 0;/r.17pc/:
  (-8,4)*{\slined{}};
  (-4,-3)*{\rcup{}};
  (4,3)*{\rcap{}};
  (8,-4)*{\slined{j}};
  (12,0)*{\lambda};
  \endxy \quad =
  \xy 0;/r.17pc/:
  (0,-4)*{\slined{j}}; 
  (0,4)*{\sline{}}; 
  (5,0)*{\lambda}; 
  \endxy. 
\]
\item Quadratic KLR (Definition \ref{def:catquantum} (4)).
\item Dot slide (Definition \ref{def:catquantum} (5)). 
\item Cubic KLR (Definition \ref{def:catquantum} (6)). 
\item Formal inverse: the following 2-morphisms are isomorphisms, that is, 
there are some additional as yet unnamed generators that serve as two-sided inverses ($i \in I, j \in J, i \neq j, \lambda \in \mathsf{P}$): 
\begin{align*}
& \xy 0;/r.17pc/:
(0,0)*{\rcross{j}{i}};  
\endxy \colon E_j F_i 1_{\lambda} \to F_i E_j 1_{\lambda}, \\
& \begin{bmatrix}
\xy 0;/r.17pc/:
(0,0)*{\rcross{j}{j}}; 
\endxy & \xy 0;/r.17pc/:
(0,0)*{\rcap{j}}; 
\endxy & \xy 0;/r.17pc/:
(0,0)*{\rcap{j}}; 
(2,.5)*{\bullet};
\endxy & \cdots & \xy 0;/r.17pc/:
(0,0)*{\rcap{j}}; 
(2,.5)*{\bullet};
(11,1)*{\scriptstyle \langle h_j, \lambda \rangle -1};
\endxy
\end{bmatrix}^\top \colon \\
& E_j F_j 1_\lambda \to F_j E_j 1_{\lambda} \oplus q_j^{\langle h_j, \lambda \rangle -1}1_{\lambda} \oplus q_j^{\langle h_j, \lambda \rangle -3}1_{\lambda} \oplus \cdots \oplus q_j^{-\langle h_j,\lambda \rangle +1}1_{\lambda}  \ \text{if $\langle h_j, \lambda \rangle \geq 0$}, \\
& \begin{bmatrix}
\xy 0;/r.17pc/:
(0,0)*{\rcross{j}{j}}; 
\endxy & \xy 0;/r.17pc/:
(0,0)*{\rcup{j}}; 
(-2,-0.5)*{\bullet};
(-12,-1)*{\scriptstyle -\langle h_j, \lambda \rangle - 1};
\endxy & \cdots & \xy 0;/r.17pc/:
(0,0)*{\rcup{j}};
(-2,-0.5)*{\bullet};
\endxy & \xy 0;/r.17pc/:
(0,0)*{\rcup{j}}; 
\endxy
\end{bmatrix}\colon \\
& E_j F_j 1_{\lambda} \oplus q_j^{-\langle h_j, \lambda \rangle -1}1_{\lambda} \oplus \cdots \oplus q_j^{\langle h_j, \lambda \rangle +3}1_{\lambda} \oplus q_j^{\langle h_j,\lambda \rangle +1}1_{\lambda} \to F_j E_j 1_{\lambda} \ \text{if $\langle h_j, \lambda \rangle \leq 0$}. 
\end{align*}
\end{itemize}
(2) The 2-category $\catquantum{\mathfrak{p}_J}$ is canonically isomorphic to the graded $\mathbf{k}$-linear 2-category defined as follows.
\begin{itemize}
\item The objects and 1-morphisms are the same as those of $\catquantum{\mathfrak{p}_J}$. 
\item The 2-morphisms are generated over $\mathbf{k}$ by composition of shifts of the decorated tangle-like diagrams: 
\begin{align*}
\xy 0;/r.17pc/: (0,0)*{\sdotd{i}}; (5,0)*{\lambda};
\endxy \colon q_i^2 F_i 1_{\lambda} \to F_i 1_{\lambda}, 
&
\xy 0;/r.17pc/: 
(0,0)*{\dcross{i}{i'}}; (9,0)*{\lambda}; 
\endxy \colon q^{-(\alpha_i,\alpha_{i'})} F_i F_{i'}1_{\lambda} \to F_{i'}F_i1_{\lambda}, \\
\xy 0;/r.17pc/:
(0,0)*{\lcup{j}}; (9,0)*{\lambda}; 
\endxy \colon q_j^{1-\langle h_j, \lambda \rangle} 1_{\lambda} \to E_jF_j 1_{\lambda}, \quad
& 
\xy 0;/r.17pc/:
(0,0)*{\lcap{j}}; (9,0)*{\lambda}; 
\endxy \colon q_j^{1+\langle h_j, \lambda \rangle} F_jE_j1_{\lambda} \to 1_{\lambda}, \\
\end{align*}
for $i, i' \in I, j \in J, \lambda \in \mathsf{P}$.  
\end{itemize}
We write
\[
\xy 0;/r.17pc/: 
(0,0)*{\slineu{j}}; 
(5,0)*{\lambda};
\endxy = \id_{E_j1_{\lambda}}, \quad \xy 0;/r.17pc/: 
(0,0)*{\slined{i}}; 
(5,0)*{\lambda}; 
\endxy = \id_{F_i1_{\lambda}}, \quad \xy 0;/r.17pc/: (0,0)*{\lcross{i}{j}}; (9,0)*{\lambda} \endxy = \xy 0;/r.17pc/:
(0,0)*{\xybox{
(0,0)*{\dcross{}{}};
(8,7)*{\lcap{}};
(-8,-7)*{\lcup{}};
(-4,8)*{\slined{}};
(4,-8)*{\slined{i}};
(12,-12); (12,4) **\dir{-} ?(1)*\dir{>};
(12,-14)*{\scriptstyle j};
(-12,-4); (-12,12) **\dir{-} ?(1)*\dir{>}; 
(17,0)*{\lambda};
}}
\endxy
\]
for $i \in I, j \in J, \lambda \in \mathsf{P}$. 
The following local relations are imposed on the 2-morphisms: 
\begin{itemize}
\item Left adjunction ($j \in J, \lambda \in \mathsf{P}$): 
\[
\xy 0;/r.17pc/:
(-8,4)*{\slineu{}};
(-4,-3)*{\lcup{}};
(4,3)*{\lcap{}};
(8,-4)*{\slineu{j}};
(12,0)*{\lambda};
\endxy \quad 
= 
\xy 0;/r.17pc/:
(0,-4)*{\sline{j}}; 
(0,4)*{\slineu{}}; 
(5,0)*{\lambda}; 
\endxy, \quad 
\xy 0;/r.17pc/:
(-4,3)*{\lcap{}};
(-8,-4)*{\slined{j}};
(4,-3)*{\lcup{}};
(8,4)*{\slined{}};
(12,0)*{\lambda};
\endxy \quad 
= 
\xy 0;/r.17pc/:
(0,-4)*{\slined{j}}; 
(0,4)*{\sline{}}; 
(5,0)*{\lambda}; 
\endxy. 
\]
\item Quadratic KLR (Definition \ref{def:catquantum} (4)).
\item Dot slide (Definition \ref{def:catquantum} (5)). 
\item Cubic KLR (Definition \ref{def:catquantum} (6)). 
\item Formal inverse: the following 2-morphisms are isomorphisms, 
that is, there are some additional as yet unnamed generators that serve as two-sided inverses ($i \in I, j \in J, i \neq j, \lambda \in \mathsf{P}$): 
\begin{align*}
& \xy 0;/r.17pc/:
(0,0)*{\lcross{i}{j}};  
\endxy \colon F_i E_j 1_{\lambda} \to E_j F_i 1_{\lambda}, \\
& \begin{bmatrix}
\xy 0;/r.17pc/:
(0,0)*{\lcross{j}{j}}; 
\endxy & \xy 0;/r.17pc/:
(0,0)*{\lcap{j}}; 
\endxy & \xy 0;/r.17pc/:
(0,0)*{\lcap{j}}; 
(-2,.5)*{\bullet};
\endxy & \cdots & \xy 0;/r.17pc/:
(0,0)*{\lcap{j}}; 
(-2,.5)*{\bullet};
(-11,3)*{\scriptstyle -\langle h_j, \lambda \rangle -1};
\endxy
\end{bmatrix}^\top \colon \\
& F_j E_j 1_\lambda \to E_j F_j 1_{\lambda} \oplus q_j^{-\langle h_j, \lambda \rangle -1}1_{\lambda} \oplus q_j^{-\langle h_j, \lambda \rangle -3}1_{\lambda} \oplus \cdots \oplus q_j^{\langle h_j,\lambda \rangle +1}1_{\lambda} \ \text{if $\langle h_j,\lambda \rangle \leq 0$}, \\
& \begin{bmatrix}
\xy 0;/r.17pc/:
(0,0)*{\lcross{j}{j}}; 
\endxy & \xy 0;/r.17pc/:
(0,0)*{\lcup{j}}; 
(2,-0.5)*{\bullet};
(12,-1)*{\scriptstyle \langle h_j, \lambda \rangle -1};
\endxy & \cdots & \xy 0;/r.17pc/:
(0,0)*{\lcup{j}};
(2,-0.5)*{\bullet};
\endxy & \xy 0;/r.17pc/:
(0,0)*{\lcup{j}}; 
\endxy
\end{bmatrix}\colon \\
& F_j E_j 1_{\lambda} \oplus q_j^{\langle h_j, \lambda \rangle -1}1_{\lambda} \oplus \cdots \oplus q_j^{-\langle h_j, \lambda \rangle +3}1_{\lambda} \oplus q_j^{-\langle h_j,\lambda \rangle +1}1_{\lambda} \to E_j F_j 1_{\lambda} \ \text{if $\langle h_j, \lambda \rangle \geq 0$}. 
\end{align*}
\end{itemize}
\end{theorem}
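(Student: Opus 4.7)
The plan is to adapt to the parabolic setting the alternative presentations of $\catquantum{\mathfrak{g}}$ due to Brundan \cite{MR3461059}, where only one side of the adjunction is presented with generators at the cost of requiring certain 2-morphisms to be formally invertible. When $J = I$ both statements reduce to \cite[Theorem 2.1]{MR3461059}, so the work is to extend the argument to accommodate the ``extra'' strands $F_i$ for $i \in I \setminus J$ which have no adjoint partner in $\catquantum{\mathfrak{p}_J}$.

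For part (1) I first define a 2-functor $\Phi$ from the right-handed presentation to $\catquantum{\mathfrak{p}_J}$ by sending each generator to itself. The only nontrivial check is that the listed 2-morphisms are isomorphisms in $\catquantum{\mathfrak{p}_J}$: the invertibility of the crossing $E_j F_i \to F_i E_j$ for $i \in I, j \in J, i \neq j$ is supplied directly by the Mixed $EF$ relation of Definition \ref{def:catquantum} (7), while invertibility of the matrices built from the $j$-colored crossing and (dotted) right caps/cups follows from the extended $\mathfrak{sl}_2$ relations together with the bubble relations. Next I construct an inverse 2-functor $\Psi$: starting from the right-handed presentation, the inverse of the Mixed $EF$ crossing serves as the definition of the leftward crossing, and left cups/caps are extracted as the appropriate matrix entries of the postulated inverses of the $\mathfrak{sl}_2$-matrices, after a normalization fixing the scalars $c_{j,\lambda}$. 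One then verifies, formally in the graphical calculus, the left adjunction, dot and crossing cyclicity, the bubble relations, the extended $\mathfrak{sl}_2$ relations, and the Mixed $EF$ relation; these are parabolic analogues of the computations in \cite{MR3461059} and crucially never require bending a strand labelled $i \in I \setminus J$. Finally $\Phi\Psi$ and $\Psi\Phi$ are checked to be the identity on generators.

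The main obstacle is the bookkeeping of bubble parameters and scalar normalizations: in the right-handed presentation fake bubbles are not generators but must be defined so that the infinite Grassmannian identity holds, and the extracted left cups/caps must be rescaled so that the bubble relations of Definition \ref{def:catquantum} (8) hold with the prescribed $c_{j,\lambda}$. This rescaling is forced by the left adjunction axiom once the right cups and caps are fixed, and the nontrivial part is verifying that the resulting 2-morphisms are compatible with the KLR relations and the Mixed $EF$ relation for all $i \in I$ (not only $i \in J$). Once this compatibility is established, the argument runs parallel to the $J = I$ case.

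Part (2) is deduced from part (1) by applying the involution $\psi$ of Proposition \ref{prop:involution}, which is an isomorphism $\catquantum{\mathfrak{p}_J} \to \catquantum{\mathfrak{p}_J}^{\mathrm{co}}$ exchanging right cups/caps with left cups/caps (after reversing vertical composition). The presentation of part (2) is the image under $\psi$ of the presentation of part (1), so the isomorphism follows formally.
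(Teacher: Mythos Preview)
Your approach is correct and closely parallels the paper's, but the paper takes a shorter route for part (1). Rather than re-running Brundan's construction of $\Phi$ and $\Psi$ in the parabolic setting, the paper first treats $J = I$ by directly citing \cite{MR3451390} (Rouquier's 2-category is isomorphic to Khovanov--Lauda's) together with \cite{MR3461059} (Khovanov--Lauda's is isomorphic to the cyclic version $\catquantum{\mathfrak{g}}$), and then composes with an automorphism from Proposition~\ref{prop:scalarshift} to normalize the scalars so that the isomorphism is the identity on generators. For general $J$, the paper simply observes that every relation in either presentation which is not a common KLR relation involves only $E_j$ and $F_j$ for a single $j \in J$, hence reduces to the already-established $\mathfrak{sl}_2$ case. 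Your direct verification would also succeed---and your remark that one never needs to bend an $F_i$ strand for $i \notin J$ is exactly why the parabolic extension goes through---but the paper's reduction avoids reproducing Brundan's bookkeeping of fake bubbles and scalar normalizations. For part (2), both you and the paper deduce it from (1) via the involution $\psi$ of Proposition~\ref{prop:involution}.
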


\begin{proof}
(1) 
Let $\mathcal{U}'_q(\mathfrak{p}_J)$ be the 2-category defined in the theorem. 

First, assume $J = I$, hence $\mathfrak{p}_J = \mathfrak{g}$.
Then, $\mathcal{U}'_q(\mathfrak{p}_J)$ is the 2-category of Rouquier \cite{rouquier20082kacmoodyalgebras}.  
Although Rouquier's generating 2-morphisms are given by upward 2-morphisms rather than downward 2-morphisms, 
it is equivalent to ours by applying adjunction of $F_i$ and $E_i$. 
%By the following equation, we see that our definition is equivalent to Rouquier's one:
\begin{comment}
\[
\xy 0;/r.17pc/:
(0,-4)*{\slined{j}}; 
(0,4)*{\sline{}}; 
(5,0)*{\lambda}; 
(0,0)*{\bullet};
\endxy \quad 
= 
\xy 0;/r.17pc/:
(-8,4)*{\slined{}};
(-4,-3)*{\rcup{}};
(4,3)*{\rcap{}};
(8,-4)*{\slined{j}};
(12,0)*{\lambda};
(0,0)*{\bullet};
\endxy, \quad \xy 0;/r.17pc/: 
(0,0)*{\dcross{j}{j'}}; 
(8,0)*{\lambda}
\endxy 
= 
\xy 0;/r.17pc/: 
(0,0)*{\xybox{(0,0)*{\ucross{}{}}; 
(-8,-7)*{\rcup{}}; 
(8,7)*{\rcap{}}; 
(-4,4); (20,4) **\crv{(-4,20) & (20,20)}; ?(1)*\dir{>};
(-20,-4); (4,-4) ** \crv{(-20,-20) & (4,-20)}; ?(1)*\dir{>}; 
(-20,22); (-20,-4) **\dir{-} ?(1)*\dir{>};
(-12,22); (-12,-4) **\dir{-} ?(1)*\dir{>};
(12,4); (12,-22) **\dir{-} ?(1)*\dir{>}+(0,-2)*{\scriptstyle j};
(20,4); (20,-22) **\dir{-} ?(1)*\dir{>}+(0,-2)*{\scriptstyle j'};
(25,0)*{\lambda}; 
(-25,0)*{}; 
}};
\endxy. 
\] 
\end{comment}
By \cite{MR3451390}, Rouquier's 2-category is isomorphic to Khovanov-Lauda's one. 
Moreover, \cite{MR3461059} shows that it is isomorphic to our $\catquantum{\mathfrak{g}}$.
(In \cite{MR3461059}, Khovanov-Lauda's 2-category is denoted by $\mathcal{U}_Q(\mathfrak{g})$, and our $\catquantum{\mathfrak{g}}$ is denoted by $\mathcal{U}_Q^{\mathrm{cyc}}(\mathfrak{g})$.)
We obtain an isomorphism $\mathcal{U}'(\mathfrak{g}) \to \catquantum{\mathfrak{g}}$ given as follows: 
\begin{align*}
\xy 0;/r.17pc/: (0,0)*{\sdotd{i}}; (5,0)*{\lambda};
\endxy 
\mapsto 
\xy 0;/r.17pc/: (0,0)*{\sdotd{i}}; (5,0)*{\lambda};
\endxy, 
&
\xy 0;/r.17pc/: 
(0,0)*{\dcross{i}{i'}}; (9,0)*{\lambda}; 
\endxy 
\mapsto 
t_{i,i'}t_{i',i}^{-1} \xy 0;/r.17pc/: 
(0,0)*{\dcross{i}{i'}}; (9,0)*{\lambda}; 
\endxy , \\
\xy 0;/r.17pc/:
(0,0)*{\rcup{j}}; (9,0)*{\lambda}; 
\endxy 
\mapsto 
c_{j,\lambda} \xy 0;/r.17pc/:
(0,0)*{\rcup{j}}; (9,0)*{\lambda}; 
\endxy, \quad
& 
\xy 0;/r.17pc/:
(0,0)*{\rcap{j}}; (9,0)*{\lambda}; 
\endxy 
\mapsto
c_{j,\lambda}^{-1} \xy 0;/r.17pc/:
(0,0)*{\rcap{j}}; (9,0)*{\lambda}; 
\endxy.  \\
\end{align*}
Here, the scalars are determined by \cite[(1.13)]{MR3451390} and \cite[(2.1)]{MR3461059}. 
Let $b_{i',i} = t_{i,i'}^{-1}t_{i',i} \ (i,i' \in I)$ and $d_{j,\lambda} = c_{j,\lambda}$. 
By Proposition \ref{prop:scalarshift}, they induce an automorphism $\catquantum{\mathfrak{g}} \to \catquantum{\mathfrak{g}}$. 
By post-composing it with the isomorphism $\mathcal{U}'_q(\mathfrak{g}) \to \catquantum{\mathfrak{g}}$ above, we obtain an isomorphism $\mathcal{U}'_q(\mathfrak{g}) \to \catquantum{\mathfrak{g}}$ given by
\begin{align*}
\xy 0;/r.17pc/: (0,0)*{\sdotd{i}}; (5,0)*{\lambda};
\endxy 
\mapsto 
\xy 0;/r.17pc/: (0,0)*{\sdotd{i}}; (5,0)*{\lambda};
\endxy, 
&
\xy 0;/r.17pc/: 
(0,0)*{\dcross{i}{i'}}; (9,0)*{\lambda}; 
\endxy 
\mapsto 
\xy 0;/r.17pc/: 
(0,0)*{\dcross{i}{i'}}; (9,0)*{\lambda}; 
\endxy , \\
\xy 0;/r.17pc/:
(0,0)*{\rcup{j}}; (9,0)*{\lambda}; 
\endxy 
\mapsto 
\xy 0;/r.17pc/:
(0,0)*{\rcup{j}}; (9,0)*{\lambda}; 
\endxy, \quad
& 
\xy 0;/r.17pc/:
(0,0)*{\rcap{j}}; (9,0)*{\lambda}; 
\endxy 
\mapsto 
\xy 0;/r.17pc/:
(0,0)*{\rcap{j}}; (9,0)*{\lambda}; 
\endxy. \\
\end{align*}
For general $J \subset I$, the proof is parallel: we only need to consider restricted 1-morphisms in the proof of \cite{MR3451390}. 

(2) is deduced from (1) by the involution $\psi$. 
\end{proof} 

\begin{comment}
\begin{corollary} \label{cor:trick}
Let $\lambda \in P, j \in J, G \in \catquantum{\mathfrak{p}_J}(\lambda,\lambda)$. 
For $f, g \in \HOM_{\catquantum{\mathfrak{p}_J}}(E_jF_j 1_{\lambda}, G)$, the following are equivalent: 
\begin{enumerate}
\item $f = g$, 
\item \[
f \circ \left(\xy 0;/r.12pc/: 
(0,0)*{\lcross{j}{j}};
\endxy \right) = g \circ \left(\xy 0;/r.12pc/:
(0,0)*{\lcross{j}{j}}; 
\endxy \right), f \circ \left( \xy 0;/r.12pc/:
(0,0)*{\lcup{j}};
(2,-0.5)*{\bullet};
(7,-1)*{\scriptstyle n};
\endxy \right) = g \circ \left( \xy 0;/r.12pc/:
(0,0)*{\lcup{j}};
(2,-0.5)*{\bullet};
(7,-1)*{\scriptstyle n};
\endxy \right) \ (n \in \mathbb{Z}_{\geq 0}), 
\]
\end{enumerate}
where $\circ$ denote the composition of 2-morphisms. 
\end{corollary}

\begin{proof}
It follows from the formal inverse of Theorem \ref{thm:Rouquierver} (2). 
\end{proof}
\end{comment}

\begin{theorem} \label{thm:KLRaction}
Let $\lambda \in \mathsf{P}$ and $\beta \in \mathsf{Q}_+$. 
Put $n = \height \beta$. 
There exists an isomorphism of graded $\mathbf{k}$-algebras
\[
R(\beta) \otimes \mathbf{k}[\{z_{j,m}\mid j\in J, m \geq 1\}] \to \END_{\catquantum{\mathfrak{p}_J}}\left(\bigoplus_{\nu \in I^{\beta}} F_{\nu_1} \cdots F_{\nu_n}1_{\lambda} \right) 
\]
with $\deg z_{j,m} = m(\alpha_j,\alpha_j)$, given by 
\begin{align*}
e(\nu) &\mapsto \text{the projection to $F_{\nu_1} \cdots F_{\nu_n}1_{\lambda}$}, \\
x_k e(\nu) &\mapsto \xy 0;/r.17pc/:
(0,0)*{\slined{\nu_1}}; 
\endxy \cdots \xy 0;/r.17pc/:
(0,0)*{\sdotd{\nu_k}}; 
\endxy \cdots \xy 0;/r.17pc/:
(0,0)*{\slined{\nu_n}}; 
\endxy, \\
\tau_k e(\nu) &\mapsto \xy 0;/r.17pc/:
(0,0)*{\slined{\nu_1}}; 
\endxy \cdots \xy 0;/r.17pc/:
(0,0)*{\dcross{\nu_k}{\nu_{k+1}}}; 
\endxy \cdots \xy 0;/r.17pc/:
(0,0)*{\slined{\nu_n}}; 
\endxy, \\
z_{j,m}e(\nu) &\mapsto F_{\nu_1} \cdots F_{\nu_n} \xy 0;/r.17pc/:
(0,3)*{\rcap{j}}; 
(0,-3)*{\lcup{}};
(0,-4.5)*{\bullet};
(0,-6.5)*{\scriptstyle \langle h_j, \lambda \rangle -1 +m};
(9,0)*{\lambda};
\endxy. 
\end{align*}
\begin{comment}
& \text{if $\langle h_j, \lambda \rangle \geq 0$}, \\
F_{\nu_1} \cdots F_{\nu_n} \xy 0;/r.17pc/:
(0,3)*{\lcap{j}}; 
(0,-3)*{\rcup{}};
(0,-4.5)*{\bullet};
(0,-6.5)*{\scriptstyle -\langle h_j, \lambda \rangle -1 +m};
(9,0)*{\lambda} ;
\endxy & \text{if $\langle h_j, \lambda \rangle < 0$}. 
\end{comment}
We also have another isomorphism by sending $z_{j,m}e(\nu)$ to 
\[
\xy 0;/r.17pc/:
(0,3)*{\rcap{j}}; 
(0,-3)*{\lcup{}};
(0,-4.5)*{\bullet};
(0,-6.5)*{\scriptstyle \langle h_j, \lambda-\beta \rangle -1 +m};
(-12,0)*{\lambda-\beta};
\endxy F_{\nu_1} \cdots F_{\nu_n}. 
\]

Assume $\beta \in \sum_{j \in J} \mathbb{Z}_{\geq 0} \alpha_j$. 
Then, there exists an isomorphism of graded $\mathbf{k}$-algebras 
\[
R(\beta) \otimes \mathbf{k}[\{z_{j,m}\mid j\in J, m \geq 1\}] \to \END_{\catquantum{\mathfrak{p}_J}}\left(\bigoplus_{\nu \in I^{\beta}} E_{\nu_n} \cdots E_{\nu_1}1_{\lambda} \right)
\]
with $\deg z_{j,m} = m(\alpha_j,\alpha_j)$, given by 
\begin{align*}
e(\nu) &\mapsto \text{the projection to $E_{\nu_n} \cdots E_{\nu_1}1_{\lambda}$}, \\
x_k e(\nu) &\mapsto \xy 0;/r.17pc/:
(0,0)*{\slineu{\nu_n}}; 
\endxy \cdots \xy 0;/r.17pc/:
(0,0)*{\sdotu{\nu_{k}}}; 
\endxy \cdots \xy 0;/r.17pc/:
(0,0)*{\slineu{\nu_1}}; 
\endxy, \\
\tau_k e(\nu) &\mapsto \xy 0;/r.17pc/:
(0,0)*{\slineu{\nu_n}}; 
\endxy \cdots \xy 0;/r.17pc/:
(0,0)*{\ucross{}{}}; 
(-5.5,-6)*{\scriptstyle \nu_{k+1}};
(6,-6)*{\scriptstyle \nu_{k}};
\endxy \cdots \xy 0;/r.17pc/:
(0,0)*{\slineu{\nu_1}}; 
\endxy, \\
z_{j,m}e(\nu) &\mapsto 
E_{\nu_n} \cdots E_{\nu_1} \xy 0;/r.17pc/:
(0,3)*{\rcap{j}}; 
(0,-3)*{\lcup{}};
(0,-4.5)*{\bullet};
(0,-6.5)*{\scriptstyle \langle h_j, \lambda \rangle -1 +m};
(9,0)*{\lambda} ;
\endxy. 
\end{align*}
We also have another isomorphism by sending $z_{j,m}e(\nu)$ to 
\[
\xy 0;/r.17pc/:
(0,3)*{\rcap{j}}; 
(0,-3)*{\lcup{}};
(0,-4.5)*{\bullet};
(0,-6.5)*{\scriptstyle \langle h_j, \lambda+ \beta \rangle -1 +m};
(-12,0)*{\lambda+\beta};
\endxy E_{\nu_n} \cdots E_{\nu_1}. 
\]
\end{theorem}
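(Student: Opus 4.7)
My plan is to verify well-definedness of the map as a graded algebra homomorphism, then to establish bijectivity by reducing injectivity to the case $J = I$ and proving surjectivity by a diagrammatic reduction argument.

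For well-definedness, the KLR-type relations among the images of $e(\nu), x_k, \tau_k$ -- namely the dot and distant crossing commutations, the identity $\tau_k x_{k+1} e(\nu) - x_k \tau_k e(\nu) = \delta_{\nu_k, \nu_{k+1}} e(\nu)$, and the quadratic and cubic KLR relations -- are immediate from the local relations in Definition \ref{def:catquantum} (4), (5), (6) applied within the strand region. Each $z_{j,m}$ is realized as a bubble placed in the rightmost region (labeled $\lambda$ or $\lambda - \beta$); its commutation with every strand 2-morphism and with other bubbles follows from the topological freedom to slide a bubble in an isolated region past any other 2-morphism, so the image of $\mathbf{k}[\{z_{j,m}\}]$ lies in the centre.

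For injectivity, I plan to exploit the natural 2-functor $\Phi \colon \catquantum{\mathfrak{p}_J} \to \catquantum{\mathfrak{g}}$, which is the identity on objects and sends each generating 1- and 2-morphism of $\catquantum{\mathfrak{p}_J}$ to the same-named generator of $\catquantum{\mathfrak{g}}$. This 2-functor is well-defined because every defining relation of $\catquantum{\mathfrak{p}_J}$ is a defining relation of $\catquantum{\mathfrak{g}}$. Composing our homomorphism $f_J$ with the induced algebra map $\END_{\catquantum{\mathfrak{p}_J}}(X) \to \END_{\catquantum{\mathfrak{g}}}(X)$ yields the canonical inclusion $R(\beta) \otimes \mathbf{k}[z_{j,m} : j \in J] \hookrightarrow R(\beta) \otimes \mathbf{k}[z_{i,m} : i \in I]$, the codomain being identified with $\END_{\catquantum{\mathfrak{g}}}(X)$ via the classical $J = I$ case established by Khovanov-Lauda and Brundan (as recorded in \cite{MR3461059}). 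Since this composition is injective, so is $f_J$.

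For surjectivity, I will use the Rouquier-type presentation from Theorem \ref{thm:Rouquierver}(1) together with adjunction and the extended $\mathfrak{sl}_2$ relations to push all cups and caps appearing in an arbitrary 2-endomorphism of $X$ into the rightmost region, converting them into bubbles indexed by $j \in J$. After this reduction, every 2-endomorphism is expressed as a KLR-type diagram on the strands decorated with bubbles on the right, hence lies in the image of $f_J$. The $E$-version of the statement, under the assumption $\beta \in \sum_{j \in J} \mathbb{Z}_{\geq 0}\alpha_j$, follows by an analogous argument applied to upward diagrams. The main technical obstacle is rigorously carrying out the cup-cap-pushing reduction in the surjectivity step, which will require an induction on the number of cups and caps combined with careful use of the extended $\mathfrak{sl}_2$ relations to resolve innermost cup-cap pairs.
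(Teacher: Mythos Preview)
Your proposal is correct and follows essentially the same strategy as the paper: well-definedness from the local relations, surjectivity by the standard diagram-reduction argument (the paper cites \cite[Section 8]{MR2729010} and \cite[Proposition 3.11]{MR2628852} for this), and injectivity by passing through the canonical 2-functor $\catquantum{\mathfrak{p}_J} \to \catquantum{\mathfrak{g}}$ to reduce to the known $J = I$ case. The only discrepancy is bibliographic: the paper attributes the $J = I$ injectivity to \cite{webster2024unfurlingkhovanovlaudarouquieralgebras} and \cite{MR4186573} rather than to \cite{MR3461059}, which concerns the equivalence of presentations rather than the endomorphism computation itself.
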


\begin{comment}
& \text{if $\langle h_j, \lambda \rangle \geq 0$}, \\
E_{\nu_n} \cdots E_{\nu_1} \xy 0;/r.17pc/:
(0,3)*{\lcap{j}}; 
(0,-3)*{\rcup{}};
(0,-4.5)*{\bullet};
(0,-6.5)*{\scriptstyle -\langle h_j, \lambda \rangle -1 +m};
(9,0)*{\lambda} ;
\endxy & \text{if $\langle h_j, \lambda \rangle < 0$}. 
\end{cases}
\end{comment}

A careful reader will notice that we only need the surjectivity in this paper. 

\begin{proof}
It is immediate from the definitions that the homomorphisms described in the theorem are well-defined. 
The surjectivity is proved by the same discussion as in \cite[Section 8]{MR2729010}: see also \cite[Proposition 3.11]{MR2628852}. 
The injectivity is proved when $J = I$ in \cite[Theorem 3.6]{webster2024unfurlingkhovanovlaudarouquieralgebras}, \cite[Theorem 3.5.3]{MR4186573}.
The general case is deduced from it by considering the canonical 2-functor $\catquantum{\mathfrak{p}_J} \to \catquantum{\mathfrak{g}}$. 
\end{proof}

\begin{definition}
Let $\lambda \in \mathsf{P}, \beta \in \mathsf{Q}_+, f \in R(\beta)$ and $\nu, \nu' \in I^{\beta}$. 
The image of $e(\nu') f e(\nu)$ in $\HOM_{\catquantum{\mathfrak{p}_J}} (F_{\nu_1} \cdots F_{\nu_n}1_{\lambda}, F_{\nu'_1} \cdots F_{\nu'_n}1_{\lambda})$ under the homomorphism given in Theorem \ref{thm:KLRaction} is depicted by 
\[
\xy 0;/r.15pc/:
(-6,8); (-6,-8) **\dir{-} ?(1)*\dir{>}; 
(-2,8); (-2,-8) **\dir{-} ?(1)*\dir{>}; 
(6,8); (6,-8) **\dir{-} ?(1)*\dir{>}; 
(0,0)*{\fcolorbox{black}{white}{\quad $f$ \quad}}; 
(-6,11)*{\scriptstyle \nu'_1}; 
(-2,11)*{\scriptstyle \nu'_2}; 
(6,11)*{\scriptstyle \nu'_n}; 
(-6,-10)*{\scriptstyle \nu_1}; 
(-2,-10)*{\scriptstyle \nu_2}; 
(6,-10)*{\scriptstyle \nu_n}; 
(1,8); (4,8) **@{.};
(1,-8); (4,-8) **@{.};
\endxy. 
\]
Similarly, the image of $e(\nu')fe(\nu)$ in $\HOM_{\catquantum{\mathfrak{p}_J}} (E_{\nu_n} \cdots E_{\nu_1}1_{\lambda}, E_{\nu'_n} \cdots E_{\nu'_1}1_{\lambda})$ is depicted by 
\[
\xy 0;/r.15pc/:
(-6,8); (-6,-8) **\dir{-} ?(0)*\dir{<}; 
(2,8); (2,-8) **\dir{-} ?(0)*\dir{<}; 
(6,8); (6,-8) **\dir{-} ?(0)*\dir{<}; 
(0,0)*{\fcolorbox{black}{white}{\quad $f$ \quad}}; 
(-6,11)*{\scriptstyle \nu'_n}; 
(2,11)*{\scriptstyle \nu'_2}; 
(6,11)*{\scriptstyle \nu'_1}; 
(-6,-10)*{\scriptstyle \nu_n}; 
(2,-10)*{\scriptstyle \nu_2}; 
(6,-10)*{\scriptstyle \nu_1}; 
(-1,8); (-4,8) **@{.};
(-1,-8); (-4,-8) **@{.};
\endxy. 
\]
\end{definition}

\begin{example}
\[
\xy 0;/r.15pc/:
(-6,8); (-6,-8) **\dir{-} ?(0)*\dir{<}; 
(0,8); (0,-8) **\dir{-} ?(0)*\dir{<}; 
(6,8); (6,-8) **\dir{-} ?(0)*\dir{<}; 
(0,0)*{\fcolorbox{black}{white}{\quad $\tau_1x_1$ \quad}}; 
(-6,11)*{\scriptstyle \nu_3}; 
(0,11)*{\scriptstyle \nu_1}; 
(6,11)*{\scriptstyle \nu_2}; 
(-6,-10)*{\scriptstyle \nu_3}; 
(0,-10)*{\scriptstyle \nu_2}; 
(6,-10)*{\scriptstyle \nu_1}; 
\endxy = \xy 0;/r.15pc/: (0,0)*{\xybox{
(0,0)*{\ucross{}{}}; 
(-4,-8)*{\slineu{\nu_2}};
(4,-8)*{\sdotu{\nu_1}};
(-12,-12); (-12,4) **\dir{-} ?(1)*\dir{>};
(-12,-14)*{\scriptstyle \nu_3}; 
}};
\endxy. 
\]
\end{example}

\begin{definition}
Let $J \subset I$. 
Fix a choice of scalars $Q$ and a choice of bubble parameters $C$ compatible with $Q$. 
The graded $\mathbf{k}$-linear 2-category $\dotcatquantum{\mathfrak{p}_J}$ has objects $\lambda \in \mathsf{P}$ and its Hom-category $\dot{\mathcal{U}}_q(\mathfrak{p}_J)(\lambda,\mu)$ is defined as the Karoubi envelope of $\mathcal{U}_q(\mathfrak{p}_J)(\lambda,\mu)$. \index{$\dotcatquantum{\mathfrak{p}_J}$}
\end{definition}

For a category $\mathcal{C}$, $X \in \mathcal{C}$ and an idempotent $e \in \mathcal{C}(X,X)$, 
the endomorphism $e$ of $X$ is a projection to a direct summand in the Karoubi envelop of $\mathcal{C}$. 
Let $eX$ denote this direct summand. 
Then, we have a canonical epimorphism $X \to eX$ and a canonical monomorphism $eX \to X$. 

\begin{definition} \label{def:dividedpower}
Let $i \in I, j\in J, \lambda \in \mathsf{P}$ and $n \in \mathbb{Z}_{\geq 1}$. 
We define 1-morphisms of $\dotcatquantum{\mathfrak{p}_J}$ 
\begin{align*}
E_j^{(n)}1_{\lambda} = q_j^{-n(n-1)/2}b_+(j^n) E_j^n 1_{\lambda},& \ F_i^{(n)}1_{\lambda} = q_i^{n(n-1)/2} b_-(i^n)F_i^n 1_{\lambda}, \\
E_j^{(n)'}1_{\lambda} = q_j^{-n(n-1)/2} b'_+(j^n)E_j^n 1_{\lambda},& \  F_i^{(n)'}1_{\lambda} = q_i^{n(n-1)/2} b'_-(i^n)F_i^n 1_{\lambda},
\end{align*} \index{$E_i^{(n)}, F_i^{(n)}, E_i^{(n)'}, F_i^{(n)'}$}
where the $R(n\alpha_i)$-action on $F_i^n$ (resp. the $R(n\alpha_j)$-action on $E_j^n$) is given by Theorem \ref{thm:KLRaction}. 
\end{definition}

\begin{comment}
The left-multiplication morphisms $E_j^{(n)} 1_{\lambda} \xrightarrow{b'_{+,n} \times} E_j^{(n)'}1_{\lambda}$ and $E_j^{(n)'} 1_{\lambda} \xrightarrow{b_{+,n} \times} E_j^{(n)}1_{\lambda}$ are mutually inverse isomorphisms since $b_{+,n}b'_{+,n} = b_{+,n}, b'_{+,n} b_{+,n} = b'_{+,n}$, see (\ref{eq:demazure}). 
Similarly, the morphism $F_i^{(n)}1_{\lambda} \to F_i^{(n)'} 1_{\lambda}$ induced from $F_i^n \xrightarrow{b_{-,n}} F_i^n$,
and the morphism $F_i^{(n)'}1_{\lambda} \to F_i^{(n)} 1_{\lambda}$ induced from $F_i^n \xrightarrow{b'_{-,n}} F_i^n$ give mutually inverse isomorphisms, 
since $b_{-,n} b'_{-,n} = b'_{-,n}, b'_{-,n}b_{-,n} = b_{-,n}$. 
\end{comment}

There are isomorphisms of 1-morphisms in $\dotcatquantum{\mathfrak{p}_J}$
\begin{align*}
E_j^n 1_{\lambda} &\simeq (E_j^{(n)}1_{\lambda})^{\oplus [n]_j!} \simeq (E_j^{(n)'}1_{\lambda})^{\oplus [n]_j!}, \\
F_i^n 1_{\lambda} &\simeq (F_i^{(n)}1_{\lambda})^{\oplus [n]_i!} \simeq (F_i^{(n)'}1_{\lambda})^{\oplus [n]_i!}. 
\end{align*}

\begin{definition} \label{def:unitcounit}
Let $j \in J, \lambda \in \mathsf{P}$. 
We define 2-morphisms in $\dotcatquantum{\mathfrak{p}_J}$ 
\begin{align*}
\varepsilon = \varepsilon_n \colon & q_j^{n(n+\langle h_j, \lambda \rangle)}F_j^{(n)} E_j^{(n)}1_{\lambda} \xrightarrow{\mathrm{can}} q_j^{n(n+\langle h_j, \lambda \rangle)}F_j^n E_j^n 1_{\lambda} \xrightarrow{\text{$n$-layer $\xy 0;/r.12pc/: (0,0)*{\lcap{j}}; \endxy$}} 1_{\lambda}, \index{$\varepsilon$} \\
\eta = \eta_n \colon & q_j^{-n(n+\langle h_j, \lambda \rangle)}1_{\lambda+n\alpha_j} \xrightarrow{\text{$n$-layer $\xy 0;/r.12pc/: (0,0)*{\lcup{j}}; \endxy$}} E_j^nF_j^n1_{\lambda+n\alpha_j} \xrightarrow{\text{can}} E_j^{(n)}F_j^{(n)}1_{\lambda+n\alpha_j}.  \index{$\eta$}
\end{align*} 
Similarly, we define 2-morphisms
\begin{align*}
\varepsilon' = \varepsilon'_n \colon & q_j^{n(n-\langle h_j, \lambda \rangle)}E_j^{(n)'} F_j^{(n)'}1_{\lambda} \xrightarrow{\mathrm{can}} q_j^{n(n-\langle h_j, \lambda \rangle)}E_j^n F_j^n 1_{\lambda} \xrightarrow{\text{$n$-layer $\xy 0;/r.12pc/: (0,0)*{\rcap{j}}; \endxy$}} 1_{\lambda}, \index{$\varepsilon'$} \\
\eta' = \eta'_n \colon & q_j^{-n(n-\langle h_j, \lambda \rangle)}1_{\lambda-n\alpha_j} \xrightarrow{\text{$n$-layer $\xy 0;/r.12pc/: (0,0)*{\rcup{j}}; \endxy$}} F_j^nE_j^n1_{\lambda-n\alpha_j} \xrightarrow{\text{can}} F_j^{(n)'}E_j^{(n)'}1_{\lambda-n\alpha_j}. \index{$\eta'$}
\end{align*}
\end{definition}

\begin{lemma} \label{lem:adjunction}
$(q_j^{n(n+\langle h_j,\lambda \rangle)}F_j^{(n)}1_{\lambda+n\alpha_j}, E_j^{(n)}1_{\lambda})$ is an adjoint pair with unit $\varepsilon$ and counit $\eta$. 
Similarly, $(q_j^{n(n-\langle h_j, \lambda \rangle)}E_j^{(n)'}1_{\lambda-n\alpha_j}, F_j^{(n)'}1_{\lambda})$ is an adjoint pair with unit $\varepsilon'$ and counit $\eta'$. 
\end{lemma}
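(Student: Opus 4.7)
The plan is to verify the two triangle identities for each of the two adjoint pairs in the lemma. By the involution $\sigma$ of Proposition \ref{prop:involutionsigma}, the primed adjunction is equivalent to the unprimed one, so I will focus on showing that $\eta$ and $\varepsilon$ satisfy the triangle identities for $F_j^{(n)} \dashv E_j^{(n)}$ in $\dotcatquantum{\mathfrak{p}_J}$.

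The first step will be to establish the analogous adjunction for non-divided tensor powers: that the $n$-layer cup and cap exhibit $F_j^n 1_{\lambda+n\alpha_j}$, with appropriate grading shift, as a left adjoint of $E_j^n 1_\lambda$ already inside $\catquantum{\mathfrak{p}_J}$. The case $n = 1$ is the left adjunction axiom of Definition \ref{def:catquantum}(1). For general $n$, I will induct, exploiting the standard fact that compositions of adjoint pairs are adjoint: the $n$-layer cup (resp.\ cap) is obtained by inserting a basic cup (resp.\ cap) in the innermost slot of the $(n-1)$-layer data, which is precisely the composition of $F_j^{n-1} \dashv E_j^{n-1}$ with $F_j \dashv E_j$. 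The $n$-fold zigzag then unfolds into $n$ nested basic zigzags, each collapsing to a straight strand. The cumulative shift $q_j^{n(n+\langle h_j,\lambda\rangle)}$ accumulates from the basic shifts $q_j^{1 \pm \langle h_j, \mu \rangle}$ at the weights $\mu$ encountered by each nested layer.

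The second step will be to transfer this adjunction to the divided power summands $E_j^{(n)} = q_j^{-n(n-1)/2} b_{+,n} E_j^n$ and $F_j^{(n)} = q_j^{n(n-1)/2} b_{-,n} F_j^n$ inside the Karoubi envelope. After unwinding the definitions of $\eta$ and $\varepsilon$ through the canonical inclusion and projection maps of these summands, the first triangle identity for the divided-power adjunction becomes the concrete equation
\[
b_{+,n} = b_{+,n} \circ (\id \otimes \cap^n) \circ (b_{+,n} \otimes b_{-,n} \otimes b_{+,n}) \circ (\cup^n \otimes \id) \circ b_{+,n}
\]
in $\END_{\catquantum{\mathfrak{p}_J}}(E_j^n 1_\lambda)$, where $\cup^n$ and $\cap^n$ are the $n$-layer cup and cap. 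Combining the stage-one triangle identity $(\id \otimes \cap^n) \circ (\cup^n \otimes \id) = \id_{E_j^n}$ with the absorption identities
\[
(b_{+,n} \otimes b_{-,n}) \circ \cup^n = \cup^n, \qquad \cap^n \circ (b_{-,n} \otimes b_{+,n}) = \cap^n,
\]
the claim reduces to $b_{+,n}^2 = b_{+,n}$. I will prove the absorption identities by induction on $n$ using the nested structure of the cup and cap together with the explicit formulas $b_{+,n} = \mathbf{x}_n \tau_{w_n}$, $b_{-,n} = \tau_{w_n} \mathbf{x}_n$ and the Demazure relation \eqref{eq:demazure}.

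The hard part will be verifying the absorption identities. Morally they encode the fact that the nested $n$-layer cup and cap already factor through the canonical $E_j^{(n)} F_j^{(n)}$ summand, with the multiplicity $[n]_j!$ accounted for by the indistinguishable symmetries among the nested layers. Rigorously, however, they demand a careful bookkeeping of dots, crossings, cups and caps governed by the quadratic KLR and extended $\mathfrak{sl}_2$ relations of Definition \ref{def:catquantum}, which constitutes the main technical burden. Once the absorption is in hand, the second triangle identity follows by a symmetric argument on the $F$-side, and the primed lemma by applying $\sigma$.
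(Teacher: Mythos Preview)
Your overall architecture---first proving $F_j^n \dashv E_j^n$ via nested cups and caps, then descending to the idempotent summands---matches the paper's strategy. However, the absorption identities you plan to use,
\[
(b_{+,n} \otimes b_{-,n}) \circ \cup^n = \cup^n, \qquad \cap^n \circ (b_{-,n} \otimes b_{+,n}) = \cap^n,
\]
are \emph{false} for $n \geq 2$. To see this, suppose the first one held. Applying $(1-b_{+,n}) \otimes \id_{F^n}$ to both sides gives $((1-b_{+,n}) \otimes \id_{F^n}) \circ \cup^n = 0$. Tensoring with $\id_{E^n}$ on the right, postcomposing with $\id_{E^n} \otimes \cap^n$, and using your own stage-one identity $(\id_{E^n} \otimes \cap^n) \circ (\cup^n \otimes \id_{E^n}) = \id_{E^n}$ forces $1 - b_{+,n} = 0$ in $\END(E_j^n 1_\lambda)$, contradicting the injectivity of $R(n\alpha_j) \hookrightarrow \END(E_j^n 1_\lambda)$ (Theorem~\ref{thm:KLRaction}). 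So the nested cup does \emph{not} land in the image of $b_{+,n} \otimes b_{-,n}$.

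What actually holds, and what the paper's diagrammatic computation exploits, is the weaker \emph{sliding} identity
\[
(b_{+,n} \otimes \id_{F^n}) \circ \cup^n = (\id_{E^n} \otimes b_{-,n}) \circ \cup^n, \qquad \cap^n \circ (b_{-,n} \otimes \id_{E^n}) = \cap^n \circ (\id_{F^n} \otimes b_{+,n}),
\]
which follows from dot and crossing cyclicity (Definition~\ref{def:catquantum}(2)(3)) together with the fact that reversing the word $\mathbf{x}_n \tau_{w_n}$ gives $\tau_{w_n}\mathbf{x}_n$ (using that the $x_k$ commute and $w_n$ is an involution in the nil-Hecke algebra). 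With these sliding moves one pushes all three idempotents in $(\id \otimes \cap^n) \circ (b_{+,n} \otimes b_{-,n} \otimes b_{+,n}) \circ (\cup^n \otimes \id)$ onto a single $E^n$-strand, obtaining $b_{+,n}^3 = b_{+,n}$ after the stage-one zigzag. This is precisely the content of the paper's first diagrammatic equality. Your plan is salvageable by replacing ``absorption'' with ``sliding,'' but as written the key lemma you intend to prove is not true.
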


\begin{proof}
We suppress grading shifts in this proof. 
We have to prove the unit-counit identities. 
We prove that the morphism
\[
E_j^{(n)} 1_{\lambda} \xrightarrow{\eta} E_j^{(n)}F_j^{(n)}E_j^{(n)}1_{\lambda} \xrightarrow{\varepsilon} E_j^{(n)}1_{\lambda}
\]
is the identity.  
By the definition, it is 
\begin{align*}
&E_j^{(n)}1_{\lambda} \xrightarrow{\text{$n$-layer $\xy 0;/r.12pc/: (0,0)*{\lcup{j}}; \endxy$}} E_j^nF_j^n E_j^{(n)}1_{\lambda} \xrightarrow{\text{can}} E_j^{(n)}F_j^{(n)}E_j^{(n)}1_{\lambda} \\
&\xrightarrow{\text{can}} E_j^{(n)}F_j^nE_j^n1_{\lambda} \xrightarrow{\text{$n$-layer $\xy 0;/r.12pc/: (0,0)*{\lcap{j}}; \endxy$}} E_j^{(n)}1_{\lambda}. 
\end{align*}
To simplify the picture, we describe computation in the case of $n = 3$. 
By precomposing $E_j^3 1_{\lambda} \xrightarrow{\text{can}}E_j^{(3)}1_{\lambda}$ and postcomposing $E_j^{(3)}1_{\lambda} \xrightarrow{\text{can}}E_j^3 1_{\lambda}$, we obtain
\begin{align*}
\xy 0;/r.12pc/: (0,0)*{\xybox{
(4,0); (12,0) **\crv{(4,6) & (12,6)}; 
(0,0); (16,0) **\crv{(0,12) & (16,12)};  
(-4,0); (20,0) **\crv{(-4,18) & (20,18)}; 
(-4,-15); (-12,-15) **\crv{(-4,-21) & (-12,-21)}; 
(0,-15); (-16,-15) **\crv{(0,-27) & (-16,-27)};  
(4,-15); (-20,-15) **\crv{(4,-33) & (-20,-33)}; 
(12,-31); (12,0) **\dir{-} ?(1)*\dir{>};
(16,-31); (16,0) **\dir{-} ?(1)*\dir{>};
(20,-31); (20,0) **\dir{-} ?(1)*\dir{>};
(-12,16); (-12,-15) **\dir{-} ?(0)*\dir{<};
(-16,16); (-16,-15) **\dir{-} ?(0)*\dir{<};
(-20,16); (-20,-15) **\dir{-} ?(0)*\dir{<};
(-4,0); (-4,-15) **\dir{-};
(0,0); (0,-15) **\dir{-};
(4,0); (4,-15) **\dir{-};
(12,-10)*{\bullet};
(12,-6)*{\bullet};
(16,-10)*{\bullet};
(16,-18)*{\fcolorbox{black}{white}{$\tau_{w_3}$}};
(0,-4)*{\fcolorbox{black}{white}{$\tau_{w_3}$}};
(4,-11)*{\bullet};
(4,-15)*{\bullet};
(0,-11)*{\bullet};
(-16,-8)*{\fcolorbox{black}{white}{$\tau_{w_3}$}};
(-16,0)*{\bullet};
(-20,0)*{\bullet};
(-20,4)*{\bullet};
}};
\endxy = \xy 0;/r.12pc/: (0,0)*{\xybox{
(4,0); (12,0) **\crv{(4,6) & (12,6)}; 
(0,0); (16,0) **\crv{(0,12) & (16,12)}; 
(-4,0); (20,0) **\crv{(-4,18) & (20,18)}; 
(-4,0); (-12,0) **\crv{(-4,-6) & (-12,-6)};
(0,0); (-16,0) **\crv{(0,-12) & (-16,-12)};
(4,0); (-20,0) **\crv{(4,-18) & (-20,-18)};
(12,-50); (12,0) **\dir{-}; 
(16,-50); (16,0) **\dir{-}; 
(20,-50); (20,0) **\dir{-}; 
(-12,16); (-12,0) **\dir{-} ?(0)*\dir{<};
(-16,16); (-16,0) **\dir{-} ?(0)*\dir{<};
(-20,16); (-20,0) **\dir{-} ?(0)*\dir{<};
(-4,0); (-4,0) **\dir{-} ?(1)*\dir{};
(0,0); (0,0) **\dir{-} ?(1)*\dir{};
(4,0); (4,0) **\dir{-} ?(1)*\dir{};
(12,0)*{\bullet};
(12,-3)*{\bullet};
(16,-3)*{\bullet};
(16,-10)*{\fcolorbox{black}{white}{$\tau_{w_3}$}};
(12,-17)*{\bullet};
(12,-20)*{\bullet};
(16,-20)*{\bullet};
(16,-27)*{\fcolorbox{black}{white}{$\tau_{w_3}$}};
(12,-34)*{\bullet};
(12,-37)*{\bullet};
(16,-37)*{\bullet};
(16,-44)*{\fcolorbox{black}{white}{$\tau_{w_3}$}}; 
}};
\endxy = \xy 0;/r.12pc/: (0,0)*{\xybox{
(12,-50); (12,0) **\dir{-} ?(1)*\dir{>}; 
(16,-50); (16,0) **\dir{-} ?(1)*\dir{>}; 
(20,-50); (20,0) **\dir{-} ?(1)*\dir{>}; 
(16,-10)*{\fcolorbox{black}{white}{$b_{+,3}$}}; 
(16,-25)*{\fcolorbox{black}{white}{$b_{+,3}$}}; 
(16,-40)*{\fcolorbox{black}{white}{$b_{+,3}$}}; 
}};
\endxy = \xy 0;/r.12pc/: (0,0)*{\xybox{
(12,-50); (12,0) **\dir{-} ?(1)*\dir{>}; 
(16,-50); (16,0) **\dir{-} ?(1)*\dir{>}; 
(20,-50); (20,0) **\dir{-} ?(1)*\dir{>}; 
(16,-25)*{\fcolorbox{black}{white}{$b_{+,3}$}}; 
}};
\endxy, 
\end{align*}
which coincides with the composition $E_j^3 1_{\lambda} \xrightarrow{\text{can}} E_j^{(3)}1_{\lambda} \xrightarrow{\text{can}} E_j^3 1_{\lambda}$. 
Since $E_j^3 1_{\lambda} \xrightarrow{\text{can}} E_j^{(3)} 1_{\lambda}$ is epi and $E_j^{(3)}1_{\lambda} \xrightarrow{\text{can}} E_j^3 1_{\lambda}$ is mono, the assertion follows. 
The remaining three identities can be proved in the same way. 
\end{proof}

\begin{theorem} \label{thm:EFrel}
Let $j \in J, \lambda \in P$ and $a, b \in \mathbb{Z}_{\geq 0}$. 
There are isomorphisms of 1-morphisms in $\dotcatquantum{\mathfrak{p}_J}$
\begin{align*}
F_j^{(b)} E_j^{(a)}1_{\lambda} &\simeq \bigoplus_{k=0}^{\min \{a,b\}} \begin{bmatrix} 
-a+b-\langle h_j,\lambda \rangle \\ 
k
\end{bmatrix}_j E_j^{(a-k)} F_j^{(b-k)} 1_{\lambda} \quad \text{if $-a+b-\langle h_j, \lambda \rangle \geq 0$}, \\
E_j^{(a)} F_j^{(b)}1_{\lambda} &\simeq \bigoplus_{k=0}^{\min \{a,b\}} \begin{bmatrix} 
a-b+\langle h_j,\lambda \rangle \\ 
k
\end{bmatrix}_j F_j^{(b-k)} E_j^{(a-k)} 1_{\lambda} \quad \text{if $a-b+\langle h_j, \lambda \rangle \geq 0$}. 
\end{align*}

Let $i \in I$ and assume $i \neq j$. 
Then, there is an isomorphism of 1-morphisms
\[
E_j F_i 1_{\lambda} \simeq F_i E_j 1_{\lambda}. 
\]
\end{theorem}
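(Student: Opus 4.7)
The plan is to dispose of the three isomorphisms separately, treating the easy case first and reducing the harder ones to the classical $\mathfrak{sl}_2$ situation.

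First, the isomorphism $E_j F_i 1_{\lambda} \simeq F_i E_j 1_{\lambda}$ for $i \neq j$ is immediate from the Mixed $EF$ relation in Definition \ref{def:catquantum} (7): the right mixed crossing $\xy 0;/r.12pc/: (0,0)*{\rcross{j}{i}}; \endxy$ and the left mixed crossing $\xy 0;/r.12pc/: (0,0)*{\lcross{i}{j}}; \endxy$ are mutually two-sided inverses, and the former realizes the desired isomorphism $E_j F_i 1_{\lambda} \to F_i E_j 1_{\lambda}$.

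For the two divided-power decompositions, I would observe that they only involve the single index $j \in J$ and the generating 2-morphisms attached to the pair $(E_j, F_j)$. All the relations among such 2-morphisms (dot slide, quadratic/cubic KLR at $i = i' = i'' = j$, extended $\mathfrak{sl}_2$, bubble relations, and the adjunctions provided by Lemma \ref{lem:adjunction}) coincide, weight by weight, with the defining relations of the categorified quantum $\mathfrak{sl}_2$ whose weight is taken to be $\langle h_j, \lambda \rangle \in \mathbb{Z}$. Therefore the full 2-subcategory of $\dotcatquantum{\mathfrak{p}_J}$ generated by $E_j, F_j$ and the identity 2-morphisms of $1_{\lambda}$ decomposes, along cosets of $\mathbb{Z}\alpha_j$ in $\mathsf{P}$, into copies of $\dotcatquantum{\mathfrak{sl}_2}$. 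This reduction may be made rigorous either by invoking Theorem \ref{thm:Rouquierver} (whose formal-inverse clauses already supply the base case $a = b = 1$) or by constructing an explicit 2-functor from $\dotcatquantum{\mathfrak{sl}_2}$ into the relevant sub-2-category.

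Granting this reduction, both decompositions become the standard divided-power $EF$-identities in the Karoubi envelope of categorified quantum $\mathfrak{sl}_2$, proved by Khovanov--Lauda--Lauda \cite{MR2628852}. The mutually inverse maps are assembled from horizontal composites of $n$-layer cups and caps with the divided-power projectors $b_{\pm,n}, b'_{\pm,n}$ of Lemma \ref{lem:projisom} and appropriate dot and bubble generators; inverseness is verified by induction on $\min(a,b)$ using the extended $\mathfrak{sl}_2$ relation of Definition \ref{def:catquantum} (9) together with the dot/crossing relations in $R(n \alpha_j)$. The main obstacle is not conceptual but bookkeeping: the mutual orthogonality of the summand projections and the match between the $q$-binomial multiplicities and the count of fake/real bubble contributions in each component, all of which can be tracked by combining (\ref{eq:demazure}) with the inductive $\mathfrak{sl}_2$ argument.
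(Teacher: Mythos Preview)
Your proposal is correct and follows essentially the same approach as the paper: the $i\neq j$ case is handled by the mixed $EF$ relation (Definition~\ref{def:catquantum}~(7)), and the divided-power decompositions are reduced via the canonical 2-functor $\catquantum{\mathfrak{sl}_2}\to\catquantum{\mathfrak{p}_J}$ associated with $j$ to the known $\mathfrak{sl}_2$ results in the literature. The paper simply cites \cite{rouquier20082kacmoodyalgebras,MR2729010,MR2963085} for the $\mathfrak{sl}_2$ case rather than sketching the inductive bubble/projector argument you outline.
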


\begin{proof}
By considering the canonical 2-functor $\mathcal{U}_{q_j}(\mathfrak{sl}_2) \to \catquantum{\mathfrak{p}_J}$ associated with $j$, 
the first two isomorphisms are reduced to the isomorphisms in $\mathcal{U}_{q_j}(\mathfrak{sl}_2)$ proved in \cite[Lemma 4.14]{rouquier20082kacmoodyalgebras}, \cite[Theorem 9.6]{MR2729010}, \cite[Theorem 5.2.8]{MR2963085}. 
The last isomorphism follows from the mixed $EF$ relation (Definition \ref{def:catquantum} (7)). 
\end{proof}

\begin{corollary}[Triangular decomposition] \label{cor:triangular}
Let $\lambda, \mu \in \mathsf{P}$. 
Then, the additive category $\dotcatquantum{\mathfrak{p}_J}(\lambda,\mu)$ is generated by 
\begin{align*}
\{ &F_{i_1} \cdots F_{i_m} E_{j_1} \cdots E_{j_n}1_{\lambda} \ (\text{resp.} \ E_{j_1} \cdots E_{j_n} F_{i_1} \cdots F_{i_m} 1_{\lambda}) \mid \\ 
&m, n \geq 0, i_1, \ldots, i_m \in I, j_1, \ldots, j_n \in J, \\ 
&\mu = \lambda -\alpha_{i_1} - \cdots - \alpha_{i_m} + \alpha_{j_1} + \cdots + \alpha_{j_n} \}, 
\end{align*}
that is, every object of $\dotcatquantum{\mathfrak{p}_J}(\lambda,\mu)$ is a direct summand of a finite direct sum of the 1-morphisms listed above. 
\end{corollary}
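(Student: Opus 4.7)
The plan is to reduce an arbitrary 1-morphism of $\dotcatquantum{\mathfrak{p}_J}(\lambda,\mu)$ to one of the stated triangular forms by iteratively applying the commutation isomorphisms of Theorem \ref{thm:EFrel}. Since $\dotcatquantum{\mathfrak{p}_J}$ is by definition the Karoubi envelope of $\catquantum{\mathfrak{p}_J}$, and every 1-morphism of the latter is by construction a finite direct sum of shifts of composable words in the generators $F_i$ ($i \in I$) and $E_j$ ($j \in J$), it suffices to prove that each such word $W = X_1 X_2 \cdots X_\ell 1_{\lambda}$ is a direct summand of a finite direct sum of shifted 1-morphisms of the listed form.

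For the first form (all $F$'s on the left, all $E$'s on the right), I would argue by induction on the inversion count $\iota(W)$, defined as the number of index pairs $(a,b)$ with $a<b$, $X_a$ an $E$-generator and $X_b$ an $F$-generator. The base case $\iota(W)=0$ is immediate, since then $W$ already has the listed form. For the inductive step, pick an adjacent inverted pair $X_a = E_j$, $X_{a+1} = F_i$ and apply Theorem \ref{thm:EFrel} with $a=b=1$ to that local segment at the intermediate weight $\lambda''$. If $i \neq j$, then $E_j F_i 1_{\lambda''} \simeq F_i E_j 1_{\lambda''}$ and $W$ is isomorphic to the swapped word, whose inversion count has dropped by one. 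If $i=j$ and $\langle h_j,\lambda''\rangle \ge 0$, the isomorphism
\[
E_j F_j 1_{\lambda''} \simeq F_j E_j 1_{\lambda''} \oplus [\langle h_j,\lambda''\rangle]_j\, 1_{\lambda''}
\]
realizes $W$ as an honest direct sum of two words, each with strictly smaller inversion count. If $i=j$ and $\langle h_j,\lambda''\rangle < 0$, the isomorphism
\[
F_j E_j 1_{\lambda''} \simeq E_j F_j 1_{\lambda''} \oplus [-\langle h_j,\lambda''\rangle]_j\, 1_{\lambda''}
\]
exhibits $W$ instead as a direct summand of the swapped word, which again has smaller inversion count. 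In every case $W$ is a direct summand of a finite direct sum of shifted words of strictly smaller inversion count, so the induction closes.

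The proof of the second form ($E$'s on the left, $F$'s on the right) proceeds symmetrically, with the inversion count $\iota'(W)$ now counting pairs where an $F$-generator lies to the left of an $E$-generator; exactly the same family of isomorphisms from Theorem \ref{thm:EFrel} is invoked, only read in the opposite direction, and the case split on the sign of $\langle h_j, \lambda''\rangle$ is reversed. I do not anticipate any serious obstacle here: the argument is a purely combinatorial corollary of Theorem \ref{thm:EFrel}. The only point that requires care is that in the last case above the reduction step gives a direct-summand inclusion rather than a genuine decomposition of $W$, but this is precisely what the statement of the corollary permits, so it causes no difficulty.
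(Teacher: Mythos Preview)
Your proof is correct and takes essentially the same approach as the paper, which simply states ``It is immediate from Theorem \ref{thm:EFrel}.'' You have spelled out the induction on inversion count that underlies this remark, including the care needed when $\langle h_j,\lambda''\rangle$ has the ``wrong'' sign and one obtains only a direct-summand inclusion rather than a decomposition.
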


\begin{proof}
It is immediate from Theorem \ref{thm:EFrel}
\end{proof}

\section{2-representations}

Let $\mathfrak{Lin}_{\mathbf{k}}$ be the 2-category of graded $\mathbf{k}$-linear categories.
For $\mathcal{A}, \mathcal{B} \in \mathfrak{Lin}_{\mathbf{k}}$, $F,G \in \mathfrak{Lin}_{\mathbf{k}}(\mathcal{A},\mathcal{B})$, a natural transformation $f \colon F \to G$ (a 2-morphisms in $\mathfrak{Lin}_{\mathbf{k}})$ and $X \in \mathcal{A}$, 
we write $fX \in \mathcal{B}(FX, GX)$ for the morphism given by $f$. 

When working in $\mathfrak{Lin}_{\mathbf{k}}^{\mathrm{op}}$, the following notation is convenient: 
for 
\[
F \in \mathfrak{Lin}_{\mathbf{k}}^{\mathrm{op}}(\mathcal{B},\mathcal{A}) = \mathfrak{Lin}_{\mathbf{k}}(\mathcal{A},\mathcal{B}),
\]
we write 
\begin{itemize}
\item $XF$ instead of $F(X)$ for $X \in \mathcal{A}$, 
\item $fF$ instead of $F(f)$ for $f \in \mathcal{A}(X,Y)$. 
\end{itemize}
Then, for $F \in \mathfrak{Lin}_{\mathbf{k}}^{\mathrm{op}}(\mathcal{B}, \mathcal{A}), G \in \mathfrak{Lin}_{\mathbf{k}}^{\mathrm{op}}(\mathcal{C},\mathcal{B})$, we have 
\[
(XF)G = X(FG) \ (X \in \mathcal{A}),\ (fF)G = f(FG) \ (f \in \mathcal{A}(X,Y)), 
\]
where $FG \in \mathfrak{Lin}_{\mathbf{k}}^{\mathrm{op}}(\mathcal{C}, \mathcal{A})$ is the composition of $G$ and $F$ in $\mathfrak{Lin}_{\mathbf{k}}^{\mathrm{op}}$. 
Similarly, for $F,G \in \mathfrak{Lin}_{\mathbf{k}}^{\mathrm{op}}(\mathcal{B},\mathcal{A})$, $f \colon F \to G$ and $X \in \mathcal{A}$, we write $Xf \in \mathcal{B}(XF,XG)$ for the morphism given by $f$. 

\begin{definition}
A left $\catquantum{\mathfrak{p}_J}$-module is a $\mathbf{k}$-linear 2-functor $\mathcal{V} \colon \catquantum{\mathfrak{p}_J} \to \mathfrak{Lin}_{\mathbf{k}}$.
This is equivalent to the data of 
\begin{itemize}
\item a family of graded $\mathbf{k}$-linear categories $\mathcal{V}_{\lambda} \ (\lambda \in \mathsf{P})$, 
\item a family of graded $\mathbf{k}$-linear functors $F_i \colon \mathcal{V}_{\lambda} \to \mathcal{V}_{\lambda -\alpha_i}, E_j \colon \mathcal{V}_{\lambda} \to \mathcal{V}_{\lambda + \alpha_j}$, 
\item a family of natural transformations corresponding to the generating 2-morphisms of $\catquantum{\mathfrak{p}_J}$ subject to the defining relations. 
\end{itemize}
We often identify $\mathcal{V}$ with the additive category $\bigoplus_{\lambda \in \mathsf{P}} \mathcal{V}_{\lambda}$. 

A right $\catquantum{\mathfrak{p}_J}$-module is a $\mathbf{k}$-linear 2-functor $\catquantum{\mathfrak{p}_J} \to \mathfrak{Lin}_{\mathbf{k}}^{\mathrm{op}}$. 

Modules over $\dotcatquantum{\mathfrak{p}_J}$ are defined in the same manner. 
\end{definition}

\begin{definition}
Let $\mathcal{V}, \mathcal{W} \colon \catquantum{\mathfrak{p}_J} \to \mathfrak{Lin}_{\mathbf{k}}$ be left $\catquantum{\mathfrak{p}_J}$-modules. 
A morphism of left $\catquantum{\mathfrak{p}_J}$-modules is a morphisms of 2-functors $\mathcal{V} \to \mathcal{W}$. 
This is equivalent to the data of 
\begin{itemize}
\item a family of graded $\mathbf{k}$-linear functors $\Theta_{\lambda} \colon \mathcal{V}_{\lambda} \to \mathcal{W}_{\lambda} \ (\lambda \in \mathsf{P})$, 
\item a family of natural isomorphisms $\Theta_{\mu} \mathcal{V}(G) \simeq \mathcal{W}(G) \Theta_{\lambda} \ (G \in \catquantum{\mathfrak{p}_J}(\lambda,\mu))$,
\end{itemize}
subject to some coherence conditions (\cite[Definition 2.3]{rouquier20082kacmoodyalgebras}). 
\end{definition}

\begin{definition}
Let $\Lambda \in \mathsf{P}$.
We define a left $\catquantum{\mathfrak{p}_J}$-module $\catquantum{\mathfrak{p}_J}1_{\Lambda}$ by 
\[
(\catquantum{\mathfrak{p}_J}1_{\Lambda})_{\lambda} = 1_{\lambda}\catquantum{\mathfrak{p}_J}1_{\Lambda} = \catquantum{\mathfrak{p}_J}(\Lambda,\lambda) \ (\lambda \in \mathsf{P}).   
\]
A left $\dotcatquantum{\mathfrak{p}_J}$-module $\dotcatquantum{\mathfrak{p}_J}1_{\Lambda}$, a right $\catquantum{\mathfrak{p}_J}$-module $1_{\Lambda}\catquantum{\mathfrak{p}_J}$, and a right $\dotcatquantum{\mathfrak{p}_J}$-module $1_{\Lambda}\dotcatquantum{\mathfrak{p}_J}$ are defined in the same manner. 
\end{definition}

\begin{lemma} \label{lem:universal2rep}
Let $\Lambda \in \mathsf{P}$. 
Let $\mathcal{V}$ be a left $\catquantum{\mathfrak{p}_J}$-module, and take $X_{\Lambda} \in \mathcal{V}_{\Lambda}$. 
Then, there exists a morphism of left $\catquantum{\mathfrak{p}_J}$-modules
\[
\catquantum{\mathfrak{p}_J}1_{\Lambda} \to \mathcal{V}, 1_{\Lambda} \mapsto X_{\Lambda}. 
\]
Moreover, such a morphism is unique up to equivalence. 
\end{lemma}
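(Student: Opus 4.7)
The plan is to construct the morphism by evaluation at $X_\Lambda$ and then verify uniqueness by the universal property of $1_\Lambda$ in $\catquantum{\mathfrak{p}_J}1_\Lambda$.

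For each $\lambda \in \mathsf{P}$, define a graded $\mathbf{k}$-linear functor $\Theta_\lambda \colon \catquantum{\mathfrak{p}_J}(\Lambda, \lambda) \to \mathcal{V}_\lambda$ as follows. On a 1-morphism $G \in \catquantum{\mathfrak{p}_J}(\Lambda, \lambda)$ (viewed as an object of the Hom-category), set $\Theta_\lambda(G) = \mathcal{V}(G)(X_\Lambda)$; on a 2-morphism $\alpha \colon G \to G'$, set $\Theta_\lambda(\alpha) = \mathcal{V}(\alpha)_{X_\Lambda}$. This is functorial because horizontal composition in $\mathfrak{Lin}_{\mathbf{k}}$ with the identity on $X_\Lambda$ is compatible with vertical composition of 2-morphisms, and preserves grading since $\mathcal{V}$ is a graded $\mathbf{k}$-linear 2-functor. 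By construction, $\Theta_\Lambda(1_\Lambda) = \mathcal{V}(1_\Lambda)(X_\Lambda) = X_\Lambda$.

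Next, I produce the coherence data that upgrades the family $\{\Theta_\lambda\}$ to a morphism of 2-functors. For each $H \in \catquantum{\mathfrak{p}_J}(\lambda, \mu)$ and $G \in (\catquantum{\mathfrak{p}_J}1_\Lambda)_\lambda$, the action of $H$ on $G$ inside the module $\catquantum{\mathfrak{p}_J}1_\Lambda$ is the horizontal composition $HG \in \catquantum{\mathfrak{p}_J}(\Lambda, \mu)$, so I need a natural isomorphism
\[
\Theta_\mu(HG) = \mathcal{V}(HG)(X_\Lambda) \;\simeq\; \mathcal{V}(H)\bigl(\mathcal{V}(G)(X_\Lambda)\bigr) = \mathcal{V}(H)\,\Theta_\lambda(G).
\]
This is supplied by the compositor of the 2-functor $\mathcal{V}$, evaluated at $X_\Lambda$. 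The three coherence axioms for a morphism of left $\catquantum{\mathfrak{p}_J}$-modules (as in \cite[Definition~2.3]{rouquier20082kacmoodyalgebras}) are then immediate consequences of the pentagon and unit axioms for the 2-functor $\mathcal{V}$ itself, again evaluated at $X_\Lambda$; if $\mathcal{V}$ is taken strict, the compositors are identities and there is nothing to check.

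For uniqueness up to equivalence, suppose $\Theta' = \{\Theta'_\lambda\}$ is another morphism of left $\catquantum{\mathfrak{p}_J}$-modules with $\Theta'_\Lambda(1_\Lambda) = X_\Lambda$. For any $G \in \catquantum{\mathfrak{p}_J}(\Lambda,\lambda)$, the structural isomorphism of $\Theta'$ at $(G, 1_\Lambda)$ yields
\[
\Theta'_\lambda(G) \;=\; \Theta'_\lambda(G \cdot 1_\Lambda) \;\simeq\; \mathcal{V}(G)\,\Theta'_\Lambda(1_\Lambda) \;=\; \mathcal{V}(G)(X_\Lambda) \;=\; \Theta_\lambda(G),
\]
and these isomorphisms are natural in $G$ by the naturality of the coherence data. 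Assembling them gives an invertible 2-morphism $\Theta \Rightarrow \Theta'$, which is the desired equivalence. No step here is a real obstacle; the only thing that demands attention is bookkeeping of the coherence axioms, and those reduce tautologically to the 2-functoriality of $\mathcal{V}$ once one commits to evaluating everything at $X_\Lambda$.
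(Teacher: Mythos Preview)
Your proof is correct and takes essentially the same approach as the paper: both construct the morphism by evaluation at $X_\Lambda$, sending $G \mapsto \mathcal{V}(G)(X_\Lambda)$. The paper's proof is a single line (it just writes down $G \mapsto GX_\Lambda$), whereas you have spelled out the coherence data and the uniqueness argument that the paper leaves implicit.
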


\begin{proof}
The morphism is given by 
\begin{align*}
1_{\lambda}\catquantum{\mathfrak{p}_J} 1_{\Lambda} &\to \mathcal{V}_{\lambda} \ (\lambda \in \mathsf{P}), \\
G &\mapsto GX_{\Lambda} \ (G \in 1_{\lambda}\catquantum{\mathfrak{p}_J}1_{\Lambda}), \\ 
f &\mapsto (fX_{\Lambda}\colon GX \to G'X) \ (G,G' \in 1_{\lambda}\catquantum{\mathfrak{p}_J}1_{\Lambda}, f \colon G \to G'). 
\end{align*}
\end{proof}

\section{Cyclotomic quiver Hecke algebras} \label{sec:cyclotomic}

Fix $J \subset I$, a choice of scalars $Q$, and a choice of bubble parameters $C$ compatible with $Q$. 
Kang-Kashiwara \cite{MR2995184} proved that, when $J = I$, the 2-category $\dotcatquantum{\mathfrak{g}}$ acts on the module category over the cyclotomic quiver Hecke algebra for every dominant weight $\Lambda$, 
which provides a categorification of the highest weight integrable module $V(\Lambda)$. 
In this section, we generalize this result to the $U_q(\mathfrak{p}_J)$-module $V_J(\Lambda)$ for any $J$-dominant $\Lambda \in \mathsf{P}$. 
For the particular case where $J = \{i\}$ and $\Lambda = 0$, 
most of the results of this section are established in \cite[Section 4]{MR4285453}. 
Fix a $J$-dominant weight $\Lambda \in \mathsf{P}$. 

\begin{definition}
Let $\beta \in \mathsf{Q}_+$ and put $n = \height \beta$. 
The cyclotomic quiver Hecke algebras are defined by 
\begin{align*}
R^{J,\Lambda}(\beta) &= R(\beta)/(x_n^{\langle h_j, \Lambda \rangle}e(*,j) \ (j \in J)), \\
{}^{J,\Lambda}R(\beta) &= R(\beta)/(x_1^{\langle h_j, \Lambda \rangle}e(j,*) \ (j \in J)). 
\end{align*} \index{$R^{J,\Lambda}(\beta), {}^{J,\Lambda}R(\beta)$}
\end{definition}

Note that they only depend on the values $\langle h_j, \Lambda \rangle \ (j \in J)$. 
Note also that the involution $\sigma$ of $R(\beta)$ induces $R^{J,\Lambda}(\beta) \simeq {}^{J,\Lambda}R(\beta)$. 

\begin{definition} \label{def:generatingfunctors}
Let $\beta \in \mathsf{Q}_+, i \in I, j \in J$.
We define the following functors:
\begin{align*}
F_i \colon & \gMod{R^{J,\Lambda}(\beta)} \to \gMod{R^{J,\Lambda}(\beta+\alpha_i)}, \\
& X \mapsto R^{J,\Lambda}(\beta+\alpha_i)e(i, \beta) \otimes_{R^{J,\Lambda}(\beta)} X, \\
E_j \colon & \gMod{R^{J,\Lambda}(\beta)} \to \gMod{R^{J,\Lambda}(\beta-\alpha_j)}, \\
& X \mapsto q_j^{-1-\langle h_j,\Lambda -\beta \rangle} e(j,\beta-\alpha_j)X, \\
F_i^* \colon & \gMod{^{J,\Lambda}R(\beta)} \to \gMod{^{J,\Lambda}R(\beta+\alpha_i)}, \\
& X \mapsto {}^{J,\Lambda}R(\beta+\alpha_i)e(\beta,i) \otimes_{{}^{J,\Lambda}R(\beta)} X, \\
E_j^* \colon & \gMod{^{J,\Lambda}R(\beta)} \to \gMod{^{J,\Lambda}R(\beta-\alpha_j)}, \\
& X \mapsto q_j^{-1-\langle h_j, \Lambda -\beta \rangle} e(\beta-\alpha_j,j)X. 
\end{align*}
\end{definition}

\begin{remark} \label{rem:grading}
We have the following equalities of graded modules. 
\begin{align*}
E_j^{-\langle h_j, \Lambda-\beta \rangle} X &= e((-\langle h_j,\Lambda-\beta \rangle)\alpha_j,*)X, \\
(E_j^*)^{-\langle h_j,\Lambda - \beta \rangle} X &= e(*,(-\langle h_j,\Lambda -\beta \rangle )\alpha_j)X. 
\end{align*}
Here, the grading shifts appearing in the definition cancel out. 
\end{remark}

First, we study $\gMod{{}^{J,\Lambda}R} = \bigoplus_{\beta \in \mathsf{Q}_+} \gMod{{}^{J,\Lambda}R(\beta)}$.

\begin{proposition} \label{prop:cyclotomicR}
Let $j \in J, \beta \in \mathsf{Q}_+, X \in \gMod{{}^{J,\Lambda}R(\beta)}$ and put $n = \height \beta$. 
\begin{enumerate}
\item There exits a homomorphism of $(R(\beta+\alpha_j), R(\alpha_j))$-modules
\[
\mathsf{R} = \mathsf{R}_X \colon q^{(\alpha_j, 2\Lambda - \beta)} R(\alpha_j)\circ X \to X \circ R(\alpha_j) \ (X \in \gMod{^{J,\Lambda}R(\beta)})
\]
given by $u \boxtimes v \mapsto x_1^{\langle h_j, \Lambda \rangle} \tau_1 \tau_2 \cdots \tau_n (v \boxtimes u) \ (u \in R(\alpha_j), v \in X)$. 
Furthermore, this homomorphism is natural in $X$. 
\item There exists a homogeneous (not necessarily of degree zero) homomorphism of $(R(\beta+\alpha_j), R(\alpha_j))$-modules
\[
\mathsf{R}' = \mathsf{R}'_X \colon X \circ R(\alpha_j) \to R(\alpha_j) \circ X \ (X \in \gMod{^{J,\Lambda}R(\beta)})
\]
given by $v \boxtimes u \mapsto g_n \cdots g_1 (u\boxtimes v)$, where
\[
g_k = \sum_{\nu \in I^{\beta+\alpha_j}, \nu_k \neq \nu_{k+1}} \tau_k e(\nu) + \sum_{\nu \in I^{\beta + \alpha_j}, \nu_k = \nu_{k+1}} (x_{k+1} -x_k -(x_{k+1}-x_k)^2\tau_k) e(\nu). 
\]
Furthermore, this homomorphism is natural in $X$. 
\item The endomorphism $\mathsf{R}'_X \mathsf{R}_X$ coincides with $u \boxtimes v \mapsto A_{j,\beta,\Lambda}(u\boxtimes v)$, where 
\begin{align*}
A_{j,\beta,\Lambda} &= x_1^{\langle h_j,\Lambda \rangle} \sum_{\nu \in I^{\beta}} A_{j,\nu}, \\
A_{j,\nu} &= e(j,\nu) \prod_{1 \leq k \leq n, \nu_k \neq j} Q_{j,\nu_k}(x_1, x_{k+1}) \in R(\alpha_j) \otimes Z(\beta). 
\end{align*}
\item The endomorphism $\mathsf{R}_X \mathsf{R}'_X$ coincides with $v \boxtimes u \mapsto A'_{j,\beta,\Lambda}(v\boxtimes u)$, where
\begin{align*}
A'_{j,\beta, \Lambda} &= x_{n+1}^{\langle h_j,\Lambda \rangle} \sum_{\nu \in I^{\beta}} A'_{j,\nu}, \\
A'_{j,\nu} &= e(\nu,j) \prod_{1 \leq k \leq n, \nu_k \neq j} Q_{\nu_k,j}(x_k, x_{n+1}) \in Z(\beta) \otimes R(\alpha_j). 
\end{align*}
\end{enumerate}
\end{proposition}

\begin{proof}
(1) and (2) are direct generalizations of the results of \cite[Section 4.3]{MR2995184}. 

(3) is also a direct generalization of \cite[Theorem 4.15]{MR2995184}.
The key is the following equation for any $\nu \in I^{\beta}$:
\begin{align*}
&x_1^{\langle h_j, \Lambda \rangle} \tau_1 \cdots \tau_n g_n \cdots g_1 e(j,\nu) \equiv A_{j,\beta,\Lambda} e(j,\nu) \\
&\mod \sum_{j' \in J} R(\beta+\alpha_j)x_2^{\langle h_{j'},\Lambda \rangle} e(j,j',*) (e(j) \boxtimes R(\beta)e(\nu)).
\end{align*}
This is proved by the same inductive argument as that of \cite[Theorem 4.15]{MR2995184}. 

(4) We give two proofs.  

First proof: 
To begin with, we claim that $\mathsf{R}'_X$ is injective. 
Since $X$ is a projective limit of finite-dimensional $X/(R(\beta)X_{\geq d}) \ (d \in \mathbb{Z})$, we may assume that $X$ is finite dimensional.
By induction on the length of $X$, we may assume that $X$ is simple. 
Since $R(\alpha_j) \circ X$ is free over $R(\alpha_j)$ and $A_{j,\beta}$ is monic in $x_1$, 
(3) implies that $\mathsf{R}'_X \mathsf{R}_X$ is injective. 
In particular, $\mathsf{R}'_X$ is nonzero. 
Let $\renormalizedR{X}{R(\alpha_j)}$ be the normalized R-matrix introduced in \cite[p.1173]{MR3790066}. 
By \cite[Proposition 2.11]{MR3790066}, $\HOM_{(R(\beta+\alpha_j), R(\alpha_j))} (X \circ R(\alpha_j), R(\alpha_j) \circ X)$ is freely generated by $\renormalizedR{R(\alpha_j)}{X}$ over $R(\alpha_j)$. 
In addition, $\renormalizedR{X}{R(\alpha_j)}$ is injective by \cite[Lemma 3.12]{murata2025affinehighestweightstructures}. 
Since $X \circ R(\alpha_j)$ is free as an $R(\alpha_j)$-module, these two facts imply that every nonzero ungraded $(R(\beta+\alpha_j),R(\alpha_j))$-linear homomorphism $X \circ R(\alpha_j) \to R(\alpha_j) \circ X$ is injective. 
In particular, the nonzero homomorphism $\mathsf{R}'_X$ is injective. 

By (3), we have $\mathsf{R}'_X \mathsf{R}_X \mathsf{R}'_X = A_{j,\beta,\Lambda}\mathsf{R}'_X$. 
Since $\mathsf{R}'_X$ commutes with the action of $R(\alpha_j)\otimes Z(\beta)$ by naturality, $\mathsf{R}'_X \mathsf{R}_X \mathsf{R}'_X = \mathsf{R}'_X A'_{j,\beta,\Lambda}$. 
Since $\mathsf{R}'_X$ is injective, we deduce that $\mathsf{R}_X \mathsf{R}'_X = A'_{j,\beta,\Lambda}$. 

Second proof: 
We carry out a direct computation. 
We define for each $1 \leq k \leq n$ an element
\[
\varphi_k = \sum_{\nu \in I^{\beta+\alpha_j}, \nu_k \neq \nu_{k+1}} \tau_k e(\nu) + \sum_{\nu \in I^{\beta +\alpha_j}, \nu_k = \nu_{k+1}} ((x_{k+1}-x_k)\tau_k -1) e(\nu) \in R(\beta+\alpha_j). 
\]
They are called the intertwiners and satisfy the following relations (\cite[Lemma 1.5]{MR3748315}, \cite[Lemma 1.9]{MR3790066}): 
\begin{itemize}
\item $\varphi_k x_l = x_{s_k(l)} \varphi_k \ (1\leq l \leq n+1)$, 
\item $\varphi_k \varphi_{k+1} \varphi_k = \varphi_{k+1} \varphi_k \varphi_{k+1}$, 
\item $\varphi_{k+1} \varphi_k \tau_{k+1} = \tau_k \varphi_{k+1} \varphi_k$.  
\end{itemize}
The elements $g_k$ also satisfy similar relations (\cite[Lemma 4.12]{MR2995184}). 
Note that 
\[
g_k e(\nu)= \begin{cases}
\varphi_k e(\nu) & \text{if $\nu_k \neq \nu_{k+1}$}, \\
(x_k-x_{k+1}) \varphi_k e(\nu) & \text{if $\nu_k = \nu_{k+1}$}. 
\end{cases}
\]
From these, we deduce for any $\nu \in I^{\beta}$, 
\begin{equation} \label{eq:gvsphi}
g_n \cdots g_1 e(j,\nu) = \varphi_n \cdots \varphi_1 \prod_{1 \leq k \leq n, \nu_k = j} (x_{k+1}-x_1) e(j,\nu). 
\end{equation}

To prove (4), it suffices to verify the following equation for any $\nu \in I^{\beta}$: 
\begin{align} \label{eq:goal}
&g_n \cdots g_1 e(j,\nu) x_1^{\langle h_j, \Lambda \rangle} \tau_1 \cdots \tau_n \equiv A'_{j,\beta,\Lambda} e(\nu,j) \\
&\mod \varphi_n \cdots \varphi_1 x_1^{\langle h_j,\Lambda \rangle} e(j, \nu) (e(j) \boxtimes R(\beta)) (R(\beta) \boxtimes e(j)) e(\nu,j). \notag
\end{align}
Note that the part modded out annihilates $X \boxtimes R(\alpha_j)$. 
Since 
\[ 
g_n \cdots g_1 x_1^{\langle h_j,\Lambda \rangle} = x_{n+1}^{\langle h_j,\Lambda \rangle}g_n \cdots g_1, \ \varphi_n \cdots \varphi_1 x_1^{\langle h_j,\Lambda \rangle} = x_{n+1}^{\langle h_j,\Lambda \rangle}\varphi_n \cdots \varphi_1,
\]
we may assume $\langle h_j,\Lambda \rangle = 0$. 
We proceed by induction on $n$. 

Let $\nu = (\nu_1, \nu')$. 
Note that 
\begin{equation} \label{eq:getau} 
g_1 e(j,\nu) \tau_1 = \begin{cases}
 Q_{\nu_1,j} (x_1,x_2) e(\nu_1, j, \nu') & \text{if $\nu_1 \neq j$}, \\
 (x_2-x_1)\tau_1 e(j,j,\nu') = (1+\varphi_1)e(j,j,\nu) & \text{if $\nu_1 = j$}. 
\end{cases}
\end{equation}
In particular, the equation (\ref{eq:goal}) holds when $n = 1$. 
From now on, let $n \geq 2$.  

First, assume $\nu_1 \neq j$. 
Then, we have 
\begin{align*}
&g_n \cdots g_1 e(j,\nu) \tau_1 \cdots \tau_n \\
&= g_n \cdots g_2 Q_{\nu_1,j}(x_1,x_2) e(\nu_1, j, \nu') \tau_2 \cdots \tau_n \quad \text{by (\ref{eq:getau})} \\
&= Q_{\nu_1,j}(x_1,x_{n+1}) g_n \cdots g_2 e(\nu_1,j,\nu') \tau_2 \cdots \tau_n \\
&\equiv Q_{\nu_1,j}(x_1,x_{n+1}) (e(\nu_1) \boxtimes A'_{j,\nu'}) e(\nu,j) \\
&\mod Q_{\nu_1,j}(x_1, x_{n+1}) \varphi_n \cdots \varphi_2 e(\nu_1,j,\nu') (e(\nu_1,j) \boxtimes R(\beta-\alpha_{\nu_1})) \times \\
&\quad (e(\nu_1) \boxtimes R(\beta-\alpha_{\nu_1}) \boxtimes e(j)) e(\nu, j), 
\end{align*}
by the induction hypothesis. 
Note that  
\[
Q_{\nu_1,j}(x_1, x_{n+1}) (e(\nu_1) \boxtimes A'_{j,\nu'})e(\nu_1, \nu',j) = A'_{j,\nu} e(\nu,j). 
\]
In addition, we have 
\begin{align*}
&Q_{\nu_1,j}(x_1, x_{n+1}) \varphi_n \cdots \varphi_2 e(\nu_1,j,\nu') (e(\nu_1, j) \boxtimes R(\beta-\alpha_{\nu_1})) \times \\ 
&\quad (e(\nu_1) \boxtimes R(\beta-\alpha_{\nu_1}) \boxtimes e(j)) e(\nu, j) \\
&= \varphi_n \cdots \varphi_2 Q_{\nu_1,j}(x_1,x_2) e(\nu_1,j,\nu') (e(\nu_1, j) \boxtimes R(\beta-\alpha_{\nu_1})) \times \\ 
&\quad(e(\nu_1) \boxtimes R(\beta-\alpha_{\nu_1}) \boxtimes e(j)) e(\nu,j) \\
&= \varphi_n \cdots \varphi_2 \varphi_1^2 e(\nu_1, j, \nu') (e(\nu_1,j) \boxtimes R(\beta-\alpha_{\nu_1}))(e(\nu_1) \boxtimes R(\beta-\alpha_{\nu_1})\boxtimes e(j))e(\nu,j) \\
&= \varphi_n \cdots \varphi_1 e(j,\nu) (e(j,\nu_1) \boxtimes R(\beta-\alpha_{\nu_1})) \varphi_1 (e(\nu_1) \boxtimes R(\beta-\alpha_{\nu_1}) \boxtimes e(j)) e(\nu,j) \\
&\subset \varphi_n \cdots \varphi_1 e(j,\nu) (e(j) \boxtimes R(\beta)) (R(\beta) \boxtimes e(j)) e(\nu,j). 
\end{align*}
Hence, the equation (\ref{eq:goal}) holds. 

Next, assume $\nu_1 = j$. 
Then, we have
\begin{align} \label{eq:twoterm}
&g_n \cdots g_1 e(j,\nu) \tau_1 \cdots \tau_n \notag \\
&= g_n \cdots g_2 (1+\varphi_1) e(j, j, \nu') \tau_2 \cdots \tau_n \quad \text{by (\ref{eq:getau})} \notag \\
&= g_n \cdots g_2 e(j,j,\nu') \tau_2 \cdots \tau_n + g_n\cdots g_2 \varphi_1 e(j,j,\nu') \tau_2 \cdots \tau_n. 
\end{align}
By the induction hypothesis, the first term of (\ref{eq:twoterm}) is 
\begin{align*}
&(e(j) \boxtimes A'_{j,\nu'})e(j,j,\nu') \mod \varphi_n \cdots \varphi_2 e(j,j,\nu') (e(j,j) \boxtimes R(\beta-\alpha_j)) \times \\ 
&(e(j) \boxtimes R(\beta-\alpha_j) \boxtimes e(j)) e(\nu,j). 
\end{align*}
Note that $(e(j) \boxtimes A'_{j,\nu'}) e(j,j,\nu') = A'_{j,\nu} e(j,\nu)$, and 
\begin{align*}
&\varphi_n \cdots \varphi_2 e(j,j,\nu') (e(j,j) \boxtimes R(\beta-\alpha_j))(e(j) \boxtimes R(\beta-\alpha_j) \boxtimes e(j)) e(\nu,j) \\
&=\varphi_n \cdots \varphi_2 \varphi_1^2 e(j,j,\nu') (e(j,j)\boxtimes R(\beta-\alpha_j)) (e(j)\boxtimes R(\beta-\alpha_j)\boxtimes e(j)) e(\nu,j) \\
&=\varphi_n \cdots \varphi_2 \varphi_1 e(j,j,\nu') (e(j,j)\boxtimes R(\beta-\alpha_j)) \varphi_1 (e(j)\boxtimes R(\beta-\alpha_j)\boxtimes e(j)) e(\nu,j) \\
&\subset \varphi_n \cdots \varphi_1 e(j,\nu) (e(j)\boxtimes R(\beta))(R(\beta) \boxtimes e(j)) e(\nu,j).  
\end{align*}
On the other hand, the second term of (\ref{eq:twoterm}) is 
\begin{align*}
&\varphi_n \cdots \varphi_2 \varphi_1 \prod_{2 \leq k \leq n, \nu_k = j} (x_{k+1}-x_1) e(j,j,\nu')\tau_2 \cdots \tau_n e(\nu,j) \quad \text{by (\ref{eq:gvsphi})} \\
&\in \varphi_n \cdots \varphi_1 e(j,\nu) (R(\alpha_j)\boxtimes R(\beta)) e(\nu,j) \\
&\subset \varphi_n \cdots \varphi_1 e(j,\nu) (e(j) \boxtimes R(\beta))(R(\beta) \boxtimes e(j)) e(\nu,j). 
\end{align*}
Hence, the equation (\ref{eq:goal}) holds.
\end{proof}

Let $X \circ R(\alpha_j) \xrightarrow{\mathrm{can}} F_j^* (X)$ denote the canonical surjection. 

\begin{theorem} \label{thm:functorF*}
Let $\beta \in \mathsf{Q}_+$ and $X \in \gMod{{}^{J,\Lambda}R(\beta)}$. 
\begin{enumerate}
\item Let $i \in I \setminus J$.
Then, the canonical homomorphism $X \circ R(\alpha_i) \xrightarrow{\mathrm{can}} F_i^* X$ is an isomorphism. 
\item Let $j \in J$. 
Then, the following sequence is exact:
\[
0 \to  q^{(2\Lambda-\beta,\alpha_j)} R(\alpha_j) \circ X \xrightarrow{\mathsf{R}_X} X \circ R(\alpha_j) \xrightarrow{\mathrm{can}} F_j^* X \to 0. 
\]
\end{enumerate}
\end{theorem}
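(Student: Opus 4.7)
The plan is to adapt the argument of Kang--Kashiwara \cite{MR2995184} (which treats the case $J = I$) to the parabolic setting, exploiting the R-matrices $\mathsf{R}$ and $\mathsf{R}'$ from Proposition \ref{prop:cyclotomicR}. In both (1) and (2) the canonical surjection is clear; the content is the computation of its kernel. Writing $I_\gamma \subset R(\gamma)$ for the cyclotomic ideal cutting out ${}^{J, \Lambda}R(\gamma)$, the key reformulation is that $\ker(\mathrm{can})$ is the image of $I_{\beta + \alpha_i} e(\beta, i) \otimes_{R(\beta)} X$ inside $X \circ R(\alpha_i)$. The dichotomy between (1) and (2) reflects the fact that for $i \in I \setminus J$ no genuinely new cyclotomic relation involves the newly inserted strand, whereas for $j \in J$ the new generator $x_1^{\langle h_j, \Lambda \rangle} e(j, *)$ at level $\beta + \alpha_j$ contributes additional kernel, which will be identified with the image of $\mathsf{R}_X$.

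For (1), I would establish the stronger identity
\[
I_{\beta + \alpha_i} \cdot e(\beta, i) \;=\; R(\beta + \alpha_i) e(\beta, i) \cdot I_\beta
\]
as subspaces of $R(\beta + \alpha_i)$, where $I_\beta$ acts via the embedding $R(\beta) \hookrightarrow R(\beta + \alpha_i)$ on the first $n = \height \beta$ strands. Tensoring the quotient $R(\beta + \alpha_i) e(\beta, i) \twoheadrightarrow {}^{J,\Lambda}R(\beta+\alpha_i) e(\beta, i)$ with $X$ over $R(\beta)$ would then yield the asserted isomorphism. The inclusion $\supseteq$ is immediate. For $\subseteq$, the basic observation is that each generator $x_1^{\langle h_j, \Lambda \rangle} e(j, *) e(\beta, i)$ with $j \in J$ equals $e(j, \beta - \alpha_j, i) \cdot x_1^{\langle h_j, \Lambda \rangle} e(j, \beta - \alpha_j)$, whose second factor lies in $I_\beta$; the general case reduces to this via the free right $R(\beta) \otimes R(\alpha_i)$-module structure of $R(\beta+\alpha_i) e(\beta, i)$ on the basis of minimal coset representatives, together with straightforward manipulations of the KLR relations. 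The hypothesis $i \notin J$ enters to guarantee that pushing the cyclotomic factor through a $\tau_{w_k}$ does not create a new cyclotomic-type contribution attached to the inserted $i$-strand.

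For (2), the composition $\mathrm{can} \circ \mathsf{R}_X$ vanishes because $\mathsf{R}_X(u \otimes v) = x_1^{\langle h_j, \Lambda \rangle} \tau_1 \cdots \tau_n(v \otimes u)$ carries the idempotent $e(j, *)$ on the left, hence lies in $I_{\beta + \alpha_j} e(\beta, j)$. Injectivity of $\mathsf{R}_X$ follows from Proposition \ref{prop:cyclotomicR}(3): the composition $\mathsf{R}'_X \circ \mathsf{R}_X$ is multiplication by $A_{j, \beta, \Lambda}$, whose restriction to each $e(j, \nu)$-component equals $x_1^{\langle h_j, \Lambda \rangle}$ times the product $\prod_{\nu_k \neq j} Q_{j, \nu_k}(x_1, x_{k+1})$, which is monic in $x_1$ of positive degree with invertible leading coefficient $\prod_{\nu_k \neq j} t_{j, \nu_k}$. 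Since $R(\alpha_j) \circ X$ is free as a left $\mathbf{k}[x_1]$-module, multiplication by such an $x_1$-monic element is injective componentwise, hence so is $\mathsf{R}_X$. Finally, to identify $\ker(\mathrm{can})$ with the image of $\mathsf{R}_X$, one runs the analogue of the calculation in (1): reducing the extra generator $x_1^{\langle h_j, \Lambda \rangle} e(j, *) e(\beta, j)$ modulo $R(\beta+\alpha_j) e(\beta, j) \cdot I_\beta$ using the basis of minimal coset representatives $\{\tau_{w_k}\}$, one extracts precisely the image of $\mathsf{R}_X$ as the residual contribution not already expressible on the right.

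The main obstacle is this final kernel identification in (2): showing that the image of $\mathsf{R}_X$ exhausts all of $\ker(\mathrm{can})$, and not merely a subspace. This is the only place where the argument goes beyond a routine application of the PBW-decomposition, and it requires a delicate bookkeeping of how the two-sided cyclotomic ideal $I_{\beta+\alpha_j}$ acts on $R(\beta+\alpha_j) e(\beta, j)$ modulo the right action of $I_\beta$. The argument should follow the blueprint of \cite[Section 4]{MR2995184} essentially verbatim, the only essential modification being that cyclotomic relations are imposed only for indices in $J$, which does not affect the combinatorics since these are precisely the colors producing new kernel.
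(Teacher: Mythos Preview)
Your proposal is correct and follows essentially the same route as the paper, with two minor divergences worth noting. For (1), the paper works directly at the module level via the Mackey filtration: since $j \neq i$ for every $j \in J$, one has $e(j,*)(X \circ R(\alpha_i)) \simeq \Ind_{\beta-\alpha_j,\alpha_i}(e(j,*)X \otimes R(\alpha_i))$, from which the cyclotomic relation on $X$ immediately gives the vanishing of $x_1^{\langle h_j,\Lambda\rangle}e(j,*)$ on $X \circ R(\alpha_i)$. Your algebra-level identity $I_{\beta+\alpha_i}\,e(\beta,i) = R(\beta+\alpha_i)e(\beta,i)\,I_\beta$ is equivalent but slightly more laborious. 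For (2), what you single out as the ``main obstacle''---that the image of $\mathsf{R}_X$ exhausts $\ker(\mathrm{can})$---is simply cited in the paper from \cite[Section 4.3, (4.13)]{MR2995184}, where it is established by exactly the bookkeeping you describe; the only point the paper actually argues is the injectivity of $\mathsf{R}_X$, which you handle identically via Proposition~\ref{prop:cyclotomicR}(3) and monicity of $A_{j,\beta,\Lambda}$ in $x_1$.
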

  
\begin{proof}
(1) It suffices to prove that $X \circ R(\alpha_i)$ is an $^{J,\Lambda}R(\beta)$-module. 
Let $j \in J$. 
Since $j \neq i$, we have an isomorphisms of $R(\alpha_j) \otimes R(\beta+\alpha_i-\alpha_j)$-modules
\[
e(j, \beta+\alpha_i-\alpha_j) (X \circ R(\alpha_i)) \simeq \Ind_{\beta-\alpha_j,\alpha_i} (e(j,\beta-\alpha_j)X \otimes R(\alpha_i)) 
\]
by considering the Mackey filtration (Proposition \ref{prop:Mackey}). 
Here, the action of $R(\alpha_j)$ on the right hand side is given by 
\[
x_1 \cdot (e(j,\beta-\alpha_j)u \otimes v) = x_1e(j,\beta-\alpha_j)u \otimes v \ (x_1 \in R(\alpha_j), u \in X, v \in R(\alpha_i)). 
\]
Since $x_1^{\langle h_i,\Lambda \rangle}e(j,\beta-\alpha_j)X = 0$, the above isomorphism implies $x_1^{\langle h_j, \Lambda \rangle}e(j,\beta+\alpha_i-\alpha_j) (X \circ R(\alpha_i)) = 0$. 
The assertion is proved. 

(2) 
The exactness at the middle and right terms is straightforward, see \cite[Section 4.3, (4.13)]{MR2995184}. 
It remains to prove that $\mathsf{R}_X$ is injective. 
By Proposition \ref{prop:cyclotomicR} (3), it suffices to verify that the multiplication by $A_{j,\beta,\Lambda}$ on $R(\alpha_j) \circ X$ is injective. 
Since $A_{j,\beta,\Lambda}$ is monic in $x_1 \in R(\alpha_j)$ and $R(\alpha_j) \circ X$ is free over $R(\alpha_j)$, the assertion follows.  
\end{proof}
 
\begin{theorem} \label{thm:functorE*}
Let $\beta \in \mathsf{Q}_+$ and $j \in J$.
Then, ${}^{J,\Lambda}R(\beta+\alpha_j)e(\beta,j)$ is a finitely-generated projective right ${}^{J,\Lambda}R(\beta)$-module, 
and $e(\beta,j) ({}^{J,\Lambda}R(\beta+\alpha_j))$ is a finitely-generated projective left ${}^{J,\Lambda}R(\beta)$-module, . 
\end{theorem}

\begin{proof}
Note that ${}^{J,\Lambda}R(\beta+\alpha_j)e(\beta,j)$ is isomorphic to $F_j^* ({}^{J,\Lambda}R(\beta))$. 
Applying theorem \ref{thm:functorF*} to $X ={}^{J,\Lambda}R(\beta)$, we obtain a short exact sequence of right $R(\alpha_j) \otimes {}^{J,\Lambda}R(\beta)$-modules
\[
0 \to q^{(\alpha_j, 2\Lambda-\beta)} R(\alpha_j) \circ {}^{J,\Lambda}R(\beta) \xrightarrow{\mathsf{R}} {}^{J,\Lambda}R(\beta) \circ R(\alpha_j) \to {}^{J,\Lambda}R(\beta+\alpha_j)e(\beta,j) \to 0. 
\] 
The left and the middle modules are finitely-generated free right $R(\alpha_j)\otimes {}^{J,\Lambda}R(\beta)$-modules.
By \cite[Lemma 4.18]{MR2995184}, the former assertion is reduced to finding an element $A \in R(\alpha_j) \otimes Z({}^{J,\Lambda}R(\beta))$ that 
\begin{itemize}
\item is monic as a polynomial in one variable $x_1 \in R(\alpha_j)$ with coefficients in $Z({}^{J,\Lambda}R(\beta))$, and 
\item annihilates ${}^{J,\Lambda}R(\beta+\alpha_j)e(\beta,j)$. 
\end{itemize}
By Proposition \ref{prop:cyclotomicR} (4), the condition is satisfied if we take $A = A'_{j,\beta,\Lambda}$. 
Hence, the former assertion is proved. 

By applying the antiautomorphism $\varphi$ that fixes all the generators, we deduce the latter assertion. 
\end{proof}

\begin{remark}
In \cite{MR2995184}, an element $A$ satisfying the condition in the proof above is obtained more easily (Lemma 4.3). 
The lemma does not extend to our setting and we needed to prove Proposition \ref{prop:cyclotomicR} (4) instead. 
\end{remark}

\begin{corollary} 
Let $i \in I, j \in J$. 
The functors $F_i^*, E_j^*$ are exact, and send finitely-generated modules (resp. projective modules) to finitely-generated ones (resp. projective ones). 
\end{corollary}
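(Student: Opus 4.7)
The plan is to realize each functor as tensor product with a bimodule, and to verify the three properties (exactness, preservation of finite generation, preservation of projectivity) by analyzing that bimodule's structure on the two sides.

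For $E_j^*$, up to the grading shift $q_j^{-1-\langle h_j,\Lambda-\beta\rangle}$, I would write it as $B \otimes_{{}^{J,\Lambda}R(\beta)} (-)$ with $B = e(\beta-\alpha_j,j){}^{J,\Lambda}R(\beta)$, viewed as a $({}^{J,\Lambda}R(\beta-\alpha_j),{}^{J,\Lambda}R(\beta))$-bimodule. As a right ${}^{J,\Lambda}R(\beta)$-module, $B$ is a direct summand of ${}^{J,\Lambda}R(\beta)$ via the idempotent $e(\beta-\alpha_j,j)$; hence it is projective on the right, and $E_j^*$ is exact. As a left ${}^{J,\Lambda}R(\beta-\alpha_j)$-module, $B$ is finitely generated and projective by Theorem \ref{thm:functorE*} applied with $\beta$ replaced by $\beta-\alpha_j$. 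A surjection ${}^{J,\Lambda}R(\beta)^n \twoheadrightarrow X$ then produces a surjection $B^n \twoheadrightarrow E_j^* X$, giving preservation of finite generation; and since $E_j^*({}^{J,\Lambda}R(\beta))=B$ is projective on the left, exactness of $E_j^*$ together with the direct-summand argument delivers preservation of projectivity.

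For $F_i^*$, I would analogously write it as $B' \otimes_{{}^{J,\Lambda}R(\beta)} (-)$ with $B' = {}^{J,\Lambda}R(\beta+\alpha_i)e(\beta,i)$. As a left ${}^{J,\Lambda}R(\beta+\alpha_i)$-module, $B'$ is a summand of the regular module, hence finitely generated and projective on the left. Preservation of finite generation follows from right exactness of the tensor product, and preservation of projectivity follows because $B'$ is projective on the left. Only exactness of $F_i^*$ demands extra input, and here I would split on $i$. For $i \in J$ I appeal to Theorem \ref{thm:functorE*}, which, after translating via the anti-involution $\varphi$, shows that $B'$ is finitely generated projective as a right ${}^{J,\Lambda}R(\beta)$-module, so tensoring with it is exact. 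For $i \in I \setminus J$ I invoke Theorem \ref{thm:functorF*}(1) to identify $F_i^* X$ with the ordinary induction $X \circ R(\alpha_i) \cong R(\beta+\alpha_i)e(\beta,i) \otimes_{R(\beta)} X$; the classical fact that $R(\beta+\alpha_i)e(\beta,i)$ is free as a right $R(\beta) \otimes R(\alpha_i)$-module (via the $\tau_w$-basis on minimal coset representatives) then gives exactness, which transfers to $F_i^*$ since the forgetful functor from ${}^{J,\Lambda}R(\beta)$-modules to $R(\beta)$-modules is faithfully exact.

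The entire proof is bookkeeping once Theorems \ref{thm:functorF*} and \ref{thm:functorE*} are in hand. The only minor obstacle is to keep straight which side of each bimodule governs which property: exactness requires flatness on the side tensored away, whereas preservation of finite generation requires finite generation on the opposite side.
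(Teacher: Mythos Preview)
Your proof is correct and is exactly the argument the paper leaves implicit: the corollary is stated without proof, immediately following Theorems \ref{thm:functorF*} and \ref{thm:functorE*}, and your bimodule analysis is the intended way to extract it from those two results. The only cosmetic point is that for $i\in I\setminus J$ you can simply say that Theorem \ref{thm:functorF*}(1) gives a natural isomorphism $F_i^*X \simeq X\circ R(\alpha_i)$ and induction is exact; the detour through ``faithfully exact'' is unnecessary.
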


\begin{theorem} \label{thm:cyclotomic2rep}
The category $\gMod{{}^{J,\Lambda}R}$ is a right $\dotcatquantum{\mathfrak{p}_J}$-module as follows: 
\begin{itemize}
\item The category attached to an object $\lambda \in \mathsf{P}$ is $\gMod{{}^{J,\Lambda}R(\lambda + \Lambda)}$, where we regard ${}^{J,\Lambda}R(\lambda + \Lambda) = 0$ if $\lambda + \Lambda \not \in \mathsf{Q}_+$. 
\item %Let $\beta \in \mathsf{Q}_+$ and $X \in \gMod{{}^{J,\Lambda}R(\beta)}$. 
%Put $\lambda = -\Lambda + \beta$. 
Using the functors from Definition \ref{def:generatingfunctors}, the actions of the generating 1-morphisms are given as follows:
\[
X F_i = F_i^* (X) \ (i \in I),\ X E_j = E_j^*(X) \ (j \in J). 
\]
\item The actions of the generating 2-morphisms are given as follows ($X \in \gMod{{}^{J,\Lambda}R(\beta)}, \lambda = -\Lambda + \beta$): 
\begin{align*}
&\left[ X \xy 0;/r.17pc/: 
(0,0)*{\sdotd{i}};
\endxy \colon q^{(\alpha_i,\alpha_i)} X F_i \to X F_i \right] \ (i\in I) \\
&= \text{the right multiplication by $x_1 \in R(\alpha_i)$}, \\
&\left[ X \xy 0;/r.17pc/: 
(0,0)*{\dcross{i}{i'}};
\endxy \colon q^{-(\alpha_i,\alpha_{i'})} X F_i F_{i'} \to X F_{i'} F_i \right] \ (i, i' \in I) \\
&= \text{the right multiplication by $e(i,i')\tau_1 \in R(\alpha_i + \alpha_{i'})$}, \\
&\left[ X \xy 0;/r.17pc/: 
(0,0)*{\rcap{j}};
\endxy \colon q_j^{1 - \langle h_j, \lambda \rangle} XE_jF_j = q_j^{1 -\langle h_j,\lambda \rangle}F_j^*E_j^*(X) \to X \right] \ (j \in J) \\
&= \text{the canonical unit for the adjoint pair $(F_j^*, q_j^{1 + \langle h_j, \Lambda - \beta \rangle} E_j^*)$}, \\
&\left[ X \xy 0;/r.17pc/: 
(0,0)*{\rcup{j}};
\endxy \colon q_j^{1 + \langle h_j, \lambda \rangle} X \to XF_jE_j = E_j^*F_j^*(X) \right] \ (j \in J) \\
&= \text{the canonical counit for the adjoint pair $(F_j^*, q_j^{1 + \langle h_j, \Lambda - \beta -\alpha_j \rangle} E_j^*)$}. 
\end{align*}
\end{itemize}
The action restricts to the additive subcategory $\gproj{{}^{J,\Lambda}R} = \bigoplus_{\beta \in \mathsf{Q}_+} \gproj{{}^{J,\Lambda}R(\beta)}$. 
Moreover, ignoring grading shifts, we have
\begin{align*}
&\left[ X \xy 0;/r.17pc/: 
(0,0)*{\sdotu{j}};
\endxy  \colon XE_j \to XE_j \right] \ (j \in J) \\
&= \left[ e(*,j)X \xrightarrow{\text{the left multiplication by $x_{\height \beta} e(*,j)$}} e(*,j)X \right], \\
&\left[ X \xy 0;/r.17pc/: 
(0,0)*{\ucross{j}{j'}};
\endxy \colon XE_jE_{j'} \to XE_{j'}E_j \right] \ (j,j' \in J) \\
&= \left[ e(*,j',j)X \xrightarrow{\text{the left multiplication by $\tau_{\height \beta -1} e(*,j',j)$}} e(*,j,j')X \right].
\end{align*}
\end{theorem}

\begin{remark} \label{rem:restrictionorder}
Be careful about where each $E_j$ is applied. 
For instance, we have $X E_j E_{j'} = e(*,j',j) X$ up to grading shift. 
\end{remark}

\begin{proof}
To see that the action is well-defined, it suffices to check the relations listed in Theorem \ref{thm:Rouquierver} (1).
The adjunction and the KLR relations are immediate. 
The last set of relations, formal inverse, is proved by the same argument as that of \cite[Section 5]{MR2995184}. 

It follows from the definition that $\gproj{{}^{J,\Lambda}R}$ is invariant under $F_i \ (i \in I)$. 
Theorem \ref{thm:functorE*} shows that it is also stable under $E_j \ (j \in J)$. 
The last assertion is deduced from the defining actions of the downward 2-morphisms by applying the adjunction. 
\end{proof}

By Theorem \ref{thm:cyclotomic2rep}, 
both $K(\gMod{{}^{J,\Lambda}R})_{\mathbb{Q}(q)}$ and $K_{\oplus}(\gproj{{}^{J,\Lambda}R})_{\mathbb{Q}(q)}$ are right $U_q(\mathfrak{p}_J)$-modules. 

\begin{theorem} \label{thm:cyclotomiccategorification}
(1) We have an isomorphism of right $U_q(\mathfrak{p}_J)$-modules
\begin{align*}
K_{\oplus}(\gproj{{}^{J,\Lambda}R(\beta)})_{\mathbb{Q}(q)} &\simeq {}_JV(-\Lambda), \\
[{}^{J,\Lambda}R(0)] &\mapsto v_{-\Lambda}. 
\end{align*}
(2) The morphisms 
\begin{align*}
&K(\gmod{{}^{J,\Lambda}R})_{\mathbb{Q}(q)} \to K(\gMod{{}^{J,\Lambda}R})_{\mathbb{Q}(q)}, \\
&K_{\oplus}(\gproj{{}^{J,\Lambda}R})_{\mathbb{Q}(q)} \to K(\gMod{{}^{J,\Lambda}R})_{\mathbb{Q}(q)}
\end{align*}
induced by the inclusions are isomorphisms. 
Furthermore, the second one is a morphism of right $U_q(\mathfrak{p}_J)$-modules. 
\end{theorem}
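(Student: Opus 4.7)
The plan is to adapt Theorem~\ref{thm:categorification2} and the Kang--Kashiwara categorification of integrable highest weight modules \cite{MR2995184} to the parabolic setting, handling the two parts in tandem.

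For part~(2), I would mimic Steps~1--3 of the proof of Theorem~\ref{thm:categorification2}. The key input is that ${}^{J,\Lambda}R(\beta)$ is finitely generated as a module over the polynomial subring $\bigotimes_{i \in I \setminus J} \mathbf{k}[z_{i,1}, \ldots, z_{i,k_i}]^{\mathfrak{S}_{k_i}}$ of its center, where $\beta = \sum_i k_i \alpha_i$. This follows because the cyclotomic relations $x_1^{\langle h_j,\Lambda\rangle}e(j,*) = 0$ for $j \in J$ force the central generators $z_{j,r}$ (the elementary symmetric polynomials in the $j$-colored $x$-variables) to be nilpotent, in parallel with the classical fact that the level-$\langle h_j,\Lambda\rangle$ cyclotomic nilHecke algebra vanishes in weights $n\alpha_j$ with $n > \langle h_j,\Lambda\rangle$. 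Granting this, the short-exact-sequence argument of Step~1 reduces $K(\gMod{{}^{J,\Lambda}R(\beta)})_{\mathbb{Q}(q)}$ to $K(\gmod{{}^{J,\Lambda}R(\beta)})_{\mathbb{Q}(q)}$, the Hom-pairing of Step~2 furnishes injectivity, and Step~3 uses $0 \to q_i^2 \cdot {}^{J,\Lambda}R(\alpha_i) \to {}^{J,\Lambda}R(\alpha_i) \to L(i) \to 0$ for $i \in I \setminus J$ (for $j \in J$, the module ${}^{J,\Lambda}R(\alpha_j)$ is finite-dimensional and $[L(j)]$ is a $\mathbb{Q}(q)$-multiple of $[{}^{J,\Lambda}R(\alpha_j)]$).

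For part~(1), I define
\[
\phi \colon V_J^{\mathrm{r}}(-\Lambda) \longrightarrow K_{\oplus}(\gproj{{}^{J,\Lambda}R})_{\mathbb{Q}(q)}, \qquad v_{-\Lambda} \mapsto [{}^{J,\Lambda}R(0)],
\]
using the right $U_q(\mathfrak{p}_J)$-structure from Theorem~\ref{thm:cyclotomic2rep}. Well-definedness amounts to checking the defining relations of $V_J^{\mathrm{r}}(-\Lambda)$: (i) $[{}^{J,\Lambda}R(0)] \cdot e_j = 0$ for $j \in J$, immediate from $E_j^*({}^{J,\Lambda}R(0)) = e(-\alpha_j, j)\cdot {}^{J,\Lambda}R(0) = 0$; (ii) $[{}^{J,\Lambda}R(0)] \cdot f_j^{\langle h_j,\Lambda\rangle+1} = 0$ for $j \in J$, from the vanishing of the cyclotomic nilHecke algebra in the excess weight; and (iii) the $q^h$-relation, automatic from the weight grading. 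Surjectivity follows from $\phi(v_{-\Lambda} \cdot f_{i_1}\cdots f_{i_n}) = [{}^{J,\Lambda}R(\beta) e(i_1,\ldots,i_n)]$ (up to shift), since every projective indecomposable of ${}^{J,\Lambda}R(\beta)$ is a summand of $\bigoplus_\nu {}^{J,\Lambda}R(\beta) e(\nu)$. Injectivity is the heart of the argument: I would exploit the simplicity of $V_J^{\mathrm{r}}(-\Lambda)$ as a right ${B'}_q^J(\mathfrak{g})$-module (Theorem~\ref{thm:bosonsimple'}) by extending the right $U_q(\mathfrak{p}_J)$-action on $K(\gMod{{}^{J,\Lambda}R})_{\mathbb{Q}(q)}$ (identified with $K_{\oplus}(\gproj)_{\mathbb{Q}(q)}$ via part~(2)) to a right ${B'}_q^J(\mathfrak{g})$-action, in which the additional generators $e_i$ for $i \in I \setminus J$ act via the ``dual'' restriction functor $E_i^*(X) = q_i^{-1-\langle h_i,\Lambda-\beta\rangle}e(\beta-\alpha_i,i)X$. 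Once this is done, $\phi$ is a nonzero ${B'}_q^J(\mathfrak{g})$-linear map, and simplicity forces $\ker\phi = 0$.

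The main obstacle is verifying the ${B'}_q^J(\mathfrak{g})$-commutation relations at the Grothendieck-group level --- especially the bosonic identity $f_i e_j - q^{-(\alpha_i,\alpha_j)} e_j f_i = \delta_{i,j}$ for $i,j \in I \setminus J$ and the mixed commutation $[e_j,f_i] = 0$ for $j \in J$, $i \in I \setminus J$. Both demand Mackey-filtration computations in the cyclotomic quiver Hecke algebra, analogous to but more delicate than Lemma~\ref{lem:res}, and the grading shifts in Definition~\ref{def:generatingfunctors} must be tracked carefully. As an alternative to extending the action, one could attempt a dimension-count using the crystal $B_J(\Lambda)$ from Proposition~\ref{prop:crystal1}, but this also requires nontrivial input on the representation theory of ${}^{J,\Lambda}R(\beta)$.
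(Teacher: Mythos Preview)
Your overall strategy shares its key ingredients with the paper, but the logical organization differs and there are two genuine gaps.

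First, on part~(2): Steps~1--2 of Theorem~\ref{thm:categorification2} transfer directly (and your refinement about nilpotency of the $J$-colored center, while plausible, is not needed: finite generation over the image of the full center $Z(\beta)$ already suffices for the short-exact-sequence reduction). But your Step~3 does not transfer. That step uses the ring structure on $K(\gMod{R})$ to conclude surjectivity of the map from $K_{\oplus}(\gproj{R})$; since $\gMod{{}^{J,\Lambda}R}$ is not monoidal for general $\Lambda$, there is no algebra structure to exploit here. The paper instead proves part~(1) first --- precisely by your ``alternative'' crystal dimension-count, matching the number of simples of ${}^{J,\Lambda}R(\beta)$ with $\lvert\{b \in B(\infty)_{-\beta} : \varepsilon_j(b) \le \langle h_j,\Lambda\rangle \text{ for all } j\in J\}\rvert$ --- and then deduces the $\gproj$-half of~(2) from equal dimensions together with nondegeneracy of the Hom-pairing $([P],[Q]) = \qdim\HOM(P,Q)$, which it identifies with the form of Theorem~\ref{thm:bosonsimple'}.

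Second, your proposed $E_i^*(X) = q_i^{-1-\langle h_i,\Lambda-\beta\rangle}e(\beta-\alpha_i,i)X$ for $i \in I\setminus J$ is not a well-defined endofunctor of either $\gproj{{}^{J,\Lambda}R}$ or $\gMod{{}^{J,\Lambda}R}$: already for $\beta = \alpha_i$ and $X = {}^{J,\Lambda}R(\alpha_i) = R(\alpha_i)$ the output is $\mathbf{k}[x_1]$ viewed as a ${}^{J,\Lambda}R(0) = \mathbf{k}$-module, which is not finitely generated. The paper instead sets $E_i^*P = (R(\alpha_i)/x_1R(\alpha_i)) \otimes_{R(\alpha_i)} \Res_{\beta-\alpha_i,\alpha_i}P$; this preserves $\gproj{{}^{J,\Lambda}R}$ and is what categorifies the right action of $e_i \in {B'}_q^J(\mathfrak{g})$. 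Your ``main obstacle'' --- the Mackey verification of the bosonic relations for this corrected $E_i^*$ --- is exactly the computation the paper carries out in its proof of~(2), though there it is used to identify the Hom-pairing with the form on $V_J^{\mathrm{r}}(-\Lambda)$ rather than to prove injectivity of $\phi$. With the corrected $E_i^*$, your simplicity argument for injectivity of $\phi$ would work and give an alternative route to~(1) that bypasses the crystal count.
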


\begin{proof}
(1)
The case where $J = \emptyset$ is \cite{MR2525917, MR2763732}. 
In this case, ${}_{\emptyset}V(-\Lambda) \simeq \quantum{-}$ with ${}_{\emptyset}V(-\Lambda)_{-\Lambda + \beta} \simeq \quantum{-}_{-\beta}$. 

Regarding the general case, we have surjective homomorphisms of right $U_q^-(\mathfrak{g})$-modules 
\begin{align*}
&\quantum{-} \to {}_JV(-\Lambda), u \mapsto v_{-\Lambda}u, \\
&K_{\oplus}(\gproj{R})_{\mathbb{Q}(q)} \to K_{\oplus}(\gproj{{}^{J,\Lambda}R})_{\mathbb{Q}(q)}, [P] \mapsto [{}^{J,\Lambda}R(\beta) \otimes_{R(\beta)} P]. 
\end{align*}
We claim that, through these two surjections, the isomorphism 
\[
\quantum{-} \simeq K_{\oplus}(\gproj{R})_{\mathbb{Q}(q)}
\]
induces a surjective homomorphism ${}_JV(-\Lambda) \to K_{\oplus}(\gproj{{}^{J,\Lambda}R})_{\mathbb{Q}(q)}$. 
By Lemma \ref{lem:presentation}, it suffices to prove that ${}^{J,\Lambda}R(0) F_j^{\langle h_j,\Lambda \rangle + 1} = 0$ for any $j \in J$. 
The left hand side is ${}^{J,\Lambda}R((\langle h_j,\Lambda \rangle +1)\alpha_j)$. 
It coincides with the cyclotomic quotient of the nil-Hecke algebra $R((\langle h_j,\Lambda \rangle +1)\alpha_j)$, which is known to be zero. 

It remains to prove that the induced surjective homomorphism is an isomorphism.
To see this, we prove that the dimensions of ${}_JV(\Lambda)_{-\Lambda + \beta}$ and of $K_{\oplus}(\gproj{{}^{J,\Lambda}R(\beta)})_{\mathbb{Q}(q)}$ are the same. 

We claim that a simple $R(\beta)$-module $L$ is a ${}^{J,\Lambda}R(\beta)$-module if and only if $\Res_{(\langle h_j,\Lambda \rangle +1)\alpha_j,*} L= 0$ for any $j \in J$. 
First, assume that $L$ is a simple ${}^{J,\Lambda}R(\beta)$-module. 
Let $j \in J$ and put $n = \langle h_j, \Lambda \rangle$. 
Then, $\Res_{(n+1)\alpha_j,\beta-(n+1)\alpha_j} L$ is a ${}^{J,\Lambda}R((n+1)\alpha_j) \otimes R(\beta-(n+1)\alpha_j)$-module. 
As is explained in the previous paragraph, ${}^{J,\Lambda}R((n+1)\alpha_j) = 0$, hence the restriction must be zero. 
Next, assume that $\Res_{(\langle h_j,\Lambda \rangle+1)\alpha_j,*} L = 0$ for any $j \in J$. 
Take $j \in J$ arbitrarily, and let $n \geq 0$ be the largest integer such that $\Res_{n\alpha_j,*} L \neq 0$. 
Take a simple $R(n\alpha_j) \otimes R(\beta-n\alpha_j)$-submodule of $\Res_{n\alpha_j,\beta-n\alpha_j}L$. 
It is of the form $L(j^n) \otimes L_0$, where $L_0$ is a simple $R(\beta-n\alpha_j)$-module that satisfies $e(j,*)L_0 = 0$. 
By the induction-restriction adjunction, we have a nonzero homomorphism $L(j^n) \circ L_0 \to L$, 
which is surjective since $L$ is simple. 
By considering the Mackey-filtration (Proposition \ref{prop:Mackey}), we have 
\[
\Res_{\alpha_j,\beta-\alpha_j}(L(j^n) \circ L_0) \simeq \Ind_{(n-1)\alpha_j,\beta-n\alpha_j} ((\Res_{\alpha_j,(n-1)\alpha_j}L(j^n)) \otimes L_0). 
\]
By the assumption, we have $n \leq \langle h_j,\Lambda \rangle$, hence $x_1^{\langle h_j,\Lambda \rangle}L(j^n) = 0$. 
It follows that $x_1^{\langle h_j,\Lambda \rangle}e(j,\beta-\alpha_j) (L(j^n) \circ L_0) = 0$.
Since we have a surjective homomorphism $L(j^n) \circ L_0 \to L$, it implies $x_1^{\langle h_j,\Lambda \rangle}e(j,\beta - \alpha_j)L = 0$. 
Therefore, $L$ is a ${}^{J,\Lambda}R(\beta)$-module.

On the other hand, the kernel of the homomorphism $\quantum{-} \to {}_JV(\Lambda)$ is $\sum_{j \in J} f_j^{\langle h_j,\Lambda \rangle +1} \quantum{-}$. 
By \cite[Theorem 7]{MR1115118}, this kernel is spanned by 
\[
\{ G(b) \mid b \in B(\infty), \text{there exists $j \in J$ such that $\varepsilon_j(b) \geq \langle h_j,\Lambda \rangle +1$}\}, 
\]
where $G(b)$ is the element of the global basis of $U_q^-(\mathfrak{g})$ corresponding to $b \in B(\infty)$. 

Using \cite{MR2822211}, we conclude that both ${}_JV(\Lambda)_{-\Lambda + \beta}$ and $K_{\oplus}(\gproj{{}^{J,\Lambda}R(\beta)})_{\mathbb{Q}(q)}$ have dimension 
\[
\lvert \{ b \in B(\infty)_{-\beta} \mid \text{$\varepsilon_j(b) \leq \langle h_j, \Lambda \rangle$ for any $j \in J$} \} \rvert. 
\]

(2) The morphism $K(\gmod{{}^{J,\Lambda}R})_{\mathbb{Q}(q)} \to K(\gMod{{}^{J,\Lambda}R})_{\mathbb{Q}(q)}$ is an isomorphism by the same reasoning as that of Theorem \ref{thm:categorification2}. 
\begin{comment}
Furthermore, the inverse is given by 
\[
[M] \mapsto \sum_{1 \leq s \leq r} (\qdim \HOM_{{}^{J,\Lambda}R(\beta)}(P_s,M))[L_s],
\]
where $\{L_1, \ldots, L_r \}$ is a complete set of isomorphism classes of simple graded ${}^{J,\Lambda}R(\beta)$-modules up to grading shift, 
and $P_s$ is the projective cover of $L_s$ in $\gMod{{}^{J,\Lambda}R(\beta)}$. 
\end{comment}
In particular, $K(\gmod{{}^{J,\Lambda}R(\beta)})_{\mathbb{Q}(q)}$, $K(\gMod{{}^{J,\Lambda}R(\beta)})_{\mathbb{Q}(q)}$ and $K(\gproj{{}^{J,\Lambda}R})_{\mathbb{Q}(q)}$ all have the same dimension. 

Hence, in order to show that te morphism 
\[
K_{\oplus}(\gproj{{}^{J,\Lambda}R(\beta)})_{\mathbb{Q}(q)} \to K(\gMod{{}^{J,\Lambda}R(\beta)})_{\mathbb{Q}(q)}
\]
is an isomorphism, 
it suffices to prove that it is injective.
\begin{comment} 
\begin{align*}
K_{\oplus}(\gproj{{}^{J,\Lambda}R(\beta)})_{\mathbb{Q}(q)} &\to K(\gmod{{}^{J,\Lambda}R(\beta)})_{\mathbb{Q}(q)}, \\
[P] \mapsto [M] \mapsto \sum_{1 \leq s \leq r} (\qdim \HOM_{{}^{J,\Lambda}R(\beta)}(P_s,P))[L_s],
\end{align*}
is injective. 
It is reduced to proving that the bilinear form of $K(\gproj{{}^{J,\Lambda}R(\beta)})_{\mathbb{Q}(q)}$ defined by
\begin{equation} \label{eq:bilinearform}
([P],[Q]) = \qdim \HOM_{{}^{J,\Lambda}R(\beta)}(P,Q) \ (P,Q \in \gproj{{}^{J,\Lambda}R(\beta)})
\end{equation}
is nondegenerate. 
It is further reduced to showing that 
\[
([P],[Q]) = (\overline{\chi'(P)},\chi'(Q)), 
\]
where $\chi' \colon K_{\oplus}(\gproj{{}^{J,\Lambda}R})_{\mathbb{Q}(q)} \to {}_JV(-\Lambda)$ is the isomorphism of (1) and the bilinear form in the right hand side is the one given in Theorem \ref{thm:bosonsimple'}. 
Note that $\overline{xf_i} = \overline{x}f_i$ for any $i \in I, x \in {}_JV(-\Lambda)$. 
\end{comment}

Let $\beta = \sum_{j \in i} k_j \alpha_j \in \mathsf{Q}_+$.   
For $i \in I \setminus J$ and $P \in \gproj{{}^{J,\Lambda}R(\beta)}$, we define 
\[
PE_i = (R(\alpha_i)/x_1R(\alpha_i)) \otimes_{R(\alpha_i)} \Res_{\beta-\alpha_i,\alpha_i}P. 
\]
By Theorem \ref{thm:functorF*}, $\Res_{\beta-\alpha_i,\alpha_i}P$ is a finitely-generated projective ${}^{J,\Lambda}R(\beta-\alpha_i) \otimes R(\alpha_i)$-module, 
hence $PE_i \in \gproj{{}^{J,\Lambda}R(\beta-\alpha_i)}$ and $E_i$ gives an additive endofuncntor of $\gproj{{}^{J,\Lambda}R}$. 

We claim that $E_i$ coincides with the right action of $e_i \in {B'}_q^J(\mathfrak{g})$ under the isomorphism $\chi' \colon K_{\oplus}(\gproj{{}^{J,\Lambda}R})_{\mathbb{Q}(q)} \simeq {}_JV(-\Lambda)$. 
Since ${}_JV(-\Lambda)$ is generated by $v_{-\Lambda}$ over $U_q^-(\mathfrak{g})$, 
it suffices to verify the commutation relation between $E_i$ and $F_j \ (j \in I)$. 
First, we verify the relation $f_ie_i = q_i^{-2}e_if_i + 1$. 
Let $P \in \gproj{{}^{J,\Lambda}R(\beta)}$. 
By applying $\Res_{\beta,\alpha_i}$ to the isomorphism $P \circ R(\alpha_i) \simeq PF_i$ of Theorem \ref{thm:functorF*} (1) and using the Mackey-filtration (Proposition \ref{prop:Mackey}), 
we obtain a short exact sequence
\[
0 \to P \otimes R(\alpha_i) \to \Res_{\beta,\alpha_i}(PF_i) \to q_i^{-2} \Ind_{\beta-\alpha_i,\alpha_i}(\Res_{\beta-\alpha_i,\alpha_i}P \otimes R(\alpha_i)) \to 0, 
\]
where $\Res_{\beta-\alpha_i,\alpha_i}P \otimes R(\alpha_i)$ is regarded as an $R(\beta-\alpha_i)\otimes R(\alpha_i)$-module by 
\[
(x \otimes y) (e(\beta-\alpha_i,i)u \otimes v) = (x \boxtimes e(i)) u \otimes yv 
\]
for $x \in R(\beta-\alpha_i),y\in R(\alpha_i), u\in P, v \in R(\alpha_i)$. 
Note that all the three terms are free over $R(\alpha_i)$. 
Applying $(R(\alpha_i)/x_1R(\alpha_i)) \otimes_{R(\alpha_i)} ?$, we obtain a short exact sequence 
\[
0 \to P \to PF_iE_i \to q_i^{-2}PE_iF_i \to 0, 
\]
which proves the desired relation. 

Next, let $j \in I \setminus J$ and assume that $j \neq i$. 
We verify the relation $f_je_i = q^{-(\alpha_i,\alpha_j)}e_if_j$. 
By applying $\Res_{\beta+\alpha_j-\alpha_i,\alpha_i}$ to the isomorphism $P \circ R(\alpha_j) \simeq PF_j$ of Theorem \ref{thm:functorF*} (1) and using the Mackey-filtration (Proposition \ref{prop:Mackey}), 
we obtain an isomorphism
\[
\Res_{\beta+\alpha_j-\alpha_i,\alpha_i}(PF_j) \simeq q^{-(\alpha_i,\alpha_j)} \Ind_{\beta-\alpha_i,\alpha_j}(\Res_{\beta-\alpha_i,\alpha_i}P \otimes R(\alpha_j)). 
\]
Applying $(R(\alpha_i)/x_1 R(\alpha_i)) \otimes_{R(\alpha_i)}?$, we obtain an isomorphism 
\[
PF_jE_i \simeq q^{-(\alpha_i,\alpha_j)}PE_iF_j, 
\]
which proves the desired relation. 

Finally, let $j \in J$. 
By applying $\Res_{\beta+\alpha_j-\alpha_i,\alpha_i}$ to the short exact sequence of Theorem \ref{thm:functorF*} (2) and using the Mackey-filtration (Proposition \ref{prop:Mackey}), 
we obtain a short exact sequence 
\begin{align*}
&0 \to q^{(\alpha_j,2\Lambda-\beta)} \Ind_{\alpha_j,\beta-\alpha_i} (R(\alpha_j) \otimes \Res_{\beta-\alpha_i,\alpha_i}P) \\ 
&\to q^{-(\alpha_i,\alpha_j)}\Ind_{\beta-\alpha_i,\alpha_j} (\Res_{\beta-\alpha_i,\alpha_i}P \otimes R(\alpha_j)) \to \Res_{\beta+\alpha_j-\alpha_i,\alpha_i} (PF_j) \to 0. 
\end{align*}
Note that all the three terms are free over $R(\alpha_i)$.
Applying $(R(\alpha_i)/x_1R(\alpha_i)) \otimes_{R(\alpha_i)} ?$, we obtain a short exact sequence 
\[
0 \to q^{(\alpha_j,2\Lambda-\beta)} R(\alpha_j) \circ PE_i \to q^{-(\alpha_i,\alpha_j)} PE_i \circ R(\alpha_j) \to PF_jE_i \to 0. 
\]
Comparing it with the short exact sequence of Theorem \ref{thm:functorF*} (2) for $X = PE_i$, we deduce that $q^{(\alpha_i,\alpha_j)}PF_jE_i \simeq PE_iF_j$.  
Now, the claim is proved. 

We have analogous endofunctors $M \mapsto ME_i \ (i \in I \setminus J)$ on $\gMod{{}^{J,\Lambda}R}$, 
and they make $K(\gMod{{}^{J,\Lambda}R})_{\mathbb{Q}(q)}$ a right ${B'}_q^J(\mathfrak{g})$-module. 
In addition, the morphism $K_{\oplus}(\gproj{{}^{J,\Lambda}R})_{\mathbb{Q}(q)} \to K(\gMod{{}^{J,\Lambda}R})_{\mathbb{Q}(q)}$ is a homomorphism of right ${B'}_q^J(\mathfrak{g})$-modules.
By Theorem \ref{thm:bosonsimple'} and (1), the module $K_{\oplus}(\gproj{{}^{J,\Lambda}R})_{\mathbb{Q}(q)}$ is simple. 
Hence, the injectivity of the morphism is equivalent to its nonvanishing.

To see that it is nonzero, consider the module $[{}^{J,\Lambda}R(0)] \in K_{\oplus}(\gproj{{}^{J,\Lambda}R(0)})$. 
Under the morphism 
\[
K_{\oplus}(\gproj{{}^{J,\Lambda}R(0)})_{\mathbb{Q}(q)} \to K(\gMod{{}^{J,\Lambda}R(0)})_{\mathbb{Q}(q)} \simeq K(\gmod{{}^{J,\Lambda}R(0)}),
\]
it is sent to $[{}^{J,\Lambda}R(0)]$. 
Since ${}^{J,\Lambda}R(0)$ is a one-dimensional simple module, this element is nonzero. 
The proof is complete.
\begin{comment}
We verify that the bilinear form (\ref{eq:bilinearform}) satisfies the formulas of Theorem \ref{thm:bosonsimple'}, which characterize the bilinear form on ${}_JV(-\Lambda)$. 
Since ${}^{J,\Lambda}R(0) \simeq \mathbf{k}$, we have 
\[
([{}^{J,\Lambda}R(0)], [{}^{J,\Lambda}R(0)]) = 1.  
\]
Let $i \in I, P \in \gproj{{}^{J,\Lambda}R(\beta)}$ and $Q \in \gproj{{}^{J,\Lambda}R(\beta+\alpha_i)}$. 
If $i \in J$, then we have 
\begin{align*}
&([PF_i], [Q]) \\ 
&= ([P], q_i^{1+\langle h_i,\Lambda - (\beta+\alpha_i) \rangle}[QE_i]) \quad \text{since $(F_i^*,q_i^{1 + \langle h_i,\Lambda - (\beta+\alpha_i)\rangle}E_i^*)$ is an adjoint pair} \\ 
&= q_i([P], [Q]t_i^{-1}e_i) \quad \text{since $[Q]$ is of weight $-\Lambda + \beta+\alpha_i$}. 
\end{align*}
If $i \not \in J$, we have 
\begin{align*}
&([PF_i],[Q]) \\ 
&= \qdim \HOM_{R(\beta)\otimes R(\alpha_i)}(P \otimes R(\alpha_i), \Res_{\beta,\alpha_i}Q) \quad \text{by the induction-restriction adjunction} \\
&= \qdim \HOM_{R(\beta)}(P,\Res_{\beta,\alpha_i}Q) \\
&= \frac{1}{1-q_i^2} ([P], [QE_i]) \quad \text{since $\Res_{\beta,\alpha_i}Q$ is free over $R(\alpha_i)$ by Theorem \ref{thm:functorF*}}. 
\end{align*}
The proof is complete. 
\end{comment}
\end{proof}

\begin{definition}
We define a right $\dotcatquantum{\mathfrak{p}_J}$-module 
\begin{align*}
&{}_J\mathcal{V}(-\Lambda) \index{${}_J\mathcal{V}(-\Lambda)$} \\
&= 1_{-\Lambda}\dotcatquantum{\mathfrak{p}_J}\bigg/ \sum_{j \in J} \left( 1_{-\Lambda} E_j \dotcatquantum{\mathfrak{p}_J} + \xy 0;/r.17pc/: 
(-5,3)*{\scriptstyle -\Lambda};
(0,0)*{\sdotd{j}};
(7,0)*{\scriptstyle \langle h_j,\Lambda \rangle};
\endxy \ \dotcatquantum{\mathfrak{p}_J} + \END (1_{-\Lambda})_{\geq 1} \dotcatquantum{\mathfrak{p}_J} \right)
\end{align*}
as follows: 
for each $\lambda \in P$, we define the additive category ${}_J\mathcal{V}(-\Lambda)_{\lambda}$ as a quotient category of $\dotcatquantum{\mathfrak{p}_J}(\lambda, -\Lambda)$ by an ideal generated by 
\begin{itemize}
\item morphisms that factor through objects of the form 
\[
\bigoplus_{j \in J} E_j G_j \ (G_j \in \dotcatquantum{\mathfrak{p}_J}(\lambda,-\Lambda-\alpha_j)), 
\]
\item morphisms in $\xy 0;/r.17pc/: 
(-5,3)*{\scriptstyle -\Lambda};
(0,0)*{\sdotd{j}};
(7,0)*{\scriptstyle \langle h_j,\Lambda \rangle};
\endxy \ \dotcatquantum{\mathfrak{p}_J}(\lambda,-\Lambda+\alpha_j)$, 
\item morphisms in $\END (1_{-\Lambda})_{\geq 1} \dotcatquantum{\mathfrak{p}_J}(\lambda,-\Lambda)$. 
\end{itemize}
The actions of the 1-morphisms and the 2-morphisms are given in a natural way.
\end{definition}

\begin{theorem} \label{thm:rightuniversality}
There exists an equivalence of right $\dotcatquantum{\mathfrak{p}_J}$-modules
\[
{}_J\mathcal{V}(-\Lambda) \simeq \gproj{{}^{J,\Lambda}R},\ 1_{-\Lambda} \mapsto {}^{J,\Lambda}R(0). 
\]
\end{theorem}

\begin{proof}

By Lemma \ref{lem:universal2rep} and Theorem \ref{thm:cyclotomic2rep}, we have a morphism of right $\dotcatquantum{\mathfrak{p}_J}$-modules
\[
1_{-\Lambda}\dotcatquantum{\mathfrak{p}_J} \to \gproj{{}^{J,\Lambda}R},\ 1_{-\Lambda} \mapsto {}^{J,\Lambda}R(0). 
\]
Since ${}^{J,\Lambda}R(0) E_j = 0, {}^{J,\Lambda}R(0) \xy 0;/r.12pc/: 
(0,0)*{\sdotd{j}};
(9,0)*{\scriptstyle \langle h_j,\Lambda \rangle};
\endxy = 0 \ (j \in J)$, and $\END_{{}^{J,\Lambda}R(0)}({}^{J,\Lambda}R(0)) \simeq \mathbf{k}$ is concentrated in degree zero, 
it induces a morphism of right $\dotcatquantum{\mathfrak{p}_J}$-modules
\[
{}_J\mathcal{V}(-\Lambda) \to \gproj{{}^{J,\Lambda}R}. 
\]
In other words, we have a family of $\mathbf{k}$-linear functors
\[
{}_J\mathcal{V}(-\Lambda)_{-\Lambda + \beta} \to \gproj{{}^{J,\Lambda}R(\beta)} \ (\beta \in \mathsf{Q}_+). 
\]
%Since ${}^{J,\Lambda}R(\beta) = \bigoplus_{\nu \in I^{\beta}} {}^{J,\Lambda}R(0)F_{\nu_1} \cdots F_{\nu_{\height \beta}}$ is a progenerator, 
%the functor above is essentially surjective. 
We first prove that this functor is fully-faithful. 
Put $n = \height \beta$. 
By the triangular decomposition (Corollary \ref{cor:triangular}), 
every objects in ${}_J\mathcal{V}(-\Lambda)_{-\Lambda +\beta}$ is a direct summand of a finite direct sum of 1-morphisms 
\[
1_{-\Lambda} F_{\nu_1} \cdots F_{\nu_n} \ (\nu \in I^{\beta}). 
\]
Hence, it suffices to prove that the following homomorphism is an isomorphism: 
\[
f\colon \END_{{}_J\mathcal{V}(-\Lambda)} \left(\bigoplus_{\nu \in I^{\beta}} 1_{-\Lambda}F_{\nu_1} \cdots F_{\nu_n} \right) \to \END_{{}^{J,\Lambda}R(\beta)} ({}^{J,\Lambda}R(\beta)) \simeq {}^{J,\Lambda}R(\beta). 
\]
Note that we have a canonical surjective homomorphism 
\[
\END_{\dotcatquantum{\mathfrak{p}_J}} \left(\bigoplus_{\nu \in I^{\beta}} 1_{-\Lambda}F_{\nu_1} \cdots F_{\nu_n}\right) \to \END_{{}_J\mathcal{V}(-\Lambda)} \left(\bigoplus_{\nu \in I^{\beta}} 1_{-\Lambda}F_{\nu_1} \cdots F_{\nu_n} \right). 
\] 
By Theorem \ref{thm:KLRaction} and the definition of ${}_J\mathcal{V}(-\Lambda)$, 
it induces a surjective homomorphism
\[
g\colon {}^{J,\Lambda}R(\beta) \to \END_{{}_J\mathcal{V}(-\Lambda)} \left(\bigoplus_{\nu \in I^{\beta}} 1_{-\Lambda}F_{\nu_1} \cdots F_{\nu_n}\right). 
\]
Since the composition $g \circ f$ is the identity, the homomorphism $f$ is an isomorphim. 

Next, we prove that the functor is essentially surjective. 
It suffices to prove that any indecomposable projective module $P \in \gproj{{}^{J,\Lambda}R(\beta)}$ lies in the image. 
$P$ is a projective cover of some simple module $L \in \gmod{{}^{J,\Lambda}R(\beta)}$. 
Let $\tilde{P}$ be the projective cover of $L$ in $\gMod{R(\beta)}$. 
There exists an idempotent $e \in R(\beta)$ and $d \in \mathbb{Z}$ such that $\tilde{P} \simeq q^dR(\beta)e$. 
(By quotienting out the ideal of $R(\beta)$ generated by homogeneous components whose degree is sufficiently large, we can argue in the same way as for finite-dimensional algebras.)
This idempotent gives a direct summand $G$ of $\bigoplus_{\nu \in I^{\beta}}1_{-\Lambda}F_{\nu_1} \cdots F_{\nu_n}$ in $\dotcatquantum{\mathfrak{p}_J}$. 
The image of $q^dG$ under the functor is 
\[
q^d ({}^{J,\Lambda}R(\beta)e) \simeq {}^{J,\Lambda}R(\beta) \otimes_{R(\beta)} \tilde{P} \simeq P.
\]
Hence, $P$ lies in the image. 

Since we have proved that the functor is fully-faithful and essentially surjective, it is an equivalence of categories. 
\end{proof}

By applying the autofucntor $\sigma_*$ on $\gMod{R}$, we deduce similar results for $\gMod{R^{J,\Lambda}}$. 
We include the precise statements for later use. 

\begin{proposition} \label{prop:cyclotomicR*}
Let $j \in J, \beta \in \mathsf{Q}_+, X \in \gMod{R^{J,\Lambda}(\beta)}$ and put $n = \height \beta$. 
\begin{enumerate}
\item There exists a homomorphism of $(R(\beta+\alpha_j), R(\alpha_j))$-modules
\[
\mathsf{R} = \mathsf{R}_X \colon q^{(\alpha_j, 2\Lambda - \beta)} X \circ R(\alpha_j) \to R(\alpha_j)\circ X \ (X \in \gMod{R^{J,\Lambda}(\beta)})
\]
given by $u \boxtimes v \mapsto x_{n+1}^{\langle h_j, \Lambda \rangle} \tau_n \tau_{n-1} \cdots \tau_1 (v \boxtimes u) \ (u \in X, v \in R(\alpha_j))$. 
Furthermore, this homomorphism is natural in $X$.
\item There exists a homogeneous homomorphism of $(R(\beta+\alpha_j), R(\alpha_j))$-modules
\[
\mathsf{R}' = \mathsf{R}'_X \colon R(\alpha_j) \circ X \to X \circ R(\alpha_j) \ (X \in \gMod{R^{J,\Lambda}(\beta)})
\]
given by $v \boxtimes u \mapsto g_1 \cdots g_n (u\boxtimes v)$, where
\[
g_k = \sum_{\nu \in I^{\beta+\alpha_j}, \nu_k \neq \nu_{k+1}} \tau_k e(\nu) + \sum_{\nu \in I^{\beta + \alpha_j}, \nu_k = \nu_{k+1}} (x_{k+1} -x_k -(x_{k+1}-x_k)^2\tau_k) e(\nu). 
\]
Furthermore, this homomorphism is natural in $X$. 
\item The endomorphism $\mathsf{R}'_X \mathsf{R}_X$ is given by 
\[
u \boxtimes v \mapsto x_{n+1}^{\langle h_j, \Lambda \rangle} \sum_{\nu \in I^{\beta}} e(\nu,j) \prod_{1 \leq k \leq n, \nu_k \neq j} Q_{\nu_k, j}(x_k, x_{n+1}) (u\boxtimes v). 
\]
\item The endomorphism $\mathsf{R}_X \mathsf{R}'_X$ is given by 
\[
v \boxtimes u \mapsto x_{1}^{\langle h_j, \Lambda \rangle} \sum_{\nu \in I^{\beta}} e(j,\nu) \prod_{1 \leq k \leq n, \nu_k \neq j} Q_{j,\nu_k}(x_1, x_{k+1}) (v\boxtimes u). 
\]
\end{enumerate}
\end{proposition}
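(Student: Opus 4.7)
As indicated in the paragraph preceding the statement, the plan is to derive each part from the corresponding part of Proposition \ref{prop:cyclotomicR} by transport of structure along the autofunctor $\sigma_*$, which restricts to an equivalence $\gMod{R^{J,\Lambda}(\beta)} \xrightarrow{\sim} \gMod{{}^{J,\Lambda}R(\beta)}$.

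The first step is to observe that on $R(\beta+\alpha_j)$, the involution $\sigma$ swaps the two parabolic subalgebras $R(\beta)\boxtimes R(\alpha_j)$ and $R(\alpha_j)\boxtimes R(\beta)$ (combined with internal strand-reversal, which is trivial on the single-strand factor $R(\alpha_j)$). For any $Y\in\gMod{R(\beta)}$, this yields natural isomorphisms
\[
\sigma_*(Y \circ R(\alpha_j)) \simeq R(\alpha_j) \circ \sigma_*Y,\qquad \sigma_*(R(\alpha_j) \circ Y) \simeq \sigma_*Y \circ R(\alpha_j).
\]
Setting $Y = \sigma_*X$ and applying Proposition \ref{prop:cyclotomicR} to $Y$, then pulling back through $\sigma_*$, I obtain homomorphisms with the directions and domains required in parts (1) and (2). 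The composition identities in part (3) are inherited from Proposition \ref{prop:cyclotomicR}(3) with the two compositions $\mathsf{R}\mathsf{R}'$ and $\mathsf{R}'\mathsf{R}$ swapping roles; moreover a direct computation using $\sigma(x_k) = x_{n+2-k}$ and the symmetry $Q_{i,j}(u,v)=Q_{j,i}(v,u)$ confirms that $\sigma(A_{j,\beta,\Lambda})$ equals the element on the right-hand side of the first identity in the new (3) (after the substitution $k \mapsto n+1-k$), and symmetrically for $\sigma(A'_{j,\beta,\Lambda})$.

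To match the explicit formulas for $\mathsf{R}_X$ and $\mathsf{R}'_X$ in parts (1) and (2), I would compute the image of the defining element under $\sigma$. Using $\sigma(\tau_k e(\nu)) = (-1)^{\delta_{\nu_k,\nu_{k+1}}}\tau_{n+1-k}e(\overline\nu)$, one checks that $\sigma(\tau_1\tau_2\cdots\tau_n\, e(\mu,j))$ equals $\tau_n\tau_{n-1}\cdots\tau_1\, e(j,\overline\mu)$ up to an overall sign depending on $|\{k : \mu_k = j\}|$, and similarly $\sigma(g_n\cdots g_1\, e(\mu,j))$ equals $g_1\cdots g_n\, e(j,\overline\mu)$ up to a sign. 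These accumulated signs can be absorbed into the choice of the natural isomorphism $\sigma_*(Y\circ R(\alpha_j))\simeq R(\alpha_j)\circ\sigma_*Y$, which itself is realized by left multiplication by a shuffle element $\tau_w$ (for the minimal coset representative $w \in \mathfrak{S}_{n+1}^{1,n}$) and thus can carry a compensating sign under $\sigma$. The main obstacle is precisely this sign bookkeeping, but it is a routine combinatorial check; once it is carried out for a single idempotent $e(\mu,j)$, uniformity in $\mu \in I^\beta$ is automatic from naturality. Should the sign accounting become unwieldy, the statement alternatively admits a direct proof closely mirroring that of Proposition \ref{prop:cyclotomicR}, with identical arguments applied to the opposite orientation.
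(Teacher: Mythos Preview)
Your proposal is correct and takes essentially the same approach as the paper, which simply states that the proposition follows by applying $\sigma_*$ to Proposition \ref{prop:cyclotomicR}. Your sign concern is in fact trivial: since $|\{k : \mu_k = j\}|$ equals the coefficient of $\alpha_j$ in $\beta$ for every $\mu \in I^\beta$, the accumulated sign $(-1)^{|\{k:\mu_k=j\}|}$ is a constant depending only on $\beta$, so it is absorbed by a single global scalar and no idempotent-by-idempotent bookkeeping is needed.
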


\begin{theorem} \label{thm:functorF2}
Let $\beta \in \mathsf{Q}_+$ and $X \in \gMod{R^{J,\Lambda}(\beta)}$. 
\begin{enumerate}
\item Let $i \in I \setminus J$.
Then, the canonical homomorphism $R(\alpha_i) \circ X \xrightarrow{\mathrm{can}} F_i X$ is an isomorphism. 
\item Let $j \in J$. 
Then, the following sequence is exact:
\[
0 \to q^{(\alpha_j,2\Lambda-\beta)} X \circ R(\alpha_j) \xrightarrow{\mathsf{R}} R(\alpha_j) \circ X \xrightarrow{\mathrm{can}} F_j X \to 0. 
\]
\end{enumerate}
\end{theorem}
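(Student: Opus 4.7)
The plan is to deduce the theorem from Theorem \ref{thm:functorF*} by applying the algebra involution $\sigma$ on $R(\beta)$, which, as noted after the definition of the cyclotomic quotients, induces an isomorphism $R^{J,\Lambda}(\beta) \simeq {}^{J,\Lambda}R(\beta)$ and hence an equivalence of module categories $\sigma_* \colon \gMod{R^{J,\Lambda}(\beta)} \xrightarrow{\sim} \gMod{{}^{J,\Lambda}R(\beta)}$. The strategy is to check that $\sigma_*$ intertwines the structures appearing in the two theorems, and then to transport the statements of Theorem \ref{thm:functorF*} along this equivalence.

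First, I would record the compatibility of $\sigma_*$ with induction. Directly from the definition of $\sigma$ on $R(\alpha+\beta)$ in terms of $R(\alpha)\boxtimes R(\beta)$, one checks that for $X \in \gMod{R^{J,\Lambda}(\beta)}$ and $Y \in \gMod{R(\alpha_j)}$ there are natural isomorphisms
\[
\sigma_*(X \circ Y) \simeq \sigma_*(Y) \circ \sigma_*(X), \qquad \sigma_*(Y \circ X) \simeq \sigma_*(X) \circ \sigma_*(Y),
\]
where on the left we use the $R^{J,\Lambda}$-side and on the right the ${}^{J,\Lambda}R$-side. Specializing to $Y = R(\alpha_j)$ and using that $\sigma$ acts trivially on the latter (up to the normalization of $\tau$ that appears in $\mathsf{R}$), we get natural identifications
\[
\sigma_*\!\bigl(X \circ R(\alpha_j)\bigr) \simeq R(\alpha_j) \circ \sigma_*(X), \quad \sigma_*\!\bigl(R(\alpha_j) \circ X\bigr) \simeq \sigma_*(X) \circ R(\alpha_j).
\]
Next I would verify that $\sigma_*(F_i X) \simeq F_i^*\sigma_*(X)$; this is immediate from the formulas $\sigma(e(i,\beta)) = e(\sigma(\beta), i)$ and $\sigma(R^{J,\Lambda}(\beta+\alpha_i)) = {}^{J,\Lambda}R(\beta+\alpha_i)$, and the fact that $\sigma$ interchanges left and right tensor factors.

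With these intertwiners in hand, part (1) follows by applying $\sigma_*$ to the canonical isomorphism $\sigma_*(X) \circ R(\alpha_i) \xrightarrow{\sim} F_i^* \sigma_*(X)$ of Theorem \ref{thm:functorF*}(1), which under the identifications above becomes precisely $R(\alpha_i) \circ X \xrightarrow{\sim} F_i X$. For part (2), applying $\sigma_*$ to the short exact sequence of Theorem \ref{thm:functorF*}(2) for $\sigma_*(X)$ gives a short exact sequence
\[
0 \to q^{(\alpha_j,2\Lambda-\beta)}\, X \circ R(\alpha_j) \xrightarrow{\sigma_*(\mathsf{R}_{\sigma_*X})} R(\alpha_j)\circ X \xrightarrow{\mathrm{can.}} F_j X \to 0,
\]
and what remains is to identify $\sigma_*(\mathsf{R}_{\sigma_*X})$ with the $\mathsf{R}_X$ defined in the preceding proposition. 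This reduces to the computation
\[
\sigma(x_1^{\langle h_j,\Lambda\rangle}\tau_1\tau_2\cdots\tau_n) = x_{n+1}^{\langle h_j,\Lambda\rangle}\tau_n\tau_{n-1}\cdots\tau_1
\]
inside $R(\beta+\alpha_j)$, which is an immediate consequence of the definition of $\sigma$ on the generators (the sign $(-1)^{\delta_{\nu_k,\nu_{k+1}}}$ on $\tau$-generators cancels in pairs here because consecutive indices among the factors have distinct colors except possibly where it drops out of the cyclotomic kernel).

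The main obstacle I anticipate is purely bookkeeping: keeping track of the reversal of word order under $\sigma$, the change between right and left tensor factors, the grading shifts, and especially the sign convention $\sigma(\tau_k e(\nu)) = (-1)^{\delta_{\nu_k,\nu_{k+1}}}\tau_{n-k}e(\sigma\nu)$ when translating the explicit formula for $\mathsf{R}$. These signs drop out of the statement because $\mathsf{R}$ is determined up to an invertible scalar by its image in $\HOM_{(R(\beta+\alpha_j),R(\alpha_j))}(R(\alpha_j)\circ X, X\circ R(\alpha_j))$ and the normalization $u\otimes v \mapsto x_{n+1}^{\langle h_j,\Lambda\rangle}\tau_n\cdots\tau_1(v\otimes u)$ is precisely what makes the diagrams commute. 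Once this identification is verified, the corollary on exactness of $F_i,E_j$ and preservation of finite generation/projectivity is then deduced from Theorems \ref{thm:functorE*} and \ref{thm:cyclotomic2rep} by the same transport of structure.
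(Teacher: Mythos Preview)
Your approach matches the paper's exactly: after proving Theorem~\ref{thm:functorF*}, the paper simply writes ``By applying the autofunctor $\sigma_*$ on $\gMod{R}$, we deduce similar results for $\gMod{R^{J,\Lambda}}$'' and then states Theorem~\ref{thm:functorF2} without further argument.

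Your treatment of the signs, however, is not correct. Applying $\sigma$ to $x_1^{m}\tau_1\cdots\tau_n\, e(\nu,j)$ gives $(-1)^{\#\{k:\,\nu_k=j\}}\, x_{n+1}^{m}\tau_n\cdots\tau_1\, e(j,\nu_n,\ldots,\nu_1)$; each crossing in $\tau_1\cdots\tau_n$ is between the $j$-strand and a single $\nu_k$-strand, so the total sign is nontrivial and depends on $\nu$. It does not ``cancel in pairs,'' nor is it a single global scalar (and $\mathsf{R}$ is not determined up to an invertible scalar: that Hom space is free of rank one over $R(\alpha_j)$, not over $\mathbf{k}$). What rescues the argument is that $\epsilon \coloneqq \sum_{\mu\in I^{\beta+\alpha_j}} (-1)^{\#\{k:\,\mu_k=j\}}e(\mu)$ is a \emph{central} unit in $R(\beta+\alpha_j)$, since the exponent counts $j$-colored strands and is therefore constant on $\mathfrak{S}_{n+1}$-orbits. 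Thus the map transported from Theorem~\ref{thm:functorF*} differs from the stated $\mathsf{R}_X$ only by composition with the module automorphism given by multiplication by $\pm\epsilon$, so injectivity and image---hence exactness of the sequence---are unchanged.
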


\begin{theorem}
Let $\beta \in \mathsf{Q}_+$ and $j \in J$. 
Then, $e(j,\beta)R^{J,\Lambda}(\beta+\alpha_j)$ is a finitely-generated projective left $R^{J,\Lambda}(\beta)$-module,
and $R^{J,\Lambda}(\beta+\alpha_j)e(j,\beta)$ is a finitely-generated projective right $R^{J,\Lambda}(\beta)$-module. 
\end{theorem}

\begin{corollary} 
Let $i \in I, j \in J$.  
The functors $F_i, E_j$ are exact, and send finitely-generated modules (resp. projective modules) to finitely-generated ones (resp. projective ones). 
\end{corollary}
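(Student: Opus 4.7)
The plan is to mirror the earlier corollary following Theorem \ref{thm:functorE*}, now using the $R^{J,\Lambda}$-versions of the key ingredients: Theorem \ref{thm:functorF2} for the short exact sequence and the preceding theorem for projectivity of the bimodule $e(j,\beta) R^{J,\Lambda}(\beta+\alpha_j)$. All four claims (exactness of $F_i$, exactness of $E_j$, preservation of finite generation, preservation of projectives) reduce to the work already done in the two preceding theorems.

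First I would verify exactness. For $i \in I \setminus J$, Theorem \ref{thm:functorF2}(1) identifies $F_i X \simeq R(\alpha_i) \circ X$, an induction functor, which is exact since $R(\beta+\alpha_i)e(\alpha_i,\beta)$ is free as a right $R(\alpha_i) \otimes R(\beta)$-module. For $j \in J$, Theorem \ref{thm:functorF2}(2) realizes $F_j X$ as the cokernel of the injection $\mathsf{R}_X$ between two induction functors. Given a short exact sequence in $\gMod{R^{J,\Lambda}(\beta)}$, the resulting $3 \times 3$ commutative diagram has exact rows (by the theorem) and exact left two columns (since induction is exact), so the nine lemma forces the $F_j$-column to be exact as well. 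Exactness of $E_j$ is automatic, because $E_j(X) = q_j^{-1-\langle h_j,\Lambda-\beta\rangle} e(j,\beta-\alpha_j)X$ is simply extraction of an idempotent direct summand at the level of underlying $\mathbf{k}$-vector spaces.

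Next, both functors are tensor products over $R^{J,\Lambda}(\beta)$ with an explicit bimodule: the bimodule $R^{J,\Lambda}(\beta+\alpha_i) e(i,\beta)$ for $F_i$, and $e(j,\beta-\alpha_j) R^{J,\Lambda}(\beta)$ for $E_j$. The former is cyclic as a left $R^{J,\Lambda}(\beta+\alpha_i)$-module (generated by $e(i,\beta)$), so $F_i$ preserves finite generation. The latter is finitely generated as a left $R^{J,\Lambda}(\beta-\alpha_j)$-module by the preceding theorem with $\beta$ replaced by $\beta-\alpha_j$, giving the same property for $E_j$.

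Finally, preservation of projectives follows from the general principle that an exact functor which preserves finite generation and sends the regular module to a projective also preserves all finitely generated projectives (any such is a direct summand of a free module, and all three properties are additive). One checks directly that $F_i(R^{J,\Lambda}(\beta)) \simeq R^{J,\Lambda}(\beta+\alpha_i)e(i,\beta)$ is a direct summand of the regular representation of $R^{J,\Lambda}(\beta+\alpha_i)$ (cut out by the idempotent $e(i,\beta)$), while $E_j(R^{J,\Lambda}(\beta)) \simeq e(j,\beta-\alpha_j)R^{J,\Lambda}(\beta)$ is finitely generated projective over $R^{J,\Lambda}(\beta-\alpha_j)$ by the preceding theorem. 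I do not anticipate any real obstacle; the content is already contained in the two preceding theorems, and the proof is essentially identical to the corollary after Theorem \ref{thm:functorE*}.
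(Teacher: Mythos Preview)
Your proof is correct, and it fills in exactly the details the paper leaves implicit; the corollary is stated in the paper without proof, as an immediate consequence of Theorem \ref{thm:functorF2} and the preceding theorem on projectivity of $e(j,\beta)R^{J,\Lambda}(\beta+\alpha_j)$. Your nine-lemma argument for exactness of $F_j$ and the direct verification for preservation of projectives are the natural ways to unpack what the paper takes as evident.
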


\begin{theorem} \label{thm:cyclotomic2repleft}
The category $\gMod{R^{J,\Lambda}}$ is a left $\dotcatquantum{\mathfrak{p}_J}$-module as follows: 
\begin{itemize}
\item The category attached to $\lambda \in \mathsf{P}$ is $\lambda \mapsto \gMod{R^{J,\Lambda}(-\lambda + \Lambda)}$, where we regard $R^{J,\Lambda}(-\lambda + \Lambda) = 0$ if $-\lambda + \Lambda \not \in \mathsf{Q}_+$. 
\item  Using the functors from Definition \ref{def:generatingfunctors}, the actions of the generating 1-morphisms are given as follows:
\[
F_i X = F_i (X) \ (i \in I), E_j X = E_j(X) \ (j \in J). 
\]
\item The actions of the generating 2-morphisms are given as follows ($X \in \gMod{R^{J,\Lambda}(\beta)}, \lambda = \Lambda - \beta$): 
\begin{align*}
&\left[ \xy 0;/r.17pc/: 
(0,0)*{\sdotd{i}};
\endxy X \colon q^{(\alpha_i,\alpha_i)} F_i X \to F_i X \right] \ (i\in I) \\
&= \text{the right multiplication by $x_1 \in R(\alpha_i)$}, \\
&\left[ \xy 0;/r.17pc/: 
(0,0)*{\dcross{i}{i'}};
\endxy X \colon q^{-(\alpha_i,\alpha_{i'})} F_i F_{i'} X \to F_{i'} F_i X \right] \ (i, i' \in I) \\
&= \text{the right multiplication by $e(i,i')\tau_1 \in R(\alpha_i + \alpha_{i'})$}, \\
&\left[ \xy 0;/r.17pc/: 
(0,0)*{\lcap{j}};
\endxy X \colon q_j^{1 + \langle h_j, \lambda \rangle} F_j E_j X \to X \right] \ (j \in J) \\
&= \text{the canonical unit for the adjoint pair $(F_j, q_j^{1 + \langle h_j, \lambda \rangle} E_j)$}, \\
&\left[ \xy 0;/r.17pc/: 
(0,0)*{\lcup{j}};
\endxy X \colon q_j^{1 - \langle h_j, \lambda \rangle} X \to E_j F_j(X) \right] \ (j\in J) \\
&= \text{the canonical counit for the adjoint pair $(F_j, q_j^{1 + \langle h_j, \lambda-\alpha_j \rangle} E_j)$}. 
\end{align*}
\end{itemize}
The action restricts to the additive subcategory $\gproj{R^{J,\Lambda}} = \bigoplus_{\beta \in \mathsf{Q}_+} \gproj{R^{J,\Lambda}(\beta)}$. 
Moreover, ignoring grading shifts, we have 
\begin{align*}
&\left[ \xy 0;/r.17pc/: 
(0,0)*{\sdotu{j}};
\endxy X  \colon E_j X \to E_jX \right] \ (j \in J) \\
&= \left[ e(j,*)X \xrightarrow{\text{the left multiplication by $x_1 e(j,*)$}} e(j,*)X \right], \\
&\left[ \xy 0;/r.17pc/: 
(0,0)*{\ucross{j}{j'}};
\endxy X \colon D_jE_{j'}X \to E_{j'}E_jX \right] \ (j,j' \in J) \\
&= \left[ e(j',j,*)X \xrightarrow{\text{the left multiplication by $\tau_1 e(j',j,*)$}} e(j,j',*)X \right].
\end{align*}
\end{theorem}
  
Hence, both $K(\gMod{R^{J,\Lambda}})_{\mathbb{Q}(q)}$ and $K_{\oplus}(\gproj{R^{J,\Lambda}})_{\mathbb{Q}(q)}$ are left $U_q(\mathfrak{p}_J)$-modules. 

\begin{remark} \label{rem:LRchange}
Theorem \ref{thm:cyclotomic2rep} and \ref{thm:cyclotomic2repleft} are related in the following way: 
The automorphism $\sigma_*$ of $\gMod{R}$ induces an isomorphism
\[
\sigma_* \colon \gMod{R^{J,\Lambda}} \to \gMod{{}^{J,\Lambda}R}. 
\]
On the other hand, we have an isomorphism of 2-categories
\[
\sigma \colon \catquantum{\mathfrak{p}_J} \to \catquantum{\mathfrak{p}_J}^{\mathrm{op}}. 
\]
Through these isomorphisms, the left action of $\catquantum{\mathfrak{p}_J}$ on $\gMod{R^{J,\Lambda}}$ (Theorem \ref{thm:cyclotomic2repleft}) coincides with the left action of $\catquantum{\mathfrak{p}_J}^{\mathrm{op}}$ on $\gMod{{}^{J,\Lambda}R}$ (Theorem \ref{thm:cyclotomic2rep}). 
\end{remark}

\begin{theorem} \label{thm:cyclotomiccategorificationleft}
(1) We have an isomorphism of left $U_q(\mathfrak{p}_J)$-modules
\begin{align*}
K_{\oplus}(\gproj{R^{J,\Lambda}})_{\mathbb{Q}(q)} &\simeq V_J(\Lambda), \\
[R^{J,\Lambda}(0)] &\mapsto v_{\Lambda}.  
\end{align*}

(2) The homomorphisms 
\begin{align*}
&K(\gmod{R^{J,\Lambda}})_{\mathbb{Q}(q)} \to K(\gMod{R^{J,\Lambda}})_{\mathbb{Q}(q)},\\
&K_{\oplus}(\gproj{R^{J,\Lambda}})_{\mathbb{Q}(q)} \to K(\gMod{R^{J,\Lambda}})_{\mathbb{Q}(q)}
\end{align*}
induced by the inclusions are isomorphisms.  
\end{theorem}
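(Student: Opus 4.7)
The plan is to deduce both parts from Theorem \ref{thm:cyclotomiccategorification} by transport of structure along the algebra anti-involution $\sigma$ of $R(\beta)$, which identifies $R^{J,\Lambda}(\beta)$ with ${}^{J,\Lambda}R(\beta)$ and therefore lifts to an equivalence of graded categories
\[
\sigma_* \colon \gMod{R^{J,\Lambda}} \xrightarrow{\sim} \gMod{{}^{J,\Lambda}R}
\]
restricting to equivalences on $\gmod$ and on $\gproj$. Part (2) is intrinsic to each side (it is a statement about inclusions among finite-dimensional, finitely-generated, and finitely-generated projective modules), so it transports directly from Theorem \ref{thm:cyclotomiccategorification}(2).

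For part (1), I would invoke Remark \ref{rem:LRchange}: under $\sigma_*$ paired with the 2-isomorphism $\sigma \colon \catquantum{\mathfrak{p}_J} \to \catquantum{\mathfrak{p}_J}^{\mathrm{op}}$ of Proposition \ref{prop:involutionsigma}, the left action of Theorem \ref{thm:cyclotomic2repleft} on $\gMod{R^{J,\Lambda}}$ is transported to the right action of Theorem \ref{thm:cyclotomic2rep} on $\gMod{{}^{J,\Lambda}R}$. Passing to split Grothendieck groups and composing with Theorem \ref{thm:cyclotomiccategorification}(1) produces a $\mathbb{Q}(q)$-linear isomorphism
\[
K_{\oplus}(\gproj{R^{J,\Lambda}})_{\mathbb{Q}(q)} \xrightarrow{\sim} V_J^{\mathrm{r}}(-\Lambda), \qquad [R^{J,\Lambda}(0)] \mapsto v_{-\Lambda},
\]
equivariant with respect to the algebra anti-automorphism $\sigma$ of $U_q(\mathfrak{p}_J)$. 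As noted immediately after Definition \ref{def:integrablemodule}, regarding $V_J(\Lambda)$ as a right module through $\sigma$ yields exactly $V_J^{\mathrm{r}}(-\Lambda)$, so this datum is the same as an isomorphism of left $U_q(\mathfrak{p}_J)$-modules $K_{\oplus}(\gproj{R^{J,\Lambda}})_{\mathbb{Q}(q)} \simeq V_J(\Lambda)$ sending $[R^{J,\Lambda}(0)]$ to $v_{\Lambda}$.

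The main obstacle is not conceptual but is the honest verification underlying Remark \ref{rem:LRchange}, namely that after applying $\sigma_*$ and $\sigma$ the generating 2-morphisms (dots, crossings, cups, caps) of the left action from Theorem \ref{thm:cyclotomic2repleft} match, on the nose and with the correct grading shifts, those of the right action from Theorem \ref{thm:cyclotomic2rep}. This amounts to comparing the two explicit lists of formulas using $\sigma(e(\nu)) = e(\nu_n,\ldots,\nu_1)$, $\sigma(x_k) = x_{n+1-k}$, and the sign twist on $\tau_k$, and is a routine but lengthy bookkeeping exercise. Alternatively, one could mirror the proof of Theorem \ref{thm:cyclotomiccategorification}(1) step by step, substituting Theorem \ref{thm:functorF2} for Theorem \ref{thm:functorF*} and working with the left-module presentation of $V_J(\Lambda)$ from Definition \ref{def:integrablemodule}; but the transport-of-structure route is considerably shorter and avoids repeating the crystal-theoretic dimension comparison.
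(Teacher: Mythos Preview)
Your proposal is correct and follows exactly the paper's approach: the paper states that all the results for $R^{J,\Lambda}$, including this theorem, are deduced from the ${}^{J,\Lambda}R$ versions ``by applying the autofunctor $\sigma_*$ on $\gMod{R}$'' and gives no further argument. Your write-up is in fact more detailed than the paper's, spelling out the role of Remark~\ref{rem:LRchange} and the identification of $V_J(\Lambda)$ with $V_J^{\mathrm{r}}(-\Lambda)$ via $\sigma$.
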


\begin{definition}
We define a left $\dotcatquantum{\mathfrak{p}_J}$-module 
\[
\mathcal{V}_J(\Lambda) = \dotcatquantum{\mathfrak{p}_J}1_{\Lambda} \bigg/ \sum_{j \in J} \left( \dotcatquantum{\mathfrak{p}_J} E_j 1_{\Lambda} + \dotcatquantum{\mathfrak{p}_J} \xy 0;/r.17pc/: 
(5,2)*{\scriptstyle \Lambda};
(0,0)*{\sdotd{j}};
(-7,0)*{\scriptstyle \langle h_j,\Lambda \rangle};
\endxy + \dotcatquantum{\mathfrak{p}_J}\END (1_{\Lambda})_{\geq 1} \right)
\]
as follows: 
for each $\lambda \in P$, we define the additive category $\mathcal{V}_J(\Lambda)_{\lambda}$ as a quotient category of $\dotcatquantum{\mathfrak{p}_J}(\Lambda,\lambda)$ by an ideal generated by 
\begin{itemize}
\item morphisms that factor through objects of the form
\[
\bigoplus_{j \in J}  G_jE_j \ (G_j \in \dotcatquantum{\mathfrak{p}_J}(\Lambda+\alpha_j,\lambda)), 
\]
\item morphisms in $\dotcatquantum{\mathfrak{p}_J}(\Lambda-\alpha_j,\lambda) \xy 0;/r.17pc/: 
(5,2)*{\scriptstyle \Lambda};
(0,0)*{\sdotd{j}};
(-7,0)*{\scriptstyle \langle h_j,\Lambda \rangle};
\endxy $, 
\item morphisms in $ \dotcatquantum{\mathfrak{p}_J}(\Lambda,\lambda)\END (1_{\Lambda})_{\geq 1}$. 
\end{itemize}
The actions of the 1-morphisms and the 2-morphisms are given in a natural way.
\end{definition}

\begin{theorem} \label{thm:leftuniversality}
There exists an equivalence of left $\dotcatquantum{\mathfrak{p}_J}$-modules
\[
\mathcal{V}_J(\Lambda) \simeq \gproj{R^{J,\Lambda}},\ 1_{\Lambda}\mapsto R^{J,\Lambda}(0). 
\]
\end{theorem}

%%%%%%%%%%%%%%%%%%%%%%%%%%%%
\chapter{Reflection functors}

Throughout this chapter, we fix $i \in I$. 
We simply write $\mathfrak{p}_i = \mathfrak{p}_{\{i\}}$. 

\section{The algebras ${}_i R$ and $R_i$}

\begin{definition}
We define 
\[
{}_i R(\beta) = R(\beta)/\langle e(i,\beta-\alpha_i) \rangle,\ R_i(\beta) = R(\beta)/ \langle e(\beta-\alpha_i,i)\rangle. 
\]
\end{definition}

\begin{lemma}
The categories $\gMod{{}_i R}$ and $\gMod{R_i}$ are both closed under convolution product. 
\end{lemma}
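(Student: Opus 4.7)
The plan is to use the shuffle description of the induction functor to verify the defining idempotent relation of each quotient. First I will recall that if $l = \height \beta$ and $m = \height \gamma$, then $R(\beta+\gamma)e(\beta,\gamma)$ is a free right $R(\beta) \otimes R(\gamma)$-module with basis $\{\tau_w : w \in \mathfrak{S}_{l+m}^{l,m}\}$ (the $\tau_w$ being well-defined because minimum-length coset representatives satisfy condition (\ref{eq:tau})). Hence for $M \in \gMod{R(\beta)}$ and $N \in \gMod{R(\gamma)}$ I have the direct-sum decomposition
\[
M \circ N = \bigoplus_{w \in \mathfrak{S}_{l+m}^{l,m}} \tau_w (M \boxtimes N),
\]
and for homogeneous $m \in e(\nu) M$, $n \in e(\mu) N$, the summand $\tau_w(m \otimes n)$ sits in $e(w \cdot (\nu,\mu))(M \circ N)$, where $w \cdot (\nu,\mu)$ denotes the shuffled concatenation of the sequences $\nu$ and $\mu$.

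For the ${}_iR$ case, I will take $M \in \gMod{{}_iR(\beta)}$ and $N \in \gMod{{}_iR(\gamma)}$ and show $e(i,\beta+\gamma-\alpha_i)(M \circ N) = 0$. Because $M$ and $N$ are ${}_iR$-modules, only sequences $\nu$ with $\nu_1 \neq i$ contribute to $M$ and only $\mu$ with $\mu_1 \neq i$ contribute to $N$. The combinatorial key is that for any minimum-length $(l,m)$-shuffle $w$, the first entry of the shuffled sequence $w \cdot (\nu,\mu)$ coincides with either $\nu_1$ or $\mu_1$, and hence is never equal to $i$. This yields the desired vanishing, and finite generation of $M \circ N$ over $R(\beta+\gamma)$ follows immediately from the free decomposition above applied to finitely generated $M$ and $N$.

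The $R_i$ case is entirely parallel: either apply the autofunctor $\sigma_\ast$, which exchanges ${}_iR$ with $R_i$ and converts convolution of $M$ and $N$ into convolution of $\sigma_\ast N$ and $\sigma_\ast M$, or argue directly using the symmetric observation that the last entry of an $(l,m)$-shuffle of $(\nu,\mu)$ equals either $\nu_l$ or $\mu_m$. There is no serious obstacle in the argument; it hinges only on the elementary combinatorics of minimum-length shuffles together with the one-sided idempotent relations defining ${}_iR$ and $R_i$.
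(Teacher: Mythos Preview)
Your argument is correct and is essentially the same as the paper's: the paper simply says ``It follows from the Mackey-filtration,'' and your shuffle decomposition is exactly the concrete unpacking of that filtration for the restriction $\Res_{\alpha_i,\beta+\gamma-\alpha_i}$ (the two relevant subquotients involve $e(i,*)M$ and $e(i,*)N$, both of which vanish). Your observation that the first entry of $w\cdot(\nu,\mu)$ is $\nu_1$ or $\mu_1$ is precisely the combinatorial content of the Mackey decomposition in this case.
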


\begin{proof}
We prove the assertion for $\gMod{{}_iR}$. 
It suffices to show that for $M \in \gMod{{}_iR(\beta)}, N \in \gMod{{}_iR(\gamma)}$, the restriction $e(i,*) (M \circ N)$ is zero. 
It follows from the Mackey-filtration (Proposition \ref{prop:Mackey}) using $e(i,*)M = 0, e(i,*) N = 0$. 
\end{proof}

Note that ${}_i R(\beta) = {}^{\{i\}, 0} R(\beta), R_i (\beta) = R^{\{i\},0}(\beta)$. 
Hence, Theorem \ref{thm:cyclotomic2rep} shows that $\gMod{{}_i R}$ is a right $\dotcatquantum{\mathfrak{p}_i}$-module, 
and Theorem \ref{thm:cyclotomic2repleft} shows that $\gMod{R_i}$ is a left $\dotcatquantum{\mathfrak{p}_i}$-module.
In particular, $E_i$ and $F_j \ (j \in I)$ act on these categories.
For $u \in M \in \gMod{R_i(\beta)}$, we define $E_iu \in E_iM$ as $E_iu = e(i,\beta-\alpha_i)u$, recalling the definition of $E_i^{(n)}$ (Definition \ref{def:dividedpower}). 
Similarly, we define $E_i^{(n)}u \in E_i^{(n)}M$ as $E_i^{(n)}u = b_+(i^n) e(i^n,\beta-n\alpha_i)u$.  
We use similar notations for other analogous cases. 

\begin{proposition} \label{prop:extremalequiv}
For $\beta \in \sum_{j \in I, j \neq i} \mathbb{Z}_{\geq 0}\alpha_j$, 
we have mutually quasi-inverse functors
\[
\gMod{R_i(\beta)} \xtofrom[E_i^{(-\langle h_i, \beta \rangle)}]{F_i^{(-\langle h_i, \beta \rangle)}} \gMod{R_i(s_i\beta)},
\]
with natural isomorphisms 
\[
\varepsilon \colon F_i^{(-\langle h_i, \beta \rangle)} E_i^{(-\langle h_i,\beta \rangle)} \to \Id, \eta \colon \Id \to F_i^{(-\langle h_i, \beta \rangle)} E_i^{(-\langle h_i,\beta \rangle)} \ \text{(Definition \ref{def:unitcounit})}. 
\]
In addition, they induce a monoidal equivalence
\[
\bigoplus_{\beta \in \sum_{j \neq i}\mathbb{Z}_{\geq 0}\alpha_j} \gMod{R_i(\beta)} \simeq \bigoplus_{\beta \in \sum_{j \neq i}\mathbb{Z}_{\geq 0}\alpha_j} \gMod{R_i(s_i\beta)}. 
\]
More precisely, it is monoidal with the canonical isomorphism $E_i^{(0)}(\mathbf{1}) \simeq \mathbf{1}$, 
and the natural isomorphism $\phi_+ \colon E_i^{(m)}X \circ E_i^{(n)}Y \to E_i^{(m+n)}(X \circ Y)$ given by  \index{$\phi_+$}
\begin{align*}
&E_i^{(m)} x \boxtimes E_i^{(n)} y \\ 
&\mapsto E_i^{(m+n)} (e(i^m) \boxtimes \tau_{w[\height \beta, n]} \boxtimes e(\gamma))(b_+(i^m)e(i^m,\beta)x \boxtimes b_+(i^n)e(i^n,\gamma)y), 
\end{align*}
where $X \in \gMod{R_i(s_i\beta)}, Y \in \gMod{R_i(s_i\gamma)}, \beta, \gamma \in \sum_{j \in I, j \neq i}\mathbb{Z}_{\geq 0}\alpha_j, m = -\langle h_i,\beta \rangle$ and $n = -\langle h_i, \gamma \rangle$. 
\end{proposition}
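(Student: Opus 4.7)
The plan is to reduce the proposition to the $[E,F]$-decomposition of Theorem \ref{thm:EFrel} together with two vanishings. Set $n = -\langle h_i, \beta \rangle \geq 0$, so $s_i\beta = \beta + n\alpha_i$.

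\emph{First vanishing.} For any $M \in \gMod{R_i(\beta)}$, I have $E_i M = 0$, since $\beta - \alpha_i \notin \mathsf{Q}_+$ forces $\gMod{R_i(\beta-\alpha_i)} = 0$. \emph{Second vanishing.} The algebra $R_i(\beta + m\alpha_i)$ is zero for every $m > n$; consequently, for every $N \in \gMod{R_i(s_i\beta)}$, we have $F_iN \in \gMod{R_i(s_i\beta + \alpha_i)} = 0$. Granted the two vanishings, Theorem \ref{thm:EFrel} applied at $\lambda = -\beta$ collapses
\[
E_i^{(n)} F_i^{(n)} M \simeq \bigoplus_{k=0}^n \begin{bmatrix} n \\ k \end{bmatrix}_i F_i^{(n-k)} E_i^{(n-k)} M
\]
to the single $k=n$ summand, yielding $E_i^{(n)} F_i^{(n)} M \simeq M$; symmetrically, at $\lambda = -s_i\beta$ one obtains $F_i^{(n)} E_i^{(n)} N \simeq N$. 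The natural transformations $\varepsilon$ and $\eta$ are the unit and counit of the adjoint pair from Lemma \ref{lem:adjunction}; they coincide with the above isomorphisms because the $k = n$ summand in the decomposition is precisely the image of the identity $1_\lambda$ under the unit (respectively, counit), as one checks diagrammatically.

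To prove the second vanishing, I invoke Theorem \ref{thm:cyclotomiccategorificationleft}: since $K_\oplus(\gproj{R_i(\gamma)})_{\mathbb{Q}(q)} \simeq V_i(0)_{-\gamma}$ and the left-hand side is a free $\mathbb{Z}[q,q^{-1}]$-module on indecomposable projectives, vanishing of the right-hand side forces $R_i(\gamma) = 0$. It therefore suffices to show $V_i(0)_{-\beta - m\alpha_i} = 0$ for $m > n$, which follows from the $\mathfrak{sl}_{2,i}$-integrability of $V_i(0)$. By Lemma \ref{lem:isom} together with Proposition \ref{prop:actionlift}, the $U_q(\mathfrak{sl}_{2,i})$-action on $V_i(0)$ corresponds, via $V_i(0) \simeq U_i$, to the adjoint action of $\ad_{e_i}, \ad_{f_i}$; since these are locally nilpotent on $U_q^-(\mathfrak{g})$, hence on the subalgebra $U_i$, the $\mathfrak{sl}_{2,i}$-string beginning at the highest weight vector at $-\beta$ (of $h_i$-weight $n$) terminates at $-s_i\beta$, giving the desired vanishing.

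For the monoidal structure, the point is to verify that the formula for $\phi_+$ defines an isomorphism and is compatible with associators. In the extremal setting the Mackey filtration on $e(i^{m+n},*)(X \circ Y)$ collapses to its ``split'' stratum $(e(i^m,*)X) \circ (e(i^n,*)Y)$, every other stratum vanishing by the cyclotomic relation (which eliminates the sequences that would arise from shuffling $i$'s out of their initial block). Applying $b_{+,m+n}$ and factoring it through $(b_{+,m} \boxtimes b_{+,n}) \cdot (e(i^m) \boxtimes \tau_{w[\height\beta, n]} \boxtimes e(\gamma))$ by means of the nil-Hecke identity $\tau_{w_n}\mathbf{x}_n\tau_{w_n} = \tau_{w_n}$ (cf.\ Lemma \ref{lem:projisom}) identifies $\phi_+$ with an idempotent isomorphism. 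The unit constraint is immediate, and the associator coherence reduces to a symmetric-group identity relating the two shuffle permutations arising in triple tensor products, which can be verified diagrammatically within the nil-Hecke algebra.

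The main obstacle is the second vanishing $R_i(\beta + m\alpha_i) = 0$ for $m > n$; once established, the equivalence and its monoidal refinement follow formally from the $[E,F]$-decomposition, adjunction, and standard nil-Hecke manipulations.
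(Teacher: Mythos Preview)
Your proposal is correct and follows essentially the same approach as the paper: the two vanishings $E_iM = 0$ and $F_iN = 0$ (the latter via Theorem~\ref{thm:cyclotomiccategorificationleft} and integrability of $V_i(0)$) reduce the equivalence to Theorem~\ref{thm:EFrel}, with the unit and counit supplied by Lemma~\ref{lem:adjunction}; for the monoidal structure, the paper likewise argues via collapse of the Mackey filtration followed by a nil-Hecke computation showing $b_{+,m+n}(R(m\alpha_i)b_{+,m} \circ R(n\alpha_i)b_{+,n}) = b_{+,m+n}(Z(m\alpha_i) \boxtimes Z(n\alpha_i))$, and checks associativity by an explicit comparison of the two triple-product formulas using the criterion~(\ref{eq:tau}) for $\tau_w$ to be independent of reduced expression. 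Your sketch of the monoidal part is lighter on detail than the paper's explicit computations, but the ingredients you name are the right ones.
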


\begin{proof}
Regarding the former assertion, observe first that $E_i \gMod{R_i(\beta)} = 0$. 
Hence, Theorem \ref{thm:EFrel} shows $E_i^{(-\langle h_i,\beta \rangle)} F_i^{(-\langle h_i, \beta \rangle)} \simeq \Id$ on $\gMod{R_i(\beta)}$. 
%Similarly, we have $F_i^{(-\langle h_i, \beta \rangle)} E_i^{(-\langle h_i,\beta \rangle)} \simeq \Id$ on $\gMod{R_i(s_i\beta)}$. 

We claim that $F_i \gMod{R_i(s_i\beta)} = 0$. 
Note that $V_i(0)$ is an integrable $U_q(\mathfrak{p}_i)$-module, that is, the action of $e_i$ and $f_i$ are locally-nilpotent.
Hence, the set of weights of $V_i(0)$ is stable under $s_i$. 
It implies that the weight space $V_i(0)_{-s_i\beta - \alpha_i}$ of weight $-s_i\beta-\alpha_i = s_i(-\beta+\alpha_i)$ is zero, since $\beta -\alpha_i \not \in \mathsf{Q}_+$. 
By Theorem \ref{thm:cyclotomiccategorificationleft}, the claim follows. 
Hence, Theorem \ref{thm:EFrel} shows $F_i^{(-\langle h_i,\beta \rangle)} E_i^{(-\langle h_i,\beta \rangle)} \simeq \Id$ on $\gMod{R_i(s_i\beta)}$. 
By Lemma \ref{lem:adjunction}, these natural isomorphisms can be explicitly given by the unit $\varepsilon$ and the counit $\eta$.  
It proves the former assertion. 

Next, we prove the latter assertion. 
We first verify that if such a homomorphism $\phi_+$ (a priori ungraded) exists, it is of degree zero. 
Remark \ref{rem:grading} shows $E_i^m X = e(i^m,\beta)X$, hence $E_i^{(m)} X = q_i^{-m(m-1)/2}b_+(i^m)e(i^m,\beta)X$. 
Similarly, we have 
\begin{align*}
&E_i^{(n)}Y = q_i^{-n(n-1)/2}b_+(i^n)e(i^n,\gamma)Y, \\ 
&E_i^{(m+n)}(X \circ Y) = q_i^{-(m+n)(m+n-1)/2}b_+(i^{m+n})e(i^{m+n},\beta+\gamma)(X \circ Y).
\end{align*} 
Note that $b_+(i^m), b_+(i^n)$ and $b_+(i^{m+n})$ are of degree zero. 
Therefore, $\phi_+$ is of degree zero by the following identity: 
\begin{align*}
\left( \frac{(m+n)(m+n-1)}{2} - \frac{m(m-1)}{2} - \frac{n(n-1)}{2} \right) \frac{(\alpha_i,\alpha_i)}{2}&= mn \frac{(\alpha_i,\alpha_i)}{2} \\
&= \deg (\tau_{w[\height \beta, n]}e(\beta,i^n)). 
\end{align*}

Since $E_i^{m+1} X =0, E_i^{n+1}Y = 0$, the Mackey filtration (\ref{prop:Mackey}) of the module $\Res_{(m+n)\alpha_i, \beta+\gamma}(X \circ Y)$ is one-step, which gives an isomorphism of $R((m+n)\alpha_i) \otimes R(\beta+\gamma)$-modules
\begin{align*}
&(\Ind_{m\alpha_i,n\alpha_i} \otimes \Ind_{\beta,\gamma}) (\Res_{m\alpha_i,\beta}X \otimes \Res_{n\alpha_i,\gamma}Y) \simeq \Res_{(m+n)\alpha_i, \beta+\gamma} (X \circ Y), \\
&e(i^m,\beta)x \otimes e(i^n,\gamma)y \mapsto (e(i^m) \boxtimes e(i^n,\beta) \tau_{w[\height \beta, n]} \boxtimes e(\gamma))(x \boxtimes y)
\end{align*}
On the other hand, since $R(m\alpha_i)b_+(i^m)$ is a progenerator of $R(m\alpha_i)$ and 
\[
\END_{R(m\alpha_i)}(R(m\alpha_i)b_+(i^m)) \simeq Z(m\alpha_i),
\]
we have an isomorphism of $R(m\alpha_i) \otimes R(\beta)$-modules
\begin{align*}
R(m\alpha_i)b_+(i^m) \otimes_{Z(m\alpha_i)} E_i^{(m)}X &= R(m\alpha_i)b_+(i^m) \otimes_{Z(m\alpha_i)} b_+(i^m) E_i^m X \\ 
&\xrightarrow{\text{multiplication}} E_i^m X. 
\end{align*}
Similarly, we have an isomorphism $R(n\alpha_i)b_+(i^n) \otimes_{Z(n\alpha_i)} E_i^{(n)}Y \to E_i^nY$. 
Hence, there is an isomorphism of $R((m+n)\alpha_i) \otimes R(\beta + \gamma)$-modules
\begin{align*}
&(R(m\alpha_i)b_+(i^m) \circ R(n\alpha_i)b_+(i^n)) \otimes_{Z(m\alpha_i)\otimes Z(n\alpha_i)} (E_i^{(m)}X \circ E_i^{(n)}Y) \\ 
&\simeq \Res_{(m+n)\alpha_i,\beta+\gamma}(X\circ Y), \\
&(b_+(i^m) \boxtimes b_+(i^n)) \otimes (E_i^{(m)} x \boxtimes E_i^{(n)} y) \\
&\mapsto (e(i^m) \boxtimes e(i^n,\beta)\tau_{w[\height \beta,n]} \boxtimes e(\gamma)) (b_+(i^m) e(i^m,\beta)x \boxtimes b_+(i^n) e(i^n,\gamma)y). 
\end{align*}
By multiplying $b_+(i^{m+n})$ from the left, we obtain an isomorphism of $R(\beta+\gamma)$-modules. 
Note that 
\begin{align*}
&b_+(i^{m+n}) (R(m\alpha_i)b_+(i^m) \circ R(n\alpha_i)b_+(i^n)) \\ 
&\simeq b_+(i^{m+n}) R((m+n)\alpha_i) (b_+(i^m) \boxtimes b_+(i^n)) \\ 
&= b_+(i^{m+n}) \mathbf{k}[x_1, \ldots, x_{m+n}] (b_+(i^m) \boxtimes b_+(i^n)) \\
&\quad \text{since $b_+(i^{m+n}) \tau_k = 0$ for any $1 \leq k < m+n$} \\
&= b_+(i^{m+n}) (Z(m\alpha_i) \boxtimes Z(n\alpha_i)), 
\end{align*}
where the last equality follows from $\tau_{w_{m+n}}e(i^{m+n}) = \tau_{w[m,n]}(\tau_{w_m} \boxtimes \tau_{w_n})e(i^{m+n})$ and 
\[
\tau_{w_m} f(x_1, \ldots, x_m) b_+(i^m) = \tau_{w_m} \partial_{w_m}(f(x_1, \ldots, x_m)\mathbf{x}_m). 
\]
Hence, we have an isomorphism of $Z(m\alpha_i) \otimes Z(n\alpha_i)$-modules
\begin{align*}
&Z(m\alpha_i) \otimes Z(n\alpha_i) \simeq b_+(i^{m+n})(R(m\alpha_i) b_+(i^m) \circ R(n\alpha_i)b_+(i^n)), \\
&1 \otimes 1 \mapsto b_+(i^{m+n}) (b_+(i^m) \boxtimes b_+(i^n)) = b_+(i^{m+n}). 
\end{align*}
Finally, we obtain the isomorphism $\phi_+$ as desired. 

The unitality is obvious. 
To prove the associativity, take another module $Z \in \gMod{R_i(s_i\delta)}$ for some $\delta \in \sum_{j \in I, j \neq i} \mathbb{Z}_{\geq 0}\alpha_j$ and put $l = -\langle h_i,\delta \rangle$. 
The composition $E_i^{(m)}X \circ E_i^{(n)}Y \circ E_i^{(l)}Z \xrightarrow{\phi_+} E_i^{(m)}X \circ E_i^{(n+l)}(Y \circ Z) \xrightarrow{\phi_+} E_i^{(m+n+l)}(X \circ Y \circ Z)$ is given by 
\begin{align*}
&E_i^{(m)} x \boxtimes E_i^{(n)} y \boxtimes E_i^{(l)} z \\
&\mapsto E_i^{(m+n+l)} (e(i^m) \boxtimes e(i^{n+l},\beta)\tau_{w[\height \beta,n+l]} \boxtimes e(\gamma + \delta))  \times \\ 
&\quad (e(i^m,\beta, i^n) \boxtimes e(i^l,\gamma)\tau_{w[\height \gamma,l]} \boxtimes e(\delta)) \times \\ 
&\quad (b_+(i^m) e(i^m,\beta)x \boxtimes b_+(i^n) e(i^n,\gamma)y \boxtimes b_+(i^l) e(i^l,\delta)z). 
\end{align*} 
On the other hand, 
the composition $E_i^{(m)}X \circ E_i^{(n)}Y \circ E_i^{(l)}Z \xrightarrow{\phi_+} E_i^{(m+n)}(X \circ Y) \circ E_i^{(l)}Z \xrightarrow{\phi_+} E_i^{(m+n+l)}(X \circ Y \circ Z)$ is given by 
\begin{align*}
&E_i^{(m)} x \boxtimes E_i^{(n)} y \boxtimes E_i^{(l)} z \\
&\mapsto E_i^{(m+n+l)} (e(i^{m+n}) \boxtimes e(i^l,\beta+\gamma)\tau_{w[\height (\beta + \gamma),l]} \boxtimes e(\delta)) \times \\
&\quad (e(i^m) \boxtimes e(i^n,\beta)\tau_{w[\height \beta,n]} \boxtimes e(\gamma,i^l,\delta)) \times \\
&\quad (b_+(i^m) e(i^m,\beta)x \boxtimes b_+(i^n) e(i^n,\gamma)y \boxtimes b_+(i^l) e(i^l,\delta)z). 
\end{align*}
These two morphisms coincide by Lemma \ref{lem:tau}, which proves the associativity. 

\end{proof} 

\begin{remark} \label{rem:extension}
It is straightforward to extend this result to a monoidal equivalence
\[
\bigoplus_{\beta \in \mathsf{Q}_+} \{ X \in \gMod{R_i(\beta)} \mid E_iX = 0\} \simeq \bigoplus_{\beta \in \mathsf{Q}_+} \{ X \in \gMod{R_i(s_i\beta)} \mid F_iX = 0\}. 
\]
\end{remark}

\begin{definition}
Let $\beta, \gamma \in \sum_{j \in I, j \neq i}\mathbb{Z}_{\geq 0}\alpha_j$ and put $m = -\langle h_i, \beta \rangle, n = -\langle h_i,\gamma \rangle$. 
For $X \in \gMod{R_i(\beta)}, Y \in \gMod{R_i(\gamma)}$, we define $\phi_- \colon F_i^{(m+n)}(X \circ Y) \to F_i^{(m)}X \circ F_i^{(n)}Y$ as \index{$\phi_-$}
\begin{align*}
&F_i^{(m+n)}(X \circ Y) \xrightarrow{\eta_m \otimes \eta_n} F_i^{(m+n)}(E_i^{(m)}F_i^{(m)}X \circ E_i^{(n)}F_i^{(n)}Y) \\  
&\xrightarrow{\phi_+} F_i^{(m+n)}E_i^{(m+n)} (F_i^{(m)}X \circ F_i^{(n)}Y) \xrightarrow{\varepsilon_{m+n}} F_i^{(m)}X \circ F_i^{(n)}Y. 
\end{align*}
\end{definition}

It is straightforward to verify that $\phi_-$ is unital and associative.
Furthermore, the morphism
\begin{align*}
&E_i^{(m)}(X) \circ E_i^{(n)}(Y) \xrightarrow{\eta_{m+n}} E_i^{(m+n)}F_i^{(m+n)}(E_i^{(m)}(X) \circ E_i^{(n)}(Y)) \\ 
&\xrightarrow{\phi_-} E_i^{(m+n)}(F_i^{(m)}E_i^{(m)}X\circ F_i^{(n)}E_i^{(n)}Y) \xrightarrow{\varepsilon_m \otimes \varepsilon_n} E_i^{(m+n)}(X \circ Y)
\end{align*}
coincides with $\phi_+$. 

Note that the canonical homomorphism $R(m\alpha_i) \circ X \twoheadrightarrow F_i^mX$ induces 
\[
R(m\alpha_i)b_+(i^m) \circ X \twoheadrightarrow F_i^{(m)}X,
\] 
since the endomorphism $b_-(i^m)$ of $F_i^mX$ is given by the right multiplication by $\varphi(b_-(i^m)) = b_+(i^m)$. 
The image of $u \boxtimes v \in R(m\alpha_i)b_+(i^m) \boxtimes X \subset R(m\alpha_i)b_+(i^m) \circ X$ in $F_i^{(m)}X$ is also denoted by $u \boxtimes v$.  

\begin{lemma} \label{lem:phi-}
The isomorphism $\phi_- \colon F_i^{(m+n)}(X \circ Y) \to F_i^{(m)}X \circ F_i^{(n)}Y$ is given by 
\begin{align*}
&\phi_-(b_+(i^{m+n}) \boxtimes (u \boxtimes v)) \\ 
&= b_+(i^{m+n}) (e(i^m) \boxtimes \tau_{w[\height \beta,n]} \boxtimes e(\gamma)) ((b_+(i^m) \boxtimes u) \boxtimes (b_+(i^n)\boxtimes v)), 
\end{align*}
for $u \in X, v \in Y$. 
\end{lemma}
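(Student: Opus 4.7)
The plan is to trace the element $b_+(i^{m+n})\boxtimes(u\boxtimes v)$ through the three constituent maps of the composition
\[
\phi_- = \varepsilon\circ F_i^{(m+n)}(\phi_+)\circ F_i^{(m+n)}(\eta\boxtimes\eta)
\]
defining $\phi_-$. Each factor admits a concrete description: $\eta$ and $\varepsilon$ are the $k$-layered cups and caps of Definition \ref{def:unitcounit} (which, under the 2-representation of $\catquantum{\mathfrak{p}_i}$ on $\gMod{R_i}$ from Theorem \ref{thm:cyclotomic2rep} and Theorem \ref{thm:cyclotomic2repleft}, correspond to insertion and absorption of the idempotent $b_+(i^k)$), while $\phi_+$ was computed element-wise in Proposition \ref{prop:extremalequiv}. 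The representative $b_+(i^{m+n})\boxtimes(u\boxtimes v)$ may be used because $F_i^{(k)}Z$ is the quotient of $R(k\alpha_i)b_+(i^k)\circ Z$ by the relations from the divided-power idempotent.

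First, I read off the unit: $\eta_X\colon X\to E_i^{(m)}F_i^{(m)}X$, as the $m$-layer cup followed by the canonical projection, sends $u$ to $b_+(i^m)\boxtimes(b_+(i^m)\boxtimes u)$, and analogously for $\eta_Y$. Hence
\[
F_i^{(m+n)}(\eta\boxtimes\eta)(b_+(i^{m+n})\boxtimes(u\boxtimes v)) = b_+(i^{m+n})\boxtimes\bigl((b_+(i^m)\boxtimes(b_+(i^m)\boxtimes u))\boxtimes(b_+(i^n)\boxtimes(b_+(i^n)\boxtimes v))\bigr).
\]
Next, I substitute $x=b_+(i^m)\boxtimes u$ and $y=b_+(i^n)\boxtimes v$ into Proposition \ref{prop:extremalequiv}'s formula for $\phi_+$ (inside the outer $F_i^{(m+n)}$), producing the braided expression containing $\tau_{w[\height\beta,n]}$. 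The identity $b_+(i^{m+n})(b_+(i^m)\boxtimes b_+(i^n))=b_+(i^{m+n})$, already exploited in the proof of Proposition \ref{prop:extremalequiv}, simplifies the outer nested idempotents. Finally, the counit $\varepsilon$ collapses the outermost $F_i^{(m+n)}E_i^{(m+n)}$ pair via the zigzag identity verified in Lemma \ref{lem:adjunction}, removing one $b_+(i^{m+n})\boxtimes b_+(i^{m+n})$ layer and leaving exactly $b_+(i^{m+n})(b_+(i^m)\boxtimes\tau_{w[\height\beta,n]}(u\boxtimes b_+(i^n))\boxtimes v)$.

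The main obstacle will be the bookkeeping: across three nested divided-power functors one accumulates several idempotents $b_+(i^k)$, grading shifts, and canonical surjections $R(k\alpha_i)\circ Z\twoheadrightarrow F_i^{(k)}Z$ as well as $e(i^k,\ast)Z\twoheadrightarrow E_i^{(k)}Z$, and at each stage one must confirm that the chosen representative survives these projections unambiguously. Once the zigzag identity of Lemma \ref{lem:adjunction} is applied to absorb the outer layer, the only nontrivial combinatorial content that remains is precisely the shuffle $\tau_{w[\height\beta,n]}$ contributed by $\phi_+$, and the asserted formula drops out.
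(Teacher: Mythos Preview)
Your approach is correct and is exactly what the paper does: its entire proof reads ``It is straightforward from the definition,'' and you have simply written out that straightforward trace through the composition $\varepsilon\circ F_i^{(m+n)}(\phi_+)\circ F_i^{(m+n)}(\eta\boxtimes\eta)$. The only caveat is that some of your intermediate expressions for $\eta_X(u)$ and the effect of $\varepsilon$ are stated a bit loosely (the precise placement of the idempotents $b_{+,k}$ versus $b_{-,k}$ and the identification of $F_i^{(k)}Z$ as $b_{-,k}F_i^kZ$ deserve care), but the structure of the argument---substitute into the explicit formula for $\phi_+$ from Proposition~\ref{prop:extremalequiv}, use $b_+(i^{m+n})(b_+(i^m)\boxtimes b_+(i^n))=b_+(i^{m+n})$, and collapse via the zigzag of Lemma~\ref{lem:adjunction}---is exactly the intended one.
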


\begin{proof}
It is immediate from the definition. 
\end{proof}

As for $\gMod{{}_i R}$, the following parallel proposition holds. 

\begin{proposition} \label{prop:extremalequiv*}
For $\beta \in \sum_{j \in I, j \neq i} \mathbb{Z}_{\geq 0}\alpha_j$, 
we have mutually quasi-inverse functors
\[
\gMod{{}_iR(\beta)} \xtofrom[\times E_i^{(-\langle h_i, \beta \rangle)'}]{\times F_i^{(-\langle h_i, \beta \rangle)'}} \gMod{{}_iR(s_i\beta)}, 
\]
with natural isomorphisms 
\[
\varepsilon' \colon E_i^{(-\langle h_i, \beta \rangle)'} F_i^{(-\langle h_i,\beta \rangle)'} \to \Id, \eta' \colon \Id \to E_i^{(-\langle h_i, \beta \rangle)'} F_i^{(-\langle h_i,\beta \rangle)'} \ \text{(Definition \ref{def:unitcounit})}. 
\]
In addition, they induce a monoidal equivalence
\[
\bigoplus_{\beta \in \sum_{j \neq i}\mathbb{Z}_{\geq 0}\alpha_j} \gMod{{}_iR(\beta)} \simeq \bigoplus_{\beta \in \sum_{j \neq i}\mathbb{Z}_{\geq 0}\alpha_j} \gMod{{}_iR(s_i\beta)}. 
\]
More precisely, it is monoidal with the canonical isomorphism $\mathbf{1}E_i^{(0)} \simeq \mathbf{1}$, 
and the natural isomorphism $\phi'_+ \colon X E_i^{(m)'} \circ Y E_i^{(n)'} \to (X \circ Y)E_i^{(m+n)'}$ given by \index{$\phi_+'$}
\begin{align*}
&x E_i^{(m)'} \boxtimes y E_i^{(n)'} \mapsto \\ 
&[(e(\beta) \boxtimes \tau_{w[m,\height \gamma]} \boxtimes e(i^n)) ((e(\beta) \boxtimes e(i^m)b'_+(i^m))x \boxtimes (e(\gamma)\boxtimes e(i^n)b'_+(i^n))y)]E_i^{(m+n)'}. 
\end{align*}
\end{proposition}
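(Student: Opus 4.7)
The plan is to deduce this proposition from Proposition \ref{prop:extremalequiv} by transferring along the involutive equivalences discussed in Remark \ref{rem:LRchange}. Since the statement is the mirror image of Proposition \ref{prop:extremalequiv} under the symmetry exchanging left and right $\dotcatquantum{\mathfrak{p}_i}$-actions, a direct transfer should produce both the equivalences and the monoidal structure with minimal additional work.

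First, I would verify that the algebra involution $\sigma$ of $R(\beta)$ identifies ${}_iR(\beta)$ with $R_i(\beta)$: since $\sigma$ sends $e(i,\beta-\alpha_i)$ to $e(\beta-\alpha_i,i)$ (after index reversal), the two-sided ideals defining ${}_iR$ and $R_i$ are exchanged, giving an autoequivalence $\sigma_* \colon \gMod{R_i} \to \gMod{{}_iR}$. Combined with the 2-isomorphism $\sigma \colon \catquantum{\mathfrak{p}_i} \to \catquantum{\mathfrak{p}_i}^{\mathrm{op}}$ of Proposition \ref{prop:involutionsigma}, Remark \ref{rem:LRchange} identifies the left $\dotcatquantum{\mathfrak{p}_i}$-module $\gMod{R_i}$ with the right $\dotcatquantum{\mathfrak{p}_i}$-module $\gMod{{}_iR}$. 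Next, using $\sigma(b_{+,n}) = b'_{+,n}$ and $\sigma(b_{-,n}) = b'_{-,n}$, the left divided-power functors $F_i^{(n)}, E_i^{(n)}$ on $\gMod{R_i}$ correspond under this transfer to the right divided-power functors $F_i^{(n)'}, E_i^{(n)'}$ on $\gMod{{}_iR}$, and the adjunction data $(\varepsilon,\eta)$ from Definition \ref{def:unitcounit} corresponds to $(\varepsilon',\eta')$. Applying the transfer to the quasi-inverse pair of Proposition \ref{prop:extremalequiv} immediately gives the claimed quasi-inverse pair between $\gMod{{}_iR(\beta)}$ and $\gMod{{}_iR(s_i\beta)}$.

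For the monoidal equivalence I would use that $\sigma_*$ is op-monoidal: there is a natural isomorphism $\sigma_*(X \circ Y) \simeq \sigma_*(Y) \circ \sigma_*(X)$, induced by $\sigma(e(\beta,\gamma)) = e(\gamma,\beta)$. Transferring the monoidal isomorphism $\phi_+$ of Proposition \ref{prop:extremalequiv} along this op-monoidal equivalence produces the desired natural isomorphism $\phi'_+ \colon XE_i^{(m)'} \circ YE_i^{(n)'} \to (X \circ Y)E_i^{(m+n)'}$. The stated explicit formula is then checked by computing the image of $\tau_{w[\height \beta, n]}$ under $\sigma$: since the permutation $w[\height \beta,n]$ moves the $n$ letters of type $i$ past the $\height \beta$ letters of types distinct from $i$, no equal adjacent letters occur in any reduced expression, so the sign factors $(-1)^{\delta_{\nu_k,\nu_{k+1}}}$ in $\sigma(\tau_k)$ do not contribute, and the result is precisely $\tau_{w[m,\height \gamma]}$ positioned as in the statement. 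Unitality, associativity, and the degree-zero property of $\phi'_+$ are inherited from those of $\phi_+$.

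The main obstacle I expect is bookkeeping: one must carefully track the direction reversal induced by $\sigma_*$ and the grading shifts in the definitions of $E_i^{(n)'}$ and $F_i^{(n)'}$, and verify that the transferred $\phi'_+$ really has the explicit form stated, rather than a shifted or twisted version. A secondary obstacle is confirming the op-monoidality of $\sigma_*$ with the correct normalization, which reduces to an elementary check on the behavior of $\sigma$ on the idempotent $e(\beta,\gamma)$ used to form the convolution product. Once these bookkeeping points are settled, the proof reduces cleanly to Proposition \ref{prop:extremalequiv}.
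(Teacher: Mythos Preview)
Your approach is correct and is precisely what the paper intends: the proposition appears in the paper immediately after the sentence ``As for $\gMod{{}_i R}$, the following parallel proposition holds,'' with no proof given, so the implicit argument is exactly the $\sigma$-transfer from Proposition~\ref{prop:extremalequiv} that you describe (and your identification of the typo $R_i$ for ${}_iR$ in the displayed statement is correct). The bookkeeping points you flag---op-monoidality of $\sigma_*$, the correspondence $\sigma(b_{\pm,n}) = b'_{\pm,n}$, and the absence of sign contributions in $\sigma(\tau_{w[\height\beta,n]})$---are the right things to check and go through as you indicate.
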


\section{Main construction}

As in the previous section, the category $\gMod{R_i}$ is a right $\dotcatquantum{\mathfrak{p}_i}$-module by Theorem \ref{thm:cyclotomic2repleft} and $\gMod{{}_iR}$ is a right $\dotcatquantum{\mathfrak{p}_i}$-module by Theorem \ref{thm:cyclotomic2rep}. 
We begin our construction of reflection functors with the following definition, which is based on Propositions \ref{prop:extremalequiv} and \ref{prop:extremalequiv*}. 

\begin{definition} \label{def:anothersimple}
(1) For $j \in I \setminus \{i\}$, we define $M_j \in \gproj{R_i(s_i\alpha_j)}$ and $M'_j \in \gproj{{}_iR(s_i\alpha_j)}$ by \index{$M_j,M'_j$}
\[
M_j = F_i^{(-a_{i,j})}R(\alpha_j), \ M'_j = R(\alpha_j) F_i^{(-a_{i,j})'}. 
\]
(2) For $j \in I \setminus \{i\}$, we define an endomorphism
\[
\text{$y_j \in \END_{R_i(s_i\alpha_j)}(M_j)_{(\alpha_j,\alpha_j)}$ (resp. $y'_j \in \END_{{}_iR(s_i\alpha_j)}(M'_j)_{(\alpha_j,\alpha_j)}$)}\index{$y_j, y'_j$}
\] 
as the one obtained by applying $F_i^{(-a_{i,j})}$ from the left (resp. $F_i^{(-a_{i,j})'}$ from the right) to the endomorphism
\[
R(\alpha_j) \xrightarrow{\times x_1} R(\alpha_j). 
\]
(3) For $j, k \in I \setminus \{i \}$, we define homomorphisms
\begin{align*}\index{$\sigma_{j,k},\sigma'_{j,k}$ for $j,k \neq i$}
\sigma_{j,k} \in \HOM_{R_i(s_i(\alpha_j+\alpha_k))} (M_j \circ M_k, M_k \circ M_j)_{-(\alpha_j,\alpha_k)}, \\
\sigma'_{j,k} \in \HOM_{{}_iR(s_i(\alpha_j+\alpha_k))} (M'_j \circ M'_k, M'_k \circ M'_j)_{-(\alpha_j,\alpha_k)}, \\
\end{align*} 
as follows. 
Note that we have isomorphisms
\begin{align*}
M_j \circ M_k &= F_i^{(-a_{i,j})}R(\alpha_j) \circ F_i^{(-a_{i,k})}R(\alpha_k) \xrightarrow{\phi_-^{-1}} F_i^{(-a_{i,j}-a_{i,k})}(R(\alpha_j) \circ R(\alpha_k)), \\
M_k \circ M_j &= F_i^{(-a_{i,k})}R(\alpha_k) \circ F_i^{(-a_{i,j})}R(\alpha_j) \xrightarrow{\phi_-^{-1}} F_i^{(-a_{i,j}-a_{i,k})}(R(\alpha_k) \circ R(\alpha_j)), 
\end{align*}
and a homomorphism of left $R(\alpha_j+\alpha_k)$-modules
\[
R(\alpha_j) \circ R(\alpha_k) \simeq R(\alpha_j+\alpha_k)e(j,k) \xrightarrow{\times \tau_1} R(\alpha_j+\alpha_k)e(k,j) \simeq R(\alpha_k)\circ R(\alpha_j). 
\]
The homomorphism $\sigma_{j,k}$ is defined as the one obtained by applying $F_i^{(-a_{i,j}-a_{i,k})}$ to this homomorphism. 
The homomorphism $\sigma'_{j,k}$ is defined in the same way. 
\end{definition}

\begin{lemma} \label{lem:yaction}
Let $j \in I \setminus \{ i\}$, and put $n = -a_{i,j}$. 
Then, the endomorphism $y_j$ of $M_j$ coincides with the left action of $x_{n+1}$. 
Similarly, the endomorphism $y'_j$ of $M'_j$ coincides with the left action of $x_1$. 
\end{lemma}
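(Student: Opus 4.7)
The plan is to give an explicit description of both the algebra $R_i(s_i\alpha_j)$ and the module $M_j$, from which centrality of $x_{n+1}$ makes the claim immediate.

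First I would unwind $M_j = F_i^{(n)} R(\alpha_j)$ explicitly. Iterating the formula for $F_i$ from Theorem \ref{thm:cyclotomic2repleft}, and using that $R_i(\alpha_j) = R(\alpha_j)$ (there is nothing to quotient out, since $\alpha_j - \alpha_i \notin \mathsf{Q}_+$ for $j \neq i$), one finds
\[
F_i^n R(\alpha_j) \;\simeq\; R_i(s_i\alpha_j)\, e(i^n, j)
\]
as left $R_i(s_i\alpha_j)$-modules. The idempotent $b_{-,n} \in R(n\alpha_i)$ then acts by right multiplication via the canonical embedding $R(n\alpha_i) \hookrightarrow R_i(s_i\alpha_j)$ into the first $n$ strands, so its image is
\[
M_j \;\simeq\; R_i(s_i\alpha_j)\cdot b_{-,n}\, e(i^n, j).
\]
Under this identification $y_j$ corresponds to right multiplication by $x_{n+1} \in R_i(s_i\alpha_j)$, because the embedding $R(\alpha_j) \hookrightarrow R_i(s_i\alpha_j)$ coming from the tensor product structure sends $x_1 \mapsto x_{n+1}\, e(i^n, j)$.

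Next I would establish a $\mathbf{k}$-algebra isomorphism
\[
R_i(s_i\alpha_j) \;\simeq\; R(n\alpha_i) \otimes \mathbf{k}[x_{n+1}],
\]
in which $x_{n+1}$ sits in the central factor. The key observations are: every sequence $\nu \in I^{n+1}$ of weight $s_i\alpha_j = n\alpha_i + \alpha_j$ other than $(i^n, j)$ ends in $i$, so $e(i^n, j)$ is the unique surviving idempotent of $R_i(s_i\alpha_j)$; moreover $\tau_n\, e(i^n, j) = e(i^{n-1}, j, i)\,\tau_n$ lies in the defining ideal $\langle e(\ast, i)\rangle$ and is therefore killed in $R_i$. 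Consequently $R_i(s_i\alpha_j)$ is presented by the KLR generators on the first $n$ strands (of profile $i^n$, contributing $R(n\alpha_i)$) together with the variable $x_{n+1}$ which commutes with everything else, giving the claimed tensor product presentation.

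Granted this structure, $x_{n+1}$ is central in $R_i(s_i\alpha_j)$, so left and right multiplication by $x_{n+1}$ induce the same endomorphism on any left module, in particular on $M_j$; this yields $y_j = $ left multiplication by $x_{n+1}$. For $y'_j$ I would apply the involution $\sigma_\ast$ of Remark \ref{rem:LRchange}: it identifies $M_j$ with $M'_j$ and $y_j$ with $y'_j$, while $\sigma(x_{n+1}) = x_1$ (as $\height s_i\alpha_j = n+1$), so the assertion for $y'_j$ follows from that for $y_j$. The main obstacle is the bookkeeping required to verify the tensor-product description of $R_i(s_i\alpha_j)$ and to trace through the identification of $y_j$ with right multiplication by $x_{n+1}$; this involves checking the KLR relations carefully and confirming that every $\tau_n$-involving term vanishes in $R_i$. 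Once these preparatory steps are in place, the lemma is immediate from the centrality of $x_{n+1}$.
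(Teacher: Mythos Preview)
Your overall strategy is sound, but one intermediate claim is false. The asserted isomorphism $R_i(s_i\alpha_j) \simeq R(n\alpha_i) \otimes \mathbf{k}[x_{n+1}]$ does not hold: once $\tau_n$ vanishes in $R_i(s_i\alpha_j)$, the quadratic KLR relation $\tau_n^2 e(i^n,j) = Q_{i,j}(x_n,x_{n+1})\, e(i^n,j)$ forces $Q_{i,j}(x_n,x_{n+1}) = 0$ as well, whereas in $R(n\alpha_i)\otimes\mathbf{k}[x_{n+1}]$ the elements $x_n$ and $x_{n+1}$ are algebraically independent. The correct description is the further quotient by $Q_{i,j}(x_n,x_{n+1})$. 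Fortunately this does not damage your argument: all you actually use is that $x_{n+1}$ is central in $R_i(s_i\alpha_j)$, and that follows immediately from $[x_l,x_{n+1}]=0$, $[\tau_k,x_{n+1}]=0$ for $k\leq n-1$, and $\tau_n=0$. You should simply drop the tensor-product claim and argue centrality directly.

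With that correction, your proof is valid and takes a more hands-on route than the paper's. The paper does not describe $R_i(s_i\alpha_j)$ or $M_j$ explicitly; instead it exploits the equivalence $E_i^{(n)}\colon \gMod{R_i(s_i\alpha_j)} \to \gMod{R(\alpha_j)}$ from Proposition~\ref{prop:extremalequiv}. Since $y_j$ is defined as $F_i^{(n)}(\times x_1)$ and $E_i^{(n)}$ inverts $F_i^{(n)}$, one has $E_i^{(n)}(y_j)=\times x_1$; on the other hand the $R(\alpha_j)$-structure on $E_i^n M_j = e(i^n,j)M_j = M_j$ is by definition left multiplication by $x_{n+1}$, so applying $E_i^{(n)}$ to that endomorphism also yields $\times x_1$, and faithfulness finishes. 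Your approach trades this categorical bookkeeping for an explicit computation in the (small) cyclotomic quotient, which is arguably more transparent here; the paper's method has the advantage of not depending on any special features of $R_i(s_i\alpha_j)$.
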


\begin{proof}
We only prove the assertion for $M_j$, as the proof of the latter assertion is completely parallel. 
Note that $M_j$ is an $R_i(s_i\alpha_j)$-module, and $s_i \alpha_j = n\alpha_i + \alpha_j$. 
In $R_i(s_i\alpha_j)$, we have $e(*,i) = 0$, hence $e(n\alpha_i,j) = 1$. 
In addition, $\tau_n = 0$ and $x_{n+1}$ is central. 
Hence, the left action of $x_{n+1}$ on $M_j$ gives a left $R(s_i\alpha_j)$-module endomorphism of $M_j$.
Let $f$ denote this endomorphism.  
By Proposition \ref{prop:extremalequiv} and $M_j = F_i^{(n)}R(\alpha_j)$, 
it suffices to prove that $E_i^{(n)}y_j = E_i^{(n)}f$ as endomorphisms of $E_i^{(n)}M_j$. 
We identify 
\[
E_i^{(n)}M_j = E_i^{(n)}F_i^{(n)}R(\alpha_j) \xrightarrow{\eta^{-1}, \simeq} R(\alpha_j). 
\] 
By the definition of $y_j$, the endomorphism $E_i^{(n)}y_j$ is the left action of $x_1$ on $R(\alpha_j)$. 
On the other hand, the endomorphism $E_i^{(n)}f$ of $E_i^{(n)}f$ is given by the left action of $x_1$, 
since $f$ is the left action of $x_{n+1}$ and $E_i^{(n)}M_j = b_+(i^n)e(i^n,j)M_j$ up to grading shift. 
It follows that $E_i^{(n)}y_j = E_i^{(n)}f$. 

\begin{comment}
It follows that $E_i^n M_j = M_j$ as vector spaces. 
We regard $M_j$ as an $R(\alpha_j)$-module by letting $x_1 \in R(\alpha_j)$ act on $M_j$ as the left multiplication by $x_{n+1} \in R(s_i\alpha_j)$. 
Then, $E_i^n M_j = M_j$ as $R(\alpha_j)$-modules.  
Note that $E_i^n M_j \simeq P(i^n) \otimes_{Z(n\alpha_i)} E_i^{(n)}M_j$ as $R(\alpha_j)$-modules. 
Finally, by Proposition \ref{prop:extremalequiv}, $E_i^{(n)} M_j \simeq R(\alpha_j)$ as $R(\alpha_j)$-modules and $E_i^{(n)}y_j$ coincides with the left action of $x_1 \in R(\alpha_j)$. 
Hence, the lemma is proved. 
\end{comment}
\end{proof}

Let $j \in I$ and $X, Y \in \gMod{R_i}$. 
We have a canonical surjective homomorphism 
\[
R(\alpha_j) \circ (X \circ Y) \simeq (R(\alpha_j) \circ X) \circ Y \twoheadrightarrow F_jX \circ Y. 
\]
Since $F_jX \circ Y \in \gMod{R_i}$, it induces a surjective homomorphism 
\[
F_j(X \circ Y) \twoheadrightarrow F_jX \circ Y. 
\]
We have similar homomorphisms for the right action of $F_j$ on $\gMod{{}_iR}$. 

\begin{lemma} \label{lem:cansurj}
Let $j \in I$. 
(1) For $X, Y \in \gMod{R_i}$, the canonical surjective homomorphism $F_j (X \circ Y) \twoheadrightarrow F_jX \circ Y$ is natural in $X$ and $Y$, 
and commutes with the action of $R(\alpha_j)$ on $F_j$. 
If $j \neq i$, it is an isomorphism. 

(2) Let $k \in I$ and $X, Y\in \gMod{R_i}$. 
The following diagram commutes: 
\[
\begin{tikzcd}
F_jF_k(X \circ Y) \arrow[r] \arrow[d, "{\xy 0;/r.15pc/: 
(0,0)*{\dcross{j}{k}};
\endxy}"'] & F_j(F_kX \circ Y) \arrow[r] & F_jF_k(X \circ Y) \arrow[d,"{\xy 0;/r.15pc/: 
(0,0)*{\dcross{j}{k}};
\endxy}"] \\
F_kF_j(X \circ Y) \arrow[r] & F_k(F_j X \circ Y) \arrow[r] & F_kF_j(X \circ Y). 
\end{tikzcd}
\]
\end{lemma}

Similar assertions hold for the right action of $F_j$ on $\gMod{{}_iR}$. 

\begin{proof}
It is immediate from the definition and Theorem \ref{thm:functorF2}. 
\end{proof}

\begin{lemma} \label{lem:adjointSES}
(1) Let $\beta, \gamma \in \mathsf{Q}_+, X \in \gMod{R_i(\beta)}, Y \in \gMod{R_i(\gamma)}$. 
Then, there are short exact sequences in $\gMod{R_i}$
\begin{align*}
&0 \to q_i^{\langle h_i,\gamma \rangle}E_i X \circ Y \to E_i(X\circ Y) \to  X \circ E_i Y \to 0, \\
&0 \to q_i^{-\langle h_i, \beta \rangle}X \circ F_i Y \to F_i (X\circ Y) \to F_i X \circ Y \to 0,
\end{align*}
where the homomorphisms are given by
\begin{align*}
&q_i^{\langle h_i,\gamma \rangle} E_iX \circ Y \to E_i(X \circ Y),\ E_iu \boxtimes v \mapsto E_i(u \boxtimes v), \\
&E_i(X \circ Y) \to X \circ E_iY, \ E_i(u \boxtimes v) \mapsto 0, \ E_i\tau_1 \cdots \tau_{\height \beta} (u \boxtimes v) \mapsto u \boxtimes E_iv, \\
&q_i^{-\langle h_i,\beta \rangle} X \circ F_iY \to F_i(X \circ Y),\ u \boxtimes (e(i) \boxtimes v) \mapsto \tau_{\height \beta} \cdots \tau_1 (e(i) \boxtimes (u\boxtimes v)), \\
&F_i(X \circ Y) \to F_iX \circ Y, \ e(i) \boxtimes (u \boxtimes v) \mapsto (e(i)\boxtimes u) \boxtimes v. 
\end{align*}
These homomorphisms are natural in $X$ and $Y$, and commute with the action of $R(\alpha_i)$ on $E_i$ or $F_i$. 

(2) Let $\beta, \gamma \in \mathsf{Q}_+, X \in \gMod{{}_iR(\beta)}, Y \in \gMod{{}_iR(\gamma)}$. 
Then, there are short exact sequences in $\gMod{R_i}$
\begin{align*}
&0 \to q_i^{\langle h_i,\beta \rangle}X \circ YE_i \to (X\circ Y)E_i \to X E_i \circ  Y \to 0, \\
&0 \to q_i^{-\langle h_i, \gamma \rangle}X F_i \circ Y \to (X\circ Y) F_i \to X \circ YF_i \to 0, 
\end{align*}
where the homomorphisms are given by 
\begin{align*}
&q_i^{\langle h_i,\beta \rangle} X \circ YE_i \to (X \circ Y)E_i,\ u \boxtimes vE_i \mapsto (u \boxtimes v)E_i, \\
&(X \circ Y)E_i \to XE_i \circ Y,\ (u\boxtimes v)E_i \mapsto 0, \ [\tau_{\height (\beta+\gamma)} \cdots \tau_{\height \beta +1} (u \boxtimes v)]E_i \mapsto uE_i \boxtimes v, \\
&q_i^{-\langle h_i,\gamma \rangle}XF_i \circ Y \to (X \circ Y)F_i,\ (u \boxtimes e(i)) \boxtimes v \mapsto \tau_{\height \beta+1} \cdots \tau_{\height (\beta + \gamma)}((u \boxtimes v) \boxtimes e(i)), \\
&(X \circ Y)F_i \to X \circ YF_i,\ (u \boxtimes v) \boxtimes e(i) \mapsto u \boxtimes (v \boxtimes e(i)). 
\end{align*}
These homomorphisms are natural in $X$ and $Y$, and commute with the action of $R(\alpha_i)$ on $E_i$ or $F_i$. 
\end{lemma}

We simply refer to the homomorphisms above as canonical homomorphisms. 
Note that the canonical surjections for $F_i$ on $\gMod{R_i}$ in Lemma \ref{lem:adjointSES} coincides with those in Lemma \ref{lem:cansurj}. 

\begin{proof}
(1)
The first short exact sequence is a special case of the Mackey-filtration (Proposition \ref{prop:Mackey}).
To prove the second one, consider the following sequence of injective homomorphisms from Proposition \ref{prop:cyclotomicR*}: 
\[
q_i^{-\langle h_i, \beta+\gamma \rangle} X\circ Y \circ R(\alpha_i) \xrightarrow{\id_X \otimes \mathsf{R}_Y} q_i^{-\langle h_i, \beta \rangle} X \circ R(\alpha_i) \circ Y \xrightarrow{\mathsf{R}_X \otimes \id_Y} R(\alpha_i) \circ X \circ Y. 
\]
It yields a short exact sequence
\[
0 \to q_i^{-\langle h_i,\beta \rangle}\Cok (\id_X \otimes \mathsf{R}_Y) \to \Cok ((\mathsf{R}_X \otimes \id_Y)(\id_X \otimes \mathsf{R}_Y)) \to \Cok (\mathsf{R}_X \otimes \id_Y) \to 0. 
\]
Note that we have $(\mathsf{R}_X \otimes \id_Y) (\id_X \otimes \mathsf{R}_Y) = \mathsf{R}_{X\circ Y}$ by definition.   
By Theorem \ref{thm:functorF2} (2), the short exact sequence above is the desired one. 
Since $\mathsf{R}_X$ (resp. $\mathsf{R}_Y$) is natural in $X$ (resp. $Y$) and is right $R(\alpha_i)$-linear (Proposition \ref{prop:cyclotomicR}), 
the naturality and the commutativity with the action of $R(\alpha_i)$ also follow. 

The proof of (2) is analogous to (1). 
\end{proof}

The homomorphisms of Lemma \ref{lem:adjointSES} are compatible with convolution products as follows. 

\begin{lemma} \label{lem:EFvsmultiplication}
For $X \in \gMod{R_i(\alpha)}, Y \in \gMod{R_i(\beta)}, Z \in \gMod{R_i(\gamma)}$, the following diagrams commute (we suppress degree shifts): 
\[
\begin{tikzcd}
E_iX \circ Y \circ Z \arrow[rr] \arrow[rd] && E_i(X \circ Y \circ Z ) \\
& E_i (X \circ Y) \circ Z \arrow[ru] &
\end{tikzcd}
\]
\[
\begin{tikzcd}
E_i(X \circ Y \circ Z) \arrow[rr] \arrow[rd] && X \circ Y \circ E_iZ \\
& X \circ E_i (Y \circ Z) \arrow[ru] &
\end{tikzcd}
\]
\[
\begin{tikzcd}
X \circ Y \circ F_iZ \arrow[rr] \arrow[rd] && F_i(X \circ Y \circ Z ) \\
& X \circ F_i(Y \circ Z) \arrow[ru] &
\end{tikzcd}
\]
\[
\begin{tikzcd}
F_i(X \circ Y \circ Z) \arrow[rr] \arrow[rd] && F_iX \circ Y \circ Z \\
& F_i(X \circ Y) \circ Z \arrow[ru] &
\end{tikzcd}
\]
\end{lemma}

\begin{proof}
It is immediate from the definition of morphisms. 
\end{proof}

\begin{lemma} \label{lem:naturality}
Let $X \in \gMod{R_i(\beta)}, Y \in \gMod{R_i(\gamma)}$. We suppress degree shifts. 
\begin{enumerate}
\item The canonical homomorphism $X \circ F_iY \hookrightarrow F_i(X \circ Y)$ commutes with $\xy 0;/r.12pc/: (0,0)*{\sdotd{i}}; \endxy$. 
\item The canonical homomorphism $X \circ F_iF_iY \hookrightarrow F_iF_i(X \circ Y)$ commute with $\xy 0;/r.12pc/: (0,0)*{\dcross{i}{i}}; \endxy$. 
\item The canonical homomorphisms $E_iE_iX \circ Y \hookrightarrow E_iE_i(X \circ Y)$ and $E_iE_i(X \circ Y) \twoheadrightarrow X \circ E_iE_iY$ commute with $\xy 0;/r.12pc/: (0,0)*{\dcross{i}{i}}; \endxy$. 
\end{enumerate}
\end{lemma}

\begin{proof}
(1) It follows from the fact that the homomorphism is induced from $\mathsf{R}_X \colon X \circ (\alpha_i) \to R(\alpha_i) \circ X$, 
which is right $R(\alpha_i)$-linear (Proposition \ref{prop:cyclotomicR*}). 

(2) Note that the homomorphism is induced from 
\[
X \circ R(2\alpha_i) \to R(2\alpha_i) \circ X, \ u \otimes v \mapsto \tau_{\height \beta} \cdots \tau_1 \tau_{\height \beta + 1} \cdots \tau_2 (v \otimes u). 
\]
By \cite[Proposition 2.12]{MR3771147}, it commutes with the right multiplication of $R(2\alpha_i)$, hence the assertion holds. 

(3) Put $V = \Res_{2\alpha_i,\beta+\gamma-2\alpha_i} (X \circ Y)$ and consider its Mackey-filtration as in Proposition \ref{prop:Mackey}. 
Then, the injective homomorphism $E_iE_iX \circ Y \to E_iE_i(X \circ Y)$ coincides with the embedding $F_{\leq e} V \to V$, 
and the surjective homomorphism $E_iE_i(X \circ Y) \to X \circ E_iE_iY$ coincides with the quotient $V \to V / F_{< w_{[\height \beta, 2]}}V$. 
In particular, they are $R(2\alpha_i)$-linear, hence the assertion follows. 
\end{proof}

\begin{lemma} \label{lem:sigmaji}
Let $j \in I \setminus \{i\}, \beta \in \mathsf{Q}_+$. 
We suppress degree shifts here. 

(1)  Let $X \in \gMod{R_i(\beta)}$. 
The endomorphism $Q_{i,j}\left(\xy 0:/r.10pc/: (0,0)*{\sdotu{i}}; \endxy, y_j\right)$ of $E_i(X \circ M_j)$ factors through the canonical injective homomorphism $(E_i X) \circ M_j \to E_i (X\circ M_j)$ of Lemma \ref{lem:adjointSES}. 

(2) Let $X \in \gMod{{}_iR(\beta)}$. 
The endomorphism $Q_{j,i}\left(y_j,\xy 0:/r.10pc/: (0,0)*{\sdotu{i}}; \endxy \right)$ of $(M'_j \circ X)E_i$ factors through the canonical injective homomorphism $M'_j \circ (X E_i) \to (M'_j \circ X)E_i$ of Lemma \ref{lem:adjointSES}.  
\end{lemma}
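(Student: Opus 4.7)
\medskip
\noindent\textbf{Proof plan.}

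Part (2) will follow from part (1) by applying the symmetry $\sigma$ of Remark~\ref{rem:LRchange}, which intertwines $\gMod{R_i}$ with $\gMod{{}_iR}$, swaps $M_j$ with $M_j'$, $y_j$ with $y_j'$, and turns the convolution order around, so I focus on (1). The plan is to convert the statement into a single algebraic identity in $R_i(n\alpha_i+\alpha_j)$, with $n=-a_{i,j}$. By Lemma~\ref{lem:adjointSES}(1) applied with $Y=M_j$, we have a short exact sequence
\[
0\to q_i^{-a_{i,j}}(E_iX)\circ M_j\xrightarrow{\iota} E_i(X\circ M_j)\xrightarrow{\pi} X\circ (E_iM_j)\to 0,
\]
so asserting that $f:=Q_{i,j}(\xy 0;/r.10pc/: (0,0)*{\sdotu{i}}; \endxy,\,y_j)$ factors through $\iota$ is equivalent to $\pi\circ f=0$.

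Both $\xi_1:=\xy 0;/r.10pc/: (0,0)*{\sdotu{i}}; \endxy$ and $y_j$ are natural with respect to the $\catquantum{\mathfrak{p}_i}$-action on $\gMod{R_i}$. Concretely, for a lift $v=\tau_1\cdots\tau_{\height\beta}(x\boxtimes\tilde m)$ of $x\boxtimes\tilde m\in X\circ E_iM_j$, pushing $x_1$ to the right through the string of crossings produces a leading term $\tau_1\cdots\tau_{\height\beta}(x\boxtimes x_1^{M_j}\tilde m)$ together with KLR corrections $\delta_{\nu_k,i}\tau_1\cdots\widehat{\tau_k}\cdots\tau_{\height\beta}(x\boxtimes\tilde m)$, all of which lie in $(E_iX)\circ M_j$ (the $i$ has been absorbed into the $X$-part) and are killed by $\pi$. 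The same analysis applies to $y_j$, which is the dot on the $j$-strand, far from $X$. Hence $\pi\circ\xi_1=(\id_X\boxtimes \xi_1^{E_iM_j})\circ\pi$ and similarly for $y_j$, and since $X$ is arbitrary the vanishing of $\pi\circ f$ reduces to $Q_{i,j}(\xi_1^{E_iM_j},y_j^{E_iM_j})=0$ on $E_iM_j$. By Lemma~\ref{lem:yaction} and Theorem~\ref{thm:cyclotomic2rep}, this in turn is left multiplication by $Q_{i,j}(x_1,x_{n+1})\in R_i(n\alpha_i+\alpha_j)$ on $M_j$. Since $M_j=R_i(n\alpha_i+\alpha_j)\cdot b_{-,n}e(i^n,j)$ is cyclic, everything boils down to proving
\[
Q_{i,j}(x_1,x_{n+1})\cdot b_{-,n}\,e(i^n,j)=0\quad\text{in } R_i(n\alpha_i+\alpha_j). \tag{$\ast$}
\]

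Identity $(\ast)$ is proved using two ingredients from the structure of $R_i(n\alpha_i+\alpha_j)$. First, the cyclotomic relation: $\tau_n e(i^n,j)=e(i^{n-1},j,i)\tau_n=0$ because $e(*,i)=0$ in $R_i$, so $Q_{i,j}(x_n,x_{n+1})e(i^n,j)=\tau_n^2e(i^n,j)=0$. Second, nil-Hecke annihilation: for $1\le k\le n-1$, $\tau_k \tau_{w_n}=0$ in $R(n\alpha_i)$ (reduce via $\tau_k^2=Q_{i,i}=0$), hence $\tau_k b_{-,n}=0$. Starting from a convenient reduced decomposition $w_n=(s_1\cdots s_{n-1})(s_1\cdots s_{n-2})\cdots s_1$, one transports $x_1$ across $\tau_{w_n}$ block by block using $\tau_\ell x_{\ell+1}e(i^n,j)=x_\ell\tau_\ell e(i^n,j)+e(i^n,j)$; the leading term eventually becomes $\tau_{w_n}x_n$ times $\mathbf{x}_n$, on which $Q_{i,j}(x_n,x_{n+1})$ acts by $0$, while each correction term acquires a missing factor $\tau_k$ that is then killed by one of the remaining $\tau$'s in $\tau_{w_n}\mathbf{x}_n$ via the second ingredient. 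After $n-1$ such commutations $(\ast)$ is reduced to a multiple of $Q_{i,j}(x_n,x_{n+1})\cdot b_{-,n}e(i^n,j)=0$.

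\emph{Main obstacle.} The computation in Step 4 is the hard part: the KLR corrections $\tau_\ell x_{\ell+1}-x_\ell\tau_\ell=\delta_{\nu_\ell,\nu_{\ell+1}}$ proliferate into a sum whose terms must each be shown to vanish against $b_{-,n}$. The combinatorics of which reduced expression of $w_n$ to use and how to pair up corrections with the available factors of $\tau_1,\dots,\tau_{n-1}$ in $\tau_{w_n}$ is delicate; it is precisely the kind of ``straightforward but lengthy'' calculation the introduction warns about. A cleaner conceptual route that I would also attempt is to identify $E_iM_j\simeq F_i^{(n-1)}R(\alpha_j)$ via Theorem~\ref{thm:EFrel} and, tracing the isomorphism back through the extended $\mathfrak{sl}_2$ relation, show that under it $\xi_1$ becomes the dot on the rightmost $i$-strand and $y_j$ remains the dot on the $j$-strand, so that $(\ast)$ becomes the cyclotomic relation $Q_{i,j}(x_{n-1},x_n)=0$ in $R_i((n-1)\alpha_i+\alpha_j)$; the work is then transferred to pinning down the precise scalar in this identification.
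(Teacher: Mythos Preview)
Your reduction---via the short exact sequence of Lemma~\ref{lem:adjointSES} and naturality of the dot and of $y_j$---to the vanishing of $Q_{i,j}(\xi_1,y_j)$ on $E_iM_j$ is exactly the paper's argument. The paper's completion is shorter than your sketch: after observing $Q_{i,j}(x_n,x_{n+1})=\tau_n^2=0$ in $R_i(s_i\alpha_j)$ from $\tau_n e(i^n,j)=e(i^{n-1},j,i)\tau_n=0$ (which you also note), it simply invokes \cite[Lemma~4.2]{MR2995184} together with Lemma~\ref{lem:yaction} to conclude $Q_{i,j}(x_1,x_{n+1})M_j=0$, with no further computation.

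Your direct replacement for that citation has two gaps. First, $M_j = R_i(s_i\alpha_j)\,b_{+,n}$, not $R_i(s_i\alpha_j)\,b_{-,n}$: as the paper remarks just before the definition of $\phi_-$, the idempotent $b_{-,n}\in R(n\alpha_i)$ acts on $F_i^nX$ as right multiplication by $\varphi(b_{-,n})=b_{+,n}$. With the correct idempotent your ``cyclicity'' reduction to $(\ast)$ breaks down, since $\tau_k\,b_{+,n}=\partial_k(\mathbf{x}_n)\tau_{w_n}\neq 0$, so $M_j$ is not spanned over the polynomial subalgebra by $b_{+,n}$ alone and left multiplication by $Q_{i,j}(x_1,x_{n+1})$ does not commute with the full $R_i(s_i\alpha_j)$-action. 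Second, even granting the target, the claim that ``each correction term acquires a missing $\tau_k$ that is then killed'' is not justified: the corrections from commuting $x_1$ through $\tau_{w_n}$ are of the form $\tau_v\cdot(\text{polynomial})$ with $v<w_n$, and these are nonzero in general. The clean fix---essentially your ``conceptual route''---is to prove the stronger identity $Q_{i,j}(x_1,x_{n+1})=0$ in $R_i(s_i\alpha_j)$: the nil-Hecke intertwiner $\varphi_k=(x_{k+1}-x_k)\tau_k-1$ satisfies $\varphi_k^2=1$, $\varphi_k x_l=x_{s_k(l)}\varphi_k$, and $\varphi_k x_{n+1}=x_{n+1}\varphi_k$, so conjugating $Q_{i,j}(x_{k+1},x_{n+1})=0$ by $\varphi_k$ gives $Q_{i,j}(x_k,x_{n+1})=0$, and downward induction from $k=n$ finishes.
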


\begin{proof} 
(1) By Lemma \ref{lem:adjointSES}, it suffices to prove that the composite homomorphism
\[
E_i(X\circ M_j) \xrightarrow{Q_{i,j}\left(\xy 0:/r.10pc/: (0,0)*{\sdotu{i}}; \endxy, y_j\right)} E_i(X\circ M_j) \xrightarrow{\mathrm{can}} X \circ (E_i M_j)
\]
is zero. 
Since the canonical surjection $E_i(X \circ M_j) \to X \circ E_iM_j$ is natural in $M_j$ and is commute with the action of $R(\alpha_i)$ on $E_i$, the composite morphisms above is equal to
\[
E_i(X \circ M_j) \xrightarrow{\mathrm{can}} X \circ (E_i M_j) \xrightarrow{\id_X \otimes Q_{i,j}\left(\xy 0:/r.10pc/: (0,0)*{\sdotu{i}}; \endxy, y_j\right)} X \circ (E_i M_j). 
\]
Hence, it suffices to prove the endomorphism $Q_{i,j}\left(\xy 0:/r.13pc/: (0,0)*{\sdotu{i}}; \endxy, y_j\right)$ of $E_i M_j$ is zero. 
Recall that $M_j$ is an $R_i(s_i\alpha_j)$-module, and $s_i \alpha_j = -a_{i,j}\alpha_i + \alpha_j$. 
Put $n = -a_{i,j}$. 
In $R_i(s_i\alpha_j)$, we have $e(*,i) = 0$, hence $e(n\alpha_i,j) = 1$. 
Thus, 
\[
\text{$Q_{i,j}(x_n, x_{n+1}) = \tau_n^2 = \tau_n e(*,j,i) \tau_n = 0$ in $R_i(s_i\alpha_j)$.}
\]
It follows that $Q_{i,j}(x_n,x_{n+1})M_j = 0$.
By Lemma \ref{lem:yaction} and \cite[Lemma 4.2]{MR2995184}, we deduce $Q_{i,j}\left(\xy 0:/r.13pc/: (0,0)*{\sdotu{i}}; \endxy, y_j\right)E_iM_j = Q_{i,j}(x_1,x_{n+1})e(i,*)M_j = 0$. 

The proof of (2) is analogous to (1). 
\end{proof}

\begin{definition} \label{def:anothertauij}
Let $j \in I \setminus \{i\}$.
(1) Let $X \in \gMod{R_i(\beta)}$. \index{$\sigma_{i,j}, \sigma_{j,i}$ for $j \neq i$}
We define $\sigma_{i,j} = \sigma_{i,j}(X) \colon q^{-(\alpha_i,\alpha_j)} E_iX \circ M_j \to E_i(X \circ M_j)$ to be the canonical injection as in Lemma \ref{lem:adjointSES}. 
We define $\sigma_{j,i} = \sigma_{j,i}(X) \colon q^{-(\alpha_i,\alpha_j)} E_i(X \circ M_j) \to E_i X \circ M_j$ to be the homomorphism characterized by 
\[
\text{$\sigma_{i,j} \sigma_{j,i} = Q_{i,j}\left(\xy 0:/r.10pc/: (0,0)*{\sdotu{i}}; \endxy, y_j\right)$ in $\END (E_iX \circ M_j)$},  
\]
whose existence is guaranteed by Lemma \ref{lem:sigmaji}. 
Note that both $\sigma_{i,j}$ and $\sigma_{j,i}$ are natural in $X$, and commute with the action of $R(\alpha_i)$ on $E_i$ and endomorphisms of $M_j$. 

(2) For $X \in \gMod{{}_iR(\beta)}$, we define 
\begin{align*}
\sigma'_{j,i} = \sigma'_{j,i}(X) \colon q^{-(\alpha_i,\alpha_j)} M'_j \circ X E_i \to (M'_j \circ X)E_i,  \\
\sigma'_{i,j} = \sigma'_{i,j}(X) \colon q^{-(\alpha_i,\alpha_j)} (M'_j \circ X)E_i \to M'_j \circ X E_i, 
\end{align*} \index{$\sigma'_{j,i}, \sigma'_{i,j}$ for $j \neq i$}
in the same manner. 
\end{definition}

The main results of this paper are the following theorems.
The proofs will be given in the subsequent sections. 

\begin{theorem} \label{thm:anotheraction}
(1) The category $\gMod{R_i}$ is a right $\dotcatquantum{\mathfrak{p}_i}$-module as follows: 
\begin{itemize}
\item The category attached to $\lambda \in \mathsf{P}$ is $\gMod{R_i(s_i\lambda)}$. 
\item The actions of the generating 1-morphisms are given by 
\[
X F_i = E_i X, X E_i = F_i X, X F_j = X \circ M_j \ (j \neq i). 
\]
\item The actions of the generating 2-morphisms are given as follows $(X \in \gMod{R_i(\beta)})$: 
\begin{align*}
&\left[ X \xy 0;/r.15pc/: 
(0,0)*{\sdotd{i}};
\endxy \colon q^{(\alpha_i,\alpha_i)} X F_i \to X F_i \right] = \left[ \xy 0;/r.15pc/: 
(0,0)*{\sdotu{i}};
\endxy X \colon q^{(\alpha_i,\alpha_i)} E_iX \to E_iX \right] \\
&= \left[ q^{(\alpha_i,\alpha_i)}e(i,*)X \xrightarrow{\text{the left multiplication by $x_1 \in R(\alpha_i)$}} e(i,*)X \right],  \\
&\left[ X \xy 0;/r.15pc/: 
(0,0)*{\sdotd{j}};
\endxy \colon q^{(\alpha_j,\alpha_j)} X F_j \to X F_j \ (j \neq i) \right] \\
&= \left[ \id_X \otimes y_j \colon q^{(\alpha_j,\alpha_j)} X \circ M_j \to X \circ M_j \right], \\
&\left[ X \xy 0;/r.15pc/: 
(0,0)*{\dcross{i}{i}};
\endxy \colon q^{-(\alpha_i,\alpha_i)} X F_i F_i \to X F_i F_i \right] = \left[ \xy 0;/r.15pc/: 
(0,0)*{\ucross{i}{i}};
\endxy X \colon q^{-(\alpha_i,\alpha_i)} E_iE_i X \to E_i E_i X \right] \\
&= \left[ q^{-(\alpha_i,\alpha_i)} e(2\alpha_i,*)X \xrightarrow{\text{the left multiplication by $\tau_1 \in R(2\alpha_i)$}} e(2\alpha_i,*)X \right], \\
&\left[ X \xy 0;/r.15pc/: 
(0,0)*{\dcross{i}{j}};
\endxy \colon q^{-(\alpha_i,\alpha_j)} X F_i F_j \to X F_j F_i \right] \ (j \neq i) \\
&= \left[ \sigma_{i,j} \colon q^{-(\alpha_i,\alpha_j)} (E_iX) \circ M_j \to E_i (X \circ M_j) \right], \\
&\left[ X \xy 0;/r.15pc/: 
(0,0)*{\dcross{j}{i}};
\endxy \colon q^{-(\alpha_i,\alpha_j)} X F_j F_i \to X F_i F_j \right] \ (j \neq i) \\
&= \left[ \sigma_{j,i} \colon q^{-(\alpha_i,\alpha_j)} E_i (X \circ M_j) \to (E_iX) \circ M_j \right], \\
&\left[ X \xy 0;/r.15pc/: 
(0,0)*{\dcross{j}{k}};
\endxy \colon q^{-(\alpha_j,\alpha_k)} X F_j F_k \to X F_k F_j \right] \ (j,k \neq i) \\
&= \left[ \id_X \otimes \sigma_{j,k} \colon q^{-(\alpha_j,\alpha_k)} X \circ M_j \circ M_k \to X \circ M_k \circ M_j \right], \\
&\left[ X \xy 0;/r.15pc/: 
(0,0)*{\lcap{i}};
\endxy \colon q_i^{1 + \langle h_i, s_i\beta \rangle} X F_i E_i \to X \right] = \left[c_{i,\beta}^{-1}\xy 0;/r.15pc/: 
(0,0)*{\lcap{i}};
\endxy X \colon q_i^{1 + \langle h_i, -\beta \rangle} F_i E_i X \to X \right] \\ 
&= \text{$c_{i,\beta}^{-1}$-multiple of the canonical unit for the adjoint pair $(F_i, q_i^{1 + \langle h_j, -\beta \rangle} E_i)$}, \\
&\left[ X \xy 0;/r.15pc/: 
(0,0)*{\lcup{i}};
\endxy \colon q_i^{1 - \langle h_i, s_i\beta \rangle} X \to X E_i F_i\right] = \left[ c_{i,\beta}\xy 0;/r.15pc/: 
(0,0)*{\lcup{i}};
\endxy X \colon q_i^{1 - \langle h_i, -\beta \rangle} X \to E_i F_i X \right] \\
&= \text{$c_{i,\beta}$-multiple of the canonical counit for the adjoint pair $(F_i, q_i^{1 + \langle h_i, -\beta -\alpha_i \rangle} E_i)$}. 
\end{align*}
\end{itemize}
The action restricts to the additive subcategory $\gproj{R_i}$. 
Moreover, we have 
\begin{align*}
%&X  \xy 0;/r.17pc/:
%(0,0)*{\lcross{i}{i}}; 
%\endxy = \xy 0;/r.17pc/:
%(0,0)*{\lcross{i}{i}}; 
%\endxy X, \\
&\left[ X \xy 0;/r.15pc/: 
(0,0)*{\rcap{i}};
\endxy \colon q_i^{1 - \langle h_i, s_i\beta \rangle} X E_i F_i \to X \right] = \left[ c_{i,-\beta}^{-1} \xy 0;/r.15pc/: 
(0,0)*{\rcap{i}};
\endxy X \colon q_i^{1 - \langle h_i, -\beta \rangle} E_i F_i X \to X \right], \\ 
&\left[ X \xy 0;/r.15pc/: 
(0,0)*{\rcup{i}};
\endxy \colon q_i^{1 + \langle h_i, s_i\beta \rangle} X \to X F_i E_i \right] = \left[ c_{i,-\beta} \xy 0;/r.15pc/: 
(0,0)*{\rcup{i}};
\endxy X \colon q_i^{1 + \langle h_i, -\beta \rangle} X \to F_i E_i X \right]. \\
\end{align*}

(2) The category $\gMod{{}_iR}$ is a left $\dotcatquantum{\mathfrak{p}_i}$-module as follows: 
\begin{itemize}
\item The category attached to $\lambda \in \mathsf{P}$ is $\gMod{R_j(-s_i\lambda)}$. 
\item The actions of the generating 1-morphisms are given by 
\[
F_i X = X E_i, E_i X = X F_i , F_j X = M_j \circ X \ (j \neq i). 
\]
\item The actions of the generating 2-morphisms are given as follows $(X \in \gMod{{}_iR(\beta)}, n = \height \beta)$: 
\begin{align*}
&\left[ \xy 0;/r.15pc/: 
(0,0)*{\sdotd{i}};
\endxy X \colon q^{(\alpha_i,\alpha_i)} F_i X \to F_i X \right] = \left[ X \xy 0;/r.15pc/: 
(0,0)*{\sdotu{i}};
\endxy \colon q^{(\alpha_i,\alpha_i)} X E_i \to XE_i\right] \\
&= \left[q^{(\alpha_i,\alpha_i)} e(*,i)X \xrightarrow{\text{the left multiplication by $x_n$}} e(*,i)X \right], \\
&\left[ \xy 0;/r.15pc/: 
(0,0)*{\sdotd{j}};
\endxy X \colon q^{(\alpha_j,\alpha_j)} F_j X \to F_j X \right] \ (j \neq i) \\
&= \left[y'_j \otimes \id_X \colon q^{(\alpha_j,\alpha_j)} M'_j \circ X \to M'_j \circ X \right], \\
&\left[ \xy 0;/r.15pc/: 
(0,0)*{\dcross{i}{i}};
\endxy X \colon q^{-(\alpha_i,\alpha_i)} F_i F_i X \to F_i F_i X \right] = \left[ X \xy 0;/r.15pc/: 
(0,0)*{\ucross{i}{i}};
\endxy  \colon q^{-(\alpha_i,\alpha_i)} X E_iE_i \to X E_i E_i \right] \\
&= \left[ q^{-(\alpha_i,\alpha_i)} e(*,2\alpha_i)X \xrightarrow{\text{the left multiplication by $\tau_{n-1}$}} e(*,2\alpha_i)X \right], \\
&\left[ \xy 0;/r.15pc/: 
(0,0)*{\dcross{i}{j}};
\endxy X \colon q^{-(\alpha_i,\alpha_j)} F_i F_j X \to F_j F_i X \right] \ (j \neq i) \\
&= \left[ \sigma'_{i,j} \colon q^{-(\alpha_i,\alpha_j)} (M'_j \circ X) E_i \to M'_j \circ (XE_i) \right], \\
&\left[ \xy 0;/r.15pc/: 
(0,0)*{\dcross{j}{i}};
\endxy X \colon q^{-(\alpha_i,\alpha_j)} F_j F_i X \to F_i F_j X \right] \ (j \neq i) \\
&= \left[ \sigma'_{j,i} \colon q^{-(\alpha_i,\alpha_j)} M'_j \circ (X E_i) \to (M'_j \circ X) E_i \right], \\
&\left[ \xy 0;/r.15pc/: 
(0,0)*{\dcross{j}{k}};
\endxy X \colon q^{-(\alpha_j,\alpha_k)} F_j F_k X \to F_k F_j X \right] \ (j,k \neq i) \\
&= \left[ \sigma'_{j,k} \otimes \id_X \colon q^{-(\alpha_j,\alpha_k)} M'_j \circ M'_k \circ X \to M'_k \circ M'_j \circ X \right], \\
&\left[ \xy 0;/r.15pc/: 
(0,0)*{\rcap{i}};
\endxy X \colon q_i^{1 - \langle h_i, -s_i\beta \rangle} E_i F_i X \to X \right] = \left[ c_{i,-\beta} X \xy 0;/r.15pc/: 
(0,0)*{\rcap{i}};
\endxy \colon q_i^{1 - \langle h_i, \beta \rangle} X E_i F_i \to X \right] \\ 
&= \text{$c_{i,-\beta}$-multiple of the canonical unit for the adjoint pair $(F_i^*, q_i^{1-\langle h_i,\beta \rangle}E_i^*)$}, \\
&\left[ \xy 0;/r.15pc/: 
(0,0)*{\rcup{i}};
\endxy X \colon q_i^{1 + \langle h_i, -s_i\beta \rangle} X \to F_i E_i X \right] = \left[ c_{i,-\beta}^{-1} X \xy 0;/r.15pc/: 
(0,0)*{\rcup{i}};
\endxy \colon q_i^{1 + \langle h_i, \beta \rangle} X \to X F_i E_i \right] \\
&= \text{$c_{i,-\beta}^{-1}$-multiple of the canonical counit for the adjoint pair $(F_i^*, q_i^{1-\langle h_i,\beta+\alpha_i \rangle}E_i^*)$}. \\
\end{align*}
\end{itemize}
The action restricts to the additive subcategory $\gproj{{}_i R}$.
Moreover, we have 
\begin{align*}
&\left[ \xy 0;/r.15pc/: 
(0,0)*{\lcap{i}};
\endxy X \colon q_i^{1 + \langle h_i, -s_i\beta \rangle} F_i E_i X \to X  \right]= \left[ c_{i,\beta}X \xy 0;/r.15pc/: 
(0,0)*{\lcap{i}};
\endxy \colon q_i^{1 + \langle h_i, \beta \rangle} X F_i E_i \to X \right], \\ 
&\left[\xy 0;/r.15pc/: 
(0,0)*{\lcup{i}};
\endxy X \colon q_i^{1 - \langle h_i, -s_i\beta \rangle} X \to E_i F_i X\right] =\left[ c_{i,\beta}^{-1} X \xy 0;/r.15pc/: 
(0,0)*{\lcup{i}};
\endxy \colon q_i^{1 - \langle h_i, \beta \rangle} X \to X E_i F_i \right]. \\
\end{align*}
\end{theorem}

Note that $R_i(0)E_i=0, R_i(0)F_i = R_i(\alpha_i) = 0$, and the graded algebra 
\[
\END_{R_i(0)}(R_i(0)) \simeq \mathbf{k}
\]
is concentrated in degree zero. 
By Theorem \ref{thm:rightuniversality}, there exists a morphism of right $\dotcatquantum{\mathfrak{p}_i}$-modules
\[
\gproj{{}_iR} \to \gproj{R_i},\ \mathbf{1} = {}_iR(0) \mapsto \mathbf{1} = R_i(0). 
\]
It uniquely extends to a right exact functor of right $\dotcatquantum{\mathfrak{p}_i}$-modules
\[
S_i \colon \gMod{{}_iR} \to \gMod{R_i},  \index{$S_i$: reflection functor}
\] 
given by 
\[
S_i(X) = S_i({}_iR(\beta)) \otimes_{{}_iR(\beta)} X \ (X \in \gMod{{}_iR(\beta)}). 
\]
Similarly, Theorem \ref{thm:leftuniversality} shows that there exists a morphism of left $\dotcatquantum{\mathfrak{p}_i}$-modules
\[
\gproj{R_i} \to \gproj{{}_iR}, \ \mathbf{1} = R_i(0) \mapsto \mathbf{1} = {}_iR(0),  
\] 
which uniquely extends to a right exact functor of left $\dotcatquantum{\mathfrak{p}_i}$-modules
\[
S'_i \colon \gMod{R_i} \to \gMod{{}_iR}. \index{$S'_i$: reflection functor}
\] 
We call them the reflection functors. 

\begin{theorem} \label{thm:reflectionfunctor}
$S_i$ and $S'_i$ are mutually quasi-inverse, and give a graded monoidal equivalence $\gMod{R_i} \simeq \gMod{{}_iR}$. 
\end{theorem}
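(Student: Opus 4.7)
The plan is to reduce the theorem to three sub-claims: (i) that $\gMod{R_i}$ and $\gMod{{}_iR}$ carry coherent $\dotcatquantum{\mathfrak{p}_i}$-bimodule structures; (ii) that $S_i$ and $S'_i$ are both left and right $\dotcatquantum{\mathfrak{p}_i}$-linear; and (iii) that $S_i$ comes equipped with a monoidal structure. Once (i) and (ii) are in place, the universality statements (Theorem \ref{thm:leftuniversality} on the $\gMod{R_i}$ side and Theorem \ref{thm:rightuniversality} on the $\gMod{{}_iR}$ side) will force $S_i$ and $S'_i$ to be mutually quasi-inverse, exactly mirroring the bimodule argument at the level of vector spaces described in the introduction.

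For the bimodule structure, I would show that for every pair of 1-morphisms $G, H$ in $\dotcatquantum{\mathfrak{p}_i}$ and every $X \in \gMod{R_i}$ there is a canonical natural isomorphism $G \cdot (X \cdot H) \simeq (G \cdot X) \cdot H$, where the left action comes from Theorem \ref{thm:cyclotomic2repleft} and the right action from Theorem \ref{thm:anotheraction}(1). It suffices to handle generating 1-morphisms and to verify coherence with the generating 2-morphisms of $\dotcatquantum{\mathfrak{p}_i}$. The most delicate cases are the interactions between $F_i$ on the right (which acts via $E_i$) and $E_i$ or $F_j$ on the left; these are handled by the Mackey-type short exact sequences of Lemma \ref{lem:adjointSES} together with Remark \ref{rem:EFvsmultiplication}, and for mixed crossings by the explicit maps $\sigma_{i,j}$, $\sigma_{j,i}$ of Definition \ref{def:anothertauij}. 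The analogous statement for $\gMod{{}_iR}$ uses the $\sigma'$-variants.

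For bilinearity of $S_i$, fix a 1-morphism $H$ and note that both $S_i(-) \cdot H$ and $S_i(- \cdot H)$ are left $\dotcatquantum{\mathfrak{p}_i}$-linear functors $\gMod{R_i} \to \gMod{{}_iR}$. By Theorem \ref{thm:leftuniversality}, to give a natural isomorphism between them it suffices to exhibit $S_i(R_i(0) \cdot H) \simeq {}_iR(0) \cdot H$ naturally in $H$ and compatibly with 2-morphisms of $\dotcatquantum{\mathfrak{p}_i}$. For $H = F_j$ with $j \ne i$ this reduces to identifying $S_i(M_j) \simeq M'_j$; for $H = F_i$ and $H = E_i$ one computes directly from the formulas of Theorem \ref{thm:anotheraction}. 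Bilinearity of $S'_i$ is symmetric, using Theorem \ref{thm:rightuniversality}. Once bilinearity is established, $S'_i \circ S_i$ is a left $\dotcatquantum{\mathfrak{p}_i}$-linear endofunctor of $\gMod{R_i}$ sending $R_i(0)$ to itself, hence $\simeq \Id$ by Theorem \ref{thm:leftuniversality}; symmetrically $S_i \circ S'_i \simeq \Id$.

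For monoidality, right-linearity of $S_i$ provides natural isomorphisms $S_i(X \circ (R_i(0) \cdot G)) \simeq S_i(X) \circ ({}_iR(0) \cdot G)$ for each 1-morphism $G$, since convolution on the right with $R_i(0) \cdot G$ implements the right action of $G$ on $\gMod{R_i}$ up to the special treatment of $F_i$, which is handled by the bimodule compatibility. Since every object of $\gMod{R_i}$ is generated by $R_i(0)$ under the right action, these isomorphisms extend to a global monoidal structure on $S_i$; pentagon and triangle coherence follow from the associativity of convolution together with the bimodule axioms. The main obstacle is the verification of bilinearity and its prerequisite bimodule structure: each individual check is elementary, but the total diagrammatic computation --- especially the compatibility with cups, caps, and the $\sigma$-crossings built from the auxiliary action --- is substantial, and as anticipated in the introduction is expected to occupy the bulk of the remainder of the paper.
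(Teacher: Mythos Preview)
Your high-level strategy --- establish that $S_i$ respects both $\dotcatquantum{\mathfrak{p}_i}$-actions and then invoke universality to conclude $S'_i S_i \simeq \Id$ and $S_i S'_i \simeq \Id$ --- matches the paper's (see the short argument immediately following Proposition \ref{prop:linear}). The paper, however, inverts your order of (ii) and (iii): it proves monoidality \emph{first} (Proposition \ref{prop:monoidality}), building $\theta \colon S_i(X) \circ S_i(Y) \to S_i(X \circ Y)$ by induction on $\height \beta$, and then \emph{uses} monoidality to construct the left-linearity isomorphisms $\kappa_j^- \colon S_i(F_j X) \to F_j S_i(X)$ for $j \neq i$, via $S_i(M'_j \circ X) \xrightarrow{\theta^{-1}} S_i(M'_j) \circ S_i(X) \simeq R(\alpha_j) \circ S_i(X)$ (Proposition \ref{prop:linear}). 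No explicit bimodule structure is established.

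Your derivation of monoidality from right-linearity has a genuine gap. Right-linearity of $S_i$ gives $S_i(X \cdot G) \simeq S_i(X) \cdot G$, but the right action of Theorem \ref{thm:anotheraction}(1) is \emph{not} right convolution with $R_i(0) \cdot G$: already $R_i(0) \cdot F_i = E_i R_i(0) = 0$ while $X \cdot F_i = E_i X$ is typically nonzero, so $X \circ (R_i(0) \cdot G) \not\simeq X \cdot G$ in general. Hence knowing $S_i(X \cdot G)$ does not directly yield $S_i(X \circ Y)$. The paper's inductive construction of $\theta$ resolves this by treating $Y = Y'F_j$ with $j \neq i$ via the identification $(X \circ Y')F_j \simeq X \circ (Y'F_j)$, and $Y = Y'F_i$ via the short exact sequence of Lemma \ref{lem:adjointSES} linking $E_i(S_i(X) \circ S_i(Y'))$ to $(E_iS_i(X)) \circ S_i(Y')$ and $S_i(X) \circ (E_iS_i(Y'))$; the isomorphism $\theta$ for the $F_i$ step is then forced by exactness (diagrams (3) and (4) in Proposition \ref{prop:monoidality}). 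This inductive construction, together with the subsequent case-by-case verification that $\kappa^-$ commutes with the generating 2-morphisms of $\catquantum{\mathfrak{p}_i}$, is where the bulk of the work lies, and it is not bypassed by the bimodule formalism you propose.
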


\begin{remark} \label{rem:LRchange2}
$S_i$ and $S'_i$ are related in the following manner. 
The automorphism $\sigma_*$ of $\gMod{R}$ induces an isomorphism of monoidal categories
\[
\sigma_* \colon \gMod{{}_iR} \to \gMod{R_i}. 
\] \index{$\sigma_*$: twist by $\sigma$}
On the other hand, we have an isomorphism of 2-categories
\[
\sigma \colon \catquantum{\mathfrak{p}_i} \to \catquantum{\mathfrak{p}_i}^{\mathrm{op}}. 
\]
Through these isomorphisms, the left action of $\catquantum{\mathfrak{p}_i}$ on $\gMod{{}_iR}$ (Theorem \ref{thm:anotheraction} (1)) coincides with the left action of $\catquantum{\mathfrak{p}_i}^{\mathrm{op}}$ on $\gMod{R_i}$ (Theorem \ref{thm:anotheraction} (1)). 
Combined with Remark \ref{rem:LRchange}, the following diagram of equivalences commutes up to natural isomorphism: 
\begin{equation*}
  \centering
\begin{tikzcd}
\gMod{R_i} \arrow[r,"S_i'"]\arrow[d,"\sigma_*"] & \gMod{{}_iR} \arrow[d, "\sigma_*"] \\
\gMod{{}_iR} \arrow[r,"S_i"] & \gMod{R_i} \\
\end{tikzcd}
\end{equation*}
This is a categorical lift of the formula \cite[37.2.4]{MR2759715}. 
\end{remark}

\section[Proof of Theorem]{Proof of Theorem \ref{thm:anotheraction}}

In this section, we prove Theorem \ref{thm:anotheraction}. 
We only address (1), as (2) is completely parallel to it: 
in fact, (2) is reduced to (1) by Remark \ref{rem:LRchange2}. 
In order to prove that the $\dotcatquantum{\mathfrak{p}_i}$-action is well-defined, it suffices to verify the relations listed in Theorem \ref{thm:Rouquierver} (2).
We suppress degree shifts. 
As in the previous section, $\gMod{R_i}$ is a left $\dotcatquantum{\mathfrak{p}_i}$-module by Theorem \ref{thm:cyclotomic2repleft}. 
Let $\beta \in \mathsf{Q}_+, X \in \gMod{R_i(\beta)}$. 

\subsection{Left adjunction}

It is immediate from the definition. 

\subsection{Quadratic KLR}

We need to prove
\[
X \xy 0;/r.17pc/:
(0,4)*{\dcross{}{}};
(0,-4)*{\dcross{j}{k}};
\endxy 
= X Q_{j,k} \left( 
\xy 0;/r.17pc/: 
(-4,8); (-4,-8) **\dir{-} ?(1)*\dir{>}+(0,-2)*{\scriptstyle j}; 
(4,8); (4,-8) **\dir{-} ?(1)*\dir{>}+(0,-2)*{\scriptstyle k};
(-4,0)*{\bullet};
\endxy, \quad 
\xy 0;/r.17pc/:
(-4,8); (-4,-8) **\dir{-} ?(1)*\dir{>}+(0,-2)*{\scriptstyle j}; 
(4,8); (4,-8) **\dir{-} ?(1)*\dir{>}+(0,-2)*{\scriptstyle k};
(4,0)*{\bullet}; 
\endxy 
\right)
\]
for any $j, k \in I$. 

If $j = k = i$, we need to prove
\[
\xy 0;/r.17pc/:
(0,4)*{\ucross{}{}};
(0,-4)*{\ucross{i}{i}};
\endxy X = 0. 
\]
It follows from relations (1), (3) and (4) in Definition \ref{def:catquantum} and $Q_{i,i}(u,v) = 0$. 

If $j \neq i, k = i$, it follows from $\sigma_{i,j} \sigma_{j,i} = Q_{i,j}\left(\xy 0:/r.10pc/: (0,0)*{\sdotu{i}}; \endxy, y_j\right)$ in $\End(E_i(X \circ M_j))$, see Definition \ref{def:anothertauij} (1). 

If $j = i, k \neq i$, then we have 
\[
\sigma_{i,k} \sigma_{k,i} \sigma_{i,k} = Q_{i,k}\left(\xy 0:/r.10pc/: (0,0)*{\sdotu{i}}; \endxy, y_k \right) \sigma_{i,k} = \sigma_{i,k} Q_{i,k}\left(\xy 0:/r.10pc/: (0,0)*{\sdotu{i}}; \endxy, y_k \right) \colon (E_i X) \circ M_j \to E_i (X \circ M_j). 
\] 
Since $\sigma_{i,k}$ is injective, we deduce $\sigma_{k,i} \sigma_{i,k} = Q_{i,k}(\xy 0:/r.10pc/: (0,0)*{\sdotu{i}}; \endxy, y_k)$. 

Finally, assume $j,k \neq i$. 
The composition
\[
R(\alpha_j+\alpha_k)e(j,k) \xrightarrow{\times \tau_1} R(\alpha_j+\alpha_k)e(k,j) \xrightarrow{\times \tau_1} R(\alpha_j+\alpha_k)e(j,k)
\]
coincides with the right multiplication by $Q_{j,k}(x_1,x_2)$. 
Hence, the monoidal equivalence of Proposition \ref{prop:extremalequiv} shows that $\sigma_{k,j}\sigma_{j,k} = Q_{j,k}(y_j,y_k)$ as an endomorphism of $M_j \circ M_k$. 
It proves the relation. 

\subsection{Dot slides}

We need to prove
\begin{align*}
  X \xy 0;/r.15pc/: 
  (0,0)*{\dcross{j}{k}}; 
  (2,1.8)*{\bullet}; 
  \endxy 
  - 
  X \xy 0;/r.15pc/: 
  (0,0)*{\dcross{j}{k}}; 
  (-2,-1.3)*{\bullet};
  \endxy
  &= 
  X \xy 0;/r.15pc/: 
  (0,0)*{\dcross{j}{k}}; 
  (2,-1.3)*{\bullet}; 
  \endxy
  -
  X \xy 0;/r.15pc/: 
  (0,0)*{\dcross{j}{k}}; 
  (-2,1.8)*{\bullet}; 
  \endxy = \begin{cases}
  X \xy 0;/r.15pc/: 
  (-4,0)*{\slined{j}};
  (4,0)*{\slined{k}}; 
  \endxy & \text {if $j=k$}, \\
  0 & \text {if $j \neq k$},
  \end{cases}
  \end{align*}
for any $j, k \in I$. 

If $j = k = i$, it is equivalent to proving 
\[
  \xy 0;/r.15pc/: 
  (0,0)*{\ucross{i}{i}}; 
  (-2,1.8)*{\bullet}; 
  \endxy X
  - 
  \xy 0;/r.15pc/: 
  (0,0)*{\ucross{i}{i}}; 
  (2,-1.3)*{\bullet};
  \endxy X
  = 
  \xy 0;/r.15pc/: 
  (0,0)*{\ucross{i}{i}}; 
  (-2,-1.3)*{\bullet}; 
  \endxy X 
  -
  \xy 0;/r.15pc/: 
  (0,0)*{\ucross{i}{i}}; 
  (2,1.8)*{\bullet}; 
  \endxy X 
  = 
  \xy 0;/r.15pc/: 
  (-4,0)*{\slineu{i}};
  (4,0)*{\slineu{i}}; 
  \endxy X. 
\]
It follows from relations (1), (2), (3) and (5) in Definition \ref{def:catquantum}. 
Alternatively, Theorem \ref{thm:cyclotomic2repleft} shows that the relation is equivalent to the equation
\[
x_2\tau_1e(i,i,*) - \tau_1x_1 e(i,i,*) = \tau_1x_2 e(i,i,*) - x_1 \tau_1e(i,i,*) = e(i,i,*)
\]
as an endomorphism of $X$. 
(One must be careful about where each $E_i$ is applied: see Remark \ref{rem:restrictionorder}.) 

If $j = i, k\neq i$ (resp. $j \neq i, k = i$), it follows from the fact that $\sigma_{i,k}$ (resp. $\sigma_{j,i}$) commutes with the action of $R(\alpha_i)$ on $E_i$ and endomorphisms of $M_j$ (resp. $M_k$). 

If $j,k \neq i$, it follows from the relation in $R(\alpha_j+\alpha_k)$ using the monoidal equivalence of Proposition \ref{prop:extremalequiv}. 

\subsection{Cubic KLR}

We need to prove 
\[
X \xy 0;/r.17pc/: 
(4,-8)*{\dcross{k}{l}}; 
(-4,0)*{\dcross{}{}};
(4,8)*{\dcross{}{}};
(-8,-8)*{\slined{j}};
(8,0)*{\slined{}};
(-8,8)*{\slined{}};
\endxy
- 
X \xy 0;/r.17pc/: 
(-4,-8)*{\dcross{j}{k}}; 
(4,0)*{\dcross{}{}};
(-4,8)*{\dcross{}{}};
(8,-8)*{\slined{l}};
(-8,0)*{\slined{}};
(8,8)*{\slined{}};
\endxy
= 
X \overline{Q}_{j,k,l}\left( 
\xy 0;/r.17pc/: 
(-4,0)*{\slined{j}}; 
(0,0)*{\slined{k}};
(4,0)*{\slined{l}}; 
(-4,0)*{\bullet}; 
\endxy, 
\xy 0;/r.17pc/: 
(-4,0)*{\slined{j}}; 
(0,0)*{\slined{k}};
(4,0)*{\slined{l}}; 
(0,0)*{\bullet}; 
\endxy, 
\xy 0;/r.17pc/: 
(-4,0)*{\slined{j}}; 
(0,0)*{\slined{k}};
(4,0)*{\slined{l}}; 
(4,0)*{\bullet}; 
\endxy
\right) 
\]
for any $j,k,l \in I$. 
The calculation splits into the following ten cases: 

Case 1. $j=k=l = i$. 
It follows from 
\[
\xy 0;/r.17pc/: 
(-4,-8)*{\ucross{i}{i}}; 
(4,0)*{\ucross{}{}};
(-4,8)*{\ucross{}{}};
(8,-8)*{\slineu{i}};
(-8,0)*{\slineu{}};
(8,8)*{\slineu{}};
\endxy X
-
\xy 0;/r.17pc/: 
(4,-8)*{\ucross{i}{i}}; 
(-4,0)*{\ucross{}{}};
(4,8)*{\ucross{}{}};
(-8,-8)*{\slineu{i}};
(8,0)*{\slineu{}};
(-8,8)*{\slineu{}};
\endxy X =0.  
\]

Case 2. $j = k = i, l \neq i$. 
Since the right hand side is zero, it suffices to prove that the following two homomorphisms coincide: 
\begin{align*}
&E_i E_i X \circ M_l \xrightarrow{\sigma_{i,l}} E_i (E_i X \circ M_l) 
\xrightarrow{\sigma_{i,l}} E_i E_i (X \circ M_l)  
\xrightarrow{\xy 0;/r.12pc/: (0,0)*{\ucross{i}{i}} \endxy} E_iE_i (X \circ M_l), \\
&E_iE_i X \circ M_l \xrightarrow{\xy 0;/r.12pc/: (0,0)*{\ucross{i}{i}} \endxy} E_iE_i X \circ M_l 
\xrightarrow{\sigma_{i,l}} E_i(E_i X \circ M_l) 
\xrightarrow{\sigma_{i,l}} E_iE_i (X \circ M_l).   
\end{align*}
It follows from Lemma \ref{lem:naturality} (3). 

Case 3. $j \neq i, k = l = i$. 
Since the right hand side is zero, we have to prove that the following two homomorphisms coincide. 
\begin{align*}
&E_i E_i (X \circ M_j) \xrightarrow{\xy 0;/r.12pc/: (0,0)*{\ucross{i}{i}} \endxy} E_i E_i (X \circ M_j) 
\xrightarrow{\sigma_{j,i}} E_i (E_i X \circ M_j) 
\xrightarrow{\sigma_{j,i}} E_iE_i X \circ M_j, \\
&E_iE_i (X \circ M_j) \xrightarrow{\sigma_{j,i}} E_i(E_i X \circ M_j) 
\xrightarrow{\sigma_{j,i}} E_iE_i X \circ M_j 
\xrightarrow{\xy 0;/r.12pc/: (0,0)*{\ucross{i}{i}} \endxy} E_iE_i X \circ M_j. 
\end{align*}
By embedding $E_i (E_i X \circ M_j)$ and $E_iE_i X \circ M_j$ into $E_i E_i(X \circ M_j)$ using $\sigma_{i,j}$, 
it suffices to prove that the following two homomorphisms coincide:
\begin{align*}
&E_i E_i (X \circ M_j) \xrightarrow{\xy 0;/r.12pc/: (0,0)*{\ucross{i}{i}} \endxy} E_iE_i (X\circ M_j) \\
&\xrightarrow{Q_{i,j} \left(\xy 0;/r.12pc/: (-3,0)*{\slineu{i}}; (3,0)*{\sdotu{i}}; \endxy, y_j\right)} E_iE_i(X \circ M_j) \xrightarrow{Q_{i,j} \left(\xy 0;/r.12pc/: (-3,0)*{\sdotu{i}}; (3,0)*{\slineu{i}}; \endxy, y_j\right)} E_iE_i(X \circ M_j), \\
&E_iE_i (X\circ M_j) \xrightarrow{Q_{i,j} \left(\xy 0;/r.12pc/: (-3,0)*{\slineu{i}}; (3,0)*{\sdotu{i}}; \endxy, y_j\right)} E_iE_i(X \circ M_j) \\
&\xrightarrow{Q_{i,j} \left(\xy 0;/r.12pc/: (-3,0)*{\sdotu{i}}; (3,0)*{\slineu{i}}; \endxy, y_j\right)} E_iE_i(X \circ M_j) \xrightarrow{\xy 0;/r.12pc/: (0,0)*{\ucross{i}{i}} \endxy} E_i E_i (X \circ M_j). 
\end{align*}
Here, we used Definition \ref{def:anothertauij} (1), 
the naturality of the embedding (Lemma \ref{lem:adjointSES}), 
and Lemma \ref{lem:naturality} (3). 
Note that the composition of $Q_{i,j} \left(\xy 0;/r.12pc/: (-3,0)*{\slineu{i}}; (3,0)*{\sdotu{i}}; \endxy, y_j\right)$ and $Q_{i,j} \left(\xy 0;/r.12pc/: (-3,0)*{\sdotu{i}}; (3,0)*{\slineu{i}}; \endxy, y_j\right)$ is symmetric for $\xy 0;/r.12pc/: (-3,0)*{\slineu{i}}; (3,0)*{\sdotu{i}}; \endxy$ and $\xy 0;/r.12pc/: (-3,0)*{\sdotu{i}}; (3,0)*{\slineu{i}}; \endxy$, 
since $Q_{i,j}(u,v)Q_{i,j}(u',v)$ is symmetric for $u$ and $u'$. 
Hence, it commutes with $\xy 0;/r.12pc/: (0,0)*{\ucross{i}{i}} \endxy$, and the two homomorphisms coincide.

Case 4. $j = i, k \neq i, l = i$. 
Note that 
\begin{align*}
  &X \xy 0;/r.15pc/: 
  (4,-8)*{\dcross{k}{i}}; 
  (-4,0)*{\dcross{}{}};
  (4,8)*{\dcross{}{}};
  (-8,-8)*{\slined{i}};
  (8,0)*{\slined{}};
  (-8,8)*{\slined{}}; 
  \endxy \\
  &= \bigg[ E_i (E_i X \circ M_k) \xrightarrow{\sigma_{k,i}} E_iE_i X \circ M_k \\
  &\quad \xrightarrow{\xy 0;/r.12pc/: (0,0)*{\ucross{i}{i}} \endxy} E_i E_i X \circ M_k \xrightarrow{\sigma_{i,k}} E_i(E_i X \circ M_k) \bigg], \\
  &X \xy 0;/r.15pc/: 
  (-4,-8)*{\dcross{i}{k}}; 
  (4,0)*{\dcross{}{}};
  (-4,8)*{\dcross{}{}};
  (8,-8)*{\slined{i}};
  (-8,0)*{\slined{}};
  (8,8)*{\slined{}};
  \endxy \\
  &= \bigg[ E_i(E_i X \circ M_k) \xrightarrow{\sigma_{i,k}} E_iE_i (X \circ M_k) \\
  &\xrightarrow{\xy 0;/r.12pc/: (0,0)*{\ucross{i}{i}} \endxy} E_iE_i (X\circ M_k) \xrightarrow{\sigma_{k,i}} E_i(E_i X \circ M_k)\bigg].
\end{align*}
By embedding all the modules into $E_i E_i (X \circ M_k)$, it suffices to prove 
\begin{align*}
&\bigg[ 
E_i E_i (X \circ M_k) \xrightarrow{Q_{i,k} \left(\xy 0;/r.12pc/: (-3,0)*{\sdotu{i}}; (3,0)*{\slineu{i}}; \endxy, y_k\right)} E_i E_i (X \circ M_k) \\
&\quad \xrightarrow{\xy 0;/r.12pc/: (0,0)*{\ucross{i}{i}}; \endxy} E_iE_i (X \circ M_k) \xrightarrow{\id} E_iE_i (X \circ M_k)\bigg] \\
&- \bigg[E_i E_i (X \circ M_k) \xrightarrow{\id} E_i E_i (X \circ M_k) \\
&\quad \xrightarrow{\xy 0;/r.12pc/: (0,0)*{\ucross{i}{i}}; \endxy} E_iE_i (X \circ M_k) \xrightarrow{Q_{i,k} \left(\xy 0;/r.12pc/: (-3,0)*{\slineu{i}}; (3,0)*{\sdotu{i}}; \endxy, y_k\right)} E_iE_i (X \circ M_k)\bigg] \\
&= \bigg[E_i E_i (X \circ M_k) \xrightarrow{\overline{Q}_{i,k,i}\left( \xy 0;/r.12pc/: (-3,0)*{\slineu{i}}; (3,0)*{\sdotu{i}}; \endxy, y_k, \xy 0;/r.12pc/: (-3,0)*{\sdotu{i}}; (3,0)*{\slineu{i}}; \endxy \right)} E_iE_i(X \circ M_k)\bigg]. 
\end{align*}
Here, we used Definition \ref{def:anothertauij},
the naturality of the embedding (Lemma \ref{lem:adjointSES}), 
Lemma \ref{lem:EFvsmultiplication}, 
and Lemma \ref{lem:naturality} (3). 
The equality is a consequence of the following equation in the nil-Hecke algebra $R(2\alpha_i)$: for a polynomial $f(u) \in \mathbf{k}[u]$, we have 
\[
\tau_1 f(x_2) - f(x_1) \tau_1 = \partial_1(f(x_2)) = \frac{f(x_1) - f(x_2)}{x_1 - x_2}. 
\]
See also Remark \ref{rem:restrictionorder}. 

Case 5. $j = k \neq i, l = i$. 
Since the right hand side is zero, we need to prove that the following two homomorphisms coincide: 
\begin{align*}
&E_i (X \circ M_k \circ M_k) \xrightarrow{\sigma_{k,i}} E_i (X \circ M_k) \circ M_k \\
&\xrightarrow{\sigma_{k,i}} E_i X \circ M_k \circ M_k  
\xrightarrow{\sigma_{k,k}} E_iX \circ M_k \circ M_k, \\
&E_i (X \circ M_k \circ M_k) \xrightarrow{\sigma_{k,k}} E_i(X \circ M_k \circ M_k) \\
&\xrightarrow{\sigma_{k,i}} E_i(X \circ M_k) \circ M_k 
\xrightarrow{\sigma_{k,i}} E_iX \circ M_k \circ M_k.  
\end{align*}
By embedding all the modules into $E_i (X \circ M_k \circ M_k)$ as before, it suffices to prove that the following two homomorphisms coincide: 
\begin{align*}
&E_i (X \circ M_k \circ M_k) \xrightarrow{Q_{i,k} \left(\xy 0;/r.12pc/: (0,0)*{\sdotu{i}}; \endxy, \id_{M_k} \otimes y_k\right)} E_i (X\circ M_k \circ M_k)  \\
&\xrightarrow{Q_{i,k} \left(\xy 0;/r.12pc/: (0,0)*{\sdotu{i}}; \endxy, y_k \otimes \id_{M_k}\right)} E_i(X \circ M_k \circ M_k) 
\xrightarrow{\sigma_{k,k}} E_i(X \circ M_k \circ M_k), \\
&E_i(X\circ M_k \circ M_k) \xrightarrow{\sigma_{k,k}} E_i(X \circ M_k \circ M_k) \\
&\xrightarrow{Q_{i,k} \left(\xy 0;/r.12pc/: (0,0)*{\sdotu{i}}; \endxy, \id_{M_k} \otimes y_k\right)} E_i(X \circ M_k \circ M_k) 
\xrightarrow{Q_{i,k} \left(\xy 0;/r.12pc/: (0,0)*{\sdotu{i}}; \endxy, y_k \otimes \id_{M_k}\right)} E_i(X \circ M_k \circ M_k).   
\end{align*}
Note that the composition of $Q_{i,k} \left(\xy 0;/r.12pc/: (0,0)*{\sdotu{i}}; \endxy, \id_{M_k} \otimes y_k\right)$ and $Q_{i,k} \left(\xy 0;/r.12pc/: (0,0)*{\sdotu{i}}; \endxy, y_k \otimes \id_{M_k}\right)$ is symmetric in $\id_{M_k} \otimes y_k$ and $y_k \otimes \id_{M_k}$. 
We need to show that it commutes with $\sigma_{k,k}$. 
By Proposition \ref{prop:extremalequiv}, it follows from the fact that, in $R(2\alpha_k)$, the element $\tau_1$ commutes with symmetric polynomials of $x_1$ and $x_2$. 

Case 6. $j = l \neq i, k = i$. 
Note that 
\begin{align*}
  &X \xy 0;/r.15pc/: 
  (4,-8)*{\dcross{i}{j}}; 
  (-4,0)*{\dcross{}{}};
  (4,8)*{\dcross{}{}};
  (-8,-8)*{\slined{j}};
  (8,0)*{\slined{}};
  (-8,8)*{\slined{}}; 
  \endxy \\
  &= \bigg[E_i (X \circ M_j) \circ M_j \xrightarrow{\sigma_{i,j}} E_i(X \circ M_j \circ M_j) \\
  &\quad \xrightarrow{\sigma_{j,j}} E_i (X \circ M_j \circ M_j) 
  \xrightarrow{\sigma_{j,i}} E_i(X \circ M_j) \circ M_j\bigg], \\
  &X \xy 0;/r.15pc/: 
  (-4,-8)*{\dcross{j}{i}}; 
  (4,0)*{\dcross{}{}};
  (-4,8)*{\dcross{}{}};
  (8,-8)*{\slined{j}};
  (-8,0)*{\slined{}};
  (8,8)*{\slined{}};
  \endxy \\
  &= \bigg[E_i (X \circ M_j) \circ M_j \xrightarrow{\sigma_{j,i}} E_iX \circ M_j \circ M_j \\
  & \quad \xrightarrow{\sigma_{j,j}} E_i X \circ M_j \circ M_j
  \xrightarrow{\sigma_{i,j}} E_i(X \circ M_j) \circ M_j \bigg].
\end{align*}
By embedding all the modules into $E_i(X \circ M_j \circ M_j)$ as before, it suffices to prove 
\begin{align*}
&\bigg[ E_i (X \circ M_j \circ M_j) \xrightarrow{\id} E_i (X \circ M_j \circ M_j) \\
&\quad \xrightarrow{\sigma_{j,j}} E_i (X \circ M_j \circ M_j) 
\xrightarrow{Q_{i,j} \left(\xy 0;/r.12pc/: (0,0)*{\sdotu{i}}; \endxy, \id_{M_j} \otimes y_j\right)} E_i (X \circ M_j \circ M_j) \bigg] \\
&-\bigg[E_i (X \circ M_j \circ M_j) \xrightarrow{Q_{i,j} \left(\xy 0;/r.12pc/: (0,0)*{\sdotu{i}}; \endxy, y_j \otimes \id_{M_j}\right)} E_i (X \circ M_j \circ M_j) \\
&\quad \xrightarrow{\sigma_{j,j}} E_i (X \circ M_j \circ M_j) 
\xrightarrow{\id} E_i (X \circ M_j \circ M_j) \bigg] \\
&= 
\left[
E_i (X \circ M_j \circ M_j) \xrightarrow{\overline{Q}_{j,i,j}\left(y_j \otimes \id_{M_j},\xy 0;/r.12pc/: (0,0)*{\sdotu{i}}; \endxy, \id_{M_j}\otimes y_j \right)} E_i (X \circ M_j \circ M_j) 
\right]. 
\end{align*}
By the equivalence of Propopsition \ref{prop:extremalequiv}, 
it is reduced to the formula in $R(2\alpha_j)$ used in Case 3. 

Case 7. $j = i, k,l \neq i$. 
Since the right hand side is zero, it suffices to prove the following two homomorphisms coincide: 
\begin{align*}
&E_i X \circ M_k \circ M_l \xrightarrow{\sigma_{k,l}} E_i X \circ M_l \circ M_k \\
&\xrightarrow{\sigma_{i,l}} E_i (X \circ M_l) \circ M_k  
\xrightarrow{\sigma_{i,k}} E_i(X \circ M_l \circ M_k), \\
&E_i X \circ M_k \circ M_l \xrightarrow{\sigma_{i,k}} E_i(X \circ M_k) \circ M_l \\
&\xrightarrow{\sigma_{i,l}} E_i(X \circ M_k \circ M_l)
\xrightarrow{\sigma_{k,l}} E_i(X \circ M_l \circ M_k).  
\end{align*}
Note that the compositions of $\sigma_{i,l}$ and $\sigma_{i,k}$ coincides with the canonical injection $E_iX \circ M_l \circ M_k \to E_i(X \circ M_l \circ M_k)$ by Lemma \ref{lem:EFvsmultiplication}. 
Hence, the assertion follows from the naturality (Lemma \ref{lem:adjointSES}).  

Case 8. $j,k,l$ are distinct and $k = i$. 
Since the right hand side is zero, it suffices to prove the following two homomorphisms coincide: 
\begin{align*}
&E_i (X \circ M_j) \circ M_l \xrightarrow{\sigma_{i,l}} E_i (X \circ M_j \circ M_l) \\
&\xrightarrow{\sigma_{j,l}} E_i (X \circ M_l \circ M_j)  
\xrightarrow{\sigma_{j,i}} E_i(X \circ M_l) \circ M_j, \\
&E_i (X \circ M_j) \circ M_l \xrightarrow{\sigma_{j,i}} E_iX \circ M_j \circ M_l \\
&\xrightarrow{\sigma_{j,l}} E_iX \circ M_l \circ M_j
\xrightarrow{\sigma_{i,l}} E_i(X \circ M_l) \circ M_j.  
\end{align*}
By embedding as before $E_i(X \circ M_j) \circ M_l$ into $E_i(X \circ M_j \circ M_l)$, 
$E_i (X \circ M_l) \circ M_j$ into $E_i(X \circ M_l \circ M_j)$, 
$E_iX \circ M_j \circ M_l$ into $E_i(X \circ M_j \circ M_l)$, and 
$E_i X \circ M_l \circ M_j$ into $E_i (X \circ M_l \circ M_j)$, 
it suffices to prove that the following two homomorphisms coincide: 
\begin{align*}
&E_i (X \circ M_j \circ M_l) \xrightarrow{\id} E_i (X \circ M_j \circ M_l) \\
&\xrightarrow{\sigma_{j,l}} E_i (X \circ M_l \circ M_j)  
\xrightarrow{Q_{i,j}\left(\xy 0:/r.10pc/: (0,0)*{\sdotu{i}}; \endxy, y_j\right)} E_i(X \circ M_l \circ M_j), \\
&E_i (X \circ M_j \circ M_l) \xrightarrow{Q_{i,j}\left(\xy 0:/r.10pc/: (0,0)*{\sdotu{i}}; \endxy, y_j\right)} E_i(X \circ M_j \circ M_l) \\
&\xrightarrow{\sigma_{j,l}} E_i(X \circ M_l \circ M_j)
\xrightarrow{\id} E_i(X \circ M_l \circ M_j).  
\end{align*}
It follows from the relation $e(j,l)\tau_1x_2 = e(j,l)x_1\tau_1$ in $R(\alpha_j + \alpha_l)$
using the equivalence of Proposition \ref{prop:extremalequiv}. 

Case 9. $j,k,l$ are distinct and $l = i$. 
Since the right hand side is zero, we need to prove the following two homomorphisms coincide: 
\begin{align*}
&E_i(X \circ M_j \circ M_k) \xrightarrow{\sigma_{k,i}} E_i (X \circ M_j) \circ M_k \\
&\xrightarrow{\sigma_{j,i}} E_i X \circ M_j \circ M_k  
\xrightarrow{\sigma_{j,k}} E_iX \circ M_k \circ M_j, \\
&E_i (X \circ M_j \circ M_k) \xrightarrow{\sigma_{j,k}} E_i(X \circ M_k \circ M_j) \\
&\xrightarrow{\sigma_{j,i}} E_i(X \circ M_k) \circ M_j
\xrightarrow{\sigma_{k,i}} E_iX \circ M_k \circ M_j.  
\end{align*}
By embedding as before $E_i(X \circ M_j) \circ M_k$ into $E_i(X \circ M_j \circ M_k)$, 
$E_i X \circ M_ j \circ M_k$ into $E_i(X \circ M_j \circ M_k)$, 
$E_i X \circ M_k \circ M_j$ into $E_i(X \circ M_k \circ M_j)$, and 
$E_i (X \circ M_k)\circ M_j$ into $E_i(X \circ M_k \circ M_j)$, 
it suffices to prove that the following two homomorphisms coincide: 
\begin{align*}
&E_i(X \circ M_j \circ M_k) \xrightarrow{Q_{i,k}\left(\xy 0:/r.10pc/: (0,0)*{\sdotu{i}}; \endxy, y_k\right)} E_i (X \circ M_j \circ M_k) \\
&\xrightarrow{Q_{i,j}\left(\xy 0:/r.10pc/: (0,0)*{\sdotu{i}}; \endxy, y_j\right)} E_i (X \circ M_j \circ M_k)  
\xrightarrow{\sigma_{j,k}} E_i (X \circ M_k \circ M_j), \\
&E_i (X \circ M_j \circ M_k) \xrightarrow{\sigma_{j,k}} E_i(X \circ M_k \circ M_j) \\
&\xrightarrow{Q_{i,j}\left(\xy 0:/r.10pc/: (0,0)*{\sdotu{i}}; \endxy, y_j\right)} E_i(X \circ M_k \circ M_j)
\xrightarrow{Q_{i,k}\left(\xy 0:/r.10pc/: (0,0)*{\sdotu{i}}; \endxy, y_k\right)} E_i(X \circ M_k \circ M_j).  
\end{align*}
It follows from the relations $x_2 \tau_1 = \tau_1 x_1, x_1 \tau_1 = \tau_1 x_2$ in $R(\alpha_j + \alpha_k)$
using the equivalence of Proposition \ref{prop:extremalequiv}. 

Case 10. $j,k,l \neq i$. 
It follows from the relation $e(j,k,l)\tau_2 \tau_1 \tau_2 = e(j,k,l) \tau_1 \tau_2 \tau_1$ in $R(\alpha_j+\alpha_k+\alpha_l)$ using Proposition \ref{prop:extremalequiv}. 

We have now completed the verification of the cubic KLR relation. 

\subsection{Formal inverse} \label{subsub:formalinverse}

We need to verify three axioms. 
As we are only concerned with whether certain homomorphisms are isomorphisms, we disregard scalar multiples when they are irrelevant. 

(1) We prove 
\[
 X \xy 0;/r.15pc/:
  (0,0)*{\xybox{
  (0,0)*{\dcross{}{}};
  (8,7)*{\lcap{}};
  (-8,-7)*{\lcup{}};
  (-4,8)*{\slined{}};
  (4,-8)*{\slined{j}};
  (12,-12); (12,4) **\dir{-} ?(1)*\dir{>};
  (12,-14)*{\scriptstyle i};
  (-12,-4); (-12,12) **\dir{-} ?(1)*\dir{>}; 
  }}
  \endxy \colon X F_j E_i \to X E_i F_j
\]
is an isomorphism for any $j \neq i$. 
By definition, it is equal to some scalar multiple of 
\[
F_i (X \circ M_j) \xrightarrow{\xy 0;/r.12pc/: (0,0)*{\lcup{i}}; \endxy} F_i (E_iF_i X \circ M_j) \xrightarrow{\sigma_{i,j}} F_iE_i (F_i X \circ M_j) \xrightarrow{\xy 0;/r.12pc/: (0,0)*{\lcap{i}}; \endxy} F_i X \circ M_j, 
\]
which is denoted by $f$. 

\begin{lemma} \label{lem:canonicalsurj}
The homomorphism $f$ coincides with the canonical surjection of Lemma \ref{lem:adjointSES}. 
\end{lemma}

\begin{proof}
Let $x \in X, v \in M_j$. 
The element $e(i) \boxtimes (x \boxtimes v) \in F_i(X \circ M_j)$ is sent to 
\begin{align*}
&e(i) \boxtimes (x \boxtimes v) \\ 
&\mapsto e(i)\boxtimes (E_i(e(i) \boxtimes x) \boxtimes v)  \in F_i(E_iF_iX \circ M_j) \\ 
&\mapsto e(i) \boxtimes E_i((e(i) \boxtimes x) \boxtimes v) \in F_iE_i(F_iX \circ M_j) \\
&\mapsto (e(i) \boxtimes x) \boxtimes v \in F_iX \circ M_j.  
\end{align*}
Hence, the assertion follows. 
\end{proof}

Note that $F_i M_j = 0$, see the proof of Proposition \ref{prop:extremalequiv}. 
Hence, the short exact sequence of Lemma \ref{lem:adjointSES} (1) shows that the canonical surjection $f$ is an isomorphism. 

(2) Assume $\langle h_i, s_i\beta \rangle \leq 0$. We prove that
\begin{align*}
&X \begin{bmatrix}
    \xy 0;/r.15pc/:
    (0,0)*{\xybox{0;/r.12pc/:
    (0,0)*{\dcross{}{}};
    (8,7)*{\lcap{}};
    (-8,-7)*{\lcup{}};
    (-4,8)*{\slined{}};
    (4,-8)*{\slined{i}};
    (12,-12); (12,4) **\dir{-} ?(1)*\dir{>};
    (12,-14)*{\scriptstyle i};
    (-12,-4); (-12,12) **\dir{-} ?(1)*\dir{>}; 
    }}; 
    \endxy & \xy 0;/r.17pc/:
    (0,0)*{\lcap{i}}; 
    \endxy & \xy 0;/r.17pc/:
    (0,0)*{\lcap{i}}; 
    (-2,.5)*{\bullet};
    \endxy & \cdots & \xy 0;/r.17pc/:
    (0,0)*{\lcap{i}}; 
    (-2,.5)*{\bullet};
    (-13,3)*{\scriptstyle -\langle h_i, s_i \beta \rangle -1};
    \endxy
    \end{bmatrix}^\top \\
    &\colon X F_i E_i \to X E_i F_i \oplus X^{\oplus -\langle h_i, s_i\beta \rangle}
\end{align*}
is an isomorphism.
By definition, this homomorphism is 
\begin{align*}
&\begin{bmatrix}
      \xy 0;/r.15pc/:
      (0,0)*{\xybox{0;/r.12pc/:
        (0,0)*{\ucross{}{}};
        (8,-7)*{\lcup{}};
        (-8,7)*{\lcap{}};
        (-4,-8)*{\slineu{i}};
        (4,8)*{\slineu{}};
        (12,12); (12,-4) **\dir{-} ?(1)*\dir{>};
        (-12,-14)*{\scriptstyle i};
        (-12,4); (-12,-12) **\dir{-} ?(1)*\dir{>}; 
      }}; 
      \endxy & \xy 0;/r.17pc/:
      (0,0)*{\lcap{i}}; 
      \endxy & \xy 0;/r.17pc/:
      (0,0)*{\lcap{i}}; 
      (-2,.5)*{\bullet};
      \endxy & \cdots & \xy 0;/r.17pc/:
      (0,0)*{\lcap{i}}; 
      (-2,.5)*{\bullet};
      (-13,3)*{\scriptstyle -\langle h_i, -\beta \rangle -1};
      \endxy
      \end{bmatrix}^\top X \\
    &\colon F_i E_i X \to E_i F_i X \oplus X^{\oplus -\langle h_i, -\beta \rangle}, 
\end{align*}
up to scalar multiples. 
By Definition \ref{def:catquantum} (3) and Theorem \ref{thm:Rouquierver} (2) Formal inverse, it is an isomorphism. 

(3) The remaining axiom for the case $\langle h_i, s_i\beta \rangle \geq 0$ is proved in the same way as (2) above. 

We have now verified all the relations for $\catquantum{\mathfrak{p}_i}$. 
Hence, $\gMod{R_i}$ is a right $\dotcatquantum{\mathfrak{p}_i}$-module. 

\subsection{Restriction to $\gproj{R_i}$} 
We prove that the subcategory $\gproj{R_i} \subset \gMod{R_i}$ is stable under the right action of $E_i$ and $F_j \ (j \in I)$. 
The assertion for $E_i$ (resp. $F_i$) follows from Theorem \ref{thm:cyclotomic2repleft}, 
since its right action is given by the left action of $F_i$ (resp. $E_i$). 
It remains to show that, for any $P \in \gproj{R_i(\beta)}$ and $j \in I \setminus \{i \}$, 
the module $PF_j = P \circ M_j$ is projective over $R_i(\beta + s_i\alpha_j)$. 

Let $X \in \gMod{R_i(\beta + s_i\alpha_j)}$. 
Then, $\Res_{\beta,s_i\alpha_j} X$ is an $R(\beta) \otimes R_i(s_i\alpha_j)$-module. 
We claim that it is an $R_i(\beta) \otimes R_i(s_i\alpha_j)$-module. 
Since we have 
\[
(\Res_{\beta-\alpha_i,\alpha_i}\otimes \Id) \Res_{\beta,s_i\alpha_j} = (\Id \otimes \Res_{\alpha_i,s_i\alpha_j})\Res_{\beta-\alpha_i,s_i\alpha_j + \alpha_i}, 
\]
it is enough to show that $\Res_{\beta-\alpha_i,s_i\alpha_j+\alpha_i} X = 0$. 
Note that it is an $R(\beta -\alpha_i) \otimes R_i(s_i\alpha_j + \alpha_i)$-module. 
However, we have $R_i(s_i\alpha_j + \alpha_i) = 0$ as shown in the proof of Proposition \ref{prop:extremalequiv}. 
Hence, the claim is proved. 

Note that $M_j$ is a projective $R_i(s_i\alpha_j)$-module by Proposition \ref{prop:extremalequiv}. 
We compute
\begin{align*}
&\Ext_{R_i(\beta+s_i\alpha_j)}^1 (P \circ M_j, X) \\
&\simeq \Ext_{R(\beta + s_i\alpha_j)}^1 (P \circ M_j,X) \quad \text{since $\gMod{R_i} \subset \gMod{R}$ is closed under extension} \\
&\simeq \Ext_{R(\beta) \otimes R(s_i\alpha_j)}^1 (P \otimes M_j, \Res_{\beta,s_i\alpha_j} X) \quad \text{by the induction-restriction adjunction} \\
&\simeq \Ext_{R_i(\beta) \otimes R_i(s_i\alpha_j)}^1 (P \otimes M_j, \Res_{\beta,s_i\alpha_j} X) \\ 
&\quad \text{since $\gMod{R_i} \subset \gMod{R}$ is closed under extension} \\
&= 0 \quad \text{since $P, M_j \in \gproj{R_i}$}. 
\end{align*}
Therefore, $P \circ M_j \in \gproj{R_i}$. 

\subsection{The other 2-morphisms}
We prove the two equalities of 2-morphisms stated at the end of Theorem \ref{thm:anotheraction} (1). 
Note that it is only relevant to $E_i$ and $F_i$. 
Roughly speaking, we prove them by relating the left action of $\catquantum{\mathfrak{sl}_2}$ and the right one using an autiautomorphism of $\catquantum{\mathfrak{sl}_2}$. 

Fix $\beta \in \mathsf{Q}_+$.  
Let $C' = (c'_{i,n})_{n \in \mathbb{Z}}, C'' = (c''_{i,n})_{n \in \mathbb{Z}}$ be choices of bubble parameters for $\mathfrak{sl}_2$ satisfying
\[
c'_{i,\langle h_i, -\beta \rangle} = c_{i,-\beta},\ c''_{i,\langle h_i,s_i\beta \rangle} = c_{i,s_i\beta} = c_{i,\beta}. 
\]
Note that we have 
\[
c'_{i,\langle h_i, -\beta \rangle + 2n} = c_{i,-\beta},\ c''_{i,\langle h_i,s_i\beta \rangle+2n} = c_{i,\beta} \ (n \in \mathbb{Z}). 
\]
Let 
\[
\mathcal{U}'_q(\mathfrak{sl}_2) = \bigoplus_{n \in \mathbb{Z}} \mathcal{U}_q(\mathfrak{sl}_2, C')1_{\langle h_i, -\beta \rangle + 2n}, \ \mathcal{U}''_q(\mathfrak{sl}_2) = \bigoplus_{n \in \mathbb{Z}} \catquantum{\mathfrak{sl}_2, C''}1_{\langle h_i, s_i\beta \rangle + 2n}.
\] 
Let $\iota' \colon \mathcal{U}'_q(\mathfrak{sl}_2) \to \catquantum{\mathfrak{p}_i}$ and $\iota'' \colon \mathcal{U}''_q(\mathfrak{sl}_2) \to \catquantum{\mathfrak{p}_i}$ be the 2-functors given by 
\[
\iota' (\langle h_i, -\beta \rangle + 2n) = -\beta + n\alpha_i,\ \iota'' (\langle h_i, s_i\beta \rangle + 2n) = s_i\beta + n\alpha_i,
\]
and sending 1-morphisms and 2-morphisms in $\mathcal{U}_q' (\mathfrak{sl}_2)$ or in $\mathcal{U}_q''(\mathfrak{sl}_2)$ to those depicted by the same symbol or diagrams in $\mathcal{U}_q(\mathfrak{p}_i)$. 
Let $\xi \colon \mathcal{U}'_q(\mathfrak{sl}_2) \to \mathcal{U}''_q(\mathfrak{sl}_2)$ be the isomorphism of Proposition \ref{prop:scalarshift} given by setting
\[
b_{i,i} = 1,\ d_{i,m} = c'_{i,m} = c_{i,-\beta} \ (m \in \mathbb{Z}). 
\] 
Let $\omega \colon \mathcal{U}''_q(\mathfrak{sl}_2) \to \mathcal{U}''_q(\mathfrak{sl}_2)^{\mathrm{op}}$ be the Chevalley involution (Proposition \ref{prop:chevalleyinvolution}). 
Then, we have the following morphisms of 2-categories:
\[
\begin{tikzcd}
\mathcal{U}'_q(\mathfrak{sl}_2) \arrow[r,"\iota'"] \arrow[d,"\xi"] & \catquantum{\mathfrak{p}_i} \arrow[rd] & \\
\mathcal{U}''_q(\mathfrak{sl}_2) \arrow[d,"\omega"] & & \mathfrak{Lin}_{\mathbf{k}} \\
\mathcal{U}''_q(\mathfrak{sl}_2)^{\mathrm{op}} \arrow[r, "\iota''"] & \catquantum{\mathfrak{p}_i}^{\mathrm{op}} \arrow[ru] & 
\end{tikzcd}
\]
where the right-top arrow is given by the left action of $\catquantum{\mathfrak{p}_i}$ on $\gMod{R_i}$, 
and the right-bottom arrow is given by the right action of $\catquantum{\mathfrak{p}_i}$ on $\gMod{R_i}$. 

We claim that this diagram is commutative. 
By Theorem \ref{thm:Rouquierver} (2), it suffices to prove that the images of $E_i, F_i$ and the following generating 2-morphisms of $\mathcal{U}'_q(\mathfrak{sl}_2)$ coincides in $\mathfrak{End}(\gMod{R_i})$. 
\[
\xy 0;/r.15pc/: 
(0,0)*{\sdotu{i}};
\endxy, \xy 0;/r.15pc/: 
(0,0)*{\ucross{i}{i}};
\endxy, \xy 0;/r.15pc/: 
(0,0)*{\lcup{i}};
\endxy, \xy 0;/r.15pc/: 
(0,0)*{\lcap{i}};
\endxy.
\]
%(The generating 2-morphisms differ by applications of adjunction.)
It is a consequence of the following equalities obtained directly from the statement of Theorem \ref{thm:anotheraction} (1). 
\begin{align*}
& F_iX = X E_i, \ E_iX = XF_i, \\
& \xy 0;/r.15pc/: 
(0,0)*{\sdotu{i}};
\endxy X = X \xy 0;/r.15pc/: 
(0,0)*{\sdotd{i}};
\endxy, \\
& \xy 0;/r.15pc/: 
(0,0)*{\ucross{i}{i}};
\endxy X= X \xy 0;/r.15pc/: 
(0,0)*{\dcross{i}{i}};
\endxy, \\
&\xy 0;/r.15pc/: 
(0,0)*{\lcap{i}};
\endxy X = c_{i,\beta} X \xy 0;/r.15pc/: 
(0,0)*{\lcap{i}};
\endxy = c_{i,\beta+n\alpha_i} X \xy 0;/r.15pc/: 
(0,0)*{\lcap{i}};
\endxy, \\ 
&\xy 0;/r.15pc/: 
(0,0)*{\lcup{i}};
\endxy X = c_{i,\beta}^{-1} X \xy 0;/r.15pc/: 
(0,0)*{\lcup{i}};
\endxy = c_{i,\beta+n\alpha_i}^{-1} X \xy 0;/r.15pc/: 
(0,0)*{\lcup{i}};
\endxy, 
\end{align*}
where $n \in \mathbb{Z}, X \in \gMod{R_i(\beta + n\alpha_i)}$. 

Considering the images of $\xy 0;/r.15pc/: 
(0,0)*{\rcap{i}};
\endxy$ and $\xy 0;/r.15pc/: 
(0,0)*{\rcup{i}};
\endxy$ in $\mathfrak{Lin}_{\mathbf{k}}$ under the above commutative diagram, 
we obtain the desired equalities of natural homomorphisms.

The proof of Theorem \ref{thm:anotheraction} (1) is now complete.

\section{Monoidality}
In this section, we prove a part of Theorem \ref{thm:reflectionfunctor}: the functors $S_i$ and $S'_i$ are monoidal. 
Note that $\gMod{R_i}$ is both a left $\dotcatquantum{\mathfrak{p}_i}$-module by Theorem \ref{thm:cyclotomic2repleft} and a right $\dotcatquantum{\mathfrak{p}_i}$-module by Theorem \ref{thm:anotheraction} (1), 
while $\gMod{{}_iR}$ is both a right $\dotcatquantum{\mathfrak{p}_i}$-module by Theorem \ref{thm:cyclotomic2rep} and a left $\dotcatquantum{\mathfrak{p}_i}$-module by Theorem \ref{thm:anotheraction} (2).  

Note that there are canonical isomorphisms
\begin{align*}
&(X \circ Y) \circ Z \simeq X \circ Y \circ Z \simeq X \circ (Y \circ Z), \ (x \boxtimes y)\boxtimes z \mapsto x \boxtimes y \boxtimes z \mapsto x \boxtimes (y \boxtimes z) \\
&X \circ \mathbf{1} \simeq X \simeq \mathbf{1} \circ X, \ x \boxtimes 1 \mapsto x \mapsto 1 \boxtimes x, 
\end{align*}
for $X, Y, Z \in \gMod{R}$. 
By the definition of $S_i$, we also have 
\begin{align*}
S_i (XF_i) \simeq E_i S_i(X), \ S_i(XE_i) \simeq F_i S_i(X),\ S_i (XF_j) \simeq S_i(X) \circ M_j \ (j \neq i), 
\end{align*}
for $X \in \gMod{{}_iR}$, and 
\[
S_i(\mathbf{1}) = S_i({}_iR(0)) = R_i(0) = \mathbf{1}. 
\]
For $j \neq i$ and $X, Y \in \gMod{{}_iR}$, Theorem \ref{thm:functorF*} shows 
\[
\text{$X F_j \simeq X \circ R(\alpha_j)$, hence $X \circ YF_j \simeq (X \circ Y) F_j$.} 
\]
In addition, Lemma \ref{lem:adjointSES} shows that there are short exact sequences 
\begin{align*}
0 \to q_i^{-\langle h_i, \beta \rangle}(X F_i) \circ Y \hookrightarrow (X \circ Y) F_i \twoheadrightarrow X \circ (Y F_i) \to 0, \\
0 \to q_i^{\langle h_i, \beta' \rangle}(E_i X') \circ Y' \hookrightarrow E_i (X' \circ Y') \twoheadrightarrow X' \circ (E_i Y') \to 0.
\end{align*}
for $X \in \gMod{{}_iR(\alpha)}, Y \in \gMod{{}_iR(\beta)}, X' \in \gMod{R_i(\alpha')}, Y' \in \gMod{R_i(\beta')}$. 
In what follows, we use these natural homomorphisms without explicitly referring to them. 
In addition, we suppress degree shifts. 

The monoidality of $S_i$ is proved in the following proposition: 

\begin{proposition} \label{prop:monoidality}
There exist natural isomorphisms
\[
\theta = \theta_{\alpha,\beta}(X,Y) \colon S_i(X) \circ S_i(Y) \to S_i(X \circ Y), 
\] \index{$\theta \colon S_i(X) \circ S_i(Y) \xrightarrow{\sim} S_i(X \circ Y)$}
for $\alpha,\beta \in \mathsf{Q}_+, X \in \gMod{{}_iR(\alpha)}, Y \in \gMod{{}_iR(\beta)}$, 
that make the following diagrams commutative ($j \in I \setminus \{i\}$): 
\[
\begin{tikzcd}
\mathbf{1} \circ S_i(X) \arrow[r,-,"\sim"] \arrow[d,equal]\arrow[rd,phantom,"(1)"] & S_i(X) \arrow[d,-,"\sim" sloped] &  S_i(X) \circ \mathbf{1} \arrow[r,-,"\sim"]\arrow[d,equal]\arrow[rd,phantom,"(2)"] & S_i(X) \arrow[d,-,"\sim" sloped] \\
S_i(\mathbf{1}) \circ S_i(X) \arrow[r,"\theta"] & S_i(\mathbf{1} \circ X)  &  S_i(X) \circ S_i(\mathbf{1}) \arrow[r,"\theta"] & S_i(X \circ \mathbf{1}) 
\end{tikzcd}
\]
\[
\begin{tikzcd}
S_i (XF_i \circ Y) \arrow[r] & S_i((X\circ Y)F_i) \arrow[r] \arrow[d, -, "\sim" sloped] & S_i(X \circ YF_i)  \\
S_i (XF_i) \circ S_i(Y) \arrow[d,-,"\sim" sloped]\arrow[u,"\theta"]\arrow[r,phantom,"(3)"] & E_i S_i(X \circ Y) \arrow[r,phantom,"(4)"]  & S_i(X) \circ S_i(YF_i) \arrow[d,-,"\sim" sloped]\arrow[u,"\theta"] \\
E_i S_i(X) \circ S_i(Y) \arrow[r, hookrightarrow] & E_i(S_i(X) \circ S_i(Y)) \arrow[r,twoheadrightarrow]\arrow[u,"\theta"] & S_i(X) \circ E_iS_i(Y) 
\end{tikzcd}
\]
\[
\begin{tikzcd}
S_i(X) \circ S_i(Y F_j) \arrow[r,"\theta"]\arrow[d,-,"\sim" sloped]\arrow[rd,phantom,"(5)"] & S_i(X \circ Y F_j) \arrow[rd,-,"\sim" sloped] \\
S_i(X) \circ S_i(Y) \circ M_j \arrow[r,"\theta"] & S_i(X \circ Y) \circ M_j \arrow[r,-,"\sim"] & S_i((X \circ Y)F_j)  
\end{tikzcd}
\]
\[
\begin{tikzcd}
S_i(X) \circ S_i(Y) \circ S_i(Z) \arrow[r,"\theta"]\arrow[d,"\theta"]\arrow[rd,phantom,"(6)"] & S_i(X \circ Y) \circ S_i(Z) \arrow[d,"\theta"] \\
S_i(X) \circ S_i(Y \circ Z) \arrow[r,"\theta"] & S_i(X \circ Y \circ Z). 
\end{tikzcd}
\]
\end{proposition}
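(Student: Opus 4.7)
The plan is to construct $\theta(X,Y)$ first on $\gproj{R_i}\times\gproj{R_i}$ via the universal property of $\gproj{R_i}$ as a left $\dotcatquantum{\mathfrak{p}_i}$-module (Theorem \ref{thm:leftuniversality}), and then extend to $\gMod{R_i}\times\gMod{R_i}$ by right-exactness of $S_i$. For fixed $X\in\gproj{R_i}$, both functors $\Phi_X(Y)=S_i(X)\circ S_i(Y)$ and $\Psi_X(Y)=S_i(X\circ Y)$ are right exact, with the common value $S_i(X)$ at $Y=R_i(0)$ up to canonical isomorphism. Diagrams (1)--(2) then rigidly prescribe $\theta(X,R_i(0))$ as the composition of these canonical isomorphisms, twisted by the chosen $\epsilon$.

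For $Y\in\gproj{R_i}$ of the form $Y'\circ M_j$ with $j\neq i$, diagram (5) acts as the inductive definition: $\theta(X,Y'\circ M_j)$ is produced from $\theta(X,Y')$ by the associators of the convolution product combined with the identification $S_i(Z\circ M_j)\simeq S_i(Z)\circ M'_j$ coming from the $F_j$-linearity of $S_i$ applied on $\gproj{R_i}$. For $Y$ involving an $F_i$-step, applying $F_i$ to both $\Phi_X(Y)$ and $\Psi_X(Y)$ produces parallel short exact sequences via Lemma \ref{lem:adjointSES}, and diagrams (3)--(4) force $\theta$ on the resulting extension to be the unique morphism of short exact sequences restricting to $\theta$ on the outer terms. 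Extending to $\gMod{R_i}$ then uses right-exactness of both $\Phi_X$ and $\Psi_X$ together with the five-lemma applied to projective presentations.

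The first substantive hurdle is well-definedness: the same object of $\gproj{R_i}$ can be built from $R_i(0)$ by different sequences of $F_\nu$-steps, and one must verify that the resulting $\theta$'s agree. By the universal property of Theorem \ref{thm:leftuniversality}, this reduces to checking compatibility with the generating 2-morphisms of $\catquantum{\mathfrak{p}_i}$ (dots, crossings, caps, cups), a diagrammatic calculation carried out using Lemma \ref{lem:naturality} together with Remark \ref{rem:EFvsmultiplication}. The main obstacle, however, will be the associativity diagram (6): a pentagon-like coherence for triple convolutions $X\circ Y\circ Z$. The inductive structure reduces (6) to a triple-SES coherence statement, most cleanly handled by a second induction on a complexity measure of the triple $(X,Y,Z)$, with base case given by the unit axioms (1)--(2) and inductive steps each corresponding to a diagram (3), (4), or (5)-type move applied in one of the three slots; the $F_i$-case for interior slots is the delicate one, as it requires simultaneously tracking the Mackey filtrations for $X\circ Y\circ Z$ across both sides of $\theta$.
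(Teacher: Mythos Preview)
Your overall strategy matches the paper's: induct on $\height\beta$, define $\theta(X,Y'F_j)$ via diagram (5) when $j\neq i$ and via the short-exact-sequence comparison (diagram (4)) when $j=i$, check naturality, then check associativity. Some notation needs fixing: the domain of $S_i$ is $\gMod{{}_iR}$, not $\gMod{R_i}$, so $X,Y$ are ${}_iR$-modules and the relevant universal property is Theorem~\ref{thm:rightuniversality} (right universality for $\gproj{{}_iR}$), not Theorem~\ref{thm:leftuniversality}; likewise $Y'F_j\simeq Y'\circ R(\alpha_j)$ in $\gMod{{}_iR}$ for $j\neq i$, and $S_i(ZF_j)\simeq S_i(Z)\circ M_j$ on the target side (not $M'_j$).

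The genuine gap is your treatment of diagram (3). You write that diagrams (3)--(4) jointly ``force $\theta$ on the resulting extension'', but only (4) serves as the definition of $\theta_{\alpha,\beta}$ (using that (3) holds at height $\beta-1$, so the left square of the combined SES diagram already commutes by induction). Diagram (3) at the \emph{current} height $\beta$---that is, the compatibility between $\theta(XF_i,Y)$ and $E_i\theta(X,Y)$ for the newly defined $\theta_{\ast,\beta}$---must then be verified separately, and this is the most laborious step in the paper. It is not subsumed by checking compatibility with dots and crossings: those checks only show $\theta$ is natural in $Y$, whereas (3) is a relation between $\theta$'s with different first arguments. The paper handles it by writing $Y=Y'F_j$ and chasing a large cube whose faces involve (4) at two different heights, naturality of the Mackey filtration, and (for $j=i$) an explicit element-level computation inside $E_iE_i(S_i(X)\circ S_i(Y'))$. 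By contrast, associativity (6), which you flag as the main obstacle, is the cleaner part: the paper's proof is a single induction on $\height\gamma$ alone (not a triple complexity measure), peeling an $F_j$ off $Z$ and appealing to (4) or (5) on the rightmost factor together with the induction hypothesis on the central square.
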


The rest of this section is devoted to the proof of this proposition. 
We first construct natural isomorphisms $\theta = \theta_{\alpha,\beta}$ that satisfy (2) -- (5) by induction on $\height \beta$. 
Then, we prove that they also satisfy (1) and (6). 
In the proof, we repeatedly use Lemma \ref{lem:adjointSES}, \ref{lem:EFvsmultiplication} and \ref{lem:naturality}, without explicitly referring to them. 

Since both $S_i(?) \circ S_i(?)$ and $S_i(? \circ ?)$ are right exact functors, 
we have 
\begin{align*}
&S_i(X) \circ S_i(Y) \simeq S_i({}_iR(\alpha)) \circ S_i({}_iR(\beta)) \otimes_{{}_iR(\alpha) \otimes {}_iR(\beta)} (X \otimes Y), \\
&S_i(X \circ Y) \simeq S_i({}_iR(\alpha) \circ {}_iR(\beta)) \otimes_{{}_iR(\alpha) \otimes {}_iR(\beta)} (X \otimes Y). 
\end{align*}
Hence, giving a natural isomorphism $\theta_{\alpha,\beta}$ is equivalent to giving an isomorphism of $({}_iR(\alpha+\beta), {}_iR(\alpha)\otimes {}_iR(\beta))$-modules
\[
S_i({}_iR(\alpha)) \circ S_i({}_iR(\beta)) \to S_i({}_iR(\alpha) \circ {}_iR(\beta)). 
\]
Moreover, we have ${}_iR(\beta) = \bigoplus_{\nu \in I^{\beta}} P(\nu)$, where
\[
P(\nu) = {}_iR(\beta)e(\nu) = {}_iR(0) F_{\nu_1} \cdots F_{\nu_{\height \beta}}. 
\]
Therefore, giving a natural isomorphism $\theta_{\alpha,\beta}$ is equivalent to giving isomorphisms
\begin{equation} \label{eq:naturaliso}
\theta \colon S_i(X) \circ S_i(Y F_j) \to S_i(X \circ YF_j) \ (j \in I)
\end{equation}
that is natural in $X \in \gMod{{}_iR(\alpha)}$ and $Y \in \gMod{{}_iR(\beta-\alpha_j)}$, and make the following diagrams commutative:
\[
\begin{tikzcd}
S_i(X) \circ S_i(Y F_j) \arrow[r,"{\xy 0;/r.12pc/: (0,0)*{\sdotd{j}}; \endxy}"]\arrow[d,"\theta"]\arrow[rd,phantom,"(7)"] & S_i(X) \circ S_i(YF_j) \arrow[d,"\theta"] \\
S_i(X \circ YF_j) \arrow[r,"{\xy 0;/r.12pc/: (0,0)*{\sdotd{j}}; \endxy}"'] & S_i(X \circ YF_j)
\end{tikzcd} 
\]
for $j \in I, X \in \gMod{{}_iR(\alpha)}, Y \in \gMod{{}_iR(\beta-\alpha_j)}$, and 
\[
\begin{tikzcd}
S_i(X) \circ S_i(YF_jF_k) \arrow[r,"{\xy 0;/r.12pc/: (0,0)*{\dcross{j}{k}}; \endxy}"]\arrow[d,"\theta"]\arrow[rd,phantom,"(8)"] & S_i(X) \circ S_i(YF_kF_j) \arrow[d,"\theta"] \\
S_i(X \circ YF_jF_k) \arrow[r,"{\xy 0;/r.12pc/: (0,0)*{\dcross{j}{k}}; \endxy}"'] & S_i(X \circ YF_kF_j) \\
\end{tikzcd}
\]
for $j,k \in I, X \in \gMod{{}_iR(\alpha)}, Y \in \gMod{{}_iR(\beta-\alpha_j-\alpha_k)}$. 

\begin{comment}
\begin{remark}
More rigorously, we consider the following. 
Let ${}_i\mathcal{C} = \bigoplus_{\beta \in \mathsf{Q}_+}{}_i\mathcal{C}(\beta)$ be the full subcategory of $\gproj{{}_iR}$ consisting objects
\[
P(\nu) = \mathbf{1} F_{\nu_1} \cdots F_{\nu_n} 
\]
indexed by $\nu \in \bigcup_{n \geq 0} I^n$. 
The functor $S_i \colon {}_i\mathcal{C} \to \gproj{R_i}$ is inductively defined by 
\[
S_i(P(\nu,j)) = \begin{cases}
E_iS_i(P(\nu)) & \text{if $j = i$},
S_i(P(\nu))\circ M_j & \text{if $j \neq i$}. 
\end{cases}
\]

\end{remark}
\end{comment}

\subsection{The initial step}
We define $\theta_{\alpha,0}$ for any $\alpha \in \mathsf{Q}_+$. 
Since ${}_iR(0) = \mathbf{k} = \mathbf{1}$, giving a natural homomorphism $\theta_{\alpha,0}$ is equivalent to giving a natural homomorphism 
\[ 
\theta_{\alpha,0} (?,\mathbf{1}) \colon S_i(?) \circ S_i(\mathbf{1}) \to S_i(? \circ \mathbf{1}). 
\] 
We define it to be the unique isomorphism that makes the diagram (2) commutative. 

It makes Diagram (3) commutative when $\beta = 0$. 
In fact, it follows from the commutative diagram below: 
\[
\begin{tikzcd}
S_i(XF_i \circ \mathbf{1}) \arrow[ddd,bend right=50]\arrow[d,-,"\sim" sloped]\arrow[rd,phantom,"(2)"] & S_i(XF_i) \circ S_i(\mathbf{1}) \arrow[l,"\theta"']\arrow[r,-,"\sim"]\arrow[d,equal] & E_iS_i(X) \circ S_i(\mathbf{1}) \arrow[ddd,bend left=60]\arrow[d,equal] \\
S_i(XF_i) \arrow[r,-,"\sim"]\arrow[d,equal] & S_i(XF_i) \circ \mathbf{1} \arrow[r,-,"\sim"] & E_iS_i(X) \circ \mathbf{1} \arrow[d] \\
S_i(XF_i) \arrow[r,-,"\sim"] & E_iS_i(X) \arrow[r,-,"\sim"]\arrow[rd,phantom,"(2)"] & E_i(S_i(X) \circ \mathbf{1}) \\
S_i((X\circ \mathbf{1})F_i)\arrow[u,-,"\sim"' sloped] \arrow[r,-,"\sim"] & E_i(S_i(X \circ \mathbf{1})) \arrow[u,-,"\sim"' sloped] & E_i(S_i(X) \circ S_i(\mathbf{1})) \arrow[l,"\theta"']\arrow[u,equal]. 
\end{tikzcd}
\]

\subsection{The induction step: construction}
Let $\beta \in \mathsf{Q}_+ \setminus \{0\}$.
Assume that $\theta_{\alpha,\beta'}$ is constructed for every $\alpha,\beta' \in \mathsf{Q}_+$ that satisfies $\height \beta' < \height \beta$, 
and they make the diagrams (2) -- (5) and (7), (8) commutative as long as all the $\theta$ involved have already been constructed. 
In order to construct a natural isomorphism $\theta_{\alpha,\beta}$, 
we give data of isomorphisms \eqref{eq:naturaliso}. 

For $j \neq i, X \in \gMod{{}_iR(\alpha)}$ and $Y \in \gMod{{}_iR(\beta - \alpha_j)}$, 
we define 
\begin{equation} \label{eq:defisoforj}
\theta \colon S_i(X) \circ S_i(YF_j) \to S_i(X \circ YF_j) 
\end{equation}
to be the unique isomorphism that makes Diagram (5) commutative. 
(Note that we already have $\theta_{\alpha,\beta-\alpha_j} \colon S_i(X) \circ S_i(Y) \to S_i(X \circ Y)$ by the induction hypothesis.)

For $j = i, X \in \gMod{{}_iR(\alpha)}$ and $Y \in \gMod{{}_iR(\beta -\alpha_i)}$, 
we already have the commutative diagram (3) by the induction hypothesis. 
Note that the following sequences appearing in Diagram (3) (4) are exact: 
\begin{align*}
&0 \to E_iS_i(X) \circ S_i(Y) \to E_i(S_i(X) \circ S_i(Y)) \to S_i(X) \circ E_iS_i(Y) \to 0, \\
&0 \to S_i(XF_i \circ Y) \to S_i((X\circ Y)F_i) \to S_i(X \circ YF_i) \to 0. 
\end{align*}
In fact, the first one is exact by Lemma \ref{lem:adjointSES}. 
The exactness at the left term in the second sequence follows from the exactness of the first one by Diagram (3). 
The remaining exactness follows from Lemma \ref{lem:adjointSES} by applying the right exact functor $S_i$.  
Hence, we can define 
\begin{equation} \label{eq:defisofori}
\theta \colon S_i(X) \circ S_i(YF_i) \to S_i(X \circ YF_i) 
\end{equation}
to be the unique isomorphism that makes Diagram (4) commutative. 

By construction these isomorphisms $\theta$ are natural in $X$ and $Y$. 

\subsection{The induction step: naturality}
We verify that the isomorphisms $\theta_{\alpha,\beta} \colon S_i(X)\circ S_i(YF_j) \to S_i(X\circ YF_j)$ defined in (\ref{eq:defisoforj}) and (\ref{eq:defisofori}) make Diagram (7) and (8) commutative. 

The commutativity of (7) for $j = i$: 
Consider the commutative diagram (4) that is used to define $\theta_{\alpha,\beta} \colon S_i(X) \circ S_i(YF_i) \to S_i(X \circ YF_i)$ in (\ref{eq:defisofori}). 
Each of the six modules has an endomorphism induced from $\xy 0;/r.12pc/: (0,0)*{\sdotd{i}}; \endxy$ or $\xy 0;/r.12pc/: (0,0)*{\sdotu{i}}; \endxy$. 
All the homomorphisms in the diagram except $\theta_{\alpha,\beta}$ commute with these endomorphisms. 
Here, we used the equality
\[
S_i(Y) \xy 0;/r.12pc/: (0,0)*{\sdotd{i}}; \endxy = \xy 0;/r.12pc/: (0,0)*{\sdotu{i}}; \endxy S_i(Y)
\]
in Theorem \ref{thm:anotheraction} for example. 
Hence, $\theta_{\alpha,\beta}$ also commutes with $\xy 0;/r.12pc/: (0,0)*{\sdotd{i}}; \endxy$. 

The commutativity of (7) for $j \neq i$: 
Consider the commutative diagram (5) that is used to define $\theta_{\alpha,\beta} \colon S_i(X) \circ S_i(YF_j) \to S_i(X \circ YF_j)$ in (\ref{eq:defisoforj}). 
Each of the five modules has an endomorphism induced from $\xy 0;/r.12pc/: (0,0)*{\sdotd{j}}; \endxy$ or $z_j \in \END(M_j)$. 
All the homomorphisms in the diagram except $\theta_{\alpha,\beta}$ commute with these endomorphisms, hence $\theta_{\alpha,\beta}$ also does. 
 
The proof of the commutativity of (8) splits into the following four cases: 

Case 1. $j = k = i$. 
Consider the following diagram: 
\begin{equation*}
\centering
\begin{tikzpicture}[auto]
\node (a) at (0,4,0) {$E_iE_i(S_i(X) \circ S_i(Y))$}; 
\node (b) at (4,4,0) {$E_iE_iS_i(X \circ Y)$};
\node (c) at (8,4,0) {$S_i((X \circ Y)F_iF_i)$};
\node (d) at (0,0,0) {$E_iE_i(S_i(X) \circ S_i(Y))$};
\node (e) at (4,0,0) {$E_iE_iS_i(X \circ Y)$};
\node (f) at (8,0,0) {$S_i((X \circ Y)F_iF_i)$};
\node (g) at (0,4,4.2) {$S_i(X) \circ E_iE_iS_i(Y)$}; 
\node (h) at (4,4,4.2) {$S_i(X) \circ S_i(Y F_iF_i)$};
\node (i) at (8,4,4.2) {$S_i(X \circ YF_iF_i)$};
\node (j) at (0,0,4.2) {$S_i(X) \circ E_iE_iS_i(Y)$};
\node (k) at (4,0,4.2) {$S_i(X) \circ S_i(YF_iF_i)$};
\node (l) at (8,0,4.2) {$S_i(X \circ YF_iF_i)$};
\draw[->] (a) -- node {$\scriptstyle \theta$} (b);
\draw (b) -- node {$\scriptstyle \sim$} (c);
\draw[->] (d) -- node[pos=.3] {$\scriptstyle \theta$} (e);
\draw (e) -- node {$\scriptstyle \sim$} (f);
\draw[->] (a) -- (d);
\draw[->] (b) -- (e);
\draw[->] (c) -- (f);
\draw[->>] (a) -- (g);
\draw[->>] (c) -- (i);
\draw[->>] (d) -- (j);
\draw[->>] (f) -- (l);
\draw[cross] (g) -- node {$\scriptstyle \sim$} (h);
\draw[cross,->, line width=1.3pt] (h) -- node {$\scriptstyle \theta$} (i);
\draw (j) -- node {$\scriptstyle \sim$} (k);
\draw[->,line width=1.3pt] (k) -- node {$\scriptstyle \theta$} (l);
\draw[->] (g) -- (j);
\draw[cross,->,line width=1.3pt] (h) -- (k);
\draw[cross,->,line width=1.3pt] (i) -- (l);
\end{tikzpicture}
\end{equation*}
where each vertical homomorphism is induced from $\xy 0;/r.12pc/: (0,0)*{\dcross{i}{i}}; \endxy$ or $\xy 0;/r.12pc/: (0,0)*{\ucross{i}{i}}; \endxy$. 
We need to prove that the thick square commutes. 
It is easy to see that all the side faces except the thick one are commutative (there are five such faces).
Hence, it suffices to prove that the top and the bottom faces commute, 
which follows from the commutative diagram below:
\begin{equation*}
\begin{tikzcd}
E_iE_i(S_i(X) \circ S_i(Y)) \arrow[r,twoheadrightarrow]\arrow[d,"\theta"] & E_i(S_i(X) \circ E_iS_i(Y)) \arrow[r,twoheadrightarrow]\arrow[d,-,"\sim" sloped]\arrow[rd,phantom,"\text{naturality}"] & S_i(X) \circ E_iE_iS_i(Y) \arrow[d,-,"\sim" sloped] \\
E_iE_iS_i(X \circ Y) \arrow[d,-,"\sim" sloped]\arrow[r,phantom,"(4)"] & E_i(S_i(X) \circ S_i(Y F_i)) \arrow[d,"\theta"]\arrow[r,twoheadrightarrow] & S_i(X) \circ E_iS_i(YF_i) \arrow[d,-,"\sim"sloped] \\
E_iS_i((X \circ Y)F_i) \arrow[r,twoheadrightarrow]\arrow[d,-,"\sim" sloped]\arrow[rd,phantom,"\text{naturality}"] & E_iS_i(X \circ Y F_i) \arrow[d,-,"\sim" sloped]\arrow[r,phantom,"(4)"] & S_i(X) \circ S_i(Y F_iF_i) \arrow[d,"\theta"] \\
S_i((X\circ Y) F_iF_i) \arrow[r,twoheadrightarrow] & S_i((X\circ YF_i)F_i) \arrow[r,twoheadrightarrow] & S_i(X \circ Y F_iF_i)
\end{tikzcd} 
\end{equation*}

Case 2. $j = i, k \neq i$. 
Consider the following diagram: 
\begin{equation*}
\centering
\scalebox{0.8}{
\begin{tikzpicture}[auto]
\node (a) at (0,4,0) {$E_i(S_i(X) \circ S_i(Y)) \circ M_k$}; 
\node (b) at (5,4,0) {$E_iS_i(X \circ Y) \circ M_k$};
\node (c) at (9,4,0) {$S_i((X \circ Y)F_iF_k)$};
\node (d) at (0,0,0) {$E_i(S_i(X) \circ S_i(Y) \circ M_k)$};
\node (e) at (5,0,0) {$E_i(S_i(X \circ Y) \circ M_k)$};
\node (f) at (9,0,0) {$S_i((X \circ Y)F_kF_i)$};
\node (g) at (0,4,5) {$S_i(X) \circ E_iS_i(Y) \circ M_k$}; 
\node (h) at (5,4,5) {$S_i(X) \circ S_i(YF_iF_k)$};
\node (i) at (9,4,5) {$S_i(X \circ YF_iF_k)$};
\node (j) at (0,0,5) {$S_i(X) \circ E_i(S_i(Y) \circ M_k)$};
\node (k) at (5,0,5) {$S_i(X) \circ S_i(Y F_kF_i)$};
\node (l) at (9,0,5) {$S_i(X \circ YF_kF_i)$};
\draw[->] (a) -- node {$\scriptstyle \theta$} (b);
\draw (b) -- node {$\scriptstyle \sim$} (c);
\draw[->] (d) -- node[pos=.3] {$\scriptstyle \theta$} (e);
\draw (e) -- node[pos=.8] {$\scriptstyle \sim$} (f);
\draw[->] (a) -- (d);
\draw[->] (b) -- (e);
\draw[->] (c) -- (f);
\draw[->>] (a) -- (g);
\draw[->>] (c) -- (i);
\draw[->>] (d) -- (j);
\draw[->>] (f) -- (l);
\draw[cross] (g) -- node {$\scriptstyle \sim$} (h);
\draw[cross,->, line width=1.3pt] (h) -- node {$\scriptstyle \theta$} (i);
\draw (j) -- node {$\scriptstyle \sim$} (k);
\draw[->,line width=1.3pt] (k) -- node {$\scriptstyle \theta$} (l);
\draw[->] (g) -- (j);
\draw[cross,->,line width=1.3pt] (h) -- (k);
\draw[cross,->,line width=1.3pt] (i) -- (l);
\end{tikzpicture}
}
\end{equation*}
where each vertical homomorphism is induced from $\xy 0;/r.12pc/: (0,0)*{\dcross{i}{k}}; \endxy$ or $\sigma_{i,k}$.
We need to prove that the thick square commutes. 
It is easy to see that all the side faces except the thick one and the left-most one commute (there are four such faces). 
Hence, it is enough to show the commutativity of the left-most, top, and bottom faces. 

The commutativity of the left-most face follows from the lemma below: 
\begin{lemma} \label{lem:commute}
Let $\alpha',\beta',\gamma' \in \mathsf{Q}_+$ and 
\[
L \in \gMod{R_i(\alpha')},\  M \in \gMod{R_i(\beta')},\ N \in \gMod{R_i(\gamma')}.
\] 
The following diagram commutes: 
\[
\begin{tikzcd}
E_i(L \circ M) \circ N \arrow[r,hookrightarrow,"i"]\arrow[d,twoheadrightarrow,"p"] & E_i(L \circ M \circ N) \arrow[d,twoheadrightarrow, "p'"] \\
L \circ E_iM \circ N \arrow[r,hookrightarrow,"i'"] & L \circ E_i(M \circ N)
\end{tikzcd}
\]
\end{lemma}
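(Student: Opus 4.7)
The plan is to realize all four modules in the square as sub- or sub-quotient objects inside a single three-step Mackey filtration of $E_i(L \circ M \circ N)$ and then observe that the commutativity reduces to a tautology. The Mackey filtration of $\Res_{\alpha_i, \alpha'+\beta'+\gamma'-\alpha_i}(L\circ M \circ N)$ gives a three-step filtration
\[
0 \subset F_1 \subset F_2 \subset F_3 = E_i(L\circ M \circ N)
\]
with successive subquotients
\[
F_1 \simeq (E_iL) \circ M \circ N, \quad F_2/F_1 \simeq L \circ (E_iM) \circ N, \quad F_3/F_2 \simeq L \circ M \circ (E_iN).
\]

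First I would verify, by applying Lemma \ref{lem:adjointSES} twice (once for $L \circ M$, then for the result convolved with $N$), that the canonical injection $i \colon E_i(L \circ M) \circ N \hookrightarrow E_i(L\circ M\circ N)$ identifies its source with $F_2$. Under this identification, the short exact sequence
\[
0 \to (E_iL)\circ M \circ N \to E_i(L\circ M)\circ N \xrightarrow{p} L \circ (E_iM) \circ N \to 0
\]
supplied by Lemma \ref{lem:adjointSES} coincides with the associated filtration sequence $0 \to F_1 \to F_2 \to F_2/F_1 \to 0$. The compatibility between the inclusion $i$ of the lemma and the inclusion $F_2 \hookrightarrow F_3$ of the Mackey filtration is exactly one of the triangles recorded in Remark \ref{rem:EFvsmultiplication}.

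Second, I would argue symmetrically that the surjection $p' \colon E_i(L \circ M\circ N) \twoheadrightarrow L \circ E_i(M \circ N)$ identifies its target with $F_3/F_1$, and that the subobject $i' \colon L \circ E_iM \circ N \hookrightarrow L \circ E_i(M \circ N)$ corresponds precisely to $F_2/F_1 \subset F_3/F_1$ under this identification; again this uses the relevant triangle from Remark \ref{rem:EFvsmultiplication}. With these identifications in hand, both composites $i' p$ and $p' i$ from $E_i(L\circ M) \circ N \simeq F_2$ to $L \circ E_i(M \circ N) \simeq F_3/F_1$ become the map induced by $F_2 \hookrightarrow F_3 \twoheadrightarrow F_3/F_1$, so the square commutes.

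I do not expect a serious obstacle: the lemma is essentially bookkeeping on Mackey's theorem in three slots. The only step requiring care is the verification that the explicit maps $i, p, i', p'$ of Lemma \ref{lem:adjointSES} match the associated-graded morphisms of the Mackey filtration, which is precisely the content of the triangles in Remark \ref{rem:EFvsmultiplication}; after that, the conclusion is a routine diagram chase.
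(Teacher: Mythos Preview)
Your filtration framework is the right mental picture, but step 4 is circular. Once you have identified $E_i(L\circ M)\circ N$ with $F_2$ via $i$, the map $p$ with the quotient $F_2\to F_2/F_1$ (using the injection triangle of Remark \ref{rem:EFvsmultiplication}), and $p'$ with the quotient $F_3\to F_3/F_1$, the remaining claim is that under these identifications $i'$ becomes the inclusion $F_2/F_1\hookrightarrow F_3/F_1$. Unwinding what that means: for $x\in F_2$ one must have $i'(p(x))=p'(i(x))$. That is precisely the commutativity of the square you are trying to prove. The surjection triangle you invoke only tells you that the \emph{image} of $i'$ inside $F_3/F_1$ is $F_2/F_1$; it does not pin down the map. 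Two surjections $F_2\to F_2/F_1$ with the same kernel may differ by an automorphism of the target, so matching kernels and images is not enough.

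Put differently, the triangles in Remark \ref{rem:EFvsmultiplication} record the ``parallel'' compatibilities (two successive injections compose to the direct injection; likewise for surjections). The lemma is a ``mixed'' compatibility: an injection on one factor against a surjection on the other. That mixed statement is not a formal consequence of the parallel ones and is exactly why the paper resorts to a direct check on generators $e_i\tau_w(e_iu\boxtimes v)$, splitting on whether $w(1)=1$. To turn your outline into a proof you would still need to verify, by hand, that the Mackey isomorphism $F_2/F_1\simeq L\circ E_iM\circ N$ coming from $p$ coincides with the one coming from $p'\circ i'$ restricted --- and at that point you are doing the paper's computation.
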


\begin{proof}
Note that $E_i(L \circ M) \circ N$ is generated as an $R(\alpha' + \beta' +\gamma' - \alpha_i)$-module by $E_i(l \boxtimes m) \boxtimes n$ and $E_i\tau_1 \cdots \tau_a (l \boxtimes m) \boxtimes n$ ($l \in L, m \in M, n \in N$), where $a = \height \alpha'$. 
Hence, it suffices to verify that $p'i$ and $i'p$ coincide on these generating elements. 
It follows from the calculation below: 
\begin{align*}
&p'i(E_i(l \boxtimes m) \boxtimes n) = p'(E_i(l \boxtimes m \boxtimes n)) = 0, \\
&i'p(E_i(l \boxtimes m) \boxtimes n) = p(0) = 0, \\
&p'i(E_i\tau_1 \cdots \tau_a (l\boxtimes m) \boxtimes n) = p'(E_i\tau_1 \cdots \tau_a (l \boxtimes m \boxtimes n)) = l \boxtimes E_i(m \boxtimes n), \\
&i'p(E_i\tau_1 \cdots \tau_a (l\boxtimes m) \boxtimes n) = i'(l \boxtimes E_im \boxtimes n) = l \boxtimes E_i(m \boxtimes n). 
\end{align*}
\end{proof}

The commutativity of the top face follows from the commutative diagram below: 
\begin{equation} \label{pic:1}
\begin{tikzcd}
E_i(S_i(X)\circ S_i(Y)) \circ M_k \arrow[r,twoheadrightarrow]\arrow[d,"\theta"] & S_i(X) \circ E_iS_i(Y) \circ M_k \arrow[d,-,"\sim" sloped] & \\
E_iS_i(X \circ Y) \circ M_k \arrow[d,-,"\sim" sloped]\arrow[r,phantom,"(4)"] & S_i(X) \circ S_i(Y F_i) \circ M_k \arrow[d,"\theta"]\arrow[r,-,"\sim" sloped]\arrow[rd,phantom,"(5)"] & S_i(X) \circ S_i(Y F_iF_k) \arrow[d,"\theta"] \\
S_i((X \circ Y)F_i) \circ M_k \arrow[d,-,"\sim" sloped]\arrow[r,twoheadrightarrow]\arrow[rd,phantom,"\text{naturality}"] & S_i(X \circ YF_i) \circ M_k \arrow[d,-,"\sim"sloped] & S_i(X \circ Y F_iF_k) \\
S_i((X\circ Y)F_iF_k) \arrow[r,twoheadrightarrow] & S_i((X \circ YF_i)F_k) \arrow[ru,-,"\sim" sloped] &  \\
\end{tikzcd}
\end{equation}

The commutativity of the bottom face follows from the commutative diagram below: 
\begin{equation} \label{pic:2}
\adjustbox{scale=0.9,center}{
\begin{tikzcd}
E_i(S_i(X) \circ S_i(Y) \circ M_k) \arrow[rr,twoheadrightarrow] \arrow[d,"\theta"]\arrow[rd,-,"\sim" sloped]\arrow[rrd,phantom,"\text{naturality}"]\arrow[rdd,phantom,"(5)"] & & S_i(X) \circ E_i(S_i(Y) \circ M_k) \arrow[d,-,"\sim" sloped] \\
E_i(S_i(X \circ Y) \circ M_k) \arrow[d,-,"\sim" sloped] & E_i(S_i(X) \circ S_i(Y F_k)) \arrow[d,"\theta"]\arrow[r,twoheadrightarrow] & S_i(X) \circ E_iS_i(YF_k) \arrow[d,-,"\sim" sloped] \\
E_iS_i((X \circ Y)F_k) \arrow[r,-,"\sim" sloped]\arrow[d,-,"\sim" sloped]\arrow[rd,phantom,"\text{naturality}"] &E_iS_i(X \circ YF_k) \arrow[d,-,"\sim" sloped]\arrow[r,phantom,"(4)"] & S_i(X) \circ S_i(Y F_kF_i) \arrow[d,"\theta"] \\
S_i((X \circ Y)F_kF_i) \arrow[r,-,"\sim" sloped] & S_i((X \circ YF_k)F_i)\arrow[r,twoheadrightarrow] & S_i(X \circ YF_kF_i)\\ 
\end{tikzcd}
}
\end{equation}

Case 3. $j \neq i, k = i$. 
Consider the following diagram. 
\begin{equation*}
\centering
\scalebox{0.8}{
\begin{tikzpicture}[auto]
\node (a) at (-1,4,0) {$E_i(S_i(X) \circ S_i(Y) \circ M_j)$}; 
\node (b) at (4,4,0) {$E_i(S_i(X \circ Y) \circ M_j)$};
\node (c) at (8,4,0) {$S_i((X \circ Y)F_jF_i)$};
\node (d) at (-1,0,0) {$E_i(S_i(X) \circ S_i(Y)) \circ M_j$};
\node (e) at (4,0,0) {$E_iS_i(X \circ Y) \circ M_j$};
\node (f) at (8,0,0) {$S_i((X \circ Y)F_iF_j)$};
\node (g) at (-1,4,5) {$S_i(X) \circ E_i(S_i(Y) \circ M_j)$}; 
\node (h) at (4,4,5) {$S_i(X) \circ S_i(Y F_jF_i)$};
\node (i) at (8,4,5) {$S_i(X \circ YF_jF_i)$};
\node (j) at (-1,0,5) {$S_i(X) \circ E_iS_i(Y) \circ M_j$};
\node (k) at (4,0,5) {$S_i(X) \circ S_i(YF_iF_j)$};
\node (l) at (8,0,5) {$S_i(X \circ YF_iF_j)$};
\draw[->] (a) -- node {$\scriptstyle \theta$} (b);
\draw (b) -- node {$\scriptstyle \sim$} (c);
\draw[->] (d) -- node[pos=.3] {$\scriptstyle \theta$} (e);
\draw (e) -- node[pos=.8] {$\scriptstyle \sim$} (f);
\draw[->] (a) -- (d);
\draw[->] (b) -- (e);
\draw[->] (c) -- (f);
\draw[->>] (a) -- (g);
\draw[->>] (c) -- (i);
\draw[->>] (d) -- (j);
\draw[->>] (f) -- (l);
\draw[cross] (g) -- node {$\scriptstyle \sim$} (h);
\draw[cross,->, line width=1.3pt] (h) -- node {$\scriptstyle \theta$} (i);
\draw (j) -- node {$\scriptstyle \sim$} (k);
\draw[->,line width=1.3pt] (k) -- node {$\scriptstyle \theta$} (l);
\draw[->] (g) -- (j);
\draw[cross,->,line width=1.3pt] (h) -- (k);
\draw[cross,->,line width=1.3pt] (i) -- (l);
\end{tikzpicture}
}
\end{equation*}
where each vertical homomorphism is induced from $\xy 0;/r.12pc/: (0,0)*{\dcross{j}{i}}; \endxy$ or $\sigma_{j,i}$. 
We need to prove that the thick square commutes. 
The top and bottom faces are commutative by (\ref{pic:1}) and (\ref{pic:2}). 
It is easy to see that all the side faces except the thick and the left-most one are commutative (there are four such faces). 
Hence, it suffices to prove that the left-most face commutes. 
 
Consider the following diagram:
\begin{equation*}
\adjustbox{scale=0.9,center}{
\begin{tikzcd}
E_i(S_i(X) \circ S_i(Y) \circ M_j) \arrow[r,"\sigma_{j,i}"]\arrow[d,twoheadrightarrow]\arrow[rr,bend left, "{Q_{i,j}\left(\xy 0;/r.12pc/: (0,0)*{\sdotu{i}}; \endxy,y_j\right)}"] & E_i(S_i(X) \circ S_i(Y)) \circ M_j \arrow[r,hookrightarrow]\arrow[d,twoheadrightarrow] & E_i(S_i(X)\circ S_i(Y) \circ M_j) \arrow[d,twoheadrightarrow] \\
S_i(X) \circ E_i(S_i(Y) \circ M_j) \arrow[r,"\sigma_{j,i}"]\arrow[rr,bend right, "{Q_{i,j}\left(\xy 0;/r.12pc/: (0,0)*{\sdotu{i}}; \endxy,y_j\right)}"] & S_i(X) \circ E_iS_i(Y) \circ M_j \arrow[r,hookrightarrow] & S_i(X) \circ E_i(S_i(Y) \circ M_j)
\end{tikzcd}
}
\end{equation*}
The right square commutes by Lemma \ref{lem:commute}, 
and the top and bottom triangles commute by Definition \ref{def:anothertauij}. 
Hence, the left square is commutative as desired. 

Case 4. $j,k \neq i$. 
Consider the following diagram: 
\begin{equation*}
  \centering
  \scalebox{0.9}{
  \begin{tikzpicture}[auto]
  \node (a) at (-2,4,0) {$S_i(X) \circ S_i(Y) \circ M_j \circ M_k$};
  \node (b) at (4,4,0) {$S_i(X \circ Y) \circ M_j \circ M_k$};
  \node (c) at (8,4,0) {$S_i((X \circ Y)F_jF_k)$};
  \node (d) at (-2,0,0) {$S_i(X) \circ S_i(Y) \circ M_k \circ M_j$};
  \node (e) at (4,0,0) {$S_i(X \circ Y) \circ M_k \circ M_j$};
  \node (f) at (8,0,0) {$S_i((X \circ Y)F_kF_j)$};
  \node (h) at (4,4,5) {$S_i(X) \circ S_i(Y F_jF_k)$};
  \node (i) at (8,4,5) {$S_i(X \circ YF_jF_k)$};
  \node (k) at (4,0,5) {$S_i(X) \circ S_i(YF_kF_j)$};
  \node (l) at (8,0,5) {$S_i(X \circ YF_kF_j)$};
  \draw[->] (a) -- node {$\scriptstyle \theta$} (b);
  \draw (b) -- node {$\scriptstyle \sim$} (c);
  \draw[->] (d) -- node[pos=.3] {$\scriptstyle \theta$} (e);
  \draw (e) -- node[pos=.8] {$\scriptstyle \sim$} (f);
  \draw[-.] (a) -- (d);
  \draw[->] (b) -- (e);
  \draw[->] (c) -- (f);
  \draw (c) -- node[sloped] {$\scriptstyle \sim$} (i);
  \draw (f) -- node[sloped] {$\scriptstyle \sim$} (l);
  \draw[cross,->, line width = 1.3pt] (h) -- node {$\scriptstyle \theta$} (i);
  \draw[->,line width = 1.3pt] (k) -- node {$\scriptstyle \theta$} (l);
  \draw[cross,->,line width = 1.3pt] (h) -- (k);
  \draw[cross,->,line width = 1.3pt] (i) -- (l);
  \draw (a) -- node[sloped] {$\scriptstyle \sim$} (h);
  \draw (d) -- node[sloped] {$\scriptstyle \sim$} (k);
  %\draw (b) -- node[sloped] {$\scriptstyle \sim$} (h);
  %\draw (e) -- node[sloped] {$\scriptstyle \sim$} (k);
  \end{tikzpicture}
  }
  \end{equation*}
where each vertical homomorphism is induced from $\xy 0;/r.12pc/: (0,0)*{\dcross{j}{k}}; \endxy$ or $\sigma_{j,k}$. 
We need to prove that the thick square commutes. 
It is easy to see that the side faces except the thick one are commutative (there are four such faces). 
The commutativity of the top pentagon follows from the commutative diagram below: 
\begin{equation*}
\begin{tikzcd}
S_i(X) \circ S_i(Y) \circ M_j \circ M_k \arrow[rd,-,"\sim" sloped] \arrow[d,"\theta"] & & \\ 
S_i(X \circ Y) \circ M_j \circ M_k \arrow[d,-,"\sim" sloped]\arrow[r,phantom,"(5)"] & S_i(X) \circ S_i(Y F_j) \circ M_k \arrow[rd,-,"\sim" sloped]\arrow[d,"\theta"] & \\
S_i((X \circ Y)F_j) \circ M_k \arrow[r,-,"\sim" sloped]\arrow[d,-,"\sim" sloped]\arrow[rd,phantom,"\text{naturality}"] & S_i(X \circ YF_j) \circ M_k \arrow[d,-,"\sim" sloped]\arrow[r,phantom,"(5)"] & S_i(X) \circ S_i(Y F_jF_k) \arrow[d,"\theta"] \\
S_i((X \circ Y)F_jF_k) \arrow[r,-,"\sim" sloped] & S_i((X \circ YF_j)F_k)\arrow[r,-,"\sim" sloped] & S_i(X \circ Y F_jF_k) \\
\end{tikzcd}
\end{equation*}
The commutativity of the bottom pentagon is also deduced from the diagram above with $j$ and $k$ swapped. 

This completes the proof of the commutativity of Diagram (7) and (8). 

\subsection{The induction step: commutativity of (3)}
We verify that the natural isomorphism $\theta_{\alpha,\beta}$ defined in (\ref{eq:defisoforj}) and (\ref{eq:defisofori}) makes Diagram (3) commutative 
for $X \in \gMod{{}_iR(\alpha)}, Y \in \gMod{{}_iR(\beta)}$. 
We may assume $Y = Y'F_j$ for some $j \in I$ and $Y' \in \gMod{{}_iR(\beta - \alpha_j)}$. 

Case 1. $j = i$. 
Consider the following diagram: 
\begin{equation} \label{pic:3}
\centering
\scalebox{0.55}{
\begin{tikzpicture}[auto]
 \node (a) at (-5.5,0) {$E_iS_i(X) \circ E_iS_i(Y')$};
 \node (b) at (-2,0) {$E_iS_i(X) \circ S_i(Y' F_i)$};
 \node (c) at  (2,0) {$E_i(S_i(X) \circ S_i(Y'F_i))$};
 \node (d) at (6,0) {$E_i(S_i(X) \circ E_iS_i(Y'))$};
 \node (e) at (-5.5,2) {$S_i(XF_i) \circ E_iS_i(Y')$};
 \node (f) at (-2,2) {$S_i(XF_i)\circ S_i(Y'F_i)$};
 \node (g) at (2,2) {$E_iS_i(X \circ Y'F_i)$};
 \node (h) at (-2,4) {$S_i(XF_i \circ Y'F_i)$};
 \node (i) at (2,4) {$S_i((X \circ Y'F_i)F_i)$};
 \node (j) at (-9.5,4) {$E_i(E_iS_i(X) \circ S_i(Y'))$};
 \node (k) at  (-9.5,6) {$E_i(S_i(X F_i) \circ S_i(Y'))$};
 \node (l) at (-9.5,8) {$E_iS_i(X F_i \circ Y')$};
 \node (m) at  (-9.5,10) {$S_i((XF_i \circ Y') F_i)$};
 \node (n) at (9,4) {$E_iE_i(S_i(X) \circ S_i(Y'))$};
 \node (o) at (9,6) {$E_iE_iS_i(X \circ Y')$};
 \node (p) at (9,8) {$E_iS_i((X \circ Y')F_i)$};
 \node (q) at (9,10) {$S_i((X \circ Y')F_iF_i)$};
 \node (r) at (0,8) {$E_iE_i(S_i(X) \circ S_i(Y'))$};
 \node (s) at (0,10) {$E_iE_iS_i(X \circ Y')$};
 \node (t) at (0,12) {$E_iS_i((X \circ Y')F_i)$};
 \node (u) at (0,14) {$S_i((X \circ Y')F_iF_i)$};
 \draw (a) -- node {$\scriptstyle \sim$} (b);
 \draw[{Hooks[right]}->, line width =1.5pt] (b) -- (c);
 \draw (c) -- node {$\scriptstyle \sim$} (d);
 \draw (a) -- node[sloped] {$\scriptstyle \sim$} (e);
 \draw[line width =1.5pt] (b) -- node[sloped] {$\scriptstyle \sim$} (f);
 \draw[->,line width =1.5pt] (c) -- node {$\scriptstyle \theta$} (g);
 \draw (e) -- node {$\scriptstyle \sim$} (f);
 \draw[->,line width =1.5pt] (f) -- node {$\scriptstyle \theta$} (h);
 \draw[line width =1.5pt] (g) -- node[sloped] {$\scriptstyle \sim$} (i);
 \draw[->,line width =1.5pt] (h) -- (i);
 \draw (j) -- node[sloped] {$\scriptstyle \sim$} (k);
 \draw[->] (k) -- node {$\scriptstyle \theta$} (l);
 \draw (l) -- node[sloped] {$\scriptstyle \sim$} (m);
 \draw[->>] (j) -- (a);
 \draw[->] (n) -- node {$\scriptstyle \theta$} (o);
 \draw (o) -- node[sloped] {$\scriptstyle \sim$} (p);
 \draw (p) -- node[sloped] {$\scriptstyle \sim$} (q);
 \draw[->>] (n) -- (d);
 \draw[->] (r) -- node {$\scriptstyle \theta$} (s);
 \draw (s) -- node[sloped] {$\scriptstyle \sim$} (t);
 \draw (t) -- node[sloped] {$\scriptstyle \sim$} (u);
 \draw[->] (m) -- (u);
 \draw[->] (l) -- (t);
 \draw[{Hooks[right]}->] (j) -- (r);
 \draw[->] (r) -- node[pos=0.2] {$\xy 0;/r.12pc/: (0,0)*{\ucross{i}{i}}; \endxy$} (n);
 \draw[->] (s) -- node[pos=0.3] {$\xy 0;/r.12pc/: (0,0)*{\ucross{i}{i}}; \endxy$} (o);
 \draw[->] (u) -- node {$\xy 0;/r.12pc/: (0,0)*{\dcross{i}{i}}; \endxy$} (q);
 \draw[cross,->>] (k) -- (e);
 \draw[cross,->>] (m) -- (h);
 \draw[cross,->>] (p) -- (g);
 \draw[cross,->>] (q) -- (i);
\end{tikzpicture}
}
\end{equation}
We need to prove that the thick face commutes. 
It is easy to see that all the side faces except the thick one are commutative (there are nine such faces). 
Hence, it suffices to prove the top and bottom faces commute. 

In order to prove the commutativity of the top face, consider the following diagram: 
\begin{equation*}
\centering
\scalebox{0.8}{
\begin{tikzpicture}[auto]
 \node (a) at (-2,0) {$XF_i \circ Y'F_i$}; 
 \node (b) at (2,0) {$(X \circ Y'F_i)F_i$};
 \node (c) at (-3,2) {$(XF_i \circ Y') F_i$};
 \node (d) at (3,2) {$(X \circ Y')F_iF_i$};
 \node (e) at (0,3) {$(X\circ Y')F_iF_i$};
 \node (f) at (-4,-2) {$X \circ R(\alpha_i) \circ Y'F_i$};
 \node (g) at (4,-2) {$X \circ Y'F_i \circ R(\alpha_i)$};
 \node (h) at (-6,3) {$X \circ R(\alpha_i) \circ Y' \circ R(\alpha_i)$};
 \node (i) at (6,3) {$X \circ Y' \circ R(\alpha_i) \circ R(\alpha_i)$};
 \node (j) at (0,5) {$X \circ Y' \circ R(\alpha_i) \circ R(\alpha_i)$};
 \draw[{Hooks[right]}->] (a) -- (b);
 \draw[->>] (c) -- (a);
 \draw[->>] (d) -- (b);
 \draw[{Hooks[right]->}] (c) -- (e);
 \draw[->] (e) -- node {$\xy 0;/r.12pc/: (0,0)*{\dcross{i}{i}}; \endxy$} (d);
 \draw[->] (f) -- node {$\mathsf{R}_{Y'F_i}$} (g);
 \draw[->>] (h) -- (f);
 \draw[->>] (i) -- (g);
 \draw[->] (h) -- node {$\mathsf{R}_{Y'}$} (j);
 \draw[->] (j) -- node {$f$} (i);
 \draw[->>] (f) -- (a);
 \draw[->>] (g) -- (b);
 \draw[->>] (h) -- (c);
 \draw[->>] (i) -- (d);
 \draw[->>] (j) -- (e);
\end{tikzpicture}
}
\end{equation*}
where $f$ is induced by
\[
R(\alpha_i) \circ R(\alpha_i) = R(2\alpha_i) \xrightarrow{\times \tau_1} R(2\alpha_i) = R(\alpha_i) \circ R(\alpha_i),  
\]
and $\mathsf{R}_{Y'}, \mathsf{R}_{Y'F_i}$ are given in Proposition \ref{prop:cyclotomicR}. 
It suffices to prove that the pentagon in the center commutes. 
It is easy to see that all the squares commute (there are five in total). 
Recall that the homomorphism $\mathsf{R}_{Y'} \colon R(\alpha_i) \circ Y' \to Y' \circ R(\alpha_i)$ is given by $u \boxtimes v \mapsto \tau_1 \cdots \tau_{\height \beta -1} (v\boxtimes u)$, 
and $\mathsf{R}_{Y'F_i} \colon R(\alpha_i) \circ Y'F_i \to Y'F_i \circ R(\alpha_i)$ is given by $(u \boxtimes v) \mapsto \tau_1 \cdots \tau_{\height \beta}(v\boxtimes u)$. 
Therefore, the outer pentagon commutes, which proves the assertion. 

Next, we prove that the bottom face of Diagram (\ref{pic:3}) commutes. 
Note that the composition
\begin{align*}
E_iS_i(X) \circ E_iS_i(Y') \simeq E_iS_i(X) \circ S_i(Y'F_i) &\hookrightarrow E_i(S_i(X) \circ S_i(Y'F_i)) \\
&\simeq E_i(S_i(X) \circ E_iS_i(Y'))
\end{align*}
appearing in the bottom face coincides with the canonical injective homomorphism $E_iS_i(X) \circ E_iS_i(Y') \hookrightarrow E_i(S_i(X) \circ E_iS_i(Y'))$. 
Hence, in order to prove the commutativity of the bottom face, it suffices to verify that the following diagram commutes:
\begin{equation} \label{pic:4}
  \centering
  \begin{tikzpicture}[auto]
   \node (a) at (-2,0) {$E_iM \circ E_iN$}; 
   \node (b) at (2,0) {$E_i(M \circ E_iN)$};
   \node (c) at (-3,2) {$E_i(E_iM \circ N)$};
   \node (d) at (3,2) {$E_iE_i(M \circ N)$};
   \node (e) at (0,3) {$E_iE_i(M \circ N)$};
   \draw[{Hooks[right]}->] (a) -- node {$b'$} (b);
   \draw[->>] (c) --node {$a'$} (a);
   \draw[->>] (d) -- node {$c$} (b);
   \draw[{Hooks[right]->}] (c) -- node {$a$} (e);
   \draw[->] (e) -- node {$b \coloneq \xy 0;/r.12pc/: (0,0)*{\ucross{i}{i}}; \endxy$} (d);
  \end{tikzpicture}
  \end{equation}
where $M \in \gMod{R_i(\alpha')}, N \in \gMod{R_i(\beta')}, \alpha',\beta' \in \mathsf{Q}_+$. 
Let $m=\height \alpha', n=\height \beta'$. 
If $m \leq 1$, we have $E_iM = 0$ since $R_i(\alpha_i) = 0$. Hence, the assertion is obvious in this case. 
Assume $m \geq 2$. 
Note that we have 
\[
E_iM \circ N = \bigoplus_{w \in \mathfrak{S}_{m+n-1}^{m-1,n}} \tau_w(E_iM \boxtimes N). 
\]
Let $w \in \mathfrak{S}_{m+n-1}^{m-1,n}, u \in M, v \in N$. 
It suffices to prove 
\begin{equation} \label{eq:1}
cba(E_i\tau_w(E_iu \boxtimes v)) = b'a'(E_i \tau_w(E_iu \boxtimes v)). 
\end{equation}

The left hand side is 
\begin{align}
cb(E_i\tau_w E_i(u\boxtimes v)) &=cb (E_i E_i (e(i) \boxtimes \tau_w) (u \boxtimes v)) \notag \\ 
&= c(E_iE_i \tau_1(e(i) \boxtimes \tau_w)(u \boxtimes v)). \label{eq:2}
\end{align}
If $w(1) = 1$, we have 
\[
E_i\tau_1(e(i) \boxtimes \tau_w)(u\boxtimes v) = \tau_w E_i (\tau_1 u \boxtimes v) = \tau_wE_i(e(i,*)\tau_1u \boxtimes v) \in E_i(M \circ N).
\]
Here, we used the assumption $m \geq 2$.
Hence, the image of this element under the canonical surjection $E_i(M \circ N) \to M \circ E_iN$ is zero. 
It follows that the left hand side of (\ref{eq:2}) is zero in this case.  
Assume $w (1) \neq 1$. 
Since $w \in \mathfrak{S}_{m+n-1}^{m-1,n}$, we have $w(m) = 1$. 
Hence, there exists $x \in \mathfrak{S}_{m+n-2}^{m-1,n-1}$ such that $w = (e_1 \star x)s_1 \cdots s_{m-1}$ (see Section \ref{sec:notation} for the notation). 
We have $s_1(e_1 \star w) = s_1(e_2 \star x)s_2 \cdots s_m = (e_2 \star x)s_1 \cdots s_m$.  
Note that this is an element of $\mathfrak{S}_{m+n}^{m,n}$, and is of length $\ell(w) + 1 = \ell(x) + m$. 
Hence, Lemma \ref{lem:tau} shows that $e(i,i,*)\tau_1 (e(i) \boxtimes \tau_w) = (e(i,i) \boxtimes \tau_x)\tau_1 \cdots \tau_m$ in $R(\alpha' + \beta')$.  
Therefore, (\ref{eq:2}) is 
\begin{align*}
c(E_i(e(i) \boxtimes \tau_x) E_i\tau_1\cdots \tau_m(u\boxtimes v)) &= E_i(e(i) \boxtimes \tau_x) (u \boxtimes E_iv) \\
&= \tau_x E_i( u \boxtimes E_iv). 
\end{align*}

Next, we compute the right hand side of (\ref{eq:1}). 
If $w (1) = 1$, there exists $w' \in \mathfrak{S}_{m+n-2}^{m-1,n-1}$ such that $w = e_1 \star w'$. 
We have 
\[
a'(E_i\tau_w(E_iu \boxtimes v)) = a'(\tau_{w'}E_i(E_iu \boxtimes v)) = \tau_{w'}a'(E_i(E_iu \boxtimes v)) = 0.
\] 
Therefore, the right hand side of (\ref{eq:1}) is zero in this case. 
Assume $w (1) \neq 1$. 
Then $w(m) = 1$, and we can write $w = (e_1 \star x) s_1 \cdots s_{m-1}$ as in the previous paragraph. 
Hence, the right hand side is 
\begin{align*}
b'a'(E_i(e(i) \boxtimes \tau_x)\tau_1 \cdots \tau_{m-1}(E_i u\boxtimes v)) &= b'a' (\tau_x E_i\tau_1 \cdots \tau_{m-1}(E_i u \boxtimes v)) \\
&= b'(\tau_x(E_i u\boxtimes E_i v)) \\
&= \tau_x E_i(u \boxtimes E_iv). 
\end{align*}
Therefore, Diagram (\ref{pic:4}) commutes. 

Case 2. $j \neq i$. 
Consider the following diagram: 
\begin{equation} \label{pic:5}
\centering 
\scalebox{0.65}{
\begin{tikzpicture}[auto]
  \node (b) at (-2,-2) {$E_iS_i(X) \circ S_i(Y' F_j)$};
  \node (c) at  (2,-2) {$E_i(S_i(X) \circ S_i(Y'F_j))$};
  \node (f) at (-2,0) {$S_i(XF_i)\circ S_i(Y'F_j)$};
  \node (g) at (2,2) {$E_iS_i(X \circ Y'F_j)$};
  \node (h) at (-2,4) {$S_i(XF_i \circ Y'F_j)$};
  \node (i) at (2,4) {$S_i((X \circ Y'F_j)F_i)$};
  \node (j) at (-8,4) {$E_iS_i(X) \circ S_i(Y') \circ M_j$};
  \node (k) at  (-8,6) {$S_i(X F_i) \circ S_i(Y') \circ M_j$};
  \node (l) at (-8,8) {$S_i(X F_i \circ Y') \circ M_j$};
  \node (m) at  (-8,10) {$S_i((XF_i \circ Y') F_j)$};
  \node (n) at (8,4) {$E_i(S_i(X) \circ S_i(Y') \circ M_j)$};
  \node (o) at (8,6) {$E_i(S_i(X \circ Y') \circ M_j)$};
  \node (p) at (8,8) {$E_iS_i((X \circ Y')F_j)$};
  \node (q) at (8,10) {$S_i((X \circ Y')F_jF_i)$};
  \node (r) at (0,8) {$E_i(S_i(X) \circ S_i(Y')) \circ M_j$};
  \node (s) at (0,10) {$E_iS_i(X \circ Y') \circ M_j$};
  \node (t) at (0,12) {$S_i((X \circ Y')F_i)\circ M_j$};
  \node (u) at (0,14) {$S_i((X \circ Y')F_iF_j)$};
  \draw[{Hooks[right]}->, line width = 1.5pt] (b) -- (c);
 \draw[line width = 1.5pt] (b) -- node[sloped] {$\scriptstyle \sim$} (f);
 \draw[->,line width = 1.5pt] (c) -- node {$\scriptstyle \theta$} (g);
 \draw[->,line width = 1.5pt] (f) -- node {$\scriptstyle \theta$} (h);
 \draw[line width = 1.5pt] (g) -- node[sloped] {$\scriptstyle \sim$} (i);
 \draw[->,line width = 1.5pt] (h) -- (i);
 \draw (j) -- node[sloped] {$\scriptstyle \sim$} (k);
 \draw[->] (k) -- node {$\scriptstyle \theta$} (l);
 \draw (l) -- node[sloped] {$\scriptstyle \sim$} (m);
 \draw (j) -- node[sloped] {$\scriptstyle \sim$} (b);
 \draw[->] (n) -- node {$\scriptstyle \theta$} (o);
 \draw (o) -- node[sloped] {$\scriptstyle \sim$} (p);
 \draw (p) -- node[sloped] {$\scriptstyle \sim$} (q);
 \draw (n) -- node[sloped] {$\scriptstyle \sim$} (c);
 \draw[->] (r) -- node {$\scriptstyle \theta$} (s);
 \draw (s) -- node[sloped] {$\scriptstyle \sim$} (t);
 \draw (t) -- node[sloped] {$\scriptstyle \sim$} (u);
 \draw[->] (m) -- (u);
 \draw[->] (l) -- (t);
 \draw[{Hooks[right]}->] (j) -- (r);
 \draw[{Hooks[right]}->] (r) -- node[pos=0.2] {$\sigma_{i,j}$} (n);
 \draw[{Hooks[right]}->] (s) -- node[pos=0.3] {$\sigma_{i,j}$} (o);
 \draw[->] (u) -- node {$\xy 0;/r.12pc/: (0,0)*{\dcross{i}{j}}; \endxy$} (q);
 \draw[cross] (k) -- node[sloped] {$\scriptstyle \sim$} (f);
 \draw[cross] (m) -- node[sloped] {$\scriptstyle \sim$} (h);
 \draw[cross] (p) -- node[sloped] {$\scriptstyle \sim$} (g);
 \draw[cross] (q) -- node[sloped] {$\scriptstyle \sim$} (i);
\end{tikzpicture}
}
\end{equation}
We need to prove the thick face commutes. 
It is easy to see that all the other side faces (there are eight in total) and the bottom face are commutative.
The top face also commutes by the same reasoning as in Case 1. 
Hence, the assertion follows. 

Now, we have obtained a family of natural isomorphisms $\theta_{\alpha,\beta} \ (\alpha, \beta \in \mathsf{Q}_+)$ that makes Diagrams (2) -- (5) commutative. 

\subsection{Commutativity of (1)}
Let $\alpha \in \mathsf{Q}_+$ and $X \in \gMod{{}_iR(\alpha)}$. 
We prove that Diagram (1) commutes by induction on $\height \alpha$. 

First, assume $\alpha = 0$. 
Since $R_i(0) = \mathbf{k}$, we may assume $X = \mathbf{1}$. 
Then, Diagram (1) coincides with Diagram (2), and the assertion follows. 

Next, assume $\alpha \neq 0$. 
We may assume $X = X' F_j$ for some $j \in I$ and $X' \in \gMod{{}_iR(\alpha-\alpha_j)}$.

Case 1. $j = i$. 
Consider the following diagram: 
\begin{equation*}
\begin{tikzcd}
& \mathbf{1}  \circ S_i(X'F_i) \arrow[r,-,"\sim",line width=1.3pt] \arrow[d,equal,line width=1.3pt]\arrow[ldddd,bend right = 30,-,"\sim" sloped] & S_i(X'F_i) \arrow[d,-,"\sim"sloped,line width=1.3pt]\arrow[dd,bend left= 60,-,"\sim"sloped] \arrow[dddd,bend left= 80,-,"\sim"sloped] \\
& S_i(\mathbf{1}) \circ S_i(X'F_i) \arrow[r,"\theta",line width=1.3pt]\arrow[d,-,"\sim"sloped] & S_i(\mathbf{1} \circ X'F_i) \\
& S_i(\mathbf{1}) \circ E_iS_i(X') \arrow[r,phantom,"(4)"] & S_i((\mathbf{1}\circ X')F_i) \arrow[u,twoheadrightarrow]\arrow[d,-,"\sim"sloped] \\
& E_i(S_i(\mathbf{1}) \circ S_i(X')) \arrow[u,twoheadrightarrow]\arrow[r,"\theta"]\arrow[d,equal] & E_iS_i(\mathbf{1} \circ X')\arrow[d,-,"\sim"sloped] \\
\mathbf{1} \circ E_iS_i(X') \arrow[ruu,bend left = 20,equal]\arrow[rr,bend right = 30,-,"\sim"] & E_i(\mathbf{1} \circ S_i(X')) \arrow[r,-,"\sim"]\arrow[l] & E_iS_i(X')
\end{tikzcd}
\end{equation*}
We need to prove that the thick diagram commutes. 
Using the induction hypothesis, it is easy to see that all the other inner diagrams and the outer diagram commute. 
Hence, the assertion follows. 

Case 2. $j \neq i$. 
Consider the following diagram: 
\begin{equation*}
\begin{tikzcd}
\mathbf{1} \circ S_i(X'F_j) \arrow[r,-,"\sim",line width=1.3pt]\arrow[ddd,bend right = 70,"\sim" sloped]\arrow[d,equal,line width=1.3pt] & S_i(X'F_j)\arrow[d,-,"\sim"sloped,line width=1.3pt]\arrow[dd,-,"\sim" sloped, bend left=70]\arrow[rddd,-,"\sim"sloped,bend left=35] & \\
S_i(\mathbf{1}) \circ S_i(X'F_j) \arrow[r,"\theta",line width=1.3pt]\arrow[d,-,"\sim"sloped] & S_i(\mathbf{1} \circ S'F_j)\arrow[d,-,"\sim"sloped] & \\
S_i(\mathbf{1}) \circ S_i(X') \circ M_j\arrow[d,equal]\arrow[rd,"\theta"]\arrow[ru,phantom,"(5)"] & S_i(\mathbf{1} \circ X')F_j \arrow[d,-,"\sim"sloped] & \\
\mathbf{1} \circ S_i(X') \circ M_j\arrow[rr,bend right=20,-,"\sim"sloped] & S_i(\mathbf{1} \circ X') \circ M_j \arrow[r,-,"\sim"sloped] & S_i(X') \circ M_j
\end{tikzcd}
\end{equation*}
We need to prove that the thick diagram commutes.
Using the induction hypothesis, it is easy to see that all the other inner diagrams and the outer diagram commute. 
Hence, the assertion follows. 

\subsection{Commutativity of (6)} 
Let $\alpha, \beta, \gamma \in \mathsf{Q}_+, X \in \gMod{{}_iR(\alpha)}, Y \in \gMod{{}_iR(\beta)}, Z \in \gMod{{}_iR(\gamma)}$. 
We prove that Diagram (6) commutes by induction on $\height \gamma$. 

First, assume $\gamma = 0$. 
Then, ${}_iR(\gamma) = \mathbf{k} = \mathbf{1}$ and we may assume $Z = \mathbf{1}$. 
The assertion follows from the commutative diagram below.  
\begin{equation*}
\centering
\begin{tikzcd}
S_i(X) \circ S_i(Y) \circ S_i(\mathbf{1}) \arrow[rrr,"\theta"]\arrow[ddd,"\theta"] &[-1cm] & &[-1cm] S_i(X \circ Y) \circ S_i(\mathbf{1}) \arrow[ddd,"\theta"] \\
\arrow[rd, phantom,"(2)"] & S_i(X) \circ S_i(Y) \circ \mathbf{1} \arrow[r,"\theta"] \arrow[d,-,"\sim"sloped]\arrow[lu,equal] & S_i(X \circ Y) \circ \mathbf{1} \arrow[d,-,"\sim"sloped]\arrow[ru,equal] & \arrow[ld,phantom,"(2)"] \\
& S_i(X)\circ S_i(Y) \arrow[r,"\theta"]\arrow[ld,-,"\sim" sloped] & S_i(X \circ Y) \arrow[rd,-,"\sim" sloped] & \\
S_i(X) \circ S_i(Y \circ \mathbf{1}) \arrow[rrr,"\theta"] & & & S_i(X \circ Y \circ \mathbf{1}) \\
\end{tikzcd}
\end{equation*}
 
Next, assume $\gamma \neq 0$. 
We may assume $Z = Z'F_j$ for some $j \in I$ and $Z' \in \gMod{{}_iR(\gamma - \alpha_j)}$. 

Case 1. $j = i$. 
It follows from the commutative diagram below: 
\begin{equation*}
\centering
\scalebox{0.6}{
\begin{tikzpicture}[auto]
\node (a1) at (-7,3) {$S_i(X) \circ S_i(Y) \circ S_i(Z'F_i)$};
\node (a2) at (-5,2) {$S_i(X)\circ S_i(Y) \circ E_iS_i(Z')$};
\node (a3) at (-3.3,1) {$S_i(X) \circ E_i(S_i(Y) \circ S_i(Z'))$};
\node (a4) at (-1,0) {$E_i(S_i(X) \circ S_i(Y) \circ S_i(Z'))$};
\node (b1) at (10,3) {$S_i(X \circ Y) \circ S_i(Z'F_i)$};
\node (b2) at (8,2) {$S_i(X \circ Y) \circ E_iS_i(Z')$};
\node (b4) at (4,0) {$E_i(S_i(X \circ Y)\circ S_i(Z'))$};
\node (c1) at (-7,-4) {$S_i(X) \circ S_i(Y \circ Z'F_i)$};
\node (c2) at (-5,-3) {$S_i(X) \circ S_i((Y\circ Z')F_i)$};
\node (c3) at (-3.3,-2) {$S_i(X) \circ E_iS_i(Y \circ Z')$};
\node (c4) at (-1,-1) {$E_i(S_i(X) \circ S_i(Y \circ Z'))$};
\node (d1) at (10,-4) {$S_i(X \circ Y \circ Z'F_i)$};
\node (d2) at (8,-3) {$S_i(X \circ (Y \circ Z')F_i)$};
\node (d3) at (6,-2) {$S_i((X \circ Y \circ Z')F_i)$};
\node (d4) at (4,-1) {$E_iS_i(X \circ Y \circ Z')$};
\draw[->] (a1) -- node {$\scriptstyle \theta$} (b1); 
\draw[->] (a1) -- node {$\scriptstyle \theta$} (c1); 
\draw[->] (b1) -- node {$\scriptstyle \theta$} (d1); 
\draw[->] (c1) -- node {$\scriptstyle \theta$} (d1); 
\draw[->] (a2) -- node {$\scriptstyle \theta$} (b2); 
\draw[->] (a4) -- node {$\scriptstyle \theta$} (b4); 
\draw[->] (a3) -- node {$\scriptstyle \theta$} (c3); 
\draw[->] (a4) -- node {$\scriptstyle \theta$} (c4); 
\draw[->] (b4) -- node {$\scriptstyle \theta$} (d4); 
\draw[->] (c2) -- node {$\scriptstyle \theta$} (d2); 
\draw[->] (c4) -- node {$\scriptstyle \theta$} (d4); 
\draw[->>] (a4) -- (a3);
\draw[->>] (a3) -- (a2);
\draw (a2) -- node[sloped] {$\scriptstyle \sim$} (a1);
\draw[->>] (b4) -- (b2);
\draw (b2) -- node[sloped] {$\scriptstyle \sim$} (b1);
\draw[->>] (c4) -- (c3);
\draw (c3) -- node[sloped] {$\scriptstyle \sim$} (c2);
\draw[->>] (c2) -- (c1);
\draw (d4) -- node[sloped] {$\scriptstyle \sim$} (d3);
\draw[->>] (d3) -- (d2);
\draw[->>] (d2) -- (d1); 
\draw (-5,-0.5) node {(4)};
\draw (1.5,1) node {(4)};
\draw (1.5,-2) node {(4)};
\draw (7,-0.5) node {(4)};
\end{tikzpicture} 
}
\end{equation*}
Note that the central square commutes by the induction hypothesis. 

Case 2. $j \neq i$. 
It follows from the commutative diagram below: 
\begin{equation*}
\centering
\scalebox{0.6}{
\begin{tikzpicture}[auto]
\node (a1) at (-7,3) {$S_i(X) \circ S_i(Y) \circ S_i(Z'F_j)$};
\node (a4) at (-1,0) {$S_i(X) \circ S_i(Y) \circ S_i(Z') \circ M_j$};
\node (b1) at (10,3) {$S_i(X \circ Y) \circ S_i(Z'F_j)$};
\node (b4) at (4,0) {$S_i(X \circ Y)\circ S_i(Z') \circ M_j$};
\node (c1) at (-7,-4) {$S_i(X) \circ S_i(Y \circ Z'F_j)$};
\node (c2) at (-5,-3) {$S_i(X) \circ S_i((Y\circ Z')F_j)$};
\node (c4) at (-1,-1) {$S_i(X) \circ S_i(Y \circ Z')\circ M_j$};
\node (d1) at (10,-4) {$S_i(X \circ Y \circ Z'F_j)$};
\node (d2) at (8,-3) {$S_i(X \circ (Y \circ Z')F_j)$};
\node (d3) at (6,-2) {$S_i((X \circ Y \circ Z')F_j)$};
\node (d4) at (4,-1) {$S_i(X \circ Y \circ Z') \circ M_j$};
\draw[->] (a1) -- node {$\scriptstyle \theta$} (b1);
\draw[->] (a4) -- node {$\scriptstyle \theta$} (b4);
\draw[->] (a1) -- node {$\scriptstyle \theta$} (c1);
\draw[->] (a4) -- node {$\scriptstyle \theta$} (c4);
\draw[->] (b1) -- node {$\scriptstyle \theta$} (d1);
\draw[->] (b4) -- node {$\scriptstyle \theta$} (d4);
\draw[->] (c1) -- node {$\scriptstyle \theta$} (d1);
\draw[->] (c2) -- node {$\scriptstyle \theta$} (d2);
\draw[->] (c4) -- node {$\scriptstyle \theta$} (d4);
\draw (a1) -- node[sloped] {$\scriptstyle \sim$} (a4);
\draw (b1) -- node[sloped] {$\scriptstyle \sim$} (b4);
\draw (c1) -- node[sloped] {$\scriptstyle \sim$} (c2);
\draw (c2) -- node[sloped] {$\scriptstyle \sim$} (c4);
\draw (d1) -- node[sloped] {$\scriptstyle \sim$} (d2);
\draw (d2) -- node[sloped] {$\scriptstyle \sim$} (d3);
\draw (d3) -- node[sloped] {$\scriptstyle \sim$} (d4);
\draw (-4,-0.5) node {(5)};
\draw (1.5,-2) node {(5)};
\draw (7,-0.5) node {(5)};
\end{tikzpicture}
}
\end{equation*}
Note that the central square commutes by the induction hypothesis. 

Now, the proof of Proposition \ref{prop:monoidality} is complete. 

\subsection{Additional commutative diagram} 

\begin{proposition} \label{prop:additionalcd}
Let $j \in I \setminus \{i\}$ and $X \in \gMod{{}_iR}$. 
The following diagram commutes: 
\begin{equation*}
\begin{tikzcd}
S_i(X) \circ S_i(R(\alpha_j)) \arrow[r,"\theta"]\arrow[d,-,"\sim"sloped] & S_i(X \circ R(\alpha_j)) \arrow[d,-,"\sim"sloped] \\
S_i(X) \circ M_j \arrow[r,-,"\sim"] & S_i(XF_j)
\end{tikzcd}
\end{equation*}
where the left vertical arrow is the morphism obtained by
\[
S_i(R(\alpha_j)) \simeq S_i(\mathbf{1}F_j) \simeq S_i(\mathbf{1})\circ M_j = \mathbf{1} \circ M_j \simeq M_j. 
\]
\end{proposition}

\begin{proof}
It follows from the commutative diagram below: 
\begin{equation*}
\begin{tikzcd}
S_i(X) \circ S_i(R(\alpha_j)) \arrow[r,"\theta"]\arrow[d,-,"\sim"sloped] & S_i(X \circ R(\alpha_j)) \arrow[d,-,"\sim"sloped] & \\
S_i(X) \circ S_i(\mathbf{1}F_j) \arrow[r,"\theta"]\arrow[d,-,"\sim"sloped]\arrow[rd,phantom,"(5)"] & S_i(X \circ \mathbf{1}F_j) \arrow[rd,-,"\sim"sloped] & \\
S_i(X) \circ S_i(\mathbf{1}) \circ M_j \arrow[r,"\theta"]\arrow[d,equal]\arrow[rd,phantom,"(2)"] & S_i(X \circ \mathbf{1}) \circ M_j \arrow[r,-,"\sim"]\arrow[d,-,"\sim"sloped] & S_i((X \circ \mathbf{1}) F_j) \arrow[d,-,"\sim"sloped] \\
S_i(X) \circ \mathbf{1} \circ M_j \arrow[r,-,"\sim"] & S_i(X) \circ M_j \arrow[r,-,"\sim"] & S_i(X F_j) 
\end{tikzcd}
\end{equation*}
\end{proof}

\subsection{Monoidality of $S_i'$}
Note that we have $S'_i(\mathbf{1}) = \mathbf{1}$. 
The following proposition shows the monoidality of $S_i'$:  

\begin{proposition} \label{prop:monoidality*}
There exist natural isomorphisms
\[
\theta'(X,Y) = \theta'_{\alpha,\beta}(X,Y) \colon S_i'(X) \circ S_i'(Y) \to S_i'(X \circ Y), 
\]
for $\alpha,\beta \in \mathsf{Q}_+, X \in \gMod{R_i(\alpha)}, Y \in \gMod{R_i(\beta)}$
that make the following diagrams commutative \ ($j \in I \setminus \{i\}$): 
\[
\begin{tikzcd}
\mathbf{1} \circ S_i'(X) \arrow[r,-,"\sim"] \arrow[d,"\epsilon'"]\arrow[rd,phantom,"(1)'"] & S_i'(X) \arrow[d,-,"\sim" sloped] &  S_i'(X) \circ \mathbf{1} \arrow[r,-,"\sim"]\arrow[d,"\epsilon'"]\arrow[rd,phantom,"(2)'"] & S_i'(X) \arrow[d,-,"\sim" sloped] \\
S_i'(\mathbf{1}) \circ S_i'(X) \arrow[r,"\theta'"] & S_i'(\mathbf{1} \circ X)  &  S_i'(X) \circ S_i'(\mathbf{1}) \arrow[r,"\theta'"] & S_i'(X \circ \mathbf{1}) 
\end{tikzcd}
\]
\[
\begin{tikzcd}
S_i' (X \circ F_iY) \arrow[r] & S_i'(F_i(X\circ Y)) \arrow[r] \arrow[d, -, "\sim" sloped] & S_i'(F_iX \circ Y)  \\
S_i' (X) \circ S_i'(F_iY) \arrow[d,-,"\sim" sloped]\arrow[u,"\theta'"]\arrow[r,phantom,"(3)'"] & S_i'(X \circ Y)E_i \arrow[r,phantom,"(4)'"]  & S_i'(F_iX) \circ S_i'(Y) \arrow[d,-,"\sim" sloped]\arrow[u,"\theta'"] \\
S_i(X) \circ S_i'(Y)E_i \arrow[r, hookrightarrow] & (S_i'(X) \circ S_i'(Y))E_i \arrow[r,twoheadrightarrow]\arrow[u,"\theta'"] & S_i'(X)E_i \circ S_i'(Y) 
\end{tikzcd}
\]
\[
\begin{tikzcd}
S_i'(F_jX) \circ S_i'(Y) \arrow[r,"\theta'"]\arrow[d,-,"\sim" sloped]\arrow[rd,phantom,"(5)'"] & S_i'(F_jX \circ Y) \arrow[rd,-,"\sim" sloped] \\
M'_j \circ S_i'(X) \circ S_i'(Y) \arrow[r,"\theta'"] & M'_j \circ S_i'(X \circ Y) \arrow[r,-,"\sim"] & S_i'(F_j(X \circ Y))  
\end{tikzcd}
\]
\[
\begin{tikzcd}
S_i'(X) \circ S_i'(Y) \circ S_i'(Z) \arrow[r,"\theta'"]\arrow[d,"\theta'"]\arrow[rd,phantom,"(6)'"] & S_i'(X \circ Y) \circ S_i'(Z) \arrow[d,"\theta'"] \\
S_i'(X) \circ S_i'(Y \circ Z) \arrow[r,"\theta'"] & S_i'(X \circ Y \circ Z). 
\end{tikzcd}
\]
\end{proposition}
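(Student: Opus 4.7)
The plan is to reduce Proposition \ref{prop:monoidality*} to Proposition \ref{prop:monoidality} via the involution $\sigma_*$ of Remark \ref{rem:LRchange2}, rather than repeating the inductive construction of Section 3.4 on the opposite side. Recall that the algebra anti-involution $\sigma$ of $R(\beta)$ induces an equivalence $\sigma_* \colon \gMod{R} \to \gMod{R}$ which is \emph{anti}-monoidal, in the sense that there is a natural isomorphism $\sigma_*(X \circ Y) \simeq \sigma_*(Y) \circ \sigma_*(X)$, and which restricts to mutually quasi-inverse equivalences between $\gMod{{}_iR}$ and $\gMod{R_i}$. Under this equivalence, the determinantial module $M_j'$ corresponds to $M_j$, and the homomorphisms $y_j', \sigma_{j,k}', \sigma_{i,j}', \sigma_{j,i}'$ correspond (up to signs controlled by $\sigma(\tau_k e(\nu)) = (-1)^{\delta_{\nu_k,\nu_{k+1}}} \tau_{n-k} e(\sigma\nu)$) to $y_j, \sigma_{k,j}, \sigma_{j,i}, \sigma_{i,j}$. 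Combined with the isomorphism $\sigma \colon \catquantum{\mathfrak{p}_i} \to \catquantum{\mathfrak{p}_i}^{\mathrm{op}}$ of Proposition \ref{prop:involutionsigma}, Remark \ref{rem:LRchange2} provides a natural isomorphism $\sigma_* \circ S_i' \simeq S_i \circ \sigma_*$.

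Choose $\epsilon'$ so that $\sigma_*(\epsilon')$ corresponds to $\epsilon$ under $\sigma_* \circ S_i'({}_iR(0)) \simeq S_i \circ \sigma_*(R_i(0)) \simeq S_i({}_iR(0))$ and the canonical $\sigma_*(R_i(0)) \simeq {}_iR(0)$. Then, for $X \in \gMod{R_i(\alpha)}$ and $Y \in \gMod{R_i(\beta)}$, define $\theta'(X,Y) \colon S_i'(X) \circ S_i'(Y) \to S_i'(X \circ Y)$ to be the unique natural isomorphism such that $\sigma_*(\theta'(X,Y))$ corresponds, under the chain of isomorphisms
\[
\sigma_*(S_i'(X) \circ S_i'(Y)) \simeq \sigma_*(S_i'(Y)) \circ \sigma_*(S_i'(X)) \simeq S_i(\sigma_* Y) \circ S_i(\sigma_* X)
\]
and $\sigma_*(S_i'(X \circ Y)) \simeq S_i(\sigma_*(X \circ Y)) \simeq S_i(\sigma_* Y \circ \sigma_* X)$, to the isomorphism $\theta(\sigma_* Y, \sigma_* X)$ of Proposition \ref{prop:monoidality}. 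Naturality in $X$ and $Y$ follows from naturality of $\theta$ and the functoriality of $\sigma_*$.

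It remains to check the six axioms $(1)'$--$(6)'$ for $\theta'$. In each case, applying $\sigma_*$ to the diagram and using that $\sigma_*$ reverses the order of convolution, swaps ${}_iR$ with $R_i$, and intertwines the left and right $\catquantum{\mathfrak{p}_i}$-actions, transforms $(k)'$ into the diagram $(k)$ of Proposition \ref{prop:monoidality} (with the roles of $X,Y,Z$ suitably permuted); since $\sigma_*$ is an equivalence, commutativity of the transformed diagram implies commutativity of the original. The only nontrivial bookkeeping is to match up Diagrams $(3)'$--$(5)'$ with $(3)$--$(5)$: for instance, the short exact sequence $0 \to q_i^{-\langle h_i,\gamma\rangle}(XF_i) \circ Y \to (X \circ Y)F_i \to X \circ (YF_i) \to 0$ of Lemma \ref{lem:adjointSES}(2) is mapped by $\sigma_*$ to the corresponding sequence of Lemma \ref{lem:adjointSES}(1) for $\sigma_* Y$ and $\sigma_* X$, and similarly $\sigma_{i,j}'$ is mapped to $\sigma_{j,i}$ on the image, exactly matching how Diagram $(3)$ and $(4)$ were set up. I expect this translation of the five non-associativity axioms to be straightforward but notationally heavy, and the associativity axiom $(6)'$ to follow immediately once the five are in place, by exactly the same argument as in Proposition \ref{prop:monoidality}. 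The main potential obstacle is keeping track of the scalar shift $(-1)^{\delta_{\nu_k,\nu_{k+1}}}$ introduced by $\sigma$ on crossings, which must be absorbed into the choice of $\epsilon'$ and does not affect any of the defining commutativity relations.
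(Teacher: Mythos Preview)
Your proposal is correct and takes essentially the same approach as the paper: the paper's proof simply reads ``By Remark \ref{rem:LRchange2}, it is parallel to Proposition \ref{prop:monoidality},'' and your use of the anti-monoidal equivalence $\sigma_*$ to transport $\theta$ to $\theta'$ is exactly how this parallelism is made precise. You have spelled out more of the bookkeeping than the paper does, but the underlying idea is identical.
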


\begin{proof}
By Remark \ref{rem:LRchange2}, it is equivalent to Proposition \ref{prop:monoidality}. 
\end{proof}

\section{Quasi-inverse}
In this section, we complete the proof of Theorem \ref{thm:reflectionfunctor}: 
two functors $S_i$ and $S'_i$ are mutually quasi-inverse. 
The following proposition is the key. 

\begin{proposition} \label{prop:linear}
The functor $S_i \colon \gMod{{}_iR} \to \gMod{R_i}$ is a morphism of left $\dotcatquantum{\mathfrak{p}_i}$-modules: 
there exits a family of natural isomorphisms 
\[
\kappa_j^- = \kappa_{j,\beta}^-(X) \colon S_i(F_jX) \to F_jS_i(X), \kappa_i^+ = \kappa_{i,\beta}^+(X) \colon S_i(E_iX) \to E_iS_i(X), 
\] \index{$\kappa_j^-, \kappa_i^+$}
for $j \in I, \beta \in \mathsf{Q}_+, X \in \gMod{{}_iR(\beta)}$ that commute with the left action of 2-morphisms of $\dotcatquantum{\mathfrak{p}_i}$. 
Similarly, the functor $S_i' \colon \gMod{R_i} \to \gMod{{}_iR}$ is a morphism of right $\dotcatquantum{\mathfrak{p}_i}$-modules. 
\end{proposition}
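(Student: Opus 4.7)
The plan is to reduce both assertions to a single one via Remark \ref{rem:LRchange2}, which exchanges them under the involutions $\sigma_*$ and $\sigma$; I will treat the assertion that $S_i'$ is left $\catquantum{\mathfrak{p}_i}$-linear, which is the content not built into its construction. Throughout, the left action on $\gMod{{}_iR}$ is the non-standard one from Theorem \ref{thm:anotheraction}(2) and the left action on $\gMod{R_i}$ is the standard one from Theorem \ref{thm:cyclotomic2repleft}.

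To construct $\kappa^\pm$, I will use the monoidality of $S_i'$ from Proposition \ref{prop:monoidality*}. For $j \in I \setminus \{i\}$ the non-standard left action reads $F_j X = M'_j \circ X$, so applying ${\theta'}^{-1}$ will yield a natural isomorphism $S_i'(F_j X) \simeq S_i'(M'_j) \circ S_i'(X)$; the required identification $S_i'(M'_j) \simeq R(\alpha_j) = F_j R_i(0)$ will follow by writing $M'_j$ as the image of ${}_iR(0) F_j$ under the standard right action of the divided power $F_i^{(-a_{i,j})\prime}$, applying the right-linearity of $S_i'$, then using $X F_i = E_i X$ from the non-standard right action on $\gMod{R_i}$ (Theorem \ref{thm:anotheraction}(1)) and the equivalence of Proposition \ref{prop:extremalequiv*}. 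For $j = i$ and for $E_i$ in the left action, the non-standard definitions $F_i X = X E_i$ and $E_i X = X F_i$ together with the identities $X E_i = F_i X$ and $X F_i = E_i X$ on the target side allow $\kappa^-_i$ and $\kappa^+$ to be read off directly from the right-linearity of $S_i'$.

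Next I will verify commutativity of the naturality square of $\kappa$ for each generating 2-morphism of $\catquantum{\mathfrak{p}_i}$, splitting cases according to whether each strand label lies in $\{i\}$ or in $I \setminus \{i\}$. For dots and crossings labelled in $I \setminus \{i\}$, the verification will be powered by the naturality of $\theta'$, the pentagon and unitality diagrams of Proposition \ref{prop:monoidality*}, and the descriptions of $y'_j$ and $\sigma'_{j,k}$ in Definition \ref{def:anothersimple} via Lemma \ref{lem:yaction}. For strands entirely labelled $i$, everything reduces to an $\mathfrak{sl}_2$-computation handled by the universal argument at the end of the proof of Theorem \ref{thm:anotheraction}, using the scalar shift of Proposition \ref{prop:scalarshift} and the Chevalley involution between the two $\mathfrak{sl}_2$-categorifications; the bubble parameters $c_{i,\beta}$ and $c_{i,-\beta}$ were arranged in Theorem \ref{thm:anotheraction} precisely so that the cups and caps match. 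The mixed crossings between an $F_i$-strand and an $F_j$-strand for $j \neq i$ are the nontrivial part: on the source they act via $\sigma'_{i,j}, \sigma'_{j,i}$ of Definition \ref{def:anothertauij}, while on the target they act by right multiplication by $\tau_1$ in $R(\alpha_i + \alpha_j)$, and compatibility will be checked by factoring both sides through Lemma \ref{lem:adjointSES} and the intertwiner formulas of Proposition \ref{prop:cyclotomicR}, then invoking the defining relation of $\sigma'_{i,j} \sigma'_{j,i}$ as a value of $Q_{j,i}$ coming from Lemma \ref{lem:sigmaji}.

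The hard part will be organising these verifications so as to avoid an explosion of cases: each generator paired with a test object of the form $Y F_{\nu_1}\cdots F_{\nu_n}$ produces a cube-shaped diagram whose side faces must be analysed individually, exactly as in the cubic KLR verification inside the proof of Theorem \ref{thm:anotheraction}. The argument for each 2-morphism is conceptually routine but combinatorially heavy, requiring careful bookkeeping of degree shifts and, crucially, of the position at which each $E_i$ or $F_i$ is inserted (cf.\ Remark \ref{rem:restrictionorder}). The most delicate verifications are those in which a mixed crossing is combined with a nontrivial left action on $X$, where the canonical injections and surjections from Lemma \ref{lem:adjointSES}, Lemma \ref{lem:commute}, and the intertwiner formulas of Proposition \ref{prop:cyclotomicR}(3) have to be reconciled through repeated applications of the associativity pentagon (diagram (6)) from Proposition \ref{prop:monoidality*}.
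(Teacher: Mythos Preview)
Your overall architecture matches the paper's: reduce to one functor via Remark~\ref{rem:LRchange2}, build $\kappa$ from monoidality plus the built-in right-linearity, then check generating 2-morphisms case by case. A minor labeling slip: the functor whose domain carries the non-standard left action $F_jX = M'_j\circ X$ is $S_i\colon \gMod{{}_iR}\to\gMod{R_i}$, not $S_i'$, so you want Proposition~\ref{prop:monoidality}, not Proposition~\ref{prop:monoidality*}. The substance is unaffected.

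There is, however, a genuine missing ingredient. The paper does \emph{not} take $\kappa_j^-$ to be the bare composite $\zeta_j\circ\theta^{-1}$; it twists by a weight-dependent scalar $\psi_j(\beta)$, where $\psi_j\colon\mathsf{Q}\to\mathbf{k}^\times$ is the homomorphism $\psi_j(\alpha_k)=t_{i,j}^{\delta_{k,i}}$. This is not cosmetic. The mixed-crossing compatibilities for $j\neq i$ are proved by induction on $\height\beta$, reducing along $X=X'F_k$; when $k\neq i$ the inductive step goes through Lemma~\ref{lem:formula}, which identifies the sideways crossing on $F_i(Y\circ M_k)$ with $t_{i,k}^{-1}$ times the canonical surjection of Lemma~\ref{lem:adjointSES}, and this factor has to be absorbed by the ratio $\psi_j(\beta)/\psi_j(\beta-\alpha_i)=t_{i,j}$. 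With the unnormalized $\kappa_j^-$ the squares simply do not commute. The same twist is what makes the scalar bookkeeping in Case~3 ($j,k\neq i$) close up; there the paper reduces to $X=\mathbf{1}$ and then needs the explicit identification of $\sigma_{j,k}$ in Lemma~\ref{lem:sigmaR} as $(-1)^{mn}t_{i,j}^{-n}$ times a specific shuffle $\mathsf{R}$, followed by a long element-level computation in $F_i^{(m+n)}(R(\alpha_j+\alpha_k))$. Your sketch (``powered by naturality of $\theta$ and the description of $\sigma'_{j,k}$'') substantially underestimates what is required there.
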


Once the proposition is established, Theorem \ref{thm:reflectionfunctor} is deduced as follows. 

\begin{proof}[Proof of Theorem \ref{thm:reflectionfunctor}]
By Proposition \ref{prop:linear}, the endofunctor $S_i S_i'$ of $\gMod{R_i}$ is left $\catquantum{\mathfrak{p}_i}$-linear. 
In addition, we have $S_iS_i'(\mathbf{1}) = \mathbf{1}$ and $S_iS_i'(f) = f$ for any endomorphism $f$ of $\mathbf{1}$. 
Since $\gproj{R_i}$ is generated by $\mathbf{1}$ as a left $\dotcatquantum{\mathfrak{p}_i}$-module (Theorem \ref{thm:leftuniversality}), 
$S_i S_i'$ is naturally isomorphic to the identity functor. 
Similarly, $S_i' S_i$ is naturally isomorphic to the identity functor, which completes the proof. 
\end{proof}

The rest of this section is devoted to the proof of Proposition \ref{prop:linear}. 
We only address the assertion for $S_i$, since the one for $S_i'$ is equivalent to it by Remark \ref{rem:LRchange2}. 

\subsection{Construction of natural isomorphisms}
We define the natural isomorphisms $\kappa_j^- \ (j \in I)$ and $\kappa_i^+$ in the statement of Proposition \ref{prop:linear}. 
Let $\beta \in \mathsf{Q}_+, X \in \gMod{{}_iR(\beta)}$. 
By Theorem \ref{thm:anotheraction}, we have natural isomorphisms
\begin{align*}
S_i(F_iX) &= S_i(X E_i) \simeq S_i(X)E_i \simeq F_iS_i(X), \\
S_i(E_iX) &= S_i(X F_i) \simeq S_i(X) F_i \simeq E_iS_i(X). 
\end{align*}
We define $\kappa_i^- \colon S_i(F_iX) \to F_iS_i(X)$ and $\kappa_i^+ \colon S_i(E_iX) \to E_iS_i(X)$ as the compositions above. 

Let $j \neq i$. 
Note that, under the equality $MF_i^n = E_i^n M$ for $M \in \gMod{R_i(\beta)}$, we have 
\[
M \ \xy 0;/r.12pc/:
(0,10); (0,-10) **\dir{-} ?(1)*\dir{>};
(0,0)*{\fcolorbox{black}{white}{$b'_-(i^n)$}};
(0,-13)*{\scriptstyle i^n};
\endxy = \xy 0;/r.12pc/:
(0,10); (0,-10) **\dir{-} ?(0)*\dir{<};
(0,0)*{\fcolorbox{black}{white}{$b'_-(i^n)$}};
(0,-13)*{\scriptstyle i^n};
\endxy \ M. 
\]
It yields a natural isomorphism $MF_i^{(n)'} \simeq q_i^{n(n-1)/2} b'_-(i^n) E_i^n M$. 
Combined with Lemma \ref{lem:projisom}, we obtain an isomorphism 
\[
MF_i^{(n)'} \simeq q_i^{n(n-1)/2}b'_-(i^n)E_i^n M \simeq q_i^{-n(n-1)/2} b_+(i^n)E_i^n M = E_i^{(n)}M. 
\]

\begin{lemma} \label{lem:directsummand}
The canonical morphisms $MF_i^n \to MF_i^{(n)'}$ (resp. $MF_i^{(n)'} \to MF_i^n$) and $E_i^nM \to E_i^{(n)}M$ (resp. $E_i^{(n)}M \to E_i^nM$) coincide under the isomorphisms above. 
\end{lemma}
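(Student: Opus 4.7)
The plan is to verify the four compatibilities by unpacking each canonical morphism and each isomorphism explicitly, then tracing elements through the two sides of a commutative square. By the duality provided by the biadjunction of $E_i$ and $F_i$ (which interchanges projections and inclusions) and by the involution $\sigma$ of Proposition \ref{prop:involutionsigma} (which swaps the idempotents $b_{+,n}$ and $b'_{-,n}$ via the exchange of $E_i$ and $F_i$), it suffices to treat a single representative case; I focus on the projection $MF_i^n \twoheadrightarrow MF_i^{(n)'}$ and its correspondence with $E_i^n M \twoheadrightarrow E_i^{(n)}M$.

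Concretely, the square to verify has top arrow $m \mapsto b'_{-,n} m$ (the right action of the idempotent $b'_{-,n} \in R(n\alpha_i)$ on $MF_i^n$), bottom arrow $m \mapsto b_{+,n} m$ (the left action of the idempotent $b_{+,n}$ on $E_i^n M$), and vertical arrows given by the identifications preceding the lemma. The left vertical is the equality $MF_i^n = E_i^n M$ built into the right $\catquantum{\mathfrak{p}_i}$-action on $\gMod{R_i}$ of Theorem \ref{thm:anotheraction}(1); under it the right $R(n\alpha_i)$-action on $MF_i^n$ and the left action on $E_i^n M$ coincide, which can be seen by induction on $n$ starting from the cases $n=1,2$ handled in that theorem. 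The right vertical is the composition of the diagrammatic identification $MF_i^{(n)'} \simeq b'_{-,n}E_i^n M$ with the isomorphism $b'_{-,n} E_i^n M \xrightarrow{\mathbf{x}_n\cdot} b_{+,n}E_i^n M = E_i^{(n)}M$ from Lemma \ref{lem:projisom}. Tracing $m \in MF_i^n$ around both sides reduces the claim to comparing $\mathbf{x}_n b'_{-,n} m$ and $b_{+,n} m$ as elements of $E_i^{(n)}M$; these coincide because both represent the projection endomorphism of $E_i^n M$ onto the direct summand $b_{+,n} E_i^n M$, which by Lemma \ref{lem:projisom} is uniquely determined, so the two descriptions of this projection via the idempotent $b_{+,n}$ or via the composite involving $\mathbf{x}_n$ and $b'_{-,n}$ must agree.

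The main obstacle is the diagrammatic identity between right and left $b'_{-,n}$-actions stated just before the lemma; this is what underpins the right vertical identification. It follows by repeated application of the dot and crossing cyclicity axioms of $\catquantum{\mathfrak{p}_i}$ (Definition \ref{def:catquantum}(2),(3)) together with the adjunctions between $E_i$ and $F_i$, since $b'_{-,n}$ is built from dots and crossings on a single-color strand and each such generator is cyclic. Once this identity is in hand, the rest of the argument is routine bookkeeping on gradings and on the sign $(-1)^{n(n-1)/2}$ appearing in the definitions of $b'_{\pm,n}$; the case of the inclusions $MF_i^{(n)'} \hookrightarrow MF_i^n$ and $E_i^{(n)}M \hookrightarrow E_i^n M$ follows by the same argument, using the inverse isomorphism of Lemma \ref{lem:projisom} (left multiplication by $\tau_{w_n}$) in place of $\mathbf{x}_n$. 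This justifies the author's description of the lemma as immediate.
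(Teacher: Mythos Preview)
Your argument breaks down at the final step. You reduce the claim to the identity $\mathbf{x}_n b'_{-,n} m = b_{+,n} m$ for all $m \in E_i^n M$, and then assert that this holds because ``both represent the projection endomorphism of $E_i^n M$ onto the direct summand $b_{+,n} E_i^n M$, which by Lemma~\ref{lem:projisom} is uniquely determined.'' This is false on two counts. First, a projection onto a direct summand is \emph{not} uniquely determined by the summand alone: it depends on the choice of complement, and Lemma~\ref{lem:projisom} says nothing about uniqueness. Second, the proposed identity $\mathbf{x}_n b'_{-,n} = b_{+,n}$ in $R(n\alpha_i)$ fails already for $n=2$: there $\mathbf{x}_2 = x_2$, $b'_{-,2} = -\tau_1 x_1 = 1 - x_2\tau_1$, and $b_{+,2} = x_2\tau_1$, so
\[
\mathbf{x}_2\, b'_{-,2} = x_2(1 - x_2\tau_1) = x_2 - x_2^2\tau_1 \neq x_2\tau_1 = b_{+,2}.
\]
Indeed $\mathbf{x}_n b'_{-,n}$ is not even idempotent (one checks $(\mathbf{x}_n b'_{-,n})^2 = 0$ since $b'_{-,n} b_{+,n} = 0$), so it cannot be a projection. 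The two maps you are comparing are both surjections onto $b_{+,n} E_i^n M$, but with different kernels.

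Because the paper records no proof beyond ``immediate,'' there is nothing to compare your approach against. What can be said is that your setup and reduction are reasonable in spirit, but the concluding step needs a genuinely different argument: you must either identify precisely which morphisms the paper intends by ``canonical'' (and check the square with those), or else find an algebraic identity relating the idempotents that actually holds. The invocation of cyclicity axioms for the displayed identity just before the lemma is also somewhat misplaced; that identity is a direct consequence of the formulas in Theorem~\ref{thm:anotheraction} (both $\rho_L$ and $\rho_R$ act as left multiplication by the same element of $R(n\alpha_i)$), not of diagrammatic cyclicity.
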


\begin{proof}
It follows from the definition. 
\end{proof}

We define an isomorphism $\zeta_j \colon S_i(M'_j) \to R(\alpha_j)$ as \index{$\zeta_j$}
\begin{align*}
S_i(M'_j) &= S_i(R(\alpha_j)F_i^{(-a_{i,j})'}) \simeq S_i(R(\alpha_j))F_i^{(-a_{i,j})'} \simeq M_j F_i^{(-a_{i,j})'} \\ 
&\simeq E_i^{(-a_{i,j})}M_j \quad \text{the isomorphism described above} \\
&=E_i^{(-a_{i,j})}F_i^{(-a_{i,j})} R(\alpha_j) \xrightarrow{\eta^{-1}} R(\alpha_j),
\end{align*} 
where $\iota^{-1}$ is an isomorphism by Proposition \ref{prop:extremalequiv}. 
Let $\varrho \colon \mathsf{Q} \to \mathbb{Z}$ be the group homomorphism defined by $\varrho(\alpha_k) = \delta_{k,i}$.
We define a natural isomorphism 
\[
\kappa_j^- \colon S_i(F_jX) \to F_jS_i(X)
\]
as the $t_{i,j}^{\varrho(\beta)}$-multiple of the composition
\[
S_i(F_jX) \simeq S_i(M'_j \circ X) \xrightarrow{\theta^{-1}} S_i(M'_j) \circ S_i(X) \xrightarrow{\zeta_j} R(\alpha_j) \circ S_i(X) \simeq F_jS_i(X). 
\]

\subsection{Naturality}

We prove that the natural isomorphisms $\kappa_j^- \colon S_i(F_jX) \to F_jS_i(X)$ and $\kappa_i^+ \colon S_i(E_iX) \to E_iS_i(X)$ commute with the action of generating 2-morphisms of $\catquantum{\mathfrak{p}_i}$. 

Under the identification $S_i(F_iX) \simeq S_i(XE_i) \simeq S_i(X)E_i \simeq F_iS_i(X)$ and $S_i(E_iX) \simeq S_i(XF_i) \simeq S_i(X)F_i \simeq E_iS_i(X)$, we have 
\begin{align} \label{eq:3}
S_i\left(\xy 0;/r.12pc/: (0,0)*{\sdotd{i}}; \endxy X \right) &= S_i\left(X \xy 0;/r.12pc/: (0,0)*{\sdotu{i}}; \endxy \right) = S_i(X) \xy 0;/r.12pc/: (0,0)*{\sdotu{i}}; \endxy = \xy 0;/r.12pc/: (0,0)*{\sdotd{i}}; \endxy S_i(X), \\
S_i\left(\xy 0;/r.12pc/: (0,0)*{\sdotu{i}}; \endxy X\right) &= S_i\left(X \xy 0;/r.12pc/: (0,0)*{\sdotd{i}}; \endxy\right) = S_i(X) \xy 0;/r.12pc/: (0,0)*{\sdotd{i}}; \endxy = \xy 0;/r.12pc/: (0,0)*{\sdotu{i}}; \endxy S_i(X), \notag \\
S_i\left(\xy 0;/r.12pc/: (0,0)*{\dcross{i}{i}}; \endxy X\right) &= S_i \left(X \xy 0;/r.12pc/: (0,0)*{\ucross{i}{i}}; \endxy \right) = S_i(X) \xy 0;/r.12pc/: (0,0)*{\ucross{i}{i}}; \endxy = \xy 0;/r.12pc/: (0,0)*{\dcross{i}{i}}; \endxy S_i(X), \notag \\
S_i(\xy 0;/r.12pc/: (0,0)*{\ucross{i}{i}}; \endxy X) &= S_i\left(X \xy 0;/r.12pc/: (0,0)*{\dcross{i}{i}}; \endxy \right) = S_i(X)\xy 0;/r.12pc/: (0,0)*{\dcross{i}{i}}; \endxy = \xy 0;/r.12pc/: (0,0)*{\ucross{i}{i}}; \endxy S_i(X), \notag\\
S_i\left(\xy 0;/r.12pc/: (0,0)*{\rcap{i}}; \endxy X\right) &= c_{i,-\beta} S_i\left(X \xy 0;/r.12pc/: (0,0)*{\rcap{i}}; \endxy \right) = c_{i,-\beta}S_i(X) \xy 0;/r.12pc/: (0,0)*{\rcap{i}}; \endxy \notag\\ 
&= c_{i,-\beta}c_{i,-s_i\beta}^{-1}\xy 0;/r.12pc/: (0,0)*{\rcap{i}}; \endxy S_i(X) = \xy 0;/r.12pc/: (0,0)*{\rcap{i}}; \endxy S_i(X), \notag\\ 
S_i(\xy 0;/r.12pc/: (0,0)*{\rcup{i}}; \endxy X) &= c_{i,-\beta}^{-1} S_i\left( X \xy 0;/r.12pc/: (0,0)*{\rcup{i}}; \endxy \right) = c_{i,-\beta}^{-1} S_i(X) \xy 0;/r.12pc/: (0,0)*{\rcup{i}}; \endxy \notag\\ 
&= c_{i,-\beta}^{-1}c_{i,-s_i\beta} \xy 0;/r.12pc/: (0,0)*{\rcup{i}}; \endxy S_i(X) = \xy 0;/r.12pc/: (0,0)*{\rcup{i}}; \endxy S_i(X), \notag \\
S_i\left(\xy 0;/r.12pc/: (0,0)*{\lcap{i}}; \endxy X\right) &= c_{i,\beta} S_i\left(X \xy 0;/r.12pc/: (0,0)*{\lcap{i}}; \endxy \right) = c_{i,\beta} S_i(X) \xy 0;/r.12pc/: (0,0)*{\lcap{i}}; \endxy \notag\\
&= c_{i,\beta}c_{i,s_i\beta}^{-1} \xy 0;/r.12pc/: (0,0)*{\lcap{i}}; \endxy S_i(X) = \xy 0;/r.12pc/: (0,0)*{\lcap{i}}; \endxy S_i(X), \notag \\ 
S_i\left(\xy 0;/r.12pc/: (0,0)*{\lcup{i}}; \endxy X\right) &= c_{i,\beta}^{-1} S_i\left( X \xy 0;/r.12pc/: (0,0)*{\lcup{i}}; \endxy \right) = c_{i,\beta}^{-1} S_i(X) \xy 0;/r.12pc/: (0,0)*{\lcup{i}}; \endxy \notag\\
&= c_{i,\beta}^{-1}c_{i,s_i\beta} \xy 0;/r.12pc/: (0,0)*{\lcup{i}}; \endxy S_i(X) = \xy 0;/r.12pc/: (0,0)*{\lcup{i}}; \endxy S_i(X). \notag 
\end{align}

\begin{comment}
Be careful of the scalar multiple included in $\kappa$ and $\eta$: 
for instance, we used the fact that the isomorphism $S_i(E_iF_iX) \xrightarrow{\eta} E_iS_i(F_iX) \xrightarrow{\kappa} E_iF_iS_i(X)$ coincides with the canonical isomorphism $S_i(E_iF_iX) \simeq E_iF_iS_i(X)$. 
\end{comment}

For $j \neq i$, the endomorphism $S_i(y'_j)$ of $S_i(M'_j)$ coincides with the endomorphism of $R(\alpha_j)$ given by the multiplication by $x_1$ through the isomorphism $\zeta_j \colon S_i'(M'_j) \to R(\alpha_j)$ by definition.
Hence, we have 
\[
S_i\left(\xy 0;/r.12pc/: (0,0)*{\sdotd{j}}; \endxy X \right) = \xy 0;/r.12pc/: (0,0)*{\sdotd{j}}; \endxy S_i(X). 
\]

From now on, we freely use the canonical isomorphisms
\begin{align*}
&S_i(F_iX) \simeq S_i(XE_i) \simeq S_i(X)E_i \simeq F_iS_i(X), \\
&S_i(E_iX) \simeq S_i(XF_i) \simeq S_i(X) F_i \simeq E_iS_i(X). \\
%&F_j Y \simeq R(\alpha_j) \circ Y. 
\end{align*}
In addition, scalar multiples of these isomorphisms by some $c \in \mathbf{k}$ are denoted simply by $c$: 
for instance, the $c$-multiple of the isomorphism $S_i(XE_i) \to S_i(X)E_i$ is written as $S_i(XE_i) \xrightarrow{c} S_i(X)E_i$. 
The isomorphism 
\[
S_i(M'_j) \circ Y \xrightarrow{\zeta_j} R(\alpha_j) \circ Y \simeq F_jY
\]
is also denoted by $\zeta_j$. 

It remains to deal with the 2-morphisms $\xy 0;/r.12pc/: (0,0)*{\dcross{j}{k}}; \endxy$ for $(j,k) \in I^2 \setminus \{(i,i)\}$.  
The computation splits into three cases, treated in separate subsections below. 

\subsection{Case 1}
$\xy 0;/r.12pc/: (0,0)*{\dcross{j}{i}}; \endxy$ for $j \neq i$.
Let $X \in \gMod{{}_iR(\beta)}$. 
Consider the following diagram:
\begin{comment}
\begin{equation} 
\begin{tikzcd}
& & F_i(S_i(M'_j \circ X)) \arrow[d,-,"\sim" sloped] \\
S_i((M'_j \circ X)E_i) \arrow[r,-,"\sim" sloped]\arrow[d,-,"\sim" sloped] & S_i(M'_j \circ X)E_i \arrow[ru,-,"\sim" sloped]\arrow[d,-,"\sim" sloped] & F_i(S_i(M'_j) \circ S_i(X)) \arrow[u,"\theta"]\arrow[d,-,"\sim" sloped] \\
S_i((F_jX)E_i) \arrow[r,-,"\sim" sloped]\arrow[d,-,"\sim" sloped] & S_i(F_jX) E_i \arrow[d,-,"\sim" sloped] & F_i(R(\alpha_j)\circ S_i(X)) \arrow[d,-,"\sim" sloped] \\
S_i(F_iF_jX) \arrow[r,"\kappa", color=red] & F_iS_i(F_jX) \arrow[r,"\kappa",color=red]\arrow[ruuu,bend left = 8,-,"\sim" sloped] & F_iF_jS_i(X) \\
S_i(F_jF_iX) \arrow[r,"\kappa",color=red]\arrow[u,"{\xy 0;/r.12pc/: (0,0)*{\dcross{j}{i}}; \endxy}",color=red]\arrow[dd,-,"\sim" sloped] & F_jS_i(F_iX) \arrow[r,"\kappa",color=red]\arrow[d,-,"\sim" sloped] & F_jF_iS_i(X) \arrow[u,"{\xy 0;/r.12pc/: (0,0)*{\dcross{j}{i}}; \endxy}",color=red]\arrow[d,-,"\sim" sloped] \\
 & R(\alpha_j) \circ S_i(F_iX)\arrow[r,"\kappa"]\arrow[d,-,"\sim" sloped] & R(\alpha_j) \circ F_iS_i(X) \arrow[d,-,"\sim" sloped] \\
S_i(M'_j\circ F_iX) \arrow[d,-,"\sim" sloped] & S_i(M'_j) \circ S_i(F_iX) \arrow[l,"\theta"]\arrow[r,"\kappa"]\arrow[d,-,"\sim" sloped] & S_i(M'_j) \circ F_iS_i(X) \arrow[d,-,"\sim" sloped] \\
S_i(M'_j\circ XE_i) \arrow[uuuuuu,bend left = 60,"\sigma'_{j,i}"] & S_i(M'_j) \circ S_i(XE_i) \arrow[l,"\theta"] \arrow[r,-,"\sim" sloped] & S_i(M'_j) \circ S_i(X)E_i \\
\end{tikzcd}
\end{equation}
\end{comment}

\begin{equation} \label{pic:6}
\begin{tikzcd}
& & F_i(S_i(M'_j \circ X)) \arrow[d,-,"\sim" sloped] \\
S_i((M'_j \circ X)E_i) \arrow[r,-,"\sim"]\arrow[d,-,"\sim" sloped] & S_i(M'_j \circ X)E_i \arrow[ru,-,"\sim" sloped]\arrow[d,-,"\sim" sloped] & F_i(S_i(M'_j) \circ S_i(X)) \arrow[u,"\theta"]\arrow[d,"t_{i,j}^{\varrho(\beta)}\zeta_j"] \\
S_i((F_jX)E_i) \arrow[r,-,"\sim"]\arrow[d,-,"\sim" sloped] & S_i(F_jX) E_i \arrow[d,-,"\sim" sloped] & F_i(R(\alpha_j)\circ S_i(X)) \arrow[d,-,"\sim" sloped] \\
S_i(F_iF_jX) \arrow[r,"\kappa_i^-",line width=1pt] & F_iS_i(F_jX) \arrow[r,"\kappa_j^-",line width=1pt]\arrow[ruuu,bend left = 8,-,"\sim" sloped] & F_iF_jS_i(X) \\
S_i(F_jF_iX) \arrow[r,"\kappa_j^-",line width=1pt]\arrow[u,"{\xy 0;/r.12pc/: (0,0)*{\dcross{j}{i}}; \endxy}",line width=1pt]\arrow[d,-,"\sim" sloped] & F_jS_i(F_iX) \arrow[r,"\kappa_i^-",line width=1pt]\arrow[d,-,"\sim" sloped] & F_jF_iS_i(X) \arrow[u,"{\xy 0;/r.12pc/: (0,0)*{\dcross{j}{i}}; \endxy}",line width=1pt]\arrow[d,-,"\sim" sloped] \\
S_i(F_j(XE_i)) \arrow[dd,-,"\sim" sloped]\arrow[r,"\kappa_j^-"] & F_jS_i(XE_i) \arrow[r,-,"\sim"]\arrow[d,-,"\sim" sloped] & F_j(S_i(X)E_i) \\
& R(\alpha_j) \circ S_i(XE_i) & \\
S_i(M'_j \circ XE_i) \arrow[uuuuuu,bend left = 60,"\sigma'_{j,i}"] & S_i(M'_j) \circ S_i(XE_i) \arrow[l,"\theta"]\arrow[u,"t_{i,j}^{\varrho(\beta-\alpha_i)}\zeta_j"] & \\
\end{tikzcd}
\end{equation}

We need to prove that the thick diagram commutes. 
It is easy to see that all the other inner diagrams are commutative. % using $c_{i,-(\beta + s_i\alpha_j)} = c_{i,-(\beta +\alpha_j)}$.  
Hence, it suffices to prove the commutativity of the outer diagram.  
We prove it by induction on $\height \beta$. 

If $\beta = 0$, then $F_iX = XE_i =0$ and the assertion is trivial. 
Assume $\beta \neq 0$. 
Since it is enough to show the assertion for projective modules, we may assume $X = X'F_k$ for some $k \in I$ and $X' \in \gMod{{}_iR(\beta-\alpha_k)}$.

The case $k =i$. 
Consider the diagram of Figure \ref{fig:1} and \ref{fig:2}.  

\begin{figure} %\label{pic:7}
\scalebox{0.85}{
\rotatebox{90}{
\begin{tikzpicture}[auto]
\node (a) at (0,1) {$S_i(M'_j \circ X'F_iE_i)$};
\node (b) at (4,1) {$S_i(M'_j) \circ S_i(X'F_iE_i)$};
\node (c) at (8,1) {$F_j S_i(X'F_iE_i)$};
\node (c') at (12,1) {$F_j F_iS_i(X'F_i)$};
\node (d) at (16,1) {$F_jF_iE_iS_i(X')$};
\node (e) at (0,-0.5) {$S_i((M'_j \circ X'F_i)E_i)$};
\node (f) at (4,-0.5) {$F_iS_i(M'_j \circ X'F_i)$};
\node (g) at (8,-0.5) {$F_i(S_i(M'_j) \circ S_i(X'F_i))$};
\node (g') at (12,-0.5) {$F_iF_j S_i(X'F_i)$}; 
\node (h) at (16,-0.5) {$F_iF_jE_iS_i(X')$};
\node (a1) at (0,2.5) {$S_i(M'_j \circ X')$};
\node (b1) at (4,2.5) {$S_i(M'_j) \circ S_i(X')$};
\node (c1) at (8,2.5) {$F_j S_i(X')$};
\node (d1) at (16,2.5) {$F_jS_i(X')$};
\node (e1) at (0,-2) {$S_i((M'_j\circ X')F_iE_i)$};
\node (f1) at (4,-2) {$F_iS_i((M'_j \circ X')F_i)$};
\node (g1) at (8,-2) {$F_i(S_i(M'_j) \circ E_iS_i(X'))$};
\node (h1) at (16,-2) {$F_iF_jE_iS_i(X')$};
\node (f2) at (4,-3) {$F_iE_iS_i(M'_j \circ X')$};
\node (g2) at (8,-3) {$F_iE_i(S_i(M'_j) \circ S_i(X'))$};
\node (h2) at (16,-3) {$F_iE_iF_jS_i(X')$};
\node (e3) at (0,-4.5) {$S_i(M'_j \circ X')$};
\node (f3) at (4,-4.5) {$S_i(M'_j\circ X')$};
\node (g3) at (8,-4.5) {$S_i(M'_j) \circ S_i(X')$};
\node (h3) at (16,-4.5) {$F_jS_i(X')$};
\draw[<-,line width=1.3pt] (a) -- node {$\scriptstyle \theta$} (b);
\draw[->,line width=1.3pt] (b) -- node {$\scriptstyle t_{i,j}^{\varrho(\beta-\alpha_i)}\zeta_j$} (c);
\draw[line width=1.3pt] (c) -- node {$\scriptstyle \sim$} (c');
\draw (c') -- node {$\scriptstyle \sim$} (d);
\draw[->,line width=1.3pt] (a) -- node {$\scriptstyle \sigma'_{j,i}$} (e);
\draw[->] (d) -- node {$\xy 0;/r.12pc/: (0,0)*{\dcross{j}{i}}; \endxy$} (h);
\draw[->,line width=1.3pt] (c') -- node {$\xy 0;/r.12pc/: (0,0)*{\dcross{j}{i}}; \endxy$} (g');
\draw[line width=1.3pt] (e) -- node {$\scriptstyle \sim$} (f);
\draw[<-,line width=1.3pt] (f) -- node {$\scriptstyle \theta$} (g); 
\draw[->,line width=1.3pt] (g) -- node {$\scriptstyle t_{i,j}^{\varrho(\beta)}\zeta_j$} (g');
\draw (g') -- node {$\scriptstyle \sim$} (h);
\draw[->] (a1) -- node {$\xy 0;/r.12pc/: (0,0)*{\rcup{i}};(-2,-0.5)*{\bullet}; (-6,-1)*{\scriptstyle n}; \endxy$} (a);
\draw[->] (b1) -- node {$\xy 0;/r.12pc/: (0,0)*{\rcup{i}};(-2,-0.5)*{\bullet}; (-6,-1)*{\scriptstyle n}; \endxy$} (b);
\draw[->] (c1) -- node {$\xy 0;/r.12pc/: (0,0)*{\rcup{i}};(-2,-0.5)*{\bullet}; (-6,-1)*{\scriptstyle n}; \endxy$} (c);
\draw[->] (d1) -- node[swap] {$c_{i,-\beta}\xy 0;/r.12pc/: (0,0)*{\rcup{i}}; (-2,-0.5)*{\bullet}; (-6,-1)*{\scriptstyle n};\endxy$} (d);
\draw[<-] (a1) -- node {$\scriptstyle \theta$} (b1);
\draw[->] (b1) -- node {$\scriptstyle t_{i,j}^{\varrho(\beta-\alpha_i)}\zeta_j$} (c1);
\draw[double distance=2pt] (c1) -- (d1); 
\draw[->] (e1) -- node[sloped] {$\scriptstyle \sim$} (e);
\draw[->] (f1) -- node[sloped] {$\scriptstyle \sim$} (f);
\draw (g1) -- node[sloped] {$\scriptstyle \sim$} (g);
\draw (f2) -- node[sloped] {$\scriptstyle \sim$} (f1);
\draw[->] (g2) -- (g1);
\draw[<-] (f2) -- node {$\scriptstyle \theta$} (g2);
\draw[double distance=2pt] (h1) -- (h);
\draw[->] (h2) -- node {$\xy 0;/r.10pc/: (0,0)*{\rcross{i}{j}}; \endxy$} (h1); 
\draw[->] (g2) -- node {$\scriptstyle t_{i,j}^{\varrho(\beta)}\zeta_j$} (h2);
\draw[->] (e3) -- node {$\xy 0;/r.12pc/: (0,0)*{\rcup{i}};(-2,-0.5)*{\bullet}; (-6,-1)*{\scriptstyle n}; \endxy$} (e1); 
\draw[->] (f3) -- node {$c_{i,-(\beta+\alpha_j)} \xy 0;/r.12pc/: (0,0)*{\rcup{i}};(-2,-0.5)*{\bullet}; (-6,-1)*{\scriptstyle n}; \endxy$} (f2);
\draw[->] (g3) -- node {$c_{i,-(\beta+\alpha_j)} \xy 0;/r.12pc/: (0,0)*{\rcup{i}};(-2,-0.5)*{\bullet}; (-6,-1)*{\scriptstyle n}; \endxy$} (g2);
\draw[->] (h3) -- node {$c_{i,-(\beta+\alpha_j)} \xy 0;/r.12pc/: (0,0)*{\rcup{i}};(-2,-0.5)*{\bullet}; (-6,-1)*{\scriptstyle n}; \endxy$} (h2);
\draw[double distance=2pt] (e3) -- (f3);
\draw[<-] (f3) -- node {$\scriptstyle \theta$} (g3);
\draw[->] (g3) -- node {$\scriptstyle t_{i,j}^{\varrho(\beta)}\zeta_j$} (h3); 
\draw[double distance=2pt] (a1) to[bend right=70] (e3);
\draw[->] (d1) to[bend left=70] node {$\scriptstyle t_{i,j} \id$} (h3);
\draw (e1) -- node {$\scriptstyle \sim$} (f1);
\draw[->] (g1) -- node {$\scriptstyle t_{i,j}^{\varrho(\beta)}\zeta_j$} (h1);
\node (A) at (12,1.8) {(A)};
\node (B) at (-1.5,0) {(B)};
\node (C) at (5.7,-2) {(C)};
\node (D) at (2,-3.3) {(D)};
\node (E) at (12,-2.3) {(E)};
\node (F) at (17,-1.2) {(F)}; 
\end{tikzpicture}
}
}
\caption{} \label{fig:1}
\end{figure}

\begin{figure} %\label{pic:8}
\scalebox{0.8}{
\rotatebox{90}{
\begin{tikzpicture}[auto]
\node (a) at (0,1) {$S_i(M'_j \circ X'F_iE_i)$};
\node (b) at (4,1) {$S_i(M'_j) \circ S_i(X'F_iE_i)$};
\node (c) at (8,1) {$F_j S_i(X'F_iE_i)$};
\node (c') at (12,1) {$F_j F_iS_i(X'F_i)$};
\node (d) at (16,1) {$F_jF_iE_iS_i(X')$};
\node (e) at (0,-1) {$S_i((M'_j \circ X'F_i)E_i)$};
\node (f) at (4,-1) {$F_iS_i(M'_j \circ X'F_i)$};
\node (g) at (8,-1) {$F_i(S_i(M'_j) \circ S_i(X'F_i))$};
\node (g') at (12,-1) {$F_iF_j S_i(X'F_i)$}; 
\node (h) at (16,-1) {$F_iF_jE_iS_i(X')$};
\node (a1) at (0,2.5) {$S_i(M'_j \circ X'E_iF_i)$};
\node (b1) at (4,2.5) {$S_i(M'_j) \circ S_i(X'E_iF_i)$};
\node (c1) at (8,2.5) {$F_j S_i(X'E_iF_i)$};
\node (d1) at (16,2.5) {$F_jE_iF_iS_i(X')$};
\node (e1) at (0,-2) {$S_i((M'_j\circ X')F_iE_i)$};
\node (f1) at (4,-2) {$F_iS_i((M'_j \circ X')F_i)$};
\node (g1) at (8,-2) {$F_i(S_i(M'_j) \circ E_iS_i(X'))$};
\node (h1) at (16,-2) {$F_iF_jE_iS_i(X')$};
\node (f2) at (4,-3) {$F_iE_iS_i(M'_j \circ X')$};
\node (g2) at (8,-3) {$F_iE_i(S_i(M'_j) \circ S_i(X'))$};
\node (h2) at (16,-3) {$F_iE_iF_jS_i(X')$};
\node (e3) at (0,-4.5) {$S_i((M'_j \circ X')E_iF_i)$};
\node (f3) at (4,-4.5) {$E_iF_iS_i(M'_j\circ X')$};
\node (g3) at (8,-4.5) {$E_iF_i(S_i(M'_j) \circ S_i(X'))$};
\node (h3) at (16,-4.5) {$E_iF_iF_jS_i(X')$};
\node (a2) at (0,3.5) {$S_i((M'_j \circ X'E_i)F_i)$};
\node (a3) at (0,5) {$E_iS_i(M'_j \circ X' E_i)$};
\node (b2) at (4,3.5) {$S_i(M'_j) \circ E_iS_i(X'E_i)$};
\node (b3) at (4,5) {$E_i(S_i(M'_j) \circ S_i(X'E_i))$};
\node (c2) at (8,3.5) {$F_jE_iS_i(X'E_i)$};
\node (c3) at (8,5) {$E_iF_jS_i(X'E_i)$};
\node (d3) at (16,5) {$E_iF_jF_iS_i(X')$};
\draw[<-,line width=1.3pt] (a) -- node {$\scriptstyle \theta$} (b);
\draw[->,line width=1.3pt] (b) -- node {$\scriptstyle t_{i,j}^{\varrho(\beta-\alpha_i)}\zeta_j$} (c);
\draw[line width=1.3pt] (c) -- node {$\scriptstyle \sim$} (c');
\draw (c') -- node {$\scriptstyle \sim$} (d);
\draw[->,line width=1.3pt] (a) -- node {$\scriptstyle \sigma'_{j,i}$} (e);
\draw[->] (d) -- node {$\xy 0;/r.12pc/: (0,0)*{\dcross{j}{i}}; \endxy$} (h);
\draw[->,line width=1.3pt] (c') -- node {$\xy 0;/r.12pc/: (0,0)*{\dcross{j}{i}}; \endxy$} (g');
\draw[line width=1.3pt] (e) -- node {$\scriptstyle \sim$} (f);
\draw[<-,line width=1.3pt] (f) -- node {$\scriptstyle \theta$} (g); 
\draw[->,line width=1.3pt] (g) -- node {$\scriptstyle t_{i,j}^{\varrho(\beta)}\zeta_j$} (g');
\draw (g') -- node {$\scriptstyle \sim$} (h);
\draw[->] (a1) -- node {$\xy 0;/r.12pc/: (0,0)*{\rcross{i}{i}}; \endxy$} (a);
\draw[->] (b1) -- node {$\xy 0;/r.12pc/: (0,0)*{\rcross{i}{i}}; \endxy$} (b);
\draw[->] (c1) -- node {$\xy 0;/r.12pc/: (0,0)*{\rcross{i}{i}}; \endxy$} (c);
\draw[->] (d1) -- node {$\xy 0;/r.12pc/: (0,0)*{\rcross{i}{i}}; \endxy$} (d);
\draw[<-] (a1) -- node {$\scriptstyle \theta$} (b1);
\draw[->] (b1) -- node {$\scriptstyle t_{i,j}^{\varrho(\beta-\alpha_i)}\zeta_j$} (c1);
\draw (c2) -- node {$\scriptstyle \sim$} (d1); 
\draw[->] (e1) -- node[sloped] {$\scriptstyle \sim$} (e);
\draw[->] (f1) -- node[sloped] {$\scriptstyle \sim$} (f);
\draw (g1) -- node[sloped] {$\scriptstyle \sim$} (g);
\draw (f2) -- node[sloped] {$\scriptstyle \sim$} (f1);
\draw[->] (g2) -- (g1);
\draw[<-] (f2) -- node {$\scriptstyle \theta$} (g2);
\draw[double distance=2pt] (h1) -- (h);
\draw[->] (h2) -- node {$\xy 0;/r.10pc/: (0,0)*{\rcross{i}{j}}; \endxy$} (h1); 
\draw[->] (g2) -- node {$\scriptstyle t_{i,j}^{\varrho(\beta)}\zeta_j$} (h2);
\draw[->] (e3) -- node {$\xy 0;/r.12pc/: (0,0)*{\rcross{i}{i}}; \endxy$} (e1); 
\draw[->] (f3) -- node {$\xy 0;/r.12pc/: (0,0)*{\rcross{i}{i}}; \endxy$} (f2);
\draw[->] (g3) -- node {$\xy 0;/r.12pc/: (0,0)*{\rcross{i}{i}}; \endxy$} (g2);
\draw[->] (h3) -- node {$\xy 0;/r.12pc/: (0,0)*{\rcross{i}{i}}; \endxy$} (h2);
\draw (e3) -- node {$\scriptstyle \sim$} (f3);
\draw[<-] (f3) -- node {$\scriptstyle \theta$} (g3);
\draw[->] (g3) -- node {$\scriptstyle t_{i,j}^{\varrho(\beta)}\zeta_j$} (h3); 
\draw[->] (a2) -- node[sloped] {$\scriptstyle \sim$} (a1);
\draw (a3) -- node[sloped] {$\scriptstyle \sim$} (a2);
\draw[<-] (a3) -- node {$\scriptstyle \theta$} (b3);
\draw (b2) -- node[sloped] {$\scriptstyle \sim$} (b1);
\draw[->] (b3) -- (b2);
\draw[->] (b3) -- node {$\scriptstyle t_{i,j}^{\varrho(\beta-\alpha_i)}\zeta_j$} (c3);
\draw (c3) -- node {$\scriptstyle \sim$} (d3);
\draw[->] (d3) -- node {$\xy 0;/r.12pc/: (0,0)*{\rcross{i}{j}}; \endxy$} (d1);
\draw[->] (c3) -- node {$\xy 0;/r.10pc/: (0,0)*{\rcross{i}{j}}; \endxy$} (c2);
\draw (c2) -- node[sloped] {$\scriptstyle \sim$} (c1);
\draw[->] (a2) to[bend right=70] node {$\sigma'_{j,i}$} (e3); 
\draw[->] (d3) to[bend left=70] node {$\xy 0;/r.12pc/: (0,0)*{\dcross{j}{i}}; \endxy$} (h3);
\draw[->] (g1) -- node {$\scriptstyle t_{i,j}^{\varrho(\beta)}\zeta_j$} (h1);
\draw[->] (b2) -- node {$\scriptstyle t_{i,j}^{\varrho(\beta-\alpha_i)}\zeta_j$} (c2);
\draw (e1) -- node {$\scriptstyle \sim$} (f1);
\node (C) at (5.7,-2) {(C)};
\node (E) at (12,-2.3) {(E)};
\node (G) at (2,3.5) {(G)};
\node (E') at (6,4.5) {(E')};
\node (H) at (12,1.8) {(H)};
\node (I) at (-1,0) {(I)};
\node (J) at (18,0) {(J)};
\node (H') at (2,-3) {(H')};
\end{tikzpicture}
}
}
\caption{} \label{fig:2}
\end{figure}

The homomorphisms $(M'_j \circ X')F_i \to M'_j \circ X'F_i$ and $(M'_j \circ X'E_i)F_i \to M'_j \circ X'E_iF_i$ are isomorphisms by Lemma \ref{lem:adjointSES} (2) and $M'_j F_i = 0$ (cf. the proof of Proposition \ref{prop:extremalequiv}). 
The thick diagrams in Figure \ref{fig:1} and \ref{fig:2} are the outer diagram of (\ref{pic:6}). 
Note that, for any $\lambda \in \mathsf{P}$, the homomorphism 
\[
X' \begin{bmatrix}
\xy 0;/r.17pc/:
(0,0)*{\rcross{i}{i}}; 
\endxy & \xy 0;/r.17pc/:
(0,0)*{\rcup{i}}; 
(-2,-0.5)*{\bullet};
(-8,-1)*{\scriptstyle N - 1};
\endxy & \cdots & \xy 0;/r.17pc/:
(0,0)*{\rcup{i}};
(-2,-0.5)*{\bullet};
\endxy & \xy 0;/r.17pc/:
(0,0)*{\rcup{i}}; 
\endxy
\end{bmatrix} \colon X'E_iF_i \oplus X'^{\oplus N} \to X'F_iE_i
\]
is a split epimorphism for sufficiently large $N$, by Theorem \ref{thm:Rouquierver} and \ref{thm:cyclotomic2rep}.  
%Also note that $S_i$ preserves the surjectivity since it is right exact. 
Hence, it suffices to prove the commutativity of the inner diagrams other than the thick ones, and the outer diagrams in Figure \ref{fig:1} and \ref{fig:2}. 

Commutativity of (A). 
We may disregard $F_j$. 
We identify $S_i(X'F_iE_i) \simeq F_iS_i(X'F_i) \simeq F_iE_iS_i(X')$. 
By Theorem \ref{thm:anotheraction}, we have 
\begin{align*}
S_i\left(X' \xy 0;/r.12pc/: (0,0)*{\rcup{i}};(-2,-0.5)*{\bullet}; (-6,-1)*{\scriptstyle n}; \endxy\right) &= S_i(X')\xy 0;/r.12pc/: (0,0)*{\rcup{i}};(-2,-0.5)*{\bullet}; (-6,-1)*{\scriptstyle n}; \endxy = c_{i,-s_i(\beta-\alpha_i)} \ \xy 0;/r.12pc/: (0,0)*{\rcup{i}};(-2,-0.5)*{\bullet}; (-6,-1)*{\scriptstyle n}; \endxy S_i(X') = c_{i,-\beta} \ \xy 0;/r.12pc/: (0,0)*{\rcup{i}};(-2,-0.5)*{\bullet}; (-6,-1)*{\scriptstyle n}; \endxy S_i(X'). 
\end{align*}

Commutativity of (B).
We may disregard $S_i$. 
Let $u \in M_j', v \in X'$. 
Under the homomorphism $M'_j \circ X' \xrightarrow{\xy 0;/r.12pc/: (0,0)*{\rcup{i}};(-2,-0.5)*{\bullet}; (-6,-1)*{\scriptstyle n}; \endxy} M'_j \circ X'F_iE_i \xrightarrow{\sigma'_{j,i}} (M'_j \circ X'F_i)E_i$,
the element $u \boxtimes v$ is mapped as follows: %sent to $u \boxtimes (v \boxtimes x_1^n e_(i)) e(*,i)$, following 
\begin{align*}
u \boxtimes v \mapsto u \boxtimes (v \boxtimes x_1^ne(i))E_i \mapsto (u\boxtimes (v \boxtimes x_1^n e(i)))E_i.  
\end{align*}
On the other hand, under the homomorphism $M'_j \circ X' \xrightarrow{\xy 0;/r.12pc/: (0,0)*{\rcup{i}};(-2,-0.5)*{\bullet}; (-6,-1)*{\scriptstyle n}; \endxy} (M'_j \circ X')F_iE_i \to (M'_j \circ X'F_i)E_i$, 
the element $u \boxtimes v$ is mapped as follows: %also sent to $(u \boxtimes (v \boxtimes x_1^n e(i)))e(*,i)$, following
\begin{align*}
u \boxtimes v \mapsto ((u\boxtimes v) \boxtimes x_1^ne(i))E_i \mapsto (u\boxtimes (v \boxtimes x_1^n e(i)))E_i. 
\end{align*}

Commutativity of (C) follows from Proposition \ref{prop:monoidality} (4). 

Commutativity of (D). 
We identify $S_i((M'_j \circ X') F_iE_i) \simeq F_i S_i((M'_j \circ X')F_i) \simeq F_iE_i S_i(M'_j \circ X')$. 
By Theorem \ref{thm:anotheraction}, we have 
\begin{align*}
S_i \left((M'_j \circ X')\xy 0;/r.12pc/: (0,0)*{\rcup{i}};(-2,-0.5)*{\bullet}; (-6,-1)*{\scriptstyle n}; \endxy \right) &= S_i(M'_j \circ X') \ \xy 0;/r.12pc/: (0,0)*{\rcup{i}};(-2,-0.5)*{\bullet}; (-6,-1)*{\scriptstyle n}; \endxy \\
&= c_{i,-(\alpha_j + s_i(\beta-\alpha_i))} \ \xy 0;/r.12pc/: (0,0)*{\rcup{i}};(-2,-0.5)*{\bullet}; (-6,-1)*{\scriptstyle n}; \endxy S_i(M'_j \circ X') \\ 
&= c_{i,-(\beta + \alpha_j)} \ \xy 0;/r.12pc/: (0,0)*{\rcup{i}};(-2,-0.5)*{\bullet}; (-6,-1)*{\scriptstyle n}; \endxy S_i(M'_j \circ X'). 
\end{align*}

Commutativity of (E). 
We may disregard the leftmost $F_i$ and identify $S_i(M'_j)$ with $R(\alpha_j)$ through $\zeta_j$. 
Note that $\xy 0;/r.12pc/: (0,0)*{\rcross{i}{j}}; \endxy S_i(X')$ is an isomorphism whose inverse is 
\[
\xy 0;/r.12pc/: (0,0)*{\lcross{j}{i}}; \endxy S_i(X') = \xy 0;/r.17pc/:
(0,0)*{\xybox{
(0,0)*{\dcross{}{}};
(8,7)*{\lcap{}};
(-8,-7)*{\lcup{}};
(-4,8)*{\slined{}};
(4,-8)*{\slined{j}};
(12,-12); (12,4) **\dir{-} ?(1)*\dir{>};
(12,-14)*{\scriptstyle i};
(-12,-4); (-12,12) **\dir{-} ?(1)*\dir{>}; 
}}\endxy S_i(X'), 
\]
by Definition \ref{def:catquantum} (7). 
Let $u \in R(\alpha_j), v \in S_i(X')$ and $1 \leq n \leq \height s_i(\beta-\alpha_i)$. 
We compute the images of $E_i \tau_n \cdots \tau_1 (u \boxtimes v) \in E_i(R(\alpha_j)\circ S_i(X'))$ in $E_iF_jS_i(X')$. 
If $n = 0$, $E_i(u\boxtimes v) = 0$ since $u \in R(\alpha_j)$ and $j \neq i$.  
Assume $n \neq 0$. 
Under the homomorphism $E_i(R(\alpha_j) \circ S_i(X')) \to R(\alpha_j) \circ E_iS_i(X') \to F_jE_iS_i(X')$, 
the element $E_i \tau_n \cdots \tau_1 (u \boxtimes v)$ mapped as follows: 
\begin{align*}
E_i\tau_n \cdots \tau_1 (u \boxtimes v) &= \tau_{n-1} \cdots \tau_1 E_i(\tau_1 (u \boxtimes v)) \\
&\mapsto \tau_{n-1}\cdots \tau_1 (u \boxtimes E_iv) \in R(\alpha_j) \circ E_iS_i(X') \\
&\mapsto \tau_{n-1} \cdots \tau_1 (u \boxtimes E_iv) \in F_jE_iS_i(X'). 
\end{align*}
Then, under the homomorphism $\xy 0;/r.12pc/: (0,0)*{\lcross{j}{i}}; \endxy S_i(X')$, 
it is sent to $E_i \tau_n \cdots \tau_1 (u \boxtimes v)$, following
\begin{align} \label{eq:5}
\tau_{n-1} \cdots\tau_1 (u \boxtimes E_iv) &\mapsto E_i(e(i) \boxtimes \tau_{n-1} \cdots \tau_1(u \boxtimes E_iv)) \in E_iF_iF_jE_iS_i(X') \\  
&= E_i \tau_n \cdots \tau_2 (e(i) \boxtimes u \boxtimes E_iv) \notag \\
&\mapsto E_i \tau_n \cdots \tau_2 \tau_1 (u \boxtimes e(i) \boxtimes E_iv) \in E_iF_jF_iE_iS_i(X') \notag \\
&\mapsto E_i \tau_n \cdots \tau_1 (u \boxtimes e(i,*)v) \in E_iF_j S_i(X') \notag \\
&= E_ie(i,*) \tau_n \cdots \tau_1 (u \boxtimes v)  \notag \\
&= E_i \tau_n \cdots \tau_1 (u \boxtimes v). \notag
\end{align}
It coincides with the image of $E_i\tau_n \cdots \tau_1 (u \boxtimes v)$ under the homomorphism $E_i(R(\alpha_j) \circ S_i(X')) \to E_iF_jS_i(X')$, hence (E) commutes. 
Commutativity of (E') is proved in the same way. 

Commutativity of (F): By Definition \ref{def:catquantum}, we compute
\begin{align*}
\xy 0;/r.12pc/: 
(0,0)*{\dcross{}{}};
(8,-7)*{\rcup{}};
(-4,-8)*{\slined{j}};
(12,-4); (12,4) **\dir{-} ?(1)*\dir{>};
(12,6)*{\scriptstyle i};
(6,-7.5)*{\bullet};
(2,-8)*{\scriptstyle n};
\endxy &=  \xy 0;/r.12pc/: 
(0,0)*{\dcross{}{}};
(8,-7)*{\rcup{}};
(-4,-8)*{\slined{j}};
(12,-4); (12,4) **\dir{-} ?(1)*\dir{>};
(12,6)*{\scriptstyle i};
(-2,1.7)*{\bullet};
(-5,1.7)*{\scriptstyle n};
\endxy = \xy 0;/r.12pc/:
(0,0)*{\dcross{}{}};
(-8,7)*{\rcap{}};
(8,-7)*{\rcup{}};
(4,8)*{\slined{}};
(-16,1)*{\rcup{}};
(-20,8)*{\slined{}};
(-4,-8)*{\slined{j}};
(12,-4); (12,12) **\dir{-} ?(1)*\dir{>}; 
(12,14)*{\scriptstyle i};
(-18,0.5)*{\bullet};
(-22,0)*{\scriptstyle n};
\endxy = \xy 0;/r.12pc/:
(0,0)*{\rcross{}{}};
(-8,-7)*{\rcup{}};
(-12,0)*{\slined{}};
(4,-8)*{\slined{j}};
(-10,-7.5)*{\bullet};
(-14,-8)*{\scriptstyle n};
(4,6)*{\scriptstyle i}; 
\endxy. 
\end{align*}
Using $t_{i,j}c_{i,-(\beta+\alpha_j)} = c_{i,-\beta}$, the assertion is proved. 

Commutativity of (G) follows from Proposition \ref{prop:monoidality} (4). 

Commutativity of (H). 
Recall that 
\[
\xy 0;/r.12pc/:
(0,0)*{\rcross{i}{i}}; 
\endxy = \xy 0;/r.12pc/:
(0,0)*{\xybox{
(0,0)*{\dcross{}{}};
(-8,7)*{\rcap{}};
(8,-7)*{\rcup{}};
(4,8)*{\slined{}};
(-4,-8)*{\slined{i}};
(-12,-12); (-12,4) **\dir{-} ?(1)*\dir{>};
(-12,-14)*{\scriptstyle i};
(12,-4); (12,12) **\dir{-} ?(1)*\dir{>}; 
}}\endxy =  \xy 0;/r.12pc/:
  (0,0)*{\xybox{
  (0,0)*{\ucross{}{}};
  (-8,-7)*{\rcup{}};
  (8,7)*{\rcap{}};
  (4,-8)*{\slineu{i}};
  (-4,8)*{\slineu{}};
  (-12,12); (-12,-4) **\dir{-} ?(1)*\dir{>};
  (12,-14)*{\scriptstyle i};
  (12,4); (12,-12) **\dir{-} ?(1)*\dir{>}; 
  }} \endxy. 
\]
Hence, we have
\begin{align*}
S_i\left(X' \xy 0;/r.12pc/:
(0,0)*{\rcross{i}{i}}; 
\endxy\right) &= S_i(X')\xy 0;/r.12pc/:
(0,0)*{\xybox{
(0,0)*{\dcross{}{}};
(-8,7)*{\rcap{}};
(8,-7)*{\rcup{}};
(4,8)*{\slined{}};
(-4,-8)*{\slined{i}};
(-12,-12); (-12,4) **\dir{-} ?(1)*\dir{>};
(-12,-14)*{\scriptstyle i};
(12,-4); (12,12) **\dir{-} ?(1)*\dir{>}; 
}}\endxy \\
&= c_{i,-s_i(\beta-\alpha_i)}^{-1}c_{i,-s_i(\beta-\alpha_i)} \xy 0;/r.12pc/:
  (0,0)*{\xybox{
  (0,0)*{\ucross{}{}};
  (-8,-7)*{\rcup{}};
  (8,7)*{\rcap{}};
  (4,-8)*{\slineu{i}};
  (-4,8)*{\slineu{}};
  (-12,12); (-12,-4) **\dir{-} ?(1)*\dir{>};
  (12,-14)*{\scriptstyle i};
  (12,4); (12,-12) **\dir{-} ?(1)*\dir{>}; 
  }} \endxy S_i(X') \\
&= \xy 0;/r.12pc/:
(0,0)*{\rcross{i}{i}};
\endxy S_i(X'). 
\end{align*}
Commutativity of (H') is proved in the same way. 

Commutativity of (I).
We may disregard $S_i$. 
Let $u \in M'_j, v \in X'$. 
Since $(M'_j \circ X'E_i)F_i$ is generated by $(M'_j \boxtimes X'E_i) \boxtimes e(i)$ as an $R(s_i\alpha_j + \beta - \alpha_i)$-module, 
it suffices to compute the images of $(u\boxtimes vE_i) \boxtimes e(i)$ in $(M'_j \circ X'F_i)E_i$.  
Under the homomorphism through $M'_j \circ X'E_iF_i$, it is mapped as follows: 
\begin{align*}
&(u \boxtimes vE_i) \boxtimes e(i) \\
&\mapsto u \boxtimes (vE_i \boxtimes e(i)) \in M'_j \circ X'E_iF_i \\
&\mapsto u \boxtimes (\tau_{\height \beta-1}(v \boxtimes e(i)))E_i \in M'_j \circ X'F_iE_i \quad \text{by the same computation as (\ref{eq:5})} \\
&\mapsto [u \boxtimes \tau_{\height \beta-1}(v \boxtimes e(i))]E_i \in (M'_j \circ X'F_i)E_i. 
\end{align*}
Under the other homomorphism, it is sent as follows: 
\begin{align*}
&(u \boxtimes vE_i) \boxtimes e(i) \\
&\mapsto (u \boxtimes v)E_i \boxtimes e(i) \in (M'_j \circ X')E_iF_i \\
&\mapsto [\tau_{\height \beta -a_{i,j}}((u\boxtimes v) \boxtimes e(i))]E_i \in (M'_j \circ X')F_iE_i \\
&\quad \text{by the same computation as (\ref{eq:5})}\\
&\mapsto [\tau_{\height \beta -a_{i,j}}(u \boxtimes (v \boxtimes e(i)))]E_i \in (M'_j \circ X'F_i)E_i. 
\end{align*}
Hence, (I) commutes. 

Commutativity of (J) follows from a computation based on Definition \ref{def:catquantum}: 
\begin{equation} \label{eq:6}
\xy 0;/r.12pc/: (0,0)*{\xybox{
(0,0)*{\rcross{i}{j}};
(12,4); (12,-4) **\dir{-} ?(1)*\dir{>};
(12,-6)*{\scriptstyle i};
(8,8)*{\rcross{}{}};
(-4,12); (-4,4) **\dir{-} ?(1)*\dir{>};
(0,16)*{\dcross{}{}};
(12,12); (12,20) **\dir{-} ?(1)*\dir{>};
}};
\endxy = \xy 0;/r.12pc/: (0,0)*{\xybox{ 
(0,0)*{\dcross{}{}};
(-4,-8)*{\slined{j}};
(8,-7)*{\rcup{}};
(-8,7)*{\rcap{}};
(-12,-12); (-12,4) **\dir{-} ?(1)*\dir{>};
(12,-4); (12,4) **\dir{-} ?(1)*\dir{>};
(16,7)*{\rcap{}};
(24,0)*{\dcross{}{}};
(20,-8)*{\slined{i}};
(32,-7)*{\rcup{}};
(36,-4); (36,20) **\dir{-} ?(1)*\dir{>};
(4,4)*{};(12,12)*{} **\crv{(4,7) & (12,9)}?(0)*\dir{<};
(28,4)*{};(20,12)*{} **\crv{(28,7) & (20,9)}?(0)*\dir{<};
(16,16)*{\dcross{}{}};
(-12,-14)*{\scriptstyle i}; 
}};
\endxy = \xy 0;/r.12pc/: (0,0)*{\xybox{
(0,0)*{\dcross{}{}};
(-8,7)*{\rcap{}};
(8,-8)*{\dcross{}{}};
(16,-15)*{\rcup{}};
(20,-12); (20,12) **\dir{-} ?(1)*\dir{>};
(12,0)*{\slined{}};
(8,8)*{\dcross{}{}};
(-12,-20); (-12,4) **\dir{-} ?(1)*\dir{>};
(-4,-4); (-4,-20) **\dir{-} ?(1)*\dir{>};
(4,-16)*{\slined{i}};
(-12,-22)*{\scriptstyle i};
(-4,-22)*{\scriptstyle j};
}};
\endxy = \xy 0;/r.12pc/: (0,0)*{\xybox{
(-4,-8)*{\dcross{j}{i}}; 
(4,0)*{\dcross{}{}};
(-4,8)*{\dcross{}{}};
(-8,0)*{\slined{}};
(8,8)*{\slined{}};
(12,-7)*{\rcup{}};
(16,-4); (16,20) **\dir{-} ?(1)*\dir{>};
(-12,15)*{\rcap{}};
(-16,-12); (-16,12) **\dir{-} ?(1)*\dir{>};
(0,16)*{\slined{}};
(8,16)*{\slined{}};
(-16,-14)*{\scriptstyle i};
}};
\endxy = \xy 0;/r.12pc/: (0,0)*{\xybox{
(0,0)*{\dcross{j}{i}};
(-12,0)*{\slineu{i}};
(-8,8)*{\rcross{}{}};
(4,8)*{\slined{}};
(-12,16)*{\slined{}};
(0,16)*{\rcross{}{}};
}};
\endxy. 
\end{equation}

Commutativity of the other inner diagrams and the outer diagram of Figure \ref{pic:1} are easily verified. 
Commutativity of the other inner diagrams of Figure \ref{pic:2} are also easy. 
Commutativity of the outer diagram of Figure \ref{pic:2} follows from the induction hypothesis
using $\varrho(\beta) - \varrho(\beta-\alpha_i) = 1 = \varrho(\beta-\alpha_i) - \varrho(\beta-2\alpha_i)$. 

The case $k \neq i$. 
Consider the diagrams of Figure \ref{fig:3},
where 
\begin{align*}
f_1 = F_j\left(S_i(X') \xy 0;/r.12pc/: (0,0)*{\rcross{i}{k}}; \endxy \right),\ & f_2 = S_i(M'_j \circ X') \xy 0;/r.12pc/: (0,0)*{\rcross{i}{k}}; \endxy,\\
f_3 = (S_i(M'_j) \circ S_i(X'))\xy 0;/r.12pc/: (0,0)*{\rcross{i}{k}}; \endxy,\ & f_4 = F_jS_i(X') \xy 0;/r.12pc/: (0,0)*{\rcross{i}{k}}; \endxy. 
\end{align*}

\begin{figure} %\label{pic:9}
\scalebox{0.8}{
\rotatebox{90}{
\begin{tikzpicture}[auto]
\node (a) at (0,1) {$S_i(M'_j \circ X'F_kE_i)$};
\node (b) at (4,1) {$S_i(M'_j) \circ S_i(X'F_kE_i)$};
\node (c) at (8,1) {$F_j S_i(X'F_kE_i)$};
\node (c') at (12,1) {$F_j F_iS_i(X'F_k)$};
\node (d) at (16,1) {$F_jF_i(S_i(X')\circ M_k)$};
\node (e) at (0,-0.5) {$S_i((M'_j \circ X'F_k)E_i)$};
\node (f) at (4,-0.5) {$F_iS_i(M'_j \circ X'F_k)$};
\node (g) at (8,-0.5) {$F_i(S_i(M'_j) \circ S_i(X'F_k))$};
\node (g') at (12,-0.5) {$F_iF_j S_i(X'F_k)$}; 
\node (h) at (16,-0.5) {$F_iF_j(S_i(X') \circ M_k)$};
\draw[<-,line width=1.3pt] (a) -- node {$\scriptstyle \theta$} (b);
\draw[->,line width=1.3pt] (b) -- node {$\scriptstyle t_{i,j}^{\varrho(\beta-\alpha_i)}\zeta_j$} (c);
\draw[line width=1.3pt] (c) -- node {$\scriptstyle \sim$} (c');
\draw (c') -- node {$\scriptstyle \sim$} (d);
\draw[->,line width=1.3pt] (a) -- node {$\scriptstyle \sigma'_{j,i}$} (e);
\draw[->] (d) -- node {$\xy 0;/r.12pc/: (0,0)*{\dcross{j}{i}}; \endxy$} (h);
\draw[->,line width=1.3pt] (c') -- node {$\xy 0;/r.12pc/: (0,0)*{\dcross{j}{i}}; \endxy$} (g');
\draw[line width=1.3pt] (e) -- node {$\scriptstyle \sim$} (f);
\draw[<-,line width=1.3pt] (f) -- node {$\scriptstyle \theta$} (g); 
\draw[->,line width=1.3pt] (g) -- node {$\scriptstyle t_{i,j}^{\varrho(\beta)}\zeta_j$} (g');
\draw (g') -- node {$\scriptstyle \sim$} (h);
\node (a1) at (0,2.5) {$S_i(M'_j \circ X'E_iF_k)$};
\node (b1) at (4,2.5) {$S_i(M'_j)\circ S_i(X'E_iF_k)$};
\node (c1) at (8,2.5) {$F_jS_i(X'E_iF_k)$};
\node (d1) at (16,2.5) {$F_j(F_iS_i(X')\circ M_k)$};
\node (a2) at (0,4) {$S_i((M'_j \circ X'E_i)F_k)$}; 
\node (b2) at (4,4) {$S_i(M'_j \circ X'E_i) \circ M_k$};
\node (c2) at (8,4) {$S_i(M'_j) \circ S_i(X'E_i) \circ M_k$};
\node (c'2) at (12,4) {$F_jS_i(X'E_i) \circ M_k$};
\node (d2) at (16,4) {$F_jF_iS_i(X') \circ M_k$};
\node (e1) at (0,-2) {$S_i((M'_j\circ X')F_kE_i)$};
\node (f1) at (4,-2) {$F_iS_i((M'_j \circ X')F_k)$};
\node (g1) at (8,-2) {$F_i(S_i(M'_j \circ X') \circ M_k)$};
\node (g'1) at (12,-2) {$F_i(S_i(M'_j)\circ S_i(X') \circ M_k)$};
\node (h1) at (16,-2) {$F_i(F_jS_i(X') \circ M_k)$};
\node (e2) at (0,-3.5) {$S_i((M'_j \circ X')E_iF_k)$};
\node (f2) at (4,-3.5) {$S_i((M'_j \circ X')E_i) \circ M_k$};
\node (g2) at (8,-3.5) {$F_iS_i(M'_j \circ X') \circ M_k$};
\node (g'2) at (12,-3.5) {$F_i(S_i(M'_j) \circ S_i(X')) \circ M_k$};
\node (h2) at (16,-3.5) {$F_iF_jS_i(X') \circ M_k$};
\draw[->] (a1) -- node {$\xy 0;/r.12pc/: (0,0)*{\rcross{i}{k}}; \endxy$} (a);
\draw[->] (b1) -- node {$\xy 0;/r.12pc/: (0,0)*{\rcross{i}{k}}; \endxy$} (b);
\draw[->] (c1) -- node {$\xy 0;/r.12pc/: (0,0)*{\rcross{i}{k}}; \endxy$} (c);
\draw[->] (d1) -- node {$f_1$} (d);
\draw[<-] (a1) -- node {$\scriptstyle \theta$} (b1);
\draw[->] (b1) -- node {$\scriptstyle t_{i,j}^{\varrho(\beta-\alpha_i)}\zeta_j$} (c1);
\draw (c1) -- node {$\scriptstyle \sim$} (d1);
\draw (a2) -- node[sloped] {$\scriptstyle \sim$} (a1);
\draw (c2) -- node[sloped] {$\scriptstyle \sim$} (b1);
\draw (d2) -- node[sloped] {$\scriptstyle \sim$} (d1);
\draw (a2) -- node {$\scriptstyle \sim$} (b2);
\draw[<-] (b2) -- node {$\scriptstyle \theta$} (c2);
\draw[->] (c2) -- node {$\scriptstyle t_{i,j}^{\varrho(\beta-\alpha_i)}\zeta_j$} (c'2);
\draw (c'2) -- node {$\scriptstyle \sim$} (d2);
\draw[->] (e1) -- node[sloped] {$\scriptstyle \sim$} (e);
\draw[->] (f1) -- node[sloped] {$\scriptstyle \sim$} (f); 
\draw (g'1) -- node[sloped] {$\scriptstyle \sim$} (g);
\draw (h1) -- node[sloped] {$\scriptstyle \sim$} (h);
\draw (e1) -- node {$\scriptstyle \sim$} (f1);
\draw (f1) -- node {$\scriptstyle \sim$} (g1);
\draw[<-] (g1) -- node {$\scriptstyle \theta$} (g'1);
\draw[->] (e2) -- node {$\xy 0;/r.12pc/: (0,0)*{\rcross{i}{k}}; \endxy$} (e1);
\draw[->] (g2) -- node {$f_2$} (g1);
\draw[->] (g'2) -- node {$f_3$} (g'1);
\draw[->] (h2) -- node {$f_4$} (h1);
\draw (e2) -- node {$\scriptstyle \sim$} (f2);
\draw (f2) -- node {$\scriptstyle \sim$} (g2);
\draw[<-] (g2) -- node {$\scriptstyle \theta$} (g'2);
\draw[->] (g'2) -- node {$\scriptstyle t_{i,j}^{\varrho(\beta)}\zeta_j$} (h2);
\draw[->] (d2) to[bend left = 70] node {$\xy 0;/r.12pc/: (0,0)*{\dcross{j}{i}};\endxy$} (h2);
\draw[->] (a2) to[bend right = 70] node {$\sigma'_{j,i}$} (e2); 
\draw[->] (g'1) -- node {$\scriptstyle t_{i,j}^{\varrho(\beta)}\zeta_j$} (h1);
\node (K) at (3,3.2) {(K)};
\node (L) at (-1.3, 0.2) {(L)};
\node (M) at (17.5,0.2) {(M)};
\node (N) at (7,-1.2) {(N)};
%\draw[smooth,->] plot coordinates {(-1.5,3.8) (-3,2.5) (-3,-4) (0,-5) (3,-3.7)};
\end{tikzpicture}
}
}
\caption{} \label{fig:3}
\end{figure}

We need to prove that the thick diagram commutes. 
Note that the homomorphism $S_i(M'_j \circ X'E_iF_k) \xrightarrow{\xy 0;/r.12pc/: (0,0)*{\rcross{i}{k}}; \endxy} S_i(M'_j \circ X'F_kE_i)$ is an isomorphism by Definition \ref{def:catquantum} (7).
Hence, it suffices to prove that all the other inner diagrams and the outer diagram commute. 

Commutativity of (L). 
We may disregard $S_i$. 
Let $u \in M'_j, v \in X'$. 
Since $(M'_j \circ X'E_i)F_k$ is generated by $(M'_j \boxtimes X'E_i) \boxtimes e(k)$ as an $R(\beta-\alpha_i + s_i\alpha_j)$-module, 
it suffices to compute the images of $(u \boxtimes vE_i) \boxtimes e(k)$ in $(M'_j \circ X'F_k)E_i$. 
Under the homomorphism $(M'_j \circ X'E_i)F_k \to M'_j \circ X'E_iF_k \xrightarrow{\xy 0;/r.12pc/: (0,0)*{\rcross{i}{k}}; \endxy} M'_j \circ X'F_kE_i \xrightarrow{\sigma'_{j,i}} (M'_j \circ X'F_k)E_i$, 
it is sent to $[u \boxtimes \tau_{\height \beta-1}(v \boxtimes e(k))]E_i$, following
\begin{align*}
&(u \boxtimes vE_i)\boxtimes e(k) \\
&\mapsto u \boxtimes (vE_i \boxtimes e(k)) \\
&\mapsto u \boxtimes [\tau_{\height \beta -1} (v \boxtimes e(k))]E_i \quad \text{by the same computation as (\ref{eq:5})} \\
&\mapsto [u \boxtimes \tau_{\height \beta -1} (v \boxtimes e(k))]E_i. 
\end{align*}
On the other hand, under the homomorphism 
\[
(M'_j \circ X'E_i)F_k \xrightarrow{\sigma'_{j,i}} (M'_j \circ X')E_iF_k \xrightarrow{\xy 0;/r.12pc/: (0,0)*{\rcross{i}{k}}; \endxy} (M'_j \circ X')F_kE_i \to (M'_j \circ X'F_k)E_i,
\] 
it is also sent to $[u \boxtimes \tau_{\height \beta -1}(v \boxtimes e(k))]E_i$, following
\begin{align*}
&(u \boxtimes vE_i) \boxtimes e(k) \\
&\mapsto (u \boxtimes v)E_i \boxtimes e(k) \\
&\mapsto [\tau_{\height \beta + \height s_i\alpha_j -1}((u \boxtimes v) \boxtimes e(k))]E_i \quad \text{by the same computation as (\ref{eq:5})} \\
&\mapsto [\tau_{\height \beta + \height s_i\alpha_j -1}(u \boxtimes (v \boxtimes e(k)))]E_i \\
&= [u \boxtimes \tau_{\height \beta -1}(v \boxtimes e(k))]E_i. 
\end{align*}
Hence, (L) commutes. 

The commutativity of (K) and (N) follows from Proposition \ref{prop:monoidality} (5).

Commutativity of (M). 
By Definition \ref{def:catquantum} (7), $f_1$ and $f_4$ are isomorphisms whose inverses are given by 
\[
f_1^{-1} = F_j\left(S_i(X') \xy 0;/r.12pc/: (0,0)*{\lcross{k}{i}}; \endxy \right),\ f_4^{-1} = F_jS_i(X') \xy 0;/r.12pc/: (0,0)*{\lcross{k}{i}}; \endxy. 
\]
We use the following lemma.

\begin{lemma} \label{lem:formula}
Let $\gamma \in \mathsf{Q}_+, Y \in \gMod{R_i(\gamma)}$. 
Then, the homomorphism $F_i(Y \circ M_k) \xrightarrow{Y \xy 0;/r.12pc/: (0,0)*{\lcross{k}{i}}; \endxy} F_iY \circ M_k$ coincides with the $t_{i,k}^{-1}$-multiple of the canonical homomorphism given in Lemma \ref{lem:adjointSES}. 
Similarly, the homomorphism $(M'_k \circ Z)F_i \xrightarrow{\xy 0;/r.12pc/: (0,0)*{\rcross{i}{k}}; \endxy Z} M'_k \circ ZF_i$ coincides with the $t_{i,k}^{-1}$-multiple of the canonical homomorphism for $Z \in \gMod{{}_iR(\gamma)}$. 
\end{lemma}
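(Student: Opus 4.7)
The plan is to compute the right action of the mixed crossing $\xy 0;/r.12pc/: (0,0)*{\lcross{k}{i}}; \endxy$ on $\gMod{R_i}$ by unfolding its definition in $\catquantum{\mathfrak{p}_i}$ into cups, caps, and the downward crossing $\xy 0;/r.12pc/: (0,0)*{\dcross{i}{k}}; \endxy$, and then identifying the resulting composition with the canonical Mackey map of Lemma \ref{lem:adjointSES} up to a scalar. By the mixed $EF$ relation of Definition \ref{def:catquantum}(7), $\xy 0;/r.12pc/: (0,0)*{\lcross{k}{i}}; \endxy$ and $\xy 0;/r.12pc/: (0,0)*{\rcross{i}{k}}; \endxy$ are mutually inverse, so it is convenient to prove the equivalent claim that $Y\xy 0;/r.12pc/: (0,0)*{\rcross{i}{k}}; \endxy$ equals $t_{i,k}$ times the canonical injection $F_iY \circ M_k \hookrightarrow F_i(Y \circ M_k)$. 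Since $F_iM_k = 0$ (the module $M_k = F_i^{(-a_{i,k})}R(\alpha_k)$ lies in the highest power of $F_i$ accessible under the categorification in Proposition \ref{prop:extremalequiv}), the short exact sequence of Lemma \ref{lem:adjointSES} collapses and this canonical injection is already an isomorphism. Both sides of the desired equality are then natural isomorphisms, and naturality in $Y$ reduces the verification to a computation on a single generating module.

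Next I would translate $\xy 0;/r.12pc/: (0,0)*{\rcross{i}{k}}; \endxy$, defined via the upward crossing $\xy 0;/r.12pc/: (0,0)*{\ucross{i}{i}}; \endxy$, the cup $\xy 0;/r.10pc/: (0,0)*{\rcup{i}}; \endxy$, and the cap $\xy 0;/r.10pc/: (0,0)*{\rcap{i}}; \endxy$ of Definition \ref{def:catquantum}(3), through the right action of Theorem \ref{thm:anotheraction}(1). Under this action the upward crossing becomes left multiplication by $\tau_1 \in R(2\alpha_i)$, the cup acquires a factor of $c_{i,-\beta}$, and the cap acquires a factor of $c_{i,-\beta}^{-1}$; the two bubble-parameter factors then cancel in the composite. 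Applying the zigzag identity for the adjunction $(F_i, E_i)$ simplifies the diagram to a single $\tau$-operation applied to the image of the canonical injection.

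The scalar $t_{i,k}$ would emerge from the interplay of this $\tau$-operation with the idempotent $b_{+,-a_{i,k}}$ built into $M_k$. After the zigzag collapse the calculation reduces to evaluating a product $\tau_n \cdots \tau_1$ (with $n = -a_{i,k}$) past the idempotent on a cyclic generator of $F_i^{(-a_{i,k})}R(\alpha_k)$, followed by the quadratic KLR relation $\tau_n^2 e(*, k) = Q_{i,k}(x_n, x_{n+1})\,e(*, k)$; the top-degree coefficient of $Q_{i,k}$ in $x_{n+1}^{-a_{i,k}}$ is precisely $t_{i,k}$. The second statement concerning $(M'_k \circ Z)F_i \to M'_k \circ ZF_i$ with $Z \in \gMod{{}_iR}$ (the printed $M_k$ should be read as $M'_k$, to match the algebra over which $Z$ is a module) then follows from the first by the symmetry of Remark \ref{rem:LRchange2}: the equivalence $\sigma_* \colon \gMod{R_i} \to \gMod{{}_iR}$ exchanges $M_k$ with $M'_k$, and the 2-isomorphism $\sigma$ of Proposition \ref{prop:involutionsigma} converts the right crossing into the left crossing.

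The main obstacle is the precise scalar bookkeeping: confirming that the cup and cap factors $c_{i,-\beta}^{\pm 1}$ cancel exactly, that the grading shifts remain consistent along the zigzag, and that the residual coefficient extracted from $Q_{i,k}$ through the normalization built into $M_k$ and the divided-power idempotents $b_{\pm,n}$ really is $t_{i,k}$ rather than a unit multiple. The argument is elementary but delicate, in the same spirit as the cubic KLR and naturality verifications carried out earlier in the proof of Theorem \ref{thm:anotheraction}.
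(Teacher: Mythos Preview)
Your overall structure is sound—unfold the mixed crossing into cup, cap, and a downward crossing, then identify the resulting composite with the canonical Mackey map—and this is exactly what the paper does. However, you have misidentified the origin of the scalar $t_{i,k}$.

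The claim that the cup and cap factors $c_{i,-\beta}^{\pm 1}$ cancel is incorrect: in the decomposition of $\lcross_{k,i}$ (equivalently $\rcross_{i,k}$), the cup and the cap sit at \emph{different} regions of the diagram and hence are applied, via Theorem~\ref{thm:anotheraction}(1), to modules of different weight. Concretely, in the paper's computation the $\lcup_i$ is applied at $Y \in \gMod{R_i(\gamma)}$ and contributes $c_{i,\gamma}$, while the $\lcap_i$ is applied at $F_iY \circ M_k \in \gMod{R_i(\gamma + \alpha_i + s_i\alpha_k)}$ and contributes $c_{i,\gamma+\alpha_i+s_i\alpha_k}^{-1}$. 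Their product is
\[
c_{i,\gamma}\,c_{i,\gamma+\alpha_i+s_i\alpha_k}^{-1} = \bigl(t_{i,i}^{\,1-a_{i,k}}\,t_{i,k}\bigr)^{-1} = t_{i,k}^{-1},
\]
using the bubble-parameter relation $c_{i,\lambda+\alpha_j}/c_{i,\lambda} = t_{i,j}$ of Definition~\ref{def:bubbleparameter} and $t_{i,i}=1$. This is the entire source of the scalar.

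Consequently, your plan to extract $t_{i,k}$ from the quadratic KLR relation $\tau_n^2 e(\ast,k) = Q_{i,k}(x_n,x_{n+1})e(\ast,k)$ is a dead end: once the bubble factors are stripped off, the remaining composition $\lcup_i,\ \sigma_{i,k},\ \lcap_i$ is computed directly on the generator $e(i)\boxtimes(u\boxtimes v)$ and simply yields $(e(i)\boxtimes u)\boxtimes v$ with no further scalar—there is no $\tau^2$ and no $Q_{i,k}$ involved. The crossing $\sigma_{i,k}$ is literally the Mackey injection. Your reduction of the second statement via the $\sigma$-symmetry of Remark~\ref{rem:LRchange2} is fine, as is your remark that $M_k$ in that statement should be $M'_k$.
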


\begin{proof}
Let $u \in Y, v \in M_k$. 
Since $F_i(Y \circ M_k)$ is generated by $e(i) \boxtimes (Y \boxtimes M_k)$ as an $R_i(\alpha_i +\gamma + s_i\alpha_k)$-module, 
it suffices to compute the image of $e(i) \boxtimes (u \boxtimes v)$. 
Note that 
\begin{align*}
Y \xy 0;/r.12pc/: (0,0)*{\lcross{k}{i}}; \endxy &= Y \xy 0;/r.12pc/:
(0,0)*{\xybox{
(0,0)*{\dcross{}{}};
(8,7)*{\lcap{}};
(-8,-7)*{\lcup{}};
(-4,8)*{\slined{}};
(4,-8)*{\slined{k}};
(12,-12); (12,4) **\dir{-} ?(1)*\dir{>};
(12,-14)*{\scriptstyle i};
(-12,-4); (-12,12) **\dir{-} ?(1)*\dir{>}; 
}}\endxy \\
&= \bigg[ F_i(Y \circ M_k) \xrightarrow{c_{i,\gamma} \xy 0;/r.12pc/: (0,0)*{\lcup{i}}; \endxy} F_i(E_iF_iY \circ M_k) \\
&\quad \xrightarrow{\sigma_{i,k}} F_iE_i(F_iY \circ M_k) \xrightarrow{c_{i,\alpha_i + \gamma + s_i\alpha_k}^{-1} \xy 0;/r.12pc/: (0,0)*{\lcap{i}}; \endxy} F_iY \circ M_k \bigg] \\
&= t_{i,k}^{-1} \bigg[ F_i(Y \circ M_k) \xrightarrow{\xy 0;/r.12pc/: (0,0)*{\lcup{i}}; \endxy} F_i(E_iF_iY \circ M_k) \\
&\quad \xrightarrow{\sigma_{i,k}} F_iE_i(F_iY \circ M_k) \xrightarrow{ \xy 0;/r.12pc/: (0,0)*{\lcap{i}}; \endxy} F_iY \circ M_k \bigg]. \\
\end{align*}
Under this homomorphism, the element $e(i) \boxtimes (u \boxtimes v)$ is sent to $t_{i,k}^{-1} (e(i) \boxtimes u) \boxtimes v$, following
\[
e(i) \boxtimes (u \boxtimes v) \mapsto e(i) \boxtimes (E_i(e(i) \boxtimes u) \boxtimes v) \mapsto e(i) \boxtimes E_i((e(i) \boxtimes u) \boxtimes v ) \mapsto (e(i) \boxtimes u) \boxtimes v. 
\]
The lemma is proved . 
\end{proof}

Let $u \in S_i(X'), v \in M_k$. 
Under the homomorphism $F_jF_i(S_i(X') \circ M_k) \xrightarrow{f_1^{-1}} F_j(F_iS_i(X') \circ M_k) \xrightarrow{\sim} F_jF_iS_i(X') \circ M_k \xrightarrow{\xy 0;/r.12pc/: (0,0)*{\dcross{j}{i}};\endxy} F_iF_jS_i(X') \circ M_k$, 
the element $e(j) \boxtimes e(i) \boxtimes (u \boxtimes v)$ is sent to $t_{i,k}^{-1}\tau_1 (e(i) \boxtimes e(j) \boxtimes u) \boxtimes v$, following 
\begin{align*}
e(j) \boxtimes e(i) \boxtimes (u \boxtimes v) &\mapsto t_{i,k}^{-1} e(j) \boxtimes ((e(i) \boxtimes u)\boxtimes v) \quad \text{by Lemma \ref{lem:formula}} \\
&\mapsto t_{i,k}^{-1} (e(j) \boxtimes e(i) \boxtimes u) \boxtimes v \\ 
&\mapsto t_{i,k}^{-1} \tau_1 (e(i) \boxtimes e(j) \boxtimes u) \boxtimes v. 
\end{align*}
On the other hand, under the homomorphism 
\begin{align*}
&F_j F_i(S_i(X') \circ M_k) \xrightarrow{\xy 0;/r.12pc/: (0,0)*{\dcross{j}{i}};\endxy} F_iF_j(S_i(X') \circ M_k) \\
&\to F_i(F_j S_i(X') \circ M_k) \xrightarrow{f_4^{-1}} F_iF_jS_i(X') \circ M_k,
\end{align*} 
the element $e(j) \boxtimes e(i) \boxtimes (u \boxtimes v)$ is also sent to $t_{i,k}^{-1} \tau_1 (e(i) \boxtimes e(j) \boxtimes u) \boxtimes v$, following
\begin{align*}
e(j) \boxtimes e(i) \boxtimes (u \boxtimes v) &\mapsto \tau_1 (e(i) \boxtimes e(j) \boxtimes (u \boxtimes v))\\
&\mapsto \tau_1 (e(i) \boxtimes ((e(j)\boxtimes u) \boxtimes v)) \\
&\mapsto t_{i,k}^{-1} \tau_1 ((e(i) \boxtimes e(j) \boxtimes u) \boxtimes v) \quad \text{by Lemma \ref{lem:formula}}. \\
\end{align*}
Hence, (M) commutes. 

Commutativity of the outer diagram of Figure \ref{fig:3} follows from the induction hypothesis, using 
\[
\varrho(\beta) - \varrho(\beta-\alpha_i) = 1 = \varrho(\beta-\alpha_k) - \varrho(\beta-\alpha_k-\alpha_i). 
\]

Now, Case 1 is complete. 

\subsection{Case 2}
$\xy 0;/r.12pc/: (0,0)*{\dcross{i}{j}}; \endxy$ for $j \neq i$. 
Let $X \in \gMod{{}_iR(\beta)}$.
Consider the following diagram: 
\begin{equation} \label{pic:10}
\begin{tikzcd}
& & F_i(S_i(M'_j \circ X)) \arrow[d,-,"\sim" sloped]\\
S_i((M'_j \circ X)E_i) \arrow[dddddd,bend right=60, "\sigma'_{i,j}"] \arrow[r,-,"\sim"]\arrow[d,-,"\sim" sloped] & S_i(M'_j \circ X)E_i \arrow[ru,-,"\sim" sloped]\arrow[d,-,"\sim" sloped] & F_i(S_i(M'_j) \circ S_i(X)) \arrow[u,"\theta"]\arrow[d,"t_{i,j}^{\varrho(\beta)}\zeta_j"] \\
S_i((F_jX)E_i) \arrow[r,-,"\sim" sloped]\arrow[d,-,"\sim" sloped] & S_i(F_jX) E_i \arrow[d,-,"\sim" sloped] & F_i(R(\alpha_j)\circ S_i(X)) \arrow[d,-,"\sim" sloped] \\
S_i(F_iF_jX) \arrow[r,"\kappa_i^-", line width=1pt]\arrow[d,"{\xy 0;/r.12pc/: (0,0)*{\dcross{i}{j}}; \endxy}",line width=1pt] & F_iS_i(F_jX) \arrow[r,"\kappa_j^-",line width=1.3pt]\arrow[ruuu,bend left = 8,-,"\sim" sloped] & F_iF_jS_i(X) \arrow[d,"{\xy 0;/r.12pc/: (0,0)*{\dcross{i}{j}}; \endxy}",line width=1pt] \\
S_i(F_jF_iX) \arrow[r,"\kappa_j^-",line width=1pt]\arrow[d,-,"\sim" sloped] & F_jS_i(F_iX) \arrow[r,"\kappa_i^-",line width=1pt]\arrow[d,-,"\sim" sloped] & F_jF_iS_i(X) \arrow[d,-,"\sim" sloped] \\
S_i(F_j(XE_i)) \arrow[dd,-,"\sim" sloped]\arrow[r,"\kappa"] & F_jS_i(XE_i) \arrow[r,-,"\sim"]\arrow[d,-,"\sim" sloped] & F_j(S_i(X)E_i) \\
& R(\alpha_j) \circ S_i(XE_i)  & \\
S_i(M'_j \circ XE_i) & S_i(M'_j) \circ S_i(XE_i) \arrow[l,"\theta"] \arrow[u,"t_{i,j}^{\varrho(\beta-\alpha_i)}\zeta_j"] & \\
\end{tikzcd}
\end{equation}
Note that it is almost the same as (\ref{pic:6}) except three morphisms.
We need to prove that the thick diagram commutes, and it is reduced to proving the commutativity of the outer diagram as in Case 1.
We prove it by induction on $\height \beta$. 
Most of the argument is parallel to Case 1. 

If $\beta = 0$, then $F_j(S_i(X)E_i) =0$ and the assertion is trivial. 
Assume $\beta \neq 0$. 
As before, we may assume $X = X'F_k$ for some $k \in I$ and $X' \in \gMod{{}_iR(\beta - \alpha_k)}$. 

The case $k = i$. 
Consider diagrams in Figure \ref{fig:4} and \ref{fig:5},
where 
\begin{align*}
g_1 =& \bigg[(M'_j \circ X')E_iF_i \xrightarrow{\xy 0;/r.12pc/: (0,0)*{\rcross{i}{i}}; \endxy} (M'_j \circ X')F_iE_i \\ 
&\quad \to (M'_j \circ X'F_i)E_i \xrightarrow{\sigma'_{i,j}} M'_j \circ X'F_iE_i\bigg] \\
&- \bigg[(M'_j \circ X')E_iF_i \xrightarrow{\sigma'_{i,j}} (M'_j \circ X'E_i)F_i \\ 
&\quad \to M'_j \circ X'E_iF_i \xrightarrow{\xy 0;/r.12pc/: (0,0)*{\rcross{i}{i}}; \endxy} M'_j \circ X'F_iE_i \bigg], \\
g_2 =& \bigg[E_iF_iF_jS_i(X') \xrightarrow{\xy 0;/r.12pc/: (0,0)*{\rcross{i}{i}}; \endxy} F_iE_iF_jS_i(X') \\
&\quad \xrightarrow{\xy 0;/r.12pc/: (0,0)*{\rcross{i}{j}}; \endxy} F_iF_jE_iS_i(X') \xrightarrow{\xy 0;/r.12pc/: (0,0)*{\dcross{i}{j}}; \endxy} F_jF_iE_iS_i(X') \bigg] \\
&- \bigg[E_iF_iF_jS_i(X') \xrightarrow{\xy 0;/r.12pc/: (0,0)*{\dcross{i}{j}}; \endxy} E_iF_jF_iS_i(X') \\
&\quad \xrightarrow{\xy 0;/r.12pc/: (0,0)*{\rcross{i}{j}}; \endxy} F_jE_iF_iS_i(X') \xrightarrow{\xy 0;/r.12pc/: (0,0)*{\rcross{i}{i}}; \endxy} F_jF_iE_iS_i(X') \bigg].
\end{align*}

\begin{figure} %\label{pic:11}
\scalebox{0.8}{
\rotatebox{90}{
\begin{tikzpicture}[auto]
\node (a) at (0,1) {$S_i(M'_j \circ X'F_iE_i)$};
\node (b) at (4,1) {$S_i(M'_j) \circ S_i(X'F_iE_i)$};
\node (c) at (8,1) {$F_j S_i(X'F_iE_i)$};
\node (c') at (12,1) {$F_j F_iS_i(X'F_i)$};
\node (d) at (16,1) {$F_jF_iE_iS_i(X')$};
\node (e) at (0,-0.5) {$S_i((M'_j \circ X'F_i)E_i)$};
\node (f) at (4,-0.5) {$F_iS_i(M'_j \circ X'F_i)$};
\node (g) at (8,-0.5) {$F_i(S_i(M'_j) \circ S_i(X'F_i))$};
\node (g') at (12,-0.5) {$F_iF_j S_i(X'F_i)$}; 
\node (h) at (16,-0.5) {$F_iF_jE_iS_i(X')$};
\node (a2) at (0,4) {$S_i(M'_j \circ X')$};
\node (b2) at (4,4) {$S_i(M'_j) \circ S_i(X')$};
\node (c2) at (8,4) {$F_j S_i(X')$};
\node (d2) at (16,4) {$F_jS_i(X')$};
\node (e1) at (0,-2) {$S_i((M'_j\circ X')F_iE_i)$};
\node (f1) at (4,-2) {$F_iS_i((M'_j \circ X')F_i)$};
\node (g1) at (8,-2) {$F_i(S_i(M'_j) \circ E_iS_i(X'))$};
\node (h1) at (16,-2) {$F_iF_jE_iS_i(X')$};
\node (f2) at (4,-3) {$F_iE_iS_i(M'_j \circ X')$};
\node (g2) at (8,-3) {$F_iE_i(S_i(M'_j) \circ S_i(X'))$};
\node (h2) at (16,-3) {$F_iE_iF_jS_i(X')$};
\node (e3) at (0,-4.5) {$S_i(M'_j \circ X')$};
\node (f3) at (4,-4.5) {$S_i(M'_j\circ X')$};
\node (g3) at (8,-4.5) {$S_i(M'_j) \circ S_i(X')$};
\node (h3) at (16,-4.5) {$F_jS_i(X')$};
\node (a1) at (0,2.5) {$S_i(M'_j \circ X'F_iE_i)$};
\node (b1) at (4,2.5) {$S_i(M'_j) \circ S_i(X'F_iE_i)$};
\node (c1) at (8,2.5) {$F_j S_i(X'F_iE_i)$};
\node (c'1) at (12,2.5) {$F_j F_iS_i(X'F_i)$};
\node (d1) at (16,2.5) {$F_jF_iE_iS_i(X')$};
\draw[<-,line width=1.3pt] (a) -- node {$\scriptstyle \theta$} (b);
\draw[->,line width=1.3pt] (b) -- node {$\scriptstyle t_{i,j}^{\varrho(\beta-\alpha_i)}\zeta_j$} (c);
\draw[line width=1.3pt] (c) -- node {$\scriptstyle \sim$} (c');
\draw (c') -- node {$\scriptstyle \sim$} (d);
\draw[<-,line width=1.3pt] (a) -- node {$\scriptstyle \sigma'_{i,j}$} (e);
\draw[<-] (d) -- node {$\xy 0;/r.12pc/: (0,0)*{\dcross{i}{j}}; \endxy$} (h);
\draw[<-,line width=1.3pt] (c') -- node {$\xy 0;/r.12pc/: (0,0)*{\dcross{i}{j}}; \endxy$} (g');
\draw[line width=1.3pt] (e) -- node {$\scriptstyle \sim$} (f);
\draw[<-,line width=1.3pt] (f) -- node {$\scriptstyle \theta$} (g); 
\draw[->,line width=1.3pt] (g) -- node {$\scriptstyle t_{i,j}^{\varrho(\beta)}\zeta_j$} (g');
\draw (g') -- node {$\scriptstyle \sim$} (h);
\draw[->] (a2) -- node {$\xy 0;/r.12pc/: (0,0)*{\rcup{i}};(-2,-0.5)*{\bullet}; (-6,-1)*{\scriptstyle n}; \endxy$} (a1);
\draw[->] (b2) -- node {$\xy 0;/r.12pc/: (0,0)*{\rcup{i}};(-2,-0.5)*{\bullet}; (-6,-1)*{\scriptstyle n}; \endxy$} (b1);
\draw[->] (c2) -- node {$\xy 0;/r.12pc/: (0,0)*{\rcup{i}};(-2,-0.5)*{\bullet}; (-6,-1)*{\scriptstyle n}; \endxy$} (c1);
\draw[->] (d2) -- node[swap] {$c_{i,-\beta}\xy 0;/r.12pc/: (0,0)*{\rcup{i}}; (-2,-0.5)*{\bullet}; (-6,-1)*{\scriptstyle n};\endxy$} (d1);
\draw[<-] (a2) -- node {$\scriptstyle \theta$} (b2);
\draw[->] (b2) -- node {$\scriptstyle t_{i,j}^{\varrho(\beta-\alpha_i)}\zeta_j$} (c2);
\draw[double distance=2pt] (c2) -- (d2); 
\draw[->] (e1) -- node[sloped] {$\scriptstyle \sim$} (e);
\draw[->] (f1) -- node[sloped] {$\scriptstyle \sim$} (f);
\draw (g1) -- node[sloped] {$\scriptstyle \sim$} (g);
\draw (f2) -- node[sloped] {$\scriptstyle \sim$} (f1);
\draw[->] (g2) -- (g1);
\draw[<-] (f2) -- node {$\scriptstyle \theta$} (g2);
\draw[double distance=2pt] (h1) -- (h);
\draw[->] (h2) -- node {$\xy 0;/r.10pc/: (0,0)*{\rcross{i}{j}}; \endxy$} (h1); 
\draw[->] (g2) -- node {$\scriptstyle t_{i,j}^{\varrho(\beta)}\zeta_j$} (h2);
\draw[->] (e3) -- node {$\xy 0;/r.12pc/: (0,0)*{\rcup{i}};(-2,-0.5)*{\bullet}; (-6,-1)*{\scriptstyle n}; \endxy$} (e1); 
\draw[->] (f3) -- node {$c_{i,-(\beta+\alpha_j)}\xy 0;/r.12pc/: (0,0)*{\rcup{i}};(-2,-0.5)*{\bullet}; (-6,-1)*{\scriptstyle n}; \endxy$} (f2);
\draw[->] (g3) -- node {$c_{i,-(\beta+\alpha_j)}\xy 0;/r.12pc/: (0,0)*{\rcup{i}};(-2,-0.5)*{\bullet}; (-6,-1)*{\scriptstyle n}; \endxy$} (g2);
\draw[->] (h3) -- node {$c_{i,-(\beta+\alpha_j)}\xy 0;/r.12pc/: (0,0)*{\rcup{i}};(-2,-0.5)*{\bullet}; (-6,-1)*{\scriptstyle n}; \endxy$} (h2);
\draw[double distance=2pt] (e3) -- (f3);
\draw[<-] (f3) -- node {$\scriptstyle \theta$} (g3);
\draw[->] (g3) -- node {$\scriptstyle t_{i,j}^{\varrho(\beta)}\zeta_j$} (h3); 
\draw[double distance=2pt] (a2) to[bend right=70] (e3);
\draw[->] (h3) to[bend right=70] node {$\scriptstyle t_{i,j}^{-1}\id$} (d2);
\draw (e1) -- node {$\scriptstyle \sim$} (f1);
\draw[->] (g1) -- node {$\scriptstyle t_{i,j}^{\varrho(\beta)}\zeta_j$} (h1);
\draw[<-] (a1) -- node {$\scriptstyle \theta$} (b1);
\draw[->] (b1) -- node {$\scriptstyle t_{i,j}^{\varrho(\beta-\alpha_i)}\zeta_j$} (c1);
\draw (c1) -- node {$\scriptstyle \sim$} (c'1);
\draw (c'1) -- node {$\scriptstyle \sim$} (d1);
\draw[->] (a1) -- node[swap] {$Q_{i,j}\left(\xy 0;/r.12pc/: (0,0)*{\sdotu{i}}; \endxy, y_j \right)$} (a);
\draw[->] (b1) -- node[swap] {$Q_{i,j}\left(\xy 0;/r.12pc/: (0,0)*{\sdotu{i}}; \endxy, y_j \right)$} (b);
\draw[->] (c1) -- node[swap] {$Q_{i,j}\left(\xy 0;/r.12pc/: (0,0)*{\sdotu{i}}; \endxy, \xy 0;/r.12pc/: (0,0)*{\sdotd{j}}; \endxy \right)$} (c);
\draw[->] (c'1) -- node[swap] {$Q_{i,j}\left(\xy 0;/r.12pc/: (0,0)*{\sdotd{i}}; \endxy, \xy 0;/r.12pc/: (0,0)*{\sdotd{j}}; \endxy \right) S_i(X'F_i)$} (c');
\draw[->] (d1) --node[swap] {$Q_{i,j}\left(\xy 0;/r.12pc/: (0,0)*{\sdotd{i}}; \endxy, \xy 0;/r.12pc/: (0,0)*{\sdotd{j}}; \endxy \right)E_iS_i(X')$}  (d);
\node (O) at (-1.5,0) {(O)};
\node (P) at (17,-1.2) {(P)}; 
\end{tikzpicture}
}
}
\caption{} \label{fig:4}
\end{figure}

\begin{figure} %\label{pic:12}
\scalebox{0.75}{
\rotatebox{90}{
\begin{tikzpicture}[auto]
\node (a) at (0,1) {$S_i(M'_j \circ X'F_iE_i)$};
\node (b) at (4,1) {$S_i(M'_j) \circ S_i(X'F_iE_i)$};
\node (c) at (8,1) {$F_j S_i(X'F_iE_i)$};
\node (c') at (12,1) {$F_j F_iS_i(X'F_i)$};
\node (d) at (16,1) {$F_jF_iE_iS_i(X')$};
\node (e) at (0,-1) {$S_i((M'_j \circ X'F_i)E_i)$};
\node (f) at (4,-1) {$F_iS_i(M'_j \circ X'F_i)$};
\node (g) at (8,-1) {$F_i(S_i(M'_j) \circ S_i(X'F_i))$};
\node (g') at (12,-1) {$F_iF_j S_i(X'F_i)$}; 
\node (h) at (16,-1) {$F_iF_jE_iS_i(X')$};
\node (a1) at (0,2.5) {$S_i(M'_j \circ X'E_iF_i)$};
\node (b1) at (4,2.5) {$S_i(M'_j) \circ S_i(X'E_iF_i)$};
\node (c1) at (8,2.5) {$F_j S_i(X'E_iF_i)$};
\node (d1) at (16,2.5) {$F_jE_iF_iS_i(X')$};
\node (e1) at (0,-2) {$S_i((M'_j\circ X')F_iE_i)$};
\node (f1) at (4,-2) {$F_iS_i((M'_j \circ X')F_i)$};
\node (g1) at (8,-2) {$F_i(S_i(M'_j) \circ E_iS_i(X'))$};
\node (h1) at (16,-2) {$F_iF_jE_iS_i(X')$};
\node (f2) at (4,-3) {$F_iE_iS_i(M'_j \circ X')$};
\node (g2) at (8,-3) {$F_iE_i(S_i(M'_j) \circ S_i(X'))$};
\node (h2) at (16,-3) {$F_iE_iF_jS_i(X')$};
\node (e3) at (0,-4.5) {$S_i((M'_j \circ X')E_iF_i)$};
\node (f3) at (4,-4.5) {$E_iF_iS_i(M'_j\circ X')$};
\node (g3) at (8,-4.5) {$E_iF_i(S_i(M'_j) \circ S_i(X'))$};
\node (h3) at (16,-4.5) {$E_iF_iF_jS_i(X')$};
\node (a2) at (0,3.5) {$S_i((M'_j \circ X'E_i)F_i)$};
\node (a3) at (0,4.5) {$E_iS_i(M'_j \circ X' E_i)$};
\node (b2) at (4,3.5) {$S_i(M'_j) \circ E_iS_i(X'E_i)$};
\node (b3) at (4,4.5) {$E_i(S_i(M'_j) \circ S_i(X'E_i))$};
\node (c2) at (8,3.5) {$F_jE_iS_i(X'E_i)$};
\node (c3) at (8,4.5) {$E_iF_jS_i(X'E_i)$};
\node (d3) at (16,4.5) {$E_iF_jF_iS_i(X')$};
\draw[<-,line width=1.3pt] (a) -- node {$\scriptstyle \theta$} (b);
\draw[->,line width=1.3pt] (b) -- node {$\scriptstyle t_{i,j}^{\varrho(\beta-\alpha_i)}\zeta_j$} (c);
\draw[line width=1.3pt] (c) -- node {$\scriptstyle \sim$} (c');
\draw (c') -- node {$\scriptstyle \sim$} (d);
\draw[<-,line width=1.3pt] (a) -- node {$\scriptstyle \sigma'_{i,j}$} (e);
\draw[<-] (d) -- node {$\xy 0;/r.12pc/: (0,0)*{\dcross{i}{j}}; \endxy$} (h);
\draw[<-,line width=1.3pt] (c') -- node {$\xy 0;/r.12pc/: (0,0)*{\dcross{i}{j}}; \endxy$} (g');
\draw[line width=1.3pt] (e) -- node {$\scriptstyle \sim$} (f);
\draw[<-,line width=1.3pt] (f) -- node {$\scriptstyle \theta$} (g); 
\draw[->,line width=1.3pt] (g) -- node {$\scriptstyle t_{i,j}^{\varrho(\beta)}\zeta_j$} (g');
\draw (g') -- node {$\scriptstyle \sim$} (h);
\draw[->] (a1) -- node {$\xy 0;/r.12pc/: (0,0)*{\rcross{i}{i}}; \endxy$} (a);
\draw[->] (b1) -- node {$\xy 0;/r.12pc/: (0,0)*{\rcross{i}{i}}; \endxy$} (b);
\draw[->] (c1) -- node {$\xy 0;/r.12pc/: (0,0)*{\rcross{i}{i}}; \endxy$} (c);
\draw[->] (d1) -- node {$\xy 0;/r.12pc/: (0,0)*{\rcross{i}{i}}; \endxy$} (d);
\draw[<-] (a1) -- node {$\scriptstyle \theta$} (b1);
\draw[->] (b1) -- node {$\scriptstyle t_{i,j}^{\varrho(\beta-\alpha_i)}\zeta_j$} (c1);
\draw (c2) -- node[sloped] {$\scriptstyle \sim$} (d1); 
\draw[->] (e1) -- node[sloped] {$\scriptstyle \sim$} (e);
\draw[->] (f1) -- node[sloped] {$\scriptstyle \sim$} (f);
\draw (g1) -- node[sloped] {$\scriptstyle \sim$} (g);
\draw (f2) -- node[sloped] {$\scriptstyle \sim$} (f1);
\draw[->] (g2) -- (g1);
\draw[<-] (f2) -- node {$\scriptstyle \theta$} (g2);
\draw[double distance=2pt] (h1) -- (h);
\draw[->] (h2) -- node {$\xy 0;/r.10pc/: (0,0)*{\rcross{i}{j}}; \endxy$} (h1); 
\draw[->] (g2) -- node {$\scriptstyle t_{i,j}^{\varrho(\beta)}\zeta_j$} (h2);
\draw[->] (e3) -- node {$\xy 0;/r.12pc/: (0,0)*{\rcross{i}{i}}; \endxy$} (e1); 
\draw[->] (f3) -- node {$\xy 0;/r.12pc/: (0,0)*{\rcross{i}{i}}; \endxy$} (f2);
\draw[->] (g3) -- node {$\xy 0;/r.12pc/: (0,0)*{\rcross{i}{i}}; \endxy$} (g2);
\draw[->] (h3) -- node {$\xy 0;/r.12pc/: (0,0)*{\rcross{i}{i}}; \endxy$} (h2);
\draw (e3) -- node {$\scriptstyle \sim$} (f3);
\draw[<-] (f3) -- node {$\scriptstyle \theta$} (g3);
\draw[->] (g3) -- node {$\scriptstyle t_{i,j}^{\varrho(\beta)}\zeta_j$} (h3); 
\draw[->] (a2) -- node[sloped] {$\scriptstyle \sim$} (a1);
\draw (a3) -- node[sloped] {$\scriptstyle \sim$} (a2);
\draw[<-] (a3) -- node {$\scriptstyle \theta$} (b3);
\draw (b2) -- node[sloped] {$\scriptstyle \sim$} (b1);
\draw[->] (b3) -- (b2);
\draw[->] (b3) -- node {$\scriptstyle t_{i,j}^{\varrho(\beta-\alpha_i)}\zeta_j$} (c3);
\draw (c3) -- node {$\scriptstyle \sim$} (d3);
\draw[->] (d3) -- node {$\xy 0;/r.12pc/: (0,0)*{\rcross{i}{j}}; \endxy$} (d1);
\draw[->] (c3) -- node {$\xy 0;/r.10pc/: (0,0)*{\rcross{i}{j}}; \endxy$} (c2);
\draw (c2) -- node[sloped] {$\scriptstyle \sim$} (c1);
\draw[<-] (a2) to[bend right=80] node {$\sigma'_{i,j}$} (e3); 
\draw[<-] (d3) to[bend left=80] node {$\xy 0;/r.12pc/: (0,0)*{\dcross{i}{j}}; \endxy$} (h3);
\draw[->] (g1) -- node {$\scriptstyle t_{i,j}^{\varrho(\beta)}\zeta_j$} (h1);
\draw[->] (b2) -- node {$\scriptstyle t_{i,j}^{\varrho(\beta-\alpha_i)}\zeta_j$} (c2);
\draw (e1) -- node {$\scriptstyle \sim$} (f1);
\draw[->] (e3) to[bend left=60] node {$\scriptstyle S_i(g_1)$} (a);
\draw[->] (h3) to[bend right=60] node {$\scriptstyle g_2$} (d);
%\node (Q) at (-1,0) {(Q)};
%\node (R) at (18,0) {(R)};
\end{tikzpicture}
}
}
\caption{} \label{fig:5}
\end{figure}

The thick diagrams in Figure \ref{fig:4} and \ref{fig:5} are the outer diagram of (\ref{pic:10}). 
Note that \ref{fig:4} and \ref{fig:5} are quite similar to \ref{fig:1} and \ref{fig:2} respectively. 
In order to prove that the outer diagram of (\ref{pic:10}) commutes, it suffices to verify the following assertions: 
\begin{itemize}
\item (O), (P) and the outer diagram of Figure \ref{fig:4} commute. 
\item In Figure \ref{fig:5}, the outer diagram and the rectangular diagram containing curved edges $S_i(g_1)$ and $g_2$ are commutative. 
\end{itemize}

\begin{remark}
In Figure \ref{fig:5}, the leftmost two inner diagrams and the rightmost two inner diagrams are not commutative.
\end{remark}

Commutativity of (O). 
We may disregard $S_i$.
Since $M'_j \circ X'F_iE_i \xrightarrow{\sigma'_{j,i}} (M'_j \circ X'F_i)E_i$ is injective, it suffices to prove the commutativity after postcomposing $\sigma'_{j,i}$. 
Let $u \in M'_j, v \in X'$. 
Note that we have the following commutative diagram by Lemma \ref{lem:adjointSES} and : 
\begin{equation*}
\begin{tikzcd}
M'_j \circ X'F_iE_i \arrow[r,"\sigma'_{j,i}"]\arrow[d,"{Q_{i,j}\left(\xy 0;/r.12pc/: (0,0)*{\sdotu{i}}; \endxy, y_j \right)}"'] & (M'_j \circ X'F_i)E_i \arrow[d,"{Q_{i,j}\left(\xy 0;/r.12pc/: (0,0)*{\sdotu{i}}; \endxy, y_j \right)}"] \\
M'_j \circ X'F_iE_i \arrow[r,"\sigma'_{j,i}"] & (M'_j \circ X'F_i)E_i
\end{tikzcd}
\end{equation*}
Hence, under the homomorphism $M'_j \circ X' \xrightarrow{\xy 0;/r.12pc/: (0,0)*{\rcup{i}};(-2,-0.5)*{\bullet}; (-6,-1)*{\scriptstyle n}; \endxy} M'_j \circ X'F_iE_i \xrightarrow{Q_{i,j}\left(\xy 0;/r.12pc/: (0,0)*{\sdotu{i}}; \endxy, y_j \right)} M'_j\circ X'F_iE_i \xrightarrow{\sigma'_{j,i}} (M'_j \circ X'F_i) E_i$, 
the element $u \boxtimes v$ is sent to $Q_{i,j}\left(\xy 0;/r.12pc/: (0,0)*{\sdotu{i}}; \endxy, y_j \right) [u \boxtimes (v \boxtimes x_1^ne(i))]E_i$, following
\begin{align*}
u \boxtimes v &\mapsto u \boxtimes (v \boxtimes x_1^ne(i))E_i \\ 
&\mapsto [u \boxtimes (v \boxtimes x_1^n e(i))]E_i \in (M'_j \circ X'F_i)E_i \\
&\mapsto  Q_{i,j}\left(\xy 0;/r.12pc/: (0,0)*{\sdotu{i}}; \endxy, y_j \right) [u \boxtimes (v \boxtimes x_1^ne(i))]E_i. 
\end{align*}
On the other hand, under the homomorphism $M'_j \circ X' \xrightarrow{\xy 0;/r.12pc/: (0,0)*{\rcup{i}};(-2,-0.5)*{\bullet}; (-6,-1)*{\scriptstyle n}; \endxy} (M'_j \circ X')F_iE_i \to (M'_j \circ X'F_i)E_i \xrightarrow{\sigma'_{i,j}} M'_j \circ X'F_iE_i \xrightarrow{\sigma'_{j,i}} (M'_j \circ X'F_i)E_i$, 
it is also sent to $Q_{i,j}\left(\xy 0;/r.12pc/: (0,0)*{\sdotu{i}}; \endxy, y_j \right) [u \boxtimes (v \boxtimes e(i))]E_i$, following 
\begin{align*}
u \boxtimes v &\mapsto ((u \boxtimes v)\boxtimes x_1^ne(i))E_i \\
&\mapsto [u \boxtimes (v\boxtimes x_1^ne(i))]E_i \\
&\mapsto Q_{i,j}\left(\xy 0;/r.12pc/: (0,0)*{\sdotu{i}}; \endxy, y_j \right) [u \boxtimes (v \boxtimes x_1^ne(i))]E_i, 
\end{align*}
since the composition $\sigma'_{j,i} \circ \sigma'_{i,j}$ is $ Q_{i,j}\left(\xy 0;/r.12pc/: (0,0)*{\sdotu{i}}; \endxy, y_j \right)$ (Definition \ref{def:anothertauij}). 
 
Commutativity of (P) follows from 
\begin{align*}
\xy 0;/r.12pc/: (0,0)*{\xybox{
(0,0)*{\rcross{}{}};
(-8,-7)*{\rcup{}};
(4,-8)*{\slined{j}};
(-12,0)*{\slined{}};
(-8,8)*{\dcross{}{}};
(4,8)*{\slineu{}};
(4,14)*{\scriptstyle i};
(-10,-7.5)*{\bullet};
(-14,-8)*{\scriptstyle n};
}};
\endxy &= \xy 0;/r.12pc/: (0,0)*{\xybox{
(0,0)*{\dcross{}{}};
(-4,-8)*{\slined{j}};
(8,-7)*{\rcup{}};
(-8,7)*{\rcap{}};
(-16,1)*{\rcup{}};
(-20,.5)*{\bullet};
(-24,0)*{\scriptstyle n};
(12,-4); (12,20) **\dir{-} ?(1)*\dir{>};
(-20,4)*{};(-12,12)*{} **\crv{(-20,7) & (-12,9)}?(0)*\dir{<};
(4,4)*{};(-4,12)*{} **\crv{(4,7) & (-4,9)}?(0)*\dir{<};
(-8,16)*{\dcross{}{}};
(12,22)*{\scriptstyle i}; 
}};
\endxy = \xy 0;/r.12pc/: (0,0)*{\xybox{
(0,0)*{\dcross{}{}};
(0,8)*{\dcross{}{}};
(-4,-8)*{\slined{j}};
(8,-7)*{\rcup{}};
(12,-4); (12,12) **\dir{-} ?(1)*\dir{>};
(12,14)*{\scriptstyle i};
(-4,4)*{\bullet};
(-8,4)*{\scriptstyle n};
}};
\endxy = \xy 0;/r.12pc/: (0,0)*{\xybox{
(0,0)*{\dcross{}{}};
(0,8)*{\dcross{}{}};
(-4,-8)*{\slined{j}};
(8,-7)*{\rcup{}};
(12,-4); (12,12) **\dir{-} ?(1)*\dir{>};
(12,14)*{\scriptstyle i}; 
(6,-7.5)*{\bullet};
(4,-9)*{\scriptstyle n};
}};
\endxy \\
&= \left[ F_jS_i(X') \xrightarrow{\xy 0;/r.12pc/: (0,0)*{\rcup{i}};(-2,-0.5)*{\bullet}; (-6,-1)*{\scriptstyle n}; \endxy} F_jF_iE_iS_i(X') \xrightarrow{Q_{i,j}\left(\xy 0;/r.12pc/: (0,0)*{\sdotd{i}}; \endxy, \xy 0;/r.12pc/: (0,0)*{\sdotd{j}}; \endxy \right)E_iS_i(X')} F_jF_iE_iS_i(X') \right]. 
\end{align*}

Commutativity of the outer diagram of Figure \ref{fig:4} follows from $\rho(\beta) - \rho(\beta-\alpha_i) = 1$.
Commutativity of the outer diagram of Figure \ref{fig:5} follows from the induction hypothesis. 

Commutativity of the rectangular diagram containing curved edges $S_i(g_1)$ and $g_2$ in Figure \ref{fig:5}. 
Consider Figure \ref{fig:6}, 
where 
\begin{align*}
h_1 &= \overline{Q}_{i,j,i}\left(S_i(M'_j \circ X'F_iE_i) \xy 0;/r.12pc/: (0,0)*{\sdotu{i}}; \endxy F_i, S_i(y_j \circ X'F_iE_i)E_iF_i, S_i(M'_j \circ X'F_i\xy 0;/r.12pc/: (0,0)*{\sdotu{i}}; \endxy)E_iF_i \right), \\
h_2 &= \overline{Q}_{i,j,i}\left(E_i \xy 0;/r.12pc/: (0,0)*{\sdotd{i}}; \endxy S_i(M'_j\circ X'F_iE_i), E_iF_iS_i(y_j \circ X'F_iE_i), E_iF_iS_i(M'_j \circ X'F_i \xy 0;/r.12pc/: (0,0)*{\sdotu{i}}; \endxy) \right), \\
h_3 &= \overline{Q}_{i,j,i}\bigg(E_i \xy 0;/r.12pc/: (0,0)*{\sdotd{i}}; \endxy(S_i(M'_j)\circ S_i(X'F_iE_i)), E_iF_i(S_i(y_j)\circ S_i(X'F_iE_i)), \\
&\quad E_iF_i(S_i(M'_j) \circ S_i(X'F_i\xy 0;/r.12pc/: (0,0)*{\sdotu{i}}; \endxy)) \bigg), \\
h_4 &= \overline{Q}_{i,j,i}\left(E_i\xy 0;/r.12pc/: (0,0)*{\sdotd{i}}; \endxy F_jS_i(X'F_iE_i), E_iF_i\xy 0;/r.12pc/: (0,0)*{\sdotd{j}}; \endxy S_i(X'F_iE_i), E_iF_iF_jS_i(X'F_i\xy 0;/r.12pc/: (0,0)*{\sdotu{i}}; \endxy ) \right), \\
h_5 &= \overline{Q}_{i,j,i}\left(E_i\xy 0;/r.12pc/: (0,0)*{\sdotd{i}}; \endxy F_jF_iE_iS_i(X'), E_iF_i\xy 0;/r.12pc/: (0,0)*{\sdotd{j}}; \endxy F_iE_iS_i(X'), E_iF_iF_j\xy 0;/r.12pc/: (0,0)*{\sdotd{i}}; \endxy E_iS_i(X')  \right). 
\end{align*}

\begin{figure} %\label{pic:13}
\scalebox{0.8}{
\rotatebox{90}{
\begin{tikzpicture}[auto]
\node (a) at (0,0) {$S_i(M'_j \circ X'F_iE_i)$};
\node (b) at (9,0) {$S_i(M'_j) \circ S_i(X'F_iE_i)$};
\node (c) at (13.5,0) {$F_j S_i(X'F_iE_i)$};
%\node (d) at (13.5,0) {$F_j F_iS_i(X'F_i)$};
\node (e) at (18,0) {$F_jF_iE_iS_i(X')$};
\node (a1) at (0,-1.5) {$S_i((M'_j \circ X'F_iE_i)E_iF_i)$};
\node (b1) at (4.5,-1.5) {$E_iF_iS_i(M'_j \circ X'F_iE_i)$};
\node (c1) at (9,-1.5) {$E_iF_i(S_i(M'_j) \circ S_i(X'F_iE_i))$};
\node (d1) at (13.5,-1.5) {$E_iF_iF_jS_i(X'F_iE_i)$};
\node (e1) at (18,-1.5) {$E_iF_iF_jF_iE_iS_i(X')$};
\node (a2) at (0,-3) {$S_i((M'_j \circ X'F_iE_i)E_iF_i)$};
\node (b2) at (4.5,-3) {$E_iF_iS_i(M'_j \circ X'F_iE_i)$};
\node (c2) at (9,-3) {$E_iF_i(S_i(M'_j) \circ S_i(X'F_iE_i))$};
\node (d2) at (13.5,-3) {$E_iF_iF_jS_i(X'F_iE_i)$};
\node (e2) at (18,-3) {$E_iF_iF_jF_iE_iS_i(X')$};
\node (a3) at (0,-4.5) {$S_i((M'_j \circ X')E_iF_i)$};
\node (b3) at (4.5,-4.5) {$E_iF_iS_i(M'_j \circ X')$};
\node (c3) at (9,-4.5) {$E_iF_i(S_i(M'_j) \circ S_i(X'))$};
\node (d3) at (13.5,-4.5) {$E_iF_iF_jS_i(X')$};
\node (e3) at (18,-4.5) {$E_iF_iF_jS_i(X')$};
\draw[<-] (a) -- node {$\scriptstyle c_{i,-(\alpha_j+s_i(\beta-\alpha_i))}^{-1}\theta$} (b);
\draw[->] (b) -- node {$\scriptstyle \zeta_j$} (c);
%\draw[->,color=red] (c) -- node {$\scriptstyle $} (d);
\draw[-] (c) -- node {$\scriptstyle \sim$} (e);
\draw[->] (a1) -- node {$\xy 0;/r.12pc/: (0,0)*{\rcap{i}}; \endxy$} (a); 
\draw[->] (c1) -- node {$\xy 0;/r.12pc/: (0,0)*{\rcap{i}}; \endxy$} (b);
\draw[->] (d1) -- node {$\xy 0;/r.12pc/: (0,0)*{\rcap{i}}; \endxy$} (c);
\draw[->] (e1) -- node {$\xy 0;/r.12pc/: (0,0)*{\rcap{i}}; \endxy$} (e);
\draw[-] (a1) -- node {$\scriptstyle \sim$} (b1);
\draw[<-] (b1) -- node {$\scriptstyle \theta$} (c1);
\draw[->] (c1) -- node {$\scriptstyle \zeta_j$} (d1);
\draw[-] (d1) -- node {$\scriptstyle \sim$} (e1);
\draw[->] (a2) -- node {$\scriptstyle h_1$} (a1);
\draw[->] (b2) -- node {$\scriptstyle h_2$} (b1);
\draw[->] (c2) -- node {$\scriptstyle h_3$} (c1);
\draw[->] (d2) -- node {$\scriptstyle h_4$} (d1);
\draw[->] (e2) -- node {$\scriptstyle h_5$} (e1);
\draw[-] (a2) -- node {$\scriptstyle \sim$} (b2); 
\draw[<-] (b2) -- node {$\scriptstyle \theta$} (c2);
\draw[->] (c2) -- node {$\scriptstyle \zeta_j$} (d2);
\draw[-] (d2) -- node {$\scriptstyle \sim$} (e2);
\draw[->] (a3) -- node {$\xy 0;/r.12pc/: (0,0)*{\rcup{i}}; \endxy$} (a2); 
\draw[->] (b3) -- node {$\xy 0;/r.12pc/: (0,0)*{\rcup{i}}; \endxy$} (b2);
\draw[->] (c3) -- node {$\xy 0;/r.12pc/: (0,0)*{\rcup{i}}; \endxy$} (c2);
\draw[->] (d3) -- node {$\xy 0;/r.12pc/: (0,0)*{\rcup{i}}; \endxy$} (d2);
\draw[->] (e3) -- node {$\xy 0;/r.12pc/: (0,0)*{\rcup{i}}; \endxy$} (e2);
\draw[->] (a3) -- node {$\scriptstyle \sim$} (b3); 
\draw[<-] (b3) -- node {$\scriptstyle \theta$} (c3);
\draw[->] (c3) -- node {$\scriptstyle \zeta_j$} (d3);
\draw[->] (d3) -- node {$\scriptstyle c_{i,-(\beta-\alpha_i)}$} (e3);
\draw[->] (a3) to[bend left = 70] node {$S_i(g_1)$} (a);
\draw[->] (e3) to[bend right=70] node {$g_2$} (e); 
\node (Q) at (-1,-2.2) {(Q)}; 
\node (R) at (19,-2.2) {(R)};
\end{tikzpicture} 
}
}
\caption{} \label{fig:6}
\end{figure}

Since
\[
c_{i,-(\alpha_j+s_i(\beta-\alpha_i))}/c_{i,-(\beta-\alpha_i)} = t_{i,j}^{-1} = t_{i,j}^{\varrho(\beta-\alpha_i)-\varrho(\beta)}, 
\]
it suffices to prove that the outer diagram of Figure \ref{fig:6} commutes.
It is further reduced to proving that all the inner diagrams of Figure \ref{fig:6} commute. 

Commutativity of (Q). 
By identifying 
\begin{align*}
&(M'_j \circ X')E_iF_i = E_iF_iF_jX', \ (M'_j \circ X')F_iE_i = F_iE_iF_jX', \\
&M'_j \circ X'F_iE_i = F_jF_iE_iX',\  M'_j \circ X'E_iF_i = F_jE_iF_iX' \\
&(M'_j \circ X'E_i)F_i = E_iF_jF_iX',
\end{align*}
we have 
\begin{align*}
g_1 &= t_{i,j} \left( \xy 0;/r.12pc/: (0,0)*{\xybox{
(0,0)*{\rcross{i}{i}};
(12,4); (12,-4) **\dir{-} ?(1)*\dir{>};
(12,-6)*{\scriptstyle j};
(8,8)*{\rcross{}{}};
(-4,12); (-4,4) **\dir{-} ?(1)*\dir{>};
(0,16)*{\dcross{}{}};
(12,12); (12,20) **\dir{-} ?(1)*\dir{>};
}}; 
\endxy  - \xy 0;/r.12pc/: (0,0)*{\xybox{
(0,0)*{\dcross{i}{j}};
(-12,0)*{\slineu{i}};
(-8,8)*{\rcross{}{}};
(4,8)*{\slined{}};
(-12,16)*{\slined{}};
(0,16)*{\rcross{}{}};
}};
\endxy \right) X' \quad \text{by Lemma \ref{lem:formula}} \\
&= t_{i,j} \left( \xy 0;/r.12pc/: (0,0)*{\xybox{
(0,0)*{\dcross{}{}};
(-8,7)*{\rcap{}};
(8,-8)*{\dcross{}{}};
(16,-15)*{\rcup{}};
(20,-12); (20,12) **\dir{-} ?(1)*\dir{>};
(12,0)*{\slined{}};
(8,8)*{\dcross{}{}};
(-12,-20); (-12,4) **\dir{-} ?(1)*\dir{>};
(-4,-4); (-4,-20) **\dir{-} ?(1)*\dir{>};
(4,-16)*{\slined{j}};
(-12,-22)*{\scriptstyle i};
(-4,-22)*{\scriptstyle i};
}};
\endxy - \xy 0;/r.12pc/: (0,0)*{\xybox{
(-4,-8)*{\dcross{i}{j}}; 
(4,0)*{\dcross{}{}};
(-4,8)*{\dcross{}{}};
(-8,0)*{\slined{}};
(8,8)*{\slined{}};
(12,-7)*{\rcup{}};
(16,-4); (16,20) **\dir{-} ?(1)*\dir{>};
(-12,15)*{\rcap{}};
(-16,-12); (-16,12) **\dir{-} ?(1)*\dir{>};
(0,16)*{\slined{}};
(8,16)*{\slined{}};
(-16,-14)*{\scriptstyle i};
}};
\endxy \right) X' \quad \text{by computing as in (\ref{eq:6})} \\
&= t_{i,j} \bigg[
(M'_j \circ X')E_iF_i \xrightarrow{c_{i,-(\beta -\alpha_i)}^{-1} \xy 0;/r.12pc/: (0,0)*{\rcup{i}}; \endxy} (M'_j \circ X'F_iE_i)E_iF_i \\ 
&\quad \xrightarrow{h_1} (M'_j \circ X'F_iE_i)E_iF_i \quad \xrightarrow{c_{i,-(\beta+s_i\alpha_j-\alpha_i)}\xy 0;/r.12pc/: (0,0)*{\rcap{i}}; \endxy} M'_j \circ X'F_iE_i \bigg] \\
%& \quad \text{where $h_1 = \overline{Q}_{i,j,i}\left((M'_j \circ X'E_iF_i) \xy 0;/r.12pc/: (0,0)*{\sdotu{i}}; \endxy F_i, (z_j \circ X'F_iE_i)E_iF_i, (M'_j \circ X'F_i\xy 0;/r.12pc/: (0,0)*{\sdotu{i}}; \endxy)E_iF_i \right)$}\\
&= \bigg[(M'_j \circ X')E_iF_i \xrightarrow{ \xy 0;/r.12pc/: (0,0)*{\rcup{i}}; \endxy} (M'_j \circ X'F_iE_i)E_iF_i \\
&\quad \xrightarrow{h_1} (M'_j \circ X'F_iE_i)E_iF_i \xrightarrow{\xy 0;/r.12pc/: (0,0)*{\rcap{i}}; \endxy} M'_j \circ X'F_iE_i \bigg].
\end{align*}

Commutativity of (R) follows from
\begin{align*}
g_2 &= \left( \xy 0;/r.12pc/: (0,0)*{\xybox{
(0,0)*{\rcross{i}{i}};
(12,4); (12,-4) **\dir{-} ?(1)*\dir{>};
(12,-6)*{\scriptstyle j};
(8,8)*{\rcross{}{}};
(-4,12); (-4,4) **\dir{-} ?(1)*\dir{>};
(0,16)*{\dcross{}{}};
(12,12); (12,20) **\dir{-} ?(1)*\dir{>};
}}; 
\endxy  - \xy 0;/r.12pc/: (0,0)*{\xybox{
(0,0)*{\dcross{i}{j}};
(-12,0)*{\slineu{i}};
(-8,8)*{\rcross{}{}};
(4,8)*{\slined{}};
(-12,16)*{\slined{}};
(0,16)*{\rcross{}{}};
}};
\endxy \right) S_i(X') \\
&= \left( \xy 0;/r.12pc/: (0,0)*{\xybox{
(0,0)*{\dcross{}{}};
(-8,7)*{\rcap{}};
(8,-8)*{\dcross{}{}};
(16,-15)*{\rcup{}};
(20,-12); (20,12) **\dir{-} ?(1)*\dir{>};
(12,0)*{\slined{}};
(8,8)*{\dcross{}{}};
(-12,-20); (-12,4) **\dir{-} ?(1)*\dir{>};
(-4,-4); (-4,-20) **\dir{-} ?(1)*\dir{>};
(4,-16)*{\slined{j}};
(-12,-22)*{\scriptstyle i};
(-4,-22)*{\scriptstyle i};
}};
\endxy - \xy 0;/r.12pc/: (0,0)*{\xybox{
(-4,-8)*{\dcross{i}{j}}; 
(4,0)*{\dcross{}{}};
(-4,8)*{\dcross{}{}};
(-8,0)*{\slined{}};
(8,8)*{\slined{}};
(12,-7)*{\rcup{}};
(16,-4); (16,20) **\dir{-} ?(1)*\dir{>};
(-12,15)*{\rcap{}};
(-16,-12); (-16,12) **\dir{-} ?(1)*\dir{>};
(0,16)*{\slined{}};
(8,16)*{\slined{}};
(-16,-14)*{\scriptstyle i};
}};
\endxy \right)S_i(X') \quad \text{by computing as in (\ref{eq:6})} \\
&= \bigg[E_iF_iF_jS_i(X') \xrightarrow{\xy 0;/r.12pc/: (0,0)*{\rcup{i}}; \endxy} E_iF_iF_jF_iE_iS_i(X')  \\ 
&\quad \xrightarrow{h_5} E_iF_iF_jF_iE_iS_i(X') \xrightarrow{\xy 0;/r.12pc/: (0,0)*{\rcap{i}}; \endxy} F_jF_iE_iS_i(X')\bigg].
\end{align*}

It is easy to verify the commutativity of the other inner diagrams of Figure \ref{fig:6}. 

The case $k \neq i$. 
Consider Figure \ref{fig:7},
where $f_1, f_2, f_3, f_4$ are the same as those of Figure \ref{fig:3}. 
\begin{figure}% \label{pic:14}
\scalebox{0.85}{
\rotatebox{90}{
\begin{tikzpicture}[auto]
\node (a) at (0,1) {$S_i(M'_j \circ X'F_kE_i)$};
\node (b) at (4,1) {$S_i(M'_j) \circ S_i(X'F_kE_i)$};
\node (c) at (8,1) {$F_j S_i(X'F_kE_i)$};
\node (c') at (12,1) {$F_j F_iS_i(X'F_k)$};
\node (d) at (16,1) {$F_jF_i(S_i(X')\circ M_k)$};
\node (e) at (0,-0.5) {$S_i((M'_j \circ X'F_k)E_i)$};
\node (f) at (4,-0.5) {$F_iS_i(M'_j \circ X'F_k)$};
\node (g) at (8,-0.5) {$F_i(S_i(M'_j) \circ S_i(X'F_k))$};
\node (g') at (12,-0.5) {$F_iF_j S_i(X'F_k)$}; 
\node (h) at (16,-0.5) {$F_iF_j(S_i(X') \circ M_k)$};
\draw[<-,line width=1.3pt] (a) -- node {$\scriptstyle \theta$} (b);
\draw[->,line width=1.3pt] (b) -- node {$\scriptstyle t_{i,j}^{\varrho(\beta-\alpha_i)}\zeta_j$} (c);
\draw[line width=1.3pt] (c) -- node {$\scriptstyle \sim$} (c');
\draw (c') -- node {$\scriptstyle \sim$} (d);
\draw[<-,line width=1.3pt] (a) -- node {$\scriptstyle \sigma'_{i,j}$} (e);
\draw[<-] (d) -- node {$\xy 0;/r.12pc/: (0,0)*{\dcross{i}{j}}; \endxy$} (h);
\draw[<-,line width=1.3pt] (c') -- node {$\xy 0;/r.12pc/: (0,0)*{\dcross{i}{j}}; \endxy$} (g');
\draw[line width=1.3pt] (e) -- node {$\scriptstyle \sim$} (f);
\draw[<-,line width=1.3pt] (f) -- node {$\scriptstyle \theta$} (g); 
\draw[->,line width=1.3pt] (g) -- node {$\scriptstyle t_{i,j}^{\varrho(\beta)}\zeta_j$} (g');
\draw (g') -- node {$\scriptstyle \sim$} (h);
\node (a1) at (0,2.5) {$S_i(M'_j \circ X'E_iF_k)$};
\node (b1) at (4,2.5) {$S_i(M'_j)\circ S_i(X'E_iF_k)$};
\node (c1) at (8,2.5) {$F_jS_i(X'E_iF_k)$};
\node (d1) at (16,2.5) {$F_j(F_iS_i(X')\circ M_k)$};
\node (a2) at (0,4) {$S_i((M'_j \circ X'E_i)F_k)$}; 
\node (b2) at (4,4) {$S_i(M'_j \circ X'E_i) \circ M_k$};
\node (c2) at (8,4) {$S_i(M'_j) \circ S_i(X'E_i) \circ M_k$};
\node (c'2) at (12,4) {$F_jS_i(X'E_i) \circ M_k$};
\node (d2) at (16,4) {$F_jF_iS_i(X') \circ M_k$};
\node (e1) at (0,-2) {$S_i((M'_j\circ X')F_kE_i)$};
\node (f1) at (4,-2) {$F_iS_i((M'_j \circ X')F_k)$};
\node (g1) at (8,-2) {$F_i(S_i(M'_j \circ X') \circ M_k)$};
\node (g'1) at (12,-2) {$F_i(S_i(M'_j)\circ S_i(X') \circ M_k)$};
\node (h1) at (16,-2) {$F_i(F_jS_i(X') \circ M_k)$};
\node (e2) at (0,-3.5) {$S_i((M'_j \circ X')E_iF_k)$};
\node (f2) at (4,-3.5) {$S_i((M'_j \circ X')E_i) \circ M_k$};
\node (g2) at (8,-3.5) {$F_iS_i(M'_j \circ X') \circ M_k$};
\node (g'2) at (12,-3.5) {$F_i(S_i(M'_j) \circ S_i(X')) \circ M_k$};
\node (h2) at (16,-3.5) {$F_iF_jS_i(X') \circ M_k$};
\draw[->] (a1) -- node {$\xy 0;/r.12pc/: (0,0)*{\rcross{i}{k}}; \endxy$} (a);
\draw[->] (b1) -- node {$\xy 0;/r.12pc/: (0,0)*{\rcross{i}{k}}; \endxy$} (b);
\draw[->] (c1) -- node {$\xy 0;/r.12pc/: (0,0)*{\rcross{i}{k}}; \endxy$} (c);
\draw[->] (d1) -- node {$f_1$} (d);
\draw[<-] (a1) -- node {$\scriptstyle \theta$} (b1);
\draw[->] (b1) -- node {$\scriptstyle t_{i,j}^{\varrho(\beta-\alpha_i)}\zeta_j$} (c1);
\draw (c1) -- node {$\scriptstyle \sim$} (d1);
\draw (a2) -- node[sloped] {$\scriptstyle \sim$} (a1);
\draw (c2) -- node[sloped] {$\scriptstyle \sim$} (b1);
\draw (d2) -- node[sloped] {$\scriptstyle \sim$} (d1);
\draw (a2) -- node {$\scriptstyle \sim$} (b2);
\draw[<-] (b2) -- node {$\scriptstyle \theta$} (c2);
\draw[->] (c2) -- node {$\scriptstyle t_{i,j}^{\varrho(\beta-\alpha_i)}\zeta_j$} (c'2);
\draw (c'2) -- node {$\scriptstyle \sim$} (d2);
\draw[->] (e1) -- node[sloped] {$\scriptstyle \sim$} (e);
\draw[->] (f1) -- node[sloped] {$\scriptstyle \sim$} (f); 
\draw (g'1) -- node[sloped] {$\scriptstyle \sim$} (g);
\draw (h1) -- node[sloped] {$\scriptstyle \sim$} (h);
\draw (e1) -- node {$\scriptstyle \sim$} (f1);
\draw (f1) -- node {$\scriptstyle \sim$} (g1);
\draw[<-] (g1) -- node {$\scriptstyle \theta$} (g'1);
\draw[->] (e2) -- node {$\xy 0;/r.12pc/: (0,0)*{\rcross{i}{k}}; \endxy$} (e1);
\draw[->] (g2) -- node {$f_2$} (g1);
\draw[->] (g'2) -- node {$f_3$} (g'1);
\draw[->] (h2) -- node {$f_4$} (h1);
\draw (e2) -- node {$\scriptstyle \sim$} (f2);
\draw (f2) -- node {$\scriptstyle \sim$} (g2);
\draw[<-] (g2) -- node {$\scriptstyle \theta$} (g'2);
\draw[->] (g'2) -- node {$\scriptstyle t_{i,j}^{\varrho(\beta)}\zeta_j$} (h2);
\draw[<-] (d2) to[bend left = 70] node {$\xy 0;/r.12pc/: (0,0)*{\dcross{i}{j}};\endxy$} (h2);
\draw[<-] (a2) to[bend right = 70] node {$\sigma'_{i,j}$} (e2); 
\draw[->] (g'1) -- node {$\scriptstyle t_{i,j}^{\varrho(\beta)}$} (h1); 
\node (S) at (-1.3, 0.2) {(S)};
\node (T) at (17.5,0.2) {(T)};
%\draw[smooth,->] plot coordinates {(-1.5,3.8) (-3,2.5) (-3,-4) (0,-5) (3,-3.7)};
\end{tikzpicture}
}
}
\caption{} \label{fig:7}
\end{figure}
We need to prove that the thick diagram commutes.
The commutativity of the other inner diagrams except (S) and (T) was already proved when we considered Figure \ref{fig:3}. 
In addition, the outer diagram commutes by the induction hypothesis. 
Hence, it suffices to prove that (S) and (T) commute. 

Commutativity of (S). 
We may disregard $S_i$. 
Consider the following diagram: 
\begin{equation*}
\begin{tikzcd} 
(M'_j \circ X')E_iF_k \arrow[r,"\sigma'_{i,j}",line width=1pt]\arrow[d, "{\xy 0;/r.12pc/: (0,0)*{\rcross{i}{k}}; \endxy}",line width=1pt]\arrow[rr,bend left=40, "{Q_{i,j}\left(\xy 0;/r.12pc/: (0,0)*{\sdotu{i}}; \endxy, y_j \right)}"'] & (M'_j \circ X'E_i) F_k \arrow[r,"\sigma'_{j,i}"]\arrow[d,line width=1pt] & (M'_j \circ X') E_iF_k \arrow[d,"{\xy 0;/r.12pc/: (0,0)*{\rcross{i}{k}}; \endxy}"] \\
(M'_j \circ X')F_kE_i \arrow[d,line width=1pt] & M'_j \circ X'E_iF_k \arrow[d,"{\xy 0;/r.12pc/: (0,0)*{\rcross{i}{k}}; \endxy}",line width=1pt] & (M'_j \circ X')F_kE_i \arrow[d] \\
(M'_j \circ X'F_k) E_i \arrow[r,"\sigma'_{i,j}",line width=1pt] \arrow[rr,bend right=40, "{Q_{i,j}\left(\xy 0;/r.12pc/: (0,0)*{\sdotu{i}}; \endxy, y_j \right)}"] & M'_j \circ X'F_k E_i \arrow[r,"\sigma'_{j,i}"] & (M'_j \circ X'F_k)E_i  
\end{tikzcd}
\end{equation*}
We need to prove the commutativity of the thick diagram. 
The upper triangle and the lower triangle commute by Definition \ref{def:anothertauij}. 
The right square coincides with (L) in Figure \ref{fig:3}, hence it is commutative. 
It is immediate to see that the outer diagram also commutes, using 
\[
\xy 0;/r.17pc/: 
(0,0)*{\rcross{i}{k}}; 
(2,1.7)*{\bullet}; 
\endxy =  \xy 0;/r.17pc/: 
(0,0)*{\rcross{i}{k}}; 
(-2,-1.5)*{\bullet}; 
\endxy. 
\]
Since $\sigma'_{j,i}$ is injective, the thick diagram commutes. 

Commutativity of (T). 
Recall that 
\[
f_1^{-1} = F_j\left(S_i(X') \xy 0;/r.12pc/: (0,0)*{\lcross{k}{i}}; \endxy \right),\ f_4^{-1} = F_jS_i(X') \xy 0;/r.12pc/: (0,0)*{\lcross{k}{i}}; \endxy. 
\]
Let $u \in S_i(X'), v \in M_k$. 
Under the homomorphism $F_iF_j(S_i(X') \circ M_k) \xrightarrow{\xy 0;/r.12pc/: (0,0)*{\dcross{i}{j}}; \endxy} F_jF_i(S_i(X') \circ M_k) \xrightarrow{f_1^{-1}} F_j(F_iS_i(X') \circ M_k) \to F_jF_iS_i(X') \circ M_k$, 
the element $e(i,j) \boxtimes (u \boxtimes v)$ is sent to $ t_{i,k}^{-1} \tau_1 ((e(j,i) \boxtimes u) \boxtimes v)$, following 
\begin{align*}
e(i,j) \boxtimes (u \boxtimes v) &\mapsto \tau_1e(j,i) \boxtimes (u \boxtimes v) \\
&\mapsto t_{i,k}^{-1} \tau_1 (e(j) \boxtimes((e(i) \boxtimes u) \boxtimes v)) \quad \text{by Lemma \ref{lem:formula}} \\ 
&\mapsto t_{i,k}^{-1} \tau_1 ((e(j,i) \boxtimes u) \boxtimes v). 
\end{align*}
On the other hand, under the homomorphism $F_iF_j(S_i(X') \circ M_k) \to F_i(F_jS_i(X') \circ M_k) \xrightarrow{f_4^{-1}} F_iF_jS_i(X') \circ M_k \xrightarrow{\xy 0;/r.12pc/: (0,0)*{\dcross{i}{j}}; \endxy} F_jF_iS_i(X') \circ M_k$, 
the element $e(i,j) \boxtimes (u \boxtimes v)$ is also sent to $ t_{i,k}^{-1} \tau_1 ((e(j,i) \boxtimes u) \boxtimes v)$, following 
\begin{align*}
e(i,j) \boxtimes (u \boxtimes v) &\mapsto e(i) \boxtimes ((e(j)\boxtimes u) \boxtimes v) \\
&\mapsto t_{i,k}^{-1} (e(i,j) \boxtimes u) \boxtimes v \quad \text{by Lemma \ref{lem:formula}} \\
&\mapsto t_{i,k}^{-1} (\tau_1 (e(j,i) \boxtimes u)) \boxtimes v. 
\end{align*}
Now, Case 2 is complete. 

\subsection{Case 3} 
$\xy 0;/r.12pc/: (0,0)*{\dcross{j}{k}}; \endxy$ for $j,k \neq i$.
Let $X \in \gMod{{}_iR(\beta)}$. 
Consider the following diagram: 
\begin{equation} \label{pic:15}
\adjustbox{scale=0.8,center}{
\begin{tikzcd}
S_i(M'_j \circ M'_k) \circ S_i(X) \arrow[d,"\theta"]\arrow[ddddddd,bend right=70, "\sigma'_{j,k}"]\arrow[rd,phantom,"\text{Proposition \ref{prop:monoidality} (6)}"] && \\
S_i(M'_j \circ M'_k \circ X) \arrow[d,-, "\sim" sloped]\arrow[ddddd, bend right=60,"\sigma'_{j,k}"] & S_i(M'_j) \circ S_i(M'_k \circ X) \arrow[l,"\theta"]\arrow[d,-,"\sim" sloped] & S_i(M'_j) \circ S_i(M'_k) \circ S_i(X) \arrow[l,"\theta"]\arrow[d,"t_{i,k}^{\varrho(\beta)}\zeta_k"]\arrow[llu,"\theta", bend right=10] \\
S_i(M'_j \circ F_kX)\arrow[d,-,"\sim" sloped] & S_i(M'_j) \circ S_i(F_kX) \arrow[l,"\theta"]\arrow[r,"\kappa_k^-"]\arrow[d,"t_{i,j}^{\varrho(\beta+s_i\alpha_k)}\zeta_j"] & S_i(M'_j) \circ F_kS_i(X)\arrow[d,"t_{i,j}^{\varrho(\beta+s_i\alpha_k)}\zeta_j"] \\
S_i(F_jF_kX) \arrow[r,"\kappa_j^-", line width=1pt]\arrow[d,"{\xy 0;/r.12pc/: (0,0)*{\dcross{j}{k}}; \endxy}",line width=1pt] & F_jS_i(F_kX) \arrow[r,"\kappa_k^-",line width=1pt] & F_jF_kS_i(X) \arrow[d,"{\xy 0;/r.12pc/: (0,0)*{\dcross{j}{k}}; \endxy}",line width=1pt] \\
S_i(F_kF_jX) \arrow[r,"\kappa_k^-",line width=1pt] & F_kS_i(F_jX) \arrow[r,"\kappa_j^-",line width=1pt] & F_kF_jS_i(X) \\
S_i(M'_k \circ F_jX) \arrow[u,-,"\sim" sloped] & S_i(M'_k) \circ S_i(F_jX) \arrow[l,"\theta"]\arrow[r,"\kappa_j^-"]\arrow[u,"t_{i,k}^{\varrho(\beta+s_i\alpha_j)}\zeta_k"] & S_i(M'_k) \circ F_jS_i(X)\arrow[u,"t_{i,k}^{\varrho(\beta+s_i\alpha_j)}\zeta_k"] \\
S_i(M'_k \circ M'_j \circ X) \arrow[u,-,"\sim" sloped] & S_i(M'_k) \circ S_i(M'_j \circ X) \arrow[l,"\theta"]\arrow[u,-,"\sim" sloped] & S_i(M'_k) \circ S_i(M'_j) \circ S_i(X) \arrow[l,"\theta"]\arrow[u,"t_{i,j}^{\varrho(\beta)}\zeta_j"]\arrow[lld,"\theta", bend left=10] \\
S_i(M'_k \circ M'_j) \circ S_i(X) \arrow[u,"\theta"] \arrow[ru,phantom,"\text{Proposition \ref{prop:monoidality} (6)}"] &&
\end{tikzcd}
}
\end{equation}
We need to prove that the thick diagram commutes.
It is easy to see that all the other inner diagrams commute. 
Hence, it suffices to prove the commutativity of the outer diagram. 

We may assume $X = \mathbf{1}$. 
Put $m = -a_{i,j}, n = - a_{i,k}$. 
There is a canonical isomorphism
\begin{align*}
&M_j \circ M_k = F_i^{(m)}R(\alpha_j) \circ M_k \xleftarrow{\text{can}} F_i^{(m)} (R(\alpha_j) \circ M_k) \\ 
&= F_i^{(m)}F_jM_k = F_i^{(m)}F_jF_i^{(n)}R(\alpha_k) = F_i^{(m)}F_jF_i^{(n)}F_k \mathbf{1}, 
\end{align*}
where the second homomorphism is an isomorphism since $F_iM_k = 0$, see the proof of Proposition \ref{prop:extremalequiv}. 
We have a similar isomorphism $M_k \circ M_j \simeq F_i^{(n)}F_kF_i^{(m)}F_j\mathbf{1}$ by interchanging $j$ and $k$,
and isomorphisms $M'_j \circ M'_k \simeq \mathbf{1} F_jF_i^{(m)'}F_kF_i^{(n)'}, M'_k \circ M'_j \simeq \mathbf{1}F_kF_i^{(n)'}F_jF_i^{(m)'}$. 
We will freely use these isomorphisms.  

\begin{lemma} \label{lem:sigmaR}
(1) The isomorphisms above make the following diagram commute: 
\begin{equation*}
\begin{tikzcd}
M_j \circ M_k \arrow[d,"\sigma_{j,k}"]\arrow[r,-,"\sim"] & F_i^{(m)}F_jF_i^{(n)}F_k\mathbf{1} \arrow[r,"\mathrm{can}"] & F_i^mF_jF_i^nF_k\mathbf{1} \arrow[d,"t_{i,j}^{-n}\mathsf{R}"] \\
M_k \circ M_j \arrow[r,-,"\sim"] & F_i^{(n)}F_kF_i^{(m)}F_j \mathbf{1} & F_i^nF_kF_i^mF_j\mathbf{1} \arrow[l,"\mathrm{can}"], 
\end{tikzcd}
\end{equation*}
where 
\[
\mathsf{R} = \xy 0;/r.12pc/: 
(-4,-8)*{};(12,8)*{} **\crv{(-4,-2) & (12,2)}?(0)*\dir{<};
(-12,-8)*{};(4,8)*{} **\crv{(-12,-2) & (4,2)}?(0)*\dir{<};
(4,-8)*{};(-12,8)*{} **\crv{(4,-2) & (-12,2)}?(0)*\dir{<};
(12,-8)*{};(-4,8)*{} **\crv{(12,-2) & (-4,2)}?(0)*\dir{<};
(-12,-11)*{\scriptstyle i^m};
(-4,-11)*{\scriptstyle j};
(4,-11)*{\scriptstyle i^n};
(12,-11)*{\scriptstyle k};
\endxy \mathbf{1}. 
\]
(2) The isomorphisms above make the following diagram commute: 
\begin{equation*}
\begin{tikzcd}
M'_j \circ M'_k \arrow[d,"\sigma'_{j,k}"]\arrow[r,-,"\sim"] & \mathbf{1}F_jF_i^{(m)'}F_kF_i^{(n)'}\arrow[r,"\mathrm{can}"] & \mathsf{1}F_jF_i^mF_kF_i^n \arrow[d,"(-1)^{mn}t_{i,k}^{-m}\mathsf{R}'"] \\
M'_k \circ M'_j \arrow[r,-,"\sim"] & \mathbf{1}F_kF_i^{(n)'}F_jF_i^{(m)'} & \mathbf{1}F_kF_i^nF_jF_i^m \arrow[l,"\mathrm{can}"], 
\end{tikzcd}
\end{equation*}
where 
\[
\mathsf{R}' = \mathbf{1} \xy 0;/r.12pc/: 
(-4,-8)*{};(12,8)*{} **\crv{(-4,-2) & (12,2)}?(0)*\dir{<};
(-12,-8)*{};(4,8)*{} **\crv{(-12,-2) & (4,2)}?(0)*\dir{<};
(4,-8)*{};(-12,8)*{} **\crv{(4,-2) & (-12,2)}?(0)*\dir{<};
(12,-8)*{};(-4,8)*{} **\crv{(12,-2) & (-4,2)}?(0)*\dir{<};
(-12,-11)*{\scriptstyle j};
(-4,-11)*{\scriptstyle i^m};
(4,-11)*{\scriptstyle k};
(12,-11)*{\scriptstyle i^n};
\endxy
\]
\end{lemma}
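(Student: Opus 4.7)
The plan is to prove both commutative diagrams simultaneously by reducing to an explicit computation in the quiver Hecke algebra, then using the second diagram as the image of the first under the duality $\sigma_*$ of Remark \ref{rem:LRchange2}. I will focus on the first diagram; the second follows from the first via Remark \ref{rem:LRchange2} together with the fact that $\sigma_*$ sends $M_j$ to $M'_j$ and conjugates $\sigma_{j,k}$ to $\sigma'_{j,k}$ up to a controllable scalar.

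First I would unwind the isomorphism $M_j \circ M_k \simeq F_i^{(m)}F_jF_i^{(n)}F_k\mathbf{1}$: by the definition of $M_j$ this is the composition
\begin{equation*}
F_i^{(m)}R(\alpha_j) \circ F_i^{(n)}R(\alpha_k) \xrightarrow{\phi_-^{-1}} F_i^{(m+n)}(R(\alpha_j) \circ R(\alpha_k)) \simeq F_i^{(m+n)}F_jF_k\mathbf{1},
\end{equation*}
followed by the isomorphism $F_i^{(m+n)}F_jF_k\mathbf{1} \simeq F_i^{(m)}F_jF_i^{(n)}F_k\mathbf{1}$ coming from Theorem \ref{thm:EFrel} applied to the trivial $EF$-relation for the strand of type $i$. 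Under the canonical map $F_i^{m+n} \twoheadrightarrow F_i^{(m+n)}$ (insertion of the idempotent $b_{+,m+n}$), the source unfolds to $b_{+,m+n} e(i^{m+n},\alpha_j+\alpha_k) F_jF_k\mathbf{1}$, while the target similarly unfolds through $b_{+,m}$ and $b_{+,n}$ applied to $F_i^mF_jF_i^nF_k\mathbf{1}$. By Definition \ref{def:anothersimple}, $\sigma_{j,k}$ corresponds to applying $F_i^{(m+n)}$ to the right-multiplication by $\tau_1 e(j,k) \in R(\alpha_j+\alpha_k)$, i.e.\ diagrammatically it is the crossing of the $j$-strand and $k$-strand with all $i$-strands to the left.

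The core of the argument is then a purely diagrammatic calculation in $\catquantum{\mathfrak{p}_i}$: starting from the $(j,k)$-crossing sandwiched between $b_{+,m+n}$-projectors, I would move the $(m+n)$ $i$-strands across using the KLR braid relations together with the Demazure identity (\ref{eq:demazure}) to rearrange $b_{+,m+n}$ as $b_{+,m} \boxtimes b_{+,n}$ acting on the interleaved $(i^m, j, i^n, k)$-pattern. This rearrangement introduces the braid $\mathsf{R}$ depicted in the lemma as the diagram that relates the two placements of the $i$-strands. Using the identity $\tau_{w_n} f = \tau_{w_n} \partial_w(f)$ and the fact that $b_{+,m+n} \tau_\ell = 0$ for $\ell < m+n$, all terms involving quadratic corrections from crossings of $i$-strands with each other annihilate, leaving only the leading braid contribution.

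The main obstacle will be bookkeeping of the scalar $(-1)^{mn}t_{i,j}^{-n}$. The sign $(-1)^{mn}$ arises from the shuffle required to factor the long-element idempotent as $b_{+,m+n} = \pm \tau_w (b_{+,m} \boxtimes b_{+,n})$ for the appropriate shuffle $w \in \mathfrak{S}^{m,n}_{m+n}$, and in extracting the leading-term from the rearrangement of the polynomial factor $\mathbf{x}_{m+n}$ into $\mathbf{x}_m \boxtimes \mathbf{x}_n$. The scalar $t_{i,j}^{-n}$ comes from the $n$ quadratic KLR relations $\tau^2 e(i,j) = Q_{i,j}(x_1,x_2)$ one uses when pushing the $j$-strand past the $n$ $i$-strands, where only the top $t_{i,j}$-term survives modulo lower-degree terms that get killed by the divided-power idempotents. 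Once this scalar is identified, the diagram commutes by direct inspection. The $M'$-version is then deduced by applying $\sigma_*$, which converts $t_{i,j}^{-n}$ into $t_{i,k}^{-m}$ because $\sigma$ reverses the order of the strands.
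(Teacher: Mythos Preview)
Your overall strategy matches the paper's: both reduce to tracking a generating element through the chain of isomorphisms and identify the scalar from an explicit quiver Hecke algebra computation. However, there are two concrete issues.

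First, your unwinding of the horizontal isomorphism $M_j \circ M_k \simeq F_i^{(m)}F_jF_i^{(n)}F_k\mathbf{1}$ is wrong. This isomorphism, as defined just before the lemma, is simply the rewriting $M_j \circ M_k = F_i^{(m)}R(\alpha_j) \circ F_i^{(n)}R(\alpha_k)$; it does not pass through $F_i^{(m+n)}(R(\alpha_j)\circ R(\alpha_k))$. There is no ``trivial $EF$-relation'' in Theorem \ref{thm:EFrel} that gives an isomorphism $F_i^{(m+n)}F_jF_k\mathbf{1} \simeq F_i^{(m)}F_jF_i^{(n)}F_k\mathbf{1}$ --- that theorem is about commuting $E$'s past $F$'s of the same color, not about redistributing $F_i$'s across an $F_j$. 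The map $\phi_-$ enters only because $\sigma_{j,k}$ is \emph{defined} via $\phi_-^{-1}$ in Definition \ref{def:anothersimple}(3); the paper's proof accordingly computes the image of the generator $b_+(i^{m+n}) \boxtimes (e(j)\boxtimes e(k)) \in F_i^{(m+n)}(R(\alpha_j)\circ R(\alpha_k))$ under both $F_i^{(m+n)}(\times\tau_1)$ and the long composite through $\phi_-$, $\mathsf{R}$, and back.

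Second, your account of the scalar is too vague to be an argument. In the paper the $Q_{i,j}$ factors arise concretely from collapsing $\tau_{w[1,n]}\tau_{w[n,1]}$ on the $(i^n,j)$-block, and the computation terminates with the Demazure operator $\partial_{w_{m+n}}$ applied to $(\mathbf{x}_m \boxtimes \mathbf{x}_n)\prod_{1\leq k\leq n} Q_{i,j}(x_k,x_{n+1})$. The scalar $(-1)^{mn}t_{i,j}^n$ is read off from the leading monomial $t_{i,j}^n\, w[n,m](\mathbf{x}_{m+n})$ of this polynomial. Your description of ``only the top $t_{i,j}$-term surviving'' and ``rearrangement of $\mathbf{x}_{m+n}$'' points in the right direction but does not pin down this mechanism; you would need to carry out this Demazure computation (or an equivalent one) explicitly. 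Your $\sigma_*$ argument for the second diagram is fine in principle, though the paper simply argues ``similarly''.
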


\begin{proof}
(1) By Proposition \ref{prop:extremalequiv} and Definition \ref{def:anothersimple}, we have isomorphisms
\begin{align*}
M_j \circ M_k \simeq F_i^{(m)}R(\alpha_j) \circ F_i^{(n)}R(\alpha_k) \xleftarrow{\phi_-} F_i^{(m+n)}(R(\alpha_j)\circ R(\alpha_k)), \\
M_k \circ M_j \simeq F_i^{(n)}R(\alpha_k) \circ F_i^{(m)}R(\alpha_j) \xleftarrow{\phi_-} F_i^{(m+n)}(R(\alpha_k)\circ R(\alpha_j)), 
\end{align*}
under which the homomorphism $\sigma_{j,k}$ coincides with the one obtained by applying $F_i^{(m+n)}$ to
\[
R(\alpha_j) \circ R(\alpha_k) \simeq R(\alpha_j+\alpha_k)(j,k) \xrightarrow{\times \tau_1} R(\alpha_j+\alpha_k)e(k,j) \simeq R(\alpha_k)\circ R(\alpha_j). 
\]
To prove the commutativity, we compute the image of $b_+(i^{m+n}) \boxtimes (e(j) \boxtimes e(k)) \in F_i^{(m+n)}(R(\alpha_j)\circ R(\alpha_k))$. 
Under the homomorphism $F_i^{(m+n)}(R(\alpha_j)\circ R(\alpha_k)) \xrightarrow{\times \tau_1} F_i^{(m+n)}(R(\alpha_k)\circ R(\alpha_j))$, 
it is sent to $b_+(i^{m+n}) \boxtimes \tau_1e(k,j)$. 
On the other hand, under the homomorphism 
\begin{align*}
&F_i^{(m+n)}(R(\alpha_j) \circ R(\alpha_k)) \xrightarrow{\phi_-} F_i^{(m)}R(\alpha_j) \circ F_i^{(n)}R(\alpha_k) \simeq F_i^{(m)}F_jF_i^{(n)}F_k \mathbf{1} \\ 
&\xrightarrow{\text{can}} F_i^mF_jF_i^nF_k\mathbf{1} \xrightarrow{\mathsf{R}} F_i^nF_kF_i^mF_j \mathbf{1} \xrightarrow{\text{can}} F_i^{(n)}F_kF_i^{(m)}F_j \mathbf{1} \\
&\simeq F_i^{(n)}R(\alpha_k) \circ F_i^{(m)}R(\alpha_j) \xrightarrow{\phi_-^{-1}} F_i^{(n+m)}(R(\alpha_k) \circ R(\alpha_j)), 
\end{align*}
it is mapped as follows:
\begin{align*}
&b_+(i^{m+n}) \boxtimes e(j,k) \notag \\
&\mapsto b_+(i^{m+n}) (b_+(i^m) \boxtimes \tau_{w[1,n]}(e(j) \boxtimes b_+(i^n)) \boxtimes e(k) ) \in F_i^mF_jF_i^nF_k\mathbf{1} \notag \\ 
&\quad \text{by Lemma \ref{lem:phi-}} \notag \\
&\mapsto b_+(i^{m+n}) (b_+(i^m) \boxtimes \tau_{w[1,n]}(e(j)\boxtimes b_+(i^n)) \boxtimes e(k)) \tau_{w[n+1,m+1]} \in F_i^nF_kF_i^mF_j\mathbf{1} \notag \\
&\mapsto b_+(i^{m+n}) (b_+(i^m) \boxtimes \tau_{w[1,n]}(e(j)\boxtimes b_+(i^n)) \boxtimes e(k)) \tau_{w[n+1,m+1]}\times \notag \\
&\quad (b_+(i^n) \boxtimes e(k) \boxtimes b_+(i^m) \boxtimes e(j)) \in F_i^{(n)}F_kF_i^{(m)}F_j \mathbf{1} \quad \text{since $\varphi(b_-(i^n)) = b_+(i^n)$} \notag \\
%&= b_+(i^{m+n})(b_+(i^m) \boxtimes \tau_{w}[1,n] (e(j) \boxtimes b_+(i^n)) \boxtimes e(k)) \tau_{w[n+1,m+1]}(b_+(i^n) \boxtimes \tau_{w[1,m]}(e(k) \boxtimes b_+(i^m)) \boxtimes e(j)) \notag \\
&= b_+(i^{m+n}) (b_+(i^m) \boxtimes (b_+(i^n)\boxtimes e(j))\tau_{w[1,n]}\tau_{w[n,1]}  \boxtimes e(k))\tau_{w[n,m]} \times \notag \\
&\quad \tau_{m+n+1}\tau_{m+n} \cdots \tau_{n+1} \times (b_+(i^n) \boxtimes e(k) \boxtimes b_+(i^m) \boxtimes e(j)) \notag \\
&= b_+(i^{m+n}) \left(b_+(i^m) \boxtimes \left((b_+(i^n) \boxtimes e(j)) \prod_{1 \leq k \leq n} Q_{i,j}(x_k, x_{n+1})\right) \boxtimes e(k)\right) \times \notag\\  
&\quad \tau_{w[n,m]}\tau_{m+n+1}\tau_{m+n} \cdots \tau_{n+1} (b_+(i^n) \boxtimes e(k) \boxtimes b_+(i^m) \boxtimes e(j)) \notag \\
&= t_{i,j}^n b_+(i^{m+n})\tau_{m+n+1}\tau_{m+n} \cdots \tau_{n+1} (b_+(i^n) \boxtimes e(k) \boxtimes b_+(i^m) \boxtimes e(j)) \notag \\
&\quad \text{by (\ref{eq:computation}) below} \\
&= t_{i,j}^n \tau_{m+n+1} b_+(i^{m+n}) \tau_{m+n} \cdots \tau_{n+1} (b_+(i^n) \boxtimes e(k) \boxtimes b_+(i^m) \boxtimes e(j)) \\
&\mapsto t_{i,j}^n \tau_{m+n+1} ((b_+(i^{m+n})) \boxtimes e(k) \boxtimes e(j)) \in F_i^{(n+m)}(R(\alpha_k)\circ R(\alpha_j)) \notag \\
&\quad \text{by Lemma \ref{lem:phi-}} \\
&= t_{i,j}^n (b_+(i^{m+n}) \boxtimes \tau_1 e(k,j)). 
\end{align*} 
We used the following formula in $R(s_i(\alpha_j+\alpha_k))$: 
\begin{align} \label{eq:computation} 
&b_+(i^{m+n}) \left(b_+(i^m) \boxtimes \left((b_+(i^n) \boxtimes e(j)) \prod_{1 \leq k \leq n} Q_{i,j}(x_k, x_{n+1})\right) \boxtimes e(k)\right) \tau_{w[n,m]} \\
&= t_{i,j}^n b_+(i^{m+n}). \notag
\end{align}
It is computed as follows: the left hand side is 
\begin{align*}
&b_+(i^{m+n}) \left(\mathbf{x}_m \boxtimes \left((\mathbf{x}_n \boxtimes e(j)) \prod_{1 \leq k \leq n} Q_{i,j}(x_k, x_{n+1})\right) \boxtimes e(k)\right) \times \\ 
&(\tau_{w_m}e(i^m) \boxtimes \tau_{w_n}e(i^n) \boxtimes e(j,k)) \tau_{w[n,m]} \\
&\quad \text{since $\prod_{1 \leq k \leq n} Q_{i,j}(x_k,x_{n+1})$ is symmetric in $x_1, \ldots, x_n$} \\
&= b_+(i^{m+n}) \left(\mathbf{x}_m \boxtimes \left(\mathbf{x}_n \prod_{1 \leq k \leq n} Q_{i,j}(x_k, x_{n+1})\right) \boxtimes e(k)\right)(\tau_{w_{m+n}}e(i^{m+n}) \boxtimes e(j,k)) \\
&= b_+(i^{m+n}) \partial_{w_{m+n}}\left(\mathbf{x}_m \boxtimes \left(\mathbf{x}_n \prod_{1 \leq k \leq n} Q_{i,j}(x_k, x_{n+1})\right) \right) e(i^{m+n},j,k) \quad \text{by (\ref{eq:demazure}).}
\end{align*} 
Note that the leading term of $\mathbf{x}_m \boxtimes \left(\mathbf{x}_n \prod_{1 \leq k \leq n} Q_{i,j}(x_k, x_{n+1}) \right)$ for variables in $x_1, \ldots, x_{m+n}$ is
\[
(\mathbf{x}_m \boxtimes \mathbf{x_n}) t_{i,j}^n (x_1 \cdots x_n)^m = t_{i,j}^n \mathbf{x}_{m+n}. 
\]
We have $\partial_{w_{m+n}}(\mathbf{x}_{m+n}) = 1$ and the monomials in $x_1, \ldots, x_{m+n}$ of lower degree are killed by $\partial_{w_{m+n}}$.
Hence, the left hand side of (\ref{eq:computation}) is $t_{i,j}^n b_+(i^{m+n})$.
(1) is proved. 

(2) follows from (1) by Remark \ref{rem:LRchange}. 
The sign $(-1)^{mn}$ is deduced from 
\begin{align*}
&\sigma_* (\sigma_{k,j}) = (-1)^{\delta_{j,k}} \sigma'_{j,k}, \\
&\sigma_* \left({\xy 0;/r.12pc/: 
(-4,-8)*{};(12,8)*{} **\crv{(-4,-2) & (12,2)}?(0)*\dir{<};
(-12,-8)*{};(4,8)*{} **\crv{(-12,-2) & (4,2)}?(0)*\dir{<};
(4,-8)*{};(-12,8)*{} **\crv{(4,-2) & (-12,2)}?(0)*\dir{<};
(12,-8)*{};(-4,8)*{} **\crv{(12,-2) & (-4,2)}?(0)*\dir{<};
(-12,-11)*{\scriptstyle i^n};
(-4,-11)*{\scriptstyle k};
(4,-11)*{\scriptstyle i^m};
(12,-11)*{\scriptstyle j};
\endxy} \mathbf{1} \right) = (-1)^{mn + \delta_{j,k}} \mathbf{1} {\xy 0;/r.12pc/: 
(-4,-8)*{};(12,8)*{} **\crv{(-4,-2) & (12,2)}?(0)*\dir{<};
(-12,-8)*{};(4,8)*{} **\crv{(-12,-2) & (4,2)}?(0)*\dir{<};
(4,-8)*{};(-12,8)*{} **\crv{(4,-2) & (-12,2)}?(0)*\dir{<};
(12,-8)*{};(-4,8)*{} **\crv{(12,-2) & (-4,2)}?(0)*\dir{<};
(-12,-11)*{\scriptstyle j};
(-4,-11)*{\scriptstyle i^m};
(4,-11)*{\scriptstyle k};
(12,-11)*{\scriptstyle i^n};
\endxy}. 
\end{align*}
\end{proof}
 
\begin{corollary} \label{cor:sigma}
We have 
\begin{align*}
&\sigma_{j,k}((b_+(i^m)\boxtimes e(j)) \boxtimes (b_+(i^n) \boxtimes e(k))) \\  
&= t_{i,j}^{-n} (b_+(i^m) \boxtimes e(j) \boxtimes b_+(i^n) \boxtimes e(k))\tau_{w[n+1,m+1]}(b_+(i^n) \boxtimes e(k) \boxtimes b_+(i^m) \boxtimes e(j)). 
\end{align*}
\end{corollary}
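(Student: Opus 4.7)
The plan is to derive the corollary as a direct application of Lemma \ref{lem:sigmaR} by tracking the specified element through the commutative diagram established there. By Lemma \ref{lem:sigmaR}, the morphism $\sigma_{j,k}$ factors as the composition
\[
M_j \circ M_k \xrightarrow{\sim} F_i^{(m)}F_jF_i^{(n)}F_k\mathbf{1} \xrightarrow{\text{can}} F_i^mF_jF_i^nF_k\mathbf{1} \xrightarrow{(-1)^{mn}t_{i,j}^{-n}\mathsf{R}} F_i^nF_kF_i^mF_j\mathbf{1} \xrightarrow{\text{can}} F_i^{(n)}F_kF_i^{(m)}F_j\mathbf{1} \xrightarrow{\sim} M_k \circ M_j,
\]
so it suffices to follow the element $(b_+(i^m)\boxtimes e(j)) \boxtimes (b_+(i^n) \boxtimes e(k))$ through this chain.

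First, under the isomorphism $M_j \circ M_k \cong F_i^{(m)}F_jF_i^{(n)}F_k\mathbf{1}$, the element is represented by the corresponding generator, and under the canonical inclusion into $F_i^mF_jF_i^nF_k\mathbf{1}$ it becomes $b_+(i^m) \boxtimes e(j) \boxtimes b_+(i^n) \boxtimes e(k)$. Second, the 2-morphism $\mathsf{R}$, under the identification of Theorem \ref{thm:KLRaction}, is precisely the image of $\tau_{w[n+1,m+1]}$ in the quiver Hecke algebra $R((m+n+2)\alpha_\bullet)$ acting on the right; this is because the crossing diagram displayed for $\mathsf{R}$ is the diagrammatic realization of the minimal-length shuffle permuting the two blocks $(i^m,j)$ and $(i^n,k)$ past each other. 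Thus the image becomes $(b_+(i^m) \boxtimes e(j) \boxtimes b_+(i^n) \boxtimes e(k))\tau_{w[n+1,m+1]}$, now regarded inside $F_i^nF_kF_i^mF_j\mathbf{1}$. Third, the canonical projection to $F_i^{(n)}F_kF_i^{(m)}F_j\mathbf{1}$ is effected by multiplication by the idempotent $b_+(i^n) \boxtimes e(k) \boxtimes b_+(i^m) \boxtimes e(j)$; after transport back to $M_k \circ M_j$, the scalar $(-1)^{mn}t_{i,j}^{-n}$ from Lemma \ref{lem:sigmaR} yields the asserted formula.

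The only real technical point is confirming that the diagrammatic 2-morphism $\mathsf{R}$ translates, under Theorem \ref{thm:KLRaction}, to right multiplication by exactly $\tau_{w[n+1,m+1]}$ (rather than some other reduced expression or sign-twisted version). This is a direct consequence of the fact that $w[n+1,m+1]$ is the minimal-length $(n+1,m+1)$-shuffle and hence satisfies condition \eqref{eq:tau}, so $\tau_{w[n+1,m+1]}$ is well-defined independently of reduced expression and is exactly what the crossing diagram represents. Everything else is bookkeeping, using that the $b_+$-elements are idempotent and that the identifications $M_j \cong F_i^{(m)}R(\alpha_j)$ and $M_k \cong F_i^{(n)}R(\alpha_k)$ send the respective generators to the expected ones.
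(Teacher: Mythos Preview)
Your argument is correct and is precisely the intended route: the paper states Corollary~\ref{cor:sigma} immediately after Lemma~\ref{lem:sigmaR} with no separate proof, treating it as a direct unpacking of the commutative diagram. Your three steps—tracking the generator through the isomorphism $M_j\circ M_k\simeq F_i^{(m)}F_jF_i^{(n)}F_k\mathbf{1}$ followed by the canonical inclusion, identifying $\mathsf{R}$ with right multiplication by $\tau_{w[n+1,m+1]}$ via Theorem~\ref{thm:KLRaction}, and realizing the canonical projection as right multiplication by the idempotent $b_+(i^n)\boxtimes e(k)\boxtimes b_+(i^m)\boxtimes e(j)$—are exactly what the proof of Lemma~\ref{lem:sigmaR} itself does in its long displayed computation (see the lines passing through $F_i^nF_kF_i^mF_j\mathbf{1}$ and then $F_i^{(n)}F_kF_i^{(m)}F_j\mathbf{1}$). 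The only point you leave slightly implicit is why the generator $(b_+(i^m)\boxtimes e(j))\boxtimes(b_+(i^n)\boxtimes e(k))$ of $M_j\circ M_k$ lands on $b_+(i^m)\boxtimes e(j)\boxtimes b_+(i^n)\boxtimes e(k)$ under the first isomorphism and inclusion; this follows because the isomorphism $F_i^{(m)}(R(\alpha_j)\circ M_k)\xrightarrow{\sim} F_i^{(m)}R(\alpha_j)\circ M_k$ is the canonical surjection of Lemma~\ref{lem:adjointSES} (an isomorphism since $F_iM_k=0$), which visibly preserves this element.
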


We return to the proof that the outer diagram in (\ref{pic:15}) commutes.
Consider the diagram in Figure \ref{fig:8}, 
where $\mathsf{S}' \colon \mathbf{1}F_jF_i^{(m)'} F_kF_i^{(n)'} \to \mathbf{1}F_kF_i^{(n)'}F_jF_i^{(m)'}$ is the composition
\begin{align*}
&\mathbf{1}F_j F_i^{(m)'}F_kF_i^{(n)'} \xrightarrow{\text{can}} \mathbf{1}F_jF_i^mF_kF_i^n \xrightarrow{\mathsf{R}'} \mathbf{1} F_kF_i^n F_jF_i^m \xrightarrow{\text{can}} \mathbf{1}F_kF_i^{(n)'}F_jF_i^{(m)'}. 
\end{align*} 

\begin{figure} %\label{pic:16}
\centering
\scalebox{0.85}{
\rotatebox{90}{
\begin{tikzpicture}[auto]
\node (a) at (0,0) {$S_i(M'_j \circ M'_k$)}; 
\node (b) at (3.6,0) {$S_i(M'_j \circ R(\alpha_k)F_i^{(n)'})$}; 
\node (c) at (8,0) {$S_i((M'_j \circ R(\alpha_k))F_i^{(n)'})$}; 
\node (d) at (12,0) {$S_i(M'_j F_kF_i^{(n)'})$};
\node (e) at (16,0) {$S_i(\mathbf{1}F_jF_i^{(m)'}F_kF_i^{(n)'})$};
\node (a1) at (0,1.5) {$S_i(M'_j) \circ S_i(M'_k)$};
\node (b1) at (3.6,1.5) {$S_i(M'_j)\circ S_i(R(\alpha_k)F_i^{(n)'})$};
\node (c1) at (8,1.5) {$E_i^{(n)}S_i(M'_j \circ R(\alpha_k))$};
\node (d1) at (12,1.5) {$E_i^{(n)}S_i(M'_j F_k)$};
\node (e1) at (16,1.5) {$E_i^{(n)}S_i(\mathbf{1}F_jF_i^{(m)'}F_k)$};
\node (a2) at (0,3) {$R(\alpha_j) \circ S_i(M'_k)$};
\node (b2) at (3.6,3) {$S_i(M'_j) \circ E_i^{(n)}S_i(R(\alpha_k))$};
\node (c2) at (8,3) {$E_i^{(n)}(S_i(M'_j) \circ S_i(R(\alpha_k)))$};
\node (d2) at (12,3) {$E_i^{(n)} (S_i(M'_j)\circ M_k)$};
\node (e2) at (16,3) {$E_i^{(n)}(S_i(\mathbf{1}F_jF_i^{(m)'}) \circ M_k)$};
\node (a3) at (0,4.5) {$R(\alpha_j)\circ R(\alpha_k)$};
\node (b3) at (3.6,4.5) {$S_i(R(\alpha_j)F_i^{(m)'}) \circ E_i^{(n)}M_k$};
\node (c3) at (8,4.5) {$E_i^{(m)}S_i(R(\alpha_j)) \circ E_i^{(n)}M_k$};
\node (d3) at (12,4.5) {$E_i^{(m)}M_j \circ E_i^{(n)}M_k$};
\node (e3) at (16,4.5) {$E_i^{(n)} (E_i^{(m)}M_j\circ M_k)$};
\draw (a) -- node {$\scriptstyle \sim$} (b);
\draw[->] (c) -- (b);
\draw (c) -- node {$\scriptstyle \sim$} (d);
\draw (d) -- node {$\scriptstyle \sim$} (e);
\draw[->,line width=1.3pt] (a1) -- node{$\scriptstyle \theta$} (a);
\draw[->] (b1) -- node {$\scriptstyle \theta$} (b);
\draw (c1) -- node[sloped] {$\scriptstyle \sim$} (c);
\draw (d1) -- node[sloped] {$\scriptstyle \sim$} (d);
\draw (e1) -- node[sloped] {$\scriptstyle \sim$} (e);
\draw (a1) -- node {$\scriptstyle \sim$} (b1);
\draw (c1) -- node {$\scriptstyle \sim$} (d1);
\draw (d1) -- node {$\scriptstyle \sim$} (e1);
\draw[->,line width=1.3pt] (a1) -- node {$\scriptstyle \zeta_j$} (a2);
\draw[-] (b1) -- node[sloped] {$\scriptstyle \sim$} (b2);
\draw[->] (c2) -- node {$\scriptstyle \theta$} (c1);
\draw (d2) -- node[sloped] {$\scriptstyle \sim$} (d1);
\draw (e2) -- node[sloped] {$\scriptstyle \sim$} (e1);
\draw[<-] (b2) -- (c2);
\draw (c2) -- node {$\scriptstyle \sim$} (d2);
\draw (d2) -- node {$\scriptstyle \sim$} (e2);
\draw[->,line width=1.3pt] (a2) -- node {$\scriptstyle \zeta_k$} (a3);
\draw (b2) -- node[sloped] {$\scriptstyle \sim$} (b3);
\draw (e2) -- node[sloped] {$\scriptstyle \sim$} (e3);
\draw (b3) -- node {$\scriptstyle \sim$} (c3);
\draw (c3) -- node {$\scriptstyle \sim$} (d3);
\draw[<-] (d3) -- node {$\scriptstyle \sim$} (e3); 
\draw (a3) to[bend left=40] node {$\scriptstyle \sim$} (d3);
\node (f) at (0,-1.5) {$S_i(M'_k \circ M'_j$)}; 
\node (g) at (3.6,-1.5) {$S_i(M'_k \circ R(\alpha_j)F_i^{(m)'})$}; 
\node (h) at (8,-1.5) {$S_i((M'_k \circ R(\alpha_j))F_i^{(m)'})$}; 
\node (i) at (12,-1.5) {$S_i(M'_k F_jF_i^{(m)'})$};
\node (j) at (16,-1.5) {$S_i(\mathbf{1}F_kF_i^{(n)'}F_jF_i^{(m)'})$};
\node (f1) at (0,-3) {$S_i(M'_k) \circ S_i(M'_j)$};
\node (g1) at (3.6,-3) {$S_i(M'_k)\circ S_i(R(\alpha_j)F_i^{(m)'})$};
\node (h1) at (8,-3) {$E_i^{(m)}S_i(M'_k \circ R(\alpha_j))$};
\node (i1) at (12,-3) {$E_i^{(m)}S_i(M'_k F_j)$};
\node (j1) at (16,-3) {$E_i^{(m)}S_i(\mathbf{1}F_kF_i^{(n)'}F_j)$};
\node (f2) at (0,-4.5) {$R(\alpha_k) \circ S_i(M'_j)$};
\node (g2) at (3.6,-4.5) {$S_i(M'_k) \circ E_i^{(m)}S_i(R(\alpha_j))$};
\node (h2) at (8,-4.5) {$E_i^{(m)}(S_i(M'_k) \circ S_i(R(\alpha_j)))$};
\node (i2) at (12,-4.5) {$E_i^{(m)} (S_i(M'_k)\circ M_j)$};
\node (j2) at (16,-4.5) {$E_i^{(m)}(S_i(\mathbf{1}F_kF_i^{(n)'}) \circ M_j)$};
\node (f3) at (0,-6) {$R(\alpha_k)\circ R(\alpha_j)$};
\node (g3) at (3.6,-6) {$S_i(R(\alpha_k)F_i^{(n)'}) \circ E_i^{(m)}M_j$};
\node (h3) at (8,-6) {$E_i^{(n)}S_i(R(\alpha_k)) \circ E_i^{(m)}M_j$};
\node (i3) at (12,-6) {$E_i^{(n)}M_k \circ E_i^{(m)}M_j$};
\node (j3) at (16,-6) {$E_i^{(m)} (E_i^{(n)}M_k\circ M_j)$};
\draw (f) -- node {$\scriptstyle \sim$} (g);
\draw[->] (h) -- (g);
\draw (h) -- node {$\scriptstyle \sim$} (i);
\draw (i) -- node {$\scriptstyle \sim$} (j);
\draw[->,line width=1.3pt] (f1) -- node{$\scriptstyle \theta$} (f);
\draw[->] (g1) -- node {$\scriptstyle \theta$} (g);
\draw (h1) -- node[sloped] {$\scriptstyle \sim$} (h);
\draw (i1) -- node[sloped] {$\scriptstyle \sim$} (i);
\draw (j1) -- node[sloped] {$\scriptstyle \sim$} (j);
\draw (f1) -- node {$\scriptstyle \sim$} (g1);
\draw (h1) -- node {$\scriptstyle \sim$} (i1);
\draw (i1) -- node {$\scriptstyle \sim$} (j1);
\draw[->,line width=1.3pt] (f1) -- node {$\scriptstyle \zeta_k$} (f2);
\draw[-] (g1) -- node[sloped] {$\scriptstyle \sim$} (g2);
\draw[->] (h2) -- node {$\scriptstyle \theta$} (h1);
\draw (i2) -- node[sloped] {$\scriptstyle \sim$} (i1);
\draw (j2) -- node[sloped] {$\scriptstyle \sim$} (j1);
\draw[<-] (g2) -- (h2);
\draw (h2) -- node {$\scriptstyle \sim$} (i2);
\draw (i2) -- node {$\scriptstyle \sim$} (j2);
\draw[->,line width=1.3pt] (f2) -- node {$\scriptstyle \zeta_j$} (f3);
\draw (g2) -- node[sloped] {$\scriptstyle \sim$} (g3);
\draw (j2) -- node[sloped] {$\scriptstyle \sim$} (j3);
\draw (g3) -- node {$\scriptstyle \sim$} (h3);
\draw (h3) -- node {$\scriptstyle \sim$} (i3);
\draw[<-] (i3) -- node {$\scriptstyle \sim$} (j3); 
\draw (f3) to[bend right=40] node {$\scriptstyle \sim$} (i3);
\draw[<-,line width=1.3pt] (f) -- node {$\scriptstyle \sigma'_{j,k}$} (a);
\draw[<-] (j) -- node {$\scriptstyle (-1)^{mn}t_{i,k}^{-m}\mathsf{S}'$} (e);
\draw[->, line width=1.3pt] (a3) to[bend right=50] node {$\scriptstyle \times t_{i,j}^nt_{i,k}^{-m}\tau_1$} (f3); 
\node (U) at (6.1,1.5) {(U)};
\node (U') at (6.1,-3) {(U')};
\node (sigmaR) at (8,-0.7) {Lemma \ref{lem:sigmaR}(2)};
\node (zeta) at (6,5.5) {Definition of $\zeta_j$ and $\zeta_k$};
\node (zeta') at (6,-7) {Definition of $\zeta'_j$ and $\zeta'_k$}; 
\node (V) at (10,3.7) {(V)};
\node (V') at (10,-5.2) {(V')};
\node (W) at (10,2.2) {(W)};
\node (W') at (10,-3.7) {(W')};
\end{tikzpicture}
}
}
\caption{} \label{fig:8}
\end{figure}

The commutativity of the outer diagram of (\ref{pic:15}) is reduced to the commutativity of the thick diagram in Figure \ref{fig:8} since 
\[
t_{i,k}^{\varrho(\beta)-\varrho(\beta+s_i\alpha_j)}t_{i,j}^{\varrho(\beta+s_i\alpha_k) - \varrho(\beta)}= t_{i,j}^n t_{i,k}^{-m}. 
\]
It is easy to see that all the inner diagrams of Figure \ref{fig:8} except (U), (U'), (V), (V'), (W) and (W') are commutative. 
Hence, it suffices to prove that these six diagrams and the outer diagram commute. 

Commutativity of (U) and (U') follows from Proposition \ref{prop:monoidality} (4).

Commutativity of (V). 
Identifying 
\[
S_i(M'_j) \simeq S_i(R(\alpha_j)F_i^{(m)'}) \simeq E_i^{(m)}S_i(R(\alpha_j)) \simeq E_i^{(m)}M_j \simeq S_i(\mathbf{1}F_jF_i^{(m)'}),
\] 
it is the same as the following commutative diagram (Lemma \ref{lem:adjointSES}): 
\begin{equation}
\begin{tikzcd}
S_i(M'_j) \circ E_i^{(n)}M_k  \arrow[d,-,"\sim"sloped] & E_i^{(n)}(S_i(M'_j) \circ M_k) \arrow[l]\arrow[d,-,"\sim"sloped] \\
S_i(M'_j) \circ E_i^{(n)}S_i(R(\alpha_k)) & E_i^{(n)}(S_i(M'_j) \circ S_i(R(\alpha_k))) \arrow[l].  
\end{tikzcd}
\end{equation}
Commutativity of (V') is analogous. 

Commutativity of (W) and (W') follows from Proposition \ref{prop:additionalcd}. 

Commutativity of the outer diagram of Figure \ref{fig:8}. 
Consider the following diagram: 
\begin{equation} \label{pic:17}
\adjustbox{scale=0.9,center}{
\begin{tikzcd}
S_i(\mathbf{1}F_jF_i^{(m)'}F_kF_i^{(n)'}) \arrow[r,-,"\sim"]\arrow[d,"\text{can}"]\arrow[rd,phantom,"(X)"]\arrow[ddd,bend right=70,"(-1)^{mn}t_{i,k}^{-m} \mathsf{S}'"'] & E_i^{(n)}(E_i^{(m)}M_j \circ M_k) \arrow[d,"\text{can}"]\arrow[r,"\sim"] & E_i^{(m)}M_j \circ E_i^{(n)}M_k\arrow[d,-,"\sim"sloped] \\
S_i(\mathbf{1}F_jF_i^mF_kF_i^n) \arrow[r,-,"\sim"]\arrow[d,"(-1)^{mn}t_{i,k}^{-m}\mathsf{R}'"] & E_i^n(E_i^mM_j \circ M_k) \arrow[d,"(-1)^{mn}t_{i,k}^{-m} \mathsf{T}"] & R(\alpha_j) \circ R(\alpha_k) \arrow[d,"\times t_{i,j}^nt_{i,k}^{-m}\tau_1"]\arrow[d,phantom,bend right=80, "(Z)"] \\
S_i(\mathbf{1}F_kF_i^nF_jF_i^m) \arrow[r,-,"\sim"]\arrow[d,"\text{can}"] & E_i^m(E_i^nM_k \circ M_j) \arrow[d,"\text{can}"] & R(\alpha_k) \circ R(\alpha_j) \arrow[d,-,"\sim"sloped] \\
S_i(\mathbf{1}F_kF_i^{(n)'}F_jF_i^{(m)'}) \arrow[r,-,"\sim"]\arrow[ru,phantom,"(Y)"] & E_i^{(m)}(E_i^{(n)}M_k \circ M_j) \arrow[r,"\sim"] & E_i^{(n)}M_k \circ E_i^{(m)}M_j  \\
\end{tikzcd}
}
\end{equation}
where $\mathsf{T}$ is the composition
\begin{align*}
&E_i^n(E_i^mM_j\circ M_k) \xrightarrow{\sigma_{i,k}} E_i^nE_i(E_i^{m-1}M_j \circ M_k) \xrightarrow{\sigma_{i,k}} \cdots \\
&\xrightarrow{\sigma_{i,k}} E_i^nE_i^m(M_j \circ M_k) \xrightarrow{\xy 0;/r.12pc/: (0,0)*{\ucross{i^n}{i^m}}; \endxy \boxtimes \sigma_{j,k}} E_i^mE_i^n(M_k \circ M_j) \\
&\xrightarrow{\sigma_{j,i}} E_i^mE_i^{n-1} (E_iM_k \circ M_j) \xrightarrow{\sigma_{j,i}} \cdots \xrightarrow{\sigma_{j,i}} E_i^m(E_i^nM_k \circ M_j). 
\end{align*}
The outer diagram of (\ref{pic:17}) is the outer diagram of Figure \ref{fig:8}. 
It is easy to see the commutativity of the inner diagrams of (\ref{pic:17}) except (X),(Y) and (Z). 
Hence, it remains to verify these three diagrams commute.

Diagrams (X) and (Y) commute by Lemma \ref{lem:directsummand}. 
We prove the commutativity of (Z). 
Recall the isomorphism $\phi_+ \colon E_i^{(n)}M_k \circ E_i^{(m)}M_j \to E_i^{(n+m)}(M_k \circ M_j)$ of Proposition \ref{prop:extremalequiv}. 
Postcompositing $\phi_+$, we compute the two images of $e(j) \boxtimes e(k) \in R(\alpha_j) \circ R(\alpha_k)$ in $E_i^{(n+m)}(M_k \circ M_j)$.

Under the homomorphism through $R(\alpha_k) \circ R(\alpha_j)$, it is sent to $t_{i,j}^n t_{i,k}^{-m}$-multiple of 
\begin{align*}
&e(j) \boxtimes e(k) \\ 
&\mapsto (e(j) \boxtimes e(k)) \tau_1 = \tau_1 (e(k) \boxtimes e(j)) \in R(\alpha_k) \circ R(\alpha_j) \\
&\mapsto \tau_1 (E_i^{(n)}(b_+(i^n)\boxtimes e(k)) \boxtimes E_i^{(m)}(b_+(i^m)\boxtimes e(j))) \in E_i^{(n)}M_k \circ E_i^{(m)}M_j \\
&\mapsto \tau_1 E_i^{(m+n)} \tau_{n+m} \cdots \tau_{n+1}((b_+(i^n) \boxtimes e(k)) \boxtimes (b_+(i^m)\boxtimes e(j))) \in E_i^{(n+m)}(M_k \circ M_j). 
\end{align*}
To compute the image of $e(j) \boxtimes e(k)$ under the other homomorphism, we use the following lemma:

\begin{lemma}
(1) The following diagram commutes: 
\begin{equation*}
\begin{tikzcd}
E_i^{(m)}(E_i^{(n)}M_k \circ M_j) \arrow[r,"\sim"]\arrow[d,"\mathrm{can}"] & E_i^{(n)}M_k \circ E_i^{(m)}M_j \arrow[r,"\phi_+"] & E_i^{(n+m)}(M_k \circ M_j) \\
E_i^m(E_i^nM_k \circ M_j) \arrow[r,"{\text{$n$-times $\sigma_{i,j}$}}"] & E_i^{m+n} (M_k \circ M_j) \arrow[ru,"\mathrm{can}"] & 
\end{tikzcd}
\end{equation*}

(2) The composition
\begin{align*}
&E_i^mE_i^n(M_k \circ M_j) \xrightarrow{\text{$n$-times $\sigma_{j,i}$}} E_i^m (E_i^nM_k \circ M_j) \xrightarrow{\text{can}} E_i^{(m)}(E_i^{(n)}M_k \circ M_j) \\
&\xrightarrow{\text{can}} E_i^m(E_i^nM_k \circ M_j) \xrightarrow{\text{$n$-times $\sigma_{i,j}$}} E_i^mE_i^n(M_k \circ M_j)
\end{align*}
coincides with the endomorphism 
\[ 
\left(\xy 0;/r.12pc/: 
(0,-10); (0,10) **\dir{-} ?(1)*\dir{>};
(30,-10); (30,10) **\dir{-} ?(1)*\dir{>};
(0,0)*{\fcolorbox{black}{white}{$b_+(i^m)$}};
(30,0)*{\fcolorbox{black}{white}{$b_+(i^n)$}};
(0,-13)*{\scriptstyle i^m};
(30,-13)*{\scriptstyle i^n};
\endxy \right) \prod_{1 \leq k \leq n} Q_{i,j} \left( \xy 0;/r.12pc/: 
(-1,-0.5)*{\slineu{i^{m+k-1}}}; 
(8,0)*{\sdotu{i}}; 
(16,0)*{\slineu{i^{n-k}}}; 
\endxy, y_j \right).
\]
\end{lemma}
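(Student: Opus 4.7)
The plan is to handle the two parts separately. For part (1), I would unwind the explicit formula defining $\phi_+$ in Proposition~\ref{prop:extremalequiv}: concretely, $\phi_+(E_i^{(m)}x \boxtimes E_i^{(n)}y)$ is given by the action of $b_{+,m+n}(e(i^m)\boxtimes \tau_{w[\height\beta,n]}\boxtimes e(\gamma))$ on $b_{+,m}e(i^m,\beta)x \boxtimes b_{+,n}e(i^n,\gamma)y$. The two canonical maps $E_i^{(r)}N \hookrightarrow E_i^r N \twoheadrightarrow E_i^{(r)}N$ are the inclusion and multiplication by $b_{+,r}$ respectively. Commutativity of the triangle then reduces to the identity $b_{+,m+n}(b_{+,m}\boxtimes b_{+,n}) = b_{+,m+n}$ in $R((m+n)\alpha_i)$, which is an immediate consequence of the absorption $\tau_{w_{m+n}}\mathbf{x}_{m+n}\tau_{w_{m+n}} = \tau_{w_{m+n}}$ together with the coset decomposition underlying $\tau_{w[\height\beta,n]}$.

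For part (2), I would argue by induction on $n$. The base case $n=0$ is immediate: the $\sigma$-compositions become identities, and the two "can" morphisms compose to multiplication by $b_+(i^m)$, matching the empty product. For the inductive step, I first observe that the outer idempotent $b_+(i^m)$ acts on the spectator $E_i^m$ strands, which commute with all $\sigma_{i,j}$ and $\sigma_{j,i}$ (these act only on the inner $n$ strands and on $M_j$), so it may be pulled to the front. It thus suffices to show that $\sigma_{i,j}^n \circ b_{+,n} \circ \sigma_{j,i}^n$ equals $b_{+,n}\prod_{k=1}^n Q_{i,j}(x_k, y_j)$ as an endomorphism of $E_i^n(M_k \circ M_j)$. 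Factoring $\sigma_{j,i}^n = \sigma_{j,i}^{n-1} \circ E_i^{n-1}\sigma_{j,i}$ and dually for $\sigma_{i,j}^n$, I would apply the fundamental identity $\sigma_{i,j}\sigma_{j,i} = Q_{i,j}(x, y_j)$ from Definition~\ref{def:anothertauij} at the innermost position, extracting a factor $Q_{i,j}(x_n, y_j)$. The remaining composition is then the $n{-}1$ case of the statement with $E_iM_k$ replacing $M_k$, and part (1) supplies the compatibility between the divided-power idempotent $b_{+,n}$ and the reduction to $b_{+,n-1}$ required by the inductive hypothesis.

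The main obstacle lies in part (2): the idempotent $b_+(i^n) = \mathbf{x}_n\tau_{w_n}$ does not commute with the dot variables $x_k$ inside $R(n\alpha_i)$, so the statement is genuinely an equality in $\End(E_i^mE_i^n(M_k \circ M_j))$ rather than a formal manipulation. The verification will require careful Demazure-type computations (cf.\ equation~(\ref{eq:demazure})) to move the $Q_{i,j}(x_k, y_j)$ factors past the components of $b_{+,n}$ when splitting $b_+(i^n)$ compatibly with the inductive step, and to check that the resulting expression matches the stated product in the correct order. By contrast, part (1) is essentially a bookkeeping exercise, and should fall out of the explicit formulas with minimal additional input.
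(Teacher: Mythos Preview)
The paper states this lemma without proof, so there is nothing to compare against directly; I assess your plan on its own.

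Part~(1) is correctly outlined. The top-left isomorphism is $\phi_+$ for the pair $(E_i^{(n)}M_k,\,M_j)$ with max powers $(0,m)$, the top-right is $\phi_+$ for $(M_k,\,M_j)$ with max powers $(n,m)$, and both are given by the same explicit formula from Proposition~\ref{prop:extremalequiv}. Unwinding the formulas reduces commutativity to $b_{+,m+n}(b_{+,m}\boxtimes b_{+,n}) = b_{+,m+n}$, which is exactly the computation already done in the proof of Proposition~\ref{prop:extremalequiv}.

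Part~(2): your induction scheme is the right idea, but the step you describe does not work as written. You propose to factor $\sigma_{j,i}^n = \sigma_{j,i}^{n-1}\circ E_i^{n-1}\sigma_{j,i}$ and extract $Q_{i,j}(x_n,y_j)$ ``at the innermost position''. The problem is that the innermost $\sigma_{j,i}$ (applied first, wrapped in $E_i^{n-1}$) and the innermost $\sigma_{i,j}$ (applied last, also wrapped in $E_i^{n-1}$) sit at \emph{opposite ends} of the composition, separated by the idempotent and all other $\sigma$'s; they cannot be collapsed directly. The pair that is actually adjacent in the middle of the composition is the \emph{outermost} $\sigma_{j,i}(E_i^{n-1}M_k)$ followed by the outermost $\sigma_{i,j}(E_i^{n-1}M_k)$, and these compose to $Q_{i,j}(x_1,y_j)$ (position $m+1$ in the full picture), not $Q_{i,j}(x_n,y_j)$. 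Once this factor is extracted it commutes with the remaining $E_i(\cdots)$-wrapped maps, and you are left with $E_i$ applied to the $(n-1)$-case with the \emph{same} $M_k$ (not $E_iM_k$). The induction still closes and produces the correct product, just peeled from the other end.

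Your concern about Demazure-type manipulations is misplaced: Lemma~\ref{lem:naturality}(3)(4) says the Mackey injections commute with dots and crossings on the $E_i$-strands, so $b_+(i^n)$ slides past $\sigma_{i,j}^n$ and $\sigma_{j,i}^n$ by pure naturality, with no nil-Hecke calculation needed. This also means part~(1) is not required for the inductive step once you organize it this way.
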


\begin{proof}
(1) follows from the explicit formula of $\phi_+$ in Proposition \ref{prop:extremalequiv}. 

(2) follows from Definition \ref{def:anothertauij} (1). 
\end{proof}

By this lemma, we can compute the another image of $e(j) \boxtimes e(k)$ using $(-1)^{mn}t_{i,k}^{-m}$-multiple of the homomorphism 
\begin{align*}
&R(\alpha_j) \circ R(\alpha_k) \simeq E_i^{(m)} M_j \circ E_i^{(n)}M_k \xleftarrow[]{\sim} E_i^{(n)}(E_i^{(m)}M_j \circ M_k) \xrightarrow{\text{can}} E_i^n(E_i^mM_j \circ M_k) \\
&\xrightarrow{\text{$n$-times $\sigma_{i,k}$}} E_i^nE_i^m(M_j \circ M_k) \xrightarrow{\xy 0;/r.12pc/: (0,0)*{\ucross{i^n}{i^m}}; \endxy \boxtimes \sigma_{j,k}} E_i^mE_i^n(M_k \circ M_j) \\
&\xrightarrow{\left(\xy 0;/r.12pc/: 
(0,-10); (0,10) **\dir{-} ?(1)*\dir{>};
(30,-10); (30,10) **\dir{-} ?(1)*\dir{>};
(0,0)*{\fcolorbox{black}{white}{$b_+(i^m)$}};
(30,0)*{\fcolorbox{black}{white}{$b_+(i^n)$}};
(0,-13)*{\scriptstyle i^m};
(30,-13)*{\scriptstyle i^n};
\endxy \right) \prod_{1 \leq k \leq n} Q_{i,j} \left( \xy 0;/r.12pc/: 
(-1,-0.5)*{\slineu{i^{m+k-1}}}; 
(8,0)*{\sdotu{i}}; 
(16,0)*{\slineu{i^{n-k}}}; 
\endxy, y_j \right)} E_i^mE_i^n(M_k \circ M_j) \\
&\xrightarrow{\text{can}} E_i^{(n+m)}(M_k\circ M_j). 
\end{align*}
Namely, $e(j) \boxtimes e(k)$ is sent to $(-1)^{mn}t_{i,k}^{-m}$-multiple of 
\allowdisplaybreaks
\begin{align} \notag
&e(j) \boxtimes e(k) \notag \\
&\mapsto E_i^{(m)}(b_+(i^m) \boxtimes e(j)) \boxtimes E_i^{(n)}(b_+(i^n) \boxtimes e(k)) \in E_i^{(m)}M_j \circ E_i^{(n)}M_k \notag \\
&\mapsto E_i^{(n)} \tau_n \cdots \tau_2 \tau_1 (E_i^{(m)}(b_+(i^m) \boxtimes e(j)) \boxtimes (b_+(i^n)\boxtimes e(k))) \in E_i^{(n)}(E_i^{(m)}M_j \circ M_k) \notag \\ 
&\mapsto E_i^n b_+(i^n) \tau_n \cdots \tau_1 (E_i^m(b_+(i^m) \boxtimes e(j)) \boxtimes (b_+(i^n) \boxtimes e(k))) \in E_i^n(E_i^mM_j\circ M_k) \notag \\
&\mapsto E_i^n b_+(i^n) \tau_n \cdots \tau_1 E_i^m ((b_+(i^m) \boxtimes e(j)) \boxtimes (b_+(i^n) \boxtimes e(k))) \in E_i^nE_i^m(M_j \circ M_k) \notag \\
&= E_i^nE_i^m \tau_{m+n} \cdots \tau_{m+1} ((b_+(i^m) \boxtimes e(j)) \boxtimes (b_+(i^n)\boxtimes e(k))) \notag \\
&\quad \text{since $b_+(i^n)\tau_n \cdots \tau_1 (e(j) \boxtimes b_+(i^n)) = \tau_n \cdots \tau_1 (e(j) \boxtimes b_+(i^n))$} \notag \\
&\mapsto t_{i,j}^{-n} E_i^mE_i^n \tau_{w[m,n]} \tau_{m+n} \cdots \tau_{m+1} ((b_+(i^m) \boxtimes e(j)) \boxtimes (b_+(i^n) \boxtimes e(k))) \tau_{w[n+1,m+1]} \times \notag \\
&\quad ((b_+(i^n) \boxtimes e(k)) \boxtimes (b_+(i^m)\boxtimes e(j))) \in E_i^mE_i^n(M_k\circ M_j) \quad \text{by Corollary \ref{cor:sigma}} \notag \\
&\mapsto t_{i,j}^{-n} E_i^mE_i^n (b_+(i^n) \boxtimes b_+(i^m)) \left( \prod_{1 \leq k \leq n} Q_{i,j}(x_k, y_j)\right)  \tau_{w[m,n]} \tau_{m+n} \cdots \tau_{m+1} \times \notag  \\
&\quad ((b_+(i^m) \boxtimes e(j)) \boxtimes (b_+(i^n) \boxtimes e(k)))\tau_{w[n+1,m+1]}((b_+(i^n) \boxtimes e(k)) \boxtimes (b_+(i^m)\boxtimes e(j))) \notag \\ 
&\in E_i^mE_i^n(M_k\circ M_j) \notag \\ 
&\mapsto t_{i,j}^{-n} E_i^{(m+n)}b_+(i^{m+n}) (b_+(i^n) \boxtimes b_+(i^m)) \left( \prod_{1 \leq k \leq n} Q_{i,j}(x_k, y_j)\right)  \tau_{w[m,n]} \times \notag\\ 
&\quad \tau_{m+n} \cdots \tau_{m+1} ((b_+(i^m) \boxtimes e(j)) \boxtimes (b_+(i^n) \boxtimes e(k))) \tau_{w[n+1,m+1]} \times \notag \\ 
&((b_+(i^n) \boxtimes e(k)) \boxtimes (b_+(i^m)\boxtimes e(j))) \in E_i^{(m+n)}(M_k\circ M_j) \notag \\
&= (-1)^{mn} E_i^{(m+n)}b_+(i^{m+n}) \tau_{m+n} \cdots \tau_{m+1}((b_+(i^m) \boxtimes e(j)) \boxtimes (b_+(i^n) \boxtimes e(k))) \times \notag \\
&\quad \tau_{w[n+1,m+1]} ((b_+(i^n) \boxtimes e(k)) \boxtimes (b_+(i^m)\boxtimes e(j))) \notag \\
&\quad \text{by the computation (\ref{eq:computation1}) below} \notag \\
&= (-1)^{mn}  E_i^{(m+n)}b_+(i^{m+n}) \tau_{m+n} \cdots \tau_{m+1}((b_+(i^m) \boxtimes e(j)) \boxtimes (b_+(i^n) \boxtimes e(k)))\times  \notag \\
&\quad  \tau_{m+1}\cdots \tau_{m+n} \tau_{w[n,m]} \tau_{m+n+1} \tau_{n+m} \cdots \tau_{n+1} ((b_+(i^n) \boxtimes e(k)) \boxtimes (b_+(i^m)\boxtimes e(j)))  \notag \\
&= (-1)^{mn} E_i^{(m+n)}b_+(i^{m+n}) \tau_{m+n} \cdots \tau_{m+1}\tau_{m+1}\cdots \tau_{m+n} \times \notag \\ 
&\quad (b_+(i^m) \boxtimes b_+(i^n) \boxtimes e(j) \boxtimes e(k))\tau_{w[n,m]} \tau_{m+n+1} \tau_{n+m} \cdots \tau_{n+1} \times  \notag \\
&\quad ((b_+(i^n) \boxtimes e(k)) \boxtimes (b_+(i^m)\boxtimes e(j))) \notag \\
&= (-1)^{mn}  E_i^{(m+n)}b_+(i^{m+n}) \left( \prod_{1 \leq k \leq n} Q_{i,j}(x_{m+k}, x_{m+n+1})\right) (\mathbf{x}_m \boxtimes \mathbf{x}_n \boxtimes e(j) \boxtimes e(k)) \times  \notag \\
&\quad \tau_{w_{m+n}} \tau_{m+n+1} \tau_{n+m} \cdots \tau_{n+1} ((b_+(i^n) \boxtimes e(k)) \boxtimes (b_+(i^m)\boxtimes e(j))) \notag \\
&= (-1)^{mn} E_i^{(m+n)}b_+(i^{m+n}) \partial_{w_{m+n}} \left( (\mathbf{x}_m \boxtimes \mathbf{x}_n) \prod_{1 \leq k \leq n} Q_{i,j}(x_{m+k}, x_{m+n+1})\right) \times \notag \\
&\quad \tau_{m+n+1} \tau_{n+m} \cdots \tau_{n+1} ((b_+(i^n) \boxtimes e(k)) \boxtimes (b_+(i^m)\boxtimes e(j))) \notag \\
&= (-1)^{mn} t_{i,j}^n E_i^{(m+n)}b_+(i^{m+n}) \tau_{m+n+1} \tau_{n+m} \cdots \tau_{n+1} \times \notag \\ 
&\quad ((b_+(i^n) \boxtimes e(k)) \boxtimes (b_+(i^m)\boxtimes e(j)))\text{by the computation (\ref{eq:computation2}) below}.  \notag 
\end{align}
We used the following formulas: 
\begin{align}
&b_+(i^{m+n})(b_+(i^n) \boxtimes b_+(i^m)) \left( \prod_{1 \leq k \leq n} Q_{i,j}(x_k, y_j)\right) \tau_{w[m,n]} = (-1)^{mn} t_{i,j}^n, \label{eq:computation1} \\  
&\partial_{w_{m+n}} \left( (\mathbf{x}_m \boxtimes \mathbf{x_n}) \prod_{1 \leq k \leq n} Q_{i,j}(x_{m+k}, x_{m+n+1})\right) = t_{i,j}^n. \label{eq:computation2} 
\end{align}
They are proved as follows. 
The left hand side of (\ref{eq:computation1}) is 
\begin{align*}
&b_+(i^{m+n}) \left((\mathbf{x}_n \boxtimes \mathbf{x}_m) \prod_{1 \leq k \leq n} Q_{i,j}(x_k, y_j)\right) \tau_{w_{m+n}} \\
&\quad \text{since $\left( \prod_{1 \leq k \leq n} Q_{i,j}(x_k, y_j)\right)$ is symmetric in $x_1, \ldots, x_n$} \\ 
&= b_+(i^{m+n}) \partial_{w_{m+n}} \left( (\mathbf{x}_n \boxtimes \mathbf{x}_m) \prod_{1 \leq k \leq n} Q_{i,j}(x_k, y_j)\right). 
\end{align*}
The leading term of $\left( (\mathbf{x}_n \boxtimes \mathbf{x}_m) \prod_{1 \leq k \leq n} Q_{i,j}(x_k, y_j)\right)$ in variables $x_1, \ldots, x_{m+n}$ is
\[
t_{i,j}^n (\mathbf{x}_n \boxtimes \mathbf{x}_m)(x_1 \cdots x_n)^m = t_{i,j}^n w_{[m,n]}(\mathbf{x}_{m+n}). 
\]
Hence, (\ref{eq:computation1}) follows. 
Similarly, since the leading term of 
\[
(\mathbf{x}_m \boxtimes \mathbf{x_n}) \prod_{1 \leq k \leq n} Q_{i,j}(x_{m+k}, x_{m+n+1})
\]
in variables $x_1, \ldots,x_{m+n}$ is $t_{i,j}^n \mathbf{x}_{m+n}$, 
(\ref{eq:computation2}) follows. 

Since $t_{i,j}^nt_{i,k}^{-m} = (-1)^{mn}t_{i,k}^{-m} (-1)^{mn}t_{i,j}^n$, 
the two images of $e(j) \boxtimes e(k)$ coincide. 
This completes the proof of Case 3, 
and of the assertion that the natural isomorphisms $\kappa_j^- \colon S_i(F_j X) \to F_jS_i(X)$ and $\kappa_i^+ \colon S_i(E_iX) \to E_iS_i(X)$ commute with the $\dotcatquantum{\mathfrak{p}_i}$-action. 
Therefore, Proposition \ref{prop:linear} is proved. 

\chapter{Properties of reflection functors}

\section{Categorification of $T_i$}

Let $i \in I$. 
Recall the isomorphism $\chi \colon K(\gMod{R})_{\mathbb{Q}(q)} \to U_q^-(\mathfrak{g})$ from Theorem \ref{thm:categorification2}. 

\begin{theorem} \label{thm:braidcategorification}
(1) The homomorphisms 
\[
K(\gmod{R_i})_{\mathbb{Q}(q)} \to K(\gMod{R_i})_{\mathbb{Q}(q)}, \ K(\gmod{{}_iR})_{\mathbb{Q}(q)} \to K(\gMod{{}_iR})_{\mathbb{Q}(q)}
\]
induced by the inclusions are isomorphisms. 

(2) The homomorphisms $K(\gMod{R_i})_{\mathbb{Q}(q)}, K(\gMod{{}_iR})_{\mathbb{Q}(q)} \to K(\gMod{R})_{\mathbb{Q}(q)}$ induced by the inclusions are injective, 
and $\chi \colon K(\gMod{R})_{\mathbb{Q}(q)} \xrightarrow{\sim} U_q^-(\mathfrak{g})$ restricts to isomorphisms
\[
K(\gMod{R_i})_{\mathbb{Q}(q)} \xrightarrow{\sim} U_i,\ K(\gMod{{}_iR})_{\mathbb{Q}(q)} \xrightarrow{\sim} {}_iU. 
\]

(3) The isomorphisms of (2) are both homomorphisms of left $U_q(\mathfrak{p}_i)$-modules and of right $U_q(\mathfrak{p}_i)$-modules, 
for the module structures described in Proposition \ref{prop:actionlift}, \ref{prop:bimodule}, Theorem \ref{thm:cyclotomic2rep}, \ref{thm:cyclotomic2repleft} and \ref{thm:anotheraction}. 

(4) The following diagram commutes: 
\begin{equation*}
\begin{tikzcd}
K(\gMod{{}_iR})_{\mathbb{Q}(q)}\arrow[r,"S_i"]\arrow[d,"\chi"] & K(\gMod{R_i})_{\mathbb{Q}(q)} \arrow[d,"\chi"] \\ 
{}_iU \arrow[r,"T_i"] & {U_i}. 
\end{tikzcd}
\end{equation*}
\end{theorem}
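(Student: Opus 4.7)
Parts (1), (2), (4) are essentially formal once the right inputs are assembled, and Part (3) does the real work. \emph{Part (1)} is obtained by specializing Theorem \ref{thm:cyclotomiccategorificationleft}(2) and Theorem \ref{thm:cyclotomiccategorification}(2) to $J = \{i\}$, $\Lambda = 0$.

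\emph{Part (2).} A simple $L \in \gmod{R(\beta)}$ lies in $\gmod{R_i(\beta)}$ if and only if ${E_i'}^*L = 0$; this gives an injection $K(\gmod{R_i})_{\mathbb{Q}(q)} \hookrightarrow K(\gmod{R})_{\mathbb{Q}(q)}$, and combining with Part (1) and Theorem \ref{thm:categorification2} produces an injection $K(\gMod{R_i})_{\mathbb{Q}(q)} \hookrightarrow K(\gMod{R})_{\mathbb{Q}(q)}$. Under $\chi$, Lemma \ref{lem:res} gives $r_i(\chi[L]) = (1-q_i^2)\chi({E_i'}^*L) = 0$ for such $L$, so $\chi[L] \in U_i$ by Lemma \ref{lem:boson}; hence the image lies in $U_i$. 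For the reverse inclusion, combine Theorem \ref{thm:cyclotomiccategorificationleft}(1) (giving $K(\gMod{R_i})_{\mathbb{Q}(q)} \simeq V_i(0)$) with Lemma \ref{lem:isom} (giving $V_i(0) \simeq U_i$) to match graded dimensions. The argument for ${}_iR$ is symmetric.

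\emph{Part (3).} By Corollary \ref{cor:triangular} it suffices to check compatibility on the generating 1-morphisms. For the left action on $\gMod{R_i}$ (Theorem \ref{thm:cyclotomic2repleft}): when $j \ne i$, Theorem \ref{thm:functorF2}(1) gives $F_jX \simeq R(\alpha_j)\circ X$, so $\chi[F_jX] = f_j\,\chi[X]$; the short exact sequence of Theorem \ref{thm:functorF2}(2) yields $\chi[F_iX] = f_i\,\chi[X] - q^{-(\alpha_i,\beta)}\chi[X]\,f_i = \ad_{f_i}(\chi[X])$; and Lemma \ref{lem:res} together with the degree shift in Definition \ref{def:generatingfunctors} gives $\chi[E_iX] = \ad_{e_i}(\chi[X])$. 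These match Proposition \ref{prop:actionlift}(1). For the right action (Theorem \ref{thm:anotheraction}(1)), the identifications $XF_i = E_iX$ and $XE_i = F_iX$ reduce the $i$-cases to the above, giving agreement with Proposition \ref{prop:bimodule}(1); and $XF_j = X\circ M_j$ gives $\chi[XF_j] = \chi[X]\cdot\chi[M_j]$, where iterating the $F_i$-computation yields $\chi[M_j] = \ad_{f_i}^{(-a_{i,j})}(f_j) = u_j$ by Lemma \ref{lem:uj}. The ${}_iR$ case follows by Remark \ref{rem:LRchange2}.

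\emph{Part (4).} By Part (3), $\chi \circ S_i \circ \chi^{-1}$ is a morphism of right $U_q(\mathfrak{p}_i)$-modules ${}_iU \to U_i$ sending $1 \mapsto 1$; by the remark after Proposition \ref{prop:bimodule}, $T_i$ is the unique such morphism, so $\chi\circ S_i = T_i\circ\chi$. The main technical obstacle is Part (3): tracking the $q$-shifts in Definition \ref{def:generatingfunctors} and Theorem \ref{thm:anotheraction} carefully enough to match the precise $q$-powers appearing in $\ad_{f_i}$ and $\ad_{e_i}$ via the twist $t_iut_i^{-1} = q^{-(\alpha_i,\beta)}u$ for $u \in U^-_{-\beta}$.
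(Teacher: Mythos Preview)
Your proposal is correct and follows essentially the same approach as the paper. One efficiency you miss: in Part~(3), the paper observes that left $U_q(\mathfrak{p}_i)$-linearity comes for free from Theorem~\ref{thm:cyclotomiccategorificationleft}(1) (which already identifies $K(\gproj{R_i})_{\mathbb{Q}(q)} \simeq V_i(0)$ as left $U_q(\mathfrak{p}_i)$-modules, and Proposition~\ref{prop:actionlift} transfers this to $U_i$), so only the right action via $\chi(M_j)=u_j$ needs a direct check---whereas you verify each generator by hand through the short exact sequence of Theorem~\ref{thm:functorF2}(2) and the degree shift in Definition~\ref{def:generatingfunctors}.
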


\begin{proof}
(1) It is a special case of Theorem \ref{thm:cyclotomiccategorification} and \ref{thm:cyclotomiccategorificationleft}. 

(2) Using (1) and Theorem \ref{thm:categorification2}, the injectivity follows from the injectivity of $K(\gmod{R_i}) \to K(\gmod{R})$ and $K(\gmod{{}_iR}) \to K(\gmod{R})$. 
By Lemma \ref{lem:res} and Lemma \ref{lem:boson}, we have $\chi(\gmod{R}) \subset \Ker r_i = U_i$.  
By Lemma \ref{lem:isom} and Theorem \ref{thm:cyclotomiccategorificationleft}, each weight spaces of $K(\gMod{R}_i)$ and of $U_i$ have the same dimension. 
Hence, the assertion follows. 

\begin{comment}
Now, recall that the isomorphism $K(\gproj{R_i})_{\mathbb{Q}(q)} \simeq V_i(0)$ of Theorem \ref{thm:cyclotomiccategorificationleft},
which is induced from $K(\gproj{R})_{\mathbb{Q}(q)} \simeq U_q^-(\mathfrak{g})$ 
through the canonical surjections $K(\gproj{R})_{\mathbb{Q}(q)} \twoheadrightarrow K(\gproj{R_i})_{\mathbb{Q}(q)}$ and $U_q^-(\mathfrak{g}) \twoheadrightarrow V_i(0)$. 
Note that the composition 
\[
K(\gproj{R_i})_{\mathbb{Q}(q)} \simeq K(\gMod{R_i})_{\mathbb{Q}(q)} \to K(\gMod{R})_{\mathbb{Q}(q)}\simeq K(\gproj{R})_{\mathbb{Q}(q)} \to K(\gproj{R_i})_{\mathbb{Q}(q)}
\]
is the identity, and $U_i \hookrightarrow U_q^-(\mathfrak{g}) \to V_i(0)$ is an isomorphism (Lemma \ref{lem:isom}). 
Combined with the discussion in the previous paragraph, we deduce that $\chi (K(\gproj{R_i})_{\mathbb{Q}(q)}) = U_i$. 
\end{comment}
The proof for ${}_iR$ is similar. 

(3) By construction, the isomorphism $K(\gproj{R_i})_{\mathbb{Q}(q)} \simeq V_i(0)$ is left $U_q(\mathfrak{p}_i)$-linear. 
Note that, for $j \in I \setminus \{i\}$, 
\[
\chi(M_j) = \chi (F_i^{(-a_{i,j})}R(\alpha_j)) = \ad_{f_i}^{(-a_{i,j})}\chi(R(\alpha_j)) = u_j, 
\]
where the last equality follows from Lemma \ref{lem:uj}. 
It implies that the isomorphism is also right $U_q(\mathfrak{p}_i)$-linear. 

The proof for ${}_iR$ is similar. 

(4) It follows from the facts below: 
\begin{itemize}
\item Both $S_i$ and $T_i$ are right $U_q(\mathfrak{p}_i)$-linear. 
\item $K(\gMod{{}_iR})_{\mathbb{Q}(q)} \simeq {}_iU \simeq {}_iV(0)$ is generated by $\mathbf{1}$ as a right $U_q(\mathfrak{p}_i)$-module. 
\item $S_i(\mathbf{1}) = \mathbf{1}, \chi(\mathbf{1}) = 1, T_i(1) = 1$. 
\end{itemize}
\end{proof}

Recall the bilinear form $(,)$ and the $\mathbb{Q}$-linear automorphism $c$ of $U_q^-(\mathfrak{g})$ from Section \ref{subsec:categorification}. 
Let $B(\infty)$ be the crystal basis of $U_q^-(\mathfrak{g})$, 
and let $\mathsf{B} = \{G(b) \mid b \in B(\infty)\}$ be the canonical basis. 
Let $\mathsf{B}^* = \{G^*(b) \mid b \in B(\infty) \}$ be the basis of $U_q^-(\mathfrak{g})$ adjoint to $\mathsf{B}$ with respect to the bilinear form $(,)$,
which is called the dual canonical basis. 
For $b \in B(\infty)$, it is known that $\overline{G(b)} = G(b)$. 
Hence, $c(G^*(b)) = G^*(b)$ by definition. 

\begin{lemma} \label{lem:candTi}
We have $c(U_i) = U_i, c({}_iU) = {}_iU$, and 
\[
c T_i (u) = T_i c(u) \ (u \in {}_iU). 
\]
\end{lemma}

\begin{proof}
It follows from the fact that $c(G^*(b)) = G^*(b) \ (b \in B(\infty))$, and 
\begin{itemize}
\item that ${}_iU$ (resp. $U_i$) is spanned by $\mathsf{B}^* \cap {}_iU$ (resp. $\mathsf{B}^* \cap U_i$) as a $\mathbb{Q}(q)$-vector space \cite[Proposition 4.14]{MR2914878},  
\item that $T_i (\mathsf{B}^* \cap {}_iU) = \mathsf{B}^* \cap U_i$ (\cite[Theorem 4.23]{MR2914878}; note that when the weight of $b$ is $-\sum_{j \in I} k_j \alpha_j$, their $G^{\text{up}}(b)$ is $\prod_{j\in I}(1-q_j^2)^{-k_j}$-multiple of our $G^*(b)$), and 
\item that $c(q) = q^{-1}$. 
\end{itemize}
\end{proof}

\begin{proposition} \label{prop:DandSi}
If $L \in \gmod{{}_iR}$ is a self-dual simple module, then $S_i(L) \in \gmod{R_i}$ is also a self-dual simple module. 
\end{proposition}

\begin{proof}
Since $S_i$ is an equivalence, $S_i(L)$ is simple. 
By Lemma \ref{lem:twist} and Lemma \ref{lem:candTi}, we have 
\[
\chi(DS_i(L)) = cT_i(\chi(L)) = T_i c(\chi(L)) = \chi(S_iDL) = \chi(S_iL). 
\]
Hence, $S_i(L)$ is self-dual. 
\end{proof}

\section{Reflection functors and standard modules} \label{sec:stratification}

\begin{lemma} \label{lem:std}
Let $i \in I, w\in W$ and assume that $ws_i > w$. 
Fix a reduced expression $\underline{w} = (i_1, \ldots, i_l)$ of $w$.  
Let $m \in \mathbb{Z}_{\geq 0}$.
\begin{enumerate}
\item[(1)] $S_{i_1} \cdots S_{i_l} L(i^m)$ and $S_{i_1} \cdots S_{i_l} P(i^m)$ are well-defined, that is, the compositions of the functors make sense. 
\end{enumerate}
We define 
\[
L(\underline{w},i^m) = S_{i_1} \cdots S_{i_l} L(i^m), \ \Delta(\underline{w},i^m) = S_{i_1} \cdots S_{i_l} P(i^m). 
\]
\begin{enumerate}
\item[(2)] $L(\underline{w},i^m)$ is self-dual simple, and we have 
\begin{align*}
\qdim \HOM_{R(mw\alpha_i)} (\Delta(\underline{w},i^m), L(\underline{w},i^m)) = 1, \\
\qdim \EXT_{R(mw\alpha_i)}^1 (\Delta(\underline{w},i^m), L(\underline{w},i^m)) = 0. 
\end{align*}
\end{enumerate}
\end{lemma}

\begin{proof}
(1) Using Theorem \ref{thm:braidcategorification}, it follows from \cite[Lemma 40.1.2]{MR2759715}. 

(2) By Proposition \ref{prop:DandSi}, $L(\underline{w},i^c)$ is self-dual simple. 
Note that the categories $\gMod{R_j}$ and $\gMod{{}_jR}$ are closed under subquotient and extension in $\gMod{R}$. 
Hence, the equalities are reduced to the case $w = e$, which are trivial.  
\end{proof}

Note that $L(\underline{w},i^m)$ is simple, hence it is determined by its character
\[
\chi (L(\underline{w},i^m)) = T_{i_1} \cdots T_{i_l} \chi(L(i^m)). 
\]
Since $T_j \ (j \in I)$ satisfy the braid relations, the character is independent of the choice of $\underline{w}$. 
Furthermore, $\Delta(\underline{w},i^m)$ is the unique module whose composition factors are grading shifts of $L(\underline{w},i^m)$ and that satisfy Lemma \ref{lem:std} (2). 
Therefore, both $L(\underline{w},i^m)$ and $\Delta(\underline{w},i^m)$ are independent of $\underline{w}$.
This leads to the following definition. 

\begin{definition} \label{def:rootmodule}
Let $w \in W, i \in I, m \in \mathbb{Z}_{\geq 0}$ and assume that $ws_i > w$. 
We define $\Delta(w,i^m) = \Delta(\underline{w},i^m)$ and $L(w, i^m) = L(\underline{w}, i^m)$, where $\underline{w}$ is a reduced word of $w$. \index{$\Delta(w,i^m), L(w,i^m)$}

Similarly, we define
\begin{align*}
\Delta'(w, i^m) = S'_{i_1}\cdots S'_{i_l} P(i^m),\ L'(w,i^m) = S'_{i_1}\cdots S'_{i_l} L(i^m).  
\end{align*} \index{$\Delta'(w,i^m), L'(w,i^m)$} 
\end{definition}

\begin{remark} \label{rem:determinantialmodule}
These modules are related to determinantial modules \cite{MR3771147} and their affinizations \cite{MR4359265} as follows. 
The simple module $L(w,i)$ coincides with the determinantial module $M = M(ws_i\Lambda_i,w\Lambda_i)$, 
since their characters are the same unipotent quantum minor $D(ws_i\Lambda_i,w\Lambda_i)$, see \cite[Section 5.3]{murata2025affinehighestweightstructures}. 
For $m \geq 0$, the module $L(w,i^m)$ is isomorphic to $q_i^{m(m-1)/2} M^{\circ m}$, since $L(i^m) \simeq q_i^{m(m-1)/2} L(i)^{\circ m}$. 
Let $\widehat{M} = \widehat{M}(ws_i\Lambda_i,w\Lambda_i)$ be the affinization of $M$ as in \cite[Section 3.6]{murata2025affinehighestweightstructures}. 
By \cite[Definition 4.27]{murata2025affinehighestweightstructures}, for $m \in \mathbb{Z}_{\geq 0}$, there exists $\widehat{M}^{\circ (m)} \in \gMod{R(mw\alpha_i)}$ such that 
\[
(\widehat{M}^{\circ (m)})^{\oplus [m]_i!} \simeq \widehat{M}^{\circ m}. 
\]
By the proof of \cite[Lemma 5.15]{murata2025affinehighestweightstructures}, we have 
\begin{align*} 
\qdim \HOM_{R(w\alpha_i)} (\widehat{M}^{\circ (m)}, q_i^{m(m-1)/2}M^{\circ m}) = 1, \\
\qdim \EXT_{R(w\alpha_i)}^1 (\widehat{M}^{\circ (m)}, q_i^{m(m-1)/2}M^{\circ m}) = 0. 
\end{align*}
Hence, $\Delta(w,i^m)$ is isomorphic to $\widehat{M}^{\circ (m)}$. 
\end{remark}

\begin{lemma} \label{lem:std2}
Let $w \in W, i,j \in I$ and $m \in \mathbb{Z}_{\geq 0}$. 
If $w\alpha_i = \alpha_j$, we have 
\[
L(w,i^m) \simeq L(j^m), \ \Delta(w,i^m) \simeq P(j^m). 
\]
\end{lemma}

\begin{proof}
By Lemma \ref{lem:std}, $L(w,i^m)$ is a self-dual simple $R(m\alpha_j)$-module. 
Since $L(j^m)$ is the unique self-dual simple $R(m\alpha_j)$-module, we must have $L(w,i^m) \simeq L(j^m)$. 
By Lemma \ref{lem:std}, $\Delta(w,i^m)$ is the projective cover of $L(j^m)$, that is, $P(j^m)$.
\end{proof}

\begin{definition}
Let $w,v\in W$, and $\beta \in \mathsf{Q}_+$. 

(1) We define 
\[
{}_wR(\beta) = R(\beta)/{}_wI,\ R_v(\beta) = R(\beta)/I_v,\ {}_wR_v(\beta) = R(\beta)/{}_wI_v, 
\] \index{${}_wR_v(\beta)$}
where
\begin{align*}
{}_wI &= \langle e(\nu) \mid \text{$\nu \in I^{\beta}, \alpha_{\nu_1}+ \cdots + \alpha_{\nu_k} \not \in w\mathsf{Q}_+$ for some $1 \leq k \leq \height \beta$} \rangle, \\
I_v &= \langle e(\nu) \mid \text{$\nu \in I^{\beta}, \alpha_{\nu_{\height \beta}}+ \cdots + \alpha_{\nu_k} \not \in v\mathsf{Q}_+$ for some $1 \leq k \leq \height \beta$} \rangle, \\
{}_wI_v &= {}_wI + I_v. 
\end{align*}

(2) We define 
\[
R_{w,v}(\beta) = R(\beta)/ (I_{w,e} + I_v), 
\] \index{$R_{w,v}(\beta)$}
where 
\begin{align*}
I_{w,e} = \langle e(\nu) \mid \nu \in I^{\beta}, \alpha_{\nu_1} + \cdots + \alpha_{\nu_k} \not \in w\mathsf{Q}_- \text{for some $1 \leq k \leq \height \beta$} \rangle.  \\
\end{align*}
\end{definition}

Note that ${}_wR(\beta) = {}_wR_e(\beta), R_v(\beta) = {}_eR_v(\beta)$ and ${}_eR_e(\beta) = R(\beta)$.
By the Mackey-filtration (Proposition \ref{prop:Mackey}), the categories 
\[
\gMod{{}_wR_v} = \bigoplus_{\beta \in \mathsf{Q}_+} \gMod{{}_wR_v(\beta)}, \ \gMod{R_{w,v}} = \bigoplus_{\beta \in \mathsf{Q}_+} \gMod{R_{w,v}(\beta)}
\]
are closed under convolution products. 

Let $v \in W$. 
Fix a reduced word $\underline{v} = (i_1, \ldots, i_m)$ of $v$. 
%From now on, we usually omit $\underline{v}$ from the notation. 
For $1 \leq k \leq m$ and $c \in \mathbb{Z}_{\geq 0}$, we define
\[
\beta_{\underline{v},k} = s_{i_1} \cdots s_{i_{k-1}}\alpha_{i_k}, \  \Delta_{\underline{v},k}^{(c)} = \Delta(s_{i_1}\cdots s_{i_{k-1}}, i_k^c).  
\]

\begin{definition}
We work in the setting above. 
Let $\beta \in \mathsf{Q}_+$. 
\begin{enumerate}
\item Let $S_v(\beta)$ be a complete set of representatives of simple graded $R_v(\beta)$-modules up to isomorphism and grading shift. 
\item For $S \in S_v(\beta)$, let $\Delta_v(S)$ be the projective cover of $S$ in $\gMod{R_v(\beta)}$. 
\item Let $\Lambda_{\underline{v}}(\beta)$ (resp. $\overline{\Lambda}_{\underline{v}}(\beta)$) be the set of triples $(\mathbf{c},\gamma,S)$ (resp. pairs $(\mathbf{c},\gamma)$) of 
\begin{itemize}
\item $\mathbf{c}= (\mathbf{c}_1, \ldots, \mathbf{c}_m) \in \mathbb{Z}_{\geq 0}^m$, 
\item $\gamma \in \mathsf{Q}_+$, and 
\item $S \in S(\gamma)$,   
\end{itemize} satisfying 
\[
\sum_{1 \leq k \leq m} \mathbf{c}_k \beta_{\underline{v},k} + \gamma = \beta. 
\]
Note that $\gamma$ is determined by $\mathbf{c}$ by this equation. 
\item We define a map $\rho_{\underline{v}} \colon \Lambda_{\underline{v}}(\beta) \to \overline{\Lambda}_{\underline{v}}(\beta)$ as the projection. 
\item We define a partial order $\leq$ on $\overline{\Lambda}_{\underline{v}}(\beta)$ as follows: for $\lambda = (\mathbf{c}, \gamma), \mu = (\mathbf{d},\delta) \in \overline{\Lambda}_{\underline{v}}(\beta)$, $\lambda \leq \mu$ if the following two conditions hold. 
\begin{itemize}
\item $\mathbf{c} \leq \mathbf{d}$ in the lexicographic order, that is, either (i) there exists $1 \leq k \leq m$ such that $\mathbf{c}_p = \mathbf{d}_p \ (1 \leq p \leq k-1)$ and $\mathbf{c}_k < \mathbf{d}_k$, or (ii) $\mathbf{c} = \mathbf{d}$ holds. 
\item If $\delta = 0$, then $\gamma = 0$ and $\mathbf{c} \leq \mathbf{d}$ in the lexicographic order from the right, that is, either (i) there exists $1 \leq k \leq m$ such that $\mathbf{c}_p = \mathbf{d}_p \ (k+1 \leq p \leq m)$ and $\mathbf{c}_k < \mathbf{d}_k$, or (ii) $\mathbf{c} = \mathbf{d}$ holds.
\end{itemize}
\item For $\lambda = (\mathbf{c},\gamma,S) \in \Lambda_{\underline{v}}(\beta)$, we define 
\[
\Delta_{\underline{v}}(\lambda) = \Delta_v(S) \circ \Delta_{\underline{v},m}^{(\mathbf{c}_m)} \circ \cdots \circ \Delta_{\underline{v},1}^{(\mathbf{c}_1)}. 
\]
Let $L_{\underline{v}}(\lambda)$ be the head of $\Delta_{\underline{v}}(\lambda)$. 
\item Let $\Lambda'_{\underline{v}}(\beta)$ be the subset of $\Lambda_{\underline{v}}(\beta)$ consisting of elements $(\mathbf{c},\gamma = 0,S = \mathbf{1})$.  
\end{enumerate}
\end{definition}

Note that $\rho_{\underline{v}}(\Lambda'_{\underline{v}}(\beta))$ is an ideal of the partially ordered set $\overline{\Lambda}_{\underline{v}}(\beta)$, that is, it is downward closed. 
Also note that $\rho_{\underline{v}}$ is injective on $\Lambda'_{\underline{v}}(\beta)$. 
By Remark \ref{rem:determinantialmodule}, our $\Delta_{\underline{v}}(\lambda)$ coincides with the module defined in \cite[Definition 5.14]{murata2025affinehighestweightstructures}. 

\begin{theorem} \label{thm:stratification}
We work in the setting above. 
\begin{enumerate}
\item The category $\gMod{R(\beta)}$ is stratified in the sense of Kleshchev \cite{MR3335289}, with respect to $\rho_{\underline{v}} \colon \Lambda_{\underline{v}}(\beta) \to \overline{\Lambda}_{\underline{v}}(\beta)$, the partial order $\leq$ on $\overline{\Lambda}_{\underline{v}}(\beta)$, 
and the standard modules $\Delta_{\underline{v}}(\lambda) \ (\lambda \in \Lambda_{\underline{v}}(\beta))$.  
\item The category $\gMod{R_{v,e}(\beta)}$ is an affine highest weight category in the sense of Kleshchev \cite{MR3335289}, with respect to $\rho_{\underline{v}} \colon \Lambda'_{\underline{v}}(\beta) \to \overline{\Lambda}_{\underline{v}}(\beta)$, the partial order $\leq$, and the standard modules $\Delta_{\underline{v}}(\lambda) \ (\lambda \in \Lambda'_{\underline{v}}(\beta))$. 
\end{enumerate}
\end{theorem}

\begin{proof}
(1) is \cite[Theorem 5.18]{murata2025affinehighestweightstructures}, and (2) is \cite[Theorem 5.21]{murata2025affinehighestweightstructures}. 
\end{proof}

\begin{corollary} \label{cor:criteria}
We work in the setting above. 
(1) For $\lambda = (\mathbf{c}, \gamma,S) \in \Lambda_{\underline{v}}(\beta)$, the simple module $L_{\underline{v}}(\lambda)$ belongs to $\gMod{R_v(\beta)}$ if and only if $\mathbf{c}= 0$. 
When this holds, we have $L_{\underline{v}} \simeq S$. 

(2) For $M \in \gMod{R(\beta)}$, the following statements are equivalent: 
\begin{itemize}
  \item $M \in \gMod{R_v(\beta)}$. 
  \item For any $\lambda (\mathbf{c},\gamma,S) \in \Lambda_{\underline{v}}(\beta)$ with $\mathbf{c} \neq 0$, we have $\HOM_{R(\beta)}(\Delta_{\underline{v}}(\lambda),M) =0$. 
\end{itemize}
\end{corollary}

\begin{proof}
By Theorem \ref{thm:stratification}, $\{ L_{\underline{v}}(\lambda) \mid \lambda \in \Lambda_{\underline{v}}(\beta) \}$ is a complete set of representatives of simple $R(\beta)$-modules up to isomorphism and grading shift. 
By definition, we have $L_{\underline{v}}(0, \beta, S) = S$ for any $S \in S_v(\beta)$. 
Since $S_v(\beta)$ is a complete set of representatives of simple $R_v(\beta)$-modules, (1) follows.

As for (2), note that for $\lambda = (\mathbf{c},\gamma,S), \mu = (\mathbf{d},\delta,T) \in  \Lambda_{\underline{v}}(\beta)$, if $\rho_{\underline{v}}(\lambda) \leq \rho_{\underline{v}}(\mu)$ and $\mathbf{d} = 0$, then $\mathbf{c} = 0$. 
Hence, the assertion follows from standard argument by Theorem \ref{thm:stratification} and (1). 
\end{proof}

Next, let $w \in W$ and fix a reduced expression $\underline{w} = (i_1, \ldots, i_m)$ of $w$. 
%We usually omit $\underline{w}$ from the notation. 
For $1 \leq k \leq m$ and $c \in \mathbb{Z}_{\geq 0}$, we define
\[
{}_{\underline{w}}\beta_k = s_{i_1} \cdots s_{i_{k-1}}\alpha_{i_k}, \ {}_{\underline{w}}{\Delta}_k^{(c)}= {\Delta'}(s_{i_1}\cdots s_{i_{k-1}}, i_k^c).  
\]

\begin{definition}
We work in the setting above. 
Let $\beta \in \mathsf{Q}_+$. 
\begin{enumerate}
\item Let ${}_wS(\beta)$ be a complete set of representatives of simple graded ${}_wR(\beta)$-modules up to isomorphism and grading shift. 
\item For $S \in {}_wS(\beta)$, let ${}_w\Delta(S)$ be the projective cover of $S$ in $\gMod{{}_wR(\beta)}$. 
\item Let ${}_{\underline{w}}\Lambda(\beta)$ (resp. ${}_{\underline{w}}\overline{\Lambda}(\beta)$) be the set of triples $(\mathbf{c},\gamma,S)$ (resp. pairs $(\mathbf{c},\gamma)$) of 
\begin{itemize}
\item $\mathbf{c}= (\mathbf{c}_1, \ldots, \mathbf{c}_m) \in \mathbb{Z}_{\geq 0}^m$, 
\item $\gamma \in \mathsf{Q}_+$, and 
\item $S \in {}_wS(\gamma)$,   
\end{itemize} satisfying 
\[
\sum_{1 \leq k \leq m} \mathbf{c}_k\, {}_{\underline{w}}\beta_k + \gamma = \beta. 
\]
Note that $\gamma$ is determined by $\mathbf{c}$ by this equation. 
\item We define a map ${}_{\underline{w}}\rho \colon {}_{\underline{w}}\Lambda(\beta) \to {}_{\underline{w}}\overline{\Lambda}(\beta)$ as the projection. 
\item We define a partial order $\leq$ on ${}_{\underline{w}}\overline{\Lambda}(\beta)$ as follows: for $\lambda = (\mathbf{c}, \gamma), \mu = (\mathbf{d},\delta) \in {}_{\underline{w}}\overline{\Lambda}(\beta)$, $\lambda \leq \mu$ if the following two conditions hold. 
\begin{itemize}
\item $\mathbf{c} \leq \mathbf{d}$ in the lexicographic order, that is, either (i) there exists $1 \leq k \leq m$ such that $\mathbf{c}_p = \mathbf{d}_p \ (1 \leq p \leq k-1)$ and $\mathbf{c}_k < \mathbf{d}_k$, or (ii) $\mathbf{c} = \mathbf{d}$ holds. 
\item If $\delta = 0$, then $\gamma = 0$ and $\mathbf{c} \leq \mathbf{d}$ in the lexicographic order from the right, that is, either (i) there exists $1 \leq k \leq m$ such that $\mathbf{c}_p = \mathbf{d}_p \ (k+1 \leq p \leq m)$ and $\mathbf{c}_k < \mathbf{d}_k$, or (ii) $\mathbf{c} = \mathbf{d}$ holds.
\end{itemize}
\item For $\lambda = (\mathbf{c},\gamma,S) \in {}_{\underline{w}}\Lambda(\beta)$, we define 
\[
{}_{\underline{w}}\Delta(\lambda) = {}_{\underline{w}}{\Delta}_1^{(\mathbf{c}_1)} \circ \cdots \circ {}_{\underline{w}}{\Delta}_m^{(\mathbf{c}_m)} \circ {}_w\Delta(S). 
\]
\end{enumerate}
\end{definition}

\begin{theorem}\label{thm:stratification'}
In the setting above, the category $\gMod{R(\beta)}$ is stratified in the sense of Kleshchev \cite{MR3335289}, with respect to ${}_{\underline{w}}\rho \colon {}_{\underline{w}}\Lambda(\beta) \to {}_{\underline{w}}\overline{\Lambda}(\beta)$, 
the partial order $\leq$ on ${}_{\underline{w}}\overline{\Lambda}(\beta)$, 
and the standard modules ${}_{\underline{w}}\Delta(\lambda) \ (\lambda \in {}_{\underline{w}}\Lambda(\beta))$.  
\end{theorem}

\begin{proof}
It follows from Theorem \ref{thm:stratification} by applying the involution $\sigma$. 
\end{proof}

The following corollary is similar to Corollary \ref{cor:criteria}

\begin{corollary} \label{cor:criteria'}
In the setting above, for $M \in \gMod{R(\beta)}$, the following statements are equivalent: 
\begin{itemize}
  \item $M \in \gMod{{}_wR(\beta)}$. 
  \item For any $\lambda (\mathbf{c},\gamma,S) \in {}_{\underline{w}}\Lambda(\beta)$ with $\mathbf{c} \neq 0$, we have $\HOM_{R(\beta)}({}_{\underline{w}}\Delta(\lambda),M) =0$. 
\end{itemize}
\end{corollary}

\section{Equivalences between various subcategories}

\begin{comment}
\begin{lemma} \label{lem:characterization} 
In the setting above, let $X \in \gMod{{}_iR(\beta)}, Y \in \gMod{R_i(\beta)}$. 
\begin{enumerate}
\item[(1)] $X \in \gMod{{}_{s_iw}R}$ if and only if $\HOM(\Delta'(\gamma) \circ {}_iR(\beta-\gamma), X) = 0$ for any $\gamma \in \Phi_+ \cap s_iw\Phi_- \setminus \{\alpha_i\}$. 
\item[(2)] $X \in \gMod{R_v}$ if and only if $\HOM ({}_iR(\beta-\gamma) \circ \Delta(\gamma), X) = 0$ for any $\gamma \in \Phi_+ \cap v\Phi_-$.    
\item[(3)] $Y \in \gMod{{}_wR}$ if and only if $\HOM(\Delta'(\gamma) \circ R_i(\beta-\gamma), Y) = 0$ for any $\gamma \in \Phi_+ \cap w \Phi_-$. 
\item[(4)] $Y \in \gMod{R_{s_iv}}$ if and only if $\HOM(R_i(\beta-\gamma)\circ \Delta(\gamma), Y) - 0$ for any $\gamma \in \Phi_+ \cap v\Phi_- \setminus \{\alpha_i\}$. 
\end{enumerate}
\end{lemma}
\end{comment}

\begin{theorem} \label{thm:categorificatoinwRv}
Let $w, v \in W$. 
The homomorphism $K(\gmod{{}_wR_v})_{\mathbb{Q}(q)} \to K(\gMod{{}_wR_v})_{\mathbb{Q}(q)}$ induced by the inclusion is an isomorphism.
Furthermore, the isomorphism $\chi \colon K(\gMod{R})_{\mathbb{Q}(q)} \to U_q^-(\mathfrak{g})$ restricts to an isomorphism
\[
K(\gMod{{}_wR_v})_{\mathbb{Q}(q)} \simeq T_{w^{-1}}^{-1}U_q^-(\mathfrak{g}) \cap U_q^-(\mathfrak{g}) \cap T_vU_q^-(\mathfrak{g}). 
\]
\end{theorem} 

\begin{proof}
The former assertion is proved in the same way as Theorem \ref{thm:categorification2}. 

When $w = e$, the latter assertion is \cite[Theorem 5.27]{murata2025affinehighestweightstructures}. 
By applying $\sigma$, we deduce the assertion for the case where $v = e$. 
In general, we have 
\begin{align*}
&K(\gMod{{}_wR_v})_{\mathbb{Q}(q)} \\
&= K(\gMod{{}_wR})_{\mathbb{Q}(q)} \cap K(\gMod{R_v})_{\mathbb{Q}(q)} \\ 
&\quad \text{since simple modules give a compatible basis} \\
&\simeq (T_{w^{-1}}^{-1}U_q^-(\mathfrak{g}) \cap U_q^-(\mathfrak{g})) \cap (T_vU_q^-(\mathfrak{g}) \cap U_q^-(\mathfrak{g})) \quad \text{by the discussion above} \\
&= T_{w^{-1}}^{-1}U_q^-(\mathfrak{g}) \cap U_q^-(\mathfrak{g}) \cap T_vU_q^-(\mathfrak{g}). 
\end{align*}
\end{proof}

Let $w,v \in W$ and $i \in I$.
Assume that $s_iw > w$ and $s_iv <v$. 
Note that 
\[
\gMod{{}_{s_iw}R_{s_iv}} \subset \gMod{{}_iR}, \ \gMod{{}_wR_v} \subset \gMod{R_i}. 
\]

\begin{theorem} \label{thm:variousequiv}
In the setting above, the equivalence $S_i \colon \gMod{{}_iR} \to \gMod{R_i}$ restricts to an equivalence 
\[
\gMod{{}_{s_iw}R_{s_iv}} \simeq \gMod{{}_wR_v}. 
\]
\end{theorem}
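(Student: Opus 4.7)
The plan is to exploit the fact that $S_i$ and $S_i'$ are mutually quasi-inverse monoidal equivalences (Theorem~\ref{thm:reflectionfunctor}): since a mutually inverse pair restricts to an equivalence between matching subcategories, it suffices to verify $S_i(\gMod{{}_{s_iw}R_v}) \subset \gMod{{}_wR_{s_iv}}$ and, symmetrically, $S_i'(\gMod{{}_wR_{s_iv}}) \subset \gMod{{}_{s_iw}R_v}$. I first record that the hypotheses $s_iw>w$ and $s_iv>v$ force $w^{-1}\alpha_i,\,v^{-1}\alpha_i>0$, hence $\alpha_i \notin s_iw\mathsf{Q}_+$ and $\alpha_i \notin s_iv\mathsf{Q}_+$; this yields the ambient inclusions $\gMod{{}_{s_iw}R_v} \subset \gMod{{}_iR}$ and $\gMod{{}_wR_{s_iv}} \subset \gMod{R_i}$, placing $S_i$ and $S_i'$ in their natural domains.

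The strategy is to detect membership in $\gMod{{}_wR_v}$ by $\HOM$-vanishing against the affinized standard modules $\Delta(\gamma),\Delta'(\gamma)$ of Definition~\ref{def:rootmodule}: continuing the standard-module filtration argument used to prove the preceding theorem (parallel to \cite[Section 5]{murata2025affinehighestweightstructures}), $M \in \gMod{R(\beta)}$ lies in $\gMod{{}_wR_v}$ iff $\HOM_R(\Delta'(\gamma) \circ N, M) = 0$ for all $\gamma \in \Phi_+ \cap w\Phi_-$ and suitable $N$, and symmetrically $\HOM_R(N \circ \Delta(\gamma), M) = 0$ for $\gamma \in \Phi_+ \cap v\Phi_-$. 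For $X \in \gMod{{}_iR}$, the $\gamma = \alpha_i$ case is automatic, so membership in $\gMod{{}_{s_iw}R_v}$ reduces to these vanishings with $\gamma \in (\Phi_+ \cap s_iw\Phi_-) \setminus \{\alpha_i\}$ and $\gamma \in \Phi_+ \cap v\Phi_-$; analogously for $\gMod{{}_wR_{s_iv}}$.

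The core translation comes from the quasi-inverse pair $S_i \simeq (S_i')^{-1}$ together with the monoidality of $S_i'$ (Proposition~\ref{prop:monoidality*}). Given $X \in \gMod{{}_{s_iw}R_v}$ and $\gamma' \in \Phi_+ \cap w\Phi_-$, one obtains
\[
\HOM_R\bigl(\Delta'(\gamma') \circ R_i(\beta-\gamma'),\, S_i(X)\bigr) \simeq \HOM_R\bigl(S_i'(\Delta'(\gamma')) \circ {}_iR(\beta-\gamma'),\, X\bigr),
\]
using $S_i'(R_i(\beta-\gamma')) \simeq {}_iR(\beta-\gamma')$. The essential input is $S_i'(\Delta'(\gamma')) \simeq \Delta'(s_i\gamma')$, together with the mirror identity $S_i'(\Delta(\tau)) \simeq \Delta(s_i\tau)$ for the right-side condition. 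Since $\gamma' \mapsto s_i\gamma'$ is a bijection from $\Phi_+ \cap w\Phi_-$ onto $(\Phi_+ \cap s_iw\Phi_-) \setminus \{\alpha_i\}$ (and the analogous map $\tau \mapsto s_i\tau$ from $(\Phi_+ \cap s_iv\Phi_-) \setminus \{\alpha_i\}$ onto $\Phi_+ \cap v\Phi_-$ handles the other side), the vanishings required on $S_i(X)$ match those hypothesized on $X$, yielding $S_i(X) \in \gMod{{}_wR_{s_iv}}$. The converse inclusion follows by the symmetric argument for $S_i'$.

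The main obstacle I anticipate will be the key identifications $S_i'(\Delta'(\gamma)) \simeq \Delta'(s_i\gamma)$ and $S_i'(\Delta(\tau)) \simeq \Delta(s_i\tau)$ for $\gamma,\tau \in \Phi_+ \setminus \{\alpha_i\}$. By Definition~\ref{def:rootmodule} this amounts to producing a reduced expression of the underlying Weyl element that begins with $s_i$ and then peeling off the outermost reflection functor; the braid relations (Theorem~\ref{thm:braidrel}) guarantee that any two such choices yield isomorphic results. The combinatorial content is that any $\eta \in \Phi_+\setminus\{\alpha_i\}$ with $s_i\eta \in \Phi_+$ admits a factorization $\eta = v\alpha_j$ with $vs_j > v$ and $s_iv < v$, which one obtains by writing $s_i\eta = v'\alpha_j$ and taking $v = s_iv'$ after arranging $s_iv' > v'$; the edge cases (such as $\eta = \alpha_j$ with $j \neq i$, where one is forced into $v=e$) must be handled separately by appealing directly to the definition. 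Once these inputs, together with the $\HOM$-vanishing characterization invoked above, are established, the theorem follows formally.
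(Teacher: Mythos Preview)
Your overall approach coincides with the paper's: both reduce to showing that $S_i$ and $S_i'$ preserve the relevant subcategories, detect membership in $\gMod{{}_wR_v}$ by $\HOM$-vanishing against the standard modules $\Delta(\gamma)$, $\Delta'(\gamma)$ (the paper cites \cite[Theorem~5.18]{murata2025affinehighestweightstructures} for this), and then transport the vanishing conditions through $S_i'$ using monoidality together with the identifications $S_i'(\Delta(\tau))\simeq\Delta(s_i\tau)$ and $S_i'(\Delta'(\gamma))\simeq\Delta'(s_i\gamma)$.

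There is one logical problem in your write-up. You appeal to Theorem~\ref{thm:braidrel} to argue that any two choices of reduced expression produce isomorphic $\Delta$ (resp.\ $\Delta'$). But Theorem~\ref{thm:braidrel} is proved \emph{after} Theorem~\ref{thm:variousequiv}, and its proof explicitly invokes Theorem~\ref{thm:variousequiv} (see the first sentence after the statement of Theorem~\ref{thm:braidrel}). So your appeal is circular. The paper avoids this: the independence of $\Delta_{\underline w}(w\alpha_i)$ and $L_{\underline w}(w\alpha_i)$ from the reduced word is established in the discussion immediately following Lemma~\ref{lem:std}, using only the \emph{classical} braid relations for Lusztig's automorphisms $T_j$ on $U_q(\mathfrak g)$ (to pin down the simple $L$ by its character) together with the $\HOM/\EXT^1$ characterization of Lemma~\ref{lem:std}(2) and the simple-root case Lemma~\ref{lem:std}(3). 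You should cite that argument rather than Theorem~\ref{thm:braidrel}; once you do, your combinatorial step of producing a suitable factorization $\eta=v\alpha_j$ with $s_iv<v$ lines up with how the paper obtains the required identity directly from Definition~\ref{def:rootmodule}.
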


\begin{proof}
Let $X \in \gMod{{}_{s_iw}R_{s_iv}(\beta)}$. 
We prove that $S_i(X) \in \gMod{{}_wR_v}$. 
We may assume $S_i(X) \neq 0$. 

First, we prove that $S_i(X) \in \gMod{{}_wR}$. 
We fix a reduced word $\underline{w} = (i_1,\ldots,i_m)$ of $w$, 
and use Theorem \ref{thm:stratification'} and the notation there. 
Let $\lambda = (\mathbf{c},\gamma,S) \in {}_{\underline{w}}\Lambda(s_i\beta)$ be an arbitrary element such that 
\[
\HOM_{R(s_i\beta)}({}_{\underline{w}}\Delta(\lambda),S_i(X)) \neq 0.
\] 
Suppose $\mathbf{c} \neq 0$ and let $1 \leq k \leq m$ be the least integer such that $\mathbf{c}_k \neq 0$. 
By the induction-restriction adjunction, it follows that 
\[
\HOM_{R(\mathbf{c}_k\, {}_{\underline{w}}\beta_k)}({}_{\underline{w}}{\Delta}_k^{(\mathbf{c}_k)}, \Res_{\mathbf{c}_k\, {}_{\underline{w}}\beta_k,*}S_i(X)) \neq 0.
\] 
Since $\Res_{\mathbf{c}_k\, {}_{\underline{w}}\beta_k,*} S_i(X)$ is an $R(\mathbf{c}_k\, {}_{\underline{w}}\beta_k) \otimes R_i(s_i\beta - \mathbf{c}_k\, {}_{\underline{w}}\beta_k)$-module, 
we have 
\[
\HOM_{R(s_i\beta)} ({}_{\underline{w}}{\Delta}_k^{(\mathbf{c}_k)}\circ R_i(s_i\beta - \mathbf{c}_k\, {}_{\underline{w}}\beta_k), S_i(X)) \neq 0.
\] 
Applying $S'_i$, we deduce that 
\[
\HOM_{R(\beta)} (S'_i({}_{\underline{w}}{\Delta}_k^{(\mathbf{c}_k)})\circ S'_i(R_i(s_i\beta-\mathbf{c}_k\, {}_{\underline{w}}\beta_k)), X) \neq 0.
\] 
Hence, $\Res_{\mathbf{c}_k s_i\, {}_{\underline{w}}\beta_k,*}X \neq 0$. 
Since $(s_iw)^{-1}s_i\, {}_{\underline{w}}\beta_k \in \mathsf{Q}_-$, it contradicts the assumption $X \in \gMod{{}_{s_iw}R}$. 
Therefore, $\mathbf{c} = 0$ and ${}_{\underline{w}}\Delta(\lambda) \in \gMod{{}_wR(\beta)}$. 
Since $\lambda$ is an arbitrary element satisfying $\HOM_{R(s_i\beta)}({}_{\underline{w}}\Delta(\lambda),S_i(X)) \neq 0$, 
we deduce from Corollary \ref{cor:criteria'} that $S_i(X) \in \gMod{{}_wR}$ as desired. 

Next, we prove that $S_i(X) \in \gMod{R_v}$.
We fix a reduced word $\underline{v} = (j_1 = i, j_2,\ldots,j_n)$ of $v$,
and use Theorem \ref{thm:stratification} (1) and the notation there. 
Let $\lambda = (\mathbf{c},\gamma,S) \in \Lambda_{\underline{v}}(s_i\beta)$ be an arbitrary element such that 
\[
\HOM_{R(s_i\beta)}(\Delta_{\underline{v}}(\lambda),S_i(X)) \neq 0.
\] 
Suppose $\mathbf{c} \neq 0$ and let $1 \leq k \leq m$ be the least integer such that $\mathbf{c}_k \neq 0$.
Since $S_i(X) \in \gMod{R_i}$ and $j_1 = i$, we have $k \geq 2$. 
Then, $\Delta_{\underline{v}}(\lambda) \simeq \Delta_{\underline{v}}(\lambda') \circ \Delta_{\underline{v},k}^{(\mathbf{c}_k)}$, 
where $\lambda'$ is the same as $\lambda$ except that the component $\mathbf{c}_k$ is zero. 
Note that both $\Delta_{\underline{v}}(\lambda')$ and $\Delta_{\underline{v},k}^{(\mathbf{c}_k)}$ belong to $\gMod{R_i}$. 
Applying $S'_i$, we deduce 
\[
\HOM_{R(\beta)}(S'_i(\Delta_{\underline{v}}(\lambda')) \circ S'_i(\Delta_{\underline{v},k}^{(\mathbf{c}_k)}), X) \neq 0.
\]
Hence, $\Res_{\beta - \mathbf{c}_k s_i\beta_{\underline{v},k},\mathbf{c}_k s_i\beta_{\underline{v},k}} X \neq 0$. 
Since $(s_iw)^{-1}(s_i\beta_{\underline{v},k}) \in \mathsf{Q}_-$, it contradicts the assumption $X \in \gMod{R_{s_iv}}$.
Therefore, $\mathbf{c} = 0$ and $\Delta_{\underline{v}}(\lambda) \in \gMod{R_v(\beta)}$. 
Since $\lambda$ is an arbitrary element satisfying $\HOM_{R(s_i\beta)}(\Delta_{\underline{v}}(\lambda),S_i(X)) \neq 0$, 
we deduce from Corollary \ref{cor:criteria} that $S_i(X) \in \gMod{R_v}$ as desired. 

We can prove $S'_i (\gMod{{}_wR_v}) \subset \gMod{{}_{s_iw}R_{s_iv}}$ in the same manner. 
The theorem is proved. 
\end{proof}

\begin{comment}
\begin{theorem}
Let $w,v \in W$, and assume $\ell(w^{-1}v) = \ell(w) + \ell(v)$. 
The homomorphism $K(\gmod{{}_wR_v})_{\mathbb{Q}(q)} \to K(\gMod{{}_wR_v})_{\mathbb{Q}(q)}$ induced by the inclusion is an isomorphism.
Furthermore, the isomorphism $\chi \colon K(\gMod{R})_{\mathbb{Q}(q)} \to U_q^-(\mathfrak{g})$ restricts to an isomorphism
\[
K(\gMod{{}_wR_v})_{\mathbb{Q}(q)} \simeq T_{w^{-1}}^{-1}U_q^-(\mathfrak{g}) \cap T_vU_q^-(\mathfrak{g})
\]
\end{theorem} 

\begin{proof}
The former assertion is proved in the same manner as Theorem \ref{thm:braidcategorification} (2). 

When $w = e$, the latter assertion is \cite[Theorem 5.27]{murata2025affinehighestweightstructures}. 
In general, it is deduced from Theorem \ref{thm:braidcategorification}, Theorem \ref{thm:variousequiv} and the fact that 
\[
T_{w^{-1}}^{-1}(U_q^-(\mathfrak{g}) \cap T_{w^{-1}v}U_q^-(\mathfrak{g})) = T_{w^{-1}}^{-1}U_q^-(\mathfrak{g}) \cap T_vU_q^-(\mathfrak{g}). 
\]
\end{proof}
\end{comment}

\begin{theorem}
Let $w, v \in W$. 
The homomorphism $K(\gmod{R_{w,v}})_{\mathbb{Q}(q)} \to K(\gMod{R_{w,v}})_{\mathbb{Q}(q)}$ induced by the inclusion is an isomorphism. 
Furthermore, the isomorphism $\chi \colon K(\gMod{R})_{\mathbb{Q}(q)} \to U_q^-(\mathfrak{g})$ restricts to an isomorphism
\[
K(\gMod{R_{w,v}})_{\mathbb{Q}(q)} \simeq T_w(U_q^0(\mathfrak{g})U_q^+(\mathfrak{g})) \cap U_q^-(\mathfrak{g}) \cap T_vU_q^-(\mathfrak{g}). 
\]
\end{theorem}

\begin{proof}
The former assertion is proved in the same manner as Theorem \ref{thm:categorification2}. 

When $v = e$, the latter assertion is \cite[Theorem 5.26]{murata2025affinehighestweightstructures}. 
In general, we have
\begin{align*}
&K(\gMod{R_{w,v}})_{\mathbb{Q}(q)} \\
&= K(\gMod{R_{w,e}})_{\mathbb{Q}(q)} \cap K(\gMod{R_v})_{\mathbb{Q}(q)} \\ 
&\quad \text{since simple modules give a compatible basis} \\
&\simeq (T_w(U_q^0(\mathfrak{g})U_q^+(\mathfrak{g})) \cap U_q^-(\mathfrak{g})) \cap (T_vU_q^-(\mathfrak{g}) \cap U_q^-(\mathfrak{g})) \\
&\quad \text{by Theorem \ref{thm:categorificatoinwRv} and the case where $v = e$} \\
&= T_w(U_q^0(\mathfrak{g})U_q^+(\mathfrak{g})) \cap U_q^-(\mathfrak{g}) \cap T_vU_q^-(\mathfrak{g}). 
\end{align*}
\end{proof}

Let $w,v \in W$ and $i \in I$.
Assume that $s_iw > w$ and $s_iv > v$. 
Note that 
\[
\gMod{R_{w,v}} \subset \gMod{{}_iR}, \gMod{R_{s_iw,s_iv}} \subset \gMod{R_i}. 
\]

\begin{theorem}
In the setting above, the equivalence $S_i \colon \gMod{{}_iR} \to \gMod{R_i}$ restricts to an equivalence 
\[
\gMod{R_{w,v}} \simeq \gMod{R_{s_iw,s_iv}}. 
\]
\end{theorem}

\begin{proof}
First, we prove that $S_i(\gMod{R_{w,v}}) \subset \gMod{R_{s_iw,s_iv}}$. 
Let $X \in \gMod{R_{w,v}(\beta)}$. 
By Theorem \ref{thm:variousequiv}, we have $S_i(X) \in \gMod{R_{s_iv}}$. 
Hence, it suffices to prove $S_i(X) \in \gMod{R_{s_iw,e}}$. 
We fix a reduced word $\underline{w}$ of $w$. 
By Theorem \ref{thm:stratification} (2), 
$\gMod{R_w(\beta)}$ is generated by $\Delta_{\underline{w}}(\lambda) \ (\lambda \in \Lambda'_{\underline{w}}(\beta))$ as a Serre subcategory of $\gMod{R(\beta)}$. 
Since $X \in \gMod{R_w(\beta)}$, it follows that $S_i(X)$ belongs to the Serre subcategory generated by $S_i(\Delta_{\underline{w}}(\lambda)) \ (\lambda \in \Lambda'_{\underline{w}}(\beta))$. 
For any $\lambda = (\mathbf{c},0,\mathbf{1}) \in \Lambda'_{\underline{w}}(\beta)$, we have 
\[
S_i(\Delta_{\underline{w}}(\lambda)) \simeq \Delta(s_is_{i_1} \cdots s_{i_{m-1}}, i_m^{\mathbf{c}_m}) \circ \cdots \circ \Delta(s_is_{i_1},i_2^{\mathbf{c}_2}) \circ \Delta(s_i,i_1^{\mathbf{c}_1}). 
\]
By using Theorem \ref{thm:stratification} (2) for $s_iw$, we see that $S_i(\Delta_{\underline{w}}(\lambda)) \in \gMod{R_{s_iw,e}}$. 
hence, we obtain $S_i(X) \in \gMod{R_{s_iw,e}}$. 

Next, we prove $S_i' (\gMod{R_{s_iw,s_iv}}) \subset \gMod{R_{w,v}}$.
Let $X \in \gMod{R_{s_iw,s_iv}(\beta)}$. 
By Theorem \ref{thm:variousequiv}, we have $S_i'(X) \in \gMod{R_v}$. 
Hence, it suffices to prove $S_i'(X) \in \gMod{R_{w,e}}$. 
We fix a reduced word $\underline{s_iw} = (i_1 = i, i_2, \ldots,i_m)$ of $s_iw$, and use Theorem \ref{thm:stratification} (2). 
Note that for $\lambda = (\mathbf{c},0,\mathbf{1}) \in \Lambda'_{\underline{w}}(\beta)$ with $\mathbf{c}_1 \neq 0$, 
we have 
\[
\HOM_{R(\beta)}(\Delta_{\underline{s_iw}}(\lambda), Y) = 0 \ (Y \in \gMod{R_{s_iw,s_i}(\beta)} \subset \gMod{R_i}), 
\]
by the induction-restriction adjunction. 
On the other hand, if $\mathbf{c}_1 = 0$, then 
\[
\Delta_{\underline{s_iw}}(\lambda) \simeq  S_i(\Delta(s_{i_2} \cdots s_{i_{m-1}},i_m^{\mathbf{c}_m})) \circ \cdots \circ S_i(\Delta(s_{i_2},i_3^{\mathbf{c}_3})) \circ S_i(\Delta(e,i_2^{\mathbf{c}_2})), 
\]
which belongs to $\gMod{R_{s_iw,s_i}(s_i\beta)}$ by Theorem \ref{thm:stratification} (2) and the fact $R_{e,s_i}(s_i\beta) = R_i(s_i\beta)$. 
Also note that the subset of $\Lambda'_{\underline{s_iw}}(\beta)$ consisting of elements $(\mathbf{c}, 0, \mathbf{1})$ with $\mathbf{c}_1 = 0$ is an ideal for the partial order. 
These observations imply that the category $\gMod{R_{s_iw,s_i}(\beta)}$ is generated by $\Delta_{\underline{s_iw}}(\lambda) \ (\text{$\lambda = (\mathbf{c},0,\mathbf{1}) \in \Lambda_0(\beta)$ satisfying $\mathbf{c}_1 =0$})$ as a Serre subcategory of $\gMod{R(\beta)}$. 
Hence, $S_i'(X)$ belongs to the Serre subcategory generated by $S_i'(\Delta_{\underline{s_iw}}(\lambda))$ for these $\lambda$. 
Since 
\[
S_i'(\Delta_{\underline{s_iw}}(\lambda)) = \Delta(s_{i_2} \cdots s_{i_{m-1}},i_m^{\mathbf{c}_m}) \circ \cdots \circ \Delta(s_{i_2},i_3^{\mathbf{c}_3}) \circ \Delta(e,i_2^{\mathbf{c}_2}) \in \gMod{R_{w,e}}, 
\]
we obtain $S_i'(X) \in \gMod{R_{w,e}}$. 
\end{proof}

\begin{comment}
\subsection{Reflection functors and adjoint action}

\begin{proposition}
Let $M \in \gMod{{}_iR(\beta)}$ and assume that $ME_i = 0$. 
Then, $T_i(M) \simeq F_i^{(-\langle h_i,\beta \rangle)}M$. 
\end{proposition}

\begin{proof}
Put $n = -\langle h_i,\beta \rangle$. 
By Remark \ref{rem:extension}, we have 
\begin{align*}
T_i(M) \simeq T_i(MF_i^{(n)}E_i^{(n)}) \simeq 
\end{align*}
\end{proof}
\end{comment}

\chapter{Braid relations} 
In this chapter, we prove that the functors $S_i \ (i \in I)$ satisfy the braid relations.
Note that we can prove that the functors $S'_i \ (i \in I)$ also satisfy the braid relations by applying the involution $\sigma$. 

\section{The algebras ${}_JR$ and $R_J$}

Let $J \subset I$. 

\begin{definition}
For $\beta \in \mathsf{Q}_+$, we define 
\[
{}_JR(\beta) = R(\beta)/\langle e(j,\beta-\alpha_j) \ (j \in J) \rangle, \ R_J(\beta) = R(\beta)/\langle e(\beta-\alpha_j,j) \ (j \in J) \rangle. 
\] \index{$R_J, {}_JR$}
\end{definition}
Let $\gMod{{}_JR} = \bigoplus_{\beta \in \mathsf{Q}_+} \gMod{{}_JR(\beta)}, \gMod{R_J} = \bigoplus_{\beta \in \mathsf{Q}_+} \gMod{R_J(\beta)}$. 
We regard these categories as Serre subcategories of $\gMod{R}$ by inflation. 
Note that ${}_JR(\beta) = {}^{J,0}R(\beta), R_J(\beta) = R^{J,0}(\beta)$. 
In particular, ${}_{\{i\}}R(\beta) = {}_i R(\beta), R_{\{i\}}(\beta) = R_i(\beta)$. 

\begin{lemma}
The categories $\gMod{{}_JR}$ and $\gMod{R_J}$ of $\gMod{R}$ are both closed under convolution products. 
\end{lemma}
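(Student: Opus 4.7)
The plan is to reduce the statement to the vanishing of a single restriction on the convolution product, and then handle that vanishing by the Mackey filtration. Specifically, recalling that an object $X \in \gMod{R(\gamma)}$ lies in $\gMod{{}_JR(\gamma)}$ if and only if $E_j' X = e(j, \gamma-\alpha_j)X = 0$ for every $j \in J$, and lies in $\gMod{R_J(\gamma)}$ if and only if ${E_j'}^* X = e(\gamma-\alpha_j, j) X = 0$ for every $j \in J$, it suffices to show the following: if $M \in \gMod{{}_JR(\alpha)}$ and $N \in \gMod{{}_JR(\beta)}$, then $E_j'(M \circ N) = 0$ for all $j \in J$, and symmetrically for $R_J$ with ${E_j'}^*$.

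To establish the first vanishing, I would fix $j \in J$ and analyze $\Res_{\alpha_j,\, \alpha+\beta-\alpha_j}(M \circ N)$ via the Mackey filtration. The one-step filtration has subquotients that are (up to grading shift) of the form $(E_j' M) \boxtimes N$ and $M \boxtimes (E_j' N)$, induced up appropriately; indeed, the exact analogue for $\gMod{R_i}$ is recorded in Lemma~\ref{lem:adjointSES}(1), and the same argument applies verbatim here since the Mackey filtration is a general fact about $\gMod{R}$ and does not use any cyclotomic quotient. Since $j \in J$ and both $M$ and $N$ lie in $\gMod{{}_JR}$, we have $E_j' M = 0$ and $E_j' N = 0$, so both subquotients vanish, hence $\Res_{\alpha_j,\, \alpha+\beta-\alpha_j}(M \circ N) = 0$, which gives $E_j'(M \circ N) = 0$.

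The case of $\gMod{R_J}$ is entirely parallel: one analyses $\Res_{\alpha+\beta-\alpha_j,\, \alpha_j}(M\circ N)$ using the right-end Mackey filtration, whose subquotients involve ${E_j'}^* M$ and ${E_j'}^* N$, both of which vanish by assumption. Alternatively, one can bypass this and deduce the statement for $R_J$ from the one for ${}_JR$ via the involution $\sigma_*$ on $\gMod{R}$, which restricts to an equivalence $\gMod{{}_JR} \simeq \gMod{R_J}$ and intertwines the convolution product with its opposite. There is no serious obstacle: the only point to be a little careful about is to invoke the correct form of the Mackey filtration so that the subquotients are genuinely induced from $E_j' M \otimes N$ and $M \otimes E_j' N$ (rather than something that merely has these as subquotients), but this is exactly the standard form used throughout the paper.
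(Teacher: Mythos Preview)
Your proposal is correct and takes essentially the same approach as the paper, which simply says ``It is immediate from the Mackey filtration.'' You have spelled out in detail exactly the argument the paper leaves implicit: the subquotients of $\Res_{\alpha_j,\,\alpha+\beta-\alpha_j}(M\circ N)$ involve $E_j'M$ and $E_j'N$, both of which vanish by hypothesis.
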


\begin{proof}
It follows from considering the Mackey-filtration (Proposition \ref{prop:Mackey}). 
\end{proof}

\begin{lemma} \label{lem:intersection}
We have 
\[
\gMod{{}_JR} = \bigcap_{j \in J} \gMod{{}_jR}, \gMod{R_J} = \bigcap_{j \in J} \gMod{R_j},  
\]
as subcategories of $\gMod{R}$. 
\end{lemma}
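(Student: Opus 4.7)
The plan is to unwind the definitions of the quotient algebras, which reduces the statement to a tautology about idempotents. The key observation is that both ${}_JR(\beta)$ and ${}_jR(\beta)$ are quotients of $R(\beta)$ by two-sided ideals generated by a family of idempotents, so membership in the corresponding module category is controlled by the vanishing of those idempotents on the module.

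Concretely, I would begin by recording the following elementary fact: if $A$ is a unital ring, $e \in A$ is an idempotent, and $M$ is an $A$-module, then the two-sided ideal $AeA$ annihilates $M$ iff $eM=0$. One direction is immediate since $eM \subseteq AeA \cdot M$; for the other, if $eM=0$, then for any $a,b \in A$ and $m \in M$ we have $(aeb)m = a \cdot e(bm) = 0$, using that $bm \in M$.

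Applying this pointwise in $\beta$ to $A = R(\beta)$ and the idempotents $e(j,\beta-\alpha_j)$, a graded $R(\beta)$-module $M$ lies in $\gMod{{}_jR(\beta)}$ iff $e(j,\beta-\alpha_j)M = 0$, and it lies in $\gMod{{}_JR(\beta)}$ iff $e(j,\beta-\alpha_j)M = 0$ for every $j \in J$ simultaneously. These conditions visibly coincide with $M \in \bigcap_{j \in J} \gMod{{}_jR(\beta)}$. Summing over $\beta \in \mathsf{Q}_+$ yields the first equality. The second equality is proved by the identical argument with $e(\beta-\alpha_j, j)$ in place of $e(j,\beta-\alpha_j)$ throughout; alternatively one can deduce it from the first by applying the algebra involution $\sigma$ of Section 3.4, which sends $e(j,\beta-\alpha_j)$ to $e(\beta-\alpha_j,j)$ (up to reordering) and interchanges ${}_JR$ with $R_J$.

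Since the lemma is essentially tautological once these definitions are unpacked, I do not anticipate any real obstacle; the only point worth being careful about is confirming that the ideal generated by a set of idempotents really is annihilated by $M$ exactly when each idempotent is, which is exactly what the elementary fact above provides.
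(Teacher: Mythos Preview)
Your proof is correct and is essentially the same as the paper's, which simply says ``It is immediate from the definition.'' You have spelled out explicitly the idempotent argument that underlies this immediacy, but there is no substantive difference in approach.
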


\begin{proof}
It follows from the definition. 
\end{proof}

Recall that $\gMod{{}_JR}$ is a right $\mathcal{U}_q(\mathfrak{p}_J)$-module and that $\gMod{{}_jR}$ is a right $\mathcal{U}_q(\mathfrak{p}_j)$-module by Theorem \ref{thm:cyclotomic2rep}.

\begin{lemma} \label{lem:resaction}
Let $j \in J$. 
Then, the subcategory $\gMod{{}_JR} \subset \gMod{{}_jR}$ is stable under the right actions of $F_j, E_j$ in $\catquantum{\mathfrak{p}_j}$, 
and the restricted actions on $\gMod{{}_JR}$ coincide with the right actions of $F_j, E_j$ in $\catquantum{\mathfrak{p}_J}$ respectively. 
Similarly, the subcategory $\gMod{R_J} \subset \gMod{R_j}$ is stable under the left actions of $F_j, E_j$ in $\catquantum{\mathfrak{p}_j}$, 
and the restricted actions on $\gMod{R_J}$ coincide with the left actions of $F_j, E_j$ in $\catquantum{\mathfrak{p}_J}$ respectively. 
\end{lemma}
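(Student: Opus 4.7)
The plan is to reduce to the right-action case via the symmetry from Remark~\ref{rem:LRchange}, then verify (a) stability of $\gMod{{}_JR}\subset\gMod{{}_jR}$ under the functors $F_j, E_j$, and (b) agreement of the two actions of $F_j$ and $E_j$ on this subcategory.

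The reduction step uses the autoequivalence $\sigma_*\colon\gMod{R_J}\xrightarrow{\sim}\gMod{{}_JR}$ together with the isomorphism $\sigma\colon\catquantum{\mathfrak{p}_J}\to\catquantum{\mathfrak{p}_J}^{\mathrm{op}}$ of Proposition~\ref{prop:involutionsigma}: as explained in Remark~\ref{rem:LRchange}, these intertwine the left $\catquantum{\mathfrak{p}_J}$-action on $\gMod{R_J}$ with the right $\catquantum{\mathfrak{p}_J}$-action on $\gMod{{}_JR}$, and likewise relate the $\catquantum{\mathfrak{p}_j}$-actions on $\gMod{R_j}$ and $\gMod{{}_jR}$. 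So it suffices to address the first assertion.

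For (a), fix $X\in\gMod{{}_JR(\beta)}$ and $k\in J\setminus\{j\}$. By Theorem~\ref{thm:cyclotomic2rep}, $XE_j$ is (a grading shift of) the submodule $e(\beta-\alpha_j,j)X$ of $X$, so $e(k,*)(XE_j)\subset e(k,*)X=0$. For $XF_j$, Theorem~\ref{thm:functorF*}(2) supplies the natural short exact sequence
\[
0\to q^{(\alpha_j,-\beta)}R(\alpha_j)\circ X\xrightarrow{\mathsf{R}_X}X\circ R(\alpha_j)\to XF_j\to 0
\]
in $\gMod R$. A Mackey filtration analysis of $e(k,*)(X\circ R(\alpha_j))$ and $e(k,*)(R(\alpha_j)\circ X)$ collapses each to a single subquotient, because the lone $j$-letter of $R(\alpha_j)$ cannot occupy the first position when $k\neq j$; both subquotients are induced from $e(k,*)X=0$ and hence vanish. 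Therefore $XF_j\in\gMod{{}_JR}$.

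For (b), the $\catquantum{\mathfrak{p}_j}$- and $\catquantum{\mathfrak{p}_J}$-actions both realize $XE_j$ and $XF_j$ by the same universal recipe inside $\gMod R$ (the submodule $e(\beta-\alpha_j,j)X$, and the cokernel of the same map $\mathsf{R}_X$), so the underlying 1-morphism actions coincide canonically. The generating 2-morphisms with only $j$-colored strands --- the dots, the $jj$-crossing, the cap and the cup --- are described in Theorem~\ref{thm:cyclotomic2rep} by identical explicit formulas in the two settings (multiplication by $x_1$ or $\tau_1$, or the canonical (co)units of the adjunction $F_j\dashv q_j^{1+\langle h_j,-\beta\rangle}E_j$), so the 2-morphism actions agree as well. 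The only substantive step is the Mackey reduction in (a); once that is in hand, the uniformity of the defining formulas makes the rest automatic, since the two categorified quantum groups differ only in which further $E_k$ generators are present, not in the implementation of $F_j$ or $E_j$.
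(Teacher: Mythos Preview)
Your proof is correct and follows essentially the same approach as the paper. The only minor difference is that the paper dispatches stability of $XF_j$ under ${}_kR$ for $k\neq j$ by observing that $X\circ R(\alpha_j)\in\gMod{{}_kR}$ (since both factors lie in this convolution-closed subcategory) and then passing to the quotient, whereas you unpack this via the Mackey filtration directly; the paper also leaves part (b) implicit while you spell it out.
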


\begin{proof}
Let $M \in \gMod{{}_JR}$ and consider the module $MF_j$ obtained by applying $F_j$ in $\mathcal{U}_q(\mathfrak{p}_j)$. 
By definition, we have $M F_j \in \gMod{{}_jR}$. 
For any $k \in J \setminus \{j \}$, $M \circ R(\alpha_j)$ belongs to $\gMod{{}_kR}$ since $M, R(\alpha_j) \in \gMod{{}_kR}$. 
Since $MF_j$ is a quotient of $M \circ R(\alpha_j)$, it follows that $MF_j \in \gMod{{}_kR}$.  
By Lemma \ref{lem:intersection}, we deduce that $MF_j \in \gMod{{}_JR}$. 
As for $E_j$, the assertion is immediate from the definition. 

The proof for $\gMod{R_J}$ is similar. 
\end{proof}

\begin{comment}
Recall that $\gMod{{}_jR}$ is a left $\dotcatquantum{\mathfrak{p}_j}$-module, and $\gMod{R_j}$ is a right $\dotcatquantum{\mathfrak{p}_j}$-module by Theorem \ref{thm:anotheraction}. 

\begin{lemma}
Let $j \in J$. 
The subcategory $\gMod{{}_JR} \subset \gMod{{}_jR}$ is closed under the left action of 
\end{lemma}
\end{comment}

\section{Statement}

Let $J \subset I$ be a subset consisting of two elements.
Rename these two elements as $1$ and $2$. 
Let $\mathsf{A}_J = (a_{i,j})_{i,j \in J}$, and assume that it is of finite type: 
it is one of the types $A_1 \times A_1, A_2, B_2$ or $G_2$. 
We define accordingly $h$ to be $2, 3, 4$ or $6$. 
For $1 \leq k \leq h$, put 
\[
i_k = \begin{cases}
1 & \text{if $k$ is odd}, \\
2 & \text{if $k$ is even}. 
\end{cases}
\]

Let $w_J$ be the longest element of the Weyl group associated with $\mathsf{A}_J$, regarded as an element of $W$. 
Note that we have $R_J(\beta) = R_{w_J}(\beta)$ and ${}_JR(\beta) = {}_{w_J}R(\beta)$.
We define $\overline{1} = 2, \overline{2} = 1$. 
We have 
\[
w_J = s_{i_h} \cdots s_{i_1} = s_{\overline{i_h}} \cdots s_{\overline{i_1}}
\]
For $1 \leq k \leq l \leq h$, we define 
\[
S_{[l,k]} = S_{i_l} \cdots S_{i_k}, \ S_{\overline{[l,k]}} = S_{\overline{i_l}} \cdots S_{\overline{i_k}}. 
\]
They are functors defined on certain subcategories. 

\begin{theorem} \label{thm:braidrel}
The following functors are well-defined equivalences: 
\[
S_{[h,1]}, S_{\overline{[h,1]}} \colon \gMod{{}_JR} \rightrightarrows \gMod{R_J}. 
\]
Furthermore, they are naturally isomorphic to each other.  
\end{theorem}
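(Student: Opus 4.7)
I first verify well-definedness and equivalence, then address the natural isomorphism.

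For well-definedness, I iterate Theorem \ref{thm:variousequiv}. Set $w^{(k)} = s_{i_{k+1}} \cdots s_{i_h}$ (of length $h-k$) and $v^{(k)} = s_{i_k} \cdots s_{i_1}$ (of length $k$); these are reduced expressions in the dihedral Coxeter group $W_J$, since both $(i_1,\ldots,i_h)$ and its reverse are reduced words for $w_J = w_J^{-1}$. At the $(k+1)$-st step of $S_{[h,1]}$, the functor $S_{i_{k+1}}$ carries $\gMod{{}_{w^{(k)}}R_{v^{(k)}}}$ to $\gMod{{}_{w^{(k+1)}}R_{v^{(k+1)}}}$: the hypotheses $s_{i_{k+1}}w^{(k+1)} > w^{(k+1)}$ and $s_{i_{k+1}}v^{(k)} > v^{(k)}$ of Theorem \ref{thm:variousequiv} are immediate from these length computations. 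Starting at $(w^{(0)}, v^{(0)}) = (w_J, e)$ and ending at $(w^{(h)}, v^{(h)}) = (e, w_J)$ realizes $S_{[h,1]}$ as a well-defined equivalence $\gMod{{}_JR} \to \gMod{R_J}$. The same argument with $(\overline{i_k})$ in place of $(i_k)$ handles $S_{\overline{[h,1]}}$.

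For the natural isomorphism, my strategy is to invoke the universality in Theorem \ref{thm:rightuniversality}, which identifies $\gproj{{}_JR}$ with the universal right $\dotcatquantum{\mathfrak{p}_J}$-module $\mathcal{V}_J^{\mathrm{r}}(0)$ generated by $\mathbf{1}$. I will transport the right $\dotcatquantum{\mathfrak{p}_J}$-action from $\gproj{{}_JR}$ along each of $S_{[h,1]}$ and $S_{\overline{[h,1]}}$ to obtain two right $\dotcatquantum{\mathfrak{p}_J}$-actions on $\gproj{R_J}$, and then identify both with a single intrinsic right $\dotcatquantum{\mathfrak{p}_J}$-action on $\gproj{R_J}$ built as the parabolic analog of Theorem \ref{thm:anotheraction}~(1). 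Once both functors are exhibited as morphisms of right $\dotcatquantum{\mathfrak{p}_J}$-modules sending $\mathbf{1}$ to $\mathbf{1}$, the uniqueness clause of Lemma \ref{lem:universal2rep} forces the desired natural isomorphism.

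The main obstacle is the verification that both transported right $\dotcatquantum{\mathfrak{p}_J}$-actions coincide with the intrinsic one. Each $S_{i_k}$ is only left $\catquantum{\mathfrak{p}_{i_k}}$-linear (Proposition \ref{prop:linear}), so assembling compatibility with the full $\catquantum{\mathfrak{p}_J}$ across the composition is not automatic. The monoidality of each $S_{i_k}$ (Proposition \ref{prop:monoidality}) routinely takes care of the generators $F_k$ with $k \in I \setminus J$ (acting by convolution), but the genuine braid-relation content resides in the generators $F_j, E_j$ with $j \in J$. I expect this to reduce to the rank-two subalgebra $\mathfrak{g}_J$ through a case-by-case analysis over $h \in \{2,3,4,6\}$, with diagrammatic bookkeeping comparable in length to the proof of Proposition \ref{prop:linear}; the finiteness of $h$ keeps the argument tractable despite its expected length.
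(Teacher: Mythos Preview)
Your proposal is correct and follows essentially the same route as the paper. The well-definedness argument (iterating Theorem~\ref{thm:variousequiv} along the chain ${}_{w^{(k)}}R_{v^{(k)}}$) is exactly what the paper does, and your reduction to universality of $\mathcal{V}_J^{\mathrm{r}}(0)$ together with a case-by-case verification over $h\in\{2,3,4,6\}$ matches the paper's strategy precisely.

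One small framing difference: you propose comparing both transported actions with a third ``intrinsic'' right $\dotcatquantum{\mathfrak{p}_J}$-action on $\gproj{R_J}$ built as a parabolic analogue of Theorem~\ref{thm:anotheraction}(1). The paper does not construct such an action independently; instead it writes down the comparison isomorphisms $S_{[h,1]}(ME_1)\simeq F_{1^*}S_{[h,1]}(M)$, $S_{[h,1]}(MF_2)\simeq E_{2^*}S_{[h,1]}(M)$, etc.\ directly from the monoidality of the $S_{i_k}$ and the root-module identifications $S_{[h,2]}R(\alpha_1)\simeq R(\alpha_{1^*})$, and then compares the two transported actions against each other. In practice these are the same computation: your ``intrinsic action'' would have to be specified by exactly these formulas, so the two framings collapse. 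Either way, the substantive work is the type-by-type verification of 2-morphism compatibility that you correctly anticipate.
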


Since ${}_JR(\beta) = {}_{w_J}R(\beta)$ and $R_J(\beta) = R_{w_J}(\beta)$, the former assertion is a consequence of Theorem \ref{thm:variousequiv}. 
\begin{comment}
Regarding the latter assertion, first note that these two equivalences restrict to two equivalences 
\begin{equation} \label{eq:twoequivalences}
\gproj{{}_JR} \rightrightarrows \gproj{R_J}. 
\end{equation}
It suffices to prove that these two restricted functors are naturally isomorphic to each other. 
By Theorem \ref{thm:rightuniversality}, $\gproj{{}_JR} = \gproj{{}^{J,0}R} \simeq {}_J\mathcal{V}(0)$ is a right $\dotcatquantum{\mathfrak{p}_J}$-module generated by $\mathbf{1}$. 
Through the two equivalences (\ref{eq:twoequivalences}), $\gproj{R_J}$ inherits two left $\dotcatquantum{\mathfrak{p}_J}$-actions. 
Hence, the latter assertion is reduced to showing that these two actions on $\gproj{R_J}$ coincide. 
It is enough to prove it for generating 2-morphisms. 
More rigorously, we consider the full subcategory of $\gproj{{}_JR}$ consisting of objects 
\[
\{ q^n \mathbf{1} \Theta \mid n \in \mathbb{Z}, \text{$\Theta$ is a word in $E_1, E_2, F_i \ (i \in I)$} \}
\]
and the action of the generating 2-morphisms on these objects. 
\end{comment}
The rest of this chapter is devoted to the proof of the latter assertion. 
We usually suppress degree shifts. 

We define $1^*, 2^* \in \{ 1,2\}$ by $w_J \alpha_1 = -\alpha_{1^*}, w_J \alpha_2 = - \alpha_{2^*}$. 
Note that $1^* = i_{h-1}, 2^* = i_h$. 
By Lemma \ref{lem:std2}, there exists isomorphisms 
\begin{equation} \label{eq:fixedisom}
S_{[h,2]}R(\alpha_1) \simeq R(\alpha_{1^*}), \ S_{[h-1,1]}R(\alpha_2) \simeq R(\alpha_{2^*}). 
\end{equation}
We fix such isomorphisms in the discussion below. 
Note that these fixed isomorphisms also determine isomorphisms 
\begin{equation} \label{eq:fixedisom2}
S_{\overline{[h,2]}} R(\alpha_2) \simeq R(\alpha_{2^*}),\ S_{\overline{[h-1,1]}} R(\alpha_1) \simeq R(\alpha_{1^*}),
\end{equation}
since $S_{\overline{[h,2]}} = S_{[h-1,1]}, S_{\overline{[h-1,1]}} = S_{[h,2]}$. 

In this proof, we adopt the following notation.
For $i \in \{1, 2\}$, let $F_i$ and $E_i$ denote the functor given by the left action of $\catquantum{\mathfrak{p}_i}$ on $\gMod{R_i}$ or the right action of $\catquantum{\mathfrak{p}_i}$ on $\gMod{{}_iR}$. 
By Lemma \ref{lem:resaction}, these functors coincide with those given by the left action of $U_q(\mathfrak{p}_J)$ on $\gMod{R_J}$ or the right action of $U_q(\mathfrak{p}_J)$ on $\gMod{{}_JR}$. 
When we apply these $F_i$ or $E_i$ to some module $M \in \gMod{R}$, we implicitly assume that $M$ belongs to the subcategory $\gMod{R_i}$ or $\gMod{{}_iR}$. 

For $i \in \{1,2\}$, 
let $\tilde{F}_i$ denote the functor given by 
\[
\text{$\tilde{F}_iM = R(\alpha_i) \circ M$ (or $M \tilde{F}_i = M \circ R(\alpha_i)$).} 
\]
In diagrams, $\id_{\tilde{F}_i}$ is depicted as a downward strand labeled $\tilde{i}$. 

For $i \in I \setminus \{1,2\}$, let $F_i$ denote the functor given by 
\[
\text{$F_iM = R(\alpha_i) \circ M$ (or $M F_i = M \circ R(\alpha_i)$).} 
\]
It coincides with the left action of $\catquantum{\mathfrak{p}_J}$ on $\gMod{R_J}$ or its right action on $\gMod{{}_JR}$.

Recall that $S_1$ and $S_2$ are monoidal for the natural isomorphisms given in Proposition \ref{prop:monoidality} denoted by $\theta$. 
Hence, the functors $S_{[l,k]}$ and $S_{\overline{[l,k]}}$ are also monoidal for the natural isomorphisms obtained by compositions of $\theta$. 
These natural isomorphisms will be denoted by $\theta$ as well: 
\[
S_{[l,k]}(X) \circ S_{[l,k]}(Y) \xrightarrow{\theta} S_{[l,k]}(X \circ Y), \ S_{\overline{[l,k]}}(X) \circ S_{\overline{[l,k]}}(Y) \xrightarrow{\theta} S_{\overline{[l,k]}}(X \circ Y). 
\]

\section{Construction of natural isomorphisms}

In this section, we define natural isomorphisms 
\begin{align*}
&S_{[h,1]}(MF_i) \simeq E_{i^*}S_{[h,1]}(M), \ S_{[h,1]}(ME_i) \simeq F_{i^*}S_{[h,1]}(M) \ (i \in \{1,2\}), \\
&S_{[h,1]}(MF_i) \simeq S_{[h,1]}(M) \circ \Delta(w_J\alpha_i) \ (i \in I \setminus \{1,2\}), 
\end{align*}
for $M \in \gMod{{}_JR}$. 

Let $M \in \gMod{{}_JR(\beta)}$. 
We construct a natural isomorphism $S_{[h,1]}(ME_1) \simeq F_{1^*}S_{[h,1]}(M)$ as follows. 
Note that $S_1(ME_1) \simeq F_1S_1(M)$, which belongs to $\gMod{{}_{s_1w_J}R_{s_1}}$ by Theorem \ref{thm:variousequiv}. 
Hence, $S_{[h,1]}(ME_1) \simeq S_{[h,2]}(F_1S_1(M))$. 
On the other hand, we have a natural isomorphism
\[
S_{[h,2]}(R(\alpha_1) \circ S_1(M)) \xleftarrow[]{\theta} S_{[h,2]}(R(\alpha_1)) \circ S_{[h,2]}S_1(M) \overset{(\ref{eq:fixedisom})}{\simeq}  R(\alpha_{1^*}) \circ S_{[h,2]}S_1(M).  
\]
Note that $R(\alpha_1) \in \gMod{{}_{s_1w_J}R}$ since $(s_1w_J)^{-1}\alpha_1 = \alpha_{1^*} \in \mathsf{Q}_+$. 

\begin{lemma} \label{lem:braidnat}
Let $M \in \gMod{{}_JR}$. 
The isomorphism $S_{[h,2]}(R(\alpha_1) \circ S_1(M)) \simeq R(\alpha_{1^*}) \circ S_{[h,2]}S_1(M)$ above 
induces a natural isomorphism 
\[
S_{[h,2]}(F_1S_1(M)) \simeq F_{1^*}S_{[h,2]}S_1(M),
\]
through the canonical surjections
\[
S_{[h,2]}(R(\alpha_1) \circ S_1(M)) \twoheadrightarrow S_{[h,2]}(F_1S_1(M)), \ R(\alpha_{1^*}) \circ S_{[h,2]}S_1(M) \twoheadrightarrow F_{1^*}S_{[h,2]}S_1(M). 
\]
\end{lemma}

\begin{proof}
By Theorem \ref{thm:functorF2}, we have a short exact sequence 
\[
0 \to S_1(M) \circ R(\alpha_1) \xrightarrow{\mathsf{R}_{S_1(M)}} R(\alpha_1) \circ S_1(M) \to F_1 S_1(M) \to 0, 
\]
where $\mathsf{R}_{S_1(M)}$ is homogeneous of degree $-(\alpha_1,s_1\beta)$. 
By applying $S_{[h,2]}$ to this short exact sequence and using the isomorphism (\ref{eq:fixedisom}), we obtain a short exact sequence 
\begin{align} \label{eq:SES}
0 &\to S_{[h,2]} S_1(M) \circ R(\alpha_{1^*}) \xrightarrow{S_{[h,2]}(\mathsf{R}_{S_1(M)})} R(\alpha_{1^*}) \circ S_{[h,2]} S_1(M) \\
&\to S_{[h,2]} (F_1S_1(M)) \to 0,  \notag
\end{align}
where $S_{[h,2]}(\mathsf{R}_{S_1(M)})$ is homogeneous of degree $-(\alpha_1,s_1\beta) = -(\alpha_{1^*}, w_J\beta)$. 
On the other hand, Theorem \ref{thm:functorF2} yields a short exact sequence 
\[
0 \to S_{[h,2]}S_1(M) \circ R(\alpha_{1^*}) \xrightarrow{\mathsf{R}_{S_{[h,2]}S_1(M)}} R(\alpha_{1^*}) \circ S_{[h,2]}S_1(M) \to F_{1^*}S_{[h,2]}S_1(M) \to 0, 
\]
where $\mathsf{R}_{S_{[h,2]}S_1(M)}$ is homogeneous of degree $-(\alpha_{1^*},w_J\beta)$. 
These two short exact sequences show that 
\[
\chi(S_{[h,2]}(F_1S_1(M))) = \chi(F_{1^*}S_{[h,2]}S_1(M)). 
\]

Since $S_{[h,2]}(F_1S_1(M)) \simeq S_{[h,1]}(ME_1) \in \gMod{R_J} \subset \gMod{R_{1^*}}$, 
the surjective homomorphism $R(\alpha_{1^*}) \circ S_{[h,2]}S_1(M) \to S_{[h,2]}(F_1S_1(M))$ in (\ref{eq:SES}) induces a surjective homomorphism 
\[
F_{1^*} S_{[h,2]}S_1(M) \to S_{[h,2]}(F_1S_1(M)). 
\]
Since $\chi(S_{[h,2]}(F_1S_1(M))) = \chi(F_{1^*}S_{[h,2]}S_1(M))$, it must be an isomorphism. 

\begin{comment}
Note that $S_{[h,2]}(\mathsf{R}_{S_1(M)})$ is natural in $M \in \gMod{{}_JR}$, hence it is natural in $S_{[h,2]}S_1(M) \in \gMod{R_J}$. 
Moreover, it commutes with the action of $\END_R(R(\alpha_1)) \simeq R(\alpha_1)$, hence commutes with the action of $\END_R(R(\alpha_{1^*})) \simeq R(\alpha_{1^*})$. 
Since we have 
\begin{align*}
&\HOM_{(R(w_J\beta + \alpha_{1^*}), R_J(w_J\beta)\otimes R(\alpha_{1^*}))}(R_J(w_J\beta) \circ R(\alpha_{1^*}), R(\alpha_{1^*}) \circ R_J(w_J\beta)) \\
&\simeq \HOM_{(R(w_J\beta)\otimes R(\alpha_{1^*}), R_J(w_J\beta) \otimes R(\alpha_{1^*}))} (R_J(w_j\beta) \otimes R(\alpha_{1^*}), q^{-(\alpha_{1^*},\beta)R_J(w_J\beta) \otimes R(\alpha_{1^*})}) \\
&\quad \text{by the induction-restriction adjunction and the Mackey-filtration} \\ 
&\simeq Z(R_J(w_J\beta)) \otimes R(\alpha_{1^*}). 
\end{align*}
\end{comment}
\end{proof}

By composing the isomorphism $S_1(ME_1)\simeq F_1S_1(M)$ and the one in Lemma \ref{lem:braidnat}, 
we obtain a natural isomorphism
\begin{equation} \label{eq:isomE1}
S_{[h,1]}(ME_1) \simeq S_{[h,2]}(F_1S_1(M)) \simeq F_{1^*}S_{[h,2]}S_1(M)= F_{1^*}S_{[h,1]}(M). 
\end{equation}

Similarly, the isomorphism
\begin{align*}
&S_{[h-1,1]}(M \circ R(\alpha_2)) \xleftarrow[]{\theta} S_{[h-1,1]}(M) \circ S_{[h-1,1]}(R(\alpha_2)) \overset{(\ref{eq:fixedisom})}{\simeq} S_{[h-1,1]}(M) \circ R(\alpha_{2^*}) \\
\end{align*}
induces an isomorphism 
\begin{align*}
&S_{[h-1,1]}(MF_2) \simeq S_{[h-1,1]}(M) F_{2^*}, \\
\end{align*}
which yields an isomorphism
\begin{equation}\label{eq:isomF2}
S_{[h,1]}(MF_2) \simeq S_{i_h}(S_{[h-1,1]}(M) F_{2^*}) \simeq E_{2^*} S_{[h,1]}(M). 
\end{equation}

Recall that $S_{[h,1]}\colon \gMod{{}_JR} \to \gMod{R_J}$ is an equivalence. 
By the uniqueness of the adjoint functors, we have natural isomorphisms
\begin{equation} \label{eq:isomF1}
S_{[h,1]}(MF_1) \simeq E_{1^*}S_{[h,1]}(M), \ S_{[h,1]}(ME_2) \simeq F_2S_{[h,1]}(M), 
\end{equation}
such that 
\begin{equation} \label{eq:adjunctioncorrespondence}
S_{[h,1]}(M \xy 0;/r.12pc/: 
(0,0)*{\lcap{i}};
\endxy) = \xy 0;/r.12pc/: 
(0,0)*{\lcap{i^*}};
\endxy S_{[h,1]}(M), \ S_{[h,1]}(M \xy 0;/r.12pc/: 
(0,0)*{\lcup{i}};
\endxy) = \xy 0;/r.12pc/: 
(0,0)*{\lcup{i^*}};
\endxy S_{[h,1]}(M) \ (i \in \{1,2\}), 
\end{equation}
under the isomorphisms (\ref{eq:isomE1}), (\ref{eq:isomF2}) and (\ref{eq:isomF1}). 

By interchanging $1$ and $2$, we can define isomorphisms based on (\ref{eq:fixedisom2})
\begin{equation} \label{eq:isombar}
S_{\overline{[h,1]}}(MF_i) \simeq E_{i^*} S_{\overline{[h,1]}}(M), \ S_{\overline{[h,1]}}(ME_i) \simeq F_{i^*}S_{\overline{[h,1]}}(M) \ (i \in \{1,2\}), 
\end{equation}
under which we have 
\begin{align*}
S_{\overline{[h,1]}}(M \xy 0;/r.12pc/: 
(0,0)*{\lcap{i}};
\endxy) = \xy 0;/r.12pc/: 
(0,0)*{\lcap{i^*}};
\endxy S_{\overline{[h,1]}}(M), \ S_{\overline{[h,1]}}(M \xy 0;/r.12pc/: 
(0,0)*{\lcup{i}};
\endxy) = \xy 0;/r.12pc/: 
(0,0)*{\lcup{i^*}};
\endxy S_{\overline{[h,1]}}(M) \ (i \in \{1,2\}). 
\end{align*}

Let $i \in I \setminus \{1, 2\}$. 
We fix isomorphisms
\begin{equation}\label{eq:fixedisom3}
S_{[h,1]}R(\alpha_i) \simeq \Delta(w_J,i) \simeq S_{\overline{[h,1]}}R(\alpha_i), 
\end{equation}
see Definition \ref{def:rootmodule}. 
Since $MF_i \simeq M \circ R(\alpha_i)$ for $M \in \gMod{{}_JR}$ and the functors $S_{[h,1]}$ and $S_{\overline{[h,1]}}$ are monoidal, these isomorphisms yield isomorphisms
\begin{equation}\label{eq:isomFi}
S_{[h,1]}(MF_i) \simeq  S_{[h,1]}(M) \circ \Delta(w_J,i), \ S_{\overline{[h,1]}}(MF_i) \simeq S_{\overline{[h,1]}}(M) \circ \Delta(w_J,i), 
\end{equation}
using $\theta$. 

The isomorphisms (\ref{eq:isomE1}), (\ref{eq:isomF2}), (\ref{eq:isomF1}), (\ref{eq:isombar}) and (\ref{eq:isomFi}) inductively induce isomorphisms 
\begin{equation} \label{eq:natisom}
S_{[h,1]} (\mathbf{1}F_{\nu}) \simeq S_{\overline{[h,1]}}(\mathbf{1}F_{\nu}) \ \left(\nu \in \bigsqcup_{n \geq 0} I^n\right). 
\end{equation}

\begin{proposition} \label{prop:braidrel}
There exist $b_{i,j} \in \mathbf{k}^{\times} \ (i, j \in I)$ such that 
\begin{itemize}
\item $b_{i,j} b_{j,i} = b_{i,i} = 1 \ (i,j \in I)$, 
\item Identifying the isomorphisms of (\ref{eq:natisom}), we have \[
S_{[h,1]} \left(M \xy
0;/r.12pc/:
(0,0)*{\sdotd{i}}; 
\endxy \right) = S_{\overline{[h,1]}} \left( M \xy 0;/r.12pc/: 
(0,0)*{\sdotd{i}}; 
\endxy \right),\ S_{[h,1]}\left( M \xy 
0;/r.12pc/: 
(0,0)*{\dcross{i}{j}}; 
\endxy\right) = b_{i,j} S_{\overline{[h,1]}}\left( M \xy 
0;/r.12pc/: 
(0,0)*{\dcross{i}{j}}; 
\endxy\right) 
\] 
for $i, j \in I, M \in \gMod{{}_JR}$. 
\end{itemize}
\end{proposition}

Assuming this proposition, we can prove Theorem \ref{thm:braidrel} as follows. 

\begin{proof}[Proof of Thoeorem \ref{thm:braidrel}]
Let $\xi$ be the automorphism of $\catquantum{\mathfrak{p}_J}$ in Proposition \ref{prop:scalarshift} given by $b_{i,j}$ above and $d_{i,\lambda} = 1 \ (i \in J, \lambda \in \mathsf{P})$. 
It induces an autoequivalence of $\gMod{{}_JR}$ denoted by the same letter $\xi$. 
Explicitly, it is given by 
\begin{align*}
&\xi(\mathbf{1}F_{\nu}) = \mathbf{1}F_{\nu}, \\
&\xi \left(\mathbf{1}\xy 0;/r.12pc/:
(0,0)*{\slined{\nu_1}}; 
\endxy \cdots \xy 0;/r.12pc/:
(0,0)*{\sdotd{\nu_k}}; 
\endxy \cdots \xy 0;/r.12pc/:
(0,0)*{\slined{\nu_n}}; 
\endxy \right) = \mathbf{1} \xy 0;/r.12pc/:
(0,0)*{\slined{\nu_1}}; 
\endxy \cdots \xy 0;/r.12pc/:
(0,0)*{\sdotd{\nu_k}}; 
\endxy \cdots \xy 0;/r.12pc/:
(0,0)*{\slined{\nu_n}}; 
\endxy, \\
&\xi \left(\mathbf{1} \xy 0;/r.12pc/:
(0,0)*{\slined{\nu_1}}; 
\endxy \cdots \xy 0;/r.12pc/:
(0,0)*{\dcross{\nu_k}{\nu_{k+1}}}; 
\endxy \cdots \xy 0;/r.12pc/:
(0,0)*{\slined{\nu_n}}; 
\endxy\right) = b_{\nu_k,\nu_{k+1}} \mathbf{1} \xy 0;/r.12pc/:
(0,0)*{\slined{\nu_1}}; 
\endxy \cdots \xy 0;/r.12pc/:
(0,0)*{\dcross{\nu_k}{\nu_{k+1}}}; 
\endxy \cdots \xy 0;/r.12pc/:
(0,0)*{\slined{\nu_n}}; 
\endxy. 
\end{align*}
Note that $\bigoplus_{\nu \in I^{\beta}}\mathbf{1}F_{\nu} \simeq {}_JR(\beta)$, 
and its endomorphism ring as a left ${}_JR(\beta)$-module is isomorphic to ${}_JR(\beta)$. 
Hence, Proposition \ref{prop:braidrel} implies that $S_{[h,1]}$ is naturally isomorphic to $S_{\overline{[h,1]}}\xi$. 
It remains to prove that the functor $\xi$ is naturally isomorphic to the identity functor. 

Let $\beta \in \mathsf{Q}_+$, and put $n = \height \beta$. 
For $\nu \in I^{\beta}$ and $w \in \mathfrak{S}_n$, we define 
\[
b'_{w,\nu} = \prod_{1 \leq k < l \leq n, w(k)>w(l)} b_{\nu_k, \nu_l}. 
\]
Note that we have $b'_{s_kw, \nu} = b_{(w\nu)_k,(w\nu)_{k+1}} b'_{w,\nu}$ since $b_{i,j} b_{j,i} = 1$.
It implies 
\begin{equation} \label{eq:multiplicative}
b'_{wv, \nu} = b'_{w,v\nu}b'_{v,\nu}
\end{equation}
for any $w, v \in \mathfrak{S}_n$ and $\nu \in I^{\beta}$. 

We claim that if $w \in \mathfrak{S}_n$ and $\nu \in I^{\beta}$ satisfy $w\nu = \nu$, then $b'_{w,\nu} = 1$. 
By (\ref{eq:multiplicative}), we may assume $w = (k,l)$, a transposition of $k$ and $l$ for some $1 \leq k < l \leq n$ satisfying $\nu_k = \nu_l$. 
Put $i = \nu_k = \nu_l$. 
Then, 
\[
b'_{w, \nu} = b_{i,i} \prod_{k < m < l}(b_{i,\nu_m} b_{\nu_m,i}) = 1. 
\] 
The claim is proved. 

Fix $\nu^{\beta} \in I^{\beta}$. 
For $\nu \in I^{\beta}$, we choose $w \in \mathfrak{S}_n$ satisfying $w\nu^{\beta} = \nu$, and define 
\[
b''_{\nu} = b'_{w, \nu^{\beta}}. 
\]
By (\ref{eq:multiplicative}) and the claim above, it is independent of the choice of $w$. 
Then, we have a natural isomorphism $\Id \to \xi$ between endofunctors of $\gMod{{}_JR(\beta)}$ given by 
\[
\mathbf{1} F_{\nu} \xrightarrow{b''_{\nu}\id} \mathbf{1} F_{\nu} \ (\nu \in I^{\beta}). 
\]
In fact, (\ref{eq:multiplicative}) shows that the following diagram commutes: 
\[
\begin{tikzcd}
\mathbf{1} F_{\nu} \arrow[r,"b''_{\nu}\id"]\arrow[d,"{\mathbf{1} \xy 0;/r.12pc/:
(0,0)*{\slined{\nu_1}}; 
\endxy \cdots \xy 0;/r.12pc/:
(0,0)*{\dcross{\nu_k}{\nu_{k+1}}}; 
\endxy \cdots \xy 0;/r.12pc/:
(0,0)*{\slined{\nu_n}}; 
\endxy}"'] & \mathbf{1} F_{\nu} \arrow[d,"{b_{\nu_k,\nu_{k+1}}\mathbf{1} \xy 0;/r.12pc/:
(0,0)*{\slined{\nu_1}}; 
\endxy \cdots \xy 0;/r.12pc/:
(0,0)*{\dcross{\nu_k}{\nu_{k+1}}}; 
\endxy \cdots \xy 0;/r.12pc/:
(0,0)*{\slined{\nu_n}}; 
\endxy}"] \\
\mathbf{1} F_{s_k\nu} \arrow[r,"b''_{s_k\nu}\id"] & \mathbf{1} F_{s_k\nu}
\end{tikzcd}
\]
Therefore, the theorem is proved. 
\end{proof}

We will prove Proposition \ref{prop:braidrel} in the subsequent section. 

\section[Proof of Proposition]{Proof of Proposition {\ref{prop:braidrel}}}

The isomorphism $S_{[h,2]}R(\alpha_1) \simeq R(\alpha_{1^*})$ of (\ref{eq:fixedisom}) induces an isomorphism of graded algebras
\begin{align*}
&R(\alpha_1) \simeq \END_{R(\alpha_1)}(R(\alpha_1)) \xrightarrow{S_{[h,2]}} \END_{R(\alpha_{1^*})}(S_{[h,2]}R(\alpha_1)) \\ 
&\simeq \END_{R(\alpha_{1^*})}(R(\alpha_{1^*})) \simeq R(\alpha_{1^*}).  
\end{align*}
Note that the homogeneous components of degree $(\alpha_1, \alpha_1)$ of these graded algebras are one-dimensional. 
Hence, there exists $g_1 \in \mathbf{k}^{\times}$ such that $x_1 \in R(\alpha_1)$ is sent to $g_1 x_1 \in R(\alpha_{1^*})$ through the isomorphism above. 

Similarly, the isomorphism $S_{[h-1,1]}R(\alpha_2) \simeq R(\alpha_{2^*})$ of (\ref{eq:fixedisom}) induces an isomorphism $R(\alpha_2) \simeq R(\alpha_{2^*})$, 
and there exists $g_2 \in \mathbf{k}^{\times}$ such that $x_1 \in R(\alpha_2)$ is sent to $g_2x_1 \in R(\alpha_{2^*})$ under the isomorphism above. 

\begin{lemma} \label{lem:g1}
Assume $(\alpha_2,\alpha_2) \geq (\alpha_1,\alpha_1)$. 
\begin{enumerate}
\item If $\mathsf{A}_J$ is of type $A_2$, then $1^* = 2, 2^* = 1$ and $g_1 = -\frac{t_{2,1}}{t_{1,2}}, g_2 = -\frac{t_{1,2}}{t_{2,1}}$. 
\item If $\mathsf{A}_J$ is of type $B_2$, then $1^* = 1, 2^* = 2$ and $g_1 = -1, g_2 = 1$. 
\item If $\mathsf{A}_J$ is of type $A_1 \times A_1$ or $G_2$, then $1^* = 1, 2^* = 2$ and $g_1 = g_2 = 1$.  
\end{enumerate}
\end{lemma}

\begin{proof}
It will be proved in the subsequent section. 
\end{proof}

Let $\beta \in \mathsf{Q}_+$ and $M \in \gMod{{}_JR(\beta)}$. 

\begin{lemma} \label{lem:sdotd1}
Let $i \in \{1, 2\}$. 
Identifying the isomorphisms 
\[
S_{[h,1]}(MF_i) \simeq E_{i^*}S_{[h,1]}(M), \ S_{\overline{[h,1]}}(MF_i) \simeq E_{i^*}S_{\overline{[h,1]}}(M)
\]
of (\ref{eq:isomE1}), (\ref{eq:isomF2}), (\ref{eq:isomF1}) and (\ref{eq:isombar}), 
we have 
\[
S_{[h,1]}\left(M \xy
0;/r.12pc/:
(0,0)*{\sdotd{i}}; 
\endxy\right) = g_i \xy
0;/r.12pc/:
(0,0)*{\sdotu{i^*}}; 
\endxy S_{[h,1]}(M), \ S_{\overline{[h,1]}}\left(M \xy
0;/r.12pc/:
(0,0)*{\sdotd{i}}; 
\endxy \right) = g_i \xy
0;/r.12pc/:
(0,0)*{\sdotu{i^*}}; 
\endxy S_{\overline{[h,1]}}(M). 
\]
\end{lemma}

\begin{proof}
Recall that the isomorphism $S_{[h,1]}(MF_2) \simeq E_{2^*}S_{[h,1]}(M)$ is induced from 
\[
S_{[h-1,1]}(M \circ R(\alpha_2)) \simeq S_{[h-1,1]}(M) \circ R(\alpha_{2^*}), \ S_{2^*}(S_{[h-1,1]}(M)F_{2^*}) \simeq E_{2^*}S_{[h,1]}(M).
\] 
Since $S_{2^*}\left(S_{[h-1,1]}M \xy
0;/r.12pc/:
(0,0)*{\sdotd{2}}; 
\endxy\right) = \xy
0;/r.12pc/:
(0,0)*{\sdotu{2^*}}; 
\endxy S_{[h,1]}(M)$, we deduce $S_{[h,1]}\left(M \xy
0;/r.12pc/:
(0,0)*{\sdotd{2}}; 
\endxy \right) = g_2 \xy
0;/r.12pc/:
(0,0)*{\sdotu{2^*}}; 
\endxy S_{[h,1]}(M)$.

Similarly, we have $S_{[h,1]}\left(M \xy
0;/r.12pc/:
(0,0)*{\sdotu{1}}; 
\endxy\right) = g_1 \xy
0;/r.12pc/:
(0,0)*{\sdotd{1^*}}; 
\endxy S_{[h,1]}(M)$. 
Hence, we obtain 
\begin{align*}
S_{[h,1]}\left(M \xy
0;/r.12pc/:
(0,0)*{\sdotd{1}}; 
\endxy\right) &= S_{[h,1]}\left(M \xy 0;/r.12pc/:
(-4,3)*{\lcap{}};
(-8,-4)*{\slined{1}};
(4,-3)*{\lcup{}};
(8,4)*{\slined{}};
(0,0)*{\bullet};
\endxy\right) \\
&= g_1 \xy 0;/r.12pc/:
(-4,3)*{\lcap{}};
(-8,-4)*{\slined{1^*}};
(4,-3)*{\lcup{}};
(8,4)*{\slined{}};
(0,0)*{\bullet};
\endxy S_{[h,1]}(M) \quad \text{by (\ref{eq:adjunctioncorrespondence}) and the discussion above}\\
&= g_1 \xy
0;/r.12pc/:
(0,0)*{\sdotd{1^*}}; 
\endxy S_{[h,1]}(M). 
\end{align*}

By interchanging $1$ and $2$, we deduce the remaining equalities. 
\end{proof}

\begin{lemma} \label{lem:dcross11}
Let $i \in \{1,2\}$. 
Identifying the isomorphisms of (\ref{eq:isomE1}), (\ref{eq:isomF2}), (\ref{eq:isomF1}) and (\ref{eq:isombar}), we have 
\begin{enumerate}
\item $S_{[h,1]}\left(M \xy 0;/r.12pc/:
(0,0)*{\dcross{i}{i}};
\endxy \right) = g_i^{-1} \xy 0;/r.12pc/:
(0,0)*{\ucross{i^*}{i^*}};
\endxy S_{[h,1]} (M)$,
\item $S_{\overline{[h,1]}}\left(M \xy 0;/r.12pc/:
(0,0)*{\dcross{i}{i}};
\endxy \right) = g_i^{-1} \xy 0;/r.12pc/:
(0,0)*{\ucross{i^*}{i^*}};
\endxy S_{\overline{[h,1]}} (M)$, 
\item $S_{[h,1]}\left(M \xy 0;/r.12pc/:
(0,0)*{\lcross{i}{i}};
\endxy \right) = g_i^{-1} \xy 0;/r.12pc/:
(0,0)*{\lcross{i^*}{i^*}};
\endxy S_{[h,1]} (M)$,
\item $S_{\overline{[h,1]}}\left(M \xy 0;/r.12pc/:
(0,0)*{\lcross{i}{i}};
\endxy \right) = g_i^{-1} \xy 0;/r.12pc/:
(0,0)*{\lcross{i^*}{i^*}};
\endxy S_{\overline{[h,1]}} (M)$.
\end{enumerate}
\end{lemma}

\begin{proof} 
(1) 
We have an isomorphism 
\begin{align*}
S_{[h-1,1]}(R(\alpha_2) \circ R(\alpha_2)) &\xleftarrow{\theta} S_{[h-1,1]}(R(\alpha_2)) \circ S_{[h-1,1]}(R(\alpha_2)) \\
&\overset{(\ref{eq:fixedisom})}{\simeq} R(\alpha_{2^*}) \circ R(\alpha_{2^*}). 
\end{align*}
It induces an isomorphism of graded algebras
\begin{align} \label{eq:isomdcross11}
&R(2\alpha_2) \simeq \END_{R(2\alpha_2)} (R(\alpha_2) \circ R(\alpha_2)) \\ 
&\overset{S_{[h-1,1]}}{\simeq} \END_{R(2\alpha_{2^*})}(R(\alpha_{2^*}) \circ R(\alpha_{2^*})) \simeq R(2\alpha_{2^*}), \notag
\end{align}
since $R(\alpha_2) \circ R(\alpha_2) = R(2\alpha_2), R(\alpha_{2^*}) \circ R(\alpha_{2^*}) = R(2\alpha_{2^*})$. 
Under this isomorphism, $x_1 \in R(2\alpha_2)$ is sent to $g_2 x_1 \in R(2\alpha_{2^*})$, and $x_2$ is sent to $g_2x_2 \in R(2\alpha_{2^*})$. 

Put $d = (\alpha_2,\alpha_2) = (\alpha_{2^*}, \alpha_{2^*})$.
Note that both $R(2\alpha_2)_{-d}$ and $R(2\alpha_2)_{-d}$ are one-dimensional and generated by $\tau_1$ over $\mathbf{k}$. 
Hence, $\tau_1 \in R(2\alpha_2)$ is sent to $a\tau_1 \in R(2\alpha_{2^*})$ for some $a \in \mathbf{k}^{\times}$. 
Note that we have a relation $x_2\tau_1 - \tau_1 x_1 = 1$ in both $R(2\alpha_2)$ and $R(2\alpha_{2^*})$. 
Since the isomorphism (\ref{eq:isomdcross11}) is an algebra isomorphism, we must have $a = g_2^{-1}$. 

Since $S_{2^*}\left(S_{[h-1,1]}(M)\xy 0;/r.12pc/:
(0,0)*{\dcross{2^*}{2^*}};
\endxy\right) = \xy 0;/r.12pc/:
(0,0)*{\ucross{2^*}{2^*}};
\endxy S_{[h,1]} (M)$, we deduce that 
\[
S_{[h,1]}\left(M \xy 0;/r.12pc/:
(0,0)*{\dcross{2}{2}};
\endxy \right) = g_2^{-1} \xy 0;/r.12pc/:
(0,0)*{\ucross{2^*}{2^*}};
\endxy S_{[h,1]} (M). 
\]
Similarly, we have 
\[
S_{[h,1]}\left(M \xy 0;/r.12pc/:
(0,0)*{\ucross{1}{1}};
\endxy \right) = g_1^{-1} \xy 0;/r.12pc/:
(0,0)*{\dcross{1^*}{1^*}};
\endxy S_{[h,1]} (M). 
\]
By applying adjunction based on the formula (\ref{eq:adjunctioncorrespondence}), we obtain
\[
S_{[h,1]}\left(M \xy 0;/r.12pc/:
(0,0)*{\dcross{1}{1}};
\endxy \right) = g_1^{-1} \xy 0;/r.12pc/:
(0,0)*{\ucross{1^*}{1^*}};
\endxy S_{[h,1]} (M). 
\]

(2) follows from (1) by interchanging $1$ and $2$. 

(3) We have 
\begin{align*}
S_{[h,1]}\left(M \xy 0;/r.12pc/:
(0,0)*{\lcross{i}{i}};
\endxy \right) &= S_{[h,1]}\left(M \xy 0;/r.12pc/:
(0,0)*{\xybox{
(0,0)*{\dcross{}{}};
(8,7)*{\lcap{}};
(-8,-7)*{\lcup{}};
(-4,8)*{\slined{}};
(4,-8)*{\slined{i}};
(12,-12); (12,4) **\dir{-} ?(1)*\dir{>};
(12,-14)*{\scriptstyle i};
(-12,-4); (-12,12) **\dir{-} ?(1)*\dir{>}; 
}}\endxy \right) \\
&= g_i^{-1} \xy 0;/r.12pc/:
  (0,0)*{\xybox{
  (0,0)*{\ucross{}{}};
  (8,-7)*{\lcup{}};
  (-8,7)*{\lcap{}};
  (-4,-8)*{\slineu{i^*}};
  (4,8)*{\slineu{}};
  (12,12); (12,-4) **\dir{-} ?(1)*\dir{>};
  (-12,-14)*{\scriptstyle i^*};
  (-12,4); (-12,-12) **\dir{-} ?(1)*\dir{>}; 
  }}\endxy S_{[h,1]}(M) \quad \text{by (1) and (\ref{eq:adjunctioncorrespondence})} \\
&= g_i^{-1}\xy 0;/r.12pc/:
(0,0)*{\lcross{i^*}{i^*}};
\endxy S_{[h,1]}(M). 
\end{align*}

(4) is parallel to (3). 
\end{proof}
 
\begin{lemma}  \label{lem:unitcounit}
Let $i \in \{1,2\}$. 
Identifying the isomorphisms of (\ref{eq:isomE1}), (\ref{eq:isomF2}), (\ref{eq:isomF1}) and (\ref{eq:isombar}), we have 
\begin{enumerate}
\item $S_{[h,1]}(M \xy 0;/r.12pc/: 
(0,0)*{\rcap{i}};
\endxy) = c_{i,\beta}c_{i^*,-w_J\beta}^{-1}g_i^{-\langle h_i,\beta \rangle +1}\xy 0;/r.12pc/: 
(0,0)*{\rcap{i^*}};
\endxy S_{[h,1]}(M)$, 
\item $S_{[h,1]}(M \xy 0;/r.12pc/: 
(0,0)*{\rcup{i}};
\endxy) = c_{i,\beta}^{-1}c_{i^*,-w_J\beta}g_i^{\langle h_i,\beta\rangle + 1} \xy 0;/r.12pc/: 
(0,0)*{\rcup{i^*}};
\endxy S_{[h,1]}(M)$, 
\item $S_{\overline{[h,1]}}(M \xy 0;/r.12pc/: 
(0,0)*{\rcap{i}};
\endxy) = c_{i,\beta}c_{i^*,-w_J\beta}^{-1}g_i^{-\langle h_i,\beta \rangle +1}\xy 0;/r.12pc/: 
(0,0)*{\rcap{i^*}};
\endxy S_{\overline{[h,1]}}(M)$, 
\item $S_{\overline{[h,1]}}(M \xy 0;/r.12pc/: 
(0,0)*{\rcup{i}};
\endxy) = c_{i,\beta}^{-1}c_{i^*,-w_J\beta}g_i^{\langle h_i,\beta\rangle + 1}\xy 0;/r.12pc/: 
(0,0)*{\rcup{i^*}};
\endxy S_{\overline{[h,1]}}(M)$. 
\end{enumerate}
\end{lemma}

\begin{proof}
(1) (2) First, assume $\langle h_i,\beta \rangle \geq 0$. 
Then, the homomorphism 
\[
M \begin{bmatrix}
\xy 0;/r.12pc/:
(0,0)*{\lcross{i}{i}}; 
\endxy & \xy 0;/r.12pc/:
(0,0)*{\lcup{i}}; 
(2,-0.5)*{\bullet};
(14,-1)*{\scriptstyle \langle h_i, \beta \rangle -1};
\endxy & \cdots & \xy 0;/r.12pc/:
(0,0)*{\lcup{i}};
(2,-0.5)*{\bullet};
\endxy & \xy 0;/r.12pc/:
(0,0)*{\lcup{i}}; 
\endxy
\end{bmatrix}\colon MF_i E_i \oplus M^{\oplus \langle h_i,\beta \rangle} \to ME_i F_i 
\]
is an isomorphism by Theorem \ref{thm:Rouquierver} (2). 
Since $S_{[h,1]}\left(M \xy 0;/r.12pc/:
(0,0)*{\lcup{i}}; 
(2,-0.5)*{\bullet};
(6,-1)*{\scriptstyle p};
\endxy\right) = g_i^p \  \xy 0;/r.12pc/:
(0,0)*{\lcup{}}; 
(0,-4)*{\scriptstyle i^*}; 
(2,-0.5)*{\bullet};
(6,-1)*{\scriptstyle p}; 
\endxy \ S_{[h,1]}(M)$ ((\ref{eq:adjunctioncorrespondence}) and Lemma \ref{lem:sdotd1}) and $S_{[h,1]}\left(M \xy 0;/r.12pc/:
(0,0)*{\lcross{i}{i}}; 
\endxy \right) = g_i^{-1} \xy 0;/r.12pc/:
(0,0)*{\lcross{i^*}{i^*}}; 
\endxy S_{[h,1]}(M)$ (Lemma \ref{lem:dcross11}), (1) is reduced to proving 
\begin{align*}
&S_{[h,1]}\left(M \xy 0;/r.12pc/: (0,0)*{\xybox{
(0,3)*{\rcap{}}; 
(0,-4)*{\lcross{i}{i}};
}}; 
\endxy\right) = c_{i,\beta}c_{i^*,-w_J\beta}^{-1}g_i^{-\langle h_i,\beta \rangle} \xy 0;/r.12pc/: (0,0)*{\xybox{
(0,3)*{\rcap{}}; 
(0,-4)*{\lcross{i^*}{i^*}};
}};
\endxy S_{[h,1]}(M), \\
&S_{[h,1]}\left(M \xy 0;/r.12pc/:
(0,3)*{\rcap{}}; 
(0,-3)*{\lcup{i}}; 
(2,-3.5)*{\bullet};
(6,-4)*{\scriptstyle p};
(9,2)*{\scriptstyle \beta}; 
\endxy \right) = c_{i,\beta}c_{i^*,-w_J\beta}^{-1}g_i^{-\langle h_i,\beta \rangle + 1 + p} \xy 0;/r.12pc/:
(0,3)*{\rcap{}}; 
(0,-3)*{\lcup{}};
(0,-7)*{\scriptstyle i^*}; 
(2,-3.5)*{\bullet};
(6,-4)*{\scriptstyle p};
(12,2)*{\scriptstyle -w_J\beta}
\endxy \ S_{[h,1]}(M), 
\end{align*}
for $0 \leq p \leq \langle h_i,\beta \rangle -1$. 
Note that $\langle h_{i^*}, -w_J\beta \rangle = \langle h_i, \beta \rangle \geq 0$. 
By \cite[Lemma 3.1]{MR3461059}, the left hand side of the first equality is 
\[
-\delta_{\langle h_i,\beta \rangle, 0} c_{i,\beta} S_{[h,1]} \left(M \xy 0;/r.12pc/:
(0,0)*{\lcap{i}};
\endxy \right) \overset{(\ref{eq:adjunctioncorrespondence})}{=} -\delta_{\langle h_i,\beta \rangle, 0} c_{i,\beta} \xy 0;/r.12pc/:
(0,0)*{\lcap{i^*}};
\endxy S_{[h,1]} (M),
\]
and the right hand side is 
\[
c_{i,\beta}c_{i^*,-w_J\beta}^{-1}g_i^{-\langle h_i,\beta \rangle} \delta_{\langle h_i,\beta \rangle, 0}(-c_{i^*,-w_J\beta}) \xy 0;/r.12pc/:
(0,0)*{\lcap{i^*}};
\endxy S_{[h,1]}(M) = -\delta_{\langle h_i,\beta \rangle,0} c_{i,\beta} \xy 0;/r.12pc/:
(0,0)*{\lcap{i^*}};
\endxy S_{[h,1]} (M). 
\]
Hence, the first equality holds. 
For $0 \leq p \leq \langle h_i,\beta \rangle -1$, Definition \ref{def:catquantum} (8) shows
\begin{align*}
&S_{[h,1]}\left(M \xy 0;/r.12pc/:
(0,3)*{\rcap{}}; 
(0,-3)*{\lcup{i}}; 
(2,-3.5)*{\bullet};
(6,-4)*{\scriptstyle p};
\endxy \right) = \delta_{p,\langle h_i,\beta \rangle -1} c_{i,\beta} \id_{S_{[h,1]}(M)}, \\
&\xy 0;/r.12pc/:
(0,3)*{\rcap{}}; 
(0,-3)*{\lcup{}}; 
(0,-7)*{\scriptstyle i^*}; 
(2,-3.5)*{\bullet};
(6,-4)*{\scriptstyle p};
\endxy S_{[h,1]}(M) = \delta_{p,\langle h_i,\beta \rangle -1} c_{i^*,-w_J\beta} \id_{S_{[h,1]}(M)}. 
\end{align*}
If $p = \langle h_i,\beta \rangle -1$, then $g_i^{-\langle h_i,\beta \rangle + 1 +p} = 1$. 
Hence, the second equality holds, and (1) is proved in this case. 
By the uniqueness of adjoint functors, (2) also follows in this case. 

Next, assume $\langle h_i,\beta \rangle < 0$. 
Then, the homomorphism 
\[
M \begin{bmatrix}
\xy 0;/r.12pc/:
(0,0)*{\lcross{i}{i}}; 
\endxy & \xy 0;/r.12pc/:
(0,0)*{\lcap{i}}; 
\endxy & \xy 0;/r.12pc/:
(0,0)*{\lcap{i}}; 
(-2,.5)*{\bullet};
\endxy & \cdots & \xy 0;/r.12pc/:
(0,0)*{\lcap{i}}; 
(-2,.5)*{\bullet};
(-14,3)*{\scriptstyle -\langle h_j, \beta \rangle -1};
\endxy
\end{bmatrix}^\top \colon M F_i E_i \to M E_i F_i \oplus M^{\oplus -\langle h_i,\beta \rangle} 
\]
is an isomorphism by Theorem \ref{thm:Rouquierver} (2). 
Since $S_{[h,1]}\left(M \ \xy 0;/r.12pc/:
(0,0)*{\lcap{}}; 
(0.5,3)*{\scriptstyle i};
(-2,.5)*{\bullet};
(-4.5,3)*{\scriptstyle p};
\endxy\right) = g_i^p \ \xy 0;/r.12pc/:
(0,0)*{\lcap{}};
(.5,3)*{\scriptstyle i^*};
(-2,.5)*{\bullet};
(-4.5,3)*{\scriptstyle p};
\endxy S_{[h,1]}(M)$ ((\ref{eq:adjunctioncorrespondence}) and Lemma \ref{lem:sdotd1}) and $S_{[h,1]}\left(M \xy 0;/r.12pc/:
(0,0)*{\lcross{i}{i}}; 
\endxy\right) = g_i^{-1} \xy 0;/r.12pc/:
(0,0)*{\lcross{i^*}{i^*}}; 
\endxy S_{[h,1]}(M)$ (Lemma \ref{lem:dcross11}), (2) is reduced to proving
\begin{align*}
&S_{[h,1]}\left(M \xy 0;/r.12pc/:
(0,4)*{\lcross{}{}}; 
(0,-3)*{\rcup{i}};
\endxy \right) = c_{i,\beta}^{-1}c_{i^*,-w_J\beta} g_i^{\langle h_i,\beta \rangle} \xy 0;/r.12pc/:
(0,4)*{\lcross{}{}}; 
(0,-3)*{\rcup{i^*}};
\endxy S_{[h,1]}(M), \\
&S_{[h,1]}\left(M \ \xy 0;/r.12pc/:
(0,3)*{\lcap{i}}; 
(-2,3.5)*{\bullet};
(-6,6)*{\scriptstyle p};
(0,-3)*{\rcup{}};
\endxy \right) = c_{i,\beta}^{-1}c_{i^*, -w_J\beta} g_i^{\langle h_i,\beta \rangle + 1 + p}  \xy 0;/r.12pc/:
(0,3)*{\lcap{}};
(0.5,6)*{\scriptstyle i^*}; 
(-2,3.5)*{\bullet};
(-4.5,6)*{\scriptstyle p};
(0,-3)*{\rcup{}};
\endxy S_{[h,1]}(M) \ (0 \leq p \leq -\langle h_i,\beta \rangle -1). 
\end{align*}
Note that $\langle h_i,\beta \rangle = \langle h_i,-w_J\beta \rangle < 0$. 
By \cite[Lemma 3.1]{MR3461059}, both sides of the first equality are zero. 
For $0 \leq p \leq -\langle h_i,\beta \rangle -1$, Definition \ref{def:catquantum} (8) shows
\begin{align*}
&S_{[h,1]}\left(M \xy 0;/r.12pc/:
(0,3)*{\lcap{}};
(0.5,6)*{\scriptstyle i};
(-2,3.5)*{\bullet};
(-4.5,6)*{\scriptstyle p};
(0,-3)*{\rcup{}};
\endxy \right) = \delta_{p,-\langle h_i,\beta \rangle -1} c_{i,\beta}^{-1}\id_{S_{[h,1]}(M)}, \\
& \xy 0;/r.12pc/:
(0,3)*{\lcap{}};
(0.5,6)*{\scriptstyle i^*}; 
(-2,3.5)*{\bullet};
(-4.5,6)*{\scriptstyle p};
(0,-3)*{\rcup{}};
\endxy S_{[h,1]}(M) = \delta_{p,-\langle h_i,\beta \rangle -1} c_{i^*,-w_J\beta}^{-1} \id_{S_{[h,1]}(M)}. 
\end{align*}
Hence, the second equality holds, and (2) is proved in this case. 
By the uniqueness of adjoint, (1) also follows. 

\begin{comment}
By \cite[Proposition 3.3]{MR3461059}, we have 
\begin{align*}
\xy 0;/r.12pc/:
(0,3)*{\lcap{}}; 
(0,-4)*{\rcross{i}{i}};
(2,3.5)*{\bullet};
(12,4)*{\scriptstyle -\langle h_i,\beta \rangle};
(9,-4)*{\scriptstyle \beta};
\endxy = c_{i,\beta}^{-1} \xy 0;/r.12pc/: 
(0,0)*{\rcap{i}};
\endxy, \ \xy 0;/r.12pc/:
(0,3)*{\lcap{}}; 
(0,-4)*{\rcross{i^*}{i^*}};
(2,3.5)*{\bullet};
(12,4)*{\scriptstyle -\langle h_i,\beta \rangle};
(9,-4)*{\scriptstyle -w_J\beta};
\endxy = c_{i,-w_J\beta}^{-1} \xy 0;/r.12pc/: 
(0,0)*{\rcap{i}};
\endxy. 
\end{align*}
Hence, 
\begin{align*}
&S_{[h,1]}(M \xy 0;/r.12pc/: 
(0,0)*{\rcap{i}};
\endxy) = c_{i,\beta} S_{[h,1]}\left(M \xy 0;/r.12pc/:
(0,3)*{\lcap{}}; 
(0,-4)*{\rcross{i}{i}};
(2,3.5)*{\bullet};
(12,4)*{\scriptstyle -\langle h_i,\beta \rangle};
(9,-4)*{\scriptstyle \beta};
\endxy \right) = c_{i,\beta}g_i^{-\langle h_i,\beta \rangle -1} \xy 0;/r.12pc/:
(0,3)*{\lcap{}}; 
(0,-4)*{\rcross{i^*}{i^*}};
(2,3.5)*{\bullet};
(12,4)*{\scriptstyle -\langle h_i,\beta \rangle};
(9,-4)*{\scriptstyle -w_J\beta};
\endxy S_{[h,1]}(M) \\ 
&= c_{i,\beta}g_i^{-\langle h_i,\beta \rangle -1}c_{i^*,-w_J\beta}^{-1} \xy 0;/r.12pc/: 
(0,0)*{\rcap{i^*}};
\endxy S_{[h,1]}(M). 
\end{align*}
\end{comment}

(3) and (4) follow from (1) and (2) by interchanging 1 and 2. 
\end{proof}

\begin{lemma} \label{lem:dcross12}
Identifying the isomorphisms of (\ref{eq:isomE1}), (\ref{eq:isomF2}), (\ref{eq:isomF1}) and (\ref{eq:isombar}), we have 
\begin{enumerate}
\item $S_{[h,1]}\left(M \xy 0;/r.12pc/:
(0,0)*{\lcross{2}{1}};
\endxy \right) = t_{1,2}^{-1}t_{2^*,1^*}^{-1} \xy 0;/r.12pc/:
(0,0)*{\lcross{1^*}{2^*}};
\endxy S_{[h,1]}(M)$, 
\item $S_{[h,1]}\left(M \xy 0;/r.12pc/:
(0,0)*{\rcross{1}{2}};
\endxy \right) = t_{1,2}t_{2^*,1^*} \xy 0;/r.12pc/:
(0,0)*{\rcross{2^*}{1^*}};
\endxy S_{[h,1]}(M)$, 
\item  $S_{[h,1]}\left(M \xy 0;/r.12pc/:
(0,0)*{\dcross{1}{2}};
\endxy \right) = t_{1,2}^{-1}t_{2^*,1^*}^{-1} \xy 0;/r.12pc/:
(0,0)*{\ucross{2^*}{1^*}};
\endxy S_{[h,1]}(M)$,
\item $S_{[h,1]}\left(M \xy 0;/r.12pc/:
(0,0)*{\dcross{2}{1}};
\endxy \right) = t_{1,2}^2t_{1^*,2^*}^{-1}t_{2^*,1^*}g_1^{-a_{1,2}} \xy 0;/r.12pc/:
(0,0)*{\ucross{1^*}{2^*}};
\endxy S_{[h,1]}(M)$, 
\item $S_{\overline{[h,1]}}\left(M \xy 0;/r.12pc/:
(0,0)*{\dcross{2}{1}};
\endxy \right) = t_{2,1}^{-1}t_{1^*,2^*}^{-1} \xy 0;/r.12pc/:
(0,0)*{\ucross{1^*}{2^*}};
\endxy S_{\overline{[h,1]}}(M)$,
\item $S_{\overline{[h,1]}}\left(M \xy 0;/r.12pc/:
(0,0)*{\dcross{1}{2}};
\endxy \right) = t_{2,1}^2t_{2^*,1^*}^{-1}t_{1^*,2^*}g_2^{-a_{2,1}} \xy 0;/r.12pc/:
(0,0)*{\ucross{2^*}{1^*}};
\endxy S_{\overline{[h,1]}}(M)$. 
\end{enumerate}
\end{lemma}

\begin{proof}
(1) Note that the following diagram commutes: 
\begin{equation*}
\begin{tikzcd}
S_1(MF_2E_1) \arrow[r,"{\xy 0;/r.12pc/:
(0,0)*{\lcross{2}{1}};
\endxy}"] & S_1(ME_1F_2) \\
S_1(M\tilde{F}_2E_1) \arrow[u,twoheadrightarrow,"\text{can}"]\arrow[r,"{\xy 0;/r.12pc/:
(0,0)*{\lcross{}{1}};
(-4,-7)*{\scriptstyle \tilde{2}}
\endxy}"'] & S_1(ME_1 \tilde{F}_2). \arrow[u,twoheadrightarrow,"\text{can}"]
\end{tikzcd}
\end{equation*}

Identifying 
\[
S_1(ME_1\tilde{F}_2) \simeq F_1S_1(M) \circ S_1(R(\alpha_2)),\ S_1(M\tilde{F}_2E_1) \simeq F_1(S_1(M) \circ S_1(R(\alpha_2)))
\]
based on the monoidality of $S_1$, 
we have 
\begin{align*}
&S_1\left(M \xy 0;/r.12pc/:
(0,0)*{\lcross{}{1}};
(-4,-6.5)*{\scriptstyle \tilde{2}}
\endxy \right) \\ 
&= S_1\left(M \xy 0;/r.12pc/:
(0,0)*{\xybox{
(0,0)*{\dcross{}{}};
(8,7)*{\lcap{}};
(-8,-7)*{\lcup{}};
(-4,8)*{\slined{}};
(4,-8)*{\slined{}};
(4,-14.5)*{\scriptstyle \tilde{2}};
(12,-12); (12,4) **\dir{-} ?(1)*\dir{>};
(12,-14)*{\scriptstyle 1};
(-12,-4); (-12,12) **\dir{-} ?(1)*\dir{>}; 
}}\endxy \right) \\
&= [ F_1(S_1(M) \circ S_1(R(\alpha_2))) \xrightarrow{c_{1,s_1\beta} \xy 0;/r.12pc/: (0,0)*{\lcup{1}}; \endxy} F_1(E_1F_1S_1(M) \circ S_1(R(\alpha_2)))  \\
&\quad \xrightarrow{\sigma_{1,2}} F_1E_1(F_1S_1(M) \circ S_1(R(\alpha_2))) \xrightarrow{c_{1,\alpha_1+s_1\beta+s_1\alpha_2}^{-1}\xy 0;/r.12pc/: (0,0)*{\lcap{1}}; \endxy} F_1S_1(M) \circ S_1(R(\alpha_2))] \\
&\quad \text{by Theorem \ref{thm:anotheraction}}, 
\end{align*} 
which coincides with $c_{1,s_1\beta}c_{1,\alpha_1+s_1\beta+s_1\alpha_2}^{-1} = t_{1,2}^{-1}$-multiple of the canonical surjection 
\[
F_1(S_1(M) \circ S_1R(\alpha_2)) \to F_1S_1(M) \circ S_1R(\alpha_2)
\]
by Lemma \ref{lem:canonicalsurj}. 
Moreover, it is an isomorphism whose inverse is $S_1\left(M \xy 0;/r.12pc/:
(0,0)*{\rcross{1}{}};
(4,-6.5)*{\scriptstyle \tilde{2}}
\endxy\right)$. 
Hence, we have a commutative diagram 
\begin{equation*}
\begin{tikzcd}
R(\alpha_1) \circ S_1(M) \circ S_1(R(\alpha_2)) \arrow[d,twoheadrightarrow,"\text{can}"]\arrow[r,"t_{1,2}^{-1}"] & R(\alpha_1) \circ S_1(M) \circ S_1(R(\alpha_2)) \arrow[d,twoheadrightarrow,"\text{can}"] \\ 
F_1(S_1(M) \circ S_1(R(\alpha_2))) \arrow[r,"{S_1 \left(M \xy 0;/r.12pc/:
(0,0)*{\lcross{}{1}};
(-4,-6.5)*{\scriptstyle \tilde{2}}; 
\endxy \right)}"'] & F_1S_1(M) \circ S_1(R(\alpha_2)). 
\end{tikzcd}
\end{equation*}

Similarly, putting $M' = S_{[h,1]}(M)$ and 
identifying 
\begin{align*}
&S'_{2^*} (\tilde{F}_{1^*}E_{2^*}M') \simeq S'_{2^*}(R(\alpha_{1^*})) \circ S'_{2^*}(M')F_{2^*}, \\
&S'_{2^*}(E_{2^*}\tilde{F}_{1^*}M') \simeq (S'_{2^*}(R(\alpha_{1^*})) \circ S'_{2^*}(M'))F_{2^*}, 
\end{align*}
we have 
\begin{align*}
&S'_{2^*} \left( \xy 0;/r.12pc/:
(0,0)*{\rcross{2^*}{}};
(4,-6.5)*{\scriptstyle \tilde{1^*}}
\endxy M' \right) = S'_{2^*} \left( \xy 0;/r.12pc/:
(0,0)*{\xybox{
(0,0)*{\dcross{}{}};
(-8,7)*{\rcap{}};
(8,-7)*{\rcup{}};
(4,8)*{\slined{}};
(-4,-8)*{\slined{}};
(-12,-12); (-12,4) **\dir{-} ?(1)*\dir{>};
(-12,-14)*{\scriptstyle 2^*};
(-4,-15)*{\scriptstyle \tilde{1}^*}; 
(12,-4); (12,12) **\dir{-} ?(1)*\dir{>}; 
}}
\endxy M' \right), 
\end{align*}
which coincides with $c_{2^*,-s_{2^*}w_J\beta}^{-1}c_{2^*,-s_{2^*}(w_J\beta -\alpha_{2^*}+\alpha_{1^*})} = t_{2^*,1^*}^{-1}$-multiple of the canonical surjection 
\[
(S'_{2^*}(R(\alpha_{1^*})) \circ S'_{2^*}(M'))F_{2^*} \xrightarrow{\text{can}} S'_{2^*}(R(\alpha_{1^*})) \circ S'_{2^*}(M')F_{2^*}. 
\]
Moreover, it is an isomorphism whose inverse is $S'_{2^*}\left( \xy 0;/r.12pc/:
(0,0)*{\lcross{}{2^*}};
(-4,-6.5)*{\scriptstyle \tilde{1^*}}
\endxy M' \right)$. 
Hence, we have a commutative diagram
\[
\begin{tikzcd}
S'_{2^*}(R(\alpha_{1^*})) \circ S_{[h-1,1]}(M) \circ R(\alpha_{2^*}) \arrow[r,"t_{2^*,1^*}"]\arrow[d,twoheadrightarrow,"{\text{can}}"] & S'_{2^*}(R(\alpha_{1^*})) \circ S_{[h-1,1]}(M) \circ R(\alpha_{2^*}) \arrow[d,twoheadrightarrow,"{\text{can}}"] \\
S'_{2^*}(R(\alpha_{1^*})) \circ S'_{2^*}(M')F_{2^*} \arrow[r,"{S'_{2^*}\left( \xy 0;/r.12pc/:
(0,0)*{\lcross{}{2^*}};
(-4,-6.5)*{\scriptstyle \tilde{1^*}}
\endxy M' \right)}"'] & (S'_{2^*}(R(\alpha_{1^*})) \circ S'_{2^*}(M'))F_{2^*}. 
\end{tikzcd}
\]

Using the isomorphism $S_{[h-1,2]}(S_1(R(\alpha_2))) \simeq R(\alpha_{2^*})$ of (\ref{eq:fixedisom}), we have an isomorphism 
\[
S_{[h-1,2]}(R(\alpha_1) \circ S_1(M) \circ S_1(R(\alpha_2))) \xleftarrow{\theta} S_{[h-1,2]}(R(\alpha_1)) \circ S_{[h-1,1]}(M) \circ R(\alpha_{2^*}). 
\] 
Note that the isomorphism $S_{[h,2]}(R(\alpha_1)) \simeq R(\alpha_{1^*})$ of (\ref{eq:fixedisom}) induces an isomorphism $S_{[h-1,2]} (R(\alpha_1)) = S'_{2^*}S_{[h,2]}(R(\alpha_1)) \simeq S'_{2^*}(R(\alpha_{1^*}))$. 
Combining these together, we deduce (1). 

(2) The morphism is the inverse of that of (1). 

(3) We have 
\begin{align*}
&S_{[h,1]}\left(M \xy 0;/r.12pc/:
(0,0)*{\dcross{1}{2}};
\endxy \right) = S_{[h,1]} \left(M \xy 0;/r.12pc/:
  (0,0)*{\xybox{
  (0,0)*{\lcross{}{}};
  (8,-7)*{\lcup{}};
  (-8,7)*{\lcap{}};
  (-4,-8)*{\slined{2}};
  (4,8)*{\slined{}};
  (12,12); (12,-4) **\dir{-} ?(1)*\dir{>};
  (-12,-14)*{\scriptstyle 1};
  (-12,4); (-12,-12) **\dir{-} ?(1)*\dir{>}; 
  }}\endxy \right) \\
&= t_{1,2}^{-1}t_{2^*,1^*}^{-1} \xy 0;/r.12pc/:
(0,0)*{\xybox{
(0,0)*{\lcross{}{}};
(8,7)*{\lcap{}};
(-8,-7)*{\lcup{}};
(-4,8)*{\slineu{}};
(4,-8)*{\slineu{2^*}};
(12,-12); (12,4) **\dir{-} ?(1)*\dir{>};
(12,-14)*{\scriptstyle 1^*};
(-12,-4); (-12,12) **\dir{-} ?(1)*\dir{>}; 
}}\endxy S_{[h,1]}(M) \quad \text{by (\ref{eq:adjunctioncorrespondence}) and (1)} \\
&= t_{1,2}^{-1}t_{2^*,1^*}^{-1} \xy 0;/r.12pc/:
(0,0)*{\ucross{2^*}{1^*}};
\endxy S_{[h,1]}(M). 
\end{align*}

(4) We have 
\begin{align*}
&S_{[h,1]}\left(M \xy 0;/r.12pc/:
(0,0)*{\dcross{2}{1}};
\endxy \right) = S_{[h,1]}\left(M \xy 0;/r.12pc/:
  (0,0)*{\xybox{
  (0,0)*{\rcross{}{}};
  (-8,-7)*{\rcup{}};
  (8,7)*{\rcap{}};
  (4,-8)*{\slined{2}};
  (-4,8)*{\slined{}};
  (-12,12); (-12,-4) **\dir{-} ?(1)*\dir{>};
  (12,-14)*{\scriptstyle 1};
  (12,4); (12,-12) **\dir{-} ?(1)*\dir{>}; 
  }}\endxy \right) \\
&= t_{1,2}t_{2^*,1^*} c_{1,\beta}^{-1}c_{1^*,-w_J\beta}g_1^{\langle h_1,\beta \rangle + 1} c_{1,\beta+\alpha_1+\alpha_2} c_{1^*,-w_J(\beta + \alpha_1 + \alpha_2)}^{-1}g_1^{-\langle h_1, \beta + \alpha_1+\alpha_2 \rangle +1} \times \\
&\quad \xy 0;/r.12pc/:
(0,0)*{\xybox{
(0,0)*{\rcross{}{}};
(-8,7)*{\rcap{}};
(8,-7)*{\rcup{}};
(4,8)*{\slineu{}};
(-4,-8)*{\slineu{2^*}};
(-12,-12); (-12,4) **\dir{-} ?(1)*\dir{>};
(-12,-14)*{\scriptstyle 1^*};
(12,-4); (12,12) **\dir{-} ?(1)*\dir{>}; 
}}\endxy S_{[h,1]}(M) \quad \text{by Lemma \ref{lem:unitcounit} and (2)} \\
&= t_{1,2}^2t_{1^*,2^*}^{-1}t_{2^*,1^*}g_1^{-a_{1,2}} \xy 0;/r.12pc/:
(0,0)*{\ucross{1^*}{2^*}};
\endxy S_{[h,1]}(M). 
\end{align*}

(5) follows from (3) by interchanging $1$ and $2$. 

(6) follows from (4) by interchanging $2$ and $1$. 
\end{proof}

\begin{lemma}
Put $b_{1,2} = t_{1,2}^{-1}t_{2,1}^{-2}t_{1^*,2^*}^{-1}g_2^{a_{2,1}}$ and $b_{2,1} = t_{2,1}t_{1,2}^2t_{2^*,1^*}g_1^{-a_{1,2}}$ 
Then, we have $b_{1,2}b_{2,1} = 1$ and 
\[
S_{[h,1]}\left( M \xy 
0;/r.12pc/: 
(0,0)*{\dcross{1}{2}}; 
\endxy\right) = b_{1,2} S_{\overline{[h,1]}}\left( M \xy 
0;/r.12pc/: 
(0,0)*{\dcross{1}{2}}; 
\endxy\right), S_{[h,1]}\left( M \xy 
0;/r.12pc/: 
(0,0)*{\dcross{2}{1}}; 
\endxy\right) = b_{2,1} S_{\overline{[h,1]}}\left( M \xy 
0;/r.12pc/: 
(0,0)*{\dcross{2}{1}}; 
\endxy\right), 
\]
under the identification (\ref{eq:natisom}). 
\end{lemma}

\begin{proof}
We may assume $(\alpha_2,\alpha_2) \geq (\alpha_1,\alpha_1)$. 
By definition, we have 
\[
b_{1,2}b_{2,1} = t_{1,2}t_{2,1}^{-1}t_{1^*,2^*}^{-1}t_{2^*,1^*}g_2^{a_{2,1}}g_1^{-a_{1,2}}.
\]
We compute it using Lemma \ref{lem:g1}. 
If $\mathsf{A}_J$ is of type $A_2$, we have 
\[
b_{1,2}b_{2,1} = t_{1,2}^2t_{2,1}^{-2} \left(-\frac{t_{1,2}}{t_{2,1}}\right)^{-1} \left( -\frac{t_{2,1}}{t_{1,2}}\right) =1. 
\]
If $\mathsf{A}_J$ is of type $B_2$, we have 
\[
b_{1,2}b_{2,1} = t_{1,2}t_{2,1}^{-1}t_{1,2}^{-1}t_{2,1} 1^{-1} (-1)^2 = 1. 
\]
Otherwise, we have 
\[
b_{1,2}b_{2,1} = t_{1,2}t_{2,1}^{-1}t_{1,2}^{-1}t_{2,1} = 1. 
\]
The first assertion is proved. 

The remaining assertions follow from Lemma \ref{lem:dcross12}. 
\end{proof}

Let $i \in I \setminus \{1,2\}$.
Put $n = \height (w_J\alpha_i)$. 
Since $\Delta(w_J,i) \in \gMod{R_J(w_J\alpha_i)}$ and $w_J\alpha_i \in \alpha_i + \mathbb{Z}\alpha_1 + \mathbb{Z}\alpha_2$, 
the idempotent $e(w_J\alpha_i-\alpha_i,i)$ acts on $\Delta(w_J,i)$ as the identity and $\tau_{n-1}\Delta(w_J,i) = 0$. 
Hence, th action of $x_n$ on $\Delta(w_J,i)$ yields an $R(w_J\alpha_i)$-module endomorphism, which is denoted by $z_i$. 

\begin{lemma} \label{lem:sdotdi}
Let $i \in I \setminus \{1,2\}$.
Identifying the isomorphism (\ref{eq:isomFi}), the endomorphism $S_{[h,1]}\left(M \xy 0;/r.12pc/:
(0,0)*{\sdotd{i}};
\endxy \right)$ coincides with the endomorphism 
\[
S_{[h,1]}(M) \circ \Delta(w_J,i) \xrightarrow{z_i} S_{[h,1]}(M) \circ \Delta(w_J, i). 
\]
Similarly, $S_{\overline{[h,1]}}\left(M \xy 0;/r.12pc/:
(0,0)*{\sdotd{i}};
\endxy \right)$ coincides with the endomorphism 
\[
S_{\overline{[h,1]}}(M) \circ \Delta(w_J,i) \xrightarrow{z_i} S_{\overline{[h,1]}}(M) \circ \Delta(w_J,i). 
\]
In particular, we have $S_{[h,1]}\left(M \xy 0;/r.12pc/:
(0,0)*{\sdotd{i}};
\endxy \right) = S_{\overline{[h,1]}}\left(M \xy 0;/r.12pc/:
(0,0)*{\sdotd{i}};
\endxy \right)$. 
\end{lemma}

\begin{proof}
We may assume $M = \mathbf{1}$. 
For $1 \leq p \leq h$, we inductively prove that the endomorphisms of $S_{[p,1]}(R(\alpha_i))$ induced by the multiplication by $x_1$ on $R(\alpha_i)$ coincides with the multiplication by $x_{\height (s_{i_p} \cdots s_{i_1}\alpha_i)}$.
Put $w_p = s_{i_p} \cdots s_{i_1}, n_p = \height (w_p\alpha_i)$. 
Since $\chi(L(w_{p-1},i))$ is the unipotent quantum minor 
\[
D(w_{p-1}s_i\Lambda_i,w_{p-1}\Lambda_i) = D(w_{p-1}s_i\Lambda_i,\Lambda_i) \quad \text{(Remark \ref{rem:determinantialmodule})}
\]
and $r_1(D(w_{p-1} \Lambda_i,\Lambda_i)) = r_2(D(w_{p-1}\Lambda_i,\Lambda_i)) = 0$ by \cite[Lemma 9.1.5]{MR3758148}, 
the idempotent $e(w_{p-1}\alpha_i-\alpha_i,i)$ acts on $\Delta(w_{p-1},i)$ as the identity. 
Hence, $\Delta(w_{p-1},i)$ is a quotient of $X F_i$ for some $X \in \gMod{{}_{i_p}R(w_{p-1}\alpha_i - \alpha_i)}$, 
and the following diagram commutes by the induction hypothesis: 
\begin{equation*}
\begin{tikzcd}
XF_i \arrow[d,twoheadrightarrow]\arrow[r,"{\xy 0;/r.12pc/: (0,0)*{\sdotd{i}}; \endxy}"] & XF_i \arrow[d,twoheadrightarrow] \\
\Delta(w_{p-1},i) \arrow[r,"{S_{[p,1]}\left( \mathbf{1}\xy 0;/r.12pc/: (0,0)*{\sdotd{i}}; \endxy \right)}"'] & \Delta(w_{p-1},i). 
\end{tikzcd}
\end{equation*}
Applying $S_{i_p}$, we obtain a commutative diagram 
\begin{equation*}
\begin{tikzcd}
S_{i_p}(X)\circ S_{i_p}(R(\alpha_i)) \arrow[d,twoheadrightarrow]\arrow[r,"{S_{i_p}\left( \mathbf{1} \xy 0;/r.12pc/: (0,0)*{\sdotd{i}}; \endxy\right)}"] & S_{i_p}(X) \circ S_{i_p}(R(\alpha_i)) \arrow[d,twoheadrightarrow] \\
\Delta(w_p,i) \arrow[r,"{S_{[p,1]}\left(\mathbf{1} \xy 0;/r.12pc/: (0,0)*{\sdotd{i}}; \endxy\right)}"'] & \Delta(w_p,i). 
\end{tikzcd}
\end{equation*}
By Lemma \ref{lem:yaction}, the endomorphism $S_{i_p}\left(\xy 0;/r.12pc/: (0,0)*{\sdotd{i}}; \endxy\right)$ of $S_{i_p}(R(\alpha_i))$ is the left action of $x_{1-a_{i_p,i}}$. 
It implies that the endomorphism $S_{[p,1]}\left(\xy 0;/r.12pc/: (0,0)*{\sdotd{i}}; \endxy\right)$ of $\Delta(w_p, i)$ is the left action of $x_{n_p}$. 
In fact, the discussion above shows that these two endomorphisms coincide on the image of 
\[
S_{i_p}(X) \boxtimes S_{i_p}(R(\alpha_i)) \hookrightarrow S_{i_p}(X)\circ S_{i_p}(R(\alpha_i)) \twoheadrightarrow \Delta(w_p,i).
\] 
Since $\Delta(w_p,i)$ is generated by this image as an $R(w_p\alpha_i)$-module, these two endomorphisms coincide on the whole $\Delta(w_p,i)$. 

The proof for $S_{\overline{[h,1]}}$ is parallel.
\end{proof}

\begin{lemma}
Let $i \in I \setminus \{1,2\}$. 
Identifying the isomorphisms of (\ref{eq:isomE1}), (\ref{eq:isomF2}), (\ref{eq:isomF1}), (\ref{eq:isombar}) and (\ref{eq:isomFi}), we have 
\begin{enumerate}
\item $S_{[h,1]}\left(M \xy 0;/r.12pc/:
(0,0)*{\lcross{i}{1}};
\endxy \right)$ is the $t_{1,i}^{-1}$-multiple of the canonical isomorphism 
\[
F_{1^*}(S_{[h,1]}(M) \circ \Delta(w_J,i)) \to F_{1^*}S_{[h,1]}(M)\circ \Delta(w_J,i).
\]  
In addition, $S_{[h,1]} \left(M \xy 0;/r.12pc/:
(0,0)*{\rcross{1}{i}};
\endxy \right)$ is the $t_{1,i}$-multiple of the inverse of this canonical isomorphism. 
\item $S_{[h,1]}\left(M \xy 0;/r.12pc/:
(0,0)*{\dcross{1}{i}};
\endxy \right)$ is the $t_{1,i}^{-1}$-multiple of the canonical injective homomorphism 
\[
E_{1^*}S_{[h,1]}(M) \circ \Delta(w_J,i) \to E_{1^*}(S_{[h,1]}(M) \circ \Delta(w_J,i)). 
\] 
\item $S_{\overline{[h,1]}}\left(M \xy 0;/r.12pc/:
(0,0)*{\dcross{2}{i}};
\endxy \right)$ is the $t_{2,i}^{-1}$-multiple of the canonical injective homomorphism 
\[
E_{2^*}S_{\overline{[h,1]}}(M) \circ \Delta(w_J,i) \to E_{2^*}(S_{\overline{[h,1]}}(M) \circ \Delta(w_J,i)). 
\]
\item There exists $b_{2,i} \in \mathbf{k}^{\times}$ independent of $M$ such that 
\[
S_{[h,1]}\left(M \xy 0;/r.12pc/:
(0,0)*{\dcross{2}{i}};
\endxy \right) = b_{2,i}S_{\overline{[h,1]}}\left(M \xy 0;/r.12pc/:
(0,0)*{\dcross{2}{i}};
\endxy \right).
\] 
Moreover, we have 
\[
S_{[h,1]}\left(M \xy 0;/r.12pc/:
(0,0)*{\dcross{i}{2}};
\endxy \right) = b_{2,i}^{-1}S_{\overline{[h,1]}}\left(M \xy 0;/r.12pc/:
(0,0)*{\dcross{i}{2}};
\endxy \right).
\] 
\item There exists $b_{1,i} \in \mathbf{k}^{\times}$ independent of $M$ such that 
\[
S_{[h,1]}\left(M \xy 0;/r.12pc/:
(0,0)*{\dcross{1}{i}};
\endxy \right) = b_{1,i} S_{\overline{[h,1]}}\left(M \xy 0;/r.12pc/:
(0,0)*{\dcross{1}{i}};
\endxy \right).
\] 
Moreover, we have 
\[
S_{[h,1]}\left(M \xy 0;/r.12pc/:
(0,0)*{\dcross{i}{1}};
\endxy \right) = b_{1,i}^{-1}S_{\overline{[h,1]}}\left(M \xy 0;/r.12pc/:
(0,0)*{\dcross{i}{1}};
\endxy \right).
\] 
\end{enumerate}
\end{lemma}

\begin{proof}
(1) The former assertion is proved by the same argument as that of Lemma \ref{lem:dcross12} (1). 
The latter one follows by taking the inverse of the morphism. 

(2) By (\ref{eq:adjunctioncorrespondence}) and (1), we have 
\begin{align*}
&S_{[h,1]}\left(M \xy 0;/r.12pc/:
(0,0)*{\dcross{1}{i}};
\endxy \right) = S_{[h,1]} \left(M \xy 0;/r.12pc/:
  (0,0)*{\xybox{
  (0,0)*{\lcross{}{}};
  (8,-7)*{\lcup{}};
  (-8,7)*{\lcap{}};
  (-4,-8)*{\slined{i}};
  (4,8)*{\slined{}};
  (12,12); (12,-4) **\dir{-} ?(1)*\dir{>};
  (-12,-14)*{\scriptstyle 1};
  (-12,4); (-12,-12) **\dir{-} ?(1)*\dir{>}; 
  }}\endxy \right) \\
&= [E_{1^*}S_{[h,1]}(M) \circ \Delta(w_J,i) \xrightarrow{\xy 0;/r.12pc/:
(0,0)*{\lcup{i^*}};
\endxy} E_{1^*}F_{1^*}(E_{1^*}S_{[h,1]}(M) \circ \Delta(w_J,i)) \\
& \quad \xrightarrow{t_{1,i}^{-1} \text{can}} E_{1^*}(F_{1^*}E_{1^*}S_{[h,1]}(M) \circ \Delta(w_J,i)) \xrightarrow{\xy 0;/r.12pc/:
(0,0)*{\lcap{i^*}};
\endxy} E_{1^*}(S_{[h,1]}(M) \circ \Delta(w_J,i))]. 
\end{align*}
Let $u \in S_{[h,1]}(M), v \in \Delta(w_J,i)$. 
Under $S_{[h,1]}\left(M \xy 0;/r.12pc/:
(0,0)*{\dcross{1}{i}};
\endxy \right)$, the element $E_{1^*}u \boxtimes v \in E_{1^*}S_{[h,1]}(M ) \circ \Delta(w_J,i)$ is sent to 
\begin{align*}
&E_{1^*}u \boxtimes v \mapsto E_{1^*}(e(1^*) \boxtimes (E_{1^*}u \boxtimes v)) \\
&\mapsto t_{1,i}^{-1} E_{1^*}((e(1^*) \boxtimes E_{1^*}u) \boxtimes v) \mapsto t_{1,i}^{-1} E_{1^*}(u \boxtimes v). 
\end{align*}
Hence, the assertion follows. 

(3) It follows from (2) by interchanging $1$ and $2$. 

(4) Since $\mathbf{1} \xy 0;/r.12pc/: (0,0)*{\dcross{\tilde{2}}{i}}; \endxy \colon R(\alpha_2) \circ R(\alpha_i) \to R(\alpha_i) \circ R(\alpha_2)$ is of degree $-(\alpha_2,\alpha_i)$, 
the homomorphism $S_{[h-1,1]}\left(\mathbf{1} \xy 0;/r.12pc/:
(0,0)*{\dcross{\tilde{2}}{i}};
\endxy\right)$ is of degree $-(\alpha_2,\alpha_i)$. 
We have isomorphisms
\begin{align*}
S_{[h-1,1]}(R(\alpha_2) \circ R(\alpha_i)) &\xleftarrow[]{\theta} S_{[h-1,1]}(R(\alpha_2)) \circ S_{[h-1,1]}(R(\alpha_i)) \\
&\overset{(\ref{eq:fixedisom})}{\simeq} R(\alpha_{2^*})\circ \Delta(s_{2^*}w_J,i), \\
S_{[h-1,1]}(R(\alpha_i) \circ R(\alpha_2)) &\xleftarrow[]{\theta} S_{[h-1,1]}(R(\alpha_i)) \circ S_{[h-1,1]}(R(\alpha_2)) \\
&\overset{(\ref{eq:fixedisom})}{\simeq} \Delta(s_{2^*}w_J,i) \circ R(\alpha_{2^*}). 
\end{align*}
We identify them below. 
Note that the space
\begin{align*}
&\HOM_{R(s_{2^*}w_J(\alpha_2+\alpha_i))} (R(\alpha_{2^*}) \circ \Delta(s_{2^*}w_J,i), \Delta(s_{2^*}w_J,i) \circ R(\alpha_{2^*}))_{-(\alpha_2,\alpha_i)} \\ 
&\simeq \HOM_{R(\alpha_2+\alpha_i)}(R(\alpha_2) \circ R(\alpha_i),R(\alpha_i) \circ R(\alpha_2))_{-(\alpha_2,\alpha_i)}
\end{align*}
is one-dimensional since $2 \neq i$. 
Since $\Delta(s_{2^*}w_J,i) \simeq S_{[h-1,1]}(R(\alpha_i))$ belongs to $\gMod{{}_{2^*}R}$, 
we have an injective homomorphism of degree $-(\alpha_{2^*}, s_{2^*}w_J\alpha_i) = -(\alpha_2,\alpha_i)$
\[
\mathsf{R} \colon R(\alpha_{2^*}) \circ \Delta(s_{2^*}w_J,i) \to \Delta(s_{2^*}w_J,i) \circ R(\alpha_{2^*}), 
\]
by Proposition \ref{prop:cyclotomicR}.
Therefore, there exists $b_{2,i} \in \mathbf{k}^{\times}$ such that $S_{[h-1,1]}\left(\mathbf{1} \xy 0;/r.12pc/:
(0,0)*{\dcross{}{i}};
(-4,-7)*{\scriptstyle \tilde{2}};
\endxy\right) = b_{2,i} \mathsf{R}$. 
It follows that $S_{[h-1,1]}\left(M \xy 0;/r.12pc/:
(0,0)*{\dcross{2}{i}};
\endxy \right)$ is the $b_{2,i}$-multiple of the canonical injective homomorphism
\begin{equation}\label{eq:canonicalinj}
S_{[h-1,1]}(M) F_{2^*} \circ \Delta(s_{2^*}w_J,i) \to (S_{[h-1,1]}(M) \circ \Delta(s_{2^*}w_J,i))F_{2^*}. 
\end{equation}
Identifying
\begin{align*}
S_{2^*}(S_{[h-1,1]}(M)F_{2^*} \circ \Delta(s_{2^*}w_J,i)) &\xleftarrow{\theta} S_{2^*}(S_{[h-1,1]}(M)F_{2^*}) \circ S_{2^*}(Delta(s_{2^*}w_J,i)) \\
&\simeq E_{2^*}S_{[h,1]}(M) \circ \Delta(w_J,i) \\
%&\overset{(\ref{eq:isomF2}), (\ref{eq:isomFi})}{\simeq} S_{[h,1]}(MF_2F_i), \\
S_{2^*}((S_{[h-1,1]}(M) \circ \Delta(s_{2^*}w_J,i))F_{2^*}) &\simeq E_{2^*}S_{2^*}(S_{[h-1,1]}(M) \circ \Delta(s_{2^*}w_J,i)) \\
&\xleftarrow{\theta} E_{2^*} (S_{[h,1]}(M) \circ \Delta(w_J,i)) \\
%&\overset{(\ref{eq:isomF2}), (\ref{eq:isomFi})}{\simeq} S_{[h,1]}(MF_iF_2), 
\end{align*}
the homomorphism obtained by applying $S_{2^*}$ to (\ref{eq:canonicalinj}) coincides with the canonical injective homomorphism: it follows from Proposition \ref{prop:monoidality} (3). 
Hence, the former assertion follows. 

To prove the latter assertion, note that we have 
\begin{align*}
&S_{[h,1]}\left(M \xy 0;/r.12pc/: (0,0)*{\xybox{
(0,0)*{\dcross{}{}};
(0,-8)*{\dcross{i}{2}};
}};
\endxy \right) = S_{[h,1]}\left( M Q_{i,2}\left(\xy 0;/r.12pc/:
(0,0)*{\sdotd{i}};
\endxy, \xy 0;/r.12pc/:
(0,0)*{\sdotd{2}};
\endxy\right) \right) \\
&\overset{(\ref{eq:natisom})}{=} S_{\overline{[h,1]}}\left(M Q_{i,2}\left(\xy 0;/r.12pc/:
(0,0)*{\sdotd{i}};
\endxy, \xy 0;/r.12pc/:
(0,0)*{\sdotd{2}};
\endxy\right) \right) \quad \text{by Lemma \ref{lem:sdotd1} and Lemma \ref{lem:sdotdi}} \\ 
&= S_{\overline{[h,1]}}\left(M \xy 0;/r.12pc/: (0,0)*{\xybox{
(0,0)*{\dcross{}{}};
(0,-8)*{\dcross{i}{2}};
}};
\endxy \right)
\end{align*}
Since $S_{[h,1]}\left(M \xy 0;/r.12pc/:
(0,0)*{\dcross{2}{i}};
\endxy \right) = b_{2,i}S_{\overline{[h,1]}}\left(M \xy 0;/r.12pc/:
(0,0)*{\dcross{2}{i}};
\endxy \right)$ and they are injective by the former assertion, the latter assertion follows. 

(5) follows from (4) by interchanging $1$ and $2$. 

\begin{comment}
(3) We have 
\begin{align*}
&S_{[h,1]}\left(M \xy 0;/r.12pc/:
(0,0)*{\dcross{i}{1}};
\endxy \right) = S_{[h,1]}\left(M \xy 0;/r.12pc/:
  (0,0)*{\xybox{
  (0,0)*{\rcross{}{}};
  (-8,-7)*{\rcup{}};
  (8,7)*{\rcap{}};
  (4,-8)*{\slined{i}};
  (-4,8)*{\slined{}};
  (-12,12); (-12,-4) **\dir{-} ?(1)*\dir{>};
  (12,-14)*{\scriptstyle 1};
  (12,4); (12,-12) **\dir{-} ?(1)*\dir{>}; 
  }}\endxy \right) \\
&= [E_{1^*}(S_{[h,1]}(M) \circ \Delta(w_J\alpha_i)) \xrightarrow{} ]

c_{1,\beta}^{-1}c_{1^*,-w_J\beta}g_1^{\langle h_1,\beta \rangle + 1} c_{1,\beta+\alpha_1+\alpha_i} c_{1^*,-w_J(\beta + \alpha_1 + \alpha_i)}^{-1}g_1^{-\langle h_1, \beta + \alpha_1+\alpha_i \rangle +1} 
\end{align*}
\end{comment}
\end{proof}

\begin{lemma}\label{lem:dcrossii2}
Let $i \in I \setminus \{1,2\}$. 
Identifying the isomorphisms (\ref{eq:isomFi}), we have 
\[
S_{[h,1]}\left(M \xy 0;/r.12pc/:
(0,0)*{\dcross{i}{i}};
\endxy\right) = S_{\overline{[h,1]}}\left(M \xy 0;/r.12pc/:
(0,0)*{\dcross{i}{i}};
\endxy \right). 
\]
\end{lemma}

\begin{proof}
We may assume $M = \mathbf{k}$. 
Note that the isomorphism (\ref{eq:fixedisom3}) induces an algebra isomorphism
\[
\END_{R(2w_J\alpha_i)}(\Delta(w_J,i) \circ \Delta(w_J,i)) \simeq R(2\alpha_i). 
\]
In $R(2\alpha_i)$, the element $\tau_1$ is characterized as the homogeneous element of degree $-(\alpha_i,\alpha_i)$ satisfying 
\[
\tau_1 x_2 - x_1 \tau_1 = 1. 
\]
Hence, the assertion follows from Lemma \ref{lem:sdotdi}. 
\end{proof}

\begin{lemma} \label{lem:dcrossij2}
Let $i, j \in I \setminus \{1,2\}$, and assume $i \neq j$. 
Identifying the isomorphisms (\ref{eq:isomFi}), there exists $b_{i,j} \in \mathbf{k}^{\times}$ such that 
\[
S_{[h,1]}\left(M \xy 0;/r.12pc/:
(0,0)*{\dcross{i}{j}};
\endxy\right) = b_{i,j} S_{\overline{[h,1]}}\left(M \xy 0;/r.12pc/:
(0,0)*{\dcross{i}{j}};
\endxy \right) \ (M \in \gMod{{}_JR}). 
\]
Moreover, they satisfy $b_{i,j} b_{j,i} = 1$.
\end{lemma}

\begin{proof}
We may assume $M = \mathbf{1}$. 
Since $\xy 0;/r.12pc/: (0,0)*{\dcross{i}{j}}; \endxy \colon R(\alpha_i) \circ R(\alpha_j) \to R(\alpha_j) \circ R(\alpha_i)$ is of degree $-(\alpha_i,\alpha_j)$, 
both $S_{[h,1]}\left(\mathbf{1} \xy 0;/r.12pc/:
(0,0)*{\dcross{i}{j}};
\endxy\right)$ and $S_{\overline{[h,1]}}\left(\mathbf{1} \xy 0;/r.12pc/:
(0,0)*{\dcross{i}{j}};
\endxy \right)$ are of degree $-(\alpha_i,\alpha_j)$. 
Note that the space
\begin{align*}
&\HOM_{R(w_J(\alpha_i+\alpha_j))}(\Delta(w_J,i)\circ \Delta(w_J,j), \Delta(w_J,j) \circ \Delta(w_J,i))_{-(\alpha_i,\alpha_j)} \\ 
&\simeq \HOM_{R(\alpha_i+\alpha_j)}(R(\alpha_i) \circ R(\alpha_j),R(\alpha_j) \circ R(\alpha_i))_{-(\alpha_i,\alpha_j)}
\end{align*}
is one-dimensional since $i \neq j$. 
Hence, there exists $b_{i,j} \in \mathbf{k}^{\times}$ such that 
\[
S_{[h,1]}\left(\mathbf{1} \xy 0;/r.12pc/:
(0,0)*{\dcross{i}{j}};
\endxy\right) = b_{i,j} S_{\overline{[h,1]}}\left(\mathbf{1} \xy 0;/r.12pc/:
(0,0)*{\dcross{i}{j}};
\endxy \right)
\]
The former assertion follows. 

Regarding the latter assertion, note that we have 
\[
S_{[h,1]}\left(\mathbf{1} \xy 0;/r.12pc/: (0,0)*{\xybox{
(0,0)*{\dcross{i}{j}};
(0,8)*{\dcross{}{}};
}};
\endxy\right) = b_{i,j}b_{j,i} S_{\overline{[h,1]}}\left(\mathbf{1} \xy 0;/r.12pc/: (0,0)*{\xybox{
(0,0)*{\dcross{i}{j}};
(0,8)*{\dcross{}{}};
}};
\endxy \right)
\]
On the other hand, since 
\[ 
\xy 0;/r.12pc/: (0,0)*{\xybox{
(0,0)*{\dcross{i}{j}};
(0,8)*{\dcross{}{}};
}};
\endxy = Q_{i,j}\left(\xy 0;/r.12pc/:
(0,0)*{\sdotd{i}};
\endxy,  \xy 0;/r.12pc/:
(0,0)*{\sdotd{j}};
\endxy\right), 
\]
Lemma \ref{lem:sdotdi} shows
\[
S_{[h,1]}\left(\mathbf{1} \xy 0;/r.12pc/: (0,0)*{\xybox{
(0,0)*{\dcross{i}{j}};
(0,8)*{\dcross{}{}};
}};
\endxy\right) = S_{\overline{[h,1]}}\left(\mathbf{1} \xy 0;/r.12pc/: (0,0)*{\xybox{
(0,0)*{\dcross{i}{j}};
(0,8)*{\dcross{}{}};
}};
\endxy \right)
\]
Since 
\[
\END_{R(w_J(\alpha_i+\alpha_j))} (\Delta(w_J,i) \circ \Delta(w_J,j)) \simeq e(i,j)R(\alpha_i+\alpha_j)e(i,j), 
\]
both 
\[
\text{$S_{[h,1]}\left(\mathbf{1} \xy 0;/r.12pc/: (0,0)*{\xybox{
(0,0)*{\dcross{i}{j}};
(0,8)*{\dcross{}{}};
}};
\endxy\right)$ and $S_{\overline{[h,1]}}\left(\mathbf{1} \xy 0;/r.12pc/: (0,0)*{\xybox{
(0,0)*{\dcross{i}{j}};
(0,8)*{\dcross{}{}};
}};
\endxy \right)$}
\]
are nonzero. 
Therefore, we must have $b_{i,j} b_{j,i} = 1$. 
\end{proof}

Now, we have completed the proof of Proposition \ref{prop:braidrel} assuming Lemma \ref{lem:g1}.  

\section[Proof of Lemma]{Proof of Lemma \ref{lem:g1}}

\subsection{The module $\Delta(s_2,1)$}

\begin{lemma}[{\cite[Lemma 4.2]{MR2995184}}] \label{lem:nilhecke} 
Let $n \geq 1$, and let $M$ be a module over the nil-Hecke algebra $R(n\alpha_i)$. 
Assume that 
\[
f = f(x_1, \ldots,x_n) \in \END_{R(n\alpha_i)}(M)[x_1,\ldots,x_n]
\]
acts on $M$ by zero. 
Then, $s_k(f)$ and $\partial_k(f)$ also acts on $M$ by zero for any $1 \leq k \leq n-1$. 
\end{lemma}

\begin{lemma} \label{lem:Delta}
Put $n = -a_{2,1}$. 
Note that $\Delta(s_2,1) \simeq S_2(R(\alpha_1))$ is an $R(s_2\alpha_1) = R(\alpha_1 + n\alpha_2)$-module. 
We have 
\begin{enumerate}
\item $e(2^n,1)$ acts on $\Delta(s_2,1)$ as the identity. 
\item $(t_{2,1}x_k^{-a_{2,1}} + t_{1,2}x_{n+1}^{-a_{1,2}}) \Delta(s_2,1) = 0$ for $1 \leq k \leq n$, 
\item $(x_{k_1}^r + \cdots + x_{k_{n+1-r}}^r) \Delta(s_2,1) = 0$ for any $1 \leq r \leq n-1$ and $1 \leq k_1 < \cdots < k_{n+1-r} \leq n$.  
\item The canonical surjective homomorphism $R(n\alpha_2) \circ R(\alpha_1) \to F_2^{(n)}R(\alpha_1) = \Delta(s_2,1)$ induces a surjective homomorphism $R(n\alpha_2)F_1 \to \Delta(s_2,1)$. 
\end{enumerate}
Similar assertions hold for the modules $\Delta(s_1,2), \Delta'(s_2,1), \Delta'(s_1,2)$. 
\end{lemma}

\begin{proof}
(1) By definition, $\Delta(s_2,1) \simeq F_2^{(-a_{2,1})}(R(\alpha_1)) \in \gMod{R_2(s_2\alpha_1)}$, 
that is, $e(s_2\alpha_1-\alpha_2,2) \Delta(s_2,1) = 0$. 
(1) follows. 

(2) By (1), we have 
\begin{align*}
0 &= \tau_n^2 e(2^n,1) \Delta(s_2,1) = Q_{2,1}(x_n,x_{n+1}) \Delta(s_2,1) \\
&= (t_{2,1}x_n^{-a_{2,1}} + t_{1,2}x_{n+1}^{-a_{1,2}}) \Delta(s_2,1).
\end{align*}

(3) Note that, for $1 \leq r \leq n-1$, we have 
\begin{align*}
&\partial_{r} \partial_{r+1} \cdots \partial_{n-2} \partial_{n-1}(t_{2,1}x_n^{-a_{2,1}} + t_{1,2}x_{n+1}^{-a_{1,2}}) \\
&= t_{2,1} (x_r^r + x_{r+1}^r + \cdots + x_n^r). 
\end{align*}
Since the nil-Hecke algebra $R(n\alpha_2)$ acts on $\Delta(s_2,1) = e(2^n,1)\Delta(s_2,1)$, 
the assertion follow from Lemma \ref{lem:nilhecke}. 

(4) By (1), we have $\Delta(s_2,1) \in \gMod{{}_1R}$. Hence, the assertion follows. 
\end{proof}

\begin{lemma} \label{lem:quotient}
Assume $(\alpha_2,\alpha_2) \geq (\alpha_1,\alpha_1)$. 
Put $n = -a_{1,2}$. 
Then, the canonical surjective homomorphism 
\[
R((n-1)\alpha_1) \circ R(\alpha_1) \circ R(\alpha_2) \simeq R(n\alpha_1) \circ R(\alpha_2) \to F_1^{(n)}R(\alpha_2) = \Delta(s_1,2)
\]
induces a surjective homomorphism 
\[
R((n-1)\alpha_1) \circ R(\alpha_1)F_2 \to \Delta(s_1,2).  
\]
\end{lemma}

\begin{proof}
By Lemma \ref{lem:adjointSES}, we have a short exact sequence
\begin{align*}
0 &\to R((n-1)\alpha_1)F_2 \circ R(\alpha_1) \to (R((n-1)\alpha_1) \circ R(\alpha_1)) F_2 \\
&\to R((n-1)\alpha_1) \circ R(\alpha_1)F_2 \to 0. 
\end{align*}
By Lemma \ref{lem:Delta} (4), the canonical surjective homomorphism $R(n\alpha_1) \circ R(\alpha_2) \to \Delta(s_1,2)$ factors through $R(n\alpha_1) F_2 \simeq (R((n-1)\alpha_1)\circ R(\alpha_1))F_2$. 
On the other hand, we have 
\begin{align*}
&\HOM_{R(s_1\alpha_2)}(R((n-1)\alpha_1)F_2 \circ R(\alpha_1), \Delta(s_2,1)) \\
&\simeq \HOM_{R(s_1\alpha_2)} (R((n-1)\alpha_1)F_2 \otimes R(\alpha_1), \Res_{(n-1)\alpha_1+\alpha_2,\alpha_1}\Delta(s_1,2)) \\
&\quad \text{by induction-restriction adjunction}\\
&= 0 \quad \text{by Lemma \ref{lem:Delta} (1)}. 
\end{align*}
Hence, the lemma is proved. 
\end{proof}

\begin{comment}
\begin{proof}
Note that $a_{2,1} = -1, s_2\alpha_1 = \alpha_1+ \alpha_2$. 
We have a surjective homomorphism $\Delta(s_1,2) \to L(s_1,2)$. 
Since $e(1^n,2)$ acts on $\Delta(s_1,2)$ by the identity (Lemma \ref{lem:Delta}),
both $\Res_{(n-1)\alpha_1, \alpha_1+\alpha_2} \Delta(s_1,2)$ and $\Res_{(n-1)\alpha_1,\alpha_1+\alpha_2}L(s_1\alpha_2)$ are $R((n-1)\alpha_1) \otimes {}_2R(\alpha_1+\alpha_2)$-modules. 
Since $L'(s_2,1)$ is the unique simple ${}_2R(\alpha_1+\alpha_2)$-module, 
we have an injective homomorphism $L(1^{n-1}) \otimes L'(s_2,1) \to \Res_{(n-1)\alpha_1,\alpha_1+ \alpha_2} L(s_1\alpha_2)$. 
We obtain a nonzero homomorphism
\[
R((n-1)\alpha_1)\otimes \Delta'(s_2,1) \twoheadrightarrow L((n-1)\alpha_1) \otimes L'(s_2,1) \hookrightarrow \Res_{(n-1)\alpha_1,\alpha_1+\alpha_2}L(s_1,2).
\]
Since $R((n-1)\alpha_1) \otimes \Delta'(s_2,1)$ is projective as an $R((n-1)\alpha_1) \otimes {}_2R(\alpha_1+\alpha_2)$-module, 
this homomorphism lifts to a homomorphism
\[
R((n-1)\alpha_1) \otimes \Delta'(s_2,1) \to \Res_{(n-1)\alpha_1,\alpha_1+\alpha_2} \Delta(s_1,2). 
\]
By the induction-restriction adjunction, we obtain a nonzero homomorphism 
\[
R((n-1)\alpha_1) \circ \Delta'(s_2,1) \to \Delta(s_1,2) 
\]
that lifts $R((n-1)\alpha_1) \circ \Delta'(s_2,1) \to L(s_1,2)$. 
Since the head of $\Delta(s_1,2)$ is the simple module $L(s_1,2)$, 
the homomorphism $R((n-1)\alpha_1) \circ \Delta'(s_2,1) \to \Delta(s_1,2)$ above is surjective. 
Since $\Delta'(s_2,1) = S'_2(R(\alpha_1)) \simeq R(\alpha_1)F_2$, the assertion follows.  
\end{proof}
\end{comment}

In the computations below, we freely use these lemmas. 
We assume that $(\alpha_2,\alpha_2) \geq (\alpha_1,\alpha_1)$. 

\subsection{The $A_1 \times A_1$ case} 
It is trivial.  

\subsection{The $A_2$ case}

First, we compute $g_1$. 
Note that $s_2\alpha_1 = \alpha_1 + \alpha_2$. 
By Lemma \ref{lem:yaction}, the endomorphism of $\Delta(s_2,1) \simeq S_2(R(\alpha_1))$ induced by $\mathbf{1} \xy 0;/r.12pc/: (0,0)*{\sdotd{}}; (0,-7)*{\scriptstyle \tilde{1}}; \endxy$ is the left-multiplication by $x_2$. 
Recall that $e(2,1)$ acts on $\Delta(s_2,1)$ as the identity, and $(t_{2,1}x_1 + t_{1,2}x_2) \Delta(s_2,1) = 0$. 
It follows that the endomorphism of $\Delta(s_2,1)$ above coincides with the left multiplication by $-\frac{t_{2,1}}{t_{1,2}}x_1$. 

On the other hand, the isomorphism $S_1(\Delta(s_2,1)) \simeq \Delta(s_1s_2,1) \simeq R(\alpha_2)$ of Lemma \ref{lem:std} (3) gives an isomorphism $\Delta(s_2,1) \simeq S'_1(R(\alpha_2))$.
By Lemma \ref{lem:yaction}, the endomorphism of $S_1'(R(\alpha_2))$ induced by $\mathbf{1} \xy 0;/r.12pc/: (0,0)*{\sdotd{}}; (0,-7)*{\scriptstyle \tilde{1}}; \endxy$ coincides with the left multiplication by $x_1$. 
Therefore, the endomorphism of $\Delta(s_1s_2,1) \simeq S_1S_2(R(\alpha_1))$ induced by $\mathbf{1} \xy 0;/r.12pc/: (0,0)*{\sdotd{}}; (0,-7)*{\scriptstyle \tilde{1}}; \endxy$ coincides with $-\frac{t_{2,1}}{t_{1,2}}$-multiple of $\mathbf{1} \xy 0;/r.12pc/: (0,0)*{\sdotd{}}; (0,-7)*{\scriptstyle \tilde{2}}; \endxy$. 
Hence, $g_1 = -\frac{t_{2,1}}{t_{1,2}}$. 

By interchanging $1$ and $2$, we deduce $g_2 = -\frac{t_{1,2}}{t_{2,1}}$. 

\subsection{The $B_2$ case}

First, we compute $g_1$. 
Note that $s_2\alpha_1 = \alpha_1 + \alpha_2$. 
By Lemma \ref{lem:yaction}, the endomorphism of $\Delta(s_2,1) \simeq S_2(R(\alpha_1))$ induced by $\mathbf{1} \xy 0;/r.12pc/: (0,0)*{\sdotd{}}; (0,-7)*{\scriptstyle \tilde{1}}; \endxy$ is the left-multiplication by $x_2$. 
Furthermore, $\Delta(s_2,1)$ is a quotient of $R(\alpha_2)F_1$, and the endomorphism above is induced by $R(\alpha_2) \xy 0;/r.12pc/: (0,0)*{\sdotd{1}}; \endxy$. 

Note that $s_1s_2\alpha_1 = \alpha_1 + \alpha_2$. 
We have $S_1(R(\alpha_2)F_1) \simeq E_1 S_1(R(\alpha_2)) \simeq E_1 \Delta(s_1,2)$ and
\[
S_1 \left(R(\alpha_2)  \xy 0;/r.12pc/: (0,0)*{\sdotd{1}}; \endxy \right) =  \xy 0;/r.12pc/: (0,0)*{\sdotu{1}}; \endxy \Delta(s_1,2). 
\]
Recall that $\Delta(s_1,2)$ is an $R(2\alpha_1+\alpha_2)$-module, on which $e(1,1,2)$ acts as the identity. 
Hence, $E_1\Delta(s_1,2) = \Delta(s_1,2)$ as a vector space. 
The endomorphism of $E_1\Delta(s_1,2)$ above coincides with the left action of $x_1$ on $\Delta(s_1,2)$. 
Since $(x_1 + x_2) \Delta(s_1,2) = 0$, it also coincides with the left multiplication by $-x_2$ on $\Delta(s_2,1)$. 
Hence, the endomorphism $S_1S_2 \left( \mathbf{1}\xy 0;/r.12pc/: (0,0)*{\sdotd{1}}; \endxy\right)$ of $\Delta(s_1s_2,1)$ is the left multiplication by $-x_1$. 

Note that $S_1S_2(R(\alpha_1)) \simeq S'_2 S_2S_1S_2(R(\alpha_1)) \overset{(\ref{eq:fixedisom})}{\simeq} S'_2 (R(\alpha_1)) \simeq \Delta'(s_2,1)$. 
It is a quotient of $R(\alpha_1)F_2= \mathbf{1}\tilde{F}_1F_2$, 
and the endomorphism $S_1S_2 \left( \mathbf{1}\xy 0;/r.12pc/: (0,0)*{\sdotd{1}}; \endxy\right)$ is induced by $-\mathbf{1} \xy 0;/r.12pc/: (0,0)*{\sdotd{}}; (0,-7)*{\scriptstyle \tilde{1}}; \endxy F_2$. 
Applying $S_2$, we deduce that $\Delta(s_2s_1s_2,1) \overset{(\ref{eq:fixedisom})}{\simeq} R(\alpha_1)$ is a quotient of $S_2(R(\alpha_1)F_2) \simeq E_2\Delta(s_2,1)$, 
and the endomorphism $S_2S_1S_2\left(\mathbf{1} \xy 0;/r.12pc/: (0,0)*{\sdotd{}}; (0,-7)*{\scriptstyle \tilde{1}}; \endxy\right)$ of $R(\alpha_1)$ coincides with the one induced from the left multiplication by $-x_2$ on $\Delta(s_2,1)$ (Lemma \ref{lem:yaction}). 
It implies that $g_1 = -1$. 

Next, we compute $g_2$. 
Note that $s_1\alpha_2 = 2\alpha_1 + \alpha_2$. 
By Lemma \ref{lem:yaction}, the endomorphism $S_1(\mathbf{1}\xy 0;/r.12pc/: (0,0)*{\sdotd{}}; (0,-7)*{\scriptstyle \tilde{2}}; \endxy)$ of $\Delta(s_1,2) \simeq S_1(R(\alpha_2))$ coincides with the action of $x_3$. 
Furthermore, we have a surjective homomorphism $R(\alpha_1) \circ R(\alpha_1) F_2 \to \Delta(s_1,2)$, 
and the endomorphism above is induced by $R(\alpha_1) \circ R(\alpha_1) \xy 0;/r.12pc/: (0,0)*{\sdotd{2}}; \endxy$. 

Applying $S_2$, we obtain a surjective homomorphism $\Delta(s_2,1) \circ E_2\Delta(s_2,1) \to \Delta(s_2s_1,2)$,
and the endomorphism $S_2S_1\left(\mathbf{1} \xy 0;/r.12pc/: (0,0)*{\sdotd{}}; (0,-7)*{\scriptstyle \tilde{2}}; \endxy\right)$ coincides with the one induced by $\Delta(s_2,1) \circ \xy 0;/r.12pc/: (0,0)*{\sdotu{2}}; \endxy \Delta(s_2,1)$. 
We have $\Delta(s_2,1) = e(2,1)\Delta(s_2,1)$, 
and $\xy 0;/r.12pc/: (0,0)*{\sdotu{2}}; \endxy \Delta(s_2\alpha_1)$ coincides with the action of $x_1$ on $\Delta(s_2,1)$. 
Furthermore, it equals to the action of $-\frac{t_{1,2}}{t_{2,1}}x_2^2$ on $\Delta(s_2,1)$, since $(t_{2,1}x_1 + t_{1,2}x_2^2) \Delta(s_2,1) = 0$. 

$\Delta(s_2s_1,2) \overset{(\ref{eq:fixedisom})}{\simeq} S'_1(R(\alpha_2)) \simeq \Delta'(s_1,2) \simeq R(\alpha_2)F_1^{(2)}$ is a quotient of $R(\alpha_2)F_1F_1$, 
and the endomorphism $S_2S_1\left(\mathbf{1}\xy 0;/r.12pc/: (0,0)*{\sdotd{}}; (0,-7)*{\scriptstyle \tilde{2}}; \endxy \right)$ of $\Delta(s_2s_1,2)$ is induced by $-\frac{t_{1,2}}{t_{2,1}}$-multiple of $R(\alpha_2)F_1 \xy 0;/r.12pc/: (0,0)*{\sdotd{1}}; (3,0)*{\scriptstyle 2}; \endxy$. 

Applying $S_1$, the module $R(\alpha_2) \overset{(\ref{eq:fixedisom})}{\simeq} \Delta(s_1s_2s_1,2)$ is a quotient of $E_1E_1\Delta(s_1,2)$, 
and the endomorphism above becomes $-\frac{t_{1,2}}{t_{2,1}}$-multiple of $\xy 0;/r.12pc/: (0,0)*{\sdotu{1}}; (3,0)*{\scriptstyle 2}; \endxy E_1 \Delta(s_1,2)$, 
namely, the action of $x_2^2$ on $\Delta(s_1,2)$. 
Since $(t_{1,2}x_2^2 + t_{2,1}x_3)\Delta(s_1,2) = 0$, it coincides with the action of $x_3$ on $\Delta(s_1,2)$. 
Therefore, the endomorphism $S_1S_2S_1\left(\mathbf{1} \xy 0;/r.12pc/: (0,0)*{\sdotd{}}; (0,-7)*{\scriptstyle \tilde{2}}; \endxy\right)$ of $R(\alpha_2) \simeq \Delta(s_1s_2s_1\alpha_2)$ coincides with the action of $x_1$.  
It means that $g_2 = 1$. 

\subsection{The $G_2$ case}

First, we compute $g_1$.
Note that $s_2\alpha_1 = \alpha_1 + \alpha_2$. 
By Lemma \ref{lem:yaction}, the endomorphism $S_2\left(\mathbf{1} \xy 0;/r.12pc/: (0,0)*{\sdotd{}}; (0,-7)*{\scriptstyle \tilde{1}}; \endxy\right)$ of $\Delta(s_2,1) \simeq S_2(R(\alpha_1))$ is the left-multiplication by $x_2$. 
Furthermore, $\Delta(s_2,1)$ is a quotient of $R(\alpha_2)F_1$, and the endomorphism above is induced by $R(\alpha_2) \xy 0;/r.12pc/: (0,0)*{\sdotd{1}}; \endxy$. 

Note that $s_1s_2\alpha_1 = 2\alpha_1 + \alpha_2$. 
We have $S_1(R(\alpha_2)F_1) \simeq E_1 \Delta(s_1,2)$ and 
\[
S_1 \left(R(\alpha_2)  \xy 0;/r.12pc/: (0,0)*{\sdotd{1}}; \endxy \right) =  \xy 0;/r.12pc/: (0,0)*{\sdotu{1}}; \endxy \Delta(s_1,2). 
\]
Recall that $\Delta(s_1,2)$ is an $R(3\alpha_1+\alpha_2)$-module, on which $e(1,1,1,2)$ acts as the identity. 
Hence, $E_1\Delta(s_1,2) = \Delta(s_1,2)$ as a vector space. 
The endomorphism of $E_1\Delta(s_1,2)$ above coincides with the left action of $x_1$ on $\Delta(s_1,2)$. 
Since $(x_1 + x_2 + x_3) \Delta(s_1,2) = 0$, it also coincides with the action of $-(x_2 + x_3)$ on $\Delta(s_1,2)$, 
namely, the left action of $-(x_1 + x_2)$ on $E_1\Delta(s_1,2)$.  
Note that $\Delta(s_1s_2,1) \simeq S_1(\Delta(s_2,1))$ is a quotient of $S_1(R(\alpha_2)F_1) \simeq E_1\Delta(s_1,2)$.
Hence, the endomorphism $S_1S_2\left(\mathbf{1}\xy 0;/r.12pc/: (0,0)*{\sdotd{}}; (0,-7)*{\scriptstyle \tilde{1}}; \endxy\ \right)$ of $\Delta(s_1s_2,1)$ coincides with the action of $-(x_1 + x_2)$. 

Since $e(1,1,2)$ acts on $E_1\Delta(s_1,2)$ as the identity, 
it also acts on $\Delta(s_1s_2,1)$ as the identity. 
Since the head of $\Delta(s_1s_2,1)$ is simple $L(s_1s_2,1)$, 
there exists a surjective homomorphism $R(\alpha_1) \circ R(\alpha_1)\circ R(\alpha_2) \to \Delta(s_1s_2,1)$. 
By the same argument as the proof of Lemma \ref{lem:quotient}, 
it induces a surjective homomorphism $(\mathbf{1} \tilde{F}_1) \circ (\mathbf{1}\tilde{F}_1F_2) \simeq R(\alpha_1) \circ R(\alpha_1)F_2 \to \Delta(s_1s_2,1)$. 
By the previous paragraph, the endomorphism $S_1S_2 \left(\mathbf{1} \xy 0;/r.12pc/: (0,0)*{\sdotd{}}; (0,-7)*{\scriptstyle \tilde{1}}; \endxy \right)$ of $\Delta(s_1s_2,1)$ 
is induced by the following endomorphism of $R(\alpha_1) \circ R(\alpha_1)F_2$: 
\[
-\left( \mathbf{1} \xy 0;/r.12pc/: (0,0)*{\sdotd{}}; (0,-7)*{\scriptstyle \tilde{1}}; \endxy \right) \circ (\mathbf{1} \tilde{F}_1F_2) - (\mathbf{1}\tilde{F}_1) \circ \left(\mathbf{1} \xy 0;/r.12pc/: (0,0)*{\sdotd{}}; (0,-7)*{\scriptstyle \tilde{1}}; \endxy F_2 \right). 
\]

Applying $S_2$, we obtain a surjective homomorphism $\Delta(s_2,1) \circ E_2\Delta(s_2,1) \to \Delta(s_2s_1s_2,1)$, 
and the endomorphism $S_2S_1S_2\left(\mathbf{1} \xy 0;/r.12pc/: (0,0)*{\sdotd{}}; (0,-7)*{\scriptstyle \tilde{1}}; \endxy \right)$ of $\Delta(s_2s_1s_2,1)$ is induced by the endomorphism of $\Delta(s_2,1) \circ E_2\Delta(s_2,1)$ given by 
\[
u \boxtimes E_2v \mapsto -x_2u \boxtimes E_2v - u \boxtimes E_2(x_2v) \ (u,v \in \Delta(s_2,1)), 
\]
by Lemma \ref{lem:yaction}.
Note that $E_2\Delta(s_2,1) \simeq E_2F_2R(\alpha_1) \simeq R(\alpha_1)$ and $\Delta(s_2,1) \simeq R(\alpha_2)F_1$.
Since $\Delta(s_2s_1s_2,1) \in \gMod{{}_1R}$, 
the surjective homomorphism $R(\alpha_2)F_1 \circ R(\alpha_1) \simeq \Delta(s_2,1) \circ E_2\Delta(s_2,1) \to \Delta(s_2s_1s_2,1)$ factors through $R(\alpha_2)F_1F_1$. 
The endomorphism of $\Delta(s_2s_1s_2,1)$ above coincides with the one induced by the following endomorphism of $R(\alpha_2)F_1F_1$: 
\[ 
- R(\alpha_2) \xy 0;/r.12pc/: (0,0)*{\sdotd{1}}; (4,0)*{\slined{1}}; \endxy - R(\alpha_2)\xy 0;/r.12pc/: (0,0)*{\slined{1}}; (4,0)*{\sdotd{1}}; \endxy.
\]

Applying $S_1$, we obtain a surjective homomorphism 
\[
E_1E_1\Delta(s_1,2) \to \Delta(s_1s_2s_1s_2,1),
\] 
and the endomorphism $S_1S_2S_1S_2\left(\mathbf{1} \xy 0;/r.12pc/: (0,0)*{\sdotd{}}; (0,-7)*{\scriptstyle \tilde{1}}; \endxy \right)$ is the one induced by the following endomorphism of $E_1E_1\Delta(s_1,2)$:  
\[
- \xy 0;/r.12pc/: (0,0)*{\sdotu{1}}; (4,0)*{\slineu{1}}; \endxy \Delta(s_1,2) - \xy 0;/r.12pc/: (0,0)*{\slineu{1}}; (4,0)*{\sdotu{1}}; \endxy \Delta(s_1,2). 
\]
Note that $E_1E_1\Delta(s_1,2) = \Delta(s_1,2)$ as vector space, and this endomorphism of $E_1E_1\Delta(s_1,2)$ coincides with the action of $-(x_1 + x_2)$ on $\Delta(s_1,2)$.  
Since $(x_1 + x_2 + x_3) \Delta(s_1,2) = 0$, it also coincides with the action of $x_3$ on $\Delta(s_1,2)$, namely, the action of $x_1$ on $E_1E_1\Delta(s_1,2)$. 
Hence, the endomorphism $S_1S_2S_1S_2\left(\mathbf{1}\xy 0;/r.12pc/: (0,0)*{\sdotd{}}; (0,-7)*{\scriptstyle \tilde{1}}; \endxy  \right)$ coincides with the action of $x_1$. 

Note that $\Delta(s_1s_2s_1s_2,1) \overset{(\ref{eq:fixedisom})}{\simeq} S'_2(R(\alpha_1))$, 
and the endomorphism above is given by $S_2' \left(\xy 0;/r.12pc/: (0,0)*{\sdotd{}}; (0,-7)*{\scriptstyle \tilde{1}}; \endxy \mathbf{1} \right)$ by Lemma \ref{lem:yaction}. 
Applying $S_2$, it becomes an endomorphisms of $\Delta(s_2s_1s_2s_1s_2,1) \simeq R(\alpha_1)$,
which coincides with the action of $x_1$ on $R(\alpha_1)$. 
Hence, $g_1= 1$.

Next, we compute $g_2$. 
Note that $s_1\alpha_2 = 3\alpha_1 + \alpha_2$.  
By Lemma \ref{lem:yaction}, the endomorphism of $\Delta(s_1,2) \simeq S_1(R(\alpha_2))$ given by $S_1 \left(\mathbf{1}\xy 0;/r.12pc/: (0,0)*{\sdotd{}}; (0,-7)*{\scriptstyle \tilde{2}}; \endxy \right)$ coincides with the action of $x_4$. 
Furthermore, we have a surjective homomorphism $R(\alpha_1) \circ R(\alpha_1) \circ R(\alpha_1) F_2 \to \Delta(s_1,2)$, 
and the endomorphism above is induced by the endomorphism $R(\alpha_1) \circ R(\alpha_1) \circ R(\alpha_1) \xy 0;/r.12pc/: (0,0)*{\sdotd{2}}; \endxy$ of $R(\alpha_1) \circ R(\alpha_1) \circ R(\alpha_1)F_2$. 

Applying $S_2$, we obtain a surjective homomorphism $\Delta(s_2,1) \circ \Delta(s_2,1) \circ E_2\Delta(s_2,1) \to \Delta(s_2s_1,2)$,
and the endomorphism $S_2S_1\left(\mathbf{1}\xy 0;/r.12pc/: (0,0)*{\sdotd{}}; (0,-7)*{\scriptstyle \tilde{2}}; \endxy\right)$ coincides with the one induced by the endomorphism $\Delta(s_2,1) \circ \Delta(s_2,1) \circ \xy 0;/r.12pc/: (0,0)*{\sdotu{2}}; \endxy \Delta(s_2,1)$ of $\Delta(s_2,1) \circ \Delta(s_2,1) \circ E_2\Delta(s_2,1)$. 
We have $\Delta(s_2,1) = e(2,1)\Delta(s_2,1)$, 
and $\xy 0;/r.12pc/: (0,0)*{\sdotu{2}}; \endxy \Delta(s_2,1)$ coincides with the action of $x_1$ on $\Delta(s_2,1)$. 
Furthermore, it equals to the action of $-\frac{t_{1,2}}{t_{2,1}}x_2^3$ on $\Delta(s_2,1)$. 

Note that $E_2\Delta(s_2,1) \simeq R(\alpha_1)$. 
Since $\Delta(s_2s_1,2) \in \gMod{{}_{s_1s_2s_1s_2}R_{s_2s_1}}$ (Theorem \ref{thm:variousequiv}) and $(s_2s_1)^{-1} (s_2\alpha_1) = -\alpha_1 \not \in \mathsf{Q}_+$, 
we have $\Res_{s_2\alpha_1+\alpha_1,s_2\alpha_1}\Delta(s_2s_1,2) = 0$. 
Hence, the surjective homomorphism $\Delta(s_2,1) \circ \Delta(s_2,1) \circ R(\alpha_1) \to \Delta(s_2s_1,2)$ factors through $\Delta(s_2,1) \circ \Delta(s_2,1) F_1$ by Proposition \ref{thm:functorF2}. 
The endomorphism $S_2S_1\left( \mathbf{1}\xy 0;/r.12pc/: (0,0)*{\sdotd{}}; (0,-7)*{\scriptstyle \tilde{2}}; \endxy\right)$ of $\Delta(s_2s_1,2)$ coincides with the one induced by the endomorphism $-\frac{t_{1,2}}{t_{2,1}} \Delta(s_2,1) \circ \Delta(s_2,1) \xy 0;/r.12pc/: (0,0)*{\sdotd{1}}; (3,0)*{\scriptstyle 3}; \endxy$ of $\Delta(s_2,1) \circ \Delta(s_2,1)F_1$. 

Applying $S_1$, we obtain a surjective homomorphism $\Delta(s_1s_2,1) \circ E_1\Delta(s_1s_2,1) \to \Delta(s_1s_2s_1,2)$, 
and the endomorphism $S_1S_2S_1\left(\mathbf{1} \xy 0;/r.12pc/: (0,0)*{\sdotd{}}; (0,-7)*{\scriptstyle \tilde{2}}; \endxy  \right)$ coincides with the one induced by $-\frac{t_{1,2}}{t_{2,1}}$-multiple of $\Delta(s_1s_2,1) \circ \xy 0;/r.12pc/: (0,0)*{\sdotu{1}}; (3,0)*{\scriptstyle 3};  \endxy \Delta(s_1s_2,1)$. 
In the computation of $g_1$, we have seen that $e(1,1,2)$ acts on $\Delta(s_1s_2,1)$ by the identity. 
Hence, $E_1\Delta(s_1s_2,1) = \Delta(s_1s_2,1)$ as a vector space, 
and the endomorphism $\xy 0;/r.12pc/: (0,0)*{\sdotu{1}}; (3,0)*{\scriptstyle 3}; \endxy \Delta(s_1s_2,1)$ coincides with the action of $x_1^3$ on $\Delta(s_1s_2,1)$. 
By the same argument as the proof of Lemma \ref{lem:Delta}, we have $(t_{1,2}x_1^3 + t_{2,1}x_3)\Delta(s_1s_2,1)$. 
Therefore, the endomorphism $-\frac{t_{1,2}}{t_{2,1}} \xy 0;/r.12pc/: (0,0)*{\sdotu{1}}; (3,0)*{\scriptstyle 3}; \endxy \ \Delta(s_1s_2,1)$ coincides with the action of $x_3$ on $\Delta(s_1s_2,1)$, 
that is, the action of $x_2$ on $E_1\Delta(s_1s_2,1)$. 

Since $e(1,1,2)\Delta(s_1s_2,1) = \Delta(s_1s_2,1)$ as a vector space, we have a canonical surjective homomorphism $(E_1\Delta(s_1s_2,1))E_2F_2 \to E_1\Delta(s_1s_2,1)$. 
Put $M = (E_1\Delta(s_1s_2,1))E_2$, which is an $R(\alpha_1)$-module. 
The action of $x_2$ on $E_1\Delta(s_1s_2,1)$ is induced by the endomorphism $M \xy 0;/r.12pc/: (0,0)*{\sdotd{2}}; \endxy$. 
To summarize, we have a surjective homomorphism $\Delta(s_1s_2,1) \circ MF_2 \to \Delta(s_1s_2s_1,2)$, 
and the endomorphism $S_1S_2S_1\left(\mathbf{1} \xy 0;/r.12pc/: (0,0)*{\sdotd{}}; (0,-7)*{\scriptstyle \tilde{2}}; \endxy \right)$ is induced by $\Delta(s_1s_2,1) \circ M \xy 0;/r.12pc/: (0,0)*{\sdotd{2}}; \endxy$. 

Applying $S_2$, we obtain a surjective homomorphism $\Delta(s_2s_1s_2,1) \circ E_2 S_2(M) \to \Delta(s_2s_1s_2s_1,2)$, 
and the endomorphism $S_2S_1S_2S_1\left(\mathbf{1} \xy 0;/r.12pc/: (0,0)*{\sdotd{}}; (0,-7)*{\scriptstyle \tilde{2}}; \endxy  \right)$ coincides with the one induced by $\Delta(s_2s_1s_2,1) \circ \xy 0;/r.12pc/: (0,0)*{\sdotu{2}}; \endxy S_2(M)$. 
Since $M$ is and $R(\alpha_1)$-module, $S_2(M)$ is an $R(\alpha_1+\alpha_2)$-module on which $e(2,1)$ acts as the identity. 
Hence, $E_2S_2(M) = S_2(M)$ as a vector space, 
and the endomorphism $\xy 0;/r.12pc/: (0,0)*{\sdotu{2}}; \endxy S_2(M)$ coincides with the action of $x_1$ on $S_2(M)$.
Furthermore, since $(t_{2,1}x_1 + t_{1,2}x_2^3) S_2(M) = \tau_1^2 S_2(M) = 0$, it also coincides with the action of $-\frac{t_{1,2}}{t_{2,1}}x_2^3$ on $S_2(M)$, 
that is, the action of $-\frac{t_{1,2}}{t_{2,1}}x_1^3$ on $E_2S_2(M)$. 
Hence, the endomorphism $S_2S_1S_2S_1\left(\mathbf{1} \xy 0;/r.12pc/: (0,0)*{\sdotd{}}; (0,-7)*{\scriptstyle \tilde{2}}; \endxy  \right)$ is the actionn of $-\frac{t_{1,2}}{t_{2,1}}x_2^3$. 

Note that $\Delta(s_2s_1s_2s_1,2) \overset{(\ref{eq:fixedisom})}{\simeq} S_1'(R(\alpha_2))$ and $e(2,1,1,1)$ acts on it as the identity. 
The action of $-\frac{t_{1,2}}{t_{2,1}}x_4^3$ on $\Delta(s_2s_1s_2s_1,2)$ coincides with the action of $x_1$.
Therefore, the endomorphism $S_2S_1S_2S_1 \left(\mathbf{1} \xy 0;/r.12pc/: (0,0)*{\sdotd{}}; (0,-7)*{\scriptstyle \tilde{2}}; \endxy \right)$ of $\Delta(s_2s_1s_2s_1,2)$ coincides with the action of $x_1$. 
By Lemma \ref{lem:yaction}, it also coincides with $S_1'\left( \xy 0;/r.12pc/: (0,0)*{\sdotd{}}; (0,-7)*{\scriptstyle \tilde{2}}; \endxy \mathbf{1} \right)$. 
Applying $S_1$, we see that the endomorphism $S_1S_2S_1S_2S_1\left(\mathbf{1} \xy 0;/r.12pc/: (0,0)*{\sdotd{}}; (0,-7)*{\scriptstyle \tilde{2}}; \endxy \right)$ of $\Delta(s_1s_2s_1s_2s_1,2) \simeq R(\alpha_2)$ coincides with the action of $x_1$. 
It indicates that $g_2 = 1$.

\backmatter

\bibliographystyle{amsalpha}
\bibliography{library.bib}

\printindex

\end{document}